\newtheorem{theorem}{Theorem}
\newtheorem*{theorem*}{Theorem}
\newtheorem{lemma}{Lemma}
\newtheorem{definition}{Definition}
\newtheorem{prop}{Proposition}
\newtheorem{remark}{Remark}
\newtheorem{corollary}{Corollary}
\newtheorem{assumption}{Assumption}
\begin{document}

\title[Comparison for the Finsler Infinity Laplacian]{Comparison Principles for the Finsler Infinity Laplacian with Applications to Absolutely Minimizing Lipschitz Extensions}
\author{Peter S.\ Morfe}
\address{%
  Max Planck Institute for Mathematics in the Sciences,
  Inselstrasse 22-26,
  04103 Leipzig, Germany
}
\email{morfe@mis.mpg.de}
\begin{abstract} This paper proves comparison principles for elliptic PDE involving the Finsler infinity Laplacian, a second-order differential operator with discontinuities in the gradient variable arising in $L^{\infty}$-variational problems and tug-of-war games.  The core of the paper consists in proving generalized cone comparison principles.  Among other consequences, these results imply that, for any Finsler norm $\varphi$ in $\mathbb{R}^{d}$, a function $u$ is a $\varphi$-absolutely minimizing Lipschitz extension if and only if it is a viscosity solution of the $\varphi$-infinity Laplace equation, settling a longstanding question in the $L^{\infty}$-calculus of variations. The proofs combine new geometric constructions with classical notions from convex analysis. \end{abstract}

\maketitle

\section{Introduction}

The aim of this paper is to prove comparison principles for elliptic PDE involving the so-called Finsler infinity Laplacian.  Recall that a function $\varphi : \mathbb{R}^{d} \to [0,+\infty)$ is called a \emph{Finsler norm} if it is convex, positively one-homogeneous, and positive away from zero.  The Finsler infinity Laplacian associated with $\varphi$ is then the (multivalued) differential operator defined by 
	\begin{equation*}
		\langle D^{2}u \cdot \partial \varphi^{*}(Du), \partial \varphi^{*}(Du) \rangle = \left\{ \langle D^{2}u \cdot q, q \rangle \, \mid \, q \in \partial \varphi^{*}(Du) \right\}.
	\end{equation*}
Here $\varphi^{*}$ denotes the dual norm of $\varphi$ and $\partial \varphi^{*}$, its subdifferential.  
PDE involving the Finsler infinity Laplacian arise in a number of contexts, particularly the study of $L^{\infty}$-variational problems and stochastic tug-of-war games.

Delaying the discussion of variational problems to Sections \ref{S: lipschitz extension problem} and \ref{S: eigenvalue intro}, a typical example relevant to tug-of-war games is the following Dirichlet problem with zeroth-order term and variable diffusivity:
	\begin{equation} \label{E: naive}
		u - c(x) \langle D^{2}u \cdot \partial \varphi^{*}(Du), \partial \varphi^{*}(Du) \rangle = f(x) \quad \text{in} \, \, U, \quad u = g \quad \text{on} \, \, \partial U.
	\end{equation}
This PDE arises in the diffusive scaling limit of a two-player tug-of-war game in which, at each step of the game, the players choose their next jump from the ball $\{\varphi < 1 \}$; see Section \ref{S: intro tug-of-war} for more details.

Since the subdifferential $\partial \varphi^{*}$ is multivalued in general, the notion of equation needs to be relaxed somehow.  In the viscosity solutions literature, the standard approach involves treating the Finsler infinity Laplacian as a discontinuous operator, and hence reformulating the equation via two differential inequalities
	\begin{equation} \label{E: finsler dirichlet with zeroth order}
		u - c(x) G_{\varphi}^{*}(Du,D^{2}u) \leq f(x), \quad u - c(x) G_{*}^{\varphi}(Du,D^{2}u) \geq f(x),
	\end{equation}
where $G_{\varphi}^{*}$ and $G_{*}^{\varphi}$ are the respective upper and lower semicontinuous envelopes
	\begin{align}
		G_{\varphi}^{*}(p,X) &= \max \left\{ \langle X q, q \rangle \, \mid \, q \in \partial \varphi^{*}(p) \right\}, \label{E: upper def operator} \\
		G_{*}^{\varphi}(p,X) &= \min \left\{ \langle X q, q \rangle \, \mid \, q \in \partial \varphi^{*}(p) \right\}. \label{E: lower def operator}
	\end{align}
Note that the coefficients of the PDE are ``continuous," that is, the envelopes $G_{\varphi}^{*}$ and $G_{*}^{\varphi}$ are equal as functions in $(\mathbb{R}^{d} \setminus \{0\}) \times \mathcal{S}_{d}$, if and only if $\partial \varphi^{*}$ is single-valued away from the origin.\footnote{It is necessary to treat the origin separately since, in general, $\partial \varphi^{*}(0) = \{\varphi \leq 1\}$ so $\partial \varphi^{*}$ is always multivalued at the origin.  For this same reason, the envelopes $G_{\varphi}^{*}$ and $G_{*}^{\varphi}$ always differ on $\{0\} \times \mathcal{S}^{d}$.} 

While the pair of inequalities \eqref{E: finsler dirichlet with zeroth order} constitute a natural replacement for the na\"{i}ve equation \eqref{E: naive}, standard results on viscosity solutions of elliptic equations do not apply directly to such pairs.

To date, comparison principles for PDE involving the (discontinuous) Finsler infinity Laplacian have been proved, for particular choices of Finsler norm $\varphi$ or under restrictions on the dimension $d$, by Crandall, Gunnarson, and Wang \cite{crandall_gunnarsson_wang} and Morfe and Souganidis \cite{m_souganidis}.  In this paper, the question is treated in full generality.

Where the Dirichlet problem \eqref{E: finsler dirichlet with zeroth order} is concerned, the results of this paper imply a comparison principle, along with uniqueness and H\"{o}lder regularity of solutions.

	\begin{theorem} \label{T: comparison dirichlet} Let $\varphi$ be a Finsler norm in $\mathbb{R}^{d}$ and assume that $c: \mathbb{R}^{d} \to [0,+\infty)$ is such that $\sqrt{c}$ is uniformly Lipschitz continuous.  Given any bounded open set $U \subseteq \mathbb{R}^{d}$ and any bounded continuous function $f : U \to \mathbb{R}$, if $u \in USC(\overline{U})$ and $v \in LSC(\overline{U})$ are such that
		\begin{gather*}
			u - c(x) G_{\varphi}^{*}(Du,D^{2}u) \leq f(x) \quad \text{in} \, \, U, \quad v - c(x) G_{*}^{\varphi}(Dv,D^{2}v) \geq f(x) \quad \text{in} \, \, U,  \quad \text{and} \\
			u \leq v \quad \text{on} \, \, \partial U,
		\end{gather*}
	then
		\begin{equation*}
			u \leq v \quad \text{in} \, \, U.
		\end{equation*}
	Further, for any $g \in C(\partial U)$, there is a unique  $u \in C(\overline{U})$ satisfying \eqref{E: finsler dirichlet with zeroth order} with $u = g$ on $\partial U$, and $u \in C^{\alpha}_{\text{loc}}(U)$ for any $\alpha \in (0,1)$.
	\end{theorem}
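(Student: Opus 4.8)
The plan is to derive Theorem~\ref{T: comparison dirichlet} from the generalized cone comparison principles that form the core of the paper, supplemented by Perron's method for existence and by comparison against cone barriers for the interior estimate. Start with a reduction: since $U$ is bounded and $\sqrt{c}$ is uniformly Lipschitz, $c$ is bounded on $\overline{U}$, say $c \le C_{0}$ there, and the degenerate case $C_{0} = 0$ (where the equation collapses to $u = f$) may be set aside. The structural point to exploit is that the coefficient $1$ in front of $u$ makes the problem strictly proper: for every $\delta > 0$, $u - \delta$ is a subsolution with right-hand side $f - \delta$ and $v + \delta$ a supersolution with right-hand side $f + \delta$, so all the error terms that will appear---from freezing the coefficients $c$ and $f$ at a point, from their moduli of continuity, and from replacing exact cones by approximate ones---can be absorbed by sending $\delta \to 0$ at the end.

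For the comparison statement I would mirror the mechanism by which comparison with cones yields full comparison in the infinity-harmonic theory. Via the generalized cone comparison principle, the subsolution inequality for $u$ says that on any small $\varphi$-ball $B \subset U$, with the coefficients frozen at its center (legitimate by the strict properness together with the $\sqrt{c}$-Lipschitz and $f$-continuity bounds), $u$ lies below every generalized cone that matches it on $\partial B$; symmetrically, $v$ lies above the corresponding family of cones. Combining the two one-sided bounds yields the familiar ``increasing slope'' monotonicity for the difference $u - v$, and this forces $\sup_{\overline{U}}(u - v)$ to be attained on $\partial U$: were the supremum attained at an interior point with $u - v > 0$ there, the slope estimate would propagate that positive value outward until reaching $\partial U$, contradicting $u \le v$ on $\partial U$. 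Hence $u \le v$ in $U$.

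Existence then follows from Perron's method, whose two required inputs are now available---the comparison principle just proved and a subsolution/supersolution pair with boundary trace $g$. For the pair I would use the generalized cones themselves as barriers: given $y_{0} \in \partial U$ and $\epsilon > 0$, a cone centered at $y_{0}$ with apex value $g(y_{0}) + \epsilon$ and sufficiently large opening is a supersolution dominating $g$ on $\partial U$ (near $y_{0}$ the opening is dictated by the modulus of continuity of $g$, away from $y_{0}$ a crude bound suffices), so the infimum of such cones over $y_{0}$, followed by $\epsilon \downarrow 0$, is a supersolution with boundary trace $g$; symmetrically one gets a subsolution. Perron's construction together with comparison then produces the unique $u \in C(\overline{U})$ solving \eqref{E: finsler dirichlet with zeroth order} with $u = g$ on $\partial U$. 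For the interior Hölder estimate one compares $u$ on small balls $B_{r}(x_{0}) \subset\subset U$ with the generalized cones matching it on $\partial B_{r}(x_{0})$; the profiles of these cones have characteristic length controlled below in terms of $\sqrt{c}$, and since the $\sqrt{c}$-Lipschitz bound forces $\sqrt{c(x)} \lesssim \operatorname{dist}(x, \{c = 0\})$ the relevant constants degrade only mildly near the degeneracy set, giving $u \in C^{\alpha}_{\text{loc}}(U)$ for every $\alpha \in (0,1)$.

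I expect the genuine difficulty to lie entirely upstream, in the generalized cone comparison principles themselves. The naive route---doubling the variables with a quadratic penalty and applying the Jensen--Ishii lemma---fails precisely because of the gradient discontinuity: whenever $\partial\varphi^{*}(p)$ is not a singleton one may have $G_{\varphi}^{*}(p,X) > G_{*}^{\varphi}(p,Y)$ even for $X \le Y$, so the second-order terms produced by doubling carry no useful sign. Circumventing this is exactly the role of the cone comparison machinery and the geometric constructions behind it; granting those, the passage to Theorem~\ref{T: comparison dirichlet} is largely bookkeeping, the only quantitative ingredient being the use of the $\sqrt{c}$-Lipschitz hypothesis to control the coefficient-freezing errors and to shape the barrier cones.
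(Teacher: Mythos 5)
Your existence and barrier discussion is broadly in line with the paper (Perron's method with cone-type barriers $g(y_{0}) \pm C\,\varphi(\pm(x-y_{0}))^{\alpha}$, then approximation for general $g \in C(\partial U)$, and Hölder regularity from the same barriers), but the comparison step --- the actual content of the theorem --- has a genuine gap. Your plan is to freeze $c$ and $f$ on small balls, invoke cone comparison for $u$ and for $v$ separately, and then run an ``increasing slope''/propagation argument for the difference $u-v$. That mechanism is specific to the homogeneous $\varphi$-infinity Laplace equation: comparison with cones and the slope monotonicity it encodes characterize $\varphi$-infinity sub/superharmonic functions, not sub/supersolutions of $u - c(x)\,G_{\varphi}^{*}(Du,D^{2}u) = f(x)$, and the zeroth-order term, the right-hand side, and the variable diffusivity destroy the cone structure; indeed the paper points out (Remark on tug-of-war, Section 10) that this Dirichlet problem does not appear to admit any comparison-with-cones-type characterization. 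You give no argument for how the frozen-coefficient errors interact with the discontinuous operator, nor how a one-sided cone bound for $u$ and one for $v$ combine into a second-order statement about $u-v$ without doubling variables --- and that combination is precisely the hard point, not bookkeeping.

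What the paper actually does is the route you dismiss as ``naive'': it doubles variables with the penalty $\varphi_{\zeta}(x-y)^{4}/(4\beta)$ built from the $C^{1,1}$-regularized norm, applies Ishii's lemma, and then --- this is the real content, Proposition on doubling variables in Section 9 --- reruns the perturbed conical test function analysis in the two-variable setting to extract matrices $X_{*}, Y_{*}$ and points $q^{u}_{**}, q^{v}_{**} \in \partial\varphi_{\zeta}^{*}(p)$ with $X_{*}(q^{u}_{**}-q^{v}_{**}) = Y_{*}(q^{u}_{**}-q^{v}_{**}) = 0$. This ``killing'' identity is what neutralizes the mismatch between $G_{\varphi}^{*}$ and $G_{*}^{\varphi}$ at the doubled maximum (so that the supersolution inequality for $v$ can be evaluated at the same subgradient point $q^{u}_{**}$ as the subsolution inequality for $u$), after which the strict properness in $u$ and the structural condition verified for $\mathcal{H}(x,r,p,X,a) = -r + c(x)a$ with $\sqrt{c}$ Lipschitz (the standard $\sigma$-Lipschitz computation fed through the Ishii matrix inequality, not pointwise coefficient freezing) close the argument as $\beta \to 0$. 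This two-variable extension of the cone machinery is not deducible from the single-variable cone comparison principle by routine manipulations, so the passage from ``cone comparison granted'' to Theorem 1 is substantially more than the bookkeeping you describe, and the alternative path you sketch in its place does not go through.
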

	
See Section \ref{S: dirichlet existence} below for the proof.

At a technical level, Theorem \ref{T: comparison dirichlet} and the other comparison theorems in this work follow from a family of \emph{cone comparison principles}.  Roughly speaking, these cone comparison results permit meaningful information to be gleaned from local maxima of functions
	\begin{equation*}
		u(x) - h(\varphi(x)),
	\end{equation*}
where $u$ is a subsolution of an elliptic differential inequality involving $G_{\varphi}^{*}$ and $h$ is a smooth nondecreasing function, as well as local maxima of functions
	\begin{equation*}
		u(x) - v(y) - h(\varphi(x - y)),
	\end{equation*}
where $u$ and $h$ are as before and $v$ is a supersolution of an inequality involving $G_{*}^{\varphi}$.  It is worth noting that when $\varphi$ is $C^{2}$ away from the origin, quite a lot can be done utilizing the fact that $\varphi$ is $\varphi$-infinity harmonic and $h(\varphi)$ is a $C^{2}$ test function; this is an observation of \cite{m_souganidis}.  In this paper, techniques are developed to overcome the fact that $h(\varphi)$ will be far from smooth in general. 

It should be noted that the case when $h \equiv \text{Id}$ is already of considerable interest.  As explained next, that case corresponds to a fundamental question in the theory of $L^{\infty}$-variational problems.
	
\subsection{Minimal Lipschitz Extensions} \label{S: lipschitz extension problem}  Much of the interest in the Finsler infinity Laplacian comes from its connection with the minimal Lipschitz extension problem, which will be reviewed now before stating one of this paper's main results.  

The problem can be described as follows: given a bounded open set $U \subseteq \mathbb{R}^{d}$ and a uniformly Lipschitz function $g : \partial U \to \mathbb{R}$, find a function $u$, which equals $g$ on $\partial U$, so as to minimize the $\varphi$-Lipschitz seminorm:
	\begin{equation} \label{E: minimal lipschitz extension}
		\min \left\{ \text{Lip}_{\varphi}(v; \overline{U}) \, \mid \, v : \overline{U} \to \mathbb{R}, \, \, v = g \, \, \text{on} \, \, \partial U \right\}.
	\end{equation}
Here the $\varphi$-Lipschitz seminorm is defined using $\varphi$ as a (nonsymmetric) distance:
	\begin{equation*}
		\text{Lip}_{\varphi}(v; \overline{U}) = \inf \left\{ C > 0 \, \mid \, v(x) - v(y) \leq C \varphi(x - y) \, \, \text{for all} \, \, x, y \in \overline{U} \right\}.
	\end{equation*}

Unfortunately, the problem \eqref{E: minimal lipschitz extension} has many minimizers in general.  Nonetheless, it turns out that there is a distinguished minimizer, the so-called \emph{absolutely minimizing Lipschitz extension} ($\varphi$-AMLE).  Precisely, this is a function $u \in W^{1,\infty}(U)$ such that, for any open subset $V \subseteq U$,
	\begin{equation} \label{E: lipschitz min}
		\text{Lip}_{\varphi}(u; \overline{V}) = \min \left\{  \text{Lip}_{\varphi}(v; \overline{V}) \, \mid \, v : \overline{V} \to \mathbb{R}, \, \, v = u \, \, \text{on} \, \, \partial V \right\}.
	\end{equation}
In other words, $u$ is absolutely minimizing if it solves the minimal Lipschitz extension problem on every subdomain of $U$.  For a given boundary condition $g$, \eqref{E: minimal lipschitz extension} has a unique $\varphi$-AMLE, though this is far from obvious \emph{a priori}.

It turns out that the $\varphi$-AMLE is characterized by another property, namely,\footnote{Indeed, when $V$ is convex, the identity $\|\varphi^{*}(Dv)\|_{L^{\infty}(V)} = \text{Lip}_{\varphi}(v;\overline{V})$ holds according to a standard argument.  The $\varphi$-AMLE minimizes both quantities even if $V$ is not convex.}
	\begin{equation} \label{E: linfty min}
		\|\varphi^{*}(Du)\|_{L^{\infty}(V)} = \min \left\{ \|\varphi^{*}(Dv)\|_{L^{\infty}(V)} \, \mid \, v : \overline{V} \to \mathbb{R}, \, \, v = u \, \, \text{on} \, \, \partial V \right\}.
	\end{equation}
Since this optimization problem is reminiscent of those appearing in the standard calculus of variations, yet with the integrability exponent $p \to +\infty$, the study of such problems is accordingly referred to as the \emph{$L^{\infty}$-calculus of variations.}

At the time of writing, it is well-known that the $\varphi$-AMLE property is characterized by another condition, referred to as \emph{comparison with cones}.  Toward that end, it will be useful to give definitions:

	\begin{definition} \label{D: cone comparison} Given an open set $U \subseteq \mathbb{R}^{d}$, an upper semicontinuous function $u \in USC(U)$ is said to satisfy \emph{comparison with cones from above} if, for any open $V \subset \subset U$, any $v \in \mathbb{R}^{d} \setminus V$, and any $\lambda > 0$,
		\begin{equation*}
			\max \left\{u(x) - \lambda \varphi(x - v) \, \mid \, x \in \overline{V} \right\} = \max \left\{ u(x) - \lambda \varphi(x - v) \, \mid \, x \in \partial V \right\}.
		\end{equation*}
	In this case, one writes $u \in CCA_{\varphi}(U)$.
		
Analogously, a lower semicontinuous function $v \in LSC(U)$ is said to satisfy \emph{comparison with cones from below} if, for any open $V \subset \subset U$, any $v \in \mathbb{R}^{d} \setminus V$, and any $\lambda > 0$, 
	\begin{equation*}
		\min \left\{v(x) + \lambda \varphi(v - x) \, \mid \, x \in \overline{V} \right\} = \min \left\{ v(x) + \lambda \varphi(v - x) \, \mid \, x \in \partial V \right\}.
	\end{equation*}
In this case, one writes $v \in CCB_{\varphi}(U)$. \end{definition}

If $\varphi$ equals the Euclidean norm, then a result of Crandall, Evans, and Gariepy \cite{crandall_evans_gariepy} asserts that the AMLE is characterized by comparison with cones.  This was extended to arbitrary symmetric norms by Aronsson, Crandall, and Juutinen \cite{aronsson_crandall_juutinen}, while the general nonsymmetric case follows from results of Armstrong, Crandall, Julin, and Smart \cite{armstrong_crandall_julin_smart}.  The precise equivalences are recalled next:

	\begin{theorem*}[Theorem 4.1 in \cite{aronsson_crandall_juutinen} and Theorem 4.8 in \cite{armstrong_crandall_julin_smart}] \label{T: equivalent conditions} Fix an open set $U \subseteq \mathbb{R}^{d}$.  Given any continuous function $u : U \to \mathbb{R}$, the following are equivalent:
		\begin{itemize}
			\item[(i)] $u \in W^{1,\infty}_{\text{loc}}(U)$ and $u$ is an absolute minimizer of $\text{Lip}_{\varphi}(\cdot \, ; \overline{V})$ (i.e., \eqref{E: lipschitz min} holds) for any open $V \subset \subset U$;
			\item[(ii)] $u \in W^{1,\infty}_{\text{loc}}(U)$ and $u$ is an absolute minimizer of $\|\varphi^{*}(D(\cdot))\|_{L^{\infty}(V)}$ (i.e., \eqref{E: linfty min} holds) for any open $V \subset \subset U$; and
			\item[(iii)] $u \in CCA_{\varphi}(U) \cap CCB_{\varphi}(U)$.
		\end{itemize}\end{theorem*}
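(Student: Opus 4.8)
This equivalence is not new: for a symmetric norm it is Theorem~4.1 of \cite{aronsson_crandall_juutinen}, and the general nonsymmetric case follows from Theorem~4.8 of \cite{armstrong_crandall_julin_smart}, so in the body of the paper it will simply be quoted. For completeness I outline how the argument runs. The plan is to establish the equivalence of (i) and (iii), and of (ii) and (iii), separately; in each case the implication \emph{into} (iii) is proved by a cone-surgery argument, and the implication \emph{out of} (iii) by reconstructing quantitative minimality from comparison with cones.

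To see that (i) and (ii) each force (iii), I would argue by contradiction. Suppose comparison with cones from above fails, say: there are $V\subset\subset U$, $v\notin V$, and $\lambda>0$ with
\[
	\max_{\overline V}\bigl(u(x)-\lambda\varphi(x-v)\bigr)>\max_{\partial V}\bigl(u(x)-\lambda\varphi(x-v)\bigr)=:m .
\]
Set $W=\{x\in V:u(x)-\lambda\varphi(x-v)>m\}$, an open set with $\overline W$ a compact subset of $U$, on whose boundary $u=m+\lambda\varphi(\cdot-v)=:w$, and which contains a point $x_{0}$ at which $u-\lambda\varphi(\cdot-v)$ attains its maximum over $\overline V$. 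Travelling along the ray $s\mapsto v+se_{0}$, with $e_{0}=(x_{0}-v)/\varphi(x_{0}-v)$, and decreasing $s$ from $\varphi(x_{0}-v)$, let $y$ be the point at which the ray first exits $W$; since $u=w$ on $\partial W$ and $\varphi(e_{0})=1$, a direct computation gives $u(x_{0})-u(y)>\lambda\varphi(x_{0}-y)$, with $x_{0},y\in\overline W$ joined by a segment whose relative interior lies in $W$. I would then compare $u$ on $\overline W$ with the competitor $w$, which agrees with $u$ on $\partial W$: because $\varphi(x-v)-\varphi(y-v)\le\varphi(x-y)$ one has $\text{Lip}_{\varphi}(w;\overline W)\le\lambda$, and because $\varphi^{*}(p)=1$ for every $p\in\partial\varphi(z)$ with $z\ne0$, one has $\varphi^{*}(Dw)=\lambda$ almost everywhere in $W$; meanwhile the strict inequality at $(x_{0},y)$ forces $\text{Lip}_{\varphi}(u;\overline W)>\lambda$ immediately, and, upon integrating $\varphi^{*}(Du)$ along a full-dimensional family of rays issuing from $v$, also $\|\varphi^{*}(Du)\|_{L^{\infty}(W)}>\lambda$. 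Either conclusion contradicts absolute minimality of the relevant functional on $W\subset\subset U$. The failure of comparison with cones from below is handled symmetrically, using downward cones $b-\lambda\varphi(v-\cdot)$ and the ray through an interior minimizer.

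The converse, that (iii) implies (i) and (ii), is the substantive part. Its engine is a one-sided slope-monotonicity lemma: if $u\in CCA_{\varphi}(U)$, then for each $x$ the ratio
\[
	r\longmapsto\frac1r\max\bigl\{u(z)-u(x):\varphi(z-x)=r\bigr\}
\]
is nondecreasing over those $r$ for which $\{\varphi(\cdot-x)\le r\}\subseteq U$, with a dual statement for $u\in CCB_{\varphi}(U)$ phrased in terms of backward cones $\varphi(x-\cdot)$. Letting $r\to0^{+}$ identifies these limiting slopes with $\varphi^{*}(Du(x))$ at points of differentiability, and a chaining argument along near-$\varphi$-geodesics inside a subdomain upgrades this to the identity $\text{Lip}_{\varphi}(u;\overline V)=\text{Lip}_{\varphi}(u;\partial V)$ for every $V\subset\subset U$; then any competitor $w$ equal to $u$ on $\partial V$ obeys $\text{Lip}_{\varphi}(w;\overline V)\ge\text{Lip}_{\varphi}(w;\partial V)=\text{Lip}_{\varphi}(u;\partial V)=\text{Lip}_{\varphi}(u;\overline V)$, which is (i), and for (ii) one localizes further to small convex subdomains --- on which $\text{Lip}_{\varphi}$ and $\|\varphi^{*}(D\cdot)\|_{L^{\infty}}$ agree --- and propagates the resulting bounds. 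The hard part will be exactly this reconstruction in the absence of symmetry of $\varphi$: the forward balls $\{\varphi(\cdot-x)<r\}$ and backward balls $\{\varphi(x-\cdot)<r\}$ are different convex bodies, cones from above and from below are genuinely distinct objects, and the slope-monotonicity and chaining arguments, routine for symmetric norms as in \cite{aronsson_crandall_juutinen}, demand the careful bookkeeping of orientations carried out in \cite{armstrong_crandall_julin_smart}. A secondary, purely measure-theoretic point --- already visible in the cone-surgery step --- is the passage between essential suprema of $\varphi^{*}(Du)$ over lower-dimensional sets and over open sets, which is a routine Fubini argument.
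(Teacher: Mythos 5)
This statement is quoted in the paper as a known result (Theorem 4.1 of \cite{aronsson_crandall_juutinen} and Theorem 4.8 of \cite{armstrong_crandall_julin_smart}) and is given no proof there, so citing it is exactly what the paper does. Your sketch — cone surgery on the set $W$ where $u$ exceeds the cone for (i),(ii)$\Rightarrow$(iii), and slope monotonicity plus chaining for the converse — is a faithful outline of the standard argument of those references, so it is correct and essentially the same approach the paper relies on.
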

		
Using comparison with cones, it is possible to show that if $u$ is a $\varphi$-AMLE in $U$, then $u$ is $\varphi$-infinity harmonic.  More precisely, given an open set $U \subseteq \mathbb{R}^{d}$, let $\text{Sub}_{\varphi}(U)$ and $\text{Super}_{\varphi}(U)$ be the spaces of $\varphi$-infinity sub- and superharmonic functions in $U$:
	\begin{align}
		\text{Sub}_{\varphi}(U) &= \{u \in USC(U) \, \mid \, -G_{\varphi}^{*}(Du,D^{2}u) \leq 0 \, \, \text{in the viscosity sense in} \, \, U\}, \nonumber \\
		\text{Super}_{\varphi}(U) &= \{v \in LSC(U) \, \mid \, -G_{*}^{\varphi}(Dv,D^{2}v) \geq 0 \, \, \text{in the viscosity sense in} \, \, U\}. \label{E: supersolution class}
	\end{align}
Due to the works \cite{crandall_evans_gariepy, aronsson_crandall_juutinen, armstrong_crandall_julin_smart}, it is now well-known that functions that satisfy comparison with cones from above or below are respectively $\varphi$-infinity sub- or superharmonic:\footnote{$CCA_{\varphi}(U)$ and $CCB_{\varphi}(U)$ are also characterized by the so-called absolutely sub- and superminimizing properties, basically one-sided versions of (i) and (ii) in the previous theorem, see \cite{armstrong_crandall_julin_smart}.}
	\begin{equation} \label{E: first inclusion}
		CCA_{\varphi}(U) \subseteq \text{Sub}_{\varphi}(U), \quad CCB_{\varphi}(U) \subseteq \text{Super}_{\varphi}(U).
	\end{equation}
This leads naturally to the question: is every $\varphi$-infinity harmonic function a $\varphi$-AMLE?  

This question was posed twenty years ago in \cite{aronsson_crandall_juutinen}, where it was observed that the answer is yes if $\partial \varphi^{*}$ is single-valued (away from the origin).  The aforementioned works \cite{crandall_gunnarsson_wang} and \cite{m_souganidis} extended this to the discontinuous case under some additional assumptions on $\varphi$.  As one of the main applications of the present work, the next theorem resolves this question in full generality.

	\begin{theorem} \label{T: cone comparison} Let $\varphi$ be a Finsler norm in $\mathbb{R}^{d}$.  Then $\text{Sub}_{\varphi}(U) \subseteq CCA_{\varphi}(U)$ and $\text{Super}_{\varphi}(U) \subseteq CCB_{\varphi}(U)$ for any open set $U \subseteq \mathbb{R}^{d}$.   \end{theorem}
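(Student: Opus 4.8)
I will prove the two inclusions $\mathrm{Sub}_\varphi(U)\subseteq CCA_\varphi(U)$ and $\mathrm{Super}_\varphi(U)\subseteq CCB_\varphi(U)$ by contradiction, exploiting the $h\equiv\mathrm{Id}$ case of the cone comparison machinery alluded to in the introduction. By symmetry (replacing $u$ by $-u(-\cdot)$ and $\varphi$ by $\varphi(-\cdot)$) it suffices to treat the subsolution case. So suppose $u\in\mathrm{Sub}_\varphi(U)$ but $u\notin CCA_\varphi(U)$: there is an open $V\subset\subset U$, a point $z\in\mathbb R^d\setminus V$, and $\lambda>0$ so that
\[
  M:=\max_{x\in\overline V}\bigl(u(x)-\lambda\varphi(x-z)\bigr) > \max_{x\in\partial V}\bigl(u(x)-\lambda\varphi(x-z)\bigr).
\]
Then the set $K$ of maximizers is a nonempty compact subset of the open set $V$, and $u(x)-\lambda\varphi(x-z)\le M$ on $\overline V$ with equality exactly on $K$. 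Since $z\notin V$, the function $x\mapsto\varphi(x-z)$ has no issue at the origin inside $V$; but $\varphi$ need not be differentiable, so $\lambda\varphi(\cdot-z)$ is generally not an admissible $C^2$ test function. The whole point is to upgrade the cone $\lambda\varphi(\cdot-z)$ to something usable against the definition of $G_\varphi^*$.

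**Reduction to a contact at the boundary of a superlevel set.** The standard trick (going back to Crandall--Evans--Gariepy in the Euclidean case, and made to work for general norms in the quoted theorem of Armstrong--Crandall--Julin--Smart) is to slide the cone: for $\mu$ slightly larger than $\lambda$, consider $w_\mu(x)=u(x)-\mu\varphi(x-z)$. Because the strict inequality on $\partial V$ is stable, for $\mu$ close to $\lambda$ the maximum of $w_\mu$ over $\overline V$ is still attained at an interior point $x_\mu\in V$, and as $\mu\downarrow\lambda$ one has $x_\mu\to K$. Moreover one can arrange (by a further small perturbation, adding $\varepsilon|x-x_0|^2$ for a chosen $x_0\in K$, or simply by picking the point of $K$ farthest from $z$) that the contact point $x_\mu$ satisfies $\varphi(x_\mu-z)>0$ and, crucially, is an \emph{isolated} contact in an appropriate sense, or at least that near $x_\mu$ one has $u(x)\le u(x_\mu)+\mu\varphi(x-z)-\mu\varphi(x_\mu-z)$. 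This says precisely that the cone $x\mapsto u(x_\mu)+\mu\bigl(\varphi(x-z)-\varphi(x_\mu-z)\bigr)$ touches $u$ from above at $x_\mu$. The essential geometric content I need is: \emph{a cone $\mu\varphi(\cdot-z)$ can be touched from below at an interior point by a $C^2$ function whose gradient lies in the right relationship to $\partial\varphi^*$.}

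**The geometric construction — the hard part.** Here is where the novelty of the paper (the ``new geometric constructions'' promised in the abstract) must enter, and I expect this to be the main obstacle. Fix $p_0=$ a selection from $\partial\varphi(x_\mu-z)$ normalized so that $\langle p_0, x_\mu-z\rangle = \varphi(x_\mu-z)$ (Euler's identity), equivalently $(x_\mu-z)/\varphi(x_\mu-z)\in\partial\varphi^*(p_0)$. The cone $x\mapsto\mu\varphi(x-z)$ is convex, one-homogeneous in $x-z$, and its graph contains the ray through $x_\mu-z$ as a ``ruling''; I want to produce a smooth convex function $\psi$ with $\psi\le\mu\varphi(\cdot-z)$, $\psi(x_\mu)=\mu\varphi(x_\mu-z)$, $D\psi(x_\mu)=\mu p_0$, and such that the second-order behavior of $\psi$ at $x_\mu$ is controlled — ideally $D^2\psi(x_\mu)$ has the ``infinity-harmonic'' degeneracy in the direction $x_\mu-z$, mimicking the fact that exact cones are $\varphi$-infinity harmonic away from the vertex. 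Against such a $\psi$, the subsolution inequality $-G_\varphi^*(Du,D^2u)\le 0$ at $x_\mu$ gives $G_\varphi^*(\mu p_0, D^2\psi(x_\mu))\ge 0$, i.e. there is $q\in\partial\varphi^*(\mu p_0)=\partial\varphi^*(p_0)$ with $\langle D^2\psi(x_\mu)\,q,q\rangle\ge 0$. The construction must be arranged so that this forces a contradiction with $w_\mu$ having a strict interior maximum — typically by iterating along the ruling direction $q$ (an ``increasing slope estimate'' / the classical $u(x_\mu+t q)-u(x_\mu)\ge$ something argument), producing a chain of points marching toward $\partial V$ along which $w_\mu$ is nondecreasing, contradicting that $x_\mu$ is a strict max. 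Concretely I would (i) build $\psi$ by taking, in a cone of directions around $x_\mu-z$, a quadratic-in-transverse-variables, linear-in-radial-variable lower support of $\mu\varphi$, smoothing across the apex region is unnecessary since $z\notin V$; (ii) extract the direction $q\in\partial\varphi^*(p_0)$ and show $u(x_\mu+tq)\ge u(x_\mu)+t\mu\langle p_0,q\rangle/\varphi^*(p_0)\cdot(\dots)$ for small $t>0$ using the touching cone plus the equality in Euler's identity $\langle p_0,q\rangle=\varphi^*(p_0)\varphi(q)/\dots$; (iii) iterate to reach $\partial V$. The delicate points are that $\partial\varphi^*(p_0)$ may be a nondegenerate face (multivaluedness — exactly the discontinuity the paper is built to handle), so one must choose $q$ in that face wisely, and that $\psi$ must genuinely lie below the true cone globally enough to be a legal test function; I expect the paper handles this with a careful convex-analysis lemma characterizing when $h(\varphi)$-type barriers admit smooth one-sided supports with prescribed first and second order data. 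Once the iteration reaches $\overline{\partial V}$ with $w_\mu$ nondecreasing along the chain, we get $\sup_{\partial V} w_\mu\ge w_\mu(x_\mu)=\sup_{\overline V}w_\mu$, contradicting the strict inequality, and the proof is complete; the supersolution inclusion follows by the symmetry noted at the outset.
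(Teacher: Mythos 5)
There is a genuine gap, and it sits exactly where you flag ``the hard part.'' First, the touching direction in your construction is backwards. After sliding the cone, the relevant information is $u(x)\le u(x_\mu)+\mu\varphi(x-z)-\mu\varphi(x_\mu-z)$ near $x_\mu$, so to invoke the definition of viscosity subsolution at $x_\mu$ you need a smooth $\psi$ lying \emph{above} the cone ($\psi\ge\mu\varphi(\cdot-z)$, with equality at $x_\mu$); only then is $x_\mu$ a local maximum of $u-\psi$ and only then may you write $G_\varphi^*(D\psi(x_\mu),D^2\psi(x_\mu))\ge 0$. A $C^2$ function touching the cone from \emph{below}, as you request ($\psi\le\mu\varphi(\cdot-z)$, ``lower support''), gives no one-sided bound on $u-\psi$ and cannot be tested against $u$. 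Second, even with the correct direction, the existence of such a smooth upper support with controlled second-order data at an \emph{arbitrary} contact point is precisely the crux, and it fails in general: if $x_\mu-z$ lies over the relative boundary of a face $\partial\varphi^*(p_0)$ of $\{\varphi\le 1\}$, no smooth function above the cone touches it there with the needed degeneracy, and (as the paper notes) one cannot ``iterate to a finer exposed face'' because that finer face is typically not exposed. The paper's entire machinery exists to handle this: a $C^{1,1}$ approximation $\varphi_\zeta$ with the exact identity $\partial\varphi_\zeta^*(p)=\partial\varphi^*(p)+\zeta p/\|p\|$, nonsmooth conical test functions built over the faces, mollification of their graph parametrization, and a convex-analysis limit (normal cones, face identification, a cone-integral lemma) producing a triple $(x_*,X_*,q_{**})$ with $X_*q_{**}=0$ and a Hessian bound. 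Your sketch defers all of this to a lemma you ``expect'' to exist, which is the theorem's actual content, not a routine step.

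Your endgame also differs from the paper's and is unproven as stated: you propose an increasing-slope iteration along a direction $q\in\partial\varphi^*(p_0)$ marching to $\partial V$, but extracting monotonicity of $u$ along a ray from a single second-order viscosity inequality is not standard, and selecting $q$ when the face is multivalued is exactly the discontinuity problem; moreover you cannot borrow the usual comparison-on-punctured-balls route, since $\varphi$ is only known to be a supersolution in the unstable strong sense and no comparison theorem is yet available. The paper avoids any such iteration: it first reduces, via the symmetries and the transformation $u\mapsto u+\tfrac{\epsilon}{2}u^2$, to \emph{strict} subsolutions of $\tfrac{1}{2}\epsilon\varphi^*(Du)^2-G_\varphi^*(Du,D^2u)\le 0$ on a set avoiding the vertex, and then the identity $X_*q_{**}=0$ together with the bound $\varphi_\zeta D^2\varphi_\zeta\le c\,\zeta^{-1}$ and $\varphi^*(D\varphi_\zeta)\ge\kappa(\varphi)$ yields a quantitative contradiction in the limit $\zeta\to 0^+$, with no chain of contact points needed.
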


Combined with the complementary inclusions \eqref{E: first inclusion}, the theorem implies that a function $u$ is a $\varphi$-AMLE if and only if it is $\varphi$-infinity harmonic:
	
	\begin{corollary} In addition to the equivalences (i), (ii), and (iii) stated above, $\varphi$-AMLE are characterized by the following condition:
		\begin{itemize}
			\item[(iv)] $u \in \text{Sub}_{\varphi}(U) \cap \text{Super}_{\varphi}(U)$.
		\end{itemize}
	\end{corollary}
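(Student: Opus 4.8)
The plan is to prove Theorem~\ref{T: cone comparison}, since the Corollary is then immediate: combining $\text{Sub}_{\varphi}(U) \subseteq CCA_{\varphi}(U)$ and $\text{Super}_{\varphi}(U) \subseteq CCB_{\varphi}(U)$ with the reverse inclusions \eqref{E: first inclusion} yields $CCA_{\varphi}(U) = \text{Sub}_{\varphi}(U)$ and $CCB_{\varphi}(U) = \text{Super}_{\varphi}(U)$, hence $CCA_{\varphi}(U) \cap CCB_{\varphi}(U) = \text{Sub}_{\varphi}(U) \cap \text{Super}_{\varphi}(U)$, and condition (iv) is equivalent to (iii) via the quoted theorem of \cite{aronsson_crandall_juutinen, armstrong_crandall_julin_smart}. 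So the whole content is the new inclusion in the opposite direction from \eqref{E: first inclusion}.

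To prove $\text{Sub}_{\varphi}(U) \subseteq CCA_{\varphi}(U)$, I would take $u \in \text{Sub}_{\varphi}(U)$, an open $V \subset\subset U$, a pole $z \in \mathbb{R}^{d} \setminus V$, and $\lambda > 0$, and I must show $\max_{\overline V} (u(x) - \lambda \varphi(x-z)) = \max_{\partial V}(u(x) - \lambda\varphi(x-z))$. Suppose not: then $x \mapsto u(x) - \lambda \varphi(x - z)$ attains a strict interior maximum (after a standard perturbation argument — e.g. replace $\lambda$ by $\lambda + \varepsilon$ and/or add a small bump vanishing on $\partial V$, then send $\varepsilon \to 0$) at some $x_{0}$ in the open set $V$. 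The natural move is to use $h(\varphi(\cdot - z))$ with $h = \lambda\,\mathrm{Id}$ as a test function touching $u$ from above at $x_{0}$; if $\varphi$ were $C^{2}$ away from $0$ this is exactly the observation attributed to \cite{m_souganidis}, because $\varphi$ is $\varphi$-infinity harmonic and the cone $\lambda \varphi(\cdot - z)$ is then a legitimate $C^{2}$ test function with $G_{\varphi}^{*}(D(\lambda\varphi(\cdot-z))(x_0), D^2(\lambda\varphi(\cdot-z))(x_0)) = 0$, contradicting the subsolution inequality $-G_{\varphi}^{*}(Du, D^{2}u) \le 0$ unless the maximum is on the boundary — wait, one has to be careful about signs, but in any case the cone has the ``wrong curvature'' to sit above a genuine subsolution at an interior max point away from the pole. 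The real work, and the point of the paper, is that $\varphi$ is \emph{not} assumed smooth, so $\lambda\varphi(\cdot - z)$ is merely Lipschitz and generally not twice differentiable, hence not a classical test function at all.

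The approach I would take to overcome the non-smoothness is to invoke the cone comparison machinery that the paper advertises as its technical core: the ``generalized cone comparison principles'' governing local maxima of $u(x) - h(\varphi(x-z))$ for $u$ a subsolution of the $G_{\varphi}^{*}$-inequality and $h$ smooth nondecreasing. Concretely, I expect the argument to run: (1) reduce, as above, to the case of a strict interior local maximum of $u(\cdot) - \lambda\varphi(\cdot - z)$ at $x_{0} \in V$, with $x_0 \ne z$ automatically since $\varphi(x-z)$ has a strict minimum at $z$ while $u$ is bounded; (2) set $w(x) = \varphi(x - z)$, note $u - \lambda w$ has a max at $x_0$, and apply the geometric construction of the paper — which replaces the would-be $C^{2}$ test function by an approximation scheme or by directly exploiting convexity of $\varphi^{*}$ and the structure of $\partial\varphi^{*}$ — to extract a ``viscosity-type'' inequality at $x_0$ of the form $G_{\varphi}^{*}(Du(x_0), D^2 u(x_0))\ge$ (something $\ge 0$) from the fact that the cone opens in the direction that makes it $\varphi$-infinity superharmonic-like; (3) combine with $-G_{\varphi}^{*}(Du, D^2 u) \le 0$ to reach a contradiction. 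Since the paper states these cone comparison results are proved ``below,'' the cleanest exposition is to deduce Theorem~\ref{T: cone comparison} as a corollary of those results specialized to $h = \lambda\,\mathrm{Id}$, deferring the hard analysis to the dedicated sections. The dual statement $\text{Super}_{\varphi}(U) \subseteq CCB_{\varphi}(U)$ follows by the symmetric argument applied to $-u(-\cdot)$ or directly from the lower cone comparison principle with $h = \lambda\,\mathrm{Id}$, using that $\varphi(v - x)$ as a function of $x$ is $-\varphi$ composed with translation and reflection, so the two cases are interchanged by $v \mapsto -v(z - \cdot)$.

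The main obstacle, as the introduction itself flags, is step (2): making sense of ``$\lambda\varphi(\cdot - z)$ as a test function'' when $\varphi$ is an arbitrary Finsler norm — not $C^2$, possibly with corners and flat pieces, and with $\partial\varphi^{*}$ genuinely multivalued. One cannot simply mollify $\varphi$, because a smooth approximation $\varphi_\varepsilon$ need not be $\varphi$-infinity harmonic (nor $\varphi_\varepsilon$-infinity harmonic in a way compatible with the frozen operator $G_{\varphi}^{*}$), and the discontinuity of $G_{\varphi}^{*}$ in the gradient variable means semicontinuous-envelope subtleties are unavoidable precisely at the points where $\partial\varphi^{*}$ jumps. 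Resolving this is exactly what the paper's new geometric constructions plus convex-analytic tools (Legendre duality, properties of $\partial\varphi$ and $\partial\varphi^{*}$, support functions of the unit balls $\{\varphi \le 1\}$ and $\{\varphi^{*} \le 1\}$) are designed to do, and it is where essentially all the difficulty lies; the reduction to a strict interior maximum in step (1) and the duality swap for the CCB statement are routine.
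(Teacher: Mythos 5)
Your derivation of the Corollary is exactly the paper's: it follows immediately by combining Theorem \ref{T: cone comparison} with the opposite inclusions \eqref{E: first inclusion} to get $CCA_{\varphi}(U)\cap CCB_{\varphi}(U)=\text{Sub}_{\varphi}(U)\cap\text{Super}_{\varphi}(U)$, and then invoking the equivalence of (iii) with the AMLE property from \cite{aronsson_crandall_juutinen,armstrong_crandall_julin_smart}. Your sketch of Theorem \ref{T: cone comparison} itself also matches the paper's route (reduce to strict subsolutions and defer to the generalized cone comparison machinery for nonsmooth $\varphi$, proved via the $C^{1,1}$ approximation $\varphi_{\zeta}$ and conical test functions), so the proposal is correct and essentially the same approach.
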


See Section \ref{S: intro discussion of proof} for a discussion of the proof.
	
Finally, in addition to a PDE characterization of $\varphi$-absolutely minimizing functions, Theorem \ref{T: cone comparison} can be combined with previous results of Armstrong and Smart \cite{armstrong_smart_easy_proof} and \cite{armstrong_crandall_julin_smart} to establish a comparison result for the Dirichlet problem for the Finsler infinity Laplacian.  Indeed, it suffices to recall the following comparison theorem for functions in $CCA_{\varphi}(U)$ and $CCB_{\varphi}(U)$:

	\begin{theorem*}[Main result of \cite{armstrong_smart_easy_proof} and Theorem 2.1 in \cite{armstrong_crandall_julin_smart}] \label{T: old comparison} Let $\varphi$ be a Finsler norm.  Given any bounded open set $U \subseteq \mathbb{R}^{d}$, if $u \in USC(U)$ and $v \in LSC(U)$ satisfy
		\begin{equation*}
			u \in CCA_{\varphi}(U) \quad \text{and} \quad v \in CCB_{\varphi}(U),
		\end{equation*}
	then
		\begin{equation} \label{E: comparison of AMLE}
			\max\left\{ u(x) - v(x) \, \mid \, x \in \overline{U} \right\} = \max \left\{ u(x) - v(x) \, \mid \, x \in \partial U\right\}.
		\end{equation}
	In particular, given any uniformly Lipschitz $g \in C(\partial U)$, there is a unique $\varphi$-AMLE minimizer of \eqref{E: minimal lipschitz extension}.
	\end{theorem*}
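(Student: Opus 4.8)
The plan is to establish \eqref{E: comparison of AMLE} by adapting the ``easy proof'' of Armstrong and Smart \cite{armstrong_smart_easy_proof} (in the nonsymmetric form of \cite{armstrong_crandall_julin_smart}), and then to deduce existence and uniqueness of the $\varphi$-AMLE from it via Perron's method. The only input is Definition~\ref{D: cone comparison}, applied on punctured Finsler balls. For the comparison principle one argues by contradiction: subtracting a constant, assume $\max_{\overline{U}}(u - v) = M > 0 = \max_{\partial U}(u - v)$, so that the contact set $K := \{x \in \overline{U} : u(x) - v(x) = M\}$ is a nonempty compact subset of the \emph{open} set $U$; the goal is to produce a point of $K$ from which one can propagate out to $\partial U$ along a path on which $u - v$ never falls below $M$, contradicting $u - v \le 0$ there.

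The engine is a pair of monotone one-sided Finsler slopes. For $x \in U$ and $r > 0$ with $\{y : \varphi(y-x) \le r\} \subseteq U$, set $S^{+}_{u}(x,r) := r^{-1}\max\{u(y) - u(x) : \varphi(y - x) = r\}$. Applying comparison with cones from above to $u$ on the punctured ball $\{0 < \varphi(\,\cdot\, - x) < r\}$ with vertex $x$ and cone slope $S^{+}_{u}(x,r)$ shows that the right-hand side of the defining identity equals $u(x)$, hence $u \le u(x) + S^{+}_{u}(x,r)\,\varphi(\,\cdot\, - x)$ on $\{\varphi(\,\cdot\, - x) \le r\}$; feeding in a smaller radius $s \in (0,r)$ then forces $S^{+}_{u}(x,s) \le S^{+}_{u}(x,r)$, so $r \mapsto S^{+}_{u}(x,r)$ is nondecreasing, with limit $S^{+}_{u}(x)$. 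The symmetric construction applied to $v \in CCB_{\varphi}(U)$ — using comparison with cones from below on $\{0 < \varphi(x - \,\cdot\,) < r\}$ with vertex $x$ — produces a nondecreasing $r \mapsto S^{-}_{v}(x,r) := r^{-1}\max\{v(x) - v(y) : \varphi(x - y) = r\}$, its limit $S^{-}_{v}(x)$, and the lower cone bound $v \ge v(x) - S^{-}_{v}(x,r)\,\varphi(x - \,\cdot\,)$ on $\{\varphi(x - \,\cdot\,) \le r\}$. Together with semicontinuity of $u$ and $v$, the monotonicity upgrades $S^{+}_{u}$ and $S^{-}_{v}$ to upper semicontinuous functions on $U$.

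The crux is to combine these at a point $x_{0} \in K$. Since $u - v \le M$ with equality at $x_{0}$, the increments over each Finsler sphere about $x_{0}$ are ordered, $u(y) - u(x_{0}) \le v(y) - v(x_{0})$; using this ordering together with the cone bounds and the monotonicity of the slopes, one produces a point $x_{1}$, a fixed $\varphi$-distance $r$ from $x_{0}$, that again lies in $K$ and at which $S^{+}_{u}$ has not decreased — heuristically, a direction along which $u$ increases at its maximal rate must, by the ordering, be one along which $v$ increases at least as fast, so that the value $M$ of $u - v$ propagates. Iterating yields an increment chain $(x_{k}) \subseteq K$ with $\varphi(x_{k+1} - x_{k}) = r$ for all $k$, along which $u - v \equiv M$; but $K$ is a bounded subset of $U$, so the chain cannot remain in $U$ and must reach $\partial U$, where $u - v \le 0 < M$ — a contradiction proving \eqref{E: comparison of AMLE}. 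I expect the production of $x_{1}$ to be the principal obstacle: one must align the forward extremal direction of $u$ with that of $v$ while contending with the asymmetry $\varphi(y - x_{0}) \ne \varphi(x_{0} - y)$, which is exactly the technical heart of \cite{armstrong_crandall_julin_smart}; the slope monotonicity and semicontinuity from the preceding step are comparatively routine once the correct cones and punctured balls have been chosen.

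Finally, existence and uniqueness of the $\varphi$-AMLE. Let $L := \text{Lip}_{\varphi}(g; \partial U) < \infty$ — this is where uniform Lipschitz continuity of $g$ is used — and form the lower and upper cone extensions $\underline{g}(x) = \sup_{z \in \partial U}(g(z) - L\varphi(z - x))$ and $\overline{g}(x) = \inf_{z \in \partial U}(g(z) + L\varphi(x - z))$. These are continuous on $\overline{U}$, agree with $g$ on $\partial U$, satisfy $\underline{g} \le \overline{g}$, and — being, respectively, a supremum and an infimum of cones with vertices on $\partial U$, hence of functions that are $\varphi$-infinity harmonic throughout $U$ — lie in $CCA_{\varphi}(U)$ and $CCB_{\varphi}(U)$. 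Run Perron's method on the family $\mathcal{F} := \{w \in CCA_{\varphi}(U) : w \le \overline{g} \text{ in } U, \ w = g \text{ on } \partial U\}$, which contains $\underline{g}$ and is stable under upper-semicontinuous suprema because suprema commute with the maxima in Definition~\ref{D: cone comparison}; the supremum $u := \sup \mathcal{F}$ therefore lies in $CCA_{\varphi}(U)$ and is squeezed between the two cone extensions, so it is continuous on $\overline{U}$ with $u = g$ on $\partial U$. The standard Perron bump argument — if $u$ failed to be $\varphi$-infinity superharmonic at some point, it could be raised slightly along a cone from below while remaining in $\mathcal{F}$, the upper bound $u \le \overline{g}$ being preserved because $\overline{g}$ is $\varphi$-infinity superharmonic — then shows $u \in CCB_{\varphi}(U)$ as well. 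Hence $u \in CCA_{\varphi}(U) \cap CCB_{\varphi}(U)$, so by the equivalence of conditions (i) and (iii) recalled above $u$ is a $\varphi$-AMLE; and since any two $\varphi$-AMLE with boundary data $g$ lie in $CCA_{\varphi}(U) \cap CCB_{\varphi}(U)$ by the same equivalence, applying \eqref{E: comparison of AMLE} in both orderings forces them to coincide.
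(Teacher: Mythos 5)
This statement is not proved in the paper at all: it is imported verbatim from \cite{armstrong_smart_easy_proof} and \cite{armstrong_crandall_julin_smart}, so there is no in-paper argument to compare against, and your outline is by design a reconstruction of those references' strategy (monotone one-sided Finsler slopes, an increment-chain argument at a contact point, then Perron's method for existence). Judged on its own terms, however, the proposal has a genuine gap exactly where you flag one: the production of the next point $x_{1}$ — aligning the forward extremal direction of $u$ with the backward extremal direction of $v$ so that the contact value propagates, despite $\varphi(y-x_{0})\neq\varphi(x_{0}-y)$ — is left entirely heuristic, and that step \emph{is} the content of the cited comparison theorem; without it the argument is a restatement of the goal, not a proof.

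There is also a flaw in the concluding contradiction as you state it. Your chain $(x_{k})$ is constructed to lie in the contact set $K$, which is a compact subset of the open set $U$, so nothing forces it to ``reach $\partial U$'': a sequence of steps of fixed $\varphi$-length can wander inside a bounded set indefinitely. The actual Armstrong--Smart mechanism is different: along the chain the slope $S^{+}_{u}$ is nondecreasing, so $u$ increases by at least $r\,S^{+}_{u}(x_{0})$ at every step; when this slope is positive, boundedness of $u$ on $\overline{U}$ forces the chain to exit the $r$-interior of $U$ after finitely many steps, where the boundary inequality $u-v\le 0$ is contradicted, and the degenerate case $S^{+}_{u}(x_{0})=0$ (together with the corresponding statement for $S^{-}_{v}$) must be treated separately by a direct comparison at $x_{0}$. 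In the nonsymmetric setting, the bookkeeping needed to run this with two different norms $\varphi(\cdot)$ and $\varphi(-\,\cdot)$ is precisely what Theorem 2.1 of \cite{armstrong_crandall_julin_smart} supplies. The Perron half of your proposal is standard (cf. Section 4 of \cite{aronsson_crandall_juutinen}) and plausible modulo the usual verifications, but it invokes \eqref{E: comparison of AMLE} for uniqueness, so the unproved crux above is the real obstruction.
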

	
Combining the previous two theorems, one deduces that, for any bounded open set $U \subseteq \mathbb{R}^{d}$, if $u \in USC(\overline{U})$ and $v \in LSC(\overline{U})$ satisfy
	\begin{equation*}
		- G_{\varphi}^{*}(Du,D^{2}u) \leq 0 \quad \text{in} \, \, U, \quad -G_{*}^{\varphi}(Dv,D^{2}v) \geq 0 \quad \text{in} \, \, U,
	\end{equation*}
then
	\begin{equation*}
		\max \left\{ u(x) - v(x) \, \mid \, x \in \overline{U} \right\} = \max \left\{ u(x) - v(x) \, \mid \, x \in \partial U \right\}.
	\end{equation*}
This proves that, for any $g \in C(\partial U)$, there is exactly one $u \in C(\overline{U})$ such that 
	\begin{equation*}
		-G_{\varphi}^{*}(Du,D^{2}u) \leq 0 \quad \text{and} \quad -G_{*}^{\varphi}(Du,D^{2}u) \geq 0 \quad \text{in} \, \, U, \quad u = g \quad \text{on} \, \, \partial U.
	\end{equation*}
Existence of such a $u$ is well-known; what is new here is uniqueness in the class of viscosity solutions.  Of course, if $g$ is uniformly Lipschitz on $\partial U$, this solution $u$ is the $\varphi$-AMLE discussed above.

\begin{remark} As one might expect, the approach of the present paper can be used in tandem with the proof strategy of Barles and Busca \cite{barles_busca} to derive a PDE proof of \eqref{E: comparison of AMLE}, see Remark \ref{R: barles busca} below. \end{remark}

\subsection{The Principal Infinity Eigenvalue} \label{S: eigenvalue intro} The next application of this work concerns the well-definedness of the principal infinity eigenvalue.  The problem of interest is the following PDE involving a (nonlinear) eigenvalue $\Lambda$:
		\begin{equation} \label{E: finsler infinity eigenvalue}
		\left\{ \begin{array}{r l}	
			\min\{\varphi^{*}(Du) - \Lambda u, -G_{\varphi}^{*}(Du,D^{2}u)\} \leq 0 & \text{in} \, \, U, \\
			\min\{\varphi^{*}(Du) - \Lambda u, -G_{*}^{\varphi}(Du,D^{2}u)\} \geq 0 & \text{in} \, \, U, \\
			u = 0  & \text{on} \, \, \partial U.
		\end{array} \right.
	\end{equation}

	\begin{definition} Given any bounded open set $U \subseteq \mathbb{R}^{d}$, a constant $\Lambda \in \mathbb{R}$ is said to be the \emph{principal} $\varphi$-\emph{infinity eigenvalue of} $U$ if there is at least one $u \in C(\overline{U})$ satisfying \eqref{E: finsler infinity eigenvalue} in the viscosity sense. \end{definition}
	
If $u$ is a solution of \eqref{E: finsler infinity eigenvalue}, then so is $\lambda u$ for any $\lambda > 0$.  It should be stressed that there may be other solutions, however, as was proved by Hynd, Smart, and Yu \cite{hynd_smart_yu} in the Euclidean setting. 
	
Nonetheless, the principle eigenvalue $\Lambda$ is well-defined.  Juutinen, Lindqvist, and Manfredi \cite{juutinen_lindqvist_manfredi}, Belloni, Kawohl, and Juutinen \cite{belloni_kawohl_juutinen}, and \cite{m_souganidis} established that, for certain choices of $\varphi$, the principal $\varphi$-infinity eigenvalue is determined by the $L^{\infty}$-variational formula
	\begin{equation} \label{E: variational formula}
		\Lambda_{\infty}^{\varphi}(U) = \min \left\{ \|\varphi^{*}(Dv)\|_{L^{\infty}(U)} \, \mid \, v \in W^{1,\infty}_{0}(U), \, \, \|v\|_{L^{\infty}(U)} \leq 1 \right\}.
	\end{equation}
As a consequence of the comparison results proved in this work, the proof of \cite{belloni_kawohl_juutinen} can be extended to arbitrary Finsler norms.

	\begin{theorem} \label{T: uniqueness infinity eigenvalue} Let $\varphi$ be a Finsler norm in $\mathbb{R}^{d}$.  Given any bounded open set $U \subseteq \mathbb{R}^{d}$, if there is a $\Lambda \in \mathbb{R}$ and a $u \in C(\overline{U})$ such that \eqref{E: finsler infinity eigenvalue} holds, then $\Lambda = \Lambda_{\infty}^{\varphi}(U)$ as in \eqref{E: variational formula}, $u \in W^{1,\infty}_{0}(U)$, and $\|u\|_{L^{\infty}(U)}^{-1} u$ attains the minimum in \eqref{E: variational formula}.\end{theorem}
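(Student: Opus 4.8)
The plan is to follow the strategy of \cite{belloni_kawohl_juutinen}, now made available in full generality by the cone comparison principle of Theorem \ref{T: cone comparison}. The argument splits into two halves: (a) any eigenfunction $u$ satisfies $\|\varphi^{*}(Du)\|_{L^{\infty}(U)} \le \Lambda$, which forces $\Lambda \ge \Lambda_{\infty}^{\varphi}(U)$ after normalizing $\|u\|_{L^{\infty}(U)} = 1$; and (b) the reverse inequality $\Lambda \le \Lambda_{\infty}^{\varphi}(U)$, obtained by testing the subsolution inequality against a suitable cone built from a point where $u$ attains its maximum.

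For part (a), I would first note that the subsolution inequality $-G_{\varphi}^{*}(Du, D^{2}u) \le 0$ in $U$ together with $u = 0$ on $\partial U$ means $u \in \text{Sub}_{\varphi}(U)$, so by Theorem \ref{T: cone comparison}, $u \in CCA_{\varphi}(U)$; symmetrically $u \in CCB_{\varphi}(U)$. Hence, by the equivalences recalled above, $u$ is a $\varphi$-AMLE in $U$, in particular $u \in W^{1,\infty}_{\text{loc}}(U)$ and $u$ minimizes $\|\varphi^{*}(D(\cdot))\|_{L^{\infty}(V)}$ on every $V \subset\subset U$. Next I would use the first line of \eqref{E: finsler infinity eigenvalue} together with the second: wherever $\varphi^{*}(Du) - \Lambda u > 0$ in the viscosity sense (which, by the $CCA_\varphi$ property and a standard slope estimate, controls the local Lipschitz constant of $u$ relative to $u$ itself), one gets $-G_{*}^{\varphi}(Du,D^{2}u) \ge 0$, i.e.\ $u$ is $\varphi$-infinity superharmonic there. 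The key point, exactly as in \cite{belloni_kawohl_juutinen}, is that comparison with cones from above for an infinity-superharmonic function yields the local slope bound, and the $\min$ structure of \eqref{E: finsler infinity eigenvalue} then propagates this bound up to the constant $\Lambda$; concretely, the upper $\varphi$-slope $S_{\varphi}^{+}(x) = \limsup_{y \to x} \varphi^{*}(\ldots)$ of $u$ is monotone nonincreasing along the relevant maximizing rays and is controlled by $\Lambda u / u$ near the maximum point, giving $\varphi^{*}(Du) \le \Lambda \|u\|_{L^{\infty}(U)}$ a.e. Normalizing so that $\|u\|_{L^{\infty}(U)} = 1$ and noting $u \in W^{1,\infty}_0(U)$ (the global bound and boundary condition promote local Lipschitz to global), this exhibits $u$ as a competitor in \eqref{E: variational formula}, whence $\Lambda_{\infty}^{\varphi}(U) \le \|\varphi^{*}(Du)\|_{L^{\infty}(U)} \le \Lambda$.

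For part (b), pick $x_{0} \in U$ with $u(x_{0}) = \|u\|_{L^{\infty}(U)} =: M > 0$ (positivity of $M$ follows since $u \equiv 0$ cannot satisfy the strict parts of \eqref{E: finsler infinity eigenvalue} unless $\Lambda$ is forced to a value consistent with the claim, and in the degenerate case $u \equiv 0$ the statement is checked directly). Let $R = \sup\{\varphi(x_{0} - y) : y \in \partial U\}$ be the $\varphi$-inradius seen from $x_{0}$, and consider the cone $\psi(x) = M - \Lambda_{\infty}^{\varphi}(U)\,\varphi(x - x_{0})$. On $\partial U$ one has $\psi \ge M - \Lambda_\infty^\varphi(U) R$; the definition of $\Lambda_\infty^\varphi(U)$ via \eqref{E: variational formula} gives $\Lambda_\infty^\varphi(U) R \ge M$ (taking any admissible $v$ and integrating along a $\varphi$-geodesic ray from $x_0$ to the boundary), so $\psi \le 0 \le u$ on $\partial U$ — wait, this goes the wrong way, so instead I would use $u \in CCB_\varphi$ applied on $U \setminus \{x_0\}$ with the supersolution inequality to deduce $\varphi^*(Du)(x_0^-) \ge M/R$ in the viscosity sense, and then feed this into the first line $\varphi^{*}(Du) - \Lambda u \le 0$ evaluated (in the appropriate viscosity/slope sense) at $x_{0}$, where $u(x_0) = M$, to conclude $M/R \le \Lambda M$, i.e.\ $\Lambda \ge 1/R$; combined with the fact that the constant function normalization identifies $1/R$ with $\Lambda_\infty^\varphi(U)$ on the ball and monotonicity gives $\Lambda \ge \Lambda_\infty^\varphi(U)$. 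Together with part (a) this yields $\Lambda = \Lambda_{\infty}^{\varphi}(U)$, and the chain of inequalities in (a) collapses to equalities, so $M^{-1}u$ attains the minimum in \eqref{E: variational formula} and $u \in W^{1,\infty}_{0}(U)$.

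The main obstacle I anticipate is the bookkeeping around the nonsmooth $\varphi$ in the slope/cone estimates: the classical proof of \cite{belloni_kawohl_juutinen} leans on the Euclidean structure to relate $|Du|$, comparison with cones, and the eigenvalue equation pointwise, whereas here $\varphi^{*}$ is merely a norm and the Finsler infinity Laplacian is discontinuous. The correct substitute is to work with the one-sided $\varphi$-difference quotients $\big(u(x) - u(y)\big)/\varphi(x-y)$ and invoke the $CCA_\varphi$/$CCB_\varphi$ monotonicity of these quotients along $\varphi$-segments (a consequence of Theorem \ref{T: cone comparison} and the definitions), rather than manipulating $Du$ directly; the $\min$ in \eqref{E: finsler infinity eigenvalue} must be handled by the dichotomy "either the eigenvalue term is the binding one, in which case one reads off the slope bound, or the infinity-Laplacian term binds, in which case $u$ is locally $\varphi$-infinity (super/sub)harmonic and cone comparison applies." Making the transition between the viscosity formulation at $x_0$ and the global $W^{1,\infty}$ competitor rigorous — in particular upgrading local Lipschitz regularity from Theorem \ref{T: cone comparison} to membership in $W^{1,\infty}_0(U)$ using $u=0$ on $\partial U$ — is the other point requiring care, though it is routine given the AMLE characterization.
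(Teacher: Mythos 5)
There is a genuine gap, and it sits at the very first step of your part (a). The subsolution half of \eqref{E: finsler infinity eigenvalue} is $\min\{\varphi^{*}(Du) - \Lambda u,\, -G_{\varphi}^{*}(Du,D^{2}u)\} \leq 0$; at a touching point this only forces $-G_{\varphi}^{*}(D\phi,D^{2}\phi) \leq 0$ when the other term happens to be positive, so it does \emph{not} give $u \in \text{Sub}_{\varphi}(U)$ (a $\min \leq 0$ does not decouple, unlike a $\min \geq 0$). Indeed, if your deduction were valid, then $u \in \text{Sub}_{\varphi}(U) \cap \text{Super}_{\varphi}(U)$ with $u = 0$ on $\partial U$, and the comparison principle for $\varphi$-infinity harmonic functions (Theorem \ref{T: cone comparison} combined with \eqref{E: comparison of AMLE}, comparing $u$ with the zero function in both directions) would force $u \equiv 0$: your argument proves that every eigenfunction is trivial. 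Equivalently, a nontrivial eigenfunction is never a $\varphi$-AMLE of its (zero) boundary values, so the whole of part (a) — deriving the slope bound $\varphi^{*}(Du) \leq \Lambda\|u\|_{L^{\infty}}$ from the absolutely minimizing property — collapses. What does decouple is the supersolution half: $\min\{\varphi^{*}(Du)-\Lambda u,\,-G_{*}^{\varphi}(Du,D^{2}u)\}\geq 0$ gives both $\varphi^{*}(Du)\geq \Lambda u$ and $u \in \text{Super}_{\varphi}(U)$, and the paper's proof is precisely the remark that the argument of \cite{belloni_kawohl_juutinen} goes through verbatim once one knows $\text{Super}_{\varphi}(U) \subseteq CCB_{\varphi}(U)$, which is supplied by Theorem \ref{T: cone comparison}; the subsolution inequality enters only pointwise, through the dichotomy at the specific test points where the gradient term is large, never as a global subharmonicity statement.

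Part (b) has further problems even taken on its own terms. The radius you introduce, $R = \sup\{\varphi(x_{0}-y) : y \in \partial U\}$, is a supremum over the boundary, whereas the relevant quantity is the $\varphi$-distance from $x_{0}$ to $\partial U$ (an infimum), maximized over $x_{0}$, i.e.\ the $\varphi$-inradius; the sign reversal you notice mid-argument is not actually repaired, since the patched step feeds a slope lower bound into the subsolution inequality ``at $x_{0}$'', where (taking the constant test function at an interior maximum) that inequality reads $\min\{-\Lambda u(x_{0}),\,0\} \leq 0$ and carries no information; and the identification of $\Lambda_{\infty}^{\varphi}(U)$ with the reciprocal inradius is asserted rather than proved. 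If you want a self-contained proof rather than the paper's citation of \cite{belloni_kawohl_juutinen}, you must redo those estimates (positivity of $u$, the lower bound of $u$ in terms of the $\varphi$-distance to $\partial U$, the Lipschitz bound, and the inradius characterization of \eqref{E: variational formula}) using only comparison with cones from below together with the pointwise use of the $\min$, which is exactly where the work lies.
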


Indeed, observe that if $u$ is a solution of the eigenvalue problem \eqref{E: finsler infinity eigenvalue}, then $u \in \text{Sup}_{\varphi}(U)$.  Revisiting the proof of \cite{belloni_kawohl_juutinen}, the one detail that is needed to extend to arbitrary $\varphi$ is precisely the inclusion $\text{Sup}_{\varphi}(U) \subseteq CCB_{\varphi}(U)$.  Since that is provided by Theorem \ref{T: cone comparison} above, Theorem \ref{T: uniqueness infinity eigenvalue} follows.
		
\subsection{Tug-of-war Games} \label{S: intro tug-of-war} In addition to $L^{\infty}$-variational problems, the Finsler infinity Laplace equation arises naturally in the analysis of so-called stochastic tug-of-war games.

For instance, the solution of the Dirichlet problem \eqref{E: finsler dirichlet with zeroth order} describes the asymptotics of a certain two-player game with running cost $f$ and random times determined by $c$.  The description of this game can be found in Section \ref{S: tug of war}.  On a purely analytical level, after diffusive rescaling, the value function $u^{\epsilon}$ of the game is the solution of the following finite-difference equation
	\begin{equation} \label{E: stochastic game}
		u^{\epsilon} + \epsilon^{-2} c(x) (u - \mathcal{M}^{\varphi}_{\epsilon}u^{\epsilon}) = f(x) \quad \text{in} \, \, U_{\epsilon}, \quad u^{\epsilon} = G \quad \text{in} \, \, \overline{U} \setminus U_{\epsilon},
	\end{equation}
where $G$ is some terminal payout, $U_{\epsilon} \subseteq U$ is a suitably defined approximation of $U$ (defined in Section \ref{S: tug of war}), and $\mathcal{M}_{\epsilon}^{\varphi}$ is the finite-difference operator defined by 
	\begin{align*}
		\mathcal{M}^{\varphi}_{\epsilon}u(x) = \frac{1}{2} \sup \left\{ u(y) \, \mid \, \varphi(y - x) < \epsilon \right\} + \frac{1}{2} \inf \left\{ u(y) \, \mid \, \varphi(x -y) < \epsilon \right\}.
	\end{align*}

The study of tug-of-war games was initiated by Peres, Schramm, Sheffield, and Wilson \cite{peres_schramm_sheffield_wilson}; see the books by Parviannen \cite{parviainen} and Lewicka \cite{lewicka} for introductions to the topic.  When $\partial \varphi^{*}$ is single-valued (hence $G_{\varphi}^{*} \equiv G_{*}^{\varphi}$ away from zero), well-known techniques apply to show that $u^{\epsilon} \to u$ as $\epsilon \to 0^{+}$, where $u$ is a solution of \eqref{E: finsler dirichlet with zeroth order}.  The next theorem shows this remains true for an arbitrary choice of $\varphi$.

	\begin{theorem} \label{T: tug of war convergence} Let $\varphi$ be a Finsler norm in $\mathbb{R}^{d}$ and assume that $c : \mathbb{R}^{d} \to [0,+\infty)$ is such that $\sqrt{c}$ is uniformly Lipschitz continuous.  Given a bounded open set $U \subseteq \mathbb{R}^{d}$, $f \in C(\overline{U})$, and $g \in C(\partial U)$, let $(u^{\epsilon})_{\epsilon > 0}$ denote the solutions of \eqref{E: stochastic game} with boundary condition $G : \overline{U} \to \mathbb{R}$ an arbitrary continuous extension of $g$, and let $u$ denote the solution of \eqref{E: finsler dirichlet with zeroth order} with $u = g$ on $\partial U$.  Then $u^{\epsilon} \to u$ uniformly in $\overline{U}$ as $\epsilon \to 0^{+}$.   \end{theorem}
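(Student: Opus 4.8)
\emph{Proof proposal.} The plan is to run the half-relaxed limits method of Barles--Perthame--Souganidis for the monotone scheme \eqref{E: stochastic game}, using the comparison principle of Theorem~\ref{T: comparison dirichlet} to pin down the limit; since the latter is already in hand, what remains is to verify stability, consistency, and attainment of the boundary data. I would first record the uniform bound $\|u^{\epsilon}\|_{L^{\infty}(\overline{U})} \le M := \max\{\|f\|_{L^{\infty}(U)}, \|G\|_{L^{\infty}(\overline{U})}\}$: since $\mathcal{M}_{\epsilon}^{\varphi}$ fixes constants and the map $w \mapsto w(x) + \epsilon^{-2}c(x)\bigl(w(x) - \mathcal{M}_{\epsilon}^{\varphi}w(x)\bigr)$ is monotone (nonincreasing in the values $w(y)$, $y \ne x$) and strictly increasing in $w(x)$, the constants $\pm M$ are a super- and a subsolution of \eqref{E: stochastic game}, and the discrete comparison principle for the scheme gives the bound. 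Hence the half-relaxed limits
\[
  \overline{u}(x) := \limsup_{\epsilon \to 0^{+},\, y \to x} u^{\epsilon}(y), \qquad \underline{u}(x) := \liminf_{\epsilon \to 0^{+},\, y \to x} u^{\epsilon}(y)
\]
are well-defined, with $\overline{u} \in USC(\overline{U})$, $\underline{u} \in LSC(\overline{U})$, and $-M \le \underline{u} \le \overline{u} \le M$ on $\overline{U}$.

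Next I would establish the interior consistency. For $\phi \in C^{3}$ a Taylor expansion of $\mathcal{M}_{\epsilon}^{\varphi}\phi(x)$ together with a first-order perturbation argument shows
\[
  \mathcal{M}_{\epsilon}^{\varphi}\phi(x) - \phi(x) = \tfrac{\epsilon^{2}}{4}\bigl( G_{\varphi}^{*}(D\phi(x), D^{2}\phi(x)) + G_{*}^{\varphi}(D\phi(x), D^{2}\phi(x)) \bigr) + o(\epsilon^{2}),
\]
the first-order contributions $\pm\epsilon\varphi^{*}(D\phi(x))$ to the $\sup$ and $\inf$ cancelling, while the $\epsilon^{2}$-corrections are the optimizations of $q \mapsto \langle D^{2}\phi(x)q, q\rangle$ over the optimal face $\partial\varphi^{*}(D\phi(x))$ -- the $\sup$ selecting the maximizing point (so $G_{\varphi}^{*}$ appears) and the $\inf$ the minimizing one (so $G_{*}^{\varphi}$ appears); when $D\phi(x) = 0$ the same expansion holds with $\partial\varphi^{*}(0) = \{\varphi \le 1\}$. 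Thus the scheme is consistent with the discontinuous operator $\tfrac14(G_{\varphi}^{*} + G_{*}^{\varphi})$, and the key point is that, because $\partial\varphi^{*}$ is single-valued off a closed set across which the semicontinuous envelopes of $G_{\varphi}^{*}$ and $G_{*}^{\varphi}$ coincide, the upper semicontinuous envelope of this operator is $\tfrac12 G_{\varphi}^{*}$ and its lower semicontinuous envelope is $\tfrac12 G_{*}^{\varphi}$. Running the usual test-function argument -- at a strict interior local maximum $x_{0}$ of $\overline{u} - \phi$ there are local maxima $x_{\epsilon} \to x_{0}$ of $u^{\epsilon} - \phi$ with $u^{\epsilon}(x_{\epsilon}) \to \overline{u}(x_{0})$, and monotonicity of $\mathcal{M}_{\epsilon}^{\varphi}$ gives $\mathcal{M}_{\epsilon}^{\varphi}u^{\epsilon}(x_{\epsilon}) - u^{\epsilon}(x_{\epsilon}) \le \mathcal{M}_{\epsilon}^{\varphi}\phi(x_{\epsilon}) - \phi(x_{\epsilon})$, so \eqref{E: stochastic game} yields $\overline{u}(x_{0}) - \tfrac14 c(x_{0})(G_{\varphi}^{*} + G_{*}^{\varphi})(D\phi(x_{0}),D^{2}\phi(x_{0})) \le f(x_{0})$ in the limit -- and invoking the Barles--Souganidis theorem with the envelope identities just noted, one concludes that $\overline{u}$ is a viscosity subsolution of the first inequality in \eqref{E: finsler dirichlet with zeroth order} in $U$, and symmetrically that $\underline{u}$ is a supersolution of the second (with the numerical constant matched to the diffusive normalization fixed in Section~\ref{S: tug of war}).

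Third, I would check $\overline{u} \le g \le \underline{u}$ on $\partial U$ by a barrier argument. Fix $z \in \partial U$ and $\delta > 0$. The geometric input is that the $\varphi$-cone $x \mapsto \varphi(x - z)$ is exactly $\mathcal{M}_{\epsilon}^{\varphi}$-harmonic away from its vertex: by the triangle inequality, $\sup\{\varphi(y - z) : \varphi(y - x) < \epsilon\} = \varphi(x - z) + \epsilon$ and $\inf\{\varphi(y - z) : \varphi(x - y) < \epsilon\} = \varphi(x - z) - \epsilon$ whenever $\varphi(x - z) > \epsilon$, and by construction of $U_{\epsilon}$ every point of $U_{\epsilon}$ is at $\varphi$-distance more than $\epsilon$ from $\partial U$. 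Consequently, with $\omega$ a concave increasing majorant of the modulus of continuity of $G$ satisfying $\omega(0) = \delta$, the function $\beta_{z}(x) = g(z) + \omega(\varphi(x - z)) + K\varphi(x - z)$ is $\mathcal{M}_{\epsilon}^{\varphi}$-superharmonic on $U_{\epsilon} \cap B_{r}(z)$, and for $K$ large and $r$ small -- using $c \ge 0$, the Lipschitz bound on $\sqrt{c}$, the continuity of $G$, and $|u^{\epsilon}| \le M$ -- it is a supersolution of \eqref{E: stochastic game} there that dominates $u^{\epsilon}$ on the discrete boundary of that region. The discrete comparison principle gives $u^{\epsilon} \le \beta_{z}$ near $z$, and letting $\epsilon \to 0^{+}$ then $\delta \to 0^{+}$ yields $\overline{u}(z) \le g(z)$; the bound for $\underline{u}$ is symmetric.

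Finally, let $u \in C(\overline{U})$ be the solution of \eqref{E: finsler dirichlet with zeroth order} with $u = g$ on $\partial U$ furnished by Theorem~\ref{T: comparison dirichlet}. Two applications of the comparison principle there -- to the pair $(\overline{u}, u)$, using $\overline{u} \le g = u$ on $\partial U$, and to the pair $(u, \underline{u})$, using $u = g \le \underline{u}$ on $\partial U$ -- give $\overline{u} \le u \le \underline{u}$ in $U$, hence $\overline{u} = \underline{u} = u$ on $\overline{U}$; since $\overline{U}$ is compact and $u$ is continuous, this forces $u^{\epsilon} \to u$ uniformly on $\overline{U}$, as claimed. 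I expect the boundary barrier construction -- for a general bounded open set, and ensuring the $\epsilon^{-2}$ zeroth-order term does not spoil it -- to be the fiddliest point; the consistency computation, though it is where the Finsler geometry enters, is otherwise routine, and, as emphasized, the essential analytic work has already been carried out in Theorem~\ref{T: comparison dirichlet}.
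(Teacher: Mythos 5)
Your route is the same as the paper's: an $L^{\infty}$ bound and half-relaxed limits for the monotone scheme \eqref{E: stochastic game}, a consistency statement linking $\epsilon^{-2}(\mathrm{Id}-\mathcal{M}^{\varphi}_{\epsilon})$ to the pair $(G_{\varphi}^{*},G_{*}^{\varphi})$, cone barriers built from the exact discrete harmonicity of $x\mapsto\varphi(x-z)$ (Lemma \ref{L: cone comparison discrete}) together with the discrete comparison principle (Lemma \ref{L: comparison discrete}) to recover the boundary data, and finally Theorem \ref{T: comparison dirichlet} to squeeze the two relaxed limits together. Your boundary step is in fact spelled out in more detail than the paper's, which simply points back to the $\epsilon$-independent barriers of Section \ref{S: dirichlet existence}; the essential architecture is identical.

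The soft spots are both in your consistency step. First, the clause ``because $\partial\varphi^{*}$ is single-valued off a closed set across which the semicontinuous envelopes of $G_{\varphi}^{*}$ and $G_{*}^{\varphi}$ coincide'' is false in general --- when $\{\varphi\le 1\}$ has nontrivial faces the two operators genuinely differ on that set, which is the whole difficulty of the paper --- and your identification of the usc/lsc envelopes of $\tfrac14(G_{\varphi}^{*}+G_{*}^{\varphi})$ is neither justified nor needed. What the Barles--Souganidis machinery actually requires is one-sided consistency along moving points $x_{\epsilon}\to x_{0}$: a lower bound for $\liminf_{\epsilon\to0^{+}}\epsilon^{-2}[\phi(x_{\epsilon})-\mathcal{M}^{\varphi}_{\epsilon}\phi(x_{\epsilon})]$ in terms of $G_{\varphi}^{*}(D\phi(x_{0}),D^{2}\phi(x_{0}))$ and the symmetric limsup bound with $G_{*}^{\varphi}$, obtained from your Taylor expansion plus the upper semicontinuity of $p\mapsto\partial\varphi^{*}(p)$; this is exactly the paper's Lemma \ref{L: consistency}, and you should argue that way rather than through envelopes of the formal pointwise limit. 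Second, the constant: your (correct) expansion $\mathcal{M}^{\varphi}_{\epsilon}\phi-\phi=\tfrac{\epsilon^{2}}{4}(G_{\varphi}^{*}+G_{*}^{\varphi})+o(\epsilon^{2})$ produces relaxed limits satisfying the differential inequalities with diffusivity $\tfrac12 c$, not $c$; the comparison step still closes (apply Theorem \ref{T: comparison dirichlet} with $c/2$), but the limit is then identified as the solution of \eqref{E: finsler dirichlet with zeroth order} with $c$ replaced by $c/2$, so your final identification with $u$ does not follow as written, and deferring to ``the diffusive normalization fixed in Section \ref{S: tug of war}'' does not resolve it, since \eqref{E: stochastic game} carries no extra factor. (For what it is worth, Lemma \ref{L: consistency} in the paper states the bounds with constants $-G_{\varphi}^{*}$ and $-G_{*}^{\varphi}$, which, given the $\tfrac12\sup+\tfrac12\inf$ in \eqref{E: finite difference}, appears to drop the same factor of two; so the mismatch you are waving away is a genuine bookkeeping point --- either renormalize the scheme or carry the $\tfrac12$ consistently into the limit equation before invoking comparison.)
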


See Section \ref{S: tug of war} for the proof.

\subsection{Method of Proof} \label{S: intro discussion of proof}

The approach of this paper expands upon an insight from \cite{m_souganidis}.  To explain the connection, it is first useful to note that, in the proof of the inclusion $\text{Sub}_{\varphi}(U) \subseteq CCA_{\varphi}(U)$ in Theorem \ref{T: cone comparison}, it is enough to assume that $u$ is strictly $\varphi$-infinity subharmonic in the sense that, for some $\epsilon > 0$,
	\begin{align} \label{E: strict sub ack}
		\frac{1}{2} \epsilon \varphi^{*}(Du)^{2} - G_{\varphi}^{*}(Du,D^{2}u) \leq 0 \quad \text{in} \, \, U;
	\end{align}
see Lemma \ref{L: symmetries} below for this reduction.

As was already mentioned above, in the recent work \cite{m_souganidis}, it was observed that even though $G_{\varphi}^{*}$ and $G_{*}^{\varphi}$ might differ along a rather rough set, nonetheless if $\varphi$ is $C^{2}$ away from the origin, then it is possible to prove cone comparison results simply by using $\varphi$ itself as a test function and directly invoking the definition of viscosity subsolution.  Indeed, at a local maximum $x \neq 0$ of $u - \varphi$, one finds
	\begin{align*}
		\frac{1}{2} \epsilon = \frac{1}{2} \epsilon \varphi^{*}(D\varphi(x))^{2} = \frac{1}{2} \epsilon \varphi^{*}(D\varphi(x))^{2} - G_{\varphi}^{*}(D\varphi(x),D^{2}\varphi(x)) \leq 0,
	\end{align*}
where above the key observation is that $\varphi^{*}(D\varphi(x)) = 1$, which is well-known, and $G_{\varphi}^{*}(D\varphi(x),D^{2}\varphi(x)) = 0$, which is a fundamental observation of \cite[Section 3]{m_souganidis}.

The point of view taken in this work extends this insight of \cite{m_souganidis} by, in effect, expanding the class of test functions to include $\varphi$, even if it is not smooth.  Notice that, compared to an arbitrary nonsmooth function, $\varphi$ has the advantage that it is (1) convex and positively one-homogeneous and (2) $\varphi$-infinity superharmonic.  

Concerning (2), there is an important observation, which appears to be new, and helps to explain why Theorem \ref{T: cone comparison} at the very least ought to be true.  While it is well-known that $\varphi$ is itself a $\varphi$-infinity harmonic function in $\mathbb{R}^{d} \setminus \{0\}$, the next result shows that it is a supersolution in a strong sense:

	\begin{restatable}{prop}{stronglysuperharmonic} \label{P: strongly superharmonic} If $\varphi$ is a Finsler norm in $\mathbb{R}^{d}$, then 
		\begin{equation} \label{E: strongly superharmonic}
			-G_{\varphi}^{*}(D\varphi,D^{2}\varphi) \geq 0 \quad \text{in the viscosity sense in} \quad \mathbb{R}^{d} \setminus \{0\}.
		\end{equation}
	\end{restatable}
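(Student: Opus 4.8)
The plan is to unwind the definition of viscosity supersolution, reduce \eqref{E: strongly superharmonic} to a pointwise inequality, and then prove that inequality by constructing, for each relevant test direction, an explicit one-parameter curve along which $\varphi$ is known exactly.

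\emph{Reduction.} Fix $x_0 \in \mathbb{R}^d \setminus \{0\}$ and a $C^2$ function $\psi$ such that $\varphi - \psi$ has a local minimum at $x_0$; after adding a constant we may assume $\psi(x_0) = \varphi(x_0)$ and $\psi \le \varphi$ near $x_0$. Writing $p_0 = D\psi(x_0)$ and $X_0 = D^2\psi(x_0)$, the assertion $-G_\varphi^*(D\psi(x_0),D^2\psi(x_0)) \ge 0$ is exactly $\langle X_0 q, q \rangle \le 0$ for every $q \in \partial \varphi^*(p_0)$. Since $\psi$ touches the convex function $\varphi$ from below at $x_0$, standard convex analysis gives $p_0 \in \partial \varphi(x_0)$; because $x_0 \ne 0$ this forces $p_0 \ne 0$, $\varphi^*(p_0) = 1$, $\langle p_0, x_0 \rangle = \varphi(x_0)$, and, by the duality between $\partial \varphi$ and $\partial \varphi^*$, the vector $q_0 := x_0/\varphi(x_0)$ lies in $\partial \varphi^*(p_0)$. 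Recall also that $\partial \varphi^*(p_0)$, being the face of $\{\varphi \le 1\}$ exposed by $p_0$, is compact convex with $\varphi \equiv 1$ and $\langle p_0, \cdot\rangle \equiv 1$ on it.

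\emph{Key construction.} Given $q \in \partial\varphi^*(p_0)$, consider the curve
\[
\gamma(t) = (1+t)\bigl[(1-t)\,x_0 + t\,\varphi(x_0)\,q\bigr], \qquad t \in [0,1].
\]
The bracketed vector equals $\varphi(x_0)\bigl[(1-t)q_0 + tq\bigr]$, a positive multiple of a point of the convex set $\partial\varphi^*(p_0)$, so $\varphi$ of it equals $\varphi(x_0)$; by one-homogeneity $\varphi(\gamma(t)) = (1+t)\varphi(x_0)$. Hence $\Phi(t) := \psi(\gamma(t)) - (1+t)\varphi(x_0)$ satisfies $\Phi \le 0$ for small $t \ge 0$ and $\Phi(0) = 0$, so $t = 0$ is a one-sided local maximum of the $C^2$ function $\Phi$. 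One computes $\gamma(0) = x_0$, $\gamma'(0) = \varphi(x_0)\,q$, and $\gamma''(0) = 2\bigl(\varphi(x_0)q - x_0\bigr)$, so that $\Phi'(0) = \varphi(x_0)\bigl(\langle p_0,q\rangle - 1\bigr) = 0$ (using $\langle p_0,q\rangle = \varphi^*(p_0) = 1$) and
\[
\Phi''(0) = \varphi(x_0)^2 \langle X_0 q, q\rangle + 2\bigl(\varphi(x_0)\langle p_0, q\rangle - \langle p_0, x_0\rangle\bigr) = \varphi(x_0)^2 \langle X_0 q, q\rangle,
\]
the last step because $\langle p_0,q\rangle = 1$ and $\langle p_0,x_0\rangle = \varphi(x_0)$. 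Since $\Phi(0) = \Phi'(0) = 0$ and $\Phi \le 0$ as $t \downarrow 0$, Taylor expansion forces $\Phi''(0) \le 0$, i.e.\ $\langle X_0 q, q\rangle \le 0$ (recall $\varphi(x_0) > 0$). Taking the maximum over $q \in \partial\varphi^*(p_0)$ gives $G_\varphi^*(p_0,X_0) \le 0$; as $\psi$ and $x_0$ were arbitrary, this proves \eqref{E: strongly superharmonic}.

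\emph{Main obstacle.} The entire content is the choice of the curve $\gamma$, which must overcome the non-smoothness of $\varphi$. Testing $\varphi$ along a pure ray $t \mapsto (1+t)x_0$ only yields $\langle X_0 x_0, x_0\rangle \le 0$, while testing along the segment $t \mapsto (1-t)x_0 + t\varphi(x_0)q$ inside the exposed face only yields $\langle X_0(\varphi(x_0)q - x_0),\,\varphi(x_0)q - x_0\rangle \le 0$; neither gives $\langle X_0 q, q\rangle \le 0$, and naively adding them leaves an uncontrolled cross term $\langle X_0 x_0,\,\varphi(x_0)q - x_0\rangle$. Multiplying the face motion by the radial factor $(1+t)$ is engineered so that $\gamma'(0)$ points along $q$ itself while the extra first- and second-order corrections cancel, precisely by virtue of Euler's identities $\langle p_0,x_0\rangle = \varphi(x_0)$ and $\langle p_0,q\rangle = \varphi^*(p_0) = 1$. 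In effect one uses $\varphi$ (more precisely, the affine function $(1+t)\varphi(x_0)$) as a test function along a cleverly chosen curve, which is exactly the guiding idea of the paper; the remaining ingredients are the routine convex-analytic facts recorded in the reduction step.
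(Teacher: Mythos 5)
Your proof is correct and follows essentially the same route as the paper: both arguments reduce to showing $\langle D^{2}\psi(x_{0})q,q\rangle \le 0$ for every $q \in \partial\varphi^{*}(D\psi(x_{0}))$ by exploiting that $\varphi$ is linear, equal to $\langle D\psi(x_{0}),\cdot\rangle$, on the convex cone $\mathcal{C}(\partial\varphi^{*}(D\psi(x_{0})))$ containing $x_{0}$, and then Taylor-expanding the test function along a path issuing from $x_{0}$ into that cone in the direction of $q$. The only difference is cosmetic: the paper simply uses the straight segment $t \mapsto x_{0}+tq$, which already lies in the cone by convexity (it is a nonnegative combination of $x_{0}$ and $q$), so the radial factor $(1+t)$ in your curve --- and the ``obstacle'' it is designed to overcome --- is not actually needed.
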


Note that this inequality is stronger than the definition of supersolution in \eqref{E: supersolution class} since it involves $G_{\varphi}^{*}$ instead of $G_{*}^{\varphi}$.  This suggests $\varphi$ should act as a barrier controlling subsolutions, as expected.  Indeed, using Proposition \ref{P: strongly superharmonic}, it is easy to prove that any $C^{2}$ $\varphi$-infinity subharmonic function is in $CCA_{\varphi}$; see Section \ref{S: preliminaries} for details.

Though Proposition \ref{P: strongly superharmonic} is a step in the right direction, nonetheless the road ahead is treacherous since, as $G_{\varphi}^{*}$ is merely upper semicontinuous, the lower bound in \eqref{E: strongly superharmonic} is highly sensitive to perturbations.  Still, as hinted already, the convexity and homogeneity of $\varphi$ provide a lot of structure to work with.    This is easiest to grasp when the unit ball $\{\varphi \leq 1\}$ is of class $C^{1,1}$ and strictly convex, as in the next result:

	\begin{restatable}{prop}{strictlyconvexgame} \label{P: strictly convex game} If $\varphi$ is a Finsler norm in $\mathbb{R}^{d}$ for which the unit ball $\{\varphi \leq 1\}$ is both of class $C^{1,1}$ and strictly convex, then there is a family of convex positively one-homogeneous functions $\{\psi_{x}\}_{x \in \mathbb{R}^{d} \setminus \{0\}}$ with the following properties:
		\begin{itemize}
			\item[(i)] $\psi_{x}$ is smooth in a neighborhood of $x$; 
			\item[(ii)] $\varphi \leq \psi_{x}$ pointwise in $\mathbb{R}^{d}$ and $\varphi(x) = \psi_{x}(x)$; and
			\item[(iii)] at the point $x$, the following identities hold:
				\begin{equation*}
					G_{*}^{\varphi}(D\psi_{x}(x),D^{2}\psi_{x}(x)) = G_{\varphi}^{*}(D\psi_{x}(x), D^{2}\psi_{x}(x)) = 0.
				\end{equation*}
		\end{itemize}
	\end{restatable}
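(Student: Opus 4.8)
The plan is to reduce Proposition \ref{P: strictly convex game} to a single geometric construction. Fix $x \in \mathbb{R}^{d} \setminus \{0\}$ and, using positive one-homogeneity, rescale so that $\varphi(x) = 1$. Since $\{\varphi \le 1\}$ is $C^{1,1}$, $\varphi$ is differentiable on $\mathbb{R}^{d} \setminus \{0\}$; write $p = D\varphi(x)$. Euler's identity gives $\langle p, x \rangle = \varphi(x) = 1$, and the well-known normalization yields $\varphi^{*}(p) = 1$, so in particular $p \neq 0$. The key structural point is that strict convexity of $\{\varphi \le 1\}$ forces the face of $\{\varphi \le 1\}$ exposed by the functional $\langle p, \cdot \rangle$ --- which coincides with $\partial \varphi^{*}(p)$ --- to be a single point; since $\langle p, x \rangle = 1 = \varphi^{*}(p)$ with $x \in \partial \{\varphi \le 1\}$, that point is $x$, i.e. $\partial \varphi^{*}(p) = \{x\}$. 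Consequently, once we exhibit $\psi_{x}$ with $D\psi_{x}(x) = p$, the operators $G_{\varphi}^{*}$ and $G_{*}^{\varphi}$ agree at $(D\psi_{x}(x), D^{2}\psi_{x}(x))$ and both equal $\langle D^{2}\psi_{x}(x) \, x, x \rangle$, which vanishes: differentiating the zero-homogeneity relation $D\psi_{x}(\lambda x) = D\psi_{x}(x)$ at $\lambda = 1$ gives $D^{2}\psi_{x}(x) \, x = 0$. Thus (iii) reduces to the single requirement $D\psi_{x}(x) = p$.

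Next I would construct $\psi_{x}$. By the interior ball property of $C^{1,1}$ convex bodies, there is a Euclidean ball $B(c, R) \subseteq \{\varphi \le 1\}$ with $x \in \partial B(c,R)$ and outward unit normal $p / |p|$ at $x$, so $c = x - R p / |p|$. Pick $\rho \in (0, 1/|p|)$ small enough that $B(0,\rho) \subseteq \{\varphi \le 1\}$, set $K_{x} = \mathrm{conv}(B(c,R) \cup B(0,\rho))$, and let $\psi_{x}$ be the Minkowski gauge of $K_{x}$. Then $\psi_{x}$ is convex and positively one-homogeneous; it is finite since $0 \in \mathrm{int}\, K_{x}$; and $K_{x} \subseteq \{\varphi \le 1\}$ with $x \in \partial K_{x}$ (as $\varphi(x) = 1$) gives $\varphi \le \psi_{x}$ and $\psi_{x}(x) = 1 = \varphi(x)$, which is (ii). For (i), I would verify that $\partial K_{x}$ coincides with the sphere $\partial B(c,R)$ near $x$: a point $y \in \partial B(c,R)$ remains on $\partial K_{x}$ exactly when $B(0,\rho)$ lies on the inner side of the tangent plane $\{z : \langle z - y, y - c \rangle = 0\}$, i.e. when $\langle y, y - c \rangle \ge \rho R$, and at $y = x$ this reads $R / |p| \ge \rho R$, which holds strictly by the choice of $\rho$ and hence on a neighborhood of $x$ in $\partial B(c,R)$. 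Therefore, near $x$, $\psi_{x}$ is the gauge of the ball $B(c,R)$, which is smooth there, giving (i).

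It remains to confirm $D\psi_{x}(x) = p$. Since $\psi_{x}$ is smooth at $x$, the convex body $\{\psi_{x} \le 1\}$ has a unique supporting hyperplane there, namely $\{\langle D\psi_{x}(x), \cdot \rangle = 1\}$. On the other hand, from $\{\psi_{x} \le 1\} \subseteq \{\varphi \le 1\} \subseteq \{\langle p, \cdot \rangle \le 1\}$ and $\langle p, x \rangle = 1$ we see that $\{\langle p, \cdot \rangle = 1\}$ also supports $\{\psi_{x} \le 1\}$ at $x$. Uniqueness forces $D\psi_{x}(x) \parallel p$, and Euler's identity $\langle D\psi_{x}(x), x \rangle = \psi_{x}(x) = 1 = \langle p, x \rangle$ then gives $D\psi_{x}(x) = p$. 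Combined with the first paragraph, this yields (iii).

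The step I expect to be the main obstacle is the elementary but slightly delicate convex-geometric bookkeeping: establishing that the convex hull of the two balls is genuinely smooth near $x$ (equal to the sphere there), and that the $C^{1,1}$ hypothesis supplies an interior ball tangent at $x$ with the correct normal. Everything else --- the identity $\partial \varphi^{*}(p) = \{x\}$ from strict convexity, the normalization $\varphi^{*}(D\varphi(x)) = 1$, the vanishing $D^{2}\psi_{x}(x)\, x = 0$ from homogeneity, and the supporting-hyperplane argument for $D\psi_{x}(x) = p$ --- is standard. One could instead use an ellipsoidal gauge or a direct mollification in place of the two-ball construction, but the rolled ball capped near the origin seems the most transparent.
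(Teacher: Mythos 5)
Your proof is correct and follows essentially the same route as the paper: the interior tangent ball at $x$ supplied by the $C^{1,1}$ hypothesis, its Minkowski gauge as the convex one-homogeneous test function touching $\varphi$ from above at $x$ (smooth near $x$ because the level set is locally a sphere), the identity $D^{2}\psi_{x}(x)\,x = 0$ from homogeneity, and strict convexity to make $\partial \varphi^{*}$ single-valued so that both envelopes reduce to $\langle D^{2}\psi_{x}(x) q, q\rangle = 0$. The only deviations are cosmetic: the paper takes the gauge of the ball alone, allowing the value $+\infty$ outside a cone, whereas you cap with $\mathrm{conv}\left(B(c,R) \cup B(0,\rho)\right)$ to keep $\psi_{x}$ finite (the local coincidence of $\partial K_{x}$ with the sphere near $x$ that you flag does go through, by the same strict inequality $\langle x, x-c\rangle > \rho R$), and you recover $D\psi_{x}(x) = D\varphi(x)$ via uniqueness of the supporting hyperplane rather than the first-order condition at the touching point.
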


In effect, the proposition says that if $\{\varphi \leq 1\}$ is $C^{1,1}$ and strictly convex, then $\varphi$ might as well be smooth: if $u$ is a strict subsolution as in \eqref{E: strict sub ack} and $u - \varphi$ has a local maximum at $x \neq 0$, then $u - \psi_{x}$ also has a local maximum at $x$, and then one invokes the aforementioned properties of $\psi_{x}$ to obtain
	\begin{align*}
		\frac{1}{2} \epsilon \varphi^{*}(D\varphi(x))^{2} = \frac{1}{2} \epsilon \varphi^{*}(D\psi_{x}(x))^{2} - G_{\varphi}^{*}(D\psi_{x}(x),D^{2}\psi_{x}) \leq 0,
	\end{align*}
a contradiction exactly as before. (See Section \ref{S: c11 strictly convex} for more details.)  

Of course, asking for both $C^{1,1}$ regularity and strict convexity of $\{\varphi \leq 1\}$ is too restrictive.\footnote{It is convenient to specify the regularity of the unit ball $\{\varphi \leq 1\}$ rather than $\varphi$ itself since the latter is never differentiable at the origin. It is not hard to convince oneself that $\{\varphi \leq 1\}$ is a manifold with boundary of class $C^{k,\alpha}$ if and only if $\varphi \in C^{k,\alpha}_{\text{loc}}(\mathbb{R}^{d} \setminus \{0\})$, cf.\ Section \ref{S: pos hom}.}  Nonetheless, it turns out that if $\{\varphi \leq 1\}$ is only $C^{1,1}$, then this is already enough to construct certain test functions, which are less regular than those in the previous proposition, but can still be used to the same effect.  This step of the proof of Theorem \ref{T: cone comparison}, which completely addresses the $C^{1,1}$ case, relies on a new perturbation argument for nonsmooth convex test functions, very much in the spirit of the classical Alexandrov-Bakelman-Pucci argument, but requiring novel ideas to work around the discontinuities of the Finsler infinity Laplacian; see Sections \ref{S: conical test} and \ref{S: c11 setting}.

Finally, it remains to show that an arbitrary Finsler norm $\varphi$ can be approximated by $C^{1,1}$ Finsler norms in a manner that respects the discontinuities of the Finsler infinity Laplacian.  It turns out that this can, indeed, be done.  The main theorem involved in this approximation argument is stated next:

\begin{restatable}{theorem}{regularizednorm} \label{T: regularized_norm} If $\varphi$ is any Finsler norm in $\mathbb{R}^{d}$, then there is a family of Finsler norms $(\varphi_{\zeta})_{\zeta > 0} \subseteq C^{1,1}_{\text{loc}}(\mathbb{R}^{d} \setminus \{0\})$ along with constants $\kappa(\varphi), \delta(\varphi) > 0$ such that
	\begin{equation*}
		\lim_{\zeta \to 0^{+}} \varphi_{\zeta} = \varphi \quad \text{locally uniformly in} \, \, \mathbb{R}^{d}
	\end{equation*} 
and, for any $\zeta \in (0,1)$, the following properties hold:
		\begin{itemize}
			\item[(i)] $\{\varphi_{\zeta} \leq 1\}$ satisfies a uniform interior ball condition with radius $\zeta$; 
			\item[(ii)] for any $p \in \mathbb{R}^{d} \setminus \{0\}$, 
				\begin{equation} \label{E: subdifferential version of infinity harmonic}
					\partial \varphi_{\zeta}^{*}(p) = \partial \varphi^{*}(p) + \frac{\zeta p}{\|p\|};
				\end{equation}
			\item[(iii)] $\varphi_{\zeta} \leq \varphi$ pointwise in $\mathbb{R}^{d}$;
			\item[(iv)] the following gradient bound holds:
				\begin{equation*}
			\varphi^{*}(D\varphi_{\zeta}(x)) \geq \kappa(\varphi) \quad \text{for each} \quad x \in \mathbb{R}^{d} \setminus \{0\}; \quad \text{and}
				\end{equation*} 
			\item[(v)] the following nondegeneracy condition holds:
				\begin{equation} \label{E: nondegeneracy condition zeta}
					\overline{B}_{\delta(\varphi)}(0) \subseteq \{\varphi_{\zeta} \leq 1\} \subseteq \overline{B}_{\delta(\varphi)^{-1}}(0).
				\end{equation}
		\end{itemize}
\end{restatable}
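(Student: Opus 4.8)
\emph{Proof proposal.}
The plan is to build $\varphi_{\zeta}$ by perturbing the \emph{dual} norm, adding to $\varphi^{*}$ a small multiple of the Euclidean norm $\|\cdot\|$; on the primal side this amounts to replacing $\{\varphi \le 1\}$ by its outer parallel body. Concretely, define $\varphi_{\zeta} := (\varphi^{*} + \zeta\|\cdot\|)^{*}$. As a sum of two Finsler norms, $\varphi^{*} + \zeta\|\cdot\|$ is again a Finsler norm, hence so is its dual $\varphi_{\zeta}$; and since dualization is an involution on Finsler norms, $\varphi_{\zeta}^{*} = \varphi^{*} + \zeta\|\cdot\|$. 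Property (ii) follows at once from the Moreau--Rockafellar sum rule applied to the finite convex functions $\varphi^{*}$ and $\zeta\|\cdot\|$: for $p \ne 0$,
\begin{equation*}
\partial\varphi_{\zeta}^{*}(p) = \partial\varphi^{*}(p) + \zeta\,\partial\|\cdot\|(p) = \partial\varphi^{*}(p) + \frac{\zeta p}{\|p\|}.
\end{equation*}
Likewise, (iii) follows from $\varphi_{\zeta}^{*} \ge \varphi^{*}$, which passes under duality to $\varphi_{\zeta} \le \varphi$.

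The geometric heart of the proof is the Minkowski-sum identity
\begin{equation*}
\{\varphi_{\zeta} \le 1\} = \{\varphi \le 1\} + \zeta\,\overline{B}_{1}(0).
\end{equation*}
The inclusion ``$\supseteq$'' I would obtain from the Young-type bound $\langle y + \zeta z, p\rangle \le \varphi(y)\varphi^{*}(p) + \zeta\|z\|\|p\| \le \varphi_{\zeta}^{*}(p)$, valid for $y \in \{\varphi \le 1\}$ and $z \in \overline{B}_{1}(0)$; the inclusion ``$\subseteq$'' from the separation theorem, using that the support function of the compact convex set $\{\varphi \le 1\} + \zeta\overline{B}_{1}(0)$ in direction $p$ is precisely $\varphi^{*}(p) + \zeta\|p\| = \varphi_{\zeta}^{*}(p)$. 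Granting this identity, the remaining properties follow quickly. For (i): since $\{\varphi_{\zeta} \le 1\} = \{z : \operatorname{dist}(z, \{\varphi \le 1\}) \le \zeta\}$, a boundary point $x$ satisfies $\operatorname{dist}(x, \{\varphi \le 1\}) = \zeta$, and writing $y$ for its projection onto $\{\varphi \le 1\}$, the ball $\overline{B}_{\zeta}(y)$ lies in $\{\varphi_{\zeta} \le 1\}$ and meets its boundary exactly at $x$. The convergence $\varphi_{\zeta} \to \varphi$ locally uniformly follows because $\{\varphi_{\zeta} \le 1\} \to \{\varphi \le 1\}$ in the Hausdorff distance as $\zeta \to 0^{+}$, which for gauges of convex bodies containing a fixed neighbourhood of the origin is equivalent to local uniform convergence. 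For (v): fix $\delta = \delta(\varphi) \in (0,1)$ small enough that $\overline{B}_{\delta}(0) \subseteq \{\varphi \le 1\}$ and $\{\varphi \le 1\} + \overline{B}_{1}(0) \subseteq \overline{B}_{\delta^{-1}}(0)$ (possible since $\varphi$ is comparable to $\|\cdot\|$ on the unit sphere); then for $\zeta \in (0,1)$ the Minkowski identity gives $\overline{B}_{\delta}(0) \subseteq \{\varphi_{\zeta} \le 1\} \subseteq \overline{B}_{\delta^{-1}}(0)$.

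For the regularity $\varphi_{\zeta} \in C^{1,1}_{\mathrm{loc}}(\mathbb{R}^{d} \setminus \{0\})$, I would note that $\partial\{\varphi_{\zeta} \le 1\}$ is the level set $\{d = \zeta\}$ of the distance function $d := \operatorname{dist}(\cdot, \{\varphi \le 1\})$; since $\frac{1}{2}d^{2}$ is $C^{1,1}$ with gradient $x - P_{\{\varphi \le 1\}}(x)$ (the projection being $1$-Lipschitz), $d$ is $C^{1,1}$ with non-vanishing gradient on $\{d \ge \zeta/2\}$, so $\{d = \zeta\}$ is a $C^{1,1}$ hypersurface; as $\{\varphi_{\zeta} \le 1\}$ is a convex body, its gauge $\varphi_{\zeta}$ is then of class $C^{1,1}$ away from the origin, via the correspondence between the regularity of a Finsler norm and that of its unit ball. (Equivalently, this is forced by the interior-ball bound (i) together with convexity, since a convex body always satisfies an exterior ball condition of every radius.) Finally, (iv) combines the above: at $x \ne 0$ the gradient $D\varphi_{\zeta}(x)$ lies on the dual unit sphere, i.e.\ $\varphi^{*}(D\varphi_{\zeta}(x)) + \zeta\|D\varphi_{\zeta}(x)\| = 1$, and the elementary bound $\|p\| \le c^{-1}\varphi^{*}(p)$ with $c := \min_{\|q\| = 1}\varphi^{*}(q) > 0$ then yields $\varphi^{*}(D\varphi_{\zeta}(x)) \ge c/(c + \zeta) \ge c/(c+1) =: \kappa(\varphi)$ for $\zeta \in (0,1)$.

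The step I expect to demand the most care is not any single lemma but the bookkeeping that makes the advertised quantities uniform in $\zeta$: the interior-ball radius must come out to exactly $\zeta$, and the bounds in (iv) and (v) must hold with constants extracted independently of $\zeta$ from the elementary comparisons among $\varphi$, $\varphi^{*}$, and $\|\cdot\|$. The Minkowski-sum identity is the linchpin tying together all five properties and the convergence statement, so establishing it cleanly---and with it the description of $\partial\{\varphi_{\zeta} \le 1\}$ via the distance function---is where I would concentrate the effort.
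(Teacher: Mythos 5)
Your construction produces exactly the same object as the paper's: the body $\{\varphi_{\zeta}\le 1\}$ you obtain from $\varphi_{\zeta}:=(\varphi^{*}+\zeta\|\cdot\|)^{*}$ is the outer parallel body $E_{\zeta}=\{\varphi\le1\}+\overline{B}_{\zeta}(0)$ that the paper builds directly, and your Minkowski-sum identity (proved correctly via support functions and separation) is the bridge between the two viewpoints. The genuine difference is the route to the central identity (ii): you work on the dual side and get $\partial\varphi_{\zeta}^{*}(p)=\partial\varphi^{*}(p)+\zeta\,\partial\|\cdot\|(p)=\partial\varphi^{*}(p)+\zeta p/\|p\|$ in one line from the Moreau--Rockafellar sum rule (legitimate here, since both summands are finite convex functions), and you also get (iii) for free from the order-reversal of duality. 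The paper instead proves (ii) on the primal side, through the structure theorem for $E_{\zeta}$ (Proposition \ref{P: structure theorem}), the nearest-point map $\mathcal{Q}_{\zeta}$ and its multivalued inverse (Proposition \ref{P: multivalued inverse}), and the duality identities \eqref{E: subdifferential basic identity dual}--\eqref{E: inversion}. Your approach is shorter and makes (ii) transparent; the paper's buys an explicit geometric description of $\partial E_{\zeta}$ (normal vectors, projections) that serves the expository picture of the regularization, though the remaining items are handled similarly in both: interior ball of radius exactly $\zeta$ from the projection onto $\{\varphi\le1\}$, the nondegeneracy and convergence from the two-sided comparison of the unit balls, $C^{1,1}$ regularity from the interior-ball-plus-convexity characterization (the paper's Lemma \ref{L: c11 set thing}; your distance-function argument is an acceptable alternative), and (iv) from $\varphi_{\zeta}^{*}(D\varphi_{\zeta})=1$ together with the comparability of $\varphi^{*}$ and $\|\cdot\|$ --- indeed your computation $\varphi^{*}(D\varphi_{\zeta}(x))\ge c/(c+\zeta)$ is a slightly cleaner quantitative version of the paper's Propositions \ref{P: proof of approx theorem 1}--\ref{P: proof of approx theorem 2}. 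The only detail worth writing out if you formalize this is the elementary fact that $\partial\bigl(\{\varphi\le1\}+\zeta\overline{B}_{1}(0)\bigr)=\{\operatorname{dist}(\cdot,\{\varphi\le1\})=\zeta\}$ (a point at distance exactly $\zeta$ cannot be interior, which follows from the supporting hyperplane at its projection); otherwise the argument is complete.
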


Where the Finsler infinity Laplacian is concerned, the key ingredient is \eqref{E: subdifferential version of infinity harmonic}.  As is argued below, this should be interpreted as implying that $\varphi_{\zeta}$ is \emph{almost} strongly $\varphi$-superharmonic in the sense of Proposition \ref{P: strongly superharmonic}.  Alternatively, notice that $G_{\varphi}^{*}$ can be rewritten as follows:
	\begin{equation*}
		G_{\varphi}^{*}(p,X) = \max\left\{ \left \langle X \left[q - \frac{\zeta p}{\|p\|}\right], q - \frac{\zeta p}{\|p\|} \right \rangle \, \mid \, q \in \partial \varphi_{\zeta}^{*}(p) \right\} =: \mathcal{G}_{\varphi_{\zeta}}^{*,\zeta}(p,X).
	\end{equation*}
This shows that $G_{\varphi}^{*}$ can be regarded as a perturbation of the operator $G_{\varphi_{\zeta}}^{*}$, which, in accordance with the previous discussion, is easier to analyze due to the $C^{1,1}$ regularity of $\{ \varphi_{\zeta} \leq 1 \}$.\footnote{Of course, an analogous formula also pertains to the lower envelope $G_{*}^{\varphi}$.}  This permits results from the $C^{1,1}$ setting to be lifted, \emph{mutatis mutandis}, to the general case; see Section \ref{S: c11 reduction} for precise statements.

\subsection{Extensions and Open Problems} The comparison principles proved in this work concern only elliptic PDE in bounded domains.  

The extension to parabolic PDE in bounded domains is immediate: the required changes in the proofs are only cosmetic.  In particular, this establishes that $\varphi$-infinity sub- and supercaloric functions, that is, those satisfying the respective inequalities
	\begin{equation*}
		u_{t} - G_{\varphi}^{*}(Du,D^{2}u) \leq 0 \quad \text{and} \quad u_{t} - G_{*}^{\varphi}(Du,D^{2}u) \geq 0
	\end{equation*}
are characterized by a ``curved" version of comparison with cones.  Namely, a function $u$ is $\varphi$-infinity subcaloric in a space-time open set $U \subseteq \mathbb{R}^{d} \times \mathbb{R}$ if and only if, for any parabolic cylinder $V = \Omega \times (t_{1},t_{2}) \subset \subset U$, any $v \notin \Omega$, and $s < t_{1}$,
	\begin{align*}
		&\max \left\{ u(x,t) + (t - s)^{-\frac{1}{2}} \exp \left( - \frac{\varphi(x - v)^{2}}{4(t - s)} \right) \, \mid \, (x,t) \in \overline{V} \right\} \\
		&\quad = \max \left\{ u(x,t) + (t - s)^{-\frac{1}{2}} \exp \left( - \frac{\varphi(x - v)^{2}}{4(t - s)} \right) \, \mid \, (x,t) \in \partial_{p} V \right\}.
	\end{align*}
Above $\partial_{p} V$ denotes the parabolic boundary.  Such results were first proved by Crandall and Wang \cite{crandall_wang} and Juutinen and Kawohl \cite{juutinen_kawohl} in the case of the Euclidean norm. 

On the other hand, the extension to unbounded domains, while likely only technical, is nonetheless nontrivial.  The reason is, generally speaking, the strategy for proving comparison principles in unbounded domains involves adding a penalization term in the variable-doubling argument, hence analyzing local maxima of functions such as
	\begin{equation*}
		u(x) - v(y) - \frac{\psi(x - y)^{4}}{4 \zeta} - \beta \|x\|^{2}.
	\end{equation*}
In the present setting, however, it is not at all clear what the perturbation term ($\beta \|x\|^{2}$ in the example above) should be.  

One way to understand why this is problematic is, in effect, in the example above, $u$ has been replaced by $u(x) - \beta \|x\|^{2}$, which, for a continuous equation, solves approximately the same equation as $u$ when $\beta$ is small.  This is no longer true in the analysis of discontinuous equations, unless one has very good control over the function $\psi$ as in \cite{m_souganidis}.  The strategy developed in this work simply does not yield such good control over (the analogue of) $\psi$, hence the extension to unbounded domains is nontrivial.

Finally, while this work proves the cone comparison principle for the PDE associated with the Lipschitz extension problem, the proof does not carry over immediately to more general $L^{\infty}$-variational problems, in particular, when the Hamiltonian is not positively homogeneous.  This remains an interesting open problem.

\subsection{Related Literature} This work belongs to both the literatures on (1) the infinity Laplacian and $L^{\infty}$-variational problems and (2) comparison principles for second-order elliptic PDE with discontinuities in the gradient variable.  Accordingly, this literature review will be separated into two parts.

\subsubsection{Infinity Laplacian and absolutely minimizing functions}  The general theory of $L^{\infty}$-variational problems was initiated by Aronsson in the series of works \cite{aronsson_minimization_1,aronsson_minimization_2,aronsson_minimization_3}, the notion of absolutely minimizing Lipschitz extension being introduced in the Euclidean context in \cite{aronsson_lipschitz}.  Jensen proved the uniqueness of such extensions by establishing a comparison principle for infinity harmonic functions \cite{jensen}.  Other proofs are now known, for instance, the more widely applicable proof in \cite{barles_busca} and the elementary proof in \cite{armstrong_smart_easy_proof}.

The fact that the comparison-with-cones properties are equivalent to infinity harmonicity was first proved by Crandall, Evans, and Gariepy \cite{crandall_evans_gariepy}, again in the context of the Euclidean norm.  Together with Jensen's work, this established the equivalence of the AMLE, comparison-with-cones, and infinity harmonicity properties in the Euclidean setting.

The extension of such results to other (symmetric) norms was pursued in \cite{aronsson_crandall_juutinen}.  They proved, in particular, that comparison with cones characterizes AMLE.  However, it needs to be emphasized that they left open the question of a PDE characterization.

Many works have treated PDE characterizations of absolute minimizers, not only for AMLE but also other $L^{\infty}$-variational problems.  A summary of the relevant literature can be found in \cite{armstrong_crandall_julin_smart}.  It should be emphasized that, with the exception of \cite{crandall_gunnarsson_wang} and \cite{m_souganidis}, these results concern settings where the coefficients of the PDE are continuous.\footnote{With the usual caveat that there may be a discontinuity where the gradient variable vanishes.}

Prior to \cite{m_souganidis}, the paper \cite{crandall_gunnarsson_wang} was the only one to prove the PDE characterization of AMLE in a setting in which the Finsler infinity Laplacian is discontinuous.  Therein comparison was proved conditional on the construction of special ``shielding" norms.  The existence of shielding norms was then proved in the case when $\varphi$ equals the $\ell^{1}$ and $\ell^{\infty}$ norm, or, in $d = 2$, in the case when the unit ball $\{\varphi \leq 1\}$ contains only finitely many line segments.

Among other contributions, the paper \cite{m_souganidis} proved that the PDE characterization for $\varphi$-AMLE holds under any one of the following conditions on $\varphi$: (i) $\{\varphi \leq 1\}$ is of class $C^{2}$, (ii) $\{\varphi \leq 1\}$ is polyhedral, or (iii) the dimension $d = 2$.\footnote{See that reference for an expository account explaining why the PDE characterization also holds if (iv) $\{\varphi \leq 1\}$ is strictly convex, which was already observed in \cite{aronsson_crandall_juutinen}.}  Since the proof strategy builds on ideas from the literature on comparison principles for level-set PDE, there will be more to say about this work in the next subsection.  For now, as already hinted at above, it is worth emphasizing that the most important aspect of \cite{m_souganidis} for the purposes of this paper is the analysis of the $C^{2}$ setting, see Section 3 therein.  

Finally, building on the ideas of \cite{armstrong_smart_easy_proof}, the previously mentioned work \cite{armstrong_crandall_julin_smart} proved various characterizations of AMLE, and even the comparison principle \eqref{E: comparison of AMLE}, in the more general setting in which the functional $\|\varphi^{*}(Du)\|_{L^{\infty}(U)}$ in \eqref{E: linfty min} is replaced by the more general one $\|H(Du)\|_{L^{\infty}(U)}$ under minimal assumptions on $H$.  

Both foundational works \cite{aronsson_crandall_juutinen} and \cite{armstrong_crandall_julin_smart} raised the question of a PDE characterization of absolute minimizers.  In this work, that question is closed in the specific context of AMLE and the minimal Lipschitz extension problem \eqref{E: minimal lipschitz extension}.  There is reason to hope that the ideas laid out in this paper can be extended to $L^{\infty}$-variational problems with general Hamiltonians $H$ as in \cite{armstrong_crandall_julin_smart}.  However, the positive homogeneity of Finsler norms (especially the positive zero-homogeneity of $\partial \varphi^{*}$) is used in a nontrivial way throughout.  As such, the geometry of the problems involved changes considerably when the homogeneity assumption is removed and new ideas will be needed.

\subsubsection{Comparison for equations with gradient discontinuities} Within the theory of viscosity solutions, the genesis for methods treating equations with discontinuities in the gradient variable by appropriately modifying the standard test functions goes back to the work of Evans and Spruck \cite{evans_spruck} and Chen, Giga, and Goto \cite{chen_giga_goto} on the level-set formulation of mean curvature flow.  In that setting, the origin is the only discontinuity of the operator, and \cite{evans_spruck} circumvented it by using a test function whose derivatives avoid the discontinuity.

Ishii and Souganidis \cite{ishii_souganidis} subsequently adapted this technique to treat equations with coefficients that blow-up when the gradient is zero.  This extends the level-set theory to cover, for instance, flow of convex sets by Gaussian curvature.  Ohnuma and Sato \cite{ohnuma_sato_p-laplace} further adapted this approach to develop a viscosity solution theory for $p$-Laplace-type equations with $p < 2$.

The extension to operators with more complicated discontinuities in the gradient variable is more challenging.  Until recently, the most general and systematic approach was introduced by Ishii \cite{ishii}, again in the setting of level-set PDE, improving previous work by Gurtin, Soner, and Souganidis \cite{gurtin_soner_souganidis} and Ohnuma and Sato \cite{ohnuma_sato}.  This work treats the case of operators that are positively one-homogeneous in the gradient variable and for which the set of discontinuous directions forms a smooth submanifold of the sphere.  Inspired by  \cite{ishii}, the work \cite{m_souganidis} extended the method to operators in which the set of discontinuous directions need not be smooth, but instead need only be, in a suitable sense, compatible with the geometry of some Finsler norm.  While the Finsler infinity Laplacian is always compatible with its associated Finsler norm, there is a restriction in \cite{ishii} and \cite{m_souganidis}, namely, the method requires the existence of certain so-called shielding norms, which are hard to construct.

Compared to the approach of \cite{m_souganidis}, the comparison proofs presented here are somewhat more cumbersome, yet also more local.  The existence of a shielding norm in \cite{m_souganidis} is a global problem: it is equivalent to the existence of an open convex set with $C^{2}$ boundary and boundary curvature (second fundamental form) satisfying certain linear constraints determined by the normal vector.\footnote{See \cite[Definition 2]{m_souganidis}.}  By contrast, the construction of conical test functions introduced in Section \ref{S: conical test} is local.  For instance, in the simple setting covered by Proposition \ref{P: strictly convex game} above, to any point $x \in \mathbb{R}^{d} \setminus \{0\}$, there is an associated test function $\psi_{x}$, the construction of which is based solely on the geometry of the level sets of $\varphi$ near $x$.  In principle, if one could glue together the conical test functions suitably, then one would obtain a shielding norm --- but the difficult issue of \emph{how} to perform such a gluing is precisely what makes the local approach taken here attractive. 

Finally, there are some qualitative similarities between the strategy employed in this paper and the techniques developed in connection with Hamilton-Jacobi equations on junctions and stratified domains.  First, it is worth emphasizing that the techniques in this paper have the flavor of nonsmooth and convex analysis, in a manner that is at least philosophically similar to recent work on Hamilton-Jacobi equations in stratified domains as carried out, for instance, by Jerhaoui and Zidani \cite{jerhaoui_zidani}.  Put one way, whereas in those works, it is the spatial domain that is stratified, in the present work, it is instead the gradient variable that takes values in a stratified space.  Where such Hamilton-Jacobi equations are concerned, there is also some similarity to the fundamental contribution of Lions and Souganidis \cite{lions_souganidis}.  In that paper and this one, a blow-up argument is used to prove what amounts to a boundary maximum principle-type lemma: the reader can compare \cite[Lemma 3.1]{lions_souganidis} to Proposition \ref{P: key mountain part 1} below.  In view of these similarities, there is reason to hope that ideas from this paper will be applicable more broadly.

\subsection{Organization of the Paper}  This paper is divided into four parts:
	\begin{itemize}
		\item[Part 1.] Two Tools: $C^{1,1}$ Approximation and Conical Test Functions
		\item[Part 2.] Cone Comparison Principles for $C^{1,1}$ Finsler Norms
		\item[Part 3.] The General Case and Applications
		\item[Part 4.] Appendices
	\end{itemize}  

\subsubsection{Part 1} The first part is intended to efficiently introduce the main new ideas of the paper.  The hope is this can be readable and informative, even if the reader chooses not to consult the other parts.  It consists of Sections \ref{S: preliminaries}-\ref{S: conical test}.  

Section \ref{S: preliminaries} first reviews preliminary definitions, then goes on to present a warm-up example.  Specifically, this warm-up consists of an easy proof that $C^{2}$ $\varphi$-infinity subharmonic (resp.\ superharmonic) functions are in $CCA_{\varphi}$ (resp.\ $CCB_{\varphi}$) for an arbitrary Finsler norm $\varphi$.

Section \ref{S: regularized} presents the $C^{1,1}$ approximation theorem, Theorem \ref{T: regularized_norm}, stated already above.  The section motivates the approximation by (1) showing that the approximating norms are ``almost" strongly $\varphi$-superharmonic and (2) demonstrating that the cone comparison principle (Theorem \ref{T: cone comparison}) holds for general Finsler norms $\varphi$ provided certain generalized cone comparison results hold for  $C^{1,1}$ Finsler norms.  (The proof of the latter is taken up in Part 2.)

Section \ref{S: conical test} introduces the conical test functions that will be used throughout the remainder of the paper.  To motivate the construction, they are first discussed in the case when the ball $\{\varphi \leq 1\}$ is both of class $C^{1,1}$ and strictly convex, as in Proposition \ref{P: strictly convex game}.  As suggested already above, this leads to an easy proof of the cone comparison principle for such norms, which entirely avoids sup-convolutions and the maximum principle for semicontinuous functions.

\subsubsection{Part 2} The second part is the most technically demanding.  It establishes cone comparison-type results when the ball $\{\varphi \leq 1\}$ is of class $C^{1,1}$.  The main difficulty lies in testing subsolutions against the conical test functions introduced in Section \ref{S: conical test}, which is nontrivial because the latter are convex but usually not smooth.

Section \ref{S: pos hom} prepares the ground by presenting some needed regularity properties of nonnegative, positively one-homogeneous convex functions.  It can be skimmed on a first reading.

Section \ref{S: c11 setting} proves cone comparison-type results for $C^{1,1}$ Finsler norms.  The proof involves testing with the conical test functions introduced in Section \ref{S: conical test}.  To do this, what can be regarded as a version of Evans' perturbed test function method is applied: the conical test function is replaced by a suitable smooth perturbation.

Section \ref{S: key mountain} proves a key claim of Section \ref{S: c11 setting}, namely that useful information can be extracted from the perturbed test functions as the perturbation parameter goes to zero.  Here tools from convex analysis are used to tie together the geometric constructions developed earlier.

The proof of the cone comparison principle, Theorem \ref{T: cone comparison}, is already complete at the conclusion of Part 2.  It is worth reiterating that, up to this point of the paper, the maximum principle for semicontinuous functions is never used.

\subsubsection{Part 3} The third part of the paper extends the results of Part 2 to other equations involving the Finsler infinity Laplacian.  

Section \ref{S: single variable} clears the ground by reformulating the results of Part 2 so as to apply to any Finsler norm, and also extending from the cone $\varphi$ to ``curved cones" $h(\varphi)$.

The main step, which comprises Section \ref{S: elliptic bounded domains}, consists in extending from single-variable cone comparison results, involving maxima of functions $u(x) - h(\varphi(x))$, to double-variable results, involving maxima of functions $u(x) - v(y) - h(\varphi(x - y))$.  This is not exactly trivial, but it follows by combining the results of Part 2 with a version of the maximum principle for semicontinuous functions\footnote{Sometimes called Jensen's Lemma, Ishii's Lemma, or the Jensen-Ishii Lemma, see \cite{user,chen_giga_goto,silvestre_imbert}.} from \cite{ishii}.  In particular, this is the first time in the paper that this maximum principle is used.

Applications of the comparison principle are discussed in Section \ref{S: applications}.  Specifically, the comparison principle for the Dirichlet problem \eqref{E: naive} is proved (Theorem \ref{T: comparison dirichlet}), as well as the approximation by tug-of-war (Theorem \ref{T: tug of war convergence}).

\subsubsection{Part 4} The final part of the paper consists of appendices.  Appendix \ref{A: technical results} provides proofs of miscellaneous technical results used elsewhere in the paper.  Appendix \ref{A: perturbed test functions} gives a complete construction of the perturbed test functions from Section \ref{S: c11 setting}.

\subsection{Acknowledgements} This work was completed through the support of NSF grant DMS-2202715.

\subsection{Notation}  In $\mathbb{R}^{d}$, the Euclidean inner product between two vectors $v, w \in \mathbb{R}^{d}$ is denoted by $\langle v, w \rangle$, and $\|\cdot\|$ denotes the associated Euclidean norm.  

Given any $r > 0$ and $x_{*} \in \mathbb{R}^{d}$, the Euclidean open and closed balls centered at $x_{*}$ of radius $r$ and the sphere of radius $r$ are denoted as follows:
	\begin{gather*}
		B_{r}(x_{*}) = \{x \in \mathbb{R}^{d} \, \mid \, \|x - x_{*}\| < r\}, \quad \overline{B}_{r}(x_{*})= \{x \in \mathbb{R}^{d} \, \mid \, \|x - x_{*}\| \leq r\}, \\
	 \partial B_{r}(x_{*}) = \{x \in \mathbb{R}^{d} \, \mid \, \|x - x_{*}\| = r\}.
	\end{gather*}
The unit sphere in $\mathbb{R}^{d}$ is denoted by $S^{d-1}$, that is,
	\begin{equation*}
		S^{d-1} = \{e \in \mathbb{R}^{d} \, \mid \, \|e\| = 1\}.
	\end{equation*}
Given an $e \in S^{d-1}$, the associated halfspace is denoted by $H_{e}$:
	\begin{equation*}
		H_{e} = \{x \in \mathbb{R}^{d} \, \mid \, \langle x,e \rangle \geq 0\}.
	\end{equation*}

\textit{Set Operations:} Given a set $A \subseteq \mathbb{R}^{d}$, the closure of $A$ with respect to the Euclidean norm topology is denoted by $\overline{A}$.  The interior of $A$ with respect to this topology is denoted by $\text{int}(A)$, and the boundary by $\partial A$.

Given an open set $U \subseteq \mathbb{R}^{d}$, the notation $V \subset \subset U$ means that $\overline{V} \subseteq U$ and $\overline{V}$ is compact.

If $A \subseteq \mathbb{R}^{d}$, then $A^{\perp}$ is the linear subspace orthogonal to $A$, that is,
	\begin{equation*}
		A^{\perp} = \bigcap_{x \in A} \{v \in \mathbb{R}^{d} \, \mid \, \langle v, x \rangle = 0\}.
	\end{equation*} 
For any $v \in \mathbb{R}^{d}$, the sum $v + A$ is the set of all translates
	\begin{equation*}
		v + A = \{v + x \, \mid \, x \in A\}.
	\end{equation*}
The cone $\mathcal{C}(A)$ generated by $A$ is the set
	\begin{equation*}
		\mathcal{C}(A) = \{tx \, \mid \, x \in A, \, \, t \geq 0\}.
	\end{equation*}
	
For any nonempty set $A \subseteq \mathbb{R}^{d}$, the distance function $\text{dist}(\cdot,A) : \mathbb{R}^{d} \to [0,+\infty)$ is defined by 
	\begin{equation*}
		\text{dist}(x,A) = \min \left\{ \|x - y\| \, \mid \, y \in A \right\}.
	\end{equation*}

\textit{Matrices:} The space of symmetric $d \times d$ matrices is denoted by $\mathcal{S}_{d}$.  	Given a symmetric matrix $X \in \mathcal{S}_{d}$, the associated quadratic form is denoted by $Q_{X}$, that is,
	\begin{equation*}
		Q_{X}(v) = \langle X v, v \rangle \quad \text{for each} \quad v \in \mathbb{R}^{d}.
	\end{equation*}
The partial order $\leq$ on $\mathcal{S}_{d}$ is defined as usual at the level of quadratic forms: given $X, Y \in \mathcal{S}_{d}$, one has
	\begin{equation*}
		X \leq Y \quad \iff \quad Q_{X}(v) \leq Q_{Y}(v) \quad \text{for each} \quad v \in \mathbb{R}^{d}.
	\end{equation*}
	
Given vectors $v_{1},v_{2} \in \mathbb{R}^{d}$, the tensor product $v_{1} \otimes v_{2}$ is the linear operator on $\mathbb{R}^{d}$ defined by 
	\begin{equation*}
		(v_{1} \otimes v_{2}) w = \langle v_{2}, w \rangle v_{1} \quad \text{for each} \quad w \in \mathbb{R}^{d}.
	\end{equation*}
For a single vector $v$, the shorthand notation $v^{\otimes 2} = v \otimes v$ is also used.

\textit{Functions:} Given a set $E \subseteq \mathbb{R}^{d}$, $USC(E)$ denotes the set of all upper semicontinuous functions in $E$, while $LSC(E)$ denotes the lower semicontinuous functions.

\tableofcontents

%
%
%
\part{Two Tools: $C^{1,1}$ Approximation and Conical Test Functions} 

This part of the paper introduces two tools, (1) a $C^{1,1}$ approximation of Finsler norms that is appropriately adapted to the Finsler infinity Laplacian and (2) a family of conical test functions, which constitute useful barriers.  The presentation is meant to be expository and the emphasis is on motivating these tools in the context of cone comparison for Finsler infinity harmonic functions, or the proof of Theorem \ref{T: cone comparison}. 

\section{Preliminaries and a Warm-Up Example} \label{S: preliminaries}

Recall from Section \ref{S: lipschitz extension problem} that one of the main goals of this paper is to prove that if a function $u$ is $\varphi$-infinity subharmonic, that is, if $-G_{\varphi}^{*}(Du,D^{2}u) \leq 0$ in some open set $U$, then it possesses the comparison-with-cones property from above, or
	\begin{align*}
		\max \left\{ u(x) - \lambda \varphi(x - v) \, \mid \, x \in V \right\} = \max \left\{ u(x) - \lambda \varphi(x - v) \, \mid \, x \in \partial V \right\}
	\end{align*}
for any $V \subset \subset U$, $\lambda > 0$, and $v \in \mathbb{R}^{d} \setminus V$.  To clear the ground for the proof of this, it is instructive to consider the case when the subsolution $u$ is $C^{2}$.  As will be shown below, this setting is simpler since then $u$ can be regarded as a test function touching $\varphi$ and then it is only necessary to understand the viscosity-theoretic properties of $\varphi$.

	\begin{prop} \label{P: smooth subharmonic} Given an open set $U \subseteq \mathbb{R}^{d}$ and a Finsler norm $\varphi$ in $\mathbb{R}^{d}$, if $u$ is a $C^{2}$ function satisfying $-G_{\varphi}^{*}(Du,D^{2}u) \leq 0$ pointwise in $U$, then $u \in CCA_{\varphi}(U)$. \end{prop}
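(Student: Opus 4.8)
The plan is to proceed by contradiction. Fix $V \subset \subset U$, $\lambda > 0$, and $v \in \mathbb{R}^{d} \setminus V$, and write $w := \lambda \varphi(\cdot - v)$. Since $\max_{\overline{V}}(u - w) \geq \max_{\partial V}(u - w)$ is automatic, it suffices to rule out the strict inequality; so assume it holds. By compactness of $\overline{V}$ the left-hand maximum is attained, necessarily at an interior point $x_{0} \in V$, and then $w - u$ has a local minimum at $x_{0}$. Note $x_{0} \neq v$ because $v \notin V$.

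The first ingredient is that the cone $w$ is a strong supersolution. By Proposition \ref{P: strongly superharmonic}, together with translation invariance and the scaling identity $G_{\varphi}^{*}(\lambda p, \lambda X) = \lambda\, G_{\varphi}^{*}(p,X)$ for $\lambda > 0$ (a consequence of the positive $0$-homogeneity of $\partial \varphi^{*}$), one has $-G_{\varphi}^{*}(Dw, D^{2}w) \geq 0$ in the viscosity sense in $\mathbb{R}^{d} \setminus \{v\}$. Since $u$ is $C^{2}$ and $w - u$ has a local minimum at $x_{0} \neq v$, testing this supersolution inequality with $u$ yields $G_{\varphi}^{*}(Du(x_{0}), D^{2}u(x_{0})) \leq 0$.

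The second ingredient is a reduction to strict subsolutions. For $\gamma > 0$ put $u_{\gamma} := \Phi_{\gamma} \circ u$ with $\Phi_{\gamma}(t) = \gamma^{-1}(e^{\gamma t} - 1)$; this $\Phi_{\gamma}$ is smooth, increasing, strictly convex, and converges to the identity locally uniformly as $\gamma \to 0^{+}$. Using the standard fact that $q \in \partial \varphi^{*}(p)$ with $p \neq 0$ forces $\langle p, q \rangle = \varphi^{*}(p)$, a short computation gives
\[
	G_{\varphi}^{*}(Du_{\gamma}, D^{2}u_{\gamma}) = \Phi_{\gamma}'(u)\, G_{\varphi}^{*}(Du, D^{2}u) + \Phi_{\gamma}''(u)\, \varphi^{*}(Du)^{2} \quad \text{in } U
\]
(the term $\varphi^{*}(Du)^{2}$ vanishing where $Du = 0$, so the identity is valid there too). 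Hence $G_{\varphi}^{*}(Du_{\gamma}, D^{2}u_{\gamma}) \geq \epsilon_{V}\, \varphi^{*}(Du_{\gamma})^{2}$ on any bounded subdomain, for a constant $\epsilon_{V} > 0$. Since $u_{\gamma} \to u$ uniformly on $\overline{V}$ and $CCA_{\varphi}$ is stable under locally uniform limits (both maxima in Definition \ref{D: cone comparison} pass to the limit on the compact set $\overline{V}$), it is enough to establish comparison with cones for each $u_{\gamma}$, i.e.\ to treat a $C^{2}$ function $u$ satisfying $G_{\varphi}^{*}(Du, D^{2}u) \geq \epsilon\, \varphi^{*}(Du)^{2}$ with $\epsilon > 0$. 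I expect this reduction to be the crux: because $G_{\varphi}^{*}$ is only upper semicontinuous in the gradient, cruder perturbations such as $u + \tfrac{\gamma}{2}\langle x, x \rangle$ are useless (they shift $Du$), whereas $u \mapsto \Phi_{\gamma}(u)$ leaves $\partial \varphi^{*}(Du)$ unchanged, since $\partial \varphi^{*}$ is radially invariant. (One may alternatively invoke Lemma \ref{L: symmetries}.)

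It remains to close the strict case. Since $w - u$ has a local minimum at $x_{0}$ and $u$ is differentiable, $Du(x_{0}) \in \partial w(x_{0}) = \lambda\, \partial \varphi(x_{0} - v)$; as $x_{0} - v \neq 0$ and $0 \notin \partial \varphi(y)$ for $y \neq 0$ (because $\varphi(0) = 0 < \varphi(y)$), it follows that $Du(x_{0}) \neq 0$. The strict subsolution inequality then gives $G_{\varphi}^{*}(Du(x_{0}), D^{2}u(x_{0})) \geq \epsilon\, \varphi^{*}(Du(x_{0}))^{2} > 0$, contradicting $G_{\varphi}^{*}(Du(x_{0}), D^{2}u(x_{0})) \leq 0$ from the first ingredient. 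Therefore no interior maximum of $u - w$ can strictly exceed its boundary maximum, which is exactly comparison with cones from above; hence $u \in CCA_{\varphi}(U)$.
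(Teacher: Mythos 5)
Your proposal is correct and takes essentially the same route as the paper: reduce to strict subsolutions via a smooth increasing convex change of dependent variable that preserves $\partial\varphi^{*}(Du)$ (the paper uses $u+\tfrac{\epsilon}{2}u^{2}$ through Lemma \ref{L: strict subharmonic}/Lemma \ref{L: symmetries}, you use $\gamma^{-1}(e^{\gamma u}-1)$), then at an interior contact point test the strong supersolution property of the cone from Proposition \ref{P: strongly superharmonic} against the $C^{2}$ subsolution to obtain the sign contradiction. The remaining differences---keeping general $\lambda$ and $v$ via translation/scaling instead of normalizing, and getting $\varphi^{*}(Du(x_{0}))=\lambda>0$ from $Du(x_{0})\in\lambda\,\partial\varphi(x_{0}-v)$ rather than from the identity $\varphi^{*}(Du^{\epsilon}(x_{\epsilon}))=1$ in the paper's proof---are cosmetic.
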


As mentioned already in the introduction, the previous proposition follows easily from the next result, which says that $\varphi$ is, in some sense, ``strongly" $\varphi$-infinity superharmonic.

\stronglysuperharmonic*
	
		\begin{remark} Note the difference between the inequality here and the one in \eqref{E: supersolution class}: $\varphi$ satisfies the differential inequality with the operator $G_{\varphi}^{*}$ replacing $G_{*}^{\varphi}$, hence it is a stronger inequality.  \end{remark}
		
		\begin{remark} At a high level, Proposition \ref{P: strongly superharmonic} is the reason why Theorem \ref{T: cone comparison} is plausible.  Although this proposition will not be used explicitly elsewhere in the paper, the results that follow in Sections \ref{S: conical test} and \ref{S: c11 setting} can be understood as refined versions of it; see especially Remark \ref{R: strongly superharmonic again}. 
		
		Nonetheless, even with Proposition \ref{P: strongly superharmonic} in hand, the proof of cone comparison given below is far from trivial.  The essential difficulty stems from the fact that $G_{\varphi}^{*}$ is only upper semicontinuous, and, thus, \eqref{E: strongly superharmonic} is highly unstable.\end{remark}

	\begin{remark} Although it is well-known that $\varphi$ is itself infinity harmonic,\footnote{See \cite[Section 1.4]{aronsson_crandall_juutinen} or \cite[Lemma 3.5 and Proposition 4.7]{armstrong_crandall_julin_smart}.} Proposition \ref{P: strongly superharmonic} appears to be new.  It strengthens an observation from \cite{m_souganidis}: if $\varphi$ is $C^{2}$ away from the origin (or, equivalently, if $\{\varphi \leq 1\}$ is of class $C^{2}$), then
		\begin{equation*}
			-G_{\varphi}^{*}(D\varphi(x),D^{2}\varphi(x)) = -G_{*}^{\varphi}(D\varphi(x),D^{2}\varphi(x)) = 0 \quad \text{for each} \quad x \in \mathbb{R}^{d} \setminus \{0\}.
		\end{equation*}
	\end{remark}
	
	\begin{remark} It should be noted that, in general, the inequality $-G_{*}^{\varphi}(D\varphi,D^{2}\varphi) \leq 0$ also holds.  Hence it is tempting to say that $\varphi$ is also ``strongly subharmonic."  However, this is not an interesting inequality --- note that it is satisfied by \emph{any} convex function --- consistent with the fact that the definition of $CCA_{\varphi}(U)$ (i.e., Definition \ref{D: cone comparison}) treats $\varphi$ only as a supersolution.  Indeed, notice that the relevant comparison property in the definition of $CCB_{\varphi}(U)$ involves the ``inverted" function $x \mapsto -\varphi(-x)$, not $\varphi$!\end{remark}
	
\subsection{Definition of Viscosity Solutions} To avoid any ambiguity, and since the differential operators considered here are generally discontinuous, it will be worthwhile to recall the definition of viscosity sub- and super-solutions.  

Toward that end, fix a function $\mathcal{F} : \mathbb{R}^{d} \times \mathbb{R} \times \mathbb{R}^{d} \times \mathcal{S}_{d} \to \mathbb{R}$.  

	\begin{definition} Given an open set $U \subseteq \mathbb{R}^{d}$, an upper semicontinuous function $u \in USC(U)$ is said to satisfy the differential inequality 
		\begin{equation*}
			-\mathcal{F}(x,u,Du,D^{2}u) \leq 0 \quad \text{in the viscosity sense in} \, \, U,
		\end{equation*}
	if the following condition is satisfied: for any $x_{0} \in U$, and any smooth function $\varphi$ defined in a neighborhood of $x_{0}$, if $u - \varphi$ has a local maximum at $x_{0}$, then
		\begin{equation*}
			-\mathcal{F}(x_{0},u(x_{0}),D\varphi(x_{0}),D^{2}\varphi(x_{0})) \leq 0.
		\end{equation*}
	Alternatively, in this case, $u$ is referred to as a \emph{viscosity subsolution} of the equation $-\mathcal{F}(x,w,Dw,D^{2}w) = 0$ in $U$.
	 \end{definition}
	 	 
The definition of viscosity supersolution is analogous, except local maxima are replaced by local minima:

	\begin{definition} Given an open set $U \subseteq \mathbb{R}^{d}$, a lower semicontinuous function $v \in LSC(U)$ is said to satisfy the differential inequality 
		\begin{equation*}
			-\mathcal{F}(x,v,Dv,D^{2}v) \geq 0 \quad \text{in the viscosity sense in} \, \, U,
		\end{equation*}
	if the following condition is satisfied: for any $x_{0} \in U$, and any smooth function $\varphi$ defined in a neighborhood of $x_{0}$, if $v - \varphi$ has a local minimum at $x_{0}$, then
		\begin{equation*}
			-\mathcal{F}(x_{0},v(x_{0}),D\varphi(x_{0}),D^{2}\varphi(x_{0})) \geq 0.
		\end{equation*}
	Alternatively, in this case, $v$ is referred to as a \emph{viscosity supersolution} of the equation $-\mathcal{F}(x,w,Dw,D^{2}w) = 0$ in $U$.
	 \end{definition}
	 
\begin{remark} Except where explicitly stated otherwise, all differential inequalities in this work should be interpreted to hold in the viscosity sense.  The words ``in the viscosity sense" will usually be omitted. \end{remark}

\begin{remark} In the definition of viscosity subsolution, one usually requires that the operator $\mathcal{F}$ be at least upper semicontinuous; while, for supersolutions, $\mathcal{F}$ is usually lower semicontinuous; see, for instance, \cite[Section 6]{barles}.  Proposition \ref{P: strongly superharmonic} is a notable exception to the rule.  \end{remark}

For more information on the theory of viscosity solutions, see \cite{user, primer, barles}.
		
\subsection{Properties of Finsler Norms}  Basic facts about Finsler norms will be used throughout the paper, in particular in the proof of Proposition \ref{P: strongly superharmonic}.  For the reader's convenience, the most important of these are recalled here.\footnote{There does not seem to be a standard reference for these (with proofs), but see \cite{bellettini} and \cite{m_souganidis}.}

First, to avoid any ambiguity, recall that a function $\varphi : \mathbb{R}^{d} \to [0,+\infty)$ is a Finsler norm if it is convex, positively homogeneous, and positive away from zero.  Equivalently, $\varphi$ is a Finsler norm if, for any $x,y \in \mathbb{R}^{d}$ and $\lambda > 0$, \footnote{A Finsler norm $\varphi$ is \emph{symmetric} if $\varphi(-x) = \varphi(x)$ for all $x \in \mathbb{R}^{d}$, in which case it is a norm according to the the usual definition.}
	\begin{equation*}
		\varphi(\lambda x) = \lambda \varphi(x), \quad \varphi(x + y) \leq \varphi(x) + \varphi(y) \quad \text{for each} \quad x,y \in \mathbb{R}^{d}, \, \, \lambda > 0,
	\end{equation*}
and
	\begin{equation*}
		\min \left\{ \varphi(q) \, \mid \, q \in \partial B_{1}(0) \right\} > 0.
	\end{equation*}

Given a Finsler norm $\varphi$, the dual norm $\varphi^{*}$ is defined by 
	\begin{equation*}
		\varphi^{*}(p) = \sup \left\{ \frac{\langle p, q \rangle}{\varphi(q)} \, \mid \, q \in \mathbb{R}^{d} \setminus \{0\} \right\} = \max \left\{ \langle p, q \rangle \, \mid \, q \in \{\varphi \leq 1\} \right\}.
	\end{equation*}
It is not hard to see that $\varphi^{*}$ is also a Finsler norm and $(\varphi^{*})^{*} = \varphi$.  

Since it is precisely the subdifferential $\partial \varphi^{*}$ of $\varphi^{*}$ that appears in the definitions \eqref{E: upper def operator} and \eqref{E: lower def operator} of $G_{\varphi}^{*}$ and $G_{*}^{\varphi}$, it will be essential to understand how $\varphi^{*}$ relates to $\varphi$.  Specifically, in the paper, certain basic duality identities, described next, will play a recurring role.

Toward that end, first, observe that, by definition of $\varphi^{*}$,
	\begin{equation} \label{E: euler}
		\langle p, q \rangle \leq \varphi(q) \varphi^{*}(p) \quad \text{for each} \quad p, q \in \mathbb{R}^{d}.
	\end{equation}
It turns out that the subdifferential $\partial \varphi$ consists precisely of the points of $\{\varphi^{*} = 1\}$ for which equality holds above:
	\begin{equation} \label{E: subdifferential basic identity}
		\partial \varphi(q) = \left\{p \in \{\varphi^{*} = 1\} \, \mid \, \langle p, q \rangle = \varphi(q) \right\} \quad \text{for each} \quad q \in \mathbb{R}^{d} \setminus \{0\}.
	\end{equation}
When $\varphi(q) = 1$, this says that the supporting hyperplanes to $\{\varphi \leq 1\}$ at $q$ can be conveniently parametrized by the elements of the dual ball $\{\varphi^{*} \leq 1\}$ saturating \eqref{E: euler}. 

Since $\varphi$ is itself the dual of $\varphi^{*}$, a corresponding formula describes $\partial \varphi^{*}$:
	\begin{equation} \label{E: subdifferential basic identity dual}
		\partial \varphi^{*}(p) = \left\{ q \in \{\varphi = 1\} \, \mid \, \langle p, q \rangle = \varphi^{*}(p) \right\} \quad \text{for each} \quad p \in \mathbb{R}^{d} \setminus \{0\}.
	\end{equation}
In terms of the geometry of $\varphi$, this says that $\partial \varphi^{*}(p)$ equals the boundary face of $\{\varphi \leq 1\}$ with normal vector parallel to $p$.\footnote{See Section \ref{S: faces} for a precise definition of a face of a convex set.}

Throughout the paper, it will be useful to remember that $\partial \varphi$ and $\partial \varphi^{*}$ are inverses of each other (up to normalization): for any $p, q \in \mathbb{R}^{d} \setminus \{0\}$, 
	\begin{equation} \label{E: inversion}
		\varphi^{*}(p)^{-1}p \in \partial \varphi(q) \quad \iff \quad \varphi(q)^{-1}q \in \partial \varphi^{*}(p).
	\end{equation}
This follows directly from \eqref{E: subdifferential basic identity} and \eqref{E: subdifferential basic identity dual}.  Furthermore, the subdifferential is positively zero-homogeneous:
	\begin{equation} \label{E: zero hom}
		\partial \varphi(\lambda q) = \partial \varphi(q) \quad \text{for each} \quad q \in \mathbb{R}^{d}, \, \, \lambda > 0.
	\end{equation}

\subsection{Strong Supersolution Property} This section proves Proposition \ref{P: strongly superharmonic}, asserting that the Finsler norm $\varphi$ is ``strongly" $\varphi$-infinity superharmonic.

Throughout the paper, it will be useful to exploit certain concepts from convex analysis.  Already in the proof of Proposition \ref{P: strongly superharmonic}, convex cones appear naturally.  To begin with, given a set $A \subseteq \mathbb{R}^{d}$, define the cone $\mathcal{C}(A)$ it generates by
	\begin{equation*}
		\mathcal{C}(A) = \{\lambda x \, \mid \, x \in A, \, \, \lambda \geq 0\}.
	\end{equation*}
Note that if $E \subseteq \mathbb{R}^{d}$ is convex, then $\mathcal{C}(E)$ is a convex cone, that is, 
	\begin{equation} \label{E: cone thing}
		\lambda_{1} x + \lambda_{2} y \in \mathcal{C}(E) \quad \text{for each} \quad x,y \in \mathcal{C}(E), \, \, \lambda_{1}, \lambda_{2} \geq 0.
	\end{equation}

Of particular interest in this work are the cones $\{\mathcal{C}(\partial \varphi^{*}(p)) \, \mid \, p \in \mathbb{R}^{d} \setminus \{0\}\}$ determined by $\partial \varphi^{*}$.  Since $\partial \varphi^{*}(p)$ is a convex set for any $p$, the set $\mathcal{C}(\partial \varphi^{*}(p))$ is a convex cone.  By \eqref{E: subdifferential basic identity dual} and positive homogeneity, these cones cover $\mathbb{R}^{d}$:
	\begin{equation} \label{E: covering}
		\mathbb{R}^{d} = \bigcup_{p \in \mathbb{R}^{d} \setminus \{0\}} \mathcal{C}(\partial \varphi^{*}(p)).
	\end{equation}

\begin{proof}[Proof of Proposition \ref{P: strongly superharmonic}] Fix $x_{0} \in \mathbb{R}^{d} \setminus \{0\}$ and let $\psi$ be a smooth function defined in a neighborhood of $x_{0}$.  Suppose that $\varphi - \psi$ has a local minimum at $x_{0}$.  

After a Taylor expansion at $x_{0}$, one finds
	\begin{equation} \label{E: taylor part}
		\varphi(x) \geq \varphi(x_{0}) + \langle D\psi(x_{0}), x - x_{0} \rangle + \frac{1}{2} \langle D^{2}\psi(x_{0})(x - x_{0}), x - x_{0} \rangle + o(\|x - x_{0}\|^{2}).
	\end{equation}
From this, it readily follows that $D\psi(x_{0}) \in \partial \varphi(x_{0})$, hence, by \eqref{E: subdifferential basic identity} and \eqref{E: inversion},
	\begin{gather}
		\varphi^{*}(D\psi(x_{0})) = 1, \quad \varphi^{*}(x_{0})^{-1} x_{0} \in \partial \varphi^{*}(D\psi(x_{0})), \quad \text{and} \nonumber \\
		\langle D\psi(x_{0}), q \rangle = 1 \quad \text{for each} \quad q \in \partial \varphi^{*}(D\psi(x_{0})). \label{E: useful subdifferential part 1}
	\end{gather}
Notice that this implies $x_{0} \in \mathcal{C}(\partial \varphi^{*}(D\psi(x_{0})))$.
	
Consider the cone $\mathcal{C}(\partial \varphi^{*}(D\psi(x_{0})))$ determined by the direction $D\psi(x_{0})$.  Observe that $\varphi$ is linear in $\mathcal{C}(\partial \varphi^{*}(D\psi(x_{0})))$, that is,
	\begin{equation} \label{E: linear thing}
		\varphi(x) = \langle D\psi(x_{0}), x \rangle \quad \text{for each} \quad x \in \mathcal{C}(\partial \varphi^{*}(D\psi(x_{0}))).
	\end{equation}
Indeed, if $q \in \partial \varphi^{*}(D\psi(x_{0}))$ and $\lambda \geq 0$, then \eqref{E: subdifferential basic identity dual} and \eqref{E: useful subdifferential part 1} together imply
	\begin{equation*}
		\varphi(\lambda q) = \lambda \varphi(q) = \lambda = \lambda \langle q, D\psi(x_{0}) \rangle = \langle \lambda q, D\psi(x_{0}) \rangle.
	\end{equation*}
Now \eqref{E: linear thing} follows from this and the definition of $\mathcal{C}(\partial \varphi^{*}(D\psi(x_{0})))$.

Next, notice that if $q \in \partial \varphi^{*}(D\psi(x_{0}))$ and $t > 0$, then $x_{0} + t q \in \mathcal{C}(\partial \varphi^{*}(D\psi(x_{0})))$.  This is immediate from \eqref{E: cone thing} since $\{x_{0}\} \cup \partial \varphi^{*}(D\psi(x_{0})) \subseteq \mathcal{C}(\partial \varphi^{*}(D\psi(x_{0})))$.

Finally, setting $x = x_{0} + tq$ with $t > 0$ and $q \in \partial \varphi^{*}(D\psi(x_{0}))$, the identity \eqref{E: linear thing} combines with \eqref{E: taylor part} and \eqref{E: useful subdifferential part 1} to yield
	\begin{align*}
		\varphi(x_{0}) + t = \varphi(x_{0} + t q) &\geq \varphi(x_{0}) + t\langle D\psi(x_{0}), q \rangle + \frac{t^{2}}{2} \langle D^{2}\psi(x_{0})q, q \rangle + o(t^{2}) \\
			&= \varphi(x_{0}) + t + \frac{t^{2}}{2} \langle D^{2}\psi(x_{0}) q, q \rangle + o(t^{2}).
	\end{align*}
This proves that 
	\begin{equation*}
		- \langle D^{2}\psi(x_{0}) q, q \rangle \geq 0,
	\end{equation*}
and, thus, by the arbitrariness of $q$ and the definition \eqref{E: upper def operator} of $G_{\varphi}^{*}$,
	\begin{equation*}
		- G^{*}_{\varphi}(D\psi(x_{0}),D^{2}\psi(x_{0})) = \min \left\{ -\langle D^{2}\psi(x_{0})q, q \rangle \, \mid \, q \in \partial \varphi^{*}(D\psi(x_{0})) \right\} \geq 0.
	\end{equation*}
\end{proof}

\subsection{Cone Comparison for $C^{2}$ Solutions} With Proposition \ref{P: strongly superharmonic} in hand, the proof of comparison for $C^{2}$ subharmonic functions is now relatively easily obtained.  The only other ingredient that will be used concerns approximation by strictly subharmonic functions.

	\begin{lemma} \label{L: strict subharmonic} Fix an open set $U \subseteq \mathbb{R}^{d}$, an $x_{0} \in U$, and a Finsler norm $\varphi : \mathbb{R}^{d} \to [0,+\infty)$.  If $u$ is a bounded upper semicontinuous function in $U$ such that $-G_{\varphi}^{*}(Du,D^{2}u) \leq 0$ in $U$ and $V \subset \subset U$, then there is a constant $\epsilon_{*} = \epsilon_{*}(u,V) > 0$ such that if $(u^{\epsilon})_{\epsilon > 0}$ are defined by 
		\begin{equation} \label{E: strictness}
			u^{\epsilon}(x) = u(x) + \frac{\epsilon}{2} u(x)^{2},
		\end{equation}
	then, for each $\epsilon \in (0,\epsilon_{*})$,
		\begin{equation*}
			\frac{1}{2} \epsilon \varphi^{*}(Du^{\epsilon})^{2} -G_{\varphi}^{*}(Du^{\epsilon},D^{2}u^{\epsilon})  \leq 0 \quad \text{in} \, \, V.
		\end{equation*}\end{lemma}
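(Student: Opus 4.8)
The plan is to derive the strict inequality for $u^{\epsilon}$ from the subsolution property of $u$ using the nonlinear change of variables $r \mapsto r + \tfrac{\epsilon}{2} r^2$, which is a smooth, strictly increasing, strictly convex reparametrization on any bounded interval once $\epsilon$ is small enough. The key point is that viscosity subsolution properties are preserved (up to a computable correction) under composition with a smooth increasing function: if $u - \psi$ has a local maximum at $x_0$ and $\Phi(r) = r + \tfrac{\epsilon}{2} r^2$, then $u^{\epsilon} - \Phi(\psi(\cdot))$ need not have a local maximum there, but instead one rewrites things so that $u$ is tested against $\Phi^{-1}(\tilde\psi)$ for a test function $\tilde\psi$ touching $u^{\epsilon}$ from above. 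So the first step is to fix $V \subset\subset U$, let $M = \sup_{V}|u| < \infty$ (finite by boundedness), and choose $\epsilon_{*}$ so small that $\Phi'(r) = 1 + \epsilon r \geq \tfrac12$ for all $|r| \leq M$ and all $\epsilon \in (0,\epsilon_*)$; this keeps $\Phi$ a diffeomorphism onto its image with controlled derivatives on the relevant range.

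Next I would do the pointwise chain-rule computation at a contact point. Suppose $x_0 \in V$ and a smooth $\psi$ is such that $u^{\epsilon} - \psi$ has a local maximum at $x_0$. Write $u^{\epsilon} = \Phi \circ u$, so that $u - \Phi^{-1}\circ\psi$ has a local maximum at $x_0$ as well (since $\Phi^{-1}$ is increasing). Let $\eta = \Phi^{-1}\circ \psi$, a smooth function near $x_0$ with $u - \eta$ attaining a local max at $x_0$ and $\eta(x_0) = u(x_0)$. By the subsolution property of $u$, $G_{\varphi}^{*}(D\eta(x_0), D^2\eta(x_0)) \geq 0$. Now compute: with $a = \Phi'(u(x_0)) = 1 + \epsilon u(x_0) > 0$ and $b = \Phi''(u(x_0)) = \epsilon$, the relations $\psi = \Phi(\eta)$ give $D\psi(x_0) = a\, D\eta(x_0)$ and $D^2\psi(x_0) = a\, D^2\eta(x_0) + b\, D\eta(x_0)\otimes D\eta(x_0)$. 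Since $\partial\varphi^{*}$ is positively zero-homogeneous, $\partial\varphi^{*}(D\psi(x_0)) = \partial\varphi^{*}(D\eta(x_0))$, so for any $q$ in this common subdifferential, using \eqref{E: useful subdifferential part 1}-type identities (namely $\langle D\eta(x_0), q\rangle = \varphi^{*}(D\eta(x_0))$ after appropriate normalization, or more directly the definition of the dual norm), one gets
\begin{equation*}
	\langle D^2\psi(x_0)\, q, q\rangle = a \langle D^2\eta(x_0)\, q, q\rangle + \epsilon \langle D\eta(x_0), q\rangle^{2}.
\end{equation*}
Taking the maximum over $q \in \partial\varphi^{*}(D\psi(x_0))$ and using $\langle D\eta(x_0), q\rangle = \varphi^{*}(D\eta(x_0)) \cdot \langle \text{(unit)}, \cdot\rangle$, one identifies $\max_q \langle D\eta(x_0),q\rangle^2 = \varphi^{*}(D\eta(x_0))^2$ on the subdifferential (since $\varphi(q) = 1$ there and the pairing saturates), while $\varphi^{*}(D\psi(x_0)) = a\,\varphi^{*}(D\eta(x_0))$. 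This yields $G_{\varphi}^{*}(D\psi(x_0), D^2\psi(x_0)) \geq a\, G_{\varphi}^{*}(D\eta(x_0),D^2\eta(x_0)) + \epsilon\, \varphi^{*}(D\eta(x_0))^2 \geq \epsilon a^{-2} \varphi^{*}(D\psi(x_0))^2 \geq \tfrac12 \epsilon\, \varphi^{*}(D\psi(x_0))^2$ once $a \geq 1$; to handle the case $u(x_0) < 0$ where $a < 1$ one instead keeps the bound $\varphi^{*}(D\eta(x_0))^2 = a^{-2}\varphi^{*}(D\psi(x_0))^2 \geq \varphi^{*}(D\psi(x_0))^2$ when $a \le 1$, and in general $\epsilon a^{-1}\varphi^*(D\psi(x_0))^2 \ge \tfrac12 \epsilon \varphi^*(D\psi(x_0))^2$ whenever $a \le 2$, which holds on $(0,\epsilon_*)$. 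Rearranged, this is exactly $\tfrac12\epsilon\,\varphi^{*}(Du^{\epsilon})^2 - G_{\varphi}^{*}(Du^{\epsilon}, D^2 u^{\epsilon}) \leq 0$ at $x_0$ in the viscosity sense.

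There is one subtlety worth flagging as the main obstacle: the argument above implicitly used the identity $\langle D\eta(x_0), q\rangle = \varphi^{*}(D\eta(x_0))$ for $q$ normalized so that $\varphi(q) = 1$, i.e.\ $q \in \partial\varphi^{*}(D\eta(x_0))$, which is precisely \eqref{E: subdifferential basic identity dual}; combined with zero-homogeneity \eqref{E: zero hom} this transfers cleanly between $\partial\varphi^{*}(D\psi(x_0))$ and $\partial\varphi^{*}(D\eta(x_0))$. The only genuinely delicate point is the degenerate case $D\eta(x_0) = 0$ (equivalently $D\psi(x_0) = 0$): then $\partial\varphi^{*}(0) = \{\varphi \leq 1\}$ and $\varphi^{*}(0) = 0$, so the claimed inequality reads $0 - G_{\varphi}^{*}(0, D^2 u^{\epsilon}) \leq 0$, i.e.\ $G_{\varphi}^{*}(0, D^2\psi(x_0)) \geq 0$; but $D^2\psi(x_0) = a D^2\eta(x_0)$ with $a > 0$ since the rank-one term drops out, and $G_{\varphi}^{*}$ is positively one-homogeneous in $X$ (being a max of linear functionals in $X$), so this follows directly from $G_{\varphi}^{*}(0, D^2\eta(x_0)) \geq 0$, which is the subsolution property of $u$ at $x_0$. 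With the degenerate case thus disposed of and the nondegenerate computation as above, setting $\epsilon_{*} = \epsilon_{*}(u, V)$ determined by the bound $1 + \epsilon M \leq 2$ and $1 + \epsilon r \ge \tfrac12$ on $|r| \le M$ (e.g.\ $\epsilon_* = (M+1)^{-1}$) completes the proof.
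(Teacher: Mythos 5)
Your argument is essentially the paper's proof: the paper does the same chain-rule computation in the $C^{2}$ case, obtaining $G_{\varphi}^{*}(Du^{\epsilon},D^{2}u^{\epsilon}) = \epsilon\varphi^{*}(Du)^{2} + (1+\epsilon u)G_{\varphi}^{*}(Du,D^{2}u) \geq \epsilon(1+\epsilon u)^{-2}\varphi^{*}(Du^{\epsilon})^{2}$, and then handles the general case exactly as you do, by noting that $s\mapsto s+\tfrac{\epsilon}{2}s^{2}$ is a diffeomorphism near the range of $u$ and passing derivatives onto the test function; your treatment of the degenerate case $D\eta(x_{0})=0$ and the use of \eqref{E: subdifferential basic identity dual} and \eqref{E: zero hom} are fine. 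The only flaw is the final constant-tracking: since $\varphi^{*}(D\eta(x_{0}))^{2}=a^{-2}\varphi^{*}(D\psi(x_{0}))^{2}$, you need $a^{-2}\geq\tfrac12$, i.e.\ $a=1+\epsilon u(x_{0})\leq\sqrt{2}$; the step replacing $a^{-2}$ by $a^{-1}$ is unjustified, and with your choice $\epsilon_{*}=(M+1)^{-1}$ you only get $a<2$, hence the weaker bound $G_{\varphi}^{*}(D\psi,D^{2}\psi)\geq\tfrac{\epsilon}{4}\varphi^{*}(D\psi)^{2}$. This is trivially repaired by taking $\epsilon_{*}$ smaller, e.g.\ $\epsilon_{*}=(\sqrt{2}-1)/(M+1)$, which is permitted since $\epsilon_{*}$ may depend on $u$ and $V$.
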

		
Note that above it is no loss of generality to consider bounded $\varphi$-infinity subharmonic functions.  Indeed, if $u$ is $\varphi$-infinity subharmonic in $U$, then it is bounded above in any compact subset by upper semi-continuity, and the functions $u_{K} = \max\{u,-K\}$ are thus bounded $\varphi$-infinity subharmonic functions that approximate $u$.  Furthermore, if $u_{K}$ satisfies comparison with cones from above for any $K \in \mathbb{N}$, then $u$ also has this property.

			\begin{proof} First, suppose that $u$ is $C^{2}$.  With $u^{\epsilon}$ as defined above, one differentiates to find, for all $x$ such that $1 + \epsilon u(x) > 0$,
				\begin{equation*}
					\varphi^{*}(Du^{\epsilon}(x)) = (1 + \epsilon u(x)) \varphi^{*}(Du(x)), \quad D^{2}u^{\epsilon}(x) = (1 + \epsilon u(x)) D^{2} u(x) + \epsilon Du(x) \otimes Du(x),
				\end{equation*}
			hence, by \eqref{E: subdifferential basic identity dual} and the definition \eqref{E: upper def operator},
				\begin{align*}
					-G_{\varphi}^{*}(Du^{\epsilon}(x),D^{2}u^{\epsilon}(x)) &= -\epsilon \varphi^{*}(Du(x))^{2} - (1 + \epsilon u(x)) G_{\varphi}^{*}(Du(x),D^{2}u(x)) \\
						&\leq -\epsilon (1 + \epsilon u(x))^{-2} \varphi^{*}(Du^{\epsilon}(x))^{2}.
				\end{align*}
			In particular, if $\epsilon$ is small enough,
				\begin{equation*}
					\frac{1}{2} \epsilon \varphi^{*}(Du^{\epsilon}(x))^{2} -G_{\varphi}^{*}(Du^{\epsilon}(x),D^{2}u^{\epsilon}(x)) \leq 0.
				\end{equation*}
			
			In general, even if $u$ is not $C^{2}$, one argues by passing the derivatives onto the test function and observing that, for any $\epsilon$ small enough, the transformation $s \mapsto s + \frac{\epsilon}{2} s^{2}$ restricts to a diffeomorphism in a neighborhood of the range of $u$.\footnote{Cf.~ Proof of \cite[Theorem 2.8]{evans_spruck}.}  \end{proof}

	\begin{proof}[Proof of Proposition \ref{P: smooth subharmonic}]  The proof proceeds by contradiction. Fix an open set $V \subset \subset U$.  Suppose that there is a point $x_{0} \in V$, constants $\lambda,\mu > 0$, and a vector $v \in \mathbb{R}^{d} \setminus V$ such that 
		\begin{align} 
			u(x_{0}) - \lambda \varphi(x_{0} - v) &= \max \left\{ u(x) - \lambda \varphi(x - v) \, \mid \, x \in \overline{V} \right\} \label{E: touching point 1} \\
				&> \max\left\{u(x) - \lambda \varphi(x - v) \, \mid \, x \in \partial V \right\}. \nonumber
		\end{align}
	Up to replacing $u$ by $\lambda^{-1} u$, there is no loss of generality assuming that $\lambda = 1$.  Further, it is safe to assume that $v = 0$ as otherwise one can replace $U$ by $U - v$, $x_{0}$ by $x_{0} - v$, $u$ by $u(\cdot + v)$, and so on.
	
	Let $(u^{\epsilon})_{\epsilon > 0}$ be the family defined by \eqref{E: strictness} and let $\epsilon_{*} = \epsilon_{*}(u,\mu) > 0$ be as in Lemma \ref{L: strict subharmonic}.  Choose a family $(x_{\epsilon})_{\epsilon > 0} \subseteq \overline{V}$ such that 
		\begin{equation*}
			u^{\epsilon}(x_{\epsilon}) - \varphi(x_{\epsilon}) = \max \left\{ u^{\epsilon}(x) - \varphi(x) \, \mid \, x \in \overline{V} \right\}.
		\end{equation*}
	By \eqref{E: touching point 1}, there is no loss of generality assuming (i.e., up to extraction) that there is an $x_{*} \in V$ such that $x_{\epsilon} \to x_{*}$ as $\epsilon \to 0^{+}$.  When $\epsilon$ is small enough, there holds $x_{\epsilon} \in V$, and, thus, by the lemma,
		\begin{equation*}
			\frac{1}{2} \epsilon \varphi^{*}(Du^{\epsilon}(x_{\epsilon}))^{2} -G_{\varphi}^{*}(Du^{\epsilon}(x_{\epsilon}),D^{2}u^{\epsilon}(x_{\epsilon}))  \leq 0.
		\end{equation*}
	At the same time, $\varphi - u^{\epsilon}$ has a local minimum at $x_{\epsilon}$.  Hence, by Proposition \ref{P: strongly superharmonic} and its proof,
		\begin{equation*}
			-G_{\varphi}^{*}(Du^{\epsilon}(x_{\epsilon}),D^{2}u^{\epsilon}(x_{\epsilon}) \geq 0, \quad \varphi^{*}(Du^{\epsilon}(x_{\epsilon})) = 1.
		\end{equation*}
	(Here is where smoothness of $u^{\epsilon}$ is used.) In particular,
		\begin{equation*}
			0 < \frac{1}{2} \epsilon \leq \frac{1}{2} \epsilon \varphi^{*}(Du^{\epsilon}(x_{0}))^{2} -G_{\varphi}^{*}(Du^{\epsilon}(x_{0}),D^{2}u^{\epsilon}(x_{0})) \leq 0, 
		\end{equation*}
	which is absurd. \end{proof}
	
Later in the paper, it will again be convenient to use the symmetries of the equation in conjunction with Lemma \ref{L: strict subharmonic} to reduce the proof of cone comparison to a simpler statement.  The following lemma makes this explicit:

	\begin{lemma} \label{L: symmetries} Let $\varphi$ be a Finsler norm in $\mathbb{R}^{d}$.  Then $\text{Sub}_{\varphi}(U) \subseteq CCA_{\varphi}(U)$ for all open sets $U$ if the following statement holds: for any open, bounded set $V \subseteq \mathbb{R}^{d} \setminus \{0\}$, any $\epsilon > 0$, and any $u^{\epsilon} \in USC(\overline{V})$ such that $\frac{1}{2} \epsilon \varphi^{*}(Du^{\epsilon})^{2} - G_{\varphi}^{*}(Du^{\epsilon},D^{2}u^{\epsilon}) \leq 0$ in $V$, there holds
		\begin{equation*}
			\max \left\{ u^{\epsilon}(x) - \varphi(x) \, \mid \, x \in \overline{V} \right\} = \max \left\{ u^{\epsilon}(x) - \varphi(x) \, \mid \, x \in \partial V \right\}.
		\end{equation*}
	\end{lemma}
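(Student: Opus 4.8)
The plan is to show that the displayed statement is exactly what remains of Lemma \ref{L: symmetries} once the translation and scaling symmetries of the problem have been used up, so the argument consists of a chain of normalizations followed by a contradiction against the hypothesized statement.

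First I would reduce to \emph{bounded} subsolutions. As noted in the discussion following Lemma \ref{L: strict subharmonic}, if $u \in \text{Sub}_{\varphi}(U)$ then the truncations $u_{K} = \max\{u,-K\}$ again lie in $\text{Sub}_{\varphi}(U)$ (a maximum of subsolutions is a subsolution, and constants are subharmonic), are bounded below by $-K$ and above by upper semicontinuity on compacts, and $u \in CCA_{\varphi}(U)$ as soon as every $u_{K}$ is; so it suffices to treat bounded $u$. Next, argue by contradiction: suppose a bounded $u \in \text{Sub}_{\varphi}(U)$ fails comparison with cones from above, so there are $V \subset\subset U$, $\lambda > 0$, and $v \in \mathbb{R}^{d}\setminus V$ with $\max_{\overline{V}}(u - \lambda\varphi(\cdot - v)) > \max_{\partial V}(u - \lambda\varphi(\cdot - v))$. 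I would then strip off the two parameters $\lambda$ and $v$. Replacing $u$ by $\lambda^{-1}u$ keeps us in $\text{Sub}_{\varphi}(U)$ because $G_{\varphi}^{*}$ is positively one-homogeneous jointly in $(p,X)$: indeed $\partial\varphi^{*}(tp) = \partial\varphi^{*}(p)$ for $t > 0$ by \eqref{E: zero hom} (applied to $\varphi^{*}$), so $G_{\varphi}^{*}(tp,tX) = tG_{\varphi}^{*}(p,X)$, and testing against $\lambda\psi$ shows $\lambda^{-1}u$ is a subsolution; moreover this turns the strict inequality into one with $\lambda = 1$. Translating by $-v$, i.e.\ replacing $U$, $u$, $V$ by $U - v$, $u(\cdot + v)$, $V - v$, leaves subharmonicity intact since the operator has no explicit $x$-dependence, produces the inequality with $v = 0$, and $V - v$ is still an open bounded subset of $\mathbb{R}^{d}\setminus\{0\}$ precisely because $v \notin V$. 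After relabeling, we have a bounded $u \in \text{Sub}_{\varphi}(U)$ with $V \subset\subset U$ open bounded, $0 \notin V$, and $\max_{\overline{V}}(u - \varphi) > \max_{\partial V}(u - \varphi)$.

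Then I would invoke Lemma \ref{L: strict subharmonic} with this $V$ to obtain $\epsilon_{*} > 0$ such that $u^{\epsilon} = u + \frac{\epsilon}{2}u^{2}$ satisfies $\frac{1}{2}\epsilon\varphi^{*}(Du^{\epsilon})^{2} - G_{\varphi}^{*}(Du^{\epsilon},D^{2}u^{\epsilon}) \leq 0$ in $V$ for every $\epsilon \in (0,\epsilon_{*})$; after further shrinking $\epsilon_{*}$ below $\|u\|_{L^{\infty}(\overline{V})}^{-1}$, the map $s \mapsto s + \frac{\epsilon}{2}s^{2}$ is increasing on the range of $u|_{\overline{V}}$, hence $u^{\epsilon} \in USC(\overline{V})$. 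Since $u$ is bounded, $u^{\epsilon} \to u$ uniformly on $\overline{V}$, so $\max_{\overline{V}}(u^{\epsilon} - \varphi) \to \max_{\overline{V}}(u - \varphi)$ and $\max_{\partial V}(u^{\epsilon} - \varphi) \to \max_{\partial V}(u - \varphi)$; the strictness in the limit persists for small $\epsilon$, giving $\max_{\overline{V}}(u^{\epsilon} - \varphi) > \max_{\partial V}(u^{\epsilon} - \varphi)$. This directly contradicts the hypothesized statement applied to $u^{\epsilon}$ and $V$, finishing the proof.

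I do not expect a genuine obstacle: the whole lemma is bookkeeping meant to isolate the essential scenario (strict subsolution, $\lambda = 1$, $v = 0$, domain avoiding the origin). The only two points that deserve a sentence of care are confirming the joint positive one-homogeneity of $G_{\varphi}^{*}$ used in the $\lambda$-normalization, and making sure the reduction to bounded $u$ really precedes the passage to $u^{\epsilon}$, since it is exactly boundedness that makes $u^{\epsilon} \to u$ uniform and thus lets the strict inequality be transferred.
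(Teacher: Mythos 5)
Your proof is correct and follows exactly the route the paper indicates (the proof is omitted there as "following readily from Lemma \ref{L: strict subharmonic}, the symmetries of the equation, and arguments similar to Proposition \ref{P: smooth subharmonic}"): truncate to reduce to bounded subsolutions, normalize $\lambda$ and $v$ by the homogeneity and translation invariance of $G_{\varphi}^{*}$, then apply Lemma \ref{L: strict subharmonic} and contradict the hypothesized statement. Your use of uniform convergence $u^{\epsilon}\to u$ on $\overline{V}$ to transfer the strict inequality is a harmless (and clean) variant of the maximizer-extraction step in the proof of Proposition \ref{P: smooth subharmonic}.
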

	
		
Lemma \ref{L: symmetries} follows readily from Lemma \ref{L: strict subharmonic}, the symmetries of the equation, and arguments similar to those presented in the proof of Proposition \ref{P: smooth subharmonic}.  Accordingly, the proof is omitted. 

\subsection{Two More Reductions} Before proceeding to introduce the tools that will be used to complete the proof of cone comparison, it will be worthwhile to note that, throughout the paper, there is no loss in focusing attention on subsolutions rather than supersolutions.  Indeed, it is possible to transform any $\varphi$-infinity superharmonic function into a function that is infinity subharmonic for the inverted norm $\underline{\varphi}$ defined below.  This easy computation is made precise in the next proposition for completeness:

	\begin{lemma} \label{L: reduction to subsolutions} Given any Finsler norm $\varphi$ in $\mathbb{R}^{d}$, let $\underline{\varphi}$ be the inverted norm, that is, the Finsler norm given by 
		\begin{equation*}
			\underline{\varphi}(q) = \varphi(-q) \quad \text{for each} \quad q \in \mathbb{R}^{d}.
		\end{equation*}
	Given any open set $U \subseteq \mathbb{R}^{d}$, if $\mathcal{N} : LSC(U) \to USC(U)$ is the map
		\begin{equation*}
			\mathcal{N}u(x) = -u(x) \quad \text{for each} \quad x \in U,
		\end{equation*}
	then
		\begin{equation*}
			\mathcal{N}(\text{Super}_{\varphi}(U)) = \text{Sub}_{\underline{\varphi}}(U), \quad \mathcal{N}(CCB_{\varphi}(U)) = CCA_{\underline{\varphi}}(U).
		\end{equation*}
	In particular, $\text{Super}_{\varphi}(U) \subseteq CCB_{\varphi}(U)$ if and only if $\text{Sub}_{\underline{\varphi}}(U) \subseteq CCA_{\underline{\varphi}}(U)$.
	\end{lemma}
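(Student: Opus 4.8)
The statement to prove is Lemma \ref{L: reduction to subsolutions}, which says that the involution $u \mapsto -u$ interchanges supersolutions for $\varphi$ with subsolutions for the inverted norm $\underline{\varphi}$, and likewise for the comparison-with-cones classes.

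\medskip

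The plan is to verify both identities by unwinding definitions, the key computation being how the operators $G_{\varphi}^{*}, G_{*}^{\varphi}$ and the subdifferential $\partial \varphi^{*}$ transform under the two sign changes $\varphi \rightsquigarrow \underline{\varphi}$ and $u \rightsquigarrow -u$. First I would record the elementary duality fact that $\underline{\varphi}^{*}(p) = \varphi^{*}(-p)$ for all $p$; this follows immediately from the formula $\varphi^{*}(p) = \max\{\langle p,q\rangle \mid \varphi(q)\leq 1\}$ together with the change of variables $q \mapsto -q$, which sends $\{\varphi \leq 1\}$ to $\{\underline{\varphi}\leq 1\}$. Differentiating (or arguing directly from the definition of the subdifferential as a set of maximizers, cf.\ \eqref{E: subdifferential basic identity dual}), this yields the companion identity $\partial \underline{\varphi}^{*}(p) = -\partial \varphi^{*}(-p)$ for $p \neq 0$. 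Substituting into \eqref{E: upper def operator} and \eqref{E: lower def operator} and using that $\langle X(-q),(-q)\rangle = \langle Xq,q\rangle$, one obtains the clean symmetry
\begin{equation*}
	G_{\underline{\varphi}}^{*}(p,X) = G_{\varphi}^{*}(-p,X), \qquad G_{*}^{\underline{\varphi}}(p,X) = G_{*}^{\varphi}(-p,X).
\end{equation*}

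\medskip

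Next I would handle the PDE identity $\mathcal{N}(\text{Super}_{\varphi}(U)) = \text{Sub}_{\underline{\varphi}}(U)$. Since $\mathcal{N}$ is an involution it suffices to show one inclusion, say that $v \in \text{Super}_{\varphi}(U)$ implies $-v \in \text{Sub}_{\underline{\varphi}}(U)$. Let $\psi$ be a smooth test function and suppose $(-v) - \psi$ has a local maximum at $x_{0} \in U$; equivalently $v - (-\psi)$ has a local minimum at $x_{0}$. By the supersolution property of $v$, $-G_{*}^{\varphi}(D(-\psi)(x_{0}), D^{2}(-\psi)(x_{0})) \geq 0$, i.e.\ $-G_{*}^{\varphi}(-D\psi(x_{0}), -D^{2}\psi(x_{0})) \geq 0$. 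Now observe from \eqref{E: lower def operator} that $G_{*}^{\varphi}(p,-X) = -G_{\varphi}^{*}(p,X)$ (the min of $\langle -Xq,q\rangle$ over $q$ is minus the max of $\langle Xq,q\rangle$), so this reads $G_{\varphi}^{*}(-D\psi(x_{0}), D^{2}\psi(x_{0})) \geq 0$, which by the symmetry recorded above is exactly $G_{\underline{\varphi}}^{*}(D\psi(x_{0}), D^{2}\psi(x_{0})) \geq 0$, i.e.\ $-G_{\underline{\varphi}}^{*}(D\psi(x_{0}),D^{2}\psi(x_{0})) \leq 0$. Hence $-v \in \text{Sub}_{\underline{\varphi}}(U)$, as desired.

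\medskip

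For the comparison-with-cones identity $\mathcal{N}(CCB_{\varphi}(U)) = CCA_{\underline{\varphi}}(U)$, I would again show one inclusion and appeal to the involution. Suppose $v \in CCB_{\varphi}(U)$ and set $u = -v$. Fix an open $V \subset\subset U$, a point $w \in \mathbb{R}^{d}\setminus V$, and $\lambda > 0$. Then
\begin{equation*}
	\max\{u(x) - \lambda\,\underline{\varphi}(x - w) \mid x \in \overline{V}\} = -\min\{v(x) + \lambda\,\varphi(w - x) \mid x \in \overline{V}\},
\end{equation*}
using $\underline{\varphi}(x-w) = \varphi(w-x)$. By the $CCB_{\varphi}$ property of $v$ (with the same $V$, the same pole $w$, and the same $\lambda$), the right-hand side equals $-\min\{v(x) + \lambda\varphi(w-x) \mid x \in \partial V\} = \max\{u(x) - \lambda\underline{\varphi}(x-w) \mid x \in \partial V\}$, which is precisely the $CCA_{\underline{\varphi}}$ identity. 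The final ``in particular'' assertion is then immediate by applying the two equalities with $U$ replaced by an arbitrary open set. I do not anticipate any genuine obstacle here; the only point requiring a little care is keeping the sign bookkeeping straight in the simultaneous substitutions $\varphi \rightsquigarrow \underline{\varphi}$, $X \rightsquigarrow -X$, and $p \rightsquigarrow -p$, and confirming that $\underline{\varphi}$ is again a Finsler norm and $(\underline{\varphi})^{*} = \underline{\varphi^{*}}$, which is the routine verification sketched in the first paragraph.
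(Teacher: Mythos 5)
Your proof is correct, and it is precisely the routine unwinding of definitions that the paper leaves to the reader (the paper gives no proof of this lemma beyond that remark); the key identities $\underline{\varphi}^{*}(p)=\varphi^{*}(-p)$, $\partial\underline{\varphi}^{*}(p)=-\partial\varphi^{*}(-p)$, $G_{\underline{\varphi}}^{*}(p,X)=G_{\varphi}^{*}(-p,X)$, and $G_{*}^{\varphi}(p,-X)=-G_{\varphi}^{*}(p,X)$ are all verified correctly and the sign bookkeeping checks out. One small caution: the reverse inclusions do not follow formally from $\mathcal{N}$ being an involution together with the single inclusion you prove (applying your inclusion to $\underline{\varphi}$ gives $\mathcal{N}(\text{Super}_{\underline{\varphi}}(U))\subseteq \text{Sub}_{\varphi}(U)$, not what is needed); you also need the mirror inclusions $\mathcal{N}(\text{Sub}_{\underline{\varphi}}(U))\subseteq \text{Super}_{\varphi}(U)$ and $\mathcal{N}(CCA_{\underline{\varphi}}(U))\subseteq CCB_{\varphi}(U)$, which, however, follow by exactly the same computation with maxima and minima exchanged.
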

	
		Since the assertions above follow from routine manipulations of the definitions involved, the details are left to the reader.
		
%
%
		
	

\section{$C^{1,1}$ Approximation} \label{S: regularized}

This section describes a regularization procedure that will be used to approximate general Finsler norms by $C^{1,1}$ norms in a manner that is suitable for proving cone comparison principles.

In particular, given a Finsler norm $\varphi$, this section details the construction of a one-parameter family of Finsler norms $(\varphi_{\zeta})_{\zeta > 0}$ that are, in some sense, \emph{almost} $\varphi$-infinity superharmonic. 

Actually, in the next result, what amounts to a supersolution property will nonetheless be stated only in terms of the subdifferential $\partial \varphi^{*}_{\zeta}$.  This is not too surprising:\footnote{For analogous situations where strong control of the first derivative is used to analyze fully nonlinear second-order equations, see \cite[Proposition 1.2]{caffarelli_souganidis_rate} and \cite[Lemmas 4.1 and 4.2]{armstrong_smart_revisited}.} the difficulty in the equation stems from $\partial \varphi^{*}$ so there is reason to hope that strong control of the first derivatives suffices.

\regularizednorm*

\begin{remark} \label{R: lower bound} Later in the paper, it will be useful to know that, for any $\zeta \in (0,1)$, there is a lower bound
	\begin{equation*}
		\delta(\varphi)^{-1} \|x\| \leq \varphi_{\zeta}(x) \quad \text{for each} \quad x \in \mathbb{R}^{d}.
	\end{equation*}
Indeed, this follows directly from \eqref{E: nondegeneracy condition zeta} and positive homogeneity. \end{remark}


One way to appreciate the utility of the identity \eqref{E: subdifferential version of infinity harmonic} is as follows.  Observe that, for any $(p,X) \in (\mathbb{R}^{d} \setminus \{0\}) \times \mathcal{S}_{d}$,
	\begin{align} \label{E: shifted operator key identity}
		G_{\varphi}^{*}(p,X) &= \max \left\{ \langle X[q - \zeta \|p\|^{-1} p], q - \zeta \|p\|^{-1} p \rangle \, \mid \, q \in \partial \varphi_{\zeta}(p) \right\} =: \mathcal{G}_{\varphi_{\zeta}}^{*,\alpha}(p,X),
	\end{align}
The operator $\mathcal{G}_{\varphi_{\zeta}}^{*}$ defined implicitly above is effectively a perturbation of the Finsler infinity Laplacian $G^{*}_{\varphi_{\zeta}}$ associated with $\varphi_{\zeta}$.  Thus, this identity shows that, up to introducing a perturbation in the operator, $\varphi$-infinity sub- and superharmonic functions can be controlled by using the smoother norm $\varphi_{\zeta}$ as a barrier instead of $\varphi$.

As further motivation for the key identity \eqref{E: subdifferential version of infinity harmonic} of the previous theorem, consider the next result (which, however, will not be needed elsewhere): analogously to Proposition \ref{P: strongly superharmonic}, it suggests that $\varphi_{\zeta}$ is \emph{almost} $\varphi$-infinity superharmonic in a strong sense.

	\begin{prop} \label{P: approximate supersolution} There is a constant $c(\varphi) > 0$ such that $-G_{\varphi}^{*}(D\varphi_{\zeta},D^{2}\varphi_{\zeta}) \geq - c(\varphi) \zeta \varphi_{\zeta}^{-1}$ holds in $\mathbb{R}^{d} \setminus \{0\}$ with respect to convex test functions.  More precisely, if $x_{*} \in \mathbb{R}^{d} \setminus \{0\}$ and $\psi$ is a smooth convex function defined in a neighborhood of $x_{*}$ that touches $\varphi_{\zeta}$ from below at $x_{*}$, then
		\begin{equation*}
			- G_{\varphi}^{*}(D\psi(x_{*}), D^{2} \psi(x_{*})) \geq -c(\varphi) \zeta \varphi_{\zeta}(x_{*})^{-1}.
		\end{equation*}  
	\end{prop}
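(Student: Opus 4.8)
The plan is to follow the proof of Proposition~\ref{P: strongly superharmonic} almost verbatim, but now tracking the error introduced by the shift $\zeta\|p\|^{-1}p$ appearing in property~(ii) of Theorem~\ref{T: regularized_norm}. So suppose $\psi$ is smooth and convex in a neighborhood of $x_{*} \neq 0$ and touches $\varphi_{\zeta}$ from below there. Exactly as in the proof of Proposition~\ref{P: strongly superharmonic}, a first-order Taylor expansion forces $p := D\psi(x_{*}) \in \partial\varphi_{\zeta}(x_{*})$; applying \eqref{E: subdifferential basic identity} and \eqref{E: inversion} to $\varphi_{\zeta}$ then gives $\varphi_{\zeta}^{*}(p) = 1$, $\varphi_{\zeta}(x_{*})^{-1}x_{*} \in \partial\varphi_{\zeta}^{*}(p)$, and $\langle p, q\rangle = 1$ for all $q \in \partial\varphi_{\zeta}^{*}(p)$. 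As in \eqref{E: linear thing}, $\varphi_{\zeta}$ coincides with the linear functional $\langle p, \cdot\rangle$ on the convex cone $\mathcal{C}(\partial\varphi_{\zeta}^{*}(p))$, which contains both $x_{*}$ and $\partial\varphi_{\zeta}^{*}(p)$.

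The first substantive step is to extract information about $D^{2}\psi(x_{*})$. For $w \in \mathcal{C}(\partial\varphi_{\zeta}^{*}(p))$ and $t \geq 0$, the point $x_{*} + tw$ lies in this convex cone, so $\varphi_{\zeta}(x_{*} + tw) = \varphi_{\zeta}(x_{*}) + t\langle p, w\rangle$; comparing with the second-order Taylor expansion of $\psi$ at $x_{*}$ and letting $t \to 0^{+}$ gives $\langle D^{2}\psi(x_{*})w, w\rangle \leq 0$. Since $D^{2}\psi(x_{*}) \succeq 0$ by convexity of $\psi$, this forces $D^{2}\psi(x_{*})$ to vanish on the entire subspace spanned by $\partial\varphi_{\zeta}^{*}(p)$. (It is precisely here that convexity of the test function is used, which is why the statement is restricted to convex test functions.) Now fix $q \in \partial\varphi^{*}(p)$, which is nonempty since $p \neq 0$. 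By property~(ii) of Theorem~\ref{T: regularized_norm}, $\tilde q := q + \zeta\|p\|^{-1}p \in \partial\varphi_{\zeta}^{*}(p)$, so $D^{2}\psi(x_{*})\tilde q = 0$. Writing $D^{2}\psi(x_{*})q = -\zeta\|p\|^{-1}D^{2}\psi(x_{*})p$ and using $\langle D^{2}\psi(x_{*})p, \tilde q\rangle = \langle p, D^{2}\psi(x_{*})\tilde q\rangle = 0$, one obtains the clean identity
\begin{equation*}
	\langle D^{2}\psi(x_{*})q, q\rangle = \zeta^{2}\|p\|^{-2}\langle D^{2}\psi(x_{*})p, p\rangle.
\end{equation*}

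It remains to bound the right-hand side. Because $\psi$ touches $\varphi_{\zeta} \in C^{1,1}_{\text{loc}}(\mathbb{R}^{d}\setminus\{0\})$ from below at $x_{*}$ with $D\psi(x_{*}) = D\varphi_{\zeta}(x_{*}) = p$, the standard argument gives $\langle D^{2}\psi(x_{*})h, h\rangle \leq L_{0}\|h\|^{2}$ for all $h \in \mathbb{R}^{d}$, where $L_{0}$ is the Lipschitz constant of $D\varphi_{\zeta}$ on a fixed neighborhood of $x_{*}$; hence $\langle D^{2}\psi(x_{*})p, p\rangle \leq L_{0}\|p\|^{2}$ and consequently $\langle D^{2}\psi(x_{*})q, q\rangle \leq \zeta^{2}L_{0}$. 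The final ingredient is the scaling of $L_{0}$: combining the interior ball condition of radius $\zeta$ (property~(i)) with the nondegeneracy \eqref{E: nondegeneracy condition zeta} and the $(-1)$-homogeneity of $D^{2}\varphi_{\zeta}$ --- the needed regularity estimates for nonnegative, positively one-homogeneous convex functions being collected in Section~\ref{S: pos hom} --- yields $L_{0} \leq c(\varphi)\zeta^{-1}\varphi_{\zeta}(x_{*})^{-1}$. Therefore $\langle D^{2}\psi(x_{*})q, q\rangle \leq c(\varphi)\zeta\varphi_{\zeta}(x_{*})^{-1}$ for every $q \in \partial\varphi^{*}(p)$, and the claim follows from the definition \eqref{E: upper def operator} of $G_{\varphi}^{*}$.

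I expect the main obstacle to be this last quantitative estimate --- converting the geometric interior-ball bound on $\{\varphi_{\zeta} \leq 1\}$ into a correctly scaled pointwise Hessian bound for the gauge $\varphi_{\zeta}$; the remainder is a careful but essentially routine bookkeeping variant of the proof of Proposition~\ref{P: strongly superharmonic}.
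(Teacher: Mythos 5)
Your proposal is correct and follows essentially the same route as the paper: the convexity of $\psi$ upgrades the supersolution inequality to $D^{2}\psi(x_{*})q = 0$ for all $q \in \partial\varphi_{\zeta}^{*}(D\psi(x_{*}))$ (the paper's \eqref{E: key vanishing part 1}, which you reprove inline rather than citing Proposition \ref{P: strongly superharmonic}), the shift from \eqref{E: subdifferential version of infinity harmonic} reduces everything to $\zeta^{2}\|p\|^{-2}\langle D^{2}\psi(x_{*})p,p\rangle$, and the quantitative input you flag as the crux is exactly Lemma \ref{L: basic hessian bound}, which the paper applies in viscosity form at the touching point while you phrase it as a scaled Lipschitz bound on $D\varphi_{\zeta}$ --- an equivalent bookkeeping of the same estimate from Section \ref{S: pos hom}.
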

	
Due to the discontinuities of $G_{\varphi}^{*}$, it is not obvious how to remove the convexity restriction on the test function.  As already suggested in the previous section, since $G_{\varphi}^{*}$ is merely upper semicontinuous, lower bounds such as the one above are not stable under perturbations.  This instability will be overcome through a series of delicate constructions in what follows.

The constant $c(\varphi)$ in the previous proposition comes from a relatively basic Hessian estimate, which is stated next.  Indeed, the interior ball condition of radius $\zeta$ satisfied by $\{\varphi_{\zeta} \leq 1\}$ roughly implies that $\varphi_{\zeta} D^{2} \varphi_{\zeta} \leq \zeta^{-1} \text{Id}$ in the sense of distributions.  In fact, the estimate is not quite so clean, but it is true up to a $\zeta$-independent constant.
	
	\begin{restatable}{lemma}{basichessianbound} \label{L: basic hessian bound} Given any Finsler norm $\varphi$, there is a constant $c(\varphi) > 0$ such that if $(\varphi_{\zeta})_{\zeta > 0}$ are the approximations of Theorem \ref{T: regularized_norm}, then, for any $\zeta \in (0,1)$,
		\begin{equation} \label{E: hessian estimate kind of silly}
			\varphi_{\zeta} D^{2}\varphi_{\zeta} \leq c(\varphi) \zeta^{-1} \text{Id} \quad \text{in the distributional sense.}
		\end{equation}
	\end{restatable}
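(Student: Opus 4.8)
The plan is to exploit the uniform interior ball condition of radius $\zeta$ satisfied by $K_\zeta := \{\varphi_\zeta \leq 1\}$ (property (i) of Theorem \ref{T: regularized_norm}), which is precisely the statement that at every boundary point $q \in \partial K_\zeta$ there is an interior Euclidean ball of radius $\zeta$ tangent to $\partial K_\zeta$ at $q$. Geometrically this means the boundary $\partial K_\zeta$ curves "no more sharply than a sphere of radius $\zeta$," so the second fundamental form of $\partial K_\zeta$ is bounded above by $\zeta^{-1}$ times the relevant metric factor. The first step is to make this precise: I will show that $K_\zeta$ is the intersection $\bigcap_{y \in E_\zeta} \overline{B}_\zeta(y)$ of the closed balls of radius $\zeta$ whose interiors lie in $K_\zeta$ (the "$\zeta$-rounding" of $K_\zeta$), equivalently that $\partial K_\zeta$ is, near each point, the graph of a concave-paraboloid-bounded function. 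A clean equivalent formulation: for each unit vector $e$ in the unit sphere of the dual, if $q_e \in \partial K_\zeta$ is the point with outer normal $e$, then $K_\zeta \subseteq \overline{B}_\zeta(q_e - \zeta e)$, and this containment encodes the Hessian bound.

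Next I will translate the geometric bound on $K_\zeta$ into the distributional Hessian bound on $\varphi_\zeta$. Since $\varphi_\zeta$ is positively $1$-homogeneous with $K_\zeta = \{\varphi_\zeta \leq 1\}$, a standard computation (cf. Section \ref{S: pos hom}) gives, at a point $x = t q$ with $q \in \partial K_\zeta$ and $t = \varphi_\zeta(x) > 0$, that $D^2\varphi_\zeta(x)$ has $x$ (equivalently $q$) in its kernel and, restricted to the tangent space $T_q \partial K_\zeta$, is given by $t^{-1}$ times the second fundamental form of $\partial K_\zeta$ at $q$ with respect to the normalization coming from $D\varphi_\zeta(q)$. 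The nondegeneracy condition \eqref{E: nondegeneracy condition zeta} bounds $\|D\varphi_\zeta\|$ and $\varphi^*(D\varphi_\zeta)$ from above and below by constants depending only on $\varphi$ (through $\delta(\varphi)$) — indeed $\overline{B}_{\delta}(0) \subseteq K_\zeta \subseteq \overline{B}_{\delta^{-1}}(0)$ gives $\delta \leq \|D\varphi_\zeta(q)\| \leq \delta^{-1}$-type bounds after using that $D\varphi_\zeta(q)$ is the outer normal to $\partial K_\zeta$ at $q$ scaled so that $\langle D\varphi_\zeta(q), q\rangle = 1$. Combining: the interior ball condition gives (second fundamental form) $\leq C(\varphi)\zeta^{-1}$ on $T_q\partial K_\zeta$, so $D^2\varphi_\zeta(x) \leq C(\varphi)\zeta^{-1}t^{-1}\,\mathrm{Id}$ on $T_q\partial K_\zeta$, and since $x \in \ker D^2\varphi_\zeta(x)$ while $t = \varphi_\zeta(x)$, we get $\varphi_\zeta(x) D^2\varphi_\zeta(x) \leq C(\varphi)\zeta^{-1}\,\mathrm{Id}$ wherever $\varphi_\zeta$ is twice differentiable, i.e. almost everywhere (by $C^{1,1}_{\mathrm{loc}}$ regularity). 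The extension to a genuine distributional inequality on all of $\mathbb{R}^d \setminus \{0\}$ is then immediate because $D^2\varphi_\zeta \in L^\infty_{\mathrm{loc}}$: the a.e. matrix inequality integrates against nonnegative test functions, and the homogeneity lets one patch across scales so that the same constant works everywhere.

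The main obstacle is making the passage from "interior ball condition of radius $\zeta$" to "second fundamental form $\leq C(\varphi)\zeta^{-1}$ in the $D\varphi_\zeta$-normalization" fully rigorous without assuming more regularity of $\partial K_\zeta$ than $C^{1,1}$. The cleanest route, which I would take, avoids pointwise curvature entirely: work directly with $\varphi_\zeta$ and test functions. Fix $x_* \neq 0$ with $\varphi_\zeta(x_*) = t$, let $e$ be the unit outer normal to $K_\zeta$ at $q_* = t^{-1}x_*$, and use that $K_\zeta \subseteq \overline{B}_\zeta(q_* - \zeta e)$. This containment says $\|q_* - \zeta e\|^2 - \zeta^2 \geq 0$ is an upper bound constraint, and translating through homogeneity, the function $x \mapsto \varphi_\zeta(x) - [\text{a suitable quadratic touching from above at } x_*]$ has a local max at $x_*$; differentiating the quadratic twice gives exactly the Hessian bound $\varphi_\zeta(x_*)D^2\varphi_\zeta(x_*) \leq C(\varphi)\zeta^{-1}\mathrm{Id}$ in the sense of touching paraboloids, which for a $C^{1,1}$ function is equivalent to the a.e. and distributional statement. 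The constant $C(\varphi)$ emerges from the nondegeneracy bounds \eqref{E: nondegeneracy condition zeta}, which control the geometry relating Euclidean balls to the level sets of $\varphi_\zeta$, and crucially is independent of $\zeta$. Finally, Lemma \ref{L: symmetries}-style reductions are not needed here; the result is purely about the regularized norm.
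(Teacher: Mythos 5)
Your overall strategy --- interior ball condition of radius $\zeta$ $\Rightarrow$ curvature of $\{\varphi_{\zeta} \leq 1\}$ bounded by $\zeta^{-1}$, then the homogeneity formula for $D^{2}\varphi_{\zeta}$ plus the gradient bounds coming from \eqref{E: nondegeneracy condition zeta} to convert this into $\varphi_{\zeta} D^{2}\varphi_{\zeta} \leq c(\varphi)\zeta^{-1}\mathrm{Id}$ a.e., hence distributionally --- is exactly the paper's route (the paper cites the standard fact that the interior ball condition gives $\|DN\| \leq \zeta^{-1}$ and then applies Propositions \ref{P: check proposition}--\ref{P: trivial hessian estimate}). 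However, the geometric step you rely on is stated with the inclusion reversed, and this is not a cosmetic slip: you write, three times, that the interior ball condition means $K_{\zeta} \subseteq \overline{B}_{\zeta}(q_{*} - \zeta e)$, and correspondingly that $K_{\zeta}$ is an \emph{intersection} of balls of radius $\zeta$. The interior ball condition says $\overline{B}_{\zeta}(q_{*} - \zeta e) \subseteq K_{\zeta}$ (so $K_{\zeta}$ is a \emph{union} of such balls). Your containment is literally impossible for $\zeta < \delta(\varphi)$, since \eqref{E: nondegeneracy condition zeta} forces $\overline{B}_{\delta(\varphi)}(0) \subseteq K_{\zeta}$, which cannot fit inside a ball of radius $\zeta$; and even where such a containment could hold, it encodes a curvature \emph{lower} bound: the gauge of a set containing $K_{\zeta}$ lies \emph{below} $\varphi_{\zeta}$, so your quadratic would touch $\varphi_{\zeta}$ from below at $x_{*}$ and yield $D^{2}\varphi_{\zeta}(x_{*}) \geq \cdots$, the opposite of \eqref{E: hessian estimate kind of silly}.

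The correct mechanism is the reverse one: since $\overline{B}_{\zeta}(q_{*} - \zeta e) \subseteq K_{\zeta}$ with both sets having $q_{*}$ on their boundary, the Minkowski gauge $\psi$ of the interior ball satisfies $\psi \geq \varphi_{\zeta}$ with equality on the ray through $q_{*}$, i.e.\ $\psi$ touches $\varphi_{\zeta}$ from \emph{above} at $x_{*}$; wherever $\varphi_{\zeta}$ is twice differentiable this gives $D^{2}\varphi_{\zeta}(x_{*}) \leq D^{2}\psi(x_{*})$, and $D^{2}\psi(x_{*}) \leq C(\delta(\varphi))\,\zeta^{-1}\varphi_{\zeta}(x_{*})^{-1}\,\mathrm{Id}$ by the explicit form of the gauge of a ball together with \eqref{E: nondegeneracy condition zeta} (this is precisely the $\psi_{e,q,\zeta}$ construction of Section \ref{S: c11 strictly convex}). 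With that correction, the remainder of your plan --- the kernel/tangential decomposition of $D^{2}\varphi_{\zeta}$ via homogeneity, the $\zeta$-independent constants from nondegeneracy, and the passage from the a.e.\ bound to the distributional one using $C^{1,1}_{\mathrm{loc}}$ regularity away from the origin --- is sound and is essentially the paper's argument (which runs the curvature-to-Hessian conversion through Proposition \ref{P: trivial hessian estimate} rather than through touching gauges, but the content is the same). As written, though, the central inequality would come out with the wrong sign, so the reversed inclusion must be fixed for the proof to go through.
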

	
The proof of the lemma is deferred to Section \ref{S: hessian bound approximation} below, where it is presented after certain basic regularity results for convex positively one-homogeneous functions are established.  

	\begin{proof}[Proof of Proposition \ref{P: approximate supersolution}]  Henceforth, to lighten the notation, it will frequently be useful to denote by $Q_{X}$ the quadratic form associated with a symmetric matrix $X$, hence $Q_{X}(v) = \langle Xv, v \rangle$ for any $v \in \mathbb{R}^{d}$.  
	
	First, by Proposition \ref{P: strongly superharmonic},
		\begin{equation*}
			-Q_{D^{2}\psi(x_{*})}(q) = -\langle D^{2}\psi(x_{*}) q, q \rangle \geq 0 \quad \text{for each} \quad q \in \partial \varphi^{*}_{\zeta}(D\psi(x_{0})).
		\end{equation*}
	Yet $\psi$ is convex so $D^{2}\psi(x_{*}) \geq 0$.  Therefore, the previous bound is actually equivalent to an apparently stronger statement:
		\begin{equation} \label{E: key vanishing part 1}
			D^{2}\psi(x_{*}) q = 0 \quad \text{for each} \quad q \in \partial \varphi^{*}_{\zeta}(D\psi(x_{*})).
		\end{equation}
	
	Next, recall that \eqref{E: hessian estimate kind of silly} holds not only distributionally but also in the viscosity sense; the two are equivalent.\footnote{See \cite[Lemma 1]{alvarez_lasry_lions}, \cite[Theorem 1]{oberman}, or \cite[Proposition 5.1]{bardi_dragoni}.}  Thus, since $\psi$ touches $\varphi_{\zeta}$ from below at $x_{*}$,
		\begin{equation*}
			-D^{2}\psi(x_{*}) \geq - c(\varphi) \varphi_{\zeta}(x_{*})^{-1} \zeta^{-1}.
		\end{equation*}
	Finally, appeal to the definition of $G_{\varphi}^{*}$ and \eqref{E: shifted operator key identity}:
		\begin{align*}
			-G_{\varphi}^{*}(D\psi(x_{*}), D^{2}\psi(x_{*})) &= - \max \left\{ Q_{D^{2}\psi(x_{*})}(\bar{q}) \, \mid \, \bar{q} \in \partial \varphi(D\psi(x_{*})) \right\} \\
				&= - \max_{q \in \partial \varphi_{\zeta}(D\psi(x_{*}))} \left\{ Q_{D^{2}\psi(x_{*})}(q - \zeta \|D\psi(x_{*})\|^{-1} D\psi(x_{*}))  \right\}
		\end{align*}
	Due to \eqref{E: key vanishing part 1}, if $q \in \partial \varphi_{\zeta}(D\psi(x_{*}))$, then
		\begin{equation*}
			Q_{D^{2}\psi(x_{*})}(q - \zeta \|D\psi(x_{*})\|^{-1} D\psi(x_{*})) = \zeta^{2} \|D\psi(x_{*})\|^{-2} Q_{D^{2}\psi(x_{*})}(D\psi(x_{*}))
		\end{equation*}
	Thus, by the Hessian bound on $\psi$ at $x_{*}$,
		\begin{align*}
				-G_{\varphi}^{*}(D\psi(x_{*}), D^{2}\psi(x_{*})) &= - \zeta^{2}\|D\psi(x_{*})\|^{-2} \langle D^{2} \psi(x_{*}) D\psi(x_{*}), D\psi(x_{*}) \rangle \\
				&\geq - c(\varphi) \zeta^{-1} \varphi_{\zeta}(x_{*})^{-1} \zeta^{2} = -c(\varphi) \zeta \varphi_{\zeta}(x_{*})^{-1}.
		\end{align*}
	\end{proof}

	
\subsection{Reduction to $C^{1,1}$ Finsler Norms} \label{S: c11 reduction} Before delving into the proof of the approximation theorem (Theorem \ref{T: regularized_norm}), here is the pay-off.  Using the approximations $(\varphi_{\zeta})_{\zeta > 0}$, it turns out that one can prove comparison with cones for  $\varphi$-infinity harmonic functions using a generalized cone comparison-type result for $C^{1,1}$ Finsler norms.  

As already suggested above, it is convenient to reformulate $\varphi$-infinity subharmonicity in terms of $\varphi_{\zeta}$.  First, recall that $Q_{X}(v) = \langle Xv, v \rangle$ for a symmetric matrix $X$ and $v \in \mathbb{R}^{d}$.  For an arbitrary Finsler norm $\varphi$ and $\alpha \in \mathbb{R}$, define the shifted operators $\mathcal{G}_{\varphi}^{*,\alpha}$ and $\mathcal{G}_{*,\alpha}^{\varphi}$ by
	\begin{align*}
		\mathcal{G}_{\varphi}^{*,\alpha}(p,X) &= \left\{ \begin{array}{r l}
					\max \left\{ Q_{X}(q - \alpha \|p\|^{-1} p) \, \mid \, q \in \partial \varphi^{*}(p) \right\}, & \text{if} \, \, p \neq 0, \\
					\limsup_{0 \neq p' \to 0} \mathcal{G}_{\varphi}^{*,\alpha}(p',X), & \text{otherwise.}
				\end{array} \right. \\ 
		\mathcal{G}_{*,\alpha}^{\varphi}(p,X) &= \left\{ \begin{array}{r l}
							\min \left\{ Q_{X}(q - \alpha \|p\|^{-1} p) \, \mid \, q \in \partial \varphi^{*}(p) \right\}, & \text{if} \, \, p \neq 0, \\
					\liminf_{0 \neq p' \to 0} \mathcal{G}_{*,\alpha}^{*}(p',X), & \text{otherwise.}
				\end{array} \right.
	\end{align*}
Notice that $G_{\varphi}^{*} \equiv \mathcal{G}_{\varphi}^{*,0}$ and $G_{*}^{\varphi} \equiv \mathcal{G}_{*,0}^{\varphi}$.  

The shifted operators provide a change of perspective.  To appreciate this, observe that, for any Finsler norm $\varphi$ and any $\zeta > 0$, the following relations hold as a consequence of \eqref{E: subdifferential version of infinity harmonic}:
	\begin{gather} \label{E: key transformation}
		G_{\varphi}^{*} \equiv \mathcal{G}_{\varphi_{\zeta}}^{*,\zeta} \quad \text{and} \quad G_{*}^{\varphi} \equiv \mathcal{G}_{*,\zeta}^{\varphi_{\zeta}}.
	\end{gather}
As hinted already above, this identity is useful since it suggests replacing $\varphi$ by $\varphi_{\zeta}$, which is easier to analyze due to its $C^{1,1}$ regularity.  

Specifically, the next result, which exploits the fact that $\{\varphi_{\zeta} \leq 1\}$ is of class $C^{1,1}$, will be used in conjunction with \eqref{E: key transformation} to show that $\varphi$-infinity subharmonic functions can be compared to $\varphi_{\zeta}$.

	\begin{prop} \label{P: intermediate cone comparison} Let $\varphi$ be a Finsler norm in $\mathbb{R}^{d}$ and let $(\varphi_{\zeta})_{\zeta > 0}$ be the regularized norms of Theorem \ref{T: regularized_norm}.
	
	There is a constant $\tilde{\Gamma}(\varphi) > 0$ such that, for any $\zeta \in (0,1)$, $\alpha \in \mathbb{R}$, $\epsilon \geq 0$, and bounded open set $W \subseteq \mathbb{R}^{d}$ with $0 \notin W$, if $u \in USC(\overline{W})$ satisfies
		\begin{equation} \label{E: perturbed equation}
			\frac{1}{2} \epsilon \varphi^{*}(Du)^{2} - \mathcal{G}_{\varphi_{\zeta}}^{*,\alpha}(Du,D^{2}u) \leq 0 \quad \text{in} \, \, W,
		\end{equation}
	then either $u - \varphi_{\zeta}$ attains its maximum on $\partial W$ or else there is a triple $(x_{*},X_{*},q_{*}) \in W \times \mathcal{S}_{d} \times \{\varphi_{\zeta} = 1\}$ such that 
		\begin{gather*}
			\frac{1}{2} \epsilon \varphi^{*}(D\varphi_{\zeta}(x_{*}))^{2} - Q_{X_{*}}(q_{*} - \alpha \|D\varphi_{\zeta}(x_{*})\|^{-1} D\varphi_{\zeta}(x_{*})) \leq 0, \\
			X_{*}q_{*} = 0, \quad \text{and} \quad 0 \leq \varphi_{\zeta}(x_{*}) X_{*} \leq \tilde{\Gamma}(\varphi) \zeta^{-1} \text{Id}.
		\end{gather*}\end{prop}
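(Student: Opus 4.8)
\emph{Overview and localization.} The plan is to study the maximum of $u-\varphi_{\zeta}$ over $\overline{W}$: if it is not attained on $\partial W$, it is attained at an interior point away from the origin, and near that point one touches $u$ from above by (a smoothing of) a conical test function built from $\varphi_{\zeta}$, reading off second-order information from the definition of viscosity subsolution. So suppose $u-\varphi_{\zeta}$ does not attain its maximum over $\overline W$ on $\partial W$, hence $M:=\max_{\overline W}(u-\varphi_{\zeta})>m:=\max_{\partial W}(u-\varphi_{\zeta})$. Upper semicontinuity of $u$, continuity of $\varphi_{\zeta}$, and compactness of $\partial W$ give a neighborhood of $\partial W$ in $\overline W$ on which $u-\varphi_{\zeta}<\tfrac12(M+m)$; and since $0\notin W$ while $\varphi_{\zeta}(x)\le\delta(\varphi)^{-1}\|x\|$ by \eqref{E: nondegeneracy condition zeta}, the same type of argument excludes a neighborhood of the origin. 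Thus the maximum is attained on a compact $K\subseteq W\setminus\{0\}$, on a neighborhood of which $\varphi_{\zeta}\in C^{1,1}$ by Theorem~\ref{T: regularized_norm}.

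\emph{ABP-type perturbation and conical test function.} In the spirit of the Alexandrov--Bakelman--Pucci argument I would perturb by a small linear function $\xi$: for $|\xi|$ small, $u-\varphi_{\zeta}-\langle\xi,\cdot\rangle$ still has $\max_{\overline W}>\max_{\partial W}$, so it attains its maximum at an interior point $x_{\xi}$ in a fixed compact subset of $W\setminus\{0\}$; the purpose of $\xi$ is to guarantee, for $\xi$ in a set of positive measure, that the conical test function attached to $x_{\xi}$ is twice differentiable at the touching point. Writing $x_*=x_{\xi}$ for such a choice, let $\psi:=\psi_{x_*}$ be the conical test function of Section~\ref{S: conical test}: it is convex and positively one-homogeneous, $\psi\ge\varphi_{\zeta}$ on $\mathbb{R}^{d}$ with $\psi(x_*)=\varphi_{\zeta}(x_*)$ (so $\psi$ is differentiable at $x_*$ with $D\psi(x_*)=D\varphi_{\zeta}(x_*)$), and, by \eqref{E: subdifferential basic identity dual} applied to both norms together with $\{\varphi_{\zeta}\le1\}\subseteq\{\psi\le1\}$, one has $\partial\varphi_{\zeta}^*(D\varphi_{\zeta}(x_*))\subseteq\partial\psi^*(D\psi(x_*))\subseteq\{\varphi_{\zeta}=1\}$. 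The design principle of the construction — the nonsmooth incarnation of properties (ii)--(iii) of Proposition~\ref{P: strictly convex game}, itself an avatar of the one-homogeneity/linearity mechanism behind Proposition~\ref{P: strongly superharmonic} — is that the (limiting, after smoothing) Hessian of $\psi$ at $x_*$ annihilates the entire face $\partial\psi^*(D\psi(x_*))$. Since $\psi\ge\varphi_{\zeta}$ with equality at $x_*$, the function $u-\psi-\langle\xi,\cdot\rangle$ also has a maximum at $x_*$.

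\emph{Testing and passage to the limit.} When $\psi$ is $C^2$ near $x_*$ (the strictly convex case of Proposition~\ref{P: strictly convex game}) one plugs $\psi+\langle\xi,\cdot\rangle$ into the definition of subsolution directly. In general, following a version of Evans' perturbed test function method, I would replace $\psi$ by $C^2$ mollifications $\psi^{\varepsilon}$, add a vanishing quadratic $\tfrac{\sigma}{2}\|\cdot-x_*\|^2$ to strictify the maximum, obtain local maxima of $u-\psi^{\varepsilon}-\langle\xi,\cdot\rangle-\tfrac{\sigma}{2}\|\cdot-x_*\|^2$ at points $x^{\varepsilon}\to x_*$, and apply the subsolution inequality there; writing $p^{\varepsilon}$ and $X^{\varepsilon}$ for the gradient and Hessian of the test function at $x^{\varepsilon}$, this reads
\[
	\tfrac12\epsilon\,\varphi^*(p^{\varepsilon})^2-\mathcal{G}_{\varphi_{\zeta}}^{*,\alpha}(p^{\varepsilon},X^{\varepsilon})\le0.
\]
One then sends $\varepsilon,\sigma\to0^+$ and $\xi\to0$. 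The uniform interior ball condition of radius $\zeta$ for $\{\varphi_{\zeta}\le1\}$ bounds $X^{\varepsilon}$ above by $\tilde\Gamma(\varphi)\zeta^{-1}\varphi_{\zeta}(x_*)^{-1}\text{Id}$ (this is Lemma~\ref{L: basic hessian bound} transported to the conical test function), so a subsequential limit $X_*$ exists with $0\le\varphi_{\zeta}(x_*)X_*\le\tilde\Gamma(\varphi)\zeta^{-1}\text{Id}$, while $p^{\varepsilon}\to D\varphi_{\zeta}(x_*)$ by $C^1$ regularity. By upper semicontinuity of $\mathcal{G}_{\varphi_{\zeta}}^{*,\alpha}$ the inequality survives as $\tfrac12\epsilon\,\varphi^*(D\varphi_{\zeta}(x_*))^2\le\mathcal{G}_{\varphi_{\zeta}}^{*,\alpha}(D\varphi_{\zeta}(x_*),X_*)$, and the convex-analytic arguments of Section~\ref{S: key mountain} produce a maximizer $q_*\in\partial\varphi_{\zeta}^*(D\varphi_{\zeta}(x_*))\subseteq\{\varphi_{\zeta}=1\}$ on which, thanks to the design of $\psi$, $X_*q_*=0$ — giving the triple $(x_*,X_*,q_*)$.

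\emph{Main obstacle.} The crux is the limit $\varepsilon\to0^+$ in the presence of the merely upper semicontinuous operator $\mathcal{G}_{\varphi_{\zeta}}^{*,\alpha}$, which jumps upward across changes of the dual face $\partial\varphi_{\zeta}^*(\cdot)$: a careless smoothing of the convex, nonsmooth $\psi$ has gradients that drift off the face $\partial\psi^*(D\psi(x_*))$ along which $\psi$ is affine, and the estimate collapses. The conical construction of Section~\ref{S: conical test} is engineered precisely to confine the limiting gradient and Hessian to that face, and proving that this actually happens — extracting both $X_*q_*=0$ and the surviving differential inequality at the specific $q_*$ — is the technical heart, carried out in Section~\ref{S: key mountain}; coupling that limiting analysis with the ABP-type selection of $\xi$ needed to reach points of twice differentiability is the remaining delicate point, after which the Hessian bound and the vanishing $X_*q_*=0$ follow along the lines of Lemma~\ref{L: basic hessian bound} and Proposition~\ref{P: strongly superharmonic} respectively.
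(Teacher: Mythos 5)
Your skeleton — touch $u$ from above at an interior maximum of $u-\varphi_{\zeta}$ by a conical test function built on the face $\partial\varphi_{\zeta}^{*}(D\varphi_{\zeta}(x_{*}))$, smooth it, test, and pass to the limit via the convex analysis of Section \ref{S: key mountain} — is indeed the paper's route (the paper proves this proposition by checking the hypotheses of Theorem \ref{T: technical core} with $\mathcal{H}(x,p,X,a)=a-\tfrac{\epsilon}{2}\varphi^{*}(p)^{2}$, using that $\varphi_{\zeta}$ satisfies Assumption \ref{A: c11 assumption} with radius $\zeta$ and $\delta=\delta(\varphi)$). But two of your steps do not hold up. First, the ABP/Jensen-type selection of a linear perturbation $\xi$ ``so that the conical test function attached to $x_{\xi}$ is twice differentiable at the touching point'' is unjustified: $u$ is merely upper semicontinuous, not semiconvex, so there is no Jensen-lemma mechanism forcing the perturbed maximum points, for a.e.\ $\xi$, into a prescribed full-measure set; moreover the test function depends on the contact point, and the genuinely bad contact points — those on the relative boundary of $\mathcal{C}(\partial\varphi_{\zeta}^{*}(p))$, where $D^{2}\psi$ jumps and where, by Remark \ref{R: exposed face conundrum}, no better exposed face is available — cannot be discarded this way. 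The paper explicitly replaces the ABP step by the graph mollification of $g_{\nu}$, and Part \ref{Part: c11 cone comparison} is built to avoid exactly this kind of argument.

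Second, and more seriously, the limit $\varepsilon\to0^{+}$ that you defer to Section \ref{S: key mountain} is only available under the hypotheses of Proposition \ref{P: key mountain part 1}, in particular the smallness condition $\limsup_{j}\epsilon_{j}^{-1}\,\mathrm{dist}(q_{j}',G_{p})<+\infty$: the perturbed contact points must stay within $O(\epsilon)$ of the face. Your proposal supplies no mechanism for this. In the paper it comes from the two-radius trick: one touches with $\psi_{\zeta}$ but perturbs $\psi_{\nu}$ with $\nu<\zeta$ strictly, so that $\psi_{\nu}-\psi_{\zeta}$ grows quadratically off the face (Proposition \ref{P: smallness condition king}); with a single radius the new contact points can wander, the limiting gradient need not stay on the face, and the merely upper semicontinuous operator jumps upward — precisely the collapse you name but do not resolve. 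In addition, the terms you insert into the test function — the linear $\langle\xi,\cdot\rangle$ and the strictifying quadratic — destroy the positive one-homogeneity and the graph structure ($\{\psi_{\nu}^{\epsilon}=1\}$ a graph of $g_{\nu}^{\epsilon}$, $D^{2}\psi_{\nu}^{\epsilon}(q)q=0$, the rescaling $q_{j}=\psi_{\nu}^{\epsilon_{j}}(x_{j})^{-1}x_{j}$) on which every lemma of Section \ref{S: key mountain} rests, so you cannot both modify the test functions in this way and invoke that section. (A minor sign of the same looseness: from $\varphi_{\zeta}\le\psi$ one gets $\{\psi\le1\}\subseteq\{\varphi_{\zeta}\le1\}$, not the reversed inclusion you state.) For the statement as posed, the clean argument is simply to verify the hypotheses of Theorem \ref{T: technical core}; the gaps lie in your attempted reconstruction of that theorem's proof.
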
  
		
In effect, the proposition says that, as far as the differential inequality \eqref{E: perturbed equation} is concerned, $\varphi_{\zeta}$ can be treated as a test function, even though it is not smooth.  The price to pay compared to, say, Proposition \ref{P: strongly superharmonic} or the $C^{2}$ case described in \cite{m_souganidis}\footnote{Cf.\ Proposition 7 in that reference and the paragraph immediately following its statement.} is the result does \emph{not} say that $G_{\varphi_{\zeta}}^{*}(D\varphi_{\zeta}(x_{*}),X_{*}) = 0$, but only that $Q_{X_{*}}(q_{*}) = 0$ for \emph{some} $q_{*} \in \partial \varphi^{*}_{\zeta}(D\varphi_{\zeta}(x_{*}))$.  
		
	\begin{proof} The proposition follows directly from the more general result, Theorem \ref{T: technical core}, presented in Section \ref{S: c11 setting} below.  The constant $\tilde{\Gamma}(\varphi)$ is given by $\tilde{\Gamma}(\varphi) = \Gamma(\delta(\varphi))$, where $\delta(\varphi)$ is the constant from Theorem \ref{T: regularized_norm}, and the function $\Gamma$ is as in the statement of Theorem \ref{T: technical core}. \end{proof}
	
Part 2 of the paper will take up the proof of generalized cone comparison principles like the one above.  Taking it for granted for now, here is the proof of cone comparison for $\varphi$-infinity sub- and superharmonic functions.
	
	\begin{proof}[Proof of Theorem \ref{T: cone comparison}] By Lemma \ref{L: symmetries}, the proof reduces to establishing that, given any bounded open set $V \subseteq \mathbb{R}^{d} \setminus \{0\}$ and any $\epsilon > 0$, if $u \in USC(\overline{V})$ satisfies 
	\begin{equation*}
		\frac{1}{2} \epsilon \varphi^{*}(Du)^{2} - G_{\varphi}^{*}(Du,D^{2}u) \leq 0 \quad \text{in} \, \, V,
	\end{equation*}
then
	\begin{equation*}
		\max \left\{ u(x) - \varphi(x) \, \mid \, x \in \overline{V} \right\} = \max \left\{ u(x) - \varphi(x) \, \mid \, x \in \partial V \right\}.
	\end{equation*}
	
It is simplest to argue by contradiction: suppose that
	\begin{equation*}
		\max \left\{ u(x) - \varphi(x) \, \mid \, x \in \overline{V} \right\} > \max \left\{ u(x) - \varphi(x) \, \mid \, x \in \partial V \right\}.
	\end{equation*}
In particular, since $u$ is upper semicontinuous, there is an $r > 0$ such that if $\overline{B}_{r}(\partial V) = \bigcup_{y \in \partial V} \overline{B}_{r}(y)$, then
	\begin{equation*}
		\max \left\{ u(x) - \varphi(x) \, \mid \, x \in \overline{V} \right\} > \max \left\{ u(x) - \varphi(x) \, \mid \, x \in \overline{B}_{r}(\partial V) \right\}.
	\end{equation*}
Note that since $0 \notin V$, it follows that $V \setminus \overline{B}_{r}(\partial V) \subseteq \mathbb{R}^{d} \setminus \overline{B}_{r}(0)$. 

Let $(\varphi_{\zeta})_{\zeta > 0}$ be the approximating norms of Theorem \eqref{T: regularized_norm}.  Since $\varphi_{\zeta} \to \varphi$ uniformly in $\overline{V}$ as $\zeta \to 0^{+}$, it follows that there is a $\zeta_{*} \in (0,1)$ such that, for any $\zeta < \zeta_{*}$,
	\begin{equation} \label{E: what we want to prove gosh}
		\max \left\{ u(x) - \varphi_{\zeta}(x) \, \mid \, x \in \overline{V} \right\} > \max \left\{ u(x) - \varphi_{\zeta}(x) \, \mid \, x \in \overline{B}_{r}(\partial V) \right\}.
	\end{equation}

Fix an arbitrary $\zeta < \zeta_{*}$.  In view of \eqref{E: key transformation}, $u$ satisfies
	\begin{equation*}
		\frac{1}{2} \epsilon \varphi^{*}(Du)^{2} - \mathcal{G}_{\varphi_{\zeta}}^{*,\zeta}(Du,D^{2}u) \leq 0 \quad \text{in} \, \, V.
	\end{equation*}
Therefore, by Proposition \ref{P: technical core eaten} applied with $W = V \setminus \overline{B}_{r}(\partial V)$, there is a triple $(x_{*},X_{*},q_{*}) \in (V \setminus \overline{B}_{r}(\partial V)) \times \mathcal{S}_{d} \times \{\varphi_{\zeta} = 1\}$ such that
	\begin{gather*}
		\frac{1}{2} \epsilon \varphi^{*}(D\varphi_{\zeta}(x_{*}))^{2} - Q_{X_{*}}(q_{*} - \zeta \|D\varphi_{\zeta}(x_{*})\|^{-1} D\varphi_{\zeta}(x_{*})) \leq 0, \\
		X_{*} q_{*} = 0, \quad \text{and} \quad 0 \leq \varphi_{\zeta}(x_{*}) X_{*} \leq c(\varphi) \zeta^{-1} \text{Id}.
	\end{gather*}
Here it is important to emphasize specifically that $x_{*} \notin \overline{B}_{r}(\partial V)$ so $\|x_{*}\| > r$. 

Due to the bound on $X_{*}$ and the fact that $q_{*}$ lies in its kernel, one can write
	\begin{align*}
		Q_{X_{*}}(q_{*} - \zeta \|D\varphi_{\zeta}(x_{*})\|^{-1} D\varphi_{\zeta}(x_{*})) &= \zeta^{2} Q_{X_{*}}(\|D\varphi_{\zeta}(x_{*})\|^{-1} D\varphi_{\zeta}(x_{*})) \\
			&\leq c(\varphi) \varphi_{\zeta}(x_{*})^{-1} \zeta.
	\end{align*}

Harvesting the above inequalities, along with the bound $\varphi^{*}(D\varphi_{\zeta}(x_{*})) \geq \kappa(\varphi)$ from Theorem \ref{T: regularized_norm}, one finds
	\begin{equation*}
		\frac{1}{2} \epsilon \kappa(\varphi)^{2} - c(\varphi) \varphi_{\zeta}(x_{*})^{-1} \zeta \leq 0.
	\end{equation*}
In view of the fact that $\|x_{*}\| > r$, this can be rearranged to give
	\begin{equation*}
		\min \left\{ \varphi_{\zeta}(x) \, \mid \, x \in \partial B_{r}(0) \right\} = \min \left\{\varphi_{\zeta}(x) \, \mid \, x \in \mathbb{R}^{d} \setminus \overline{B}_{r}(0) \right\} \leq \frac{2 c(\varphi)}{\kappa(\varphi)^{2}} \cdot \frac{\zeta}{\epsilon}
	\end{equation*}
In the limit $\zeta \to 0^{+}$, this last inequality becomes the conclusion that 
	\begin{equation*}
		\min \left\{ \varphi(x) \, \mid \, x \in \partial B_{r}(0) \right\} \leq 0,
	\end{equation*}
but this contradicts the fact that $\varphi$ is positive away from zero. \end{proof}

	\begin{remark} The reader can check that if $\{\varphi \leq 1\}$ itself already has $C^{1,1}$ regularity, then, in the above proof, it is not necessary to use the approximations $(\varphi_{\zeta})_{\zeta > 0}$.  Indeed, in this case, one can directly apply Theorem \ref{T: technical core} below to the operator $G_{\varphi}^{*} \equiv \mathcal{G}_{\varphi}^{*,0}$. \end{remark}
	
	\begin{remark} The proof above is somewhat delicate since it was necessary to mitigate the blow-up of $D^{2}\varphi_{\zeta}$ as $\zeta \to 0^{+}$.  In fact, using the results of Part \ref{Part: applications} below, one can give an alternative proof of Theorem \ref{T: cone comparison} involving $\varphi_{\zeta}$ with just a single value of $\zeta \in (0,1)$ --- there is no need to send $\zeta$ to zero.  This can be seen by applying Proposition \ref{P: doubling variables} below in conjunction with Lemma \ref{L: strict subharmonic}.  The price to pay is the proof of Proposition \ref{P: doubling variables} involves the maximum principle for semicontinuous functions, whereas the arguments of Part \ref{Part: c11 cone comparison} avoid this.    \end{remark}


\subsection{Construction of $E_{\zeta} = \{\varphi_{\zeta} \leq 1\}$}  The construction of $(\varphi_{\zeta})_{\zeta > 0}$ is fundamentally geometric, leveraging the intuition that the basic object ought to be the unit ball $\{\varphi_{\zeta} \leq 1\}$ rather than $\varphi_{\zeta}$ itself.   Accordingly, the presentation will begin with the construction of the set $E_{\zeta} := \{\varphi_{\zeta} \leq 1\}$.  This is possible due to the well-known bijective correspondence between Finsler norms and their unit balls. 

Given a $\zeta > 0$, let $E_{\zeta}$ be the set of points in $\mathbb{R}^{d}$ a distance at most $\zeta$ from $\{\varphi \leq 1\}$, that is,
	\begin{equation*}
		E_{\zeta} = \bigcup_{\bar{q} \in \{\varphi \leq 1\}} \{q \in \mathbb{R}^{d} \, \mid \, \|q - \bar{q}\| \leq \zeta \} = \bigcup_{\bar{q} \in \{\varphi \leq 1\}} \overline{B}_{\zeta}(\bar{q}).
	\end{equation*}
This operation regularizes $\{\varphi \leq 1\}$ in a manner that, in some sense, preserves the qualitative features of the boundary, as shown in Figure \ref{F: preserves curvature}.  

\begin{remark} Indeed, in the convex analysis literature, the map sending $\{\varphi \leq 1\}$ to $E_{\zeta}$ provides a means of defining a weak notion of curvature for arbitrary convex sets, see, for instance,  \cite[Section 2.6]{schneider} and the references cited therein.  For the purposes of this paper, the idea to have in mind is that, roughly speaking, the curvature of $E_{\zeta}$ converges weakly to the curvature of $\{\varphi \leq 1\}$ as $\zeta \to 0^{+}$.  In terms of the norms $(\varphi_{\zeta})_{\zeta > 0}$, Section \ref{S: c11 reduction} already showed that, in some sense, the Finsler infinity Laplace operators also converge in the limit.

Finally, note that this regularization procedure, which amounts to a geometric form of sup-convolution, is well-known in the viscosity solutions literature, notably in Jensen's classical work \cite{jensen_maximum_principle}, the literature on level-set PDE, such as \cite{gao_kim}, and the ``easy" proof of uniqueness for the (Euclidean) infinity Laplacian in \cite{armstrong_smart_easy_proof}.  \end{remark}

\begin{figure} 
\begin{tikzpicture}[scale=1.5]

    \fill[fill=blue!10] (0,0.4in) arc (150:210:0.8in);
    \fill[fill=blue!10] (0,-0.4in) arc (-30:30:0.8in);
    
    \draw[dashed] (0.1299in,0.4750in) arc (30:150:0.15in);
    \draw[dashed] (0.1299in,-0.4750in) arc (-30:-150:0.15in);

    \draw[dashed] (0.1299in,0.4750in) arc (30:-30:0.95in);
    \draw[dashed] (-0.1299in,0.4750in) arc (150:210:0.95in);

\end{tikzpicture}
\hspace{0.2in}
\begin{tikzpicture}[scale=1.5]

    \fill[fill=blue!10] (-0.2in,0) -- (0,0.4in) -- (1.2in,0.4in) -- (1.4in,0) -- (1.2in,-0.4in) -- (0,-0.4in) -- (-0.2in,0);
    
    \draw[dashed] (0,0.55in) -- (1.2in,0.55in);
    \draw[dashed] (0,-0.55in) -- (1.2in,-0.55in);
    
    \draw[dashed] (1.5342in,0.0671in) arc (26.5651:-26.5651:0.15in);
    \draw[dashed] (-0.3342in,0.0671in) arc (180-26.5651:180+26.5651:0.15in);
    
    \draw[dashed] (1.5342in,0.0671in) -- (1.3342in,0.4671in);
    \draw[dashed] (1.5342in,-0.0671in) -- (1.3342in,-0.4671in);
    \draw[dashed] (-0.3342in, 0.0671in) -- (-0.1342in,0.4671in);
    \draw[dashed] (-0.3342in,-0.0671in) -- (-0.1342in,-0.4671in);
    
    \draw[dashed] (1.2in,0.55in) arc (90:26.5651:0.15in);
    \draw[dashed] (1.2in,-0.55in) arc (-90:-26.5651:0.15in);
    \draw[dashed] (0in,0.55in) arc (90:180-26.5651:0.15in);
    \draw[dashed] (0in,-0.55in) arc (270:180+26.5651:0.15in);
  	
\end{tikzpicture}
\hspace{0.2in}
\begin{tikzpicture}[scale=1.5]

    \fill[fill=blue!10] (0,0.2in) arc (180:0:0.2in) -- (0.4in,-0.2in) arc (0:-180:0.2in) -- (0,0.2in);
    
    \draw[dashed] (-0.15in,0.2in) arc (180:0:0.35in) -- (0.55in,-0.2in) arc (0:-180:0.35in) -- (-0.15in,0.2in);
  	
\end{tikzpicture}
\caption{The shaded regions are the sets $\{\varphi \leq 1\}$ for various choices of $\varphi$, while $E_{\zeta}$ is the region bounded by the dashed curves.}
\label{F: preserves curvature}
\end{figure}
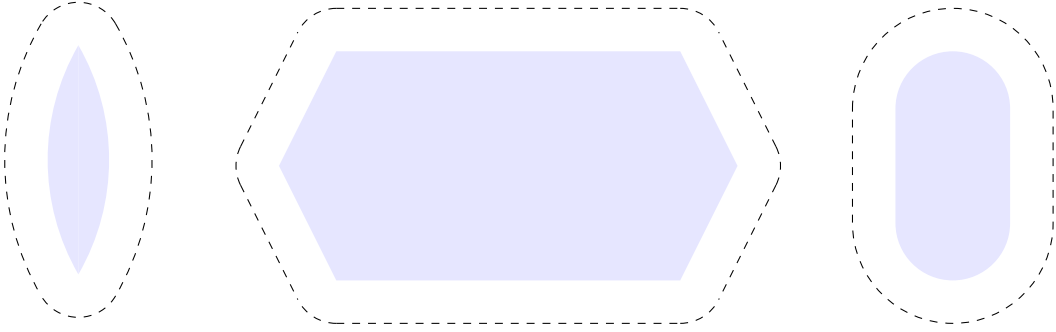


The next few results describe basic geometric properties of the set $E_{\zeta}$ as it relates to $\{\varphi \leq 1\}$.  These were already discovered previously in the convex analysis literature (cf.\ \cite[Section 2.6]{schneider}); the proofs are presented for the sake of completeness.

\begin{prop} \label{P: structure theorem} For any $\zeta > 0$, the set $E_{\zeta}$ has the following properties:
	\begin{itemize}
		\item[(i)] $E_{\zeta}$ is a closed convex set of class $C^{1,1}$ satisfying a uniform interior ball condition with radius $\zeta$.
		\item[(ii)] There is a continuous function $\mathcal{Q}_{\zeta} : \partial E_{\zeta} \to \{\varphi = 1\}$ such that, for any $q_{*} \in \partial E_{\zeta}$, the point $\bar{q}_{*} = \mathcal{Q}_{\zeta}(q)$ is the unique point in $\{\varphi = 1\}$ closest to $q_{*}$, that is,
				\begin{equation*}
					\{\bar{q}_{*}\} = \{\bar{q} \in \{\varphi \leq 1\} \, \mid \, \|\bar{q} - q_{*}\| = \zeta \}.
				\end{equation*}
		\item[(iii)] For any $q \in \partial E_{\zeta}$, if $\bar{q} = \mathcal{Q}_{\zeta}(q)$ as before, then 
				\begin{equation*}
					\varphi^{*}(q_{*} - \bar{q}_{*})^{-1} (q_{*} - \bar{q}_{*}) \in \partial \varphi(\bar{q}_{*}), \quad
					\langle q_{*} - \bar{q}_{*}, q_{*} \rangle = \sup \left\{ \langle q_{*} - \bar{q}_{*}, q \rangle \, \mid \, q \in E_{\zeta} \right\}.
				\end{equation*}
	\end{itemize}\end{prop}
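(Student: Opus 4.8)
The plan is to treat $E_{\zeta}$ as the outer parallel body of the convex body $K := \{\varphi \leq 1\}$, i.e.\ the Minkowski sum $E_{\zeta} = K \oplus \overline{B}_{\zeta}(0)$, and to deduce all three assertions from elementary properties of the Euclidean metric projection $\pi_{K}$ onto $K$. First I would record the preliminaries: $K$ is a convex body with $0$ in its interior (it contains the open set $\{\varphi < 1\}$), and $\partial K = \{\varphi = 1\}$ (if $\varphi(q) = 1$ then $\varphi(tq) = t > 1$ for $t > 1$, so $q$ is not interior). Writing $\text{dist}(\cdot, K)$ for the Euclidean distance to $K$, one has $q \in E_{\zeta} \iff \text{dist}(q, K) \leq \zeta$; since this function is $1$-Lipschitz the sublevel set $\{\text{dist}(\cdot, K) < \zeta\}$ lies in $\text{int}(E_{\zeta})$, while moving away from $K$ along the ray through $q_{*}$ and $\pi_{K}(q_{*})$ increases $\text{dist}(\cdot, K)$ strictly (by the variational inequality characterizing the projection), so no point of $\{\text{dist}(\cdot, K) = \zeta\}$ is interior. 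Hence $\partial E_{\zeta} = \{q : \text{dist}(q, K) = \zeta\}$.

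For part (i), $E_{\zeta}$ is convex (a Minkowski sum of convex sets) and compact, hence closed. Given $q_{*} \in \partial E_{\zeta}$, set $\bar{q}_{*} = \pi_{K}(q_{*})$, so $\|q_{*} - \bar{q}_{*}\| = \text{dist}(q_{*}, K) = \zeta$; then $\overline{B}_{\zeta}(\bar{q}_{*}) = \bar{q}_{*} + \overline{B}_{\zeta}(0) \subseteq E_{\zeta}$ is an interior ball of radius $\zeta$ touching $\partial E_{\zeta}$ at $q_{*}$, which is the uniform interior ball condition. For the $C^{1,1}$ regularity the cleanest route is to invoke the classical fact that a convex body has $C^{1,1}$ boundary precisely when it is the outer parallel body of some convex body at a positive radius (see \cite[Section 2.6]{schneider}); alternatively, the interior ball condition already forces a unique supporting hyperplane at each point of $\partial E_{\zeta}$, so the Gauss map is single-valued, and a routine local graph argument bounds its modulus of continuity in terms of $\zeta^{-1}$.

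For part (ii), I would define $\mathcal{Q}_{\zeta} := \pi_{K}|_{\partial E_{\zeta}}$, which is well-defined and continuous because the metric projection onto a closed convex set is single-valued and nonexpansive. Since $\text{dist}(q_{*}, K) = \zeta > 0$, the point $q_{*}$ lies outside $K$, and the projection of an exterior point lands on $\partial K = \{\varphi = 1\}$. Uniqueness follows from uniqueness of the metric projection: any $\bar{q} \in K$ with $\|\bar{q} - q_{*}\| = \zeta = \text{dist}(q_{*}, K)$ realizes the distance and hence equals $\bar{q}_{*}$; and because $\{\varphi = 1\} \subseteq K$, the same $\bar{q}_{*}$ is the unique nearest point of $\{\varphi = 1\}$ as well.

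For part (iii), put $v = q_{*} - \bar{q}_{*}$, so $\|v\| = \zeta > 0$. The variational inequality for $\bar{q}_{*} = \pi_{K}(q_{*})$ reads $\langle v, q - \bar{q}_{*} \rangle \leq 0$ for all $q \in K$; since $\varphi^{*}(v) = \max\{\langle v, q \rangle : q \in K\}$ and $\bar{q}_{*} \in K$, this forces $\varphi^{*}(v) = \langle v, \bar{q}_{*} \rangle$. Hence $p := \varphi^{*}(v)^{-1} v$ satisfies $\varphi^{*}(p) = 1$ and $\langle p, \bar{q}_{*} \rangle = 1 = \varphi(\bar{q}_{*})$, so $p \in \partial \varphi(\bar{q}_{*})$ by \eqref{E: subdifferential basic identity}, which is the first identity. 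For the second, I would compute the support function of $E_{\zeta}$ in direction $v$: using $E_{\zeta} = K \oplus \overline{B}_{\zeta}(0)$,
\begin{equation*}
\sup\{\langle v, q \rangle : q \in E_{\zeta}\} = \max_{q \in K} \langle v, q \rangle + \max_{\|w\| \leq \zeta} \langle v, w \rangle = \langle v, \bar{q}_{*} \rangle + \zeta \|v\| = \langle v, \bar{q}_{*} + v \rangle = \langle v, q_{*} \rangle,
\end{equation*}
where the last two equalities use $\|v\| = \zeta$. This is exactly $\langle q_{*} - \bar{q}_{*}, q_{*} \rangle = \sup\{\langle q_{*} - \bar{q}_{*}, q \rangle : q \in E_{\zeta}\}$. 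The only genuinely non-mechanical point is the $C^{1,1}$ claim in part (i), which either rests on the cited structure theorem for outer parallel bodies or, for a self-contained treatment, requires the standard but slightly fiddly argument extracting $C^{1,1}$ boundary regularity from a uniform interior ball condition plus convexity; everything else is routine convex analysis organized around the nonexpansiveness and variational characterization of the metric projection.
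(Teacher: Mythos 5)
Your proposal is correct, and its skeleton (outer parallel body / metric projection onto $K=\{\varphi\leq 1\}$, variational inequality, interior tangent ball) matches the paper's proof: the paper also gets convexity and the $\zeta$-interior ball directly from the definition, obtains $C^{1,1}$ by citing the interior-ball characterization (its Lemma on $C^{1,1}$ convex sets, playing the role of your Schneider citation), defines $\mathcal{Q}_{\zeta}$ as the nearest-point map, and derives the inclusion $\varphi^{*}(q_{*}-\bar{q}_{*})^{-1}(q_{*}-\bar{q}_{*})\in\partial\varphi(\bar{q}_{*})$ from the same supporting-hyperplane/variational characterization of the projection (its Lemma \ref{L: separation and distance}), which you phrase as the projection inequality. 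The two genuine divergences are minor but worth noting. For continuity of $\mathcal{Q}_{\zeta}$ you invoke nonexpansiveness of the metric projection, whereas the paper argues via uniqueness of the nearest point plus compactness; both are standard and equally valid. More substantively, for the second identity in (iii) the paper identifies the outward normal $N_{\partial E_{\zeta}}(q_{*})$ with the normal of the inscribed ball $\overline{B}_{\zeta}(\bar{q}_{*})$ tangent from inside and then uses convexity to get the supporting hyperplane, which implicitly leans on the $C^{1}$ regularity of $\partial E_{\zeta}$ established in (i); your computation of the support function of the Minkowski sum, $\sup_{q\in E_{\zeta}}\langle v,q\rangle = \sup_{q\in K}\langle v,q\rangle + \zeta\|v\| = \langle v,\bar{q}_{*}\rangle + \langle v,v\rangle = \langle v,q_{*}\rangle$ with $v=q_{*}-\bar{q}_{*}$ and $\|v\|=\zeta$, reaches the same conclusion without any appeal to differentiability of the boundary, which is arguably cleaner and slightly more robust. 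No gaps.
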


At this stage, it will be important to note a certain characterization of convex sets with $C^{1,1}$ boundary, which will be used repeatedly throughout the paper.  To state, it, first, recall the definition of interior and exterior ball conditions: a set $A \subseteq \mathbb{R}^{d}$ is said to satisfy a \emph{uniform interior ball condition (resp.\ uniform exterior ball condition) of radius $\zeta > 0$} if, for each $x \in \partial A$, there is an $e \in S^{d-1}$ such that 
	\begin{equation*}
		\overline{B}_{\zeta}(x - \zeta e) \subseteq \overline{A} \quad \text{(resp.} \quad \overline{B}_{\zeta}(x + \zeta e) \subseteq \mathbb{R}^{d} \setminus \text{int}(A).\text{)}
	\end{equation*}
The next lemma, which is well-known, asserts that a convex set is of class $C^{1,1}$ if and only if it satisfies a uniform interior ball condition.

	\begin{lemma} \label{L: c11 set thing} A compact convex set $C \subseteq \mathbb{R}^{d}$ with nonempty interior is a submanifold of $\mathbb{R}^{d}$ with boundary of class $C^{1,1}$ if and only if there is a $\zeta > 0$ such that $C$ satisfies a uniform interior ball condition of radius $\zeta$. \end{lemma}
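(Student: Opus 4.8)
The plan is to prove both implications by passing through the characterization of a convex body as a Minkowski sum with a ball, which makes the ``rolling ball'' picture transparent.

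For the implication that a uniform interior ball condition of radius $\zeta$ forces $C^{1,1}$ regularity, I would first show that $C = K \oplus \overline{B}_{\zeta}(0)$, where $K := \{y \in \mathbb{R}^{d} \, \mid \, \overline{B}_{\zeta}(y) \subseteq C\} = \bigcap_{\|b\| \leq \zeta}(C - b)$ is a nonempty compact convex set. The inclusion $K \oplus \overline{B}_{\zeta}(0) \subseteq C$ is immediate from the definition of $K$; for the reverse, the interior ball condition places every boundary point $x \in \partial C$ in $K \oplus \overline{B}_{\zeta}(0)$ (if $\overline{B}_{\zeta}(x - \zeta e) \subseteq C$ then $x - \zeta e \in K$ and $x = (x - \zeta e) + \zeta e$), and since a convex body with nonempty interior equals the convex hull of its boundary while $K \oplus \overline{B}_{\zeta}(0)$ is convex, one gets $C \subseteq K \oplus \overline{B}_{\zeta}(0)$. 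Granting this, two facts finish the argument. First, each $x \in \partial C$ has a \emph{unique} outer unit normal: writing $x = k + \zeta e$ with $k \in K$ and $e \in S^{d-1}$ (necessarily $\|x - k\| = \zeta$, else $x$ would be interior), the ball $\overline{B}_{\zeta}(k) \subseteq C$ touches $\partial C$ at $x$, so any supporting hyperplane of $C$ at $x$ supports this ball at $x$ and is thus the tangent plane of the ball there; hence $\partial C$ is $C^{1}$ and the Gauss map $\nu : \partial C \to S^{d-1}$, $k + \zeta e \mapsto e$, is well defined. Second, $\nu$ is $\zeta^{-1}$-Lipschitz: from the support function identity $h_{C} = h_{K} + \zeta$ on $S^{d-1}$ and $\langle x_{i}, \nu(x_{i}) \rangle = h_{C}(\nu(x_{i}))$ one finds $\langle k_{i}, e_{i} \rangle = h_{K}(e_{i})$, i.e.\ $k_{i}$ lies in the face of $K$ with normal $e_{i}$, so $\langle k_{1} - k_{2}, e_{1} - e_{2} \rangle \geq 0$; expanding $\|x_{1} - x_{2}\|^{2} = \|(k_{1} - k_{2}) + \zeta(e_{1} - e_{2})\|^{2} \geq \zeta^{2}\|e_{1} - e_{2}\|^{2}$ gives the claim. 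Finally, a $C^{1}$ convex hypersurface with Lipschitz Gauss map is $C^{1,1}$: writing $\partial C$ locally as a graph $x_{d} = f(x')$ with $f$ convex, the composition of the bi-Lipschitz chart $x' \mapsto (x', f(x'))$ with $\nu$ equals $x' \mapsto (1 + \|\nabla f(x')\|^{2})^{-1/2}(\nabla f(x'), -1)$; applying the smooth inverse of $p \mapsto (1 + \|p\|^{2})^{-1/2}(p,-1)$ recovers $\nabla f$, which is therefore Lipschitz.

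For the converse, suppose $\partial C$ is a $C^{1,1}$ submanifold with boundary. Cover the compact set $\partial C$ by finitely many charts in which, after a rotation, $\partial C$ is the graph of a convex function $f$ with $C$ the epigraph side and $\nabla f$ Lipschitz; let $L$ be a common Lipschitz constant and $\rho > 0$ a common chart radius. Then at any boundary point, rotating so that its tangent plane is horizontal (so $\nabla f$ vanishes at the base point $x_{0}'$, which is then a minimum of $f$), one has $f(x_{0}') \leq f(y') \leq f(x_{0}') + \tfrac{L}{2}\|y' - x_{0}'\|^{2}$ on the chart, and the ball $\overline{B}_{\zeta}(x_{0} + \zeta e_{d})$, with $e_{d}$ the inner normal and $\zeta = \min\{1/L, \rho/2\}$, lies in $C$: every point of it has $y_{d}$ at least the value of its lower boundary $f(x_{0}') + \zeta - \sqrt{\zeta^{2} - \|y' - x_{0}'\|^{2}}$, which dominates $f(x_{0}') + \tfrac{L}{2}\|y' - x_{0}'\|^{2} \geq f(y')$ for $\|y' - x_{0}'\| \leq \zeta \leq 1/L$, by the elementary inequality $1 - \sqrt{1 - t^{2}} \geq t^{2}/2$ on $[0,1]$. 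Since $\zeta$ does not depend on the boundary point, $C$ satisfies a uniform interior ball condition of radius $\zeta$.

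The main obstacle, and the only place any real care is needed, is the geometric bookkeeping in the two ``translation'' steps: the clean estimate $\|x_{1} - x_{2}\| \geq \zeta\|e_{1} - e_{2}\|$ for the Gauss map (which hinges on identifying $k_{i}$ as a point of the face of $K$ in direction $e_{i}$), and the hemisphere-versus-paraboloid comparison in the converse, together with making all constants uniform via compactness of $\partial C$. A few minor points also need checking, namely that nonemptiness of $\text{int}(C)$ makes $C = \text{conv}(\partial C)$ and $K$ nonempty, and that the local graph representations can be chosen with uniform radius and gradient-Lipschitz bound. None of this is deep; alternatively one may simply cite \cite[Section 2.6]{schneider} and the references therein, where this rolling-ball characterization is recorded.
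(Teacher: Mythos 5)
Your proof is correct, but it follows a genuinely different route from the paper's. The paper disposes of the lemma in a few lines: it cites the known fact (cf.\ \cite{lewicka_peres}) that a bounded set is a $d$-dimensional $C^{1,1}$ submanifold with boundary if and only if it satisfies uniform interior \emph{and} exterior ball conditions of some radius $\zeta$, and then observes that for a convex set the exterior ball condition is automatic, since a supporting hyperplane acts as an exterior ball of infinite radius. You instead give a self-contained argument: for the ``interior ball $\Rightarrow C^{1,1}$'' direction you prove the inner-parallel-body decomposition $C = K \oplus \overline{B}_{\zeta}(0)$, deduce uniqueness of the outer normal (hence $C^{1}$ regularity), and obtain the $\zeta^{-1}$-Lipschitz bound on the Gauss map from $\langle k_{1}-k_{2}, e_{1}-e_{2}\rangle \geq 0$, which is exactly the normal-cone monotonicity \eqref{E: monotonicity formula} that the paper uses elsewhere; for the converse you run a hemisphere-versus-paraboloid comparison in uniform local graph coordinates. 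Both directions check out (note the lower bound $f(x_{0}') \leq f(y')$ in your converse is not actually needed — only the upper quadratic bound enters the ball containment), and the remaining issues you flag (uniformity of chart constants, $C=\mathrm{conv}(\partial C)$, Lipschitzness of the inverse of $p \mapsto (1+\|p\|^{2})^{-1/2}(p,-1)$ on compact gradient ranges) are genuinely routine. What your approach buys is independence from the cited two-sided ball characterization and an explicit quantitative Lipschitz constant $\zeta^{-1}$ for the Gauss map, which is in the spirit of the curvature bounds the paper needs later (Proposition \ref{P: trivial hessian estimate}); what the paper's approach buys is brevity, at the cost of leaning on \cite{lewicka_peres} — your closing remark that one could instead cite \cite[Section 2.6]{schneider} is essentially the same move with a different reference.
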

	
The lemma follows from a similar characterization of domains of class $C^{1,1}$, made precise, for instance, in \cite{lewicka_peres}.  Hence only a short proof will be given.  Nonetheless, here and later in the paper, the notion of supporting hyperplane will be needed.  Recall that if $A \subseteq \mathbb{R}^{d}$ and $e \in S^{d-1}$, then $A$ is said to possess a \emph{supporting hyperplane} with normal $e$ at a point $x \in \partial A$ if the following inclusion holds:
	\begin{align*}
		A \subseteq \{y \in \mathbb{R}^{d} \, \mid \, \langle y - x, e \rangle \leq 0\}.
	\end{align*}
	
		\begin{proof} The lemma follows from a well-known fact about submanifolds of $\mathbb{R}^{d}$ of class $C^{1,1}$ (cf.\ \cite{lewicka_peres}): in particular, a bounded set $A \subseteq \mathbb{R}^{d}$ is a $d$-dimensional submanifold of $\mathbb{R}^{d}$ with boundary of class $C^{1,1}$ if and only if there is a $\zeta > 0$ such that $A$ satisfies both an interior and an exterior ball condition of radius $\zeta$.  What makes the convex case different is the existence of supporting hyperplanes.  Indeed, a supporting hyperplane is, in effect, an exterior ball of radius $r = +\infty$, hence the exterior ball condition is redundant.      \end{proof}

Finally, in the analysis of the geometry of $E_{\zeta}$, it will be useful to keep in mind the following elementary observation about convex sets and their supporting hyperplanes:
	
		\begin{lemma} \label{L: separation and distance} Let $E \subseteq \mathbb{R}^{d}$ be convex and suppose that $q_{*} \notin \overline{E}$. Given any $\bar{q}_{*} \in \partial E$, the equality $\|q_{*} - \bar{q}_{*}\| = \text{dist}(q_{*},E)$ holds if and only if
			\begin{equation*}
				\langle q_{*} - \bar{q}_{*}, \bar{q}_{*} \rangle = \sup \left\{ \langle q_{*} - \bar{q}_{*}, \bar{q} \rangle \, \mid \, \bar{q} \in \overline{E} \right\}.
			\end{equation*}
		In particular, there is a unique such point $\bar{q}_{*}$. \end{lemma}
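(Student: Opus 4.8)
The plan is to reduce the whole statement to the elementary geometry of Euclidean projection onto a closed convex set. Write $v = q_{*} - \bar{q}_{*}$. Since the infimum defining $\text{dist}(q_{*},E)$ over $E$ agrees with the infimum over $\overline{E}$, one works throughout with the closed set $\overline{E}$, and the hypothesis $\|q_{*}-\bar{q}_{*}\| = \text{dist}(q_{*},E)$ becomes: $\bar{q}_{*}$ is a nearest point of $\overline{E}$ to $q_{*}$. Note also that the supporting-hyperplane identity in the statement is equivalent to the inequality $\langle v, \bar{q} - \bar{q}_{*} \rangle \leq 0$ for all $\bar{q} \in \overline{E}$, because $\bar{q}_{*} \in \overline{E}$ realizes the supremum on the right-hand side precisely when this inequality holds.

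For the forward implication, assume $\bar{q}_{*}$ is nearest. Fix $\bar{q} \in \overline{E}$; by convexity the segment $\bar{q}_{t} := (1-t)\bar{q}_{*} + t\bar{q}$ lies in $\overline{E}$ for $t \in [0,1]$, so the function $t \mapsto \|q_{*} - \bar{q}_{t}\|^{2} = \|v\|^{2} - 2t\langle v, \bar{q} - \bar{q}_{*}\rangle + t^{2}\|\bar{q} - \bar{q}_{*}\|^{2}$ attains its minimum over $[0,1]$ at $t = 0$. Comparing the one-sided derivative at $t = 0$ against $0$ gives $-2\langle v, \bar{q} - \bar{q}_{*}\rangle \geq 0$, i.e.\ $\langle v, \bar{q}\rangle \leq \langle v, \bar{q}_{*}\rangle$, which (since $\bar{q} \in \overline{E}$ is arbitrary and $\bar{q}_{*} \in \overline{E}$) is exactly the claimed identity. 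For the converse, assume $\langle v, \bar{q} - \bar{q}_{*}\rangle \leq 0$ for all $\bar{q} \in \overline{E}$. Then for any such $\bar{q}$,
\[
	\|q_{*} - \bar{q}\|^{2} = \|v + (\bar{q}_{*} - \bar{q})\|^{2} = \|v\|^{2} + 2\langle v, \bar{q}_{*} - \bar{q}\rangle + \|\bar{q}_{*} - \bar{q}\|^{2} \geq \|v\|^{2},
\]
using $\langle v, \bar{q}_{*} - \bar{q}\rangle \geq 0$; hence $\bar{q}_{*}$ is a nearest point, i.e.\ $\|q_{*} - \bar{q}_{*}\| = \text{dist}(q_{*},\overline{E}) = \text{dist}(q_{*},E)$.

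For the uniqueness clause, existence of a nearest point follows from closedness of $\overline{E}$ (intersect with a large closed ball and use compactness), and uniqueness from the standard strict-convexity argument: if $\bar{q}_{*}$ and $\bar{q}_{**}$ both attain the distance $r$, their midpoint lies in $\overline{E}$ and the parallelogram identity forces $\|q_{*} - \tfrac12(\bar{q}_{*}+\bar{q}_{**})\| < r$ unless $\bar{q}_{*} = \bar{q}_{**}$. Since the two conditions in the statement have just been shown to be equivalent, the point satisfying them is precisely this nearest point, hence unique; and it lies in $\partial E$ because $q_{*} \notin \overline{E}$ rules out an interior minimizer (moving from an interior point slightly toward $q_{*}$ would stay in $E$ and decrease the distance). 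There is no genuine obstacle here — this is a classical fact; the only points demanding a little care are the passage from $E$ to $\overline{E}$ and the invocation of closedness to guarantee that the projection exists when $E$ is not assumed bounded, both of which are routine.
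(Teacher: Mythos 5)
Your proof is correct, and it is essentially the standard argument the paper relies on: the paper's own treatment just notes that the "if" direction follows from orthogonality (your expansion $\|q_{*}-\bar{q}\|^{2}=\|v\|^{2}+2\langle v,\bar{q}_{*}-\bar{q}\rangle+\|\bar{q}_{*}-\bar{q}\|^{2}\geq\|v\|^{2}$) and cites Schneider's Lemma 1.3.1 for the "only if" direction, which is exactly your first-variation argument along the segment $\bar{q}_{t}$. Writing it out self-contained, including the existence/uniqueness of the projection via compactness and the parallelogram identity, is a fine and complete substitute for the citation.
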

		
			\begin{proof} The ``if" direction is a consequence of orthogonality.  See \cite[Lemma 1.3.1]{schneider} for the proof of the ``only if" direction.  \end{proof}
	
		\begin{proof}[Proof of Proposition \ref{P: structure theorem}]  To see that (i) holds, first, observe that $E_{\zeta}$ is convex.  Indeed, suppose that $q_{1},q_{2} \in E_{\zeta}$ and $\lambda \in [0,1]$.  By construction, there are points $\bar{q}_{1},\bar{q}_{2} \in \{\varphi \leq 1\}$ and $\xi_{1},\xi_{2} \in \overline{B}_{\zeta}$ such that $q_{i} = \bar{q}_{i} + \xi_{i}$ for $i \in \{1,2\}$.  From this, one decomposes the convex combination $(1 - \lambda) q_{1} + \lambda q_{2}$ as
			\begin{equation*}
				(1 - \lambda) q_{1} + \lambda q_{2} = [(1 - \lambda) \bar{q}_{1} + \lambda \bar{q}_{2}] + [(1 - \lambda) \xi_{1} + \lambda \xi_{2}].
			\end{equation*}
		 Since $\{\varphi \leq 1\}$ and $\overline{B}_{\zeta}$ are both convex, it follows that $(1 - \lambda) q_{1} + \lambda q_{2} \in E_{\zeta}$, hence $E_{\zeta}$ is convex.  
		 
		 Next, here is the proof that $E_{\zeta}$ satisfies a uniform interior ball condition with radius $\zeta$, and, thus, is of class $C^{1,1}$.  By definition, if $q_{*} \in \partial E_{\zeta}$, then there is a $\bar{q}_{*} \in \{\varphi \leq 1\}$ such that $\|q_{*} - \bar{q}_{*}\| = \zeta$.  From this, one readily deduces that $\overline{B}_{\zeta}(q_{*} - \zeta e) = \overline{B}_{\zeta}(\bar{q}_{*}) \subseteq E_{\zeta}$ with $e = \|q_{*} - \bar{q}_{*}\|^{-1} (q_{*} - \bar{q}_{*})$. 
		
		Next, turn to (ii).  By Lemma \ref{L: separation and distance}, for any $x \in \mathbb{R}^{d}$, there is a unique $\bar{q}_{x} \in \{\varphi \leq 1\}$ such that $\|x - \bar{q}_{x}\| = \text{dist}(x,\{\varphi \leq 1\})$.  It is also easy to check that $\bar{q}_{x} \in \{\varphi = 1\}$ if $\varphi(x) > 1$.  In particular, the function $\mathcal{Q}_{\zeta} : \partial E_{\zeta} \to \{\varphi = 1\}$ given by setting $\mathcal{Q}_{\zeta}(q) = \bar{q}_{q}$ is well-defined.  Due to the fact that $\{\varphi \leq 1\}$ is compact, uniqueness readily implies continuity.
		
		Finally, consider (iii).  Let $\bar{q}_{*} = \mathcal{Q}_{\zeta}(q_{*})$.  A classical argument establishes that $\varphi^{*}(q_{*} - \bar{q}_{*})^{-1} (q_{*} - \bar{q}_{*}) \in \partial \varphi(\bar{q}_{*})$.  Indeed, were this not the case, it would be possible to invoke \eqref{E: subdifferential basic identity dual} and \eqref{E: inversion} to find a $\bar{q}_{**} \in \{\varphi \leq 1\}$ such that 
			\begin{equation*}
				\langle q_{*} - \bar{q}_{*}, \bar{q}_{**} \rangle > \langle q_{*} - \bar{q}_{*}, \bar{q}_{*} \rangle.
			\end{equation*}
		However, this would contradict Lemma \ref{L: separation and distance}.  
		
		It only remains to show that
			\begin{equation} \label{E: super annoying}
				\langle q_{*} - \bar{q}_{*}, q_{*} \rangle = \sup \left\{ \langle q_{*} - \bar{q}_{*}, q \rangle \, \mid \, q \in \partial E_{\zeta} \right\}.
			\end{equation}
		To see this, first, note that if $N_{\partial E_{\zeta}}(q_{*}) \in S^{d-1}$ is the outward normal vector to $\partial E_{\zeta}$ at $q_{*}$, then, by convexity, $N_{\partial E_{\zeta}}(q_{*})$ determines a supporting hyperplane:
			\begin{equation} \label{E: separation}
				\langle N_{\partial E_{\zeta}}(q_{*}), q_{*} \rangle = \sup \left\{ \langle N_{\partial E_{\zeta}}, q \rangle \, \mid \, q \in E_{\zeta} \right\}.
			\end{equation}
		At the same time, as observed already, there holds $\overline{B}_{\zeta}(\bar{q}_{*}) \subseteq E_{\zeta}$ and $q_{*} \in \partial B_{\zeta}(\bar{q}_{*}) \cap \partial E_{\zeta}$, i.e., the ball $\overline{B}_{\zeta}(\bar{q}_{*})$ is tangent to $E_{\zeta}$ from the inside.  It necessarily follows that $N_{\partial E_{\zeta}}(q_{*})$ equals the normal vector to $\partial B_{\zeta}(\bar{q}_{*})$, that is, $N_{\partial E_{\zeta}}(q_{*}) = \|q_{*} - \bar{q}_{*}\|^{-1} (q_{*} - \bar{q}_{*})$.  Combining this with \eqref{E: separation} yields \eqref{E: super annoying}.  \end{proof}

In the proof of Theorem \ref{T: regularized_norm}, it will be useful to know that the map $\mathcal{Q}_{\zeta}$, which is not invertible in general, nonetheless has an explicit multivalued inverse.

	\begin{prop} \label{P: multivalued inverse} Let $\mathcal{Q}_{\zeta}$ be the map defined in Proposition \ref{P: structure theorem}.  Then, for any $\bar{q} \in \{\varphi = 1\}$,
		\begin{equation*}
			\{q \in \partial E_{\zeta} \, \mid \, \mathcal{Q}_{\zeta}(q) = \bar{q}\} = \bar{q} + \left\{ \frac{\zeta p}{\|p\|} \, \mid \, p \in \partial \varphi(\bar{q}) \right\}
		\end{equation*}\end{prop}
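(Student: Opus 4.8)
The plan is to unwind the definition of $\mathcal{Q}_{\zeta}$, characterize the left-hand set in terms of the nearest-point projection onto $\{\varphi \leq 1\}$, and then translate the resulting ``nearest point'' condition into a statement about saturating the Euler inequality \eqref{E: euler}. First I would record that, exactly as in the proof of Proposition \ref{P: structure theorem}, one has $\partial E_{\zeta} = \{q \in \mathbb{R}^{d} \mid \operatorname{dist}(q, \{\varphi \leq 1\}) = \zeta\}$: the set $\{\operatorname{dist}(\cdot, \{\varphi \leq 1\}) < \zeta\}$ is open and contained in $E_{\zeta}$, while if $\operatorname{dist}(q, \{\varphi \leq 1\}) = \zeta$ with nearest point $\bar{q}$ and $e = \|q - \bar{q}\|^{-1}(q - \bar{q})$, the supporting hyperplane to the convex set $\{\varphi \leq 1\}$ at $\bar{q}$ with normal $e$ forces $\operatorname{dist}(q + te, \{\varphi \leq 1\}) \geq \zeta + t > \zeta$ for $t > 0$, so $q \notin \operatorname{int}(E_{\zeta})$. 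Consequently $\mathcal{Q}_{\zeta}(q) = \bar{q}$ holds precisely when $\bar{q} \in \{\varphi \leq 1\}$, $\|q - \bar{q}\| = \zeta$, and $\bar{q}$ is the nearest point of $\{\varphi \leq 1\}$ to $q$.

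By Lemma \ref{L: separation and distance} together with the identity $\sup\{\langle q - \bar{q}, \bar{q}' \rangle \mid \bar{q}' \in \{\varphi \leq 1\}\} = \varphi^{*}(q - \bar{q})$, the nearest-point condition is equivalent to $\langle q - \bar{q}, \bar{q} \rangle = \varphi^{*}(q - \bar{q})$. Since $\varphi(\bar{q}) = 1$, identity \eqref{E: subdifferential basic identity} (or, equivalently, \eqref{E: subdifferential basic identity dual} and \eqref{E: inversion}) shows this is in turn equivalent to $\varphi^{*}(q - \bar{q})^{-1}(q - \bar{q}) \in \partial \varphi(\bar{q})$. Writing $p = q - \bar{q}$, this yields
\[
\{q \in \partial E_{\zeta} \mid \mathcal{Q}_{\zeta}(q) = \bar{q}\} = \bar{q} + \bigl\{ p \in \mathbb{R}^{d} \mid \|p\| = \zeta, \ \varphi^{*}(p)^{-1} p \in \partial \varphi(\bar{q}) \bigr\}.
\]
It then remains only to check that the set on the right of the translate coincides with $\{\zeta \|p'\|^{-1} p' \mid p' \in \partial \varphi(\bar{q})\}$. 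For the inclusion $\supseteq$, any $p' \in \partial \varphi(\bar{q})$ satisfies $\varphi^{*}(p') = 1$ by \eqref{E: subdifferential basic identity}, so $p := \zeta \|p'\|^{-1} p'$ has $\|p\| = \zeta$ and $\varphi^{*}(p)^{-1} p = p'$. For $\subseteq$, given such a $p$, put $p' := \varphi^{*}(p)^{-1} p \in \partial \varphi(\bar{q})$; then $\zeta \|p'\|^{-1} p' = \zeta \|p\|^{-1} p = p$ since $\|p\| = \zeta$. This completes the proof.

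The argument is essentially a translation through the duality identities \eqref{E: euler}, \eqref{E: subdifferential basic identity}, \eqref{E: subdifferential basic identity dual}, and \eqref{E: inversion}; the only point requiring a little care is the identification $\partial E_{\zeta} = \{\operatorname{dist}(\cdot, \{\varphi \leq 1\}) = \zeta\}$ — in particular, verifying that every point $\bar{q} + \zeta \|p'\|^{-1} p'$ with $p' \in \partial \varphi(\bar{q})$ genuinely lands on $\partial E_{\zeta}$ and is mapped by $\mathcal{Q}_{\zeta}$ back to $\bar{q}$, which is precisely where the orthogonality/supporting-hyperplane half of Lemma \ref{L: separation and distance} and the convexity of $\{\varphi \leq 1\}$ are used (and where one also uses that a boundary point of $E_{\zeta}$, being at positive distance from $\{\varphi \leq 1\}$, has its nearest point on $\{\varphi = 1\}$).
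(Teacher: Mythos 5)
Your proof is correct and follows essentially the same route as the paper's: both directions ultimately reduce to Lemma \ref{L: separation and distance} together with the duality identity \eqref{E: subdifferential basic identity}, which you organize as a single chain of equivalences rather than two separate inclusions. The only substantive addition is your explicit verification that $\partial E_{\zeta} = \{\operatorname{dist}(\cdot,\{\varphi \leq 1\}) = \zeta\}$ via the supporting-hyperplane argument, a point the paper's proof uses implicitly when concluding $q \in \partial E_{\zeta}$ from the distance condition.
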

		
		\begin{proof} Suppose that $q \in \mathcal{Q}_{\zeta}^{-1}(\bar{q})$.  By Proposition \ref{P: structure theorem}, (iii), the vector $p = \varphi^{*}(q - \bar{q})^{-1}(q - \bar{q})$  is an element of $\partial \varphi(\bar{q})$.  At the same time, $\|q - \bar{q}\| = \zeta$.  Thus, since $\varphi^{*}(q - \bar{q}) > 0$, it follows that $q = \bar{q} + \varphi^{*}(q - \bar{q}) p = \bar{q} + \zeta \|p\|^{-1} p$.  This proves one of the necessary inclusions.
		
		 Conversely, given a $p \in \partial \varphi(\bar{q})$, it remains to show that the vector $q = \bar{q} + \zeta \|p\|^{-1} p$ is an element of the set $\{\mathcal{Q}_{\zeta} = \bar{q}\}$.  It is clear that $\|q - \bar{q}\| = \zeta$ so $q \in E_{\zeta}$.  At the same time, 
		 	\begin{align*}
				\langle q - \bar{q}, \bar{q} \rangle = \frac{\zeta}{\|p\|} \langle p, \bar{q} \rangle
				&= \frac{\zeta}{\|p\|} \sup \left\{ \langle p, \bar{q}_{1} \rangle \, \mid \, \bar{q}_{1} \in \{\varphi \leq 1\} \right\} \\
				&= \sup \left\{ \langle q - \bar{q}, \bar{q}_{1} \rangle \, \mid \, \bar{q}_{1} \in \{\varphi \leq 1\} \right\}.
			\end{align*}
		Thus, by Lemma \ref{L: separation and distance}, $\|q - \bar{q}\| = \inf \left\{ \|q - \bar{q}_{1}\| \, \mid \, \bar{q}_{1} \in \{\varphi \leq 1\}\right\}$.  This proves $q \in \partial E_{\zeta}$ and $\mathcal{Q}_{\zeta}(q) = \bar{q}$.  \end{proof}

\subsection{Construction of $\varphi_{\zeta}$} \label{S: construction of minkowski gauge zeta} Given $E_{\zeta}$ as defined in the previous section, let $\varphi_{\zeta} : \mathbb{R}^{d} \to [0,+\infty)$ be the associated Minkowski gauge, that is, the unique Finsler norm such that $\{\varphi_{\zeta} \leq 1\} = E_{\zeta}$.  Recall that the gauge is determined by the following formula:
	\begin{equation} \label{E: minkowski}
		\varphi_{\zeta}(x) = \inf \left\{ \alpha > 0 \, \mid \, \alpha^{-1} x \in E_{\zeta} \right\}.
	\end{equation}
See, for instance, \cite[Chapter 5]{reed_simon} or \cite[Section 1.2]{brezis} for a proof that \eqref{E: minkowski} defines a Finsler norm; uniqueness follows immediately from positive homogeneity.

	\begin{remark} \label{R: ball regularization} At this stage, it is worth considering what happens when $\varphi(q) = \alpha^{-1} \|q\|$ for some constant $\alpha > 0$.  In this case, the unit ball $\{\varphi \leq 1\}$ is nothing but the ball $\overline{B}_{\alpha}(0)$, and then it is easy to verify that $E_{\zeta} = \overline{B}_{\alpha + \zeta}(0)$ for any $\zeta > 0$.  In particular, this proves that
		\begin{equation*}
			\varphi_{\zeta} = \frac{1}{\alpha + \zeta} \|\cdot\| \quad \text{if} \quad \varphi = \frac{1}{\alpha} \|\cdot\|.
		\end{equation*}   \end{remark}

The key ingredient in Theorem \ref{T: regularized_norm} is the subject of the next proposition:

		\begin{prop} \label{P: subdifferential characterization} If $p \in \mathbb{R}^{d} \setminus \{0\}$, then
		\begin{equation*}
			\partial \varphi_{\zeta}^{*}(p) = \partial \varphi^{*}(p) +  \frac{\zeta p}{\|p\|}.
		\end{equation*}
	\end{prop}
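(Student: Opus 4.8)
The plan is to realize $E_\zeta$ as a Minkowski sum and then extract both $\varphi_\zeta^*$ and its subdifferential via support functions. First I would record the identity $E_\zeta = \{\varphi \leq 1\} + \overline{B}_\zeta(0)$, meaning that every point of $E_\zeta$ is of the form $\bar q + \xi$ with $\bar q \in \{\varphi \leq 1\}$ and $\|\xi\| \leq \zeta$; this is immediate from the definition $E_\zeta = \bigcup_{\bar q \in \{\varphi \leq 1\}} \overline B_\zeta(\bar q)$, since $q \in \overline B_\zeta(\bar q)$ exactly when $q - \bar q \in \overline B_\zeta(0)$.

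Next I would compute the dual norm. Since $\varphi^*$ and $\varphi_\zeta^*$ are, by the definition of the dual norm recalled in the preliminaries, the support functions of $\{\varphi \leq 1\}$ and of $\{\varphi_\zeta \leq 1\} = E_\zeta$ respectively, additivity of support functions under Minkowski sums gives
$$\varphi_\zeta^*(p) = \max_{q \in E_\zeta}\langle p, q\rangle = \max_{\bar q \in \{\varphi \leq 1\}}\langle p, \bar q\rangle + \max_{\|\xi\| \leq \zeta}\langle p, \xi\rangle = \varphi^*(p) + \zeta\|p\|,$$
where, for $p \neq 0$, the last maximum is attained \emph{only} at $\xi = \zeta p/\|p\|$ by the strict convexity of the Euclidean ball (equality in Cauchy--Schwarz).

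Then I would identify $\partial\varphi_\zeta^*(p)$ with the exposed face of $E_\zeta$ in direction $p$. By \eqref{E: subdifferential basic identity dual}, together with the elementary fact that any maximizer of the nonzero functional $\langle p, \cdot\rangle$ over $\{\varphi_\zeta \leq 1\}$ must lie on $\{\varphi_\zeta = 1\}$ (using $\varphi_\zeta^*(p) > 0$), one has $\partial\varphi_\zeta^*(p) = \{q \in E_\zeta \mid \langle p, q\rangle = \varphi_\zeta^*(p)\}$. For the inclusion $\supseteq$: if $\bar q \in \partial\varphi^*(p)$, then $\bar q \in \{\varphi = 1\} \subseteq \{\varphi \leq 1\}$ with $\langle p, \bar q\rangle = \varphi^*(p)$, so $q := \bar q + \zeta p/\|p\| \in E_\zeta$ satisfies $\langle p, q\rangle = \varphi^*(p) + \zeta\|p\| = \varphi_\zeta^*(p)$, hence $q \in \partial\varphi_\zeta^*(p)$. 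For $\subseteq$: given $q \in \partial\varphi_\zeta^*(p)$, decompose $q = \bar q + \xi$ as above; then from $\langle p, \bar q\rangle + \langle p, \xi\rangle = \varphi^*(p) + \zeta\|p\|$ combined with $\langle p, \bar q\rangle \leq \varphi^*(p)$ and $\langle p, \xi\rangle \leq \zeta\|p\|$, both inequalities are forced to be equalities. Equality in the second gives $\xi = \zeta p/\|p\|$ by the uniqueness of the maximizer over $\overline B_\zeta(0)$; equality in the first, combined with $\varphi^*(p) > 0$ for $p \neq 0$, forces $\bar q \in \{\varphi = 1\}$ (a point of $\{\varphi \leq 1\}$ of norm $<1$ could be rescaled toward the boundary to strictly increase $\langle p, \cdot\rangle$) and hence $\bar q \in \partial\varphi^*(p)$ by \eqref{E: subdifferential basic identity dual}. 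Thus $q \in \partial\varphi^*(p) + \zeta p/\|p\|$, which closes the argument.

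I do not anticipate any serious obstacle: the whole statement reduces to the Minkowski-sum description of $E_\zeta$ together with additivity of support functions. The only points that need a sentence of care are the strict convexity of the Euclidean ball (so that the optimal translate $\xi = \zeta p/\|p\|$ is \emph{uniquely} determined, which is what pins down the shift) and the routine bookkeeping between the unit ball and its boundary sphere, handled via the positivity of $\varphi^*(p)$ and $\varphi_\zeta^*(p)$ for $p \neq 0$.
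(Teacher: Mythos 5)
Your proof is correct, and it takes a genuinely different route from the paper's. You work directly from the Minkowski-sum identity $E_{\zeta} = \{\varphi \leq 1\} + \overline{B}_{\zeta}(0)$ and the additivity of support functions, obtaining $\varphi_{\zeta}^{*}(p) = \varphi^{*}(p) + \zeta \|p\|$ and then identifying $\partial \varphi_{\zeta}^{*}(p)$ as the exposed face of $E_{\zeta}$ via the equality cases in $\langle p, \bar{q} \rangle \leq \varphi^{*}(p)$ and Cauchy--Schwarz, the strict convexity of the Euclidean ball pinning down the shift $\zeta p/\|p\|$. The paper instead routes the argument through the nearest-point projection $\mathcal{Q}_{\zeta} : \partial E_{\zeta} \to \{\varphi = 1\}$ and the normal-vector structure of the $C^{1,1}$ boundary: it uses Proposition \ref{P: structure theorem}(iii) to see that $q \in \partial \varphi_{\zeta}^{*}(p)$ forces $q - \mathcal{Q}_{\zeta}(q) = \zeta p/\|p\|$ and $\mathcal{Q}_{\zeta}(q) \in \partial \varphi^{*}(p)$ via the inversion formula \eqref{E: inversion}, and Proposition \ref{P: multivalued inverse} for the reverse inclusion. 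Your version is shorter and more self-contained (it never needs the $C^{1,1}$ regularity or the projection map, and it delivers the explicit dual-norm formula as a byproduct), while the paper's version is essentially free once the structural results about $E_{\zeta}$ are in place --- results the paper needs anyway for the interior-ball/curvature analysis later --- and it keeps the geometric picture of boundary normals and contact points explicit, which is the picture exploited in the subsequent sections. The only steps in your write-up that warrant the care you already give them are the uniqueness of the maximizer over $\overline{B}_{\zeta}(0)$ and the rescaling argument showing $\bar{q} \in \{\varphi = 1\}$; both are handled correctly using $\varphi^{*}(p) > 0$ for $p \neq 0$ and \eqref{E: subdifferential basic identity dual}.
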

	
In the proof, it will be useful to recall that $\partial \varphi_{\zeta}^{*}$ has a geometric representation:
	\begin{equation}
		\partial \varphi_{\zeta}^{*}(p) = \left\{ q \in \partial E_{\zeta} \, \mid \, \langle q, p \rangle = \max_{q' \in \partial E_{\zeta}} \langle q', p \rangle \right\}. \label{E: key subdifferential identity}
	\end{equation}
This is nothing but a recapitulation of \eqref{E: subdifferential basic identity dual}.

		\begin{proof}  First, note that if $q \in \partial \varphi_{\zeta}^{*}(p)$, then \eqref{E: key subdifferential identity} implies that, at the level of the normal vector $N_{\partial E_{\zeta}}(q)$ to $\partial E_{\zeta}$ at $q$, there holds 
				\begin{equation*}
					\frac{p}{\|p\|} = N_{\partial E_{\zeta}}(q) = \frac{D\varphi_{\zeta}(q)}{\|D\varphi_{\zeta}(q)\|}.
				\end{equation*}
			Thus, by Proposition \ref{P: structure theorem}, (iii), 
				\begin{equation} \label{E: tedious subdifferential computations}
					q - \mathcal{Q}_{\zeta}(q) = \frac{\zeta p}{\|p\|}, \quad \varphi^{*}(p)^{-1} p \in \partial \varphi(\mathcal{Q}_{\zeta}(q)).
				\end{equation}
			By duality, the second identity can be rewritten as $\mathcal{Q}_{\zeta}(q) \in \partial \varphi^{*}(p)$.  Indeed, by \eqref{E: inversion} and zero-homogeneity,
				\begin{align*}
					\mathcal{Q}_{\zeta}(q) &\in \partial \varphi^{*}(\varphi^{*}(p)^{-1} p) = \partial \varphi^{*}(p).
				\end{align*}
			Since $q$ is an arbitrary element of $\partial \varphi_{\zeta}^{*}(p)$, this yields
				\begin{equation*}
					\partial \varphi_{\zeta}^{*}(p) \subseteq \partial \varphi^{*}(p) + \frac{\zeta p}{\|p\|}.
				\end{equation*}
			
			Conversely, if $\bar{q} \in \partial \varphi^{*}(p)$, then $\varphi^{*}(p)^{-1}p \in \partial \varphi(\bar{q})$ and Proposition \ref{P: multivalued inverse} implies that
				\begin{equation*}
					\bar{q} + \frac{\zeta p}{\|p\|} \in \partial E_{\zeta} \cap \mathcal{Q}_{\zeta}^{-1}(\bar{q}).
				\end{equation*}
			By Proposition \ref{P: structure theorem}, (iii) and \eqref{E: key subdifferential identity}, the inclusion $\bar{q} + \frac{\zeta p}{\|p\|} \in \partial \varphi^{*}_{\zeta}(p)$ holds.  Since $\bar{q}$ was arbitrary, this gives the remaining inclusion:
				\begin{equation*}
					\partial \varphi^{*}(p) + \frac{\zeta p}{\|p\|} \subseteq \partial \varphi_{\zeta}^{*}(p).
				\end{equation*}
			\end{proof}
			
To conclude the proof of Theorem \ref{T: regularized_norm}, it remains to verify the remaining properties of $\varphi_{\zeta}$.  That is accomplished in the following two propositions.

	\begin{prop} \label{P: proof of approx theorem 1} $\varphi_{\zeta} \to \varphi$ locally uniformly in $\mathbb{R}^{d}$ as $\zeta \to 0^{+}$.  Furthermore, there are constants $C(\varphi), \mu(\varphi) > 0$ such that, for any $\zeta \in (0,1)$,
		\begin{gather} 
			\mu(\varphi) \leq \|D\varphi_{\zeta}(x)\| \leq C(\varphi) \quad \text{for each} \quad x \in \mathbb{R}^{d} \setminus \{0\}, \quad \text{and} \label{E: bounded below gradient} \\
			\{\varphi_{\zeta} \leq 1\} \subseteq (1 + C(\varphi) \zeta) \{\varphi \leq 1\}. \nonumber
		\end{gather} \end{prop}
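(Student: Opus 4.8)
The plan is to reduce everything to two elementary facts about the convex set $K := \{\varphi \le 1\}$: its nondegeneracy and its convexity. Since $\varphi$ is finite and continuous it is bounded on $S^{d-1}$, and since it is positive away from zero it is bounded below there; writing $r_{0} = (\max_{S^{d-1}}\varphi)^{-1}$ and $R_{0} = (\min_{S^{d-1}}\varphi)^{-1}$, one has $0 < r_{0} \le R_{0}$ and $\overline{B}_{r_{0}}(0) \subseteq K \subseteq \overline{B}_{R_{0}}(0)$, equivalently $R_{0}^{-1}\|x\| \le \varphi(x) \le r_{0}^{-1}\|x\|$ for all $x$. These are the only quantitative inputs I will use.

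First I would record a sandwich at the level of sets. By construction $E_{\zeta} = K + \overline{B}_{\zeta}(0)$ (a Minkowski sum), and $\overline{B}_{\zeta}(0) = \tfrac{\zeta}{r_{0}}\overline{B}_{r_{0}}(0) \subseteq \tfrac{\zeta}{r_{0}}K$. Using $\lambda K + \mu K = (\lambda+\mu)K$ for $\lambda,\mu \ge 0$ (a consequence of the convexity of $K$), this gives
\[
	K \subseteq E_{\zeta} \subseteq K + \tfrac{\zeta}{r_{0}}K = \bigl(1 + \tfrac{\zeta}{r_{0}}\bigr)K,
\]
which is precisely the last assertion of the proposition with $C(\varphi) = r_{0}^{-1}$. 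Passing to Minkowski gauges, and using that a larger convex set has a smaller gauge while the gauge of $tK$ is $t^{-1}\varphi$, the same chain of inclusions yields
\[
	\bigl(1 + \tfrac{\zeta}{r_{0}}\bigr)^{-1}\varphi \le \varphi_{\zeta} \le \varphi \quad \text{on } \mathbb{R}^{d},
\]
so that $0 \le \varphi - \varphi_{\zeta} \le \tfrac{\zeta}{r_{0}}\varphi$; since $\varphi$ is bounded on compact sets, this proves $\varphi_{\zeta} \to \varphi$ locally uniformly.

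For the gradient bounds I would first note that, by Proposition \ref{P: structure theorem}(i), $E_{\zeta} = \{\varphi_{\zeta} \le 1\}$ is a closed convex set of class $C^{1,1}$, so by the correspondence between the regularity of a convex set and that of its gauge (cf.\ Section~\ref{S: pos hom}) the function $\varphi_{\zeta}$ is $C^{1}$ on $\mathbb{R}^{d}\setminus\{0\}$, making $D\varphi_{\zeta}$ meaningful there. The upper bound is then immediate: from $\varphi_{\zeta} \le \varphi$ and the triangle inequality for $\varphi_{\zeta}$ one gets $\varphi_{\zeta}(x) - \varphi_{\zeta}(y) \le \varphi_{\zeta}(x-y) \le \varphi(x-y) \le r_{0}^{-1}\|x-y\|$, so $\varphi_{\zeta}$ is $r_{0}^{-1}$-Lipschitz and $\|D\varphi_{\zeta}\| \le r_{0}^{-1} = C(\varphi)$. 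For the lower bound I would combine Euler's identity $\langle D\varphi_{\zeta}(x), x\rangle = \varphi_{\zeta}(x)$ (valid since $\varphi_{\zeta}$ is positively one-homogeneous and differentiable at $x \neq 0$) with Cauchy--Schwarz and the pointwise lower bound already obtained: for $\zeta \in (0,1)$,
\[
	\|D\varphi_{\zeta}(x)\| \ge \frac{\varphi_{\zeta}(x)}{\|x\|} \ge \frac{1}{1 + \zeta/r_{0}}\cdot\frac{\varphi(x)}{\|x\|} \ge \frac{r_{0}}{r_{0}+1}\cdot\frac{1}{R_{0}} =: \mu(\varphi).
\]
Since every step is a routine application of convexity and homogeneity, I do not anticipate a genuine obstacle; the only point that requires a citation rather than a direct computation is the $C^{1}$-differentiability of $\varphi_{\zeta}$ away from the origin, which is exactly where Proposition \ref{P: structure theorem}(i) and the gauge-regularity correspondence of Section~\ref{S: pos hom} enter.
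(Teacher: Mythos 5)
Your proof is correct, and the two main estimates are obtained essentially as in the paper: the set sandwich $K \subseteq E_{\zeta} \subseteq (1 + C(\varphi)\zeta)K$ (the paper's $C(\varphi) = \max_{S^{d-1}}\varphi$ is exactly your $r_{0}^{-1}$), the resulting pointwise bounds $(1+C(\varphi)\zeta)^{-1}\varphi \le \varphi_{\zeta} \le \varphi$, and the gradient bounds via the Euler identity $\langle D\varphi_{\zeta}(x),x\rangle = \varphi_{\zeta}(x)$ together with $\langle D\varphi_{\zeta}(x),y\rangle \le \varphi_{\zeta}(y)$ (your Lipschitz argument for the upper bound is an equivalent repackaging). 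The one genuine divergence is the convergence statement: the paper deduces $\varphi_{\zeta}\to\varphi$ by a compactness argument (uniform Lipschitz bounds, Arzel\`{a}--Ascoli, identification of the limit through the set inclusions), whereas you read off the explicit quantitative estimate $0 \le \varphi - \varphi_{\zeta} \le C(\varphi)\zeta\,\varphi$ directly from the sandwich, which gives local uniform convergence with a rate and avoids any subsequence extraction. Your route is slightly shorter and yields more (a linear-in-$\zeta$ rate), at no extra cost; the paper's route needs nothing beyond what it has already proved either, so the difference is one of economy rather than substance. Your explicit appeal to Proposition \ref{P: structure theorem}(i) and the gauge-regularity correspondence of Section \ref{S: pos hom} to justify that $D\varphi_{\zeta}$ exists away from the origin is a point the paper leaves implicit, and it is the right justification.
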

	
		\begin{proof} First, recall that $\{\varphi \leq 1\} \subseteq \{\varphi_{\zeta} \leq 1\}$ for any $\zeta > 0$.  This implies $\varphi_{\zeta} \leq \varphi$ by positive one-homogeneity.  
		
		Define $C(\varphi) = \max\{\varphi(y) \, \mid \, \|y\| = 1\}$, and note that 
			\begin{equation*}
				B_{1}(0) \subseteq \{\varphi \leq C(\varphi)\} = C(\varphi) \{\varphi \leq 1\}.
			\end{equation*}
		Thus, for any $\zeta > 0$, there holds $E_{\zeta} \subseteq \{\varphi \leq 1\} + \overline{B}_{\zeta}(0) \subseteq (1 + C(\varphi)\zeta) \{\varphi \leq 1\}$.  Since $\varphi_{\zeta}$ is positively one-homogeneous, it follows that, for any $y \in \mathbb{R}^{d}$ and any $\zeta \in (0,1)$,
		\begin{equation*}
			(1 + C(\varphi)\zeta)^{-1} \min \left\{ \varphi(y) \, \mid \, \|y\| = 1\right\}\|x\| \leq \varphi_{\zeta}(x) \leq \varphi(x) \leq C(\varphi) \|x\|.
		\end{equation*}
	From this and the identity $\varphi_{\zeta}(x) = \langle D\varphi_{\zeta}(x), x \rangle$, one deduces that
		\begin{equation*}
			(1 + C(\varphi) \zeta)^{-1} \min \left\{ \varphi(y) \, \mid \, \|y\| = 1 \right\} \leq \|D\varphi_{\zeta}(x)\| \quad \text{for each} \quad x \in \mathbb{R}^{d} \setminus \{0\}.
		\end{equation*}
	Similarly, since $\langle D\varphi_{\zeta}(x), y \rangle \leq \varphi_{\zeta}(y)$ for any $x,y \in \mathbb{R}^{d} \setminus \{0\}$, 
		\begin{equation*}
			\|D\varphi_{\zeta}(x)\| \leq C(\varphi) \quad \text{for each} \quad x \in \mathbb{R}^{d} \setminus \{0\}.
		\end{equation*}
	This proves \eqref{E: bounded below gradient} holds, and it shows that $(\varphi_{\zeta})_{\zeta > 0}$ is uniformly Lipschitz.
	
	By the Arzel\`{a}-Ascoli Theorem, up to passing to a subsequence, there is a Finsler norm $\tilde{\varphi}$ such that $\varphi_{\zeta} \to \tilde{\varphi}$ locally uniformly as $\zeta \to 0^{+}$.  The previous considerations show that, for any $\zeta \in (0,1)$,
		\begin{equation*}
			\{\varphi \leq 1\} \subseteq \{\tilde{\varphi} \leq 1\} \subseteq (1 + C(\varphi) \zeta) \{\varphi \leq 1\}.
		\end{equation*}
	Therefore, $\{\varphi \leq 1\} = \{\tilde{\varphi} \leq 1\}$, which means $\tilde{\varphi} = \varphi$.  Since the subsequence was arbitrary, this proves $\varphi_{\zeta} \to \varphi$ locally uniformly. \end{proof}

	\begin{prop} \label{P: proof of approx theorem 2} There is a $\kappa(\varphi) > 0$ such that, for any $\zeta \in (0,1)$,
		\begin{equation*}
			\varphi^{*}(D\varphi_{\zeta}(x)) \geq \kappa(\varphi) \quad \text{for each} \quad x \in \mathbb{R}^{d} \setminus \{0\}.
		\end{equation*}
	\end{prop}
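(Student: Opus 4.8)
The plan is to deduce the bound from two facts already in hand: the uniform gradient estimate of Proposition~\ref{P: proof of approx theorem 1} and the equivalence of $\varphi^{*}$ with the Euclidean norm.

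First I would record that, since $\varphi^{*}$ is itself a Finsler norm, the constant
\[
  c_{0} := \min\left\{ \varphi^{*}(e) \,\mid\, e \in S^{d-1} \right\}
\]
is strictly positive (a positive continuous function on the compact sphere attains a positive minimum), and hence, by positive one-homogeneity of $\varphi^{*}$,
\[
  \varphi^{*}(p) \geq c_{0}\|p\| \quad \text{for every } p \in \mathbb{R}^{d}.
\]

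Next, for any $\zeta \in (0,1)$ and any $x \in \mathbb{R}^{d}\setminus\{0\}$, the lower bound in \eqref{E: bounded below gradient} gives $\|D\varphi_{\zeta}(x)\| \geq \mu(\varphi)$, where $\mu(\varphi) > 0$ is independent of both $\zeta$ and $x$. Combining the last two displays yields
\[
  \varphi^{*}(D\varphi_{\zeta}(x)) \geq c_{0}\|D\varphi_{\zeta}(x)\| \geq c_{0}\,\mu(\varphi),
\]
so the proposition holds with $\kappa(\varphi) := c_{0}\,\mu(\varphi)$.

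There is no real obstacle here; the only point to keep track of is that the lower bound on $\|D\varphi_{\zeta}\|$ is \emph{uniform} in $\zeta$, which is precisely the content of Proposition~\ref{P: proof of approx theorem 1}. One might be tempted instead to use the exact identity $\varphi^{*}_{\zeta} = \varphi^{*} + \zeta\|\cdot\|$ (valid because $E_{\zeta}$ is the Minkowski sum $\{\varphi\leq 1\} \oplus \overline{B}_{\zeta}(0)$, so its support function, namely $\varphi_{\zeta}^{*}$, is the sum of the support functions of the summands) together with the normalization $\varphi_{\zeta}^{*}(D\varphi_{\zeta}(x)) = 1$ to write $\varphi^{*}(D\varphi_{\zeta}(x)) = 1 - \zeta\|D\varphi_{\zeta}(x)\|$; but extracting positivity from this requires an upper bound on $\|D\varphi_{\zeta}(x)\|$ that does not degenerate as $\zeta \to 1^{-}$, so the route above is cleaner.
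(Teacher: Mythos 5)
Your argument is correct and is essentially the paper's own proof: the paper likewise notes that $\varphi^{*}$, being a Finsler norm, satisfies $\varphi^{*}(p)\geq m(\varphi^{*})\|p\|$, and then invokes the uniform lower bound $\|D\varphi_{\zeta}(x)\|\geq\mu(\varphi)$ from Proposition \ref{P: proof of approx theorem 1}. Your closing remark about the support-function identity is a fine aside, but the core reasoning matches the paper exactly.
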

	
		\begin{proof} Since $\varphi^{*}$ is a Finsler norm, there is a constant $m(\varphi^{*}) > 0$ such that
			\begin{equation*}
				\varphi^{*}(p) \geq m(\varphi^{*}) \|p\| \quad \text{for each} \quad p \in \mathbb{R}^{d}.
			\end{equation*}
		Thus, the desired conclusion follows from Proposition \ref{P: proof of approx theorem 1}.
		\end{proof}

Finally, Theorem \ref{T: regularized_norm} is proved by combining the previous propositions.

	\begin{proof}[Proof of Theorem \ref{T: regularized_norm}]  Concerning (i), recall that $E_{\zeta} = \{\varphi_{\zeta} \leq 1\}$ by definition.  Thus, this is immediate from Proposition \ref{P: structure theorem}.
	
	Next, statements (ii), (iii), and (iv) were proved in Propositions \ref{P: subdifferential characterization}, \ref{P: proof of approx theorem 1}, and \ref{P: proof of approx theorem 2}, respectively.  
	
	Finally, concerning (v), it is clear that $0 \in \{\varphi < 1\} \subseteq \{\varphi_{\zeta} < 1\}$, hence there is a $\delta > 0$ such that $\overline{B}_{\delta}(0) \subseteq \{\varphi_{\zeta} \leq 1\}$ for all $\zeta > 0$.  By boundedness, there is an $R > 0$ such that $\{\varphi \leq 1\} \subseteq \overline{B}_{R}(0)$, hence Proposition \ref{P: proof of approx theorem 1} implies that $\{\varphi_{\zeta} \leq 1\} \subseteq B_{CR}(0)$ for some $C > 0$ independent of $\zeta \in (0,1)$.  Thus, up to decreasing $\delta$, one has $\overline{B}_{\delta}(0) \subseteq \{\varphi_{\zeta} \leq 1\} \subseteq \overline{B}_{\delta^{-1}}(0)$ for all $\zeta \in (0,1)$. \end{proof}

\section{Conical Test Functions} \label{S: conical test}

This section describes the test functions used in the proofs of $C^{1,1}$ cone comparison principles, such as the one encountered in Section \ref{S: c11 reduction}.  The test functions in question are positively one-homogeneous, nonnegative convex functions --- hence their domains are cones --- so they will be referred to henceforth as \emph{conical test functions}.

\subsection{Faces of Convex Sets} \label{S: faces} The construction of conical test functions is based on the notion of a \emph{face} of a convex set.  That concept will be reviewed here.  For more information, see the monographs by Rockafellar \cite{rockafellar} or Schneider \cite{schneider}.

Before stating the definition, it is necessary to first define the relative interior and boundary of a convex set.  Observe that if $C \subseteq \mathbb{R}^{d}$ is a convex set, then there is a unique smallest linear subspace $V_{C} \subseteq \mathbb{R}^{d}$ such that 
	\begin{equation*}
		C \subseteq x + V_{C} \quad \text{for each} \quad x \in C.
	\end{equation*}
The dimension of $C$ is then defined to coincide with that of $V_{C}$, $\text{dim}(C) := \text{dim}(V_{C})$.

	\begin{definition} Given a convex set $C \subseteq \mathbb{R}^{d}$, the \emph{relative interior} $\text{rint}(C)$ is the interior of $C$ relative to the subspace $V_{C}$, that is, $x \in \text{rint}(C)$ if and only if there is an $r > 0$ such that
		\begin{equation*}
			B_{r}(x) \cap (x + V_{C}) \subseteq C.
		\end{equation*}
	The \emph{relative boundary} $\text{bdry}(C)$ of $C$ is the boundary of $C$ relative to $V_{C}$:
		\begin{equation*}
			\text{bdry}(C) = \overline{C} \setminus \text{rint}(C).
		\end{equation*}
	\end{definition}

Roughly speaking, the faces of a convex set $C$ are the convex subsets that are maximal with respect to the operation $\text{rint}(\cdot)$.  The definition given next will be useful for us, but is different than that in \cite{rockafellar}; however, see Theorem 18.1 in that reference for the equivalence of the two formulations.

	\begin{definition} \label{D: face} Given a convex set $C \subseteq \mathbb{R}^{d}$, a convex subset $F \subseteq C$ is called a \emph{face} of $C$ if it has the following property: for any other convex subset $F' \subseteq C$, if $\text{rint}(F') \cap F$ is nonempty, then $F' \subseteq F$.  \end{definition}
	
It is worth noting that the (relative interiors of) faces of a convex set form a partition: that is, if $C \subseteq \mathbb{R}^{d}$ is convex, then
	\begin{equation} \label{E: face partition}
		C = \bigcup \left\{ \text{rint}(F) \, \mid \, F \, \, \text{face of} \, \, C \right\}
	\end{equation}
and $\text{rint}(F_{1}) \cap \text{rint}(F_{2}) = \emptyset$ if $F_{1}$ and $F_{2}$ are distinct faces of $C$.  See \cite[Theorem 18.2]{rockafellar} for more details. 
	
The easiest example of faces are the so-called \emph{exposed faces}.  These are the faces determined through linear optimization.

	\begin{definition}  Given a closed convex set $C \subseteq \mathbb{R}^{d}$, a subset $F \subseteq C$ is called an \emph{exposed face} if there is a $p \in \mathbb{R}^d$ such that 
		\begin{equation*}
			F = \left\{x \in C \, \mid \, \langle x, p \rangle = \max_{y \in C} \, \langle y,p \rangle \right\}.
		\end{equation*}
	\end{definition}

It is not hard to show that an exposed face is, in fact, a face.  In particular, if $\varphi$ is a Finsler norm, then, by \eqref{E: subdifferential basic identity dual}, the subdifferential $\partial \varphi^{*}(p)$ is an exposed face of $\{\varphi \leq 1\}$ for any $p \in \mathbb{R}^{d}$.  Clearly, every exposed face of $\{\varphi \leq 1\}$ takes this form.

It is important to note that, in general, a face $F$ of a convex set $C$ need \emph{not} be an exposed face.  This is actually very relevant to the approach taken in this paper --- non-exposed faces cause difficulties that significantly complicate the proofs, as indicated in Remark \ref{R: exposed face conundrum}.  An example of a face that is not exposed is described next.

\begin{remark} Consider the convex set $C \subseteq \mathbb{R}^{2}$ given by 
	\begin{equation*}
		C = ([0,1] \times [0,1]) \cup \left\{(x,y) \in \mathbb{R}^{2} \, \mid \,  1 \leq x \leq 2, \, \, 0 \leq y \leq \sqrt{1 - (x - 1)^{2}} \right\}.
	\end{equation*}
The segment $[0,1] \times \{1\}$ is an exposed face.  At the same time, the singleton $\{(1,1)\}$ is a face, but it is not exposed (since $\partial C$ is differentiable there). \end{remark}

\begin{remark} \label{R: exposed face stuff} More generally, given any Finsler norm $\varphi$, if $\{\varphi \leq 1\}$ is of class $C^{1}$ but not strictly convex --- so it contains at least one nontrivial exposed face --- then it necessarily has a face that is not exposed.  Indeed, for any $p \in \mathbb{R}^{d} \setminus \{0\}$, the set $\partial \varphi^{*}(p)$ is a face of $\{\varphi \leq 1\}$.  If $\# \partial \varphi^{*}(p) > 1$, then the differentiability of the boundary $\{\varphi = 1\}$ implies that every nontrivial exposed face of $\partial \varphi^{*}(p)$ is a non-exposed face of $\{\varphi \leq 1\}$. \end{remark}

\subsection{A Motivating Example} \label{S: c11 strictly convex} In order to motivate the conical test functions defined below, first, consider the following example of their use.  

Specifically, this subsection revisits the case of Finsler norms $\varphi$ for which the ball $\{\varphi \leq 1\}$ is $C^{1,1}$ and strictly convex.\footnote{Recall that a convex set $C \subseteq \mathbb{R}^{d}$ is \emph{strictly convex} if $\partial C$ contains no line segments.}  In this case, the strict convexity of $\{\varphi \leq 1\}$ implies that $\partial \varphi^{*}$ is single-valued away from the origin and, thus, 
	\begin{equation*}
		G_{\varphi}^{*} \equiv G^{\varphi}_{*} \quad \text{in} \, \, (\mathbb{R}^{d} \setminus \{0\}) \times \mathcal{S}_{d}
	\end{equation*}
so the $\varphi$-infinity Laplacian is continuous, at least where the gradient is nonzero.  While in this setting the comparison principle is well-understood,\footnote{See the discussion in \cite{m_souganidis}, particularly Proposition 1 therein.} this section presents a different, elementary approach.

	\begin{prop} \label{P: strictly convex c11 thing} If $\varphi$ is a Finsler norm in $\mathbb{R}^{d}$ for which $\{\varphi \leq 1\}$ is both $C^{1,1}$ and strictly convex, then $\text{Sub}_{\varphi}(U) = CCA_{\varphi}(U)$ for any open set $U \subseteq \mathbb{R}^{d}$. \end{prop}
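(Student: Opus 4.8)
The plan is to use the packaging already in place. The inclusion $CCA_{\varphi}(U) \subseteq \text{Sub}_{\varphi}(U)$ is recorded in \eqref{E: first inclusion}, so it remains only to prove $\text{Sub}_{\varphi}(U) \subseteq CCA_{\varphi}(U)$, and for this I would invoke Lemma \ref{L: symmetries}. That reduces the claim to the following statement: for every bounded open $V \subseteq \mathbb{R}^{d} \setminus \{0\}$, every $\epsilon > 0$, and every $u \in USC(\overline{V})$ satisfying $\tfrac{1}{2}\epsilon\, \varphi^{*}(Du)^{2} - G_{\varphi}^{*}(Du, D^{2}u) \leq 0$ in $V$, the maximum of $u - \varphi$ over $\overline{V}$ is attained on $\partial V$. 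Since $\overline{V}$ is compact and $u - \varphi$ is upper semicontinuous, this maximum is attained somewhere; if it is attained on $\partial V$ there is nothing to prove, so the remaining case is an interior maximum.

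So I would argue by contradiction, assuming the maximum of $u - \varphi$ over $\overline{V}$ is attained at an interior point $x_{*} \in V$. Because $\{\varphi \leq 1\}$ is both $C^{1,1}$ and strictly convex, I would apply Proposition \ref{P: strictly convex game} at $x_{*}$ to obtain a convex, positively one-homogeneous function $\psi_{x_{*}}$ that is smooth in a neighborhood of $x_{*}$, lies above $\varphi$ pointwise with $\psi_{x_{*}}(x_{*}) = \varphi(x_{*})$, and satisfies $G_{\varphi}^{*}(D\psi_{x_{*}}(x_{*}), D^{2}\psi_{x_{*}}(x_{*})) = 0$. Since $\psi_{x_{*}} \geq \varphi$ with equality at $x_{*}$, one has $u - \psi_{x_{*}} \leq u - \varphi$ on $\overline{V}$ with equality at $x_{*}$, so $u - \psi_{x_{*}}$ also has a local maximum at $x_{*}$; as $\psi_{x_{*}}$ is smooth there, it is a legitimate test function. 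The subsolution inequality at $x_{*}$ then reads
\begin{equation*}
\tfrac{1}{2}\epsilon\, \varphi^{*}(D\psi_{x_{*}}(x_{*}))^{2} \leq G_{\varphi}^{*}(D\psi_{x_{*}}(x_{*}), D^{2}\psi_{x_{*}}(x_{*})) = 0,
\end{equation*}
which forces $\varphi^{*}(D\psi_{x_{*}}(x_{*})) = 0$.

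To close the argument I would observe that $\psi_{x_{*}}$, being positively one-homogeneous and differentiable at $x_{*}$, obeys Euler's identity $\langle D\psi_{x_{*}}(x_{*}), x_{*} \rangle = \psi_{x_{*}}(x_{*}) = \varphi(x_{*})$, which is strictly positive because $x_{*} \in V \subseteq \mathbb{R}^{d} \setminus \{0\}$ and $\varphi$ is positive away from the origin. In particular $D\psi_{x_{*}}(x_{*}) \neq 0$, and since $\varphi^{*}$ is itself a Finsler norm, hence positive away from the origin, $\varphi^{*}(D\psi_{x_{*}}(x_{*})) > 0$ --- contradicting the previous display. This rules out an interior maximum, establishes the statement to which Lemma \ref{L: symmetries} reduces the problem, and hence gives $\text{Sub}_{\varphi}(U) \subseteq CCA_{\varphi}(U)$; together with \eqref{E: first inclusion} this yields the asserted equality.

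I do not anticipate any genuine obstacle in carrying out these steps: in this regularity class the $\varphi$-infinity Laplacian is continuous away from the origin, and essentially all of the content is concentrated in Proposition \ref{P: strictly convex game}, whose proof (the construction of the conical test functions $\psi_{x}$ from the geometry of the level sets of $\varphi$ near $x$) is the substantive part. The value of stating the present proposition separately is precisely to show how cleanly that construction settles cone comparison once smoothness of $\psi_{x}$ is available, and to set up by contrast the difficulty addressed in the remainder of the paper, where $\{\varphi \leq 1\}$ is only $C^{1,1}$ (or worse) and $\psi_{x}$ can no longer be taken smooth, so that the subsolution must be tested against a nonsmooth convex barrier via the perturbation machinery of Parts 1 and 2.
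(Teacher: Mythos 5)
Your proposal is correct and follows essentially the same route as the paper: reduce via Lemma \ref{L: symmetries} to strict subsolutions, assume an interior maximum of $u-\varphi$, and use the smooth conical barrier $\psi_{x_{*}}$ from Proposition \ref{P: strictly convex game} as a test function to reach a contradiction from $\tfrac{1}{2}\epsilon\,\varphi^{*}(D\psi_{x_{*}}(x_{*}))^{2}\leq 0$. The only (immaterial) difference is how the contradiction is closed: the paper notes $D\psi_{x_{*}}(x_{*})=D\varphi(x_{*})$ and uses $\varphi^{*}(D\varphi(x_{*}))=1$ from \eqref{E: subdifferential basic identity}, while you use Euler's identity and positivity of $\varphi^{*}$ away from the origin.
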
  

To reiterate, the above result is already known, as discussed, for instance, in \cite{aronsson_crandall_juutinen} and \cite{m_souganidis}.  Nonetheless, since $\varphi$ is only assumed to be $C^{1,1}$ and not $C^{2}$, the known proof involves some form of the maximum principle for semicontinuous functions, see \cite[Appendix A]{user} or \cite[Proof of Theorem 6]{m_souganidis}.  By contrast, the argument detailed next only uses elementary calculus and convex analysis.

As in the last section, the interior ball condition afforded by $C^{1,1}$ regularity plays an essential role.   In view of Lemma \ref{L: c11 set thing}, since the set $\{\varphi \leq 1\}$ is of class $C^{1,1}$, there is a radius $\zeta > 0$ such that, given any  $q \in \{\varphi = 1\}$, if $e \in S^{d-1}$ is the outward normal vector to $\{\varphi \leq 1\}$ at $q$, then	
	\begin{equation} \label{E: geometric form of touching ack}
		\overline{B}_{r}(q - \zeta e) \subseteq \{\varphi \leq 1\}, \quad \partial B_{r}(q - \zeta e) \cap \{\varphi \leq 1\} = \{q\}.
	\end{equation}
Hence although $\{\varphi \leq 1\}$ need not be smooth at $q$, it can be touched from the inside at $q$ by a convex set that \emph{is}.  

At the level of the norm $\varphi$, this implies a similar statement.  In particular, there is a function $\psi_{e,q,\zeta}$ that is smooth in a neighborhood of the ray $\mathcal{C}(\{q\})$ through $q$ and for which there holds
	\begin{gather}
		\varphi \leq \psi_{e,q,\zeta} \quad \text{in} \, \, \mathbb{R}^{d}, \label{E: psi above} \quad \text{and}\\ 
		\varphi(x) = \psi_{e,q,\zeta}(x) \quad \text{for each} \quad x \in \mathcal{C}(\{q\}). \label{E: psi equal}
	\end{gather}
Specifically, let $\psi_{e,q,\zeta} : \mathbb{R}^{d} \to [0,+\infty]$ be the Minkowski gauge of the ball $\overline{B}_{\zeta}(q - \zeta e)$, that is, the convex function defined by 
	\begin{equation*}
		\psi_{e,q,\zeta}(x) = \inf \left\{ \alpha > 0 \, \mid \, \alpha^{-1}x \in \overline{B}_{\zeta}(q - \zeta e) \right\},
	\end{equation*}
taken with the convention that $\inf \emptyset = +\infty$.  See Figure \ref{F: conical test function figure} for a depiction of this construction.

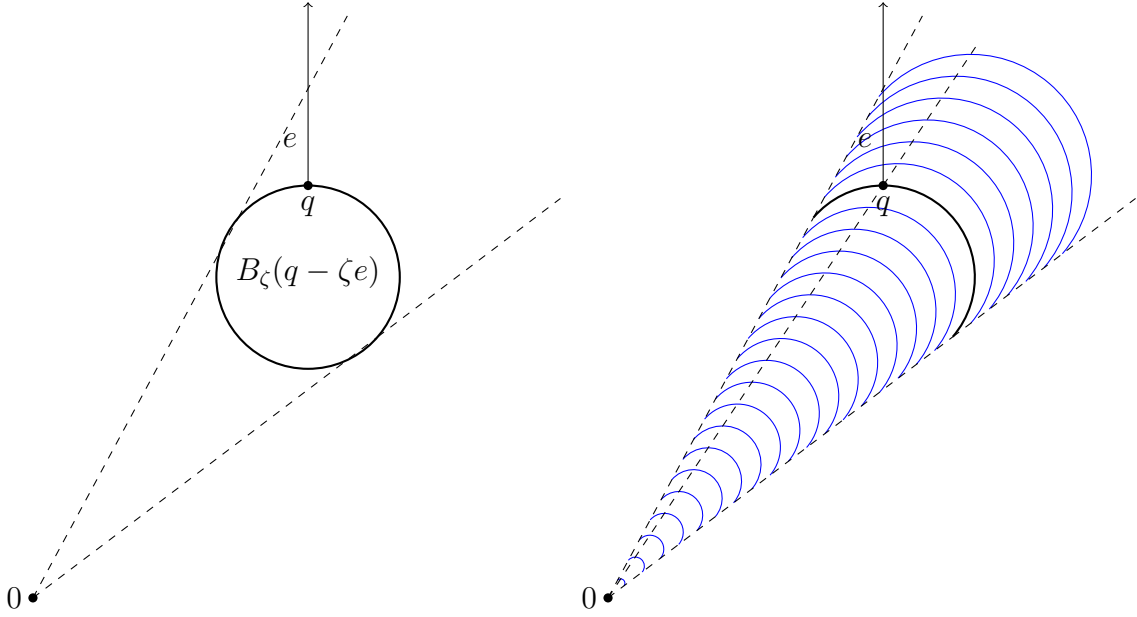
\begin{figure}
    \begin{tikzpicture}[scale=2.4]
    
    	\fill (0in,0in) circle (0.01in);
	\draw[anchor=east] (0in,0in) node {$0$};
    	
    	\draw[thick] (0.6in,0.7in) circle (0.2in);
    	
    	\fill (0.6in,0.9in) circle (0.01in);
    	\draw[anchor=north] (0.6in,0.9in) node {$q$};
    	\draw[anchor=south] (0.6in,0.65in) node {$B_{\zeta}(q - \zeta e)$};
    	
    	\draw[->] (0.6in,0.9in) -- (0.6in,1.3in);
    	\draw[anchor = east] (0.6in,1.0in) node {$e$};
    	
    	\draw[dashed] (0,0) -- (0.4481in,0.8302in);
    	\draw[dashed] (0,0) -- (0.7519in, 0.5698in);
    	\draw[dashed] (0.4481in,0.8302in) -- (0.6856in,1.2702in);
    	\draw[dashed] (0.7519in, 0.5698in) -- (1.1504in, 0.8718in);
    	
    \end{tikzpicture}
    \begin{tikzpicture}[scale=2.4]
    
    	\fill (0in,0in) circle (0.01in);
	\draw[anchor=east] (0in,0in) node {$0$};
    	
    	\draw[thick] (0.4481in,0.8302in) arc (139.3999:-40.6001:0.2in);
    	
    	\draw[blue] (0.4718in,0.8742in) arc (139.3999:-40.6001:0.2106in);
    	
    	\draw[blue] (0.4956in, 0.9182in) arc (139.3999:-40.6001:0.2212in);
    	
    	\draw[blue] (0.5193in,0.9622in) arc (139.3999:-40.6001:0.2318in);
    	
    	\draw[blue] (0.5431in,1.0062in) arc (139.3999:-40.6001:0.2424in);
    	
    	\draw[blue] (0.5668in,1.0502in) arc (139.3999:-40.6001:0.2530in);
    	
    	\draw[blue] (0.5906in,1.0942in) arc (139.3999:-40.6001:0.2636in);
    	
    	
    	\draw[blue] (0.4244in,0.7862in) arc (139.3999:-40.6001:0.1894in);
    	
    	\draw[blue] (0.4006in,0.7422in) arc (139.3999:-40.6001:0.1788in);
    	
    	\draw[blue] (0.3769in,0.6982in) arc (139.3999:-40.6001:0.1682in);
    	
    	\draw[blue] (0.3531in,0.6542in) arc (139.3999:-40.6001:0.1576in);
    	
    	\draw[blue] (0.3294in,0.6102in) arc (139.3999:-40.6001:0.1470in);
    	
    	\draw[blue] (0.3056in,0.5662in) arc (139.3999:-40.6001:0.1364in);
    
    	\draw[blue] (0.2819in,0.5222in) arc (139.3999:-40.6001:0.1258in);
    	
    	\draw[blue] (0.2581in,0.4782in) arc (139.3999:-40.6001:0.1152in);
    	
    	\draw[blue] (0.2344in,0.4342in) arc (139.3999:-40.6001:0.1046in);
    	
    	\draw[blue] (0.2106in,0.3902in) arc (139.3999:-40.6001:0.0940in);
    
    	\draw[blue] (0.1869in,0.3462in) arc (139.3999:-40.6001:0.0834in);
    	
    	\draw[blue] (0.1631in,0.3022in) arc (139.3999:-40.6001:0.0728in);
    	
    	\draw[blue] (0.1394in,0.2582in) arc (139.3999:-40.6001:0.0622in);
    
    	\draw[blue] (0.1156in,0.2142in) arc (139.3999:-40.6001:0.0516in);
    	
    	\draw[blue] (0.0919in,0.1702in) arc (139.3999:-40.6001:0.0410in);
    	
    	\draw[blue] (0.0681in,0.1262in) arc (139.3999:-40.6001:0.0304in);
    	
    	\draw[blue] (0.0444in,0.0822in) arc (139.3999:-40.6001:0.0198in);

    	\draw[blue] (0.0206in,0.0382in) arc (139.3999:-40.6001:0.0092in);

    	\fill (0.6in,0.9in) circle (0.01in);
    	\draw[anchor=north] (0.6in,0.9in) node {$q$};
    	
    	\draw[->] (0.6in,0.9in) -- (0.6in,1.3in);
    	\draw[anchor = east] (0.6in,1.0in) node {$e$};
    	
    	\draw[dashed] (0,0) -- (0.4481in,0.8302in);
    	\draw[dashed] (0,0) -- (0.7519in, 0.5698in);
    	\draw[dashed] (0.4481in,0.8302in) -- (0.6856in,1.2702in);
    	\draw[dashed] (0.7519in, 0.5698in) -- (1.1504in, 0.8718in);
    	
    	\draw[dashed] (0,0) -- (0.6in,0.9in);
    	\draw[dashed] (0.6in,0.9in) -- (0.8108in,1.2162in);
    	
    \end{tikzpicture}
    \caption{The ball $B_{\zeta}(q - \zeta e)$ is shown on the left.  The level sets of the Minkowski gauge $\psi_{e,q,\zeta}$ are shown on the right.  Note that $\psi_{e,q,\zeta}$ is infinite outside of the cone indicated by the dashed lines.}
     \label{F: conical test function figure}
\end{figure}

As in the construction of $\varphi_{\zeta}$ in Section \ref{S: construction of minkowski gauge zeta}, the function $\psi_{e,q,\zeta}$ is convex and positively one-homogeneous.  Note that \eqref{E: psi above} and \eqref{E: psi equal} follow immediately from \eqref{E: geometric form of touching ack}, the definition of $\psi_{e,q,\zeta}$, and positive one-homogeneity. 

While the domain $\{\psi_{e,q,\zeta} < +\infty\}$ may be a proper subset of $\mathbb{R}^{d}$, the function $\psi_{e,q,\zeta}$ is nonetheless smooth close to $q$.

	\begin{lemma} \label{L: smoothness of the ball thing} The function $\psi_{e,q,\zeta}$ is smooth in a neighborhood of the ray $\mathcal{C}(\{q\})$. \end{lemma}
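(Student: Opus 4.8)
The plan is to write the Minkowski gauge $\psi_{e,q,\zeta}$ down explicitly near $q$. Set $c = q - \zeta e$ and $r = \zeta$, so that $\psi_{e,q,\zeta}$ is the gauge of the Euclidean ball $\overline{B}_{r}(c)$ and $q$ lies on its boundary, as $\|q - c\| = \zeta\|e\| = r$. For $x \in \mathbb{R}^{d}$ and $\alpha > 0$, clearing the denominator and squaring shows that $\alpha^{-1}x \in \overline{B}_{r}(c)$ is equivalent to $p_{x}(\alpha) \leq 0$, where $p_{x}(\alpha) = (\|c\|^{2} - r^{2})\alpha^{2} - 2\langle x, c\rangle\alpha + \|x\|^{2}$, so that $\psi_{e,q,\zeta}(x) = \inf\{\alpha > 0 \, \mid \, p_{x}(\alpha) \leq 0\}$. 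I would first observe that $\|c\| > r$ once $\zeta$ is small enough: since $\{\varphi = 1\}$ is compact and omits the origin, $\|c\| \geq \|q\| - \zeta \geq \min_{\{\varphi = 1\}}\|\cdot\| - \zeta > r$ for $\zeta$ small, and the degenerate cases $\|c\| \leq r$ (possible only for large $\zeta$) I would handle separately at the end.

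Assuming $\|c\| > r$, the polynomial $p_{x}$ opens upward and $p_{x}(0) = \|x\|^{2}$, so on the open cone $\mathcal{U} = \{x \in \mathbb{R}^{d} \, \mid \, \langle x, c\rangle > 0 \ \text{and} \ D(x) > 0\}$, where $D(x) = \langle x, c\rangle^{2} - (\|c\|^{2} - r^{2})\|x\|^{2}$ is its (rescaled) discriminant, the two roots of $p_{x}$ are real, have positive product $\|x\|^{2}(\|c\|^{2} - r^{2})^{-1}$ and positive sum, hence are both positive; consequently $\{\alpha > 0 \, \mid \, p_{x}(\alpha) \leq 0\}$ is a nonempty closed interval and $\psi_{e,q,\zeta}(x)$ equals its left endpoint, the smaller root $\alpha_{-}(x) = (\langle x, c\rangle - \sqrt{D(x)})(\|c\|^{2} - r^{2})^{-1}$. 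Since $D$ is a polynomial that is strictly positive on $\mathcal{U}$, this formula for $\psi_{e,q,\zeta}$ is smooth on $\mathcal{U}$. The remaining task is to check $q \in \mathcal{U}$. Expanding $\|q - c\|^{2} = r^{2}$ gives $\|c\|^{2} - r^{2} = 2\langle q, c\rangle - \|q\|^{2}$, and feeding this into $D(q)$ collapses it to $D(q) = (\langle q, c\rangle - \|q\|^{2})^{2} = \zeta^{2}\langle q, e\rangle^{2}$, using $\langle q, c\rangle - \|q\|^{2} = -\zeta\langle q, e\rangle$. Now $e$ is the outer unit normal to $\{\varphi \leq 1\}$ at $q$, so the supporting hyperplane inequality together with the fact that $0$ is an interior point of $\{\varphi \leq 1\}$ yields $\langle q, e\rangle > 0$, whence $D(q) > 0$; and $\langle q, c\rangle > 0$ since $p_{q}(1) = 0$ (just $\|q - c\|^{2} = r^{2}$ rearranged) forces $1$ to lie between the two roots of the upward parabola $p_{q}$, which are therefore both positive, so that their sum $2\langle q, c\rangle(\|c\|^{2} - r^{2})^{-1}$ is positive.

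Since $\mathcal{U}$ is an open cone containing $q$ on which $\psi_{e,q,\zeta}$ agrees with a smooth function, this shows $\psi_{e,q,\zeta}$ is smooth in a neighborhood of $q$, and, by homogeneity of the cone $\mathcal{U}$, in a neighborhood of the punctured ray $\mathcal{C}(\{q\}) \setminus \{0\}$ (no smoothness at the origin is expected or needed, $\psi_{e,q,\zeta}$ being positively one-homogeneous). To finish, I would dispose of the two degenerate cases: if $\|c\|^{2} = r^{2}$ then $p_{x}$ is affine in $\alpha$ and $\psi_{e,q,\zeta}(x) = \|x\|^{2}(2\langle x, c\rangle)^{-1}$ near $q$, which is smooth because $\langle q, c\rangle = \frac{1}{2}\|q\|^{2} > 0$ in this case; if $\|c\|^{2} < r^{2}$ then $D(x) = \langle x, c\rangle^{2} + (r^{2} - \|c\|^{2})\|x\|^{2} > 0$ for every $x \neq 0$ and the same formula $\psi_{e,q,\zeta}(x) = \alpha_{-}(x)$ remains valid and smooth off the origin. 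The one point demanding care --- and the closest thing to an obstacle here --- is the sign bookkeeping that identifies the gauge with the \emph{smaller} root of $p_{x}$, together with the observation that the gauge stays away from the branch locus $D = 0$ exactly because the ray through $q$ meets $\partial B_{r}(c)$ transversally, i.e.\ because $\langle q, e\rangle \neq 0$; the rest is routine.
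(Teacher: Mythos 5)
Your proof is correct, but it takes a different route from the paper's. The paper disposes of this lemma by appealing to the general principle developed for positively one-homogeneous convex functions (Proposition \ref{P: differentiability in terms of level sets} and the proof of Proposition \ref{P: smooth case test function}): the gauge is exactly as smooth as its unit level set, and near $q$ the level set $\{\psi_{e,q,\zeta}=1\}$ is a piece of the sphere $\partial B_{\zeta}(q-\zeta e)$, hence smooth; the same cylinder/graph argument then covers the general conical test functions $\psi_{e,F,\nu}$ with an arbitrary face $F$, which is why the paper routes the proof that way. You instead exploit the fact that the set is a Euclidean ball and compute the gauge in closed form: it is the smaller root $\alpha_{-}(x)$ of the quadratic $p_{x}$, smooth wherever the discriminant $D$ is positive, and $D(q)=\zeta^{2}\langle q,e\rangle^{2}>0$ because $e$ supports $\{\varphi\leq 1\}$ at $q$ and $0$ is interior; homogeneity of the set $\{D>0,\ \langle \cdot,c\rangle>0\}$ then spreads smoothness along the punctured ray, and you correctly note (as the paper's own proof of Proposition \ref{P: smooth case test function} implicitly does) that no smoothness at the origin is claimed. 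Your sign bookkeeping in the three cases $\|c\|>r$, $\|c\|=r$, $\|c\|<r$ checks out, including the identification of the gauge with the relevant root in the degenerate cases. What your computation buys is a self-contained, elementary argument with an explicit formula; what it gives up is generality — it is tied to balls and does not extend to the faces $F$ needed later, which is exactly the extension the paper's level-set argument is designed to provide.
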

	
\begin{proof} The lemma follows, in effect, from the fact that, at least in a small neighborhood of $q$, the level set $\{\psi_{e,q,\zeta} = 1\}$ equals a sphere; see Proposition \ref{P: smooth case test function} below for a detailed proof of a more general statement. \end{proof}

Since $q$ was arbitrary, the lemma implies that at any $x \in \mathbb{R}^{d} \setminus \{0\}$, it is always possible to find a nice convex function that touches the norm $\varphi$ from above at $x$, a fact that leads to an easy proof of comparison with cones.  In other words, what has been done so far already very nearly proves Proposition \ref{P: strictly convex game}, which was stated in the introduction and is reproduced here for the reader's convenience:

	\strictlyconvexgame*
	
	\begin{proof} Let $x \in \mathbb{R}^{d} \setminus \{0\}$.  Define $q = \varphi(x)^{-1} x$ so that $q \in \{\varphi = 1\}$ and $x \in \mathcal{C}(\{q\})$, and let $e = \|D\varphi(q)\|^{-1} D\varphi(q)$.  Define $\psi_{x}$ by $\psi_{x}(y) = \psi_{e,q,\zeta}(y)$, where $\psi_{e,q,\zeta}$ is the positively homogeneous convex function defined in the preceding discussion.  Condition (i) holds by \eqref{E: psi above} and \eqref{E: psi equal}, and condition (ii), by Lemma \ref{L: smoothness of the ball thing}.  Therefore, it only remains to check (iii).
	
	Since $\{\varphi \leq 1\}$ is strictly convex, the proof of (iii) amounts to a bit of a tautology.  By the positive one-homogeneity of $\psi_{e,q,\zeta}$, the identity $D^{2}\psi_{e,q,\zeta}(q)q = 0$ holds, and, thus, again by homogeneity,
			\begin{equation*}
				D^{2}\psi_{e,q,\zeta}(x) q = \psi_{e,q,\zeta}(x)^{-1} D^{2} \psi_{e,q,\zeta}(q) q = 0.
			\end{equation*}
	At the same time, since $\{\varphi \leq 1\}$ is strictly convex, \eqref{E: subdifferential basic identity dual} implies that the set function $\partial \varphi^{*}$ is single-valued away from the origin.  Taken together with the fact that $D\psi_{e,q,\zeta}(x) = D\varphi(x)$ by \eqref{E: psi above} and \eqref{E: psi equal}, this yields
		\begin{equation*}
				\partial \varphi^{*}(D\psi_{e,q,\zeta}(x)) = \{D\varphi^{*}(D\psi_{e,q,\zeta}(x))\} = \{D\varphi^{*}(D\varphi(x))\} = \{q\}.
		\end{equation*}
Invoking the definition of $G_{\varphi}^{*}$ and $G_{*}^{\varphi}$, one concludes
		\begin{equation*}
			G_{\varphi}^{*}(D\psi_{e,q,\zeta}(x),D^{2}\psi_{e,q,\zeta}(x)) = G_{*}^{\varphi}(D\psi_{e,q,\zeta}(x),D^{2}\psi_{e,q,\zeta}(x)) = \langle D^{2} \psi_{e,q,\zeta}(x) q, q \rangle = 0.
		\end{equation*}
      \end{proof}
      
As already indicated in the introduction, Proposition \ref{P: strictly convex game} readily leads to an easy proof of cone comparison, as shown next.
			
		\begin{proof}[Proof of Proposition \ref{P: strictly convex c11 thing}]   By Lemma \ref{L: symmetries}, to prove that $\text{Sub}_{\varphi}(U) \subseteq CCA_{\varphi}(U)$, it suffices to show that if $V$ is a bounded open set in $\mathbb{R}^{d} \setminus \{0\}$, $\epsilon > 0$, and $u$ is an upper semicontinuous function in $\overline{V}$ such that $ \frac{1}{2} \epsilon \varphi^{*}(Du)^{2} -G_{\varphi}^{*}(Du,D^{2}u) \leq 0$ in $V$, then
			\begin{equation*}
				\max \{u(x) - \varphi(x) \, \mid \, x \in \overline{V} \} = \max \{ u(x) - \varphi(x) \, \mid \, x \in \partial V\}.
			\end{equation*} 
			
	The proof proceeds by contradiction: suppose that there is an $x \in V$ such that
		\begin{equation} \label{E: ack too much}
			u(x) - \varphi(x) =  \max \{ u(y) - \varphi(y) \, \mid \, y \in \partial V\}.
		\end{equation}
	Since $x \in V \subseteq \mathbb{R}^{d} \setminus \{0\}$, Proposition \ref{P: strictly convex game} implies there is a function $\psi_{x}$, which is smooth in a neighborhood of $x$, such that 
		\begin{gather}
			\varphi \leq \psi_{x} \quad \text{in} \, \, \mathbb{R}^{d}, \quad \varphi(x) = \psi_{x}(x), \label{E: need to do this all the time}\\
			\text{and} \quad G_{\varphi}^{*}(D\psi_{x}(x),D^{2}\psi_{x}(x)) = G_{*}^{\varphi}(D\psi_{x}(x),D^{2}\psi_{x}(x)) = 0. \nonumber
		\end{gather}
	
	Combining \eqref{E: ack too much} and \eqref{E: need to do this all the time}, one deduces that 
		\begin{align*}
			u \leq (u(x) - \varphi(x)) + \varphi \leq (u(x) - \varphi(x)) + \psi_{x} \quad \text{in} \, \, \overline{V}, \\
			u(x) = (u(x) - \varphi(x)) + \varphi(x) = (u(x) - \varphi(x)) + \psi_{x}(x). 
		\end{align*}
	In particular, $u - \psi_{x}$ has a local maximum at $x$.  By the subsolution property of $u$, this implies
		\begin{equation*}
			\frac{1}{2} \epsilon \varphi^{*}(D\varphi(x)) = \frac{1}{2} \epsilon \varphi^{*}(D\psi_{x}(x))^{2} - G_{\varphi}^{*}(D\psi_{x}(x),D^{2}\psi_{x}(x)) \leq 0.
		\end{equation*}
	Yet \eqref{E: subdifferential basic identity} implies that $\varphi^{*}(D\varphi(x)) = 1$, so this leads to the inequality $0 < \frac{1}{2} \epsilon \leq 0$, which is absurd.  \end{proof}
	
Finally, note that Proposition \ref{P: strongly superharmonic} has not been used at all here.  Nonetheless, as explained in the next remark, the arguments above should be understood as a stronger version of that proposition.

\begin{remark} \label{R: strongly superharmonic again} Notice that the test functions $\{\psi_{x}\}_{x \in \mathbb{R}^{d} \setminus \{0\}}$ can be used to prove that $-G_{\varphi}^{*}(D\varphi,D^{2}\varphi) \geq 0$ holds in the viscosity sense in $\mathbb{R}^{d} \setminus \{0\}$.  Indeed, if $x \in \mathbb{R}^{d} \setminus \{0\}$ and $\psi$ is a smooth test function such that $\varphi - \psi$ has a local minimum at $x$, then $\psi_{x} - \psi$ also has a local minimum at $x$, hence, by the second derivative test and ellipticity,
	\begin{equation*}
		-G_{\varphi}^{*}(D\psi(x),D^{2}\psi(x)) \geq - G_{\varphi}^{*}(D\psi_{x}(x),D^{2}\psi_{x}(x)) = 0.
	\end{equation*}
In this sense, Proposition \ref{P: strictly convex game} is a stronger version of Proposition \ref{P: strongly superharmonic} in the special case when $\{\varphi \leq 1\}$ is strictly convex and $C^{1,1}$. \end{remark}
		
\subsection{Conical Test Functions} \label{S: general conical test} Extending the argument of the previous subsection to more general Finsler norms --- and, in particular, the discontinuous case where  $G_{\varphi}^{*} \not\equiv G_{*}^{\varphi}$ in $(\mathbb{R}^{d} \setminus \{0\}) \times \mathcal{S}_{d}$ --- will require some work.  The first step is to generate more test functions.  

As in the last subsection, it is convenient to retain the assumption that $\{\varphi \leq 1\}$ is of class $C^{1,1}$.  Hence, in particular, throughout the remainder of this section, fix a $\zeta > 0$ and assume that $\{\varphi \leq 1\}$ satisfies an interior ball condition of radius $\zeta$.  

Given a closed convex set $F \subseteq \mathbb{R}^{d}$, a direction $e \in S^{d-1}$, and a positive number $\nu > 0$, define another convex set $F_{(e,\nu)}$ by 
	\begin{equation} \label{E: fancy convolution}
		F_{(e,\nu)} = \bigcup_{x \in F} \overline{B}_{\nu}(x - \nu e). 
	\end{equation}
Note that $F_{(e,\nu)}$ is closed and convex, and it has nonempty interior if $F$ is nonempty.  Correspondingly, let $\psi_{e,F,\nu} : \mathbb{R}^{d} \to [0,+\infty]$ be the Minkowski gauge of $F_{(e,\nu)}$, that is,\footnote{Here $\inf \emptyset = +\infty$ as before.}
	\begin{equation} \label{E: test function definition}
		\psi_{e,F,\nu}(x) = \inf \left\{ \alpha > 0 \, \mid \, \alpha^{-1}x \in F_{(e,\nu)} \right\}.
	\end{equation}
This extends the previous definition, where $F$ was a singleton.

The utility of these test functions is demonstrated in the two propositions that follow.  In the first of the two, $F$ is taken to be one of the exposed faces of $\{\varphi \leq 1\}$, hence a set of the form $\partial \varphi^{*}(p)$ for some $p \in \mathbb{R}^{d} \setminus \{0\}$.  

	\begin{prop} \label{P: touching above test function part} Suppose that $\varphi$ is a Finsler norm in $\mathbb{R}^{d}$ and $\{\varphi \leq 1\}$ satisfies a uniform interior ball condition of radius $\zeta$.  For any $p \in \mathbb{R}^{d} \setminus \{0\}$ and any $\nu \in (0,\zeta]$, if $e = \|p\|^{-1} p$, then
		\begin{gather}
			\varphi \leq \psi_{e, \partial \varphi^{*}(p), \nu} \quad \text{in} \, \, \mathbb{R}^{d}, \nonumber \\
			\varphi(x) = \psi_{e,\partial \varphi^{*}(p),\nu}(x) \quad \text{for each} \quad x \in \mathcal{C}(\partial \varphi^{*}(p)). \label{E: touching}
		\end{gather}
	Further, if $\nu < \zeta$, then the contact set equals $\mathcal{C}(\partial \varphi^{*}(p))$, i.e.,
		\begin{equation} \label{E: contact set cone part}
			\{\varphi = \psi_{e,\partial \varphi^{*}(p),\nu}\} = \{\psi_{e,\partial \varphi^{*}(p),\zeta} = \psi_{e,\partial \varphi^{*}(p),\nu} < +\infty \} = \mathcal{C}(\partial \varphi^{*}(p)).
		\end{equation}
	\end{prop}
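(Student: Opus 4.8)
The plan is to argue entirely in terms of the unit balls, using that the Minkowski gauge of a closed convex set determines — and, up to scaling, is determined by — that set. Write $F = \partial\varphi^{*}(p)$ and, for $\nu>0$, $F_{(e,\nu)} = \bigcup_{y\in F}\overline{B}_{\nu}(y-\nu e)$, so that $\psi_{e,F,\nu}$ is the gauge of $F_{(e,\nu)}$. Two elementary facts are used repeatedly: (a) $\varphi\le\psi_{e,F,\nu}$ pointwise is equivalent to $F_{(e,\nu)}\subseteq\{\varphi\le1\}$, since $x/\alpha\in F_{(e,\nu)}\subseteq\{\varphi\le1\}$ forces $\varphi(x)\le\alpha$; and (b) the set of admissible $\alpha$ in \eqref{E: test function definition} is a closed interval, so if $\psi_{e,F,\nu}(x)=1$ then $x\in F_{(e,\nu)}$, and in fact $x\in\partial F_{(e,\nu)}$ — were $x\in\text{int}(F_{(e,\nu)})$, then $x/\alpha\in F_{(e,\nu)}$ for $\alpha<1$ close to $1$, contradicting $\psi_{e,F,\nu}(x)=1$.

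For the first two displays I would proceed as follows. Since $\{\varphi\le1\}$ is $C^{1,1}$ (Lemma \ref{L: c11 set thing}), it has a unique supporting hyperplane at each boundary point; by \eqref{E: subdifferential basic identity dual} and the definition of $\varphi^{*}$, for $y\in F$ that hyperplane is $\{q:\langle p,q\rangle=\varphi^{*}(p)\}$, so the outward unit normal to $\{\varphi\le1\}$ at $y$ is $e=\|p\|^{-1}p$. Hence the interior ball condition forces $\overline{B}_{\zeta}(y-\zeta e)\subseteq\{\varphi\le1\}$, and since $\nu\le\zeta$ one has $\overline{B}_{\nu}(y-\nu e)\subseteq\overline{B}_{\zeta}(y-\zeta e)$ (centers at distance $\zeta-\nu$, radii differing by $\zeta-\nu$). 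Taking the union over $y\in F$ gives $F_{(e,\nu)}\subseteq\{\varphi\le1\}$, i.e.\ $\varphi\le\psi_{e,F,\nu}$ by (a). Conversely $F\subseteq F_{(e,\nu)}$ (use the ball centered at $y-\nu e$ for $y\in F$), so $\psi_{e,F,\nu}\le1=\varphi$ on $F$; with the bound just proved this gives $\varphi=\psi_{e,F,\nu}$ on $F$, and positive one-homogeneity upgrades it to $\mathcal{C}(F)$, which is \eqref{E: touching}.

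The substance is \eqref{E: contact set cone part}, which asserts (for $\nu<\zeta$) that the contact sets are no larger than $\mathcal{C}(F)$. Since $F_{(e,\nu)}\subseteq F_{(e,\zeta)}$ implies $\psi_{e,F,\zeta}\le\psi_{e,F,\nu}$, one has the chain $\varphi\le\psi_{e,F,\zeta}\le\psi_{e,F,\nu}$, so $\{\varphi=\psi_{e,F,\nu}\}\subseteq\{\psi_{e,F,\zeta}=\psi_{e,F,\nu}<+\infty\}$, while $\mathcal{C}(F)$ lies in both sets by \eqref{E: touching} (applied with radii $\nu$ and $\zeta$). It therefore remains to show $\{\psi_{e,F,\zeta}=\psi_{e,F,\nu}<+\infty\}\subseteq\mathcal{C}(F)$, and by homogeneity (with $x=0$ trivial) it suffices to take $x$ with $\psi_{e,F,\nu}(x)=\psi_{e,F,\zeta}(x)=1$. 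Using (b) twice, $x\in\partial F_{(e,\nu)}$ and $x\in\partial F_{(e,\zeta)}$, so $x=y-\nu e+\nu w=y'-\zeta e+\zeta w'$ with $y,y'\in F$, $\|w\|=\|w'\|=1$; moreover $\overline{B}_{\nu}(y-\nu e)\subseteq F_{(e,\nu)}$ touches $\partial F_{(e,\nu)}$ at $x$ and $F_{(e,\nu)}$ is $C^{1,1}$ (interior ball radius $\nu$), so $w$ is the outward unit normal of $F_{(e,\nu)}$ at $x$, and similarly $w'$ is that of $F_{(e,\zeta)}$. Since $F_{(e,\nu)}\subseteq F_{(e,\zeta)}$ are nested $C^{1,1}$ sets sharing the boundary point $x$, their outward normals there coincide: $w=w'$. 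Subtracting the two representations, $y-y'=(\zeta-\nu)(w-e)$; but $y,y'\in F\subseteq\{q:\langle p,q\rangle=\varphi^{*}(p)\}$ forces $\langle p,y-y'\rangle=0$, hence $\langle p,w\rangle=\langle p,e\rangle=\|p\|$, and as $\|w\|=1$ the equality case of Cauchy--Schwarz gives $w=e$. Then $x=y-\nu e+\nu e=y\in F\subseteq\mathcal{C}(F)$, and rescaling finishes the proof.

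The step I expect to be delicate is the one binding everything together: that the two representations of $x$ involve the \emph{same} unit vector $w$. This is exactly why the $C^{1,1}$ regularity of the inflated sets $F_{(e,\nu)},F_{(e,\zeta)}$ — uniqueness of the outward normal at a boundary point — together with their nesting must both be invoked; dropping either leaves two a priori distinct unit vectors and the Cauchy--Schwarz argument collapses. The surrounding points — fact (b), the identification of $w,w'$ with outward normals, and the case $x=0$ — are routine convex analysis, but should be written with some care since $F_{(e,\nu)}$ does not contain the origin in general, so $\psi_{e,F,\nu}\equiv+\infty$ off a cone.
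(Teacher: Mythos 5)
Your proof is correct and follows essentially the same route as the paper: identify $e$ as the outward normal to $\{\varphi\le 1\}$ along $\partial\varphi^{*}(p)$, use the interior ball condition to get $F_{(e,\nu)}\subseteq\{\varphi\le 1\}$ and hence \eqref{E: touching}, and then characterize the contact set through the boundary intersection $\partial F_{(e,\nu)}\cap\partial F_{(e,\zeta)}$. The only difference is cosmetic: for that last inclusion you identify the common outward unit normal at the contact point and conclude $w=e$ by Cauchy--Schwarz, whereas the paper invokes the internal tangency $\overline{B}_{\nu}(q-\nu e)\cap\overline{B}_{\zeta}(q-\zeta e)=\{q\}$; these express the same geometric fact, and your version simply writes out in full the step the paper treats tersely.
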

	
The relatively routine proof is deferred to the end of this section.
	
The next result holds more generally provided $F$ is contained in a $(d - 1)$-dimensional affine subspace that avoids the origin, which, in view of \eqref{E: subdifferential basic identity dual}, is certainly the case if $F = \partial \varphi^{*}(p)$ for some $p \neq 0$.  It is stated in this more general form for clarity.
	
	\begin{prop} \label{P: smooth case test function}  Suppose that $F \subseteq \mathbb{R}^{d}$ is a closed convex set, $e \in S^{d-1}$, and $\nu > 0$.  If there is a constant $c > 0$ such that 
		\begin{equation} \label{E: hyperplane assumption}
			F \subseteq \{x \in \mathbb{R}^{d} \, \mid \, \langle x,e \rangle = c\}
		\end{equation}
	and if $q_{*} \in \text{rint}(F)$, then $\psi_{e,F,\nu}$ is smooth in a neighborhood of the ray $\mathcal{C}(\{q_{*}\})$ and
		\begin{equation*}
			D^{2} \psi_{e,F,\nu}(q_{*}) q = 0 \quad \text{for each} \quad q \in F.
		\end{equation*} \end{prop}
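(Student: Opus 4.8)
The plan is to reduce the statement to a local analysis of the convex body $F_{(e,\nu)}$ near $q_*$, exploiting the identity $F_{(e,\nu)} = (F - \nu e) + \overline{B}_{\nu}(0)$. Set $G = F - \nu e$ and $z_{0} = q_{*} - \nu e$, and let $L = \text{aff}(F) - q_{*} = \text{aff}(G) - z_{0}$. Because $F \subseteq \{\langle x,e\rangle = c\}$, one has $G \subseteq \{\langle x,e\rangle = c - \nu\}$, hence $L \subseteq e^{\perp}$ and $q_{*} - z_{0} = \nu e \in L^{\perp}$; moreover $q_{*} \in \text{rint}(F)$ gives $z_{0} \in \text{rint}(G)$.

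First I would record two preliminary facts. (1) $\psi_{e,F,\nu} \equiv 1$ on $F$: indeed each $q \in F$ lies in $F_{(e,\nu)}$, since $q = (q - \nu e) + \nu e$ with $q - \nu e \in G$ and $\|\nu e\| = \nu$; on the other hand $F_{(e,\nu)} \subseteq \{\langle x,e\rangle \leq c\}$ while $\langle q,e\rangle = c$, so $q$ maximizes $\langle \cdot, e\rangle$ over $F_{(e,\nu)}$ and therefore $\psi_{e,F,\nu}(q) = 1$. (2) Once $\psi_{e,F,\nu}$ is known to be smooth near $q_{*}$, Euler's relation for the positively $1$-homogeneous function $\psi_{e,F,\nu}$ gives $D^{2}\psi_{e,F,\nu}(q_{*})\,q_{*} = 0$, exactly as in the proof of Proposition \ref{P: strictly convex game}.

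The core of the argument is the smoothness claim. For $x$ close to $q_{*}$ I would show $\text{dist}(x, G) = \text{dist}(x, \text{aff}(G)) = \|P_{L^{\perp}}(x - z_{0})\|$, where $P_{L^{\perp}}$ is the orthogonal projection onto $L^{\perp}$. The inequality ``$\geq$'' is immediate from $G \subseteq \text{aff}(G)$. For ``$\leq$'', the nearest point of $\text{aff}(G) = z_{0} + L$ to $x$ is $\pi(x) = z_{0} + P_{L}(x - z_{0})$, and since $q_{*} - z_{0} = \nu e \in L^{\perp}$ one has $P_{L}(x - z_{0}) = P_{L}(x - q_{*})$, which is small; as $z_{0} \in \text{rint}(G)$, for $x$ sufficiently near $q_{*}$ the point $\pi(x)$ lies in $G$, so $\text{dist}(x, G) \leq \|x - \pi(x)\| = \text{dist}(x, \text{aff}(G))$. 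Since $F_{(e,\nu)} = \{x : \text{dist}(x, G) \leq \nu\}$, it follows that, near $q_{*}$, $F_{(e,\nu)}$ coincides with $\{g \leq \nu\}$ where $g(x) = \|P_{L^{\perp}}(x - z_{0})\|$; and $g$ is smooth near $q_{*}$ because $P_{L^{\perp}}(q_{*} - z_{0}) = \nu e \neq 0$, with $g(q_{*}) = \nu$ and $\nabla g(q_{*}) = e$. In particular $\psi_{e,F,\nu}$ is finite, hence continuous, on a neighborhood of $q_{*}$, and for $x$ there the standard property of gauges of closed convex sets gives $x/\psi_{e,F,\nu}(x) \in \partial F_{(e,\nu)}$, which near $q_{*}$ is the hypersurface $\{g = \nu\}$. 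Applying the implicit function theorem to $(x,\alpha) \mapsto g(x/\alpha) - \nu$ at $(q_{*},1)$ --- where the $\alpha$-derivative equals $\langle \nabla g(q_{*}), -q_{*}\rangle = -\langle e, q_{*}\rangle = -c \neq 0$ --- produces a smooth function $A$ with $A(q_{*}) = 1$ and $g(x/A(x)) = \nu$; the monotonicity of $\alpha \mapsto g(x/\alpha)$ near $\alpha = 1$ then forces $\psi_{e,F,\nu} = A$ near $q_{*}$, so $\psi_{e,F,\nu}$ is smooth there, and by positive $1$-homogeneity it is smooth on a conical neighborhood of the ray $\mathcal{C}(\{q_{*}\})$.

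Finally I would assemble the Hessian identity. By fact (1) and $q_{*} \in \text{rint}(F)$, $\psi_{e,F,\nu} \equiv 1$ on a relative neighborhood of $q_{*}$ inside $\text{aff}(F)$; differentiating twice along any direction $\ell \in L$ gives $\langle D^{2}\psi_{e,F,\nu}(q_{*})\,\ell,\ell\rangle = 0$. Since $\psi_{e,F,\nu}$ is convex and smooth near $q_{*}$, $D^{2}\psi_{e,F,\nu}(q_{*}) \geq 0$, so this forces $D^{2}\psi_{e,F,\nu}(q_{*})\,\ell = 0$ for every $\ell \in L$. Combined with $D^{2}\psi_{e,F,\nu}(q_{*})\,q_{*} = 0$ from fact (2), and the observation that every $q \in F$ satisfies $q - q_{*} \in \text{aff}(F) - q_{*} = L$, this yields $D^{2}\psi_{e,F,\nu}(q_{*})\,q = D^{2}\psi_{e,F,\nu}(q_{*})\,q_{*} + D^{2}\psi_{e,F,\nu}(q_{*})(q - q_{*}) = 0$. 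I expect the smoothness step to be the main obstacle: the delicate point is verifying that the relative-interior hypothesis on $q_{*}$ is precisely what lets one replace $\text{dist}(\cdot, G)$ by the smooth distance to $\text{aff}(G)$ near $q_{*}$, and then matching the Minkowski gauge with the implicit-function solution; the remaining steps are routine manipulations with gauges and homogeneity.
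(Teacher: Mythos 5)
Your proof is correct and follows essentially the same route as the paper: using $q_{*} \in \text{rint}(F)$ to identify $F_{(e,\nu)}$ near $q_{*}$ with the cylinder over $\text{aff}(F)$ (equivalently, the sublevel set of the smooth distance to $\text{aff}(G)$), and then transferring smoothness of this level surface to the gauge. The only cosmetic differences are that you use the implicit function theorem where the paper cites Propositions \ref{P: differentiability in terms of level sets} and \ref{P: check proposition}, and that you derive the Hessian identity explicitly (from $\psi_{e,F,\nu} \equiv 1$ on a relative neighborhood in $F$, positive semidefiniteness, and Euler's relation), whereas the paper leaves it implicit in the cylindrical structure.
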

		
	Once again, the proof is deferred to the end of the section.  (The impatient reader may be relieved to learn that Proposition \ref{P: smooth case test function} will not be used in later sections and, in particular, is not needed for the proof of any of the theorems.)
		
\subsection{Application to Cone Comparison} Here is the application of the previous two propositions to cone comparison, at a first glance.  This is only a partial result, which is limited by the lack of (global) smoothness of the test functions.  It is nonetheless presented here to motivate the construction so far, setting the scene for the complete treatment in Section \ref{S: c11 setting}.  

	\begin{prop} \label{P: conical test functions not smooth} Suppose that $\varphi$ is a Finsler norm in $\mathbb{R}^{d}$ for which the unit ball $\{\varphi \leq 1\}$ is of class $C^{1,1}$.  Fix an open set $U \subseteq \mathbb{R}^{d}$ and an $\epsilon > 0$, and suppose that $u \in USC(U)$ satisfies
		\begin{equation*}
			\frac{1}{2} \epsilon \varphi^{*}(Du)^{2} -G^{*}_{\varphi}(Du,D^{2}u)  \leq 0 \quad \text{in} \, \, U.
		\end{equation*}  
	
	If there is a bounded open set $V \subseteq U \setminus \{0\}$ and an $x_{*} \in V$ such that
		\begin{equation*}
			u(x_{*}) - \varphi(x_{*}) = \max\{u(y) - \varphi(y) \, \mid \, y \in \overline{V} \},
		\end{equation*}
	then the point $\varphi(x_{*})^{-1}x_{*}$ does not belong to the relative interior of any of the exposed faces of $\varphi$, that is,
		\begin{equation} \label{E: relative interior issue}
			x_{*} \notin \bigcup_{p \in \mathbb{R}^{d} \setminus \{0\}} \text{rint}(\mathcal{C}(\partial \varphi^{*}(p))).
		\end{equation}\end{prop}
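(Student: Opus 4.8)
The plan is to argue by contradiction. Suppose that $x_{*} \in \text{rint}(\mathcal{C}(\partial \varphi^{*}(p)))$ for some $p \in \mathbb{R}^{d} \setminus \{0\}$, and set $F = \partial \varphi^{*}(p)$, $e = \|p\|^{-1} p$, and $q_{*} = \varphi(x_{*})^{-1} x_{*} \in \{\varphi = 1\}$, so that $x_{*} = \varphi(x_{*}) q_{*} \in \mathcal{C}(F)$. By \eqref{E: subdifferential basic identity dual}, $F$ is a compact convex subset of the hyperplane $\{x : \langle x, e \rangle = \varphi^{*}(p)/\|p\|\}$, which does not pass through the origin since $\varphi^{*}(p) > 0$. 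From this and the assumption $x_{*} \in \text{rint}(\mathcal{C}(F))$, a routine argument with cones over convex sets lying in an affine hyperplane off the origin gives $q_{*} \in \text{rint}(F)$.

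Next I would produce a conical test function. Fixing $\nu = \zeta$ and writing $\psi := \psi_{e, F, \nu}$ as in \eqref{E: test function definition}, Proposition \ref{P: touching above test function part} yields $\varphi \leq \psi$ throughout $\mathbb{R}^{d}$ and $\varphi = \psi$ on $\mathcal{C}(F)$; in particular $\varphi(x_{*}) = \psi(x_{*})$. Since $q_{*} \in \text{rint}(F)$ and $F$ lies in the hyperplane just described, Proposition \ref{P: smooth case test function} applies: $\psi$ is smooth in a neighborhood of the ray $\mathcal{C}(\{q_{*}\})$ — hence near $x_{*}$, so it is an admissible test function there — and $D^{2}\psi(q_{*}) q = 0$ for every $q \in F$. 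Because $\psi$ is positively one-homogeneous, $D^{2}\psi$ is positively $(-1)$-homogeneous, so also $D^{2}\psi(x_{*}) q = \varphi(x_{*})^{-1} D^{2}\psi(q_{*}) q = 0$ for all $q \in F$.

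Now I would feed $\psi$ into the subsolution inequality. From $u(x_{*}) - \varphi(x_{*}) = \max_{\overline{V}}(u - \varphi)$ together with $\varphi \leq \psi$ and $\varphi(x_{*}) = \psi(x_{*})$, it follows that $u - \psi$ attains a local maximum at the interior point $x_{*} \in V$. Testing the inequality $\tfrac{1}{2} \epsilon \varphi^{*}(Du)^{2} - G_{\varphi}^{*}(Du, D^{2}u) \leq 0$ against $\psi$ gives
\[
	\tfrac{1}{2} \epsilon\, \varphi^{*}(D\psi(x_{*}))^{2} - G_{\varphi}^{*}(D\psi(x_{*}), D^{2}\psi(x_{*})) \leq 0.
\]
To evaluate the two terms: since $\{\varphi \leq 1\}$ is $C^{1,1}$, $\varphi$ is differentiable on $\mathbb{R}^{d} \setminus \{0\}$, so the fact that the smooth function $\psi$ touches $\varphi$ from above at $x_{*}$ forces $D\psi(x_{*}) = D\varphi(x_{*}) = D\varphi(q_{*})$, using zero-homogeneity of $D\varphi$. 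Because $q_{*} \in F = \partial \varphi^{*}(p)$, the inversion identity \eqref{E: inversion} (with $\partial\varphi(q_{*}) = \{D\varphi(q_{*})\}$) yields $D\varphi(q_{*}) = \varphi^{*}(p)^{-1} p$; hence $\varphi^{*}(D\psi(x_{*})) = 1$ and, by the positive zero-homogeneity of $\partial \varphi^{*}$ (cf.\ \eqref{E: zero hom}), $\partial \varphi^{*}(D\psi(x_{*})) = \partial \varphi^{*}(p) = F$. Consequently, by definition \eqref{E: upper def operator} and the vanishing established above,
\[
	G_{\varphi}^{*}(D\psi(x_{*}), D^{2}\psi(x_{*})) = \max \left\{ \langle D^{2}\psi(x_{*}) q, q \rangle \, \mid \, q \in F \right\} = 0,
\]
so the displayed subsolution inequality becomes $\tfrac{1}{2} \epsilon \leq 0$, contradicting $\epsilon > 0$.

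The argument is essentially mechanical once Propositions \ref{P: touching above test function part} and \ref{P: smooth case test function} are in hand — the genuine work is in those propositions, whose proofs are deferred. For the present statement, the one point that requires care is the passage from $x_{*} \in \text{rint}(\mathcal{C}(\partial \varphi^{*}(p)))$ to $q_{*} \in \text{rint}(\partial \varphi^{*}(p))$: this is precisely the hypothesis guaranteeing that $q_{*}$ sits in the ``smooth part'' of the convolved set $\partial\varphi^{*}(p)_{(e,\nu)}$, and hence that $\psi$ is simultaneously a legitimate upper barrier for $\varphi$ at $x_{*}$ and smooth there with $D^{2}\psi(x_{*})$ annihilating $\partial\varphi^{*}(D\psi(x_{*}))$. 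Everything else — differentiability of $\varphi$ away from the origin, the identification of $D\psi(x_{*})$ and $\partial\varphi^{*}(D\psi(x_{*}))$, and the scaling of $D^{2}\psi$ — is bookkeeping with the duality relations \eqref{E: subdifferential basic identity}, \eqref{E: inversion}, and homogeneity.
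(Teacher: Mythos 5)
Your proposal is correct and follows essentially the same route as the paper: reduce to $q_{*}\in\text{rint}(\partial\varphi^{*}(p))$, touch $u$ from above at $x_{*}$ with the conical test function $\psi_{e,\partial\varphi^{*}(p),\zeta}$, and use Propositions \ref{P: touching above test function part} and \ref{P: smooth case test function} together with the duality identities to force $\tfrac{1}{2}\epsilon\leq 0$. The only difference is that you spell out some bookkeeping (the equivalence $x_{*}\in\text{rint}(\mathcal{C}(F))\iff q_{*}\in\text{rint}(F)$, the identification $\partial\varphi^{*}(D\psi(x_{*}))=\partial\varphi^{*}(p)$, and the homogeneity of $D^{2}\psi$) that the paper leaves implicit.
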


	\begin{proof} Let $q_{*} = \varphi(x_{*})^{-1} x_{*}$ and fix a $\zeta > 0$ such that $\{\varphi \leq 1\}$ satisfies a interior ball condition of radius $\zeta$.  Argue by contradiction: assume there is a $p \in \mathbb{R}^{d} \setminus \{0\}$ such that
	\begin{equation*}
		q_{*} \in \text{rint}(\partial \varphi^{*}(p)),
	\end{equation*}
which is equivalent to $x_{*} \in \text{rint}(\mathcal{C}(\partial \varphi^{*}(p)))$.
Setting $e = \|p\|^{-1} p$ and applying \eqref{E: touching}, the test function $\psi := \psi_{e,\partial \varphi^{*}(p),\zeta}$ is smooth in a neighborhood of $x$ and $u - \psi$ has a local maximum at $x$.  Thus, by Proposition \ref{P: smooth case test function}, the subsolution property of $u$, and \eqref{E: subdifferential basic identity},
	\begin{align*}
		0 &\geq \frac{1}{2} \epsilon \varphi^{*}(D\psi(x))^{2} -G_{\varphi}^{*}(D\psi(x),D^{2}\psi(x))  \\
			&=  \frac{1}{2} \epsilon \varphi^{*}(D\varphi(x))^{2} - \max \left\{ \langle D^{2}\psi(x) q, q \rangle \, \mid \, q \in \partial \varphi^{*}(p) \right\} = \frac{1}{2} \epsilon,
	\end{align*}
which contradicts the fact that $\epsilon > 0$. \end{proof}

\subsection{Extending the Argument} The conclusion \eqref{E: relative interior issue} just slightly misses the mark.  If the relative interior $\text{rint}(\mathcal{C}(\partial \varphi^{*}(p)))$ could be replaced by $\mathcal{C}(\partial \varphi^{*}(p))$ in Proposition \ref{P: smooth case test function}, then the proof of cone comparison for $C^{1,1}$ norms would be complete, as \eqref{E: covering} asserts that this family of cones covers $\mathbb{R}^{d}$.

Unfortunately, there is no easy way to improve Proposition \ref{P: smooth case test function}.  If the relative boundary of $\partial \varphi^{*}(p)$ is nonempty, then $\text{bdry}(\mathcal{C}(\partial \varphi^{*}(p))$ is nonempty and, as will become clear shortly, $\psi$ is not twice differentiable at any of those points: the second derivative jumps!  Hence $\psi$ cannot be na\"{i}vely treated as a test function.

\begin{remark} \label{R: exposed face conundrum} If the relative boundary of $\partial \varphi^{*}(p)$ is nonempty, then one might be tempted to ``iterate:" if the contact point $x_{*}$ is such that $q_{*} := \varphi(x_{*})^{-1} x_{*} \in \text{bdry}(\partial \varphi^{*}(p))$, then one is tempted to search for a $p' \neq p$ such that $q_{*} \in \text{rint}(\partial \varphi^{*}(p'))$.  If this were possible, then the appropriate thing to do would be to work with the conical test function associated with $\partial \varphi^{*}(p')$ rather than $\partial \varphi^{*}(p)$.  However, this is not possible in general.  While it is true that there is a unique face $F$ of $\partial \varphi^{*}(p)$ such that $q_{*} \in \text{rint}(F)$, as already observed in Remark \ref{R: exposed face stuff}, the $C^{1,1}$ regularity of $\{\varphi \leq 1\}$ implies that $F$ is \emph{not} an exposed face of $\{\varphi \leq 1\}$, or, in other words, $F \notin \{\partial \varphi^{*}(p') \, \mid \, p' \in \mathbb{R}^{d} \setminus \{0\}\}$. \end{remark}

Even though conical test functions are not smooth in general, there is reason for hope.  For any $p \in \mathbb{R}^{d} \setminus \{0\}$ and any $\nu \in (0,\zeta]$, the conical test function $\psi = \psi_{e,\partial \varphi^{*}(p),\zeta}$ has two advantageous properties:
	\begin{itemize}
		\item[(i)] Even though $\psi$ may not be smooth at relative boundary points, the differential inequality $-G_{\varphi}^{*}(D\psi,D^{2}\psi) \geq 0$ holds in the viscosity sense at each point of $\mathcal{C}(\partial \varphi^{*}(p))$.  More precisely, if $\tilde{\psi}$ is a smooth test function touching $\psi$ from below at some point $x \in \mathcal{C}(\partial \varphi^{*}(p))$, then\footnote{This can be proved by generalizing the argument of Proposition \ref{P: strongly superharmonic} suitably to arbitrary positively one-homogeneous convex functions.  The details are omitted.}
			\begin{equation*}
				-G_{\varphi}^{*}(D\tilde{\psi}(x),D^{2}\tilde{\psi}(x)) \geq 0.
			\end{equation*}
		This indicates that nothing is lost passing from $\varphi$ to the conical test function $\psi$: on the contact set $\{\psi = \varphi\}$, the relevant viscosity-thereotic properties (Proposition \ref{P: strongly superharmonic}) are retained, even at singular points.
		\item[(ii)] The ``shape" of $\psi$ is very simple.  More precisely, the jumps of $D^{2}\psi$ are relatively well-behaved.  Since $\psi$ is positively one-homogeneous, this is easiest to comprehend by studying the level set $\{\psi = 1\}$ of $\psi$.  It turns out that it is easy to write the level set as a graph in a very concrete way; this is explained in the next subsection and will be used extensively in Section \ref{S: c11 setting}.
	\end{itemize}
As will be justified below, points (i) and (ii) show that, in passing from an arbitrary $C^{1,1}$ Finsler norm to a conical test function, there is a reduction in complexity.  At the level of the geometry, one passes from the arbitrary $C^{1,1}$ surface $\{\varphi = 1\}$ to one that can be written explicitly as a graph, while, where the PDE is concerned, the local structure is retained.  In effect, the approach of this paper is to treat conical test functions as ``flexible building blocks" for building up cone comparison principles.

%
%

\subsection{Graphical Representation of $F_{(e,\nu)}$} \label{S: graphical representation} In the previous subsection, it was suggested that the conical test functions, which are not smooth in their domains, nonetheless are quite amenable to geometric arguments.  This subsection presents an alternative graphical description of $F_{(e,\nu)}$, which makes the structure of the jumps of the Hessian tractable. 

Suppose, as in Proposition \ref{P: smooth case test function}, that $F \subseteq \mathbb{R}^{d}$ is a convex set for which there is an $e \in S^{d-1}$ and a $c > 0$ such that 
	\begin{equation*}
		F \subseteq \left\{x \in \mathbb{R}^{d} \, \mid \, \langle x,e \rangle = c \right\}.
	\end{equation*}
Define $G \subseteq \{e\}^{\perp}$ by projecting $F$ orthogonally, that is.,
	\begin{equation*}
		G = \left\{x' \in \{e\}^{\perp} \, \mid \, x' + c e \in F \right\} = F - ce.
	\end{equation*}
Here and in Section \ref{S: c11 setting} below, to lighten the notation, $\{e\}^{\perp}$ will be identified with $\mathbb{R}^{d-1}$; the prime notation, such as $x'$, will be reserved for points in $\mathbb{R}^{d-1}$.  

As before, define $F_{(e,\nu)} \subseteq \mathbb{R}^{d}$ according to \eqref{E: fancy convolution}.

Let $\text{dist}(\cdot,G) : \mathbb{R}^{d-1} \to [0,+\infty)$ denote the distance function to $G$, i.e.,
	\begin{equation*}
		\text{dist}(x',G) = \inf \left\{ \|x' - y'\| \, \mid \, y' \in G \right\}.
	\end{equation*}
Next, for a given $\nu > 0$, define $g_{\nu} : \mathbb{R}^{d-1} \to \mathbb{R}$ by
	\begin{equation} \label{E: graphical}
		g_{\nu}(x') = c - \nu + \sqrt{\nu^{2} - \text{dist}(x',G)^{2}}.
	\end{equation}
It is straightforward to check that the epigraph of $g_{\nu}$ parametrizes the ``upper half" of $F_{(e,\nu)}$ in the following sense:
	\begin{align*}
		F_{(e,\nu)} \cap \left\{x \in \mathbb{R}^{d} \, \mid \, \langle x, e \rangle \geq c - \nu \right\} &= \left\{x' + se \, \mid \, c - \nu \leq s \leq g_{\nu}(x'), \, \, \text{dist}(x',G) \leq \nu \right\}, \\
		\partial F_{(e,\nu)} \cap \left\{ x \in \mathbb{R}^{d} \, \mid \, \langle x,e \rangle \geq c - \nu \right\} &= \{x' + g_{\nu}(x')e \, \mid \, \text{dist}(x',G) \leq \nu\},
	\end{align*}
In particular,
	\begin{align*}
		F = \{x' + g_{\nu}(x')e \, \mid \, x' \in G\} = G + c e.
	\end{align*}

The representation of $F_{(e,\nu)}$ in terms of $g_{\nu}$ is convenient for a simple reason.  It makes the analysis of the singular points of $\partial F_{(e,\nu)}$ tractable; see Figure \ref{F: distance picture} for a plot of the gradient $D\text{dist}(\cdot,G)$ of the distance function to $G$ and the discontinuities of the Hessian.  In particular, when it comes time to improve Proposition \ref{P: conical test functions not smooth}, it will become clear that a manageable way to achieve this is through a perturbation argument involving mollification of $g_{\nu}$.

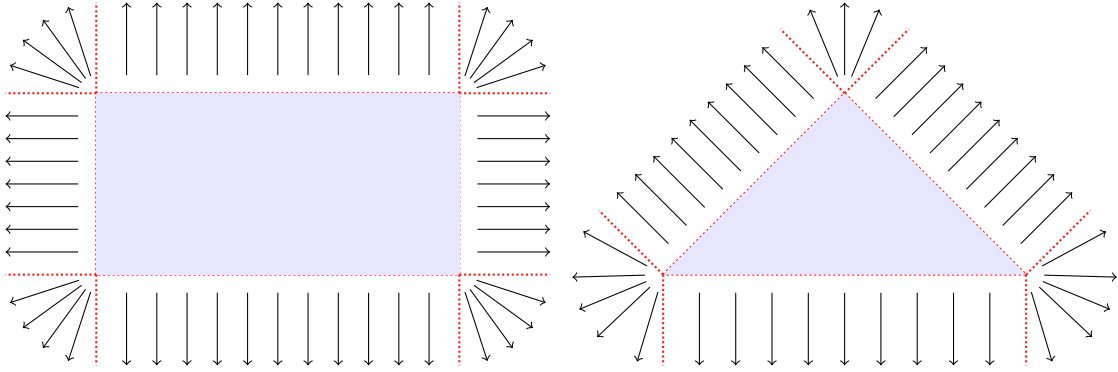
\begin{figure}
	\begin{tikzpicture}[scale=1.9]
   
    \draw[thick,color=red!90,densely dotted] (-0.5in,0) -- (-0.5in,0.5in) -- (0.5in,0.5in) -- (0.5in,0) -- (-0.5in,0);
    \fill[fill=blue!10] (-0.5in,0) -- (-0.5in,0.5in) -- (0.5in,0.5in) -- (0.5in,0) -- (-0.5in,0);
	
    \draw[->] (-0.55in,0.4375in) -- (-0.75in,0.4375in);
    \draw[->] (-0.55in,0.375in) -- (-0.75in,0.375in);
    \draw[->] (-0.55in,0.3125in) -- (-0.75in,0.3125in);
    \draw[->] (-0.55in,0.25in) -- (-0.75in,0.25in);
    \draw[->] (-0.55in,0.1875in) -- (-0.75in,0.1875in);
    \draw[->] (-0.55in,0.125in) -- (-0.75in,0.125in);
    \draw[->] (-0.55in,0.0625in) -- (-0.75in,0.0625in);
    
    \draw[->] (0.416667in,0.55in) -- (0.416667in,0.75in);
    \draw[->] (0.333333in,0.55in) -- (0.333333in,0.75in);
    \draw[->] (0.25in,0.55in) -- (0.25in,0.75in);
    \draw[->] (0.166667in,0.55in) -- (0.166667in,0.75in);
    \draw[->] (0.0833333in,0.55in) -- (0.0833333in,0.75in);
    \draw[->] (0in,0.55in) -- (0in,0.75in);
    \draw[->] (-0.0833333in,0.55in) -- (-0.0833333in,0.75in);
    \draw[->] (-0.166667in,0.55in) -- (-0.166667in,0.75in);
    \draw[->] (-0.25in,0.55in) -- (-0.25in,0.75in);
    \draw[->] (-0.333333in,0.55in) -- (-0.333333in,0.75in);
    \draw[->] (-0.416667in,0.55in) -- (-0.416667in,0.75in);
    
    \draw[->] (0.55in,0.0625in) -- (0.75in,0.0625in);
    \draw[->] (0.55in,0.125in) -- (0.75in,0.125in);
    \draw[->] (0.55in,0.1875in) -- (0.75in,0.1875in);
    \draw[->] (0.55in,0.25in) -- (0.75in,0.25in);
    \draw[->] (0.55in,0.3125in) -- (0.75in,0.3125in);
    \draw[->] (0.55in,0.375in) -- (0.75in,0.375in);
    \draw[->] (0.55in,0.4375in) -- (0.75in,0.4375in);
    
    \draw[->] (-0.416667in,-0.05in) -- (-0.416667in,-0.25in);
    \draw[->] (-0.333333in,-0.05in) -- (-0.333333in,-0.25in);
    \draw[->] (-0.25in,-0.05in) -- (-0.25in,-0.25in);
    \draw[->] (-0.166667in,-0.05in) -- (-0.166667in,-0.25in);
    \draw[->] (-0.0833333in,-0.05in) -- (-0.0833333in,-0.25in);
    \draw[->] (0in,-0.05in) -- (0in,-0.25in);
    \draw[->] (0.0833333in,-0.05in) -- (0.0833333in,-0.25in);
    \draw[->] (0.166667in,-0.05in) -- (0.166667in,-0.25in);
    \draw[->] (0.25in,-0.05in) -- (0.25in,-0.25in);
    \draw[->] (0.333333in,-0.05in) -- (0.333333in,-0.25in);
    \draw[->] (0.416667in,-0.05in) -- (0.416667in,-0.25in);

    \draw[->] (-0.547553in,-0.0154508in) -- (-0.737764in,-0.0772542in);
    \draw[->] (-0.540451in,-0.0293893in) -- (-0.702254in,-0.146946in);
    \draw[->] (-0.529389in,-0.0404508in) -- (-0.646946in,-0.202254in);
    \draw[->] (-0.515451in,-0.0475528in) -- (-0.577254in,-0.237764in);
    \draw[thick,color=red!90,densely dotted] (-0.5in,0in) -- (-0.75in,0in);
    \draw[thick,color=red!90,densely dotted] (-0.5in,0in) -- (-0.5in,-0.25in);
    
    \draw[->] (-0.547553in,0.515451in) -- (-0.737764in,0.577254in);
    \draw[->] (-0.540451in,0.529389in) -- (-0.702254in,0.646946in);
    \draw[->] (-0.529389in,0.540451in) -- (-0.646946in,0.702254in);
    \draw[->] (-0.515451in,0.547553in) -- (-0.577254in,0.737764in);
    \draw[thick,color=red!90,densely dotted] (-0.5in,0.5in) -- (-0.75in,0.5in);
    \draw[thick,color=red!90,densely dotted] (-0.5in,0.5in) -- (-0.5in,0.75in);
    
    \draw[->] (0.515451in,0.547553in) -- (0.577254in,0.737764in);
    \draw[->] (0.529389in,0.540451in) -- (0.646946in,0.702254in);
    \draw[->] (0.540451in,0.529389in) -- (0.702254in,0.646946in);
    \draw[->] (0.547553in,0.515451in) -- (0.737764in,0.577254in);
    \draw[thick,color=red!90,densely dotted] (0.5in,0.5in) -- (0.5in,0.75in);
    \draw[thick,color=red!90,densely dotted] (0.5in,0.5in) -- (0.75in,0.5in);
    
    \draw[->] (0.547553in,-0.0154508in) -- (0.737764in,-0.0772542in);
    \draw[->] (0.540451in,-0.0293893in) -- (0.702254in,-0.146946in);
    \draw[->] (0.529389in,-0.0404508in) -- (0.646946in,-0.202254in);
    \draw[->] (0.515451in,-0.0475528in) -- (0.577254in,-0.237764in);
    \draw[thick,color=red!90,densely dotted] (0.5in,0in) -- (0.75in,0in);
    \draw[thick,color=red!90,densely dotted] (0.5in,0in) -- (0.5in,-0.25in);	
	
\end{tikzpicture}
	\begin{tikzpicture}[scale=1.9]
   
    \draw[thick,color=red!90,densely dotted] (-0.5in,0) -- (0,0.5in) -- (0.5in,0) -- (-0.5in,0);
    \fill[fill=blue!10] (-0.5in,0) -- (0,0.5in) -- (0.5in,0) -- (-0.5in,0);
	
    \draw[->] (-0.4in,-0.05in) -- (-0.4in,-0.25in);
    \draw[->] (-0.3in,-0.05in) -- (-0.3in,-0.25in);
    \draw[->] (-0.2in,-0.05in) -- (-0.2in,-0.25in);
    \draw[->] (-0.1in,-0.05in) -- (-0.1in,-0.25in);
    \draw[->] (0in,-0.05in) -- (0in,-0.25in);
    \draw[->] (0.1in,-0.05in) -- (0.1in,-0.25in);
    \draw[->] (0.2in,-0.05in) -- (0.2in,-0.25in);
    \draw[->] (0.3in,-0.05in) -- (0.3in,-0.25in);
    \draw[->] (0.4in,-0.05in) -- (0.4in,-0.25in);
    \draw[->] (-0.0853553in,0.485355in) -- (-0.226777in,0.626777in);
    \draw[->] (-0.135355in,0.435355in) -- (-0.276777in,0.576777in);
    \draw[->] (-0.185355in,0.385355in) -- (-0.326777in,0.526777in);
    \draw[->] (-0.235355in,0.335355in) -- (-0.376777in,0.476777in);
    \draw[->] (-0.285355in,0.285355in) -- (-0.426777in,0.426777in);
    \draw[->] (-0.335355in,0.235355in) -- (-0.476777in,0.376777in);
    \draw[->] (-0.385355in,0.185355in) -- (-0.526777in,0.326777in);
    \draw[->] (-0.435355in,0.135355in) -- (-0.576777in,0.276777in);
    \draw[->] (-0.485355in,0.0853553in) -- (-0.626777in,0.226777in);
    \draw[->] (0.485355in,0.0853553in) -- (0.626777in,0.226777in);
    \draw[->] (0.435355in,0.135355in) -- (0.576777in,0.276777in);
    \draw[->] (0.385355in,0.185355in) -- (0.526777in,0.326777in);
    \draw[->] (0.335355in,0.235355in) -- (0.476777in,0.376777in);
    \draw[->] (0.285355in,0.285355in) -- (0.426777in,0.426777in);
    \draw[->] (0.235355in,0.335355in) -- (0.376777in,0.476777in);
    \draw[->] (0.185355in,0.385355in) -- (0.326777in,0.526777in);
    \draw[->] (0.135355in,0.435355in) -- (0.276777in,0.576777in);
    \draw[->] (0.0853553in,0.485355in) -- (0.226777in,0.626777in);

    \draw[->] (-0.546194in,-0.0191342in) -- (-0.73097in,-0.0956709in);
    \draw[->] (-0.543986in,0.0237743in) -- (-0.719931in,0.118871in);
    \draw[->] (-0.549981in,-0.0013738in) -- (-0.749906in,-0.00686898in);
    \draw[->] (-0.536313in,-0.0343706in) -- (-0.681567in,-0.171853in);
    \draw[->] (-0.514292in,-0.0479139in) -- (-0.57146in,-0.239569in);
    \draw[thick,color=red!90,densely dotted] (-0.5in,0in) -- (-0.676777in,0.176777in);
    \draw[thick,color=red!90,densely dotted] (-0.5in,0in) -- (-0.5in,-0.25in);
    
    \draw[->] (0in,0.55in) -- (0in,0.75in);
    \draw[->] (-0.0191342in,0.546194in) -- (-0.0956709in,0.73097in);
    \draw[->] (0.0191342in,0.546194in) -- (0.0956709in,0.73097in);
    \draw[thick,color=red!90,densely dotted] (0in,0.5in) -- (-0.176777in,0.676777in);
    \draw[thick,color=red!90,densely dotted] (0in,0.5in) -- (0.176777in,0.676777in);

    \draw[->] (0.546194in,-0.0191342in) -- (0.73097in,-0.0956709in);
    \draw[->] (0.514292in,-0.0479139in) -- (0.57146in,-0.239569in);
    \draw[->] (0.536313in,-0.0343706in) -- (0.681567in,-0.171853in);
    \draw[->] (0.549981in,-0.0013738in) -- (0.749906in,-0.00686898in);
    \draw[->] (0.543986in,0.0237743in) -- (0.719931in,0.118871in);
    \draw[thick,color=red!90,densely dotted] (0.5in,0in) -- (0.5in,-0.25in);
    \draw[thick,color=red!90,densely dotted] (0.5in,0in) -- (0.676777in,0.176777in);

\end{tikzpicture}
	\caption{The filled regions show two possible choices of $G$ in $\mathbb{R}^{2}$.  The arrows indicate the gradient of $\text{dist}(\cdot,G)$, while the dashed segments delineate jumps of the Hessian.}
	\label{F: distance picture}
\end{figure}

\subsection{Proofs of Propositions \ref{P: touching above test function part} and \ref{P: smooth case test function}} This section concludes with the two remaining proofs.

\begin{proof}[Proof of Proposition \ref{P: touching above test function part}] To lighten the notation, let $F = \partial \varphi^{*}(p)$ and $e = \|p\|^{-1} p$.  By \eqref{E: subdifferential basic identity dual}, the outward normal vector to $\{\varphi \leq 1\}$ equals $e$ at each point of $F$.  Thus, since, by hypothesis, $\{\varphi \leq 1\}$ satisfies an interior ball condition of radius $\zeta$, for any $q \in F$, there holds
	\begin{equation*}
		\overline{B}_{\zeta}(q - \zeta e) \subseteq \{\varphi \leq 1\}.
	\end{equation*}
In particular,  $F_{(e,\zeta)} \subseteq \{\varphi \leq 1\}$ by definition.  After a straightforward manipulation of the definition \eqref{E: test function definition} of $\psi_{e,F,\zeta}$, this implies
	\begin{equation*}
		\varphi \leq \psi_{e,F,\zeta} \quad \text{in} \, \, \mathbb{R}^{d}.
	\end{equation*}

Next, if $q \in F$, then $q \in F_{(e,\nu)}$ and, thus, the definition of $\psi_{e,F,\zeta}$ immediately implies that $\psi_{e,F,\zeta}(q) \leq 1$.  On the other hand, since $F \subseteq \{\varphi = 1\}$ by \eqref{E: subdifferential basic identity dual}, there holds $1 = \varphi(q) \geq \psi_{e,F,\zeta}(q)$.  Hence $\varphi = \psi_{e,F,\zeta} = 1$ in $F$, and, by homogeneity, $\varphi = \psi_{e,F,\zeta}$ in $\mathcal{C}(F)$.

Finally, in case $\nu < \zeta$, there holds $\varphi \leq \psi_{e,F,\zeta} \leq \psi_{e,F,\nu}$.  Further, note that for an arbitrary $q \in \mathbb{R}^{d}$, one has 
	\begin{equation*}
		\overline{B}_{\nu}(q - \nu e) \cap \overline{B}_{\zeta}(q - \zeta e) = \{q\}.
	\end{equation*}
From this, it follows that $\partial F_{(e,\nu)} \cap \partial F_{(e,\zeta)} = F$.  Thus, given $x \in \{\psi_{e,F,\zeta} < +\infty\} \cap \{\psi_{e,F,\nu} < +\infty\}$, there holds  $\psi_{e,F,\zeta}(x) = \psi_{e,F,\nu}(x)$ if and only if $x \in F$. \end{proof}

\begin{proof}[Proof of Proposition \ref{P: smooth case test function}] We will show that there is an $r > 0$ and a smooth function $\tilde{\psi} : B_{r}(q) \to \mathbb{R}$ such that $\tilde{\psi}(x) = \psi_{e,F,\nu}(x)$ for each $x \in B_{r}(q)$.  

Let $V_{F} \subseteq \mathbb{R}^{d}$ be the smallest linear subspace such that $F \subseteq q + V_{F}$ (cf.\ Section \ref{S: faces}).  Let $\tilde{F}$ be the corresponding tube, defined analogously to $F_{(e,\nu)}$, that is,
	\begin{equation*}
		\tilde{F} = \bigcup_{y \in q + V_{F}} \overline{B}_{\nu}(y - \nu e).
	\end{equation*}
Let $\tilde{\psi}$ be the Minkowski gauge of $\tilde{F}$:
	\begin{equation*}
		\tilde{\psi}(x) = \inf \left\{ \alpha > 0 \, \mid \, \frac{x}{\alpha} \in \tilde{F} \right\}.
	\end{equation*}
Since $q \in \text{rint}(F)$ by assumption, it follows that there is an $r_{1} > 0$ such that $B_{r_{1}}(q) \cap F = B_{r_{1}}(q) \cap (q + V_{F})$ and, thus, 
	\begin{equation*}
		B_{r_{1}}(q) \cap F_{(e,\nu)} = B_{r_{1}}(q) \cap \tilde{F}.
	\end{equation*}
This implies $\psi_{e,F,\nu} = \tilde{\psi}$ in the cone $\mathcal{C}(B_{r_{1}}(q))$, and, in particular, in $B_{r}(q)$ for an appropriate choice of $r$.  
	
It only remains to show that $\tilde{\psi}$ is smooth in a neighborhood of $q$.  Since $\tilde{\psi}$ is a positively homogeneous convex function, the results of the next section, particularly Propositions \ref{P: differentiability in terms of level sets} and \ref{P: check proposition} (and their proofs), are applicable.  Hence to see that $\tilde{\psi}$ is smooth in a neighborhood of $q$, it suffices to prove that $\partial \tilde{F}$ is a smooth hypersurface in a neighborhood of $q$.  

Let $V_{F}^{\perp}$ denote the orthogonal complement of $V_{F}$.  To see that $\tilde{F}$ is smooth in a neighborhood of $q$, it suffices to establish that 
	\begin{equation} \label{E: need to prove cylinder thing}
		\{x \in \tilde{F} \, \mid \, \langle x, e \rangle \geq \langle q, e \rangle - \nu\} = (q - \nu e) + V_{F} + (\overline{B}_{\nu}(0) \cap H_{e} \cap V_{F}^{\perp}),
	\end{equation}
where, as before, $H_{e} = \{x \in \mathbb{R}^{d} \, \mid \, \langle x,e \rangle \geq 0\}$.
Indeed, the right-hand side is a cylinder, hence its boundary is a smooth hypersurface; and $q$ is an element of the boundary of the left-hand side.  Therefore, it only remains to prove \eqref{E: need to prove cylinder thing}.  

As far as \eqref{E: need to prove cylinder thing} is concerned, suppose that $x \in \tilde{F}$ and $\langle x,e \rangle \geq \langle q, e \rangle - \nu$.  Fix a $y \in q + V_{F}$ such that $x \in \overline{B}_{\nu}(y - \nu e)$.  Write $x = y - \nu e + \xi_{F} + \xi_{F}^{\perp}$ with $\xi_{F} \in V_{F}$ and $\xi_{F}^{\perp} \in V_{F}^{\perp}$ to find 
	\begin{equation*}
		\|\xi_{F}^{\perp}\| \leq \|x - (y - \nu e)\| \leq \nu, \quad \langle \xi_{F}^{\perp}, e \rangle = \langle x, e \rangle - \langle y, e \rangle + \nu = \langle x,e \rangle - \langle q, e \rangle + \nu \geq 0
	\end{equation*}
since $y - q \in V_{F}$ and $V_{F} \subseteq \langle e \rangle^{\perp}$ holds by \eqref{E: hyperplane assumption}.  This proves
	\begin{equation*}
		-(q - \nu e) + \{x \in \tilde{F} \, \mid \, \langle x, e \rangle \geq \langle q, e \rangle - \nu\} \subseteq V_{F} + (\overline{B}_{\nu}(0) \cap H_{e} \cap V_{F}^{\perp}).
	\end{equation*}

Conversely, fix $\xi_{F} \in V_{F}$ and $\xi_{F}^{\perp} \in \overline{B}_{\nu}(0) \cap H_{e} \cap V_{F}^{\perp}$.  Let $y = q + \xi_{F}$.  Observe that if $x = y - \nu e + \xi_{F}^{\perp}$, then $x \in \overline{B}_{\nu}(y - \nu e) \subseteq \tilde{F}$ and 
	\begin{equation*}
		\langle x,e \rangle = \langle y, e \rangle - \nu + \langle \xi_{F}^{\perp}, e \rangle \geq \langle q, e \rangle - \nu.
	\end{equation*}
This proves the remaining inclusion, and hence \eqref{E: need to prove cylinder thing}.
\end{proof}

\part{Cone Comparison Principles for $C^{1,1}$ Finsler Norms} \label{Part: c11 cone comparison}

This part of the paper concerns the main technical work, namely, utilizing the conical test functions to prove generalized cone comparison principles, such as the one encountered in Section \ref{S: c11 reduction}.  The main technique is an approximation procedure via smoothing, which is tailor-made so that the essential differential structure remains intact.  At a high level, it is analogous to Evans' perturbed test function method \cite{evans_perturbed}.  

\section{Properties of Positively One-Homogeneous Convex Functions} \label{S: pos hom}

The conical test functions introduced in the previous section, like Finsler norms themselves, belong to the class of positively one-homogeneous convex functions.  As in the previous part of the paper, properties of such functions --- particularly the extent to which their regularity is determined by that of their level sets --- will play an important role in what follows.  

This section presents a number of elementary (but fundamental) facts about positively homogeneous convex functions, which will be used extensively in what follows.  These facts are more-or-less intuitive and not hard to prove, hence the reader may wish to skim this section at a first reading.

\begin{remark} In what follows, it will be convenient to consider convex functions taking values in $\mathbb{R} \cup \{+\infty\}$.  Toward that end, recall that a function $f : \mathbb{R}^{d} \to \mathbb{R} \cup \{+\infty\}$ is called convex if and only if the set $\{f < + \infty\}$ is convex and $f$ restricts to a real-valued convex function therein.

Furthermore, in order to include nontrivial functions taking value $+\infty$, it is important to give a precise definition of \emph{postively one-homogeneous}.  Specifically, henceforth, a function $f : \mathbb{R}^{d} \to \mathbb{R} \cup \{+\infty\}$ is positively one-homogeneous if, for any $x \in \mathbb{R}^{d}$ and $\lambda > 0$,
	\begin{equation*}
		f(\lambda x) = \lambda f(x).
	\end{equation*}
(It is necessary to exclude $\lambda = 0$ in this definition so that the set $\{f = + \infty\}$ need not be trivial.)
\end{remark}

\subsection{Subdifferential and Regularity}  As was already mentioned above in the special case of Finsler norms, the subdifferential of a positively one-homogeneous convex function has a relatively simple geometric representation.

Recall that the subdifferential $\partial f$ of a convex function $f : \mathbb{R}^{d} \to \mathbb{R} \cup \{+\infty\}$ is the set-valued function on $\{f < + \infty\}$ determined by the formula
	\begin{equation} \label{E: def subdifferential}
		\partial f(x) = \bigcap_{y \in \mathbb{R}^{d}} \{p \in \mathbb{R}^{d} \, \mid \, f(y) \geq f(x) + \langle p, y - x \rangle\}.
	\end{equation}
The next result recalls the classical fact that a positively one-homogeneous convex function has a very simple subdifferential.

	\begin{prop} \label{P: subdifferential} If $\psi : \mathbb{R}^{d} \to [0,+\infty]$ is convex and positively one-homogeneous, then, for any $x_{*} \in \text{int}(\{\psi < +\infty\}) \setminus \{0\}$,
		\begin{equation} \label{E: subdifferential thing}
			\partial \psi(x_{*}) = \{p \in \mathbb{R}^{d} \, \mid \, \langle p, x_{*} \rangle = \psi(x_{*})\} \cap \bigcap_{x \in \mathbb{R}^{d}} \{p \in \mathbb{R}^{d} \, \mid \, \langle p, x \rangle \leq \psi(x)\}.
		\end{equation}
	\end{prop}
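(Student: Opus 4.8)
The plan is to prove the two inclusions in \eqref{E: subdifferential thing} separately, using the definition \eqref{E: def subdifferential} of the subdifferential together with the positive one-homogeneity of $\psi$. Denote by $S$ the set on the right-hand side of \eqref{E: subdifferential thing}; that is, $S$ consists of those $p$ that both satisfy the equality $\langle p, x_{*} \rangle = \psi(x_{*})$ and lie below the graph of $\psi$ everywhere, $\langle p, x \rangle \le \psi(x)$ for all $x$. Note the two conditions together say precisely that $p$ is a subgradient of $\psi$ at $x_{*}$ \emph{of the affine function $x \mapsto \langle p, x \rangle$ passing through $(x_{*}, \psi(x_{*}))$}, which is why $S$ should coincide with $\partial\psi(x_{*})$.

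First I would show $\partial\psi(x_{*}) \subseteq S$. Fix $p \in \partial\psi(x_{*})$, so $\psi(y) \ge \psi(x_{*}) + \langle p, y - x_{*} \rangle$ for all $y$. Applying this with $y = \lambda x_{*}$, $\lambda > 0$, and using $\psi(\lambda x_{*}) = \lambda \psi(x_{*})$ gives $\lambda \psi(x_{*}) \ge \psi(x_{*}) + (\lambda - 1)\langle p, x_{*}\rangle$, i.e.\ $(\lambda - 1)\psi(x_{*}) \ge (\lambda-1)\langle p, x_{*}\rangle$ for every $\lambda > 0$; letting $\lambda \to 1^{+}$ and $\lambda \to 1^{-}$ (or simply taking $\lambda$ on both sides of $1$) forces $\langle p, x_{*} \rangle = \psi(x_{*})$. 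Substituting this equality back into the subgradient inequality yields $\psi(y) \ge \langle p, y \rangle$ for all $y$, which is the second condition defining $S$. Hence $p \in S$.

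Next I would show $S \subseteq \partial\psi(x_{*})$. Fix $p \in S$. For arbitrary $y$ we have, using $\langle p, x_{*}\rangle = \psi(x_{*})$ and $\langle p, y \rangle \le \psi(y)$,
\begin{equation*}
	\psi(y) \ge \langle p, y \rangle = \langle p, x_{*} \rangle + \langle p, y - x_{*} \rangle = \psi(x_{*}) + \langle p, y - x_{*} \rangle,
\end{equation*}
which is exactly the defining inequality for $p \in \partial\psi(x_{*})$. This closes the argument. The role of the hypothesis $x_{*} \in \mathrm{int}(\{\psi < +\infty\}) \setminus \{0\}$ is only to guarantee that $\partial\psi(x_{*})$ is nonempty (a convex function has a nonempty subdifferential at interior points of its effective domain) and that $\psi(x_{*}) \in \mathbb{R}$, so that the manipulations above are meaningful; the identity \eqref{E: subdifferential thing} itself holds as an equality of sets regardless.

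I do not expect any real obstacle here — both inclusions are short consequences of homogeneity plus the definition of the subdifferential. The only point requiring a modicum of care is the step extracting $\langle p, x_{*}\rangle = \psi(x_{*})$ from the subgradient inequality by testing against positive multiples of $x_{*}$ and letting the multiple tend to $1$ from both sides; once that equality is in hand, everything else is a direct substitution. It may also be worth remarking, for later use, that \eqref{E: subdifferential thing} shows $\partial\psi$ is positively zero-homogeneous (replacing $x_{*}$ by $\lambda x_{*}$ with $\lambda > 0$ changes neither condition defining $S$), mirroring \eqref{E: zero hom}, though this is not strictly part of the statement to be proved.
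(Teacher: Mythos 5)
Your proof is correct and follows essentially the same route as the paper: test the subgradient inequality against positive multiples $\lambda x_{*}$ with $\lambda$ on both sides of $1$ to force $\langle p, x_{*}\rangle = \psi(x_{*})$, substitute back to get $\langle p, x\rangle \le \psi(x)$ everywhere, and verify the reverse inclusion by the same one-line computation. Your closing remark that the identity needs only $\psi(x_{*}) < +\infty$ (homogeneity keeps the ray through $x_{*}$ in the domain) is a fair observation and does not conflict with the paper's argument.
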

	
		\begin{proof}  First, suppose that $x_{*} \in \text{int}(\{\psi < +\infty\}) \setminus \{0\}$ and $p \in \partial \psi(x_{*})$.  Since $x_{*}$ is in the interior of $\{\psi < +\infty\}$, there is an $\epsilon > 0$ such that $(1 + t)x_{*} \in \{\psi < +\infty\} \setminus \{0\}$ for any $t \in (-\epsilon,\epsilon)$.  Setting $x = (1 + t) x_{*}$ for $t \in (-\epsilon,\epsilon)$, one invokes \eqref{E: def subdifferential} to obtain
			\begin{equation*}
				(1 + t) \psi(x_{*}) = \psi((1 + t)x_{*}) \geq \psi(x_{*}) + t \langle p, x_{*} \rangle.
			\end{equation*}
		Sending $t \to 0^{+}$ yields $\psi(x_{*}) \geq \langle p, x_{*} \rangle$, while sending $t \to 0^{-}$ gives $\psi(x_{*}) \leq \langle p, x_{*} \rangle$.  Thus, $\psi(x_{*}) = \langle p, x_{*} \rangle$.
		
		Revisiting \eqref{E: def subdifferential} and substituting $\psi(x_{*}) = \langle p, x_{*} \rangle$, one obtains
			\begin{equation*}
				\psi(x) \geq \langle p, x \rangle \quad \text{for each} \quad x \in \mathbb{R}^{d}.
			\end{equation*}
		This proves one of the inclusions involved in \eqref{E: subdifferential thing}.
		
		Finally, it remains to prove the opposite inclusion.  Suppose that $x_{*} \in \text{int}(\{\psi < +\infty\}) \setminus \{0\}$ and $p \in \mathbb{R}^{d}$ is such that 
			\begin{equation*}
				\langle p, x \rangle \leq \psi(x) \quad \text{for each} \quad x \in \mathbb{R}^{d}, \quad \langle p, x_{*} \rangle = \psi(x_{*}).
			\end{equation*}
		To see that $p \in \partial \psi(x_{*})$, add and subtract $\langle p,x_{*} \rangle$ to find, for any $x \in \mathbb{R}^{d}$,
			\begin{equation*}
				\psi(x) \geq \langle p, x \rangle = \langle p, x_{*} \rangle + \langle p, x - x_{*} \rangle = \psi(x_{*}) + \langle p, x - x_{*} \rangle.
			\end{equation*}
		\end{proof}
		
The next result makes precise the observation that, in the interior of the domain $\{\psi < +\infty\}$, a positively one-homogeneous convex function $\psi$ is as regular as the level set $\{\psi = 1\}$.

	\begin{prop} \label{P: differentiability in terms of level sets} Let $\psi : \mathbb{R}^{d} \to [0,+\infty]$ be a positively one-homogeneous convex function and suppose that $x_{*} \in \text{int}(\{\psi < +\infty\}) \setminus \{0\}$.  Then $\psi$ is differentiable at $x_{*}$ if and only if there is a unique supporting hyperplane to $\{\psi \leq 1\}$ through the point $\psi(x_{*})^{-1} x_{*}$.  
	
	In particular, $\{\psi = 1\}$ is a $C^{k}$ hypersurface in the interior of $\{\psi < +\infty\}$ if and only if $\psi$ is a $C^{k}$ function in the interior of $\{\psi < +\infty\}$. \end{prop}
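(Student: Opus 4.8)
The plan is to deduce the proposition from the standard characterization of differentiability of convex functions together with the explicit description of $\partial\psi$ in Proposition \ref{P: subdifferential}. First I would reduce to the normalization $\psi(x_*)=1$: since $\psi$ is convex and positively one-homogeneous, $\{\psi<+\infty\}$ is a convex cone and hence so is its interior, and Proposition \ref{P: subdifferential} gives $\partial\psi(\lambda x_*)=\partial\psi(x_*)$ for all $\lambda>0$; likewise $\psi$ is $C^k$ near $x_*$ iff it is $C^k$ near $\lambda x_*$. Thus both sides of the equivalence are invariant under replacing $x_*$ by $\psi(x_*)^{-1}x_*$, and this replacement is meaningful only because the statement already presupposes $\psi(x_*)\neq 0$. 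With $\psi(x_*)=1$ fixed, recall that a convex function that is finite near a point is differentiable there iff its subdifferential at that point is a singleton (see \cite{rockafellar}, Theorem 25.1); since $x_*\in\operatorname{int}(\{\psi<+\infty\})$, $\psi$ is locally Lipschitz near $x_*$ and $\partial\psi(x_*)\neq\emptyset$. So the first equivalence reduces to exhibiting a bijection between $\partial\psi(x_*)$ and the set of supporting hyperplanes to $\{\psi\le 1\}$ at $x_*$.

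That bijection is $p\mapsto H_p:=\{y:\langle p,y\rangle=1\}$. By Proposition \ref{P: subdifferential}, $p\in\partial\psi(x_*)$ iff $\langle p,x_*\rangle=1$ and $\langle p,x\rangle\le\psi(x)$ for all $x$; the latter gives $\{\psi\le 1\}\subseteq\{y:\langle p,y\rangle\le 1\}$, so $H_p$ is a supporting hyperplane to $\{\psi\le 1\}$ at $x_*$ (one first notes $x_*\in\partial\{\psi\le 1\}$, which holds since $\psi(tx_*)=t>1$ for $t>1$ while $tx_*\to x_*$). Injectivity is the elementary fact that a hyperplane missing the origin has a unique representation $\{y:\langle p,y\rangle=1\}$. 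For surjectivity, given a supporting hyperplane with outer normal $n$ one shows $\langle n,x_*\rangle>0$: there is a ball around $x_*/2$ contained in $\{\psi<1\}$ by continuity of $\psi$ near $x_*$, which forces $\langle n,x_*/2\rangle<\langle n,x_*\rangle$; then $p:=n/\langle n,x_*\rangle$ lies in $\partial\psi(x_*)$, the inequality $\langle p,x\rangle\le\psi(x)$ being checked by normalizing $x$ into $\{\psi\le 1\}$ when $\psi(x)\in(0,\infty)$, by pushing a ray into $\{\psi\le 1\}$ when $\psi(x)=0$, and trivially when $\psi(x)=\infty$. This settles the first claim.

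For the ``in particular'' part I would argue pointwise near each $x_*\in\operatorname{int}(\{\psi<+\infty\})$ with $\psi(x_*)>0$, after rescaling to $\psi(x_*)=1$. If $\psi$ is $C^k$ near $x_*$, differentiating $\psi(\lambda x)=\lambda\psi(x)$ at $\lambda=1$ yields Euler's identity $\langle\nabla\psi(x),x\rangle=\psi(x)$, so $\nabla\psi\neq 0$ on $\{\psi=1\}$; hence $1$ is a regular value and $\{\psi=1\}$ is a $C^k$ hypersurface by the regular value theorem. Conversely, suppose $M:=\{\psi=1\}$ is a $C^k$ hypersurface near $x_*$, and consider $\Phi:M\times(0,\infty)\to\operatorname{int}(\{\psi<+\infty\})$, $\Phi(q,t)=tq$. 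It is $C^k$, and its differential at $(x_*,1)$ is $(v,s)\mapsto v+sx_*$ on $T_{x_*}M\times\mathbb{R}$, which is an isomorphism precisely when $x_*\notin T_{x_*}M$. This transversality holds because, $\{\psi\le 1\}$ being convex with $C^1$ boundary near $x_*$, its unique supporting hyperplane there is $x_*+T_{x_*}M$, whose normal $n$ satisfies $\langle n,x_*\rangle>0$ by the computation of the previous paragraph, so $T_{x_*}M=n^\perp\not\ni x_*$. By the inverse function theorem $\Phi$ is a $C^k$ diffeomorphism near $(x_*,1)$; writing $\Phi^{-1}(x)=(q(x),t(x))$ gives $\psi(x)=\psi(t(x)q(x))=t(x)$, a $C^k$ function of $x$.

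I expect the main obstacle to be the bookkeeping in the reverse direction of the $C^k$ statement, specifically identifying $T_{x_*}M$ with $n^\perp$ and securing the strict sign $\langle n,x_*\rangle>0$, since these are exactly the places where convexity of $\{\psi\le 1\}$ (rather than mere $C^k$ regularity of an abstract hypersurface through $x_*$) is used; everything else is routine convex analysis plus an application of the inverse function theorem. A secondary nuisance is the interplay with points where $\psi$ vanishes or where $\{\psi\le 1\}$ is unbounded, but these do not affect the differentiability assertion, and for the Finsler norms and conical test functions considered later $\psi$ is positive away from the origin, so they cause no trouble in practice.
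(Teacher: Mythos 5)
Your proposal is correct. The first equivalence is proved exactly as in the paper: both arguments set up the correspondence $p \mapsto \{y \,\mid\, \langle p,y\rangle = 1\}$ between $\partial\psi(x_*)$ and supporting hyperplanes to $\{\psi\le 1\}$ at $\psi(x_*)^{-1}x_*$ using Proposition \ref{P: subdifferential}, and then invoke the standard fact that differentiability of a convex function is equivalent to the subdifferential being a singleton; in fact you are slightly more careful than the paper at the normalization step, since you verify $\langle n,x_*\rangle>0$ (via the ball around $x_*/2$) before rescaling the normal, a point the paper passes over when it says ``replacing $p$ by $\lambda p$ for an appropriate $\lambda>0$.'' Where you genuinely diverge is the harder half of the ``in particular'' statement ($C^k$ level set $\Rightarrow$ $C^k$ function): the paper derives the explicit formula $D\psi(x) = \langle N(\check x),\check x\rangle^{-1} N(\check x)$, i.e.\ \eqref{E: gradient formula}, and reads off regularity of $D\psi$ from regularity of the normal $N$ (which, strictly speaking, requires a small bootstrap since $\check x = x/\psi(x)$ already involves $\psi$), whereas you parametrize a neighborhood of $x_*$ by the ray map $\Phi(q,t)=tq$ on $M\times(0,\infty)$ and apply the inverse function theorem, getting $\psi = t(\cdot)$ directly with no bootstrapping. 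Your route is cleaner and self-contained for this proposition; the paper's route has the side benefit of producing \eqref{E: gradient formula} itself, which is reused later (Propositions \ref{P: check proposition} and \ref{P: curvature bound for pos hom}), so if you adopted your argument you would still want to record that formula separately. Finally, both you and the paper implicitly restrict to points where $\psi>0$ (e.g.\ $\psi(x_1,x_2)=|x_1|$ shows the ``in particular'' fails verbatim on the zero set); you flag this explicitly, and it is harmless for the norms and conical test functions used in the paper.
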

	
		\begin{proof} First, note that if $p \in \partial \psi(x_{*})$, then 
			\begin{equation*}
				\langle p, q \rangle \leq \psi(q) \leq 1 = \langle p, \psi(x_{*})^{-1} x_{*} \rangle \quad \text{for each} \quad q \in \{\psi \leq 1\}.
			\end{equation*}
		Thus, each element of $\partial \psi(x_{*})$ determines a supporting hyperplane to $\{\psi \leq 1\}$ through $\psi(x_{*})^{-1} x_{*}$.  
		
		Conversely, suppose that $p \in \mathbb{R}^{d} \setminus \{0\}$ determines such a hyperplane, that is, $\langle p, q - \psi(x_{*})^{-1} x_{*} \rangle \leq 0$ for each $q \in \{\psi \leq 1\}$.  Replacing $p$ by $\lambda p$ for an appropriate choice of $\lambda > 0$, there is no loss of generality assuming that $\langle p, x_{*} \rangle = \psi(x_{*})$.  Since $\langle p, q \rangle \leq 1$ for any $q \in \{\psi \leq 1\}$, one readily deduces by positive-homogeneity that $\langle p, y \rangle \leq \psi(y)$ for any $y \in \{\psi < +\infty\}$.  Thus, by Proposition \ref{P: subdifferential}, $p \in \partial \psi(x_{*})$.  
		
		The previous two paragraphs show that $p \in \partial \psi(x_{*})$ holds if and only if $p$ determines a supporting hyperplane to $\{\psi \leq 1\}$ through the point $\psi(x_{*})^{-1}x_{*}$.  Finally, recall that $\psi$ is differentiable at $x_{*}$ if and only if there is a vector $p \in \mathbb{R}^{d}$ such that $\partial \psi(x_{*}) = \{p\}$.\footnote{See \cite[Theorem 9.18]{rockafellar_wets} or \cite[Theorem 1.5.15]{schneider}.}  Therefore, $\psi$ is differentiable at $x_{*}$ if and only if there is a unique supporting hyperplane to $\{\psi \leq 1\}$ through $\psi(x_{*})^{-1}x_{*}$.
		
		Finally, if $\psi$ is differentiable at a point $x$, then the positive one-homogeneity of $\psi$ implies that $\psi$ is differentiable at each point of $\mathcal{C}(\{x\})$ and
	\begin{equation*}
		D\psi(x) = D\psi(\check{x}) = \|D\psi(\check{x})\| N(\check{x}),
	\end{equation*}
where $\check{x} = \psi(x)^{-1} x$ and $N(\check{x})$ is the outward normal vector to $\{\psi \leq 1\}$ at $\check{x}$. 
At the same time, by Proposition \ref{P: subdifferential}, 
	\begin{equation*}
		1 = \psi(\check{x}) = \langle D\psi(\check{x}), \check{x} \rangle,
	\end{equation*}
which implies that
	\begin{equation*}
		\|D\psi(\check{x})\|^{-1} = \langle N(\check{x}), \check{x} \rangle.
	\end{equation*}
Therefore,
	\begin{equation} \label{E: gradient formula}
		D\psi(x) = \langle N(\check{x}), \check{x} \rangle^{-1} N(\check{x}).
	\end{equation}
From this, one readily deduces that if $\{\psi \leq 1\}$ is a $C^{k}$ hypersurface locally in the interior of $\{\psi < +\infty\}$, then $D\psi$ is $C^{k -1}$ in that set and hence $\psi$ is $C^{k}$ there.  The converse follows immediately from the implicit function theorem. \end{proof}

\subsection{Curvature Bounds} \label{S: curvature bounds} In the next sections, approximation arguments will be introduced that will require somewhat delicate information concerning the Hessians of the test functions involved.  Toward this end, it will be convenient to know that curvature bounds on the level sets of a positively homogeneous convex function can be propagated to Hessian bounds.  

The next result shows that the Hessian of a positively homogeneous convex function can be expressed in terms of the normal vector and second fundamental form (derivative of the normal vector) of one of its level sets.

	\begin{prop} \label{P: check proposition} Let $\psi : \mathbb{R}^{d} \to [0,+\infty]$ be a positively one-homogeneous convex function.  Assume that $\psi$ is locally $C^{1,1}$ in the interior of $\{\psi < +\infty\}$, and let $N : \{\psi = 1\} \to S^{d-1}$ be the (outward) normal vector to $\{\psi \leq 1\}$.  Define a map $x \mapsto \check{x}$ from $\{\psi < + \infty\}$ into $\{\psi = 1\}$ by 
		\begin{equation*}
			\check{x} = \frac{x}{\psi(x)}.
		\end{equation*}  
	Then, for every $x \in \{\psi < + \infty\}$ at which $D^{2}\psi(x)$ exists, there holds
		\begin{equation} \label{E: chain rule guy}
			D^{2}\psi(x) = \psi(x)^{-1} A(x) \cdot B(x),
		\end{equation}
	where $A$ and $B$ are the matrix fields given by 
		\begin{align*}
			A(x) &= \langle N(\check{x}),\check{x} \rangle^{-1} DN(\check{x}) - \langle N(\check{x}), \check{x} \rangle^{-2} (N(\check{x}) \otimes \check{x}) DN(\check{x}) - \langle N(\check{x}), \check{x} \rangle^{-2} N(\check{x}) \otimes N(\check{x}), \\
			B(x) &= \text{Id} - \check{x} \otimes D\psi(x).
		\end{align*}
	\end{prop}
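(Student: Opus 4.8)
The plan is to derive \eqref{E: chain rule guy} by differentiating the gradient formula \eqref{E: gradient formula} established in the proof of Proposition \ref{P: differentiability in terms of level sets}, namely $D\psi(x) = \langle N(\check{x}),\check{x}\rangle^{-1} N(\check{x})$, and then bundling the resulting terms into the product $A(x)\cdot B(x)$. Throughout, $DN(\check{x})$ --- the derivative of the Gauss map, a priori a self-map of the tangent space $T_{\check{x}}\{\psi=1\} = N(\check{x})^{\perp}$ --- is to be regarded as a $d\times d$ matrix by extending it arbitrarily off $N(\check{x})^{\perp}$; the choice is immaterial, because $DN(\check{x})$ will only ever appear composed on the right with $B(x) = \mathrm{Id} - \check{x}\otimes D\psi(x)$, whose range lies in $N(\check{x})^{\perp}$. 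Indeed, setting $\mu(x) := \langle N(\check{x}),\check{x}\rangle$ so that $D\psi(x) = \mu(x)^{-1}N(\check{x})$, one computes $\langle D\psi(x), B(x)v\rangle = \langle D\psi(x),v\rangle\bigl(1 - \langle D\psi(x),\check{x}\rangle\bigr) = 0$, using Euler's identity $\langle D\psi(x),\check{x}\rangle = \psi(\check{x}) = 1$ (a consequence of Proposition \ref{P: subdifferential}); since $N(\check{x})$ is a positive multiple of $D\psi(x)$, this yields $\langle N(\check{x}), B(x)v\rangle = 0$ for every $v$.

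First I would reduce to the case $\psi(x)=1$. As $\psi$ is positively one-homogeneous, $D\psi$ is positively zero-homogeneous, and differentiating this relation shows that if $D\psi$ is differentiable at $x$ then it is differentiable along the whole ray through $x$, with $D^{2}\psi(\lambda x) = \lambda^{-1}D^{2}\psi(x)$; thus $D^{2}\psi$ is positively $(-1)$-homogeneous on its set of existence. Since $A$ and $B$ depend on $x$ only through the zero-homogeneous quantities $\check{x}$ and $D\psi(x)$, the right-hand side of \eqref{E: chain rule guy} is also $(-1)$-homogeneous, so it is enough to check the identity at points $x$ with $\psi(x)=1$; there $\check{x}=x$, $B(x) = \mathrm{Id} - x\otimes D\psi(x)$, and $D\psi(x)=\mu(x)^{-1}N(x)$.

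Now fix such an $x$ at which $D^{2}\psi(x)$ exists. A short computation gives that the differential of the radial projection $y\mapsto\check{y}$ at $x$ is $v\mapsto v - \langle D\psi(x),v\rangle x = B(x)v$, which lies in $N(x)^{\perp}$; since $N = D\psi/\|D\psi\|$ on $\{\psi=1\}$ is differentiable at $x$ (note $D\psi(x)\neq 0$, again by Euler's identity), the chain rule shows that the differential of $y\mapsto N(\check{y})$ at $x$ sends $v$ to $DN(x)[B(x)v]$, and that $D\mu(x)[v] = \langle DN(x)[B(x)v], x\rangle + \langle N(x), B(x)v\rangle = \langle DN(x)[B(x)v], x\rangle$. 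Feeding these into the product rule applied to $D\psi(y) = \mu(y)^{-1}N(\check{y})$ yields
\begin{equation*}
 D^{2}\psi(x)[v] = \mu(x)^{-1}DN(x)[B(x)v] - \mu(x)^{-2}\langle x, DN(x)[B(x)v]\rangle N(x).
\end{equation*}
On the other hand, expanding $A(x)B(x)v$ and using $\langle N(x), B(x)v\rangle = 0$ to kill the contribution of the $N(x)\otimes N(x)$ term produces exactly the same expression; as $\psi(x)=1$ this establishes \eqref{E: chain rule guy} at $x$, and the homogeneity reduction then gives it in general. The one point I expect to require real care is the justification that, under the mere $C^{1,1}$ hypothesis, this pointwise chain-rule computation is valid at every point at which $D^{2}\psi$ exists (rather than only almost everywhere): this follows from the observations above, since wherever $D\psi$ is differentiable so is $N = D\psi/\|D\psi\|$, and the homogeneity argument transfers differentiability from $x$ to the point $\check{x}$ on the level set. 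The rest is the bookkeeping indicated above.
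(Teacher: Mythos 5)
Your proposal is correct and follows essentially the same route as the paper, whose proof is exactly the one-line observation that \eqref{E: chain rule guy} follows from the gradient formula \eqref{E: gradient formula} and the chain rule; you have simply carried out that differentiation explicitly, together with the (sound) homogeneity reduction to the level set and the remark that the tangential nature of $B(x)$ makes the extension of $DN$ off the tangent space immaterial.
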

	
		\begin{proof} The formula \eqref{E: chain rule guy} follows directly from \eqref{E: gradient formula} and the chain rule. \end{proof}

The previous expression for $D^{2}\psi$ provides very convenient Hessian bounds in terms of the curvature of the level set $\{\psi = 1\}$.

	\begin{prop} \label{P: curvature bound for pos hom} Let $\psi : \mathbb{R}^{d} \to [0,+\infty]$ be a positively one-homogeneous convex function.  Assume that $\psi$ is locally $C^{1,1}$ in the open set $\text{int}(\{\psi < +\infty\})$, and define the map $x \mapsto \check{x}$ as in Proposition \ref{P: check proposition}.  Then
		\begin{align*}
			\|D^{2}\psi(x)\| \leq \psi(x)^{-1} \left[ 2 \left( 1 + \|D\psi(x)\|^{2} \right) \left(1 + \|\check{x}\| \right) (1 + \|DN(\check{x})\|) \right] \left( 1 + \|D\psi(x)\| \|\check{x}\| \right)		
		\end{align*}
	at any point in $x \in \text{int}(\{\psi < +\infty\})$ at which $D^{2}\psi(x)$ exists. \end{prop}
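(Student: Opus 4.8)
The plan is to read off the bound directly from the matrix identity \eqref{E: chain rule guy} in Proposition~\ref{P: check proposition}, using only elementary operator-norm estimates and the triangle inequality. Recall from that proposition that at any point $x \in \text{int}(\{\psi < +\infty\})$ where $D^{2}\psi(x)$ exists,
	\begin{equation*}
		D^{2}\psi(x) = \psi(x)^{-1} A(x) \cdot B(x),
	\end{equation*}
so that $\|D^{2}\psi(x)\| \leq \psi(x)^{-1} \|A(x)\| \, \|B(x)\|$. Hence it suffices to bound $\|A(x)\|$ and $\|B(x)\|$ separately in terms of the quantities appearing on the right-hand side of the claimed inequality.

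For $B(x) = \text{Id} - \check{x} \otimes D\psi(x)$, I would use $\|\text{Id}\| = 1$ and $\|v \otimes w\| = \|v\| \|w\|$, giving $\|B(x)\| \leq 1 + \|\check{x}\| \|D\psi(x)\|$, which matches the last factor in the statement exactly. For $A(x)$, which is a sum of three terms, the one subtlety is controlling the scalar $\langle N(\check{x}), \check{x} \rangle^{-1}$: by Proposition~\ref{P: subdifferential} (or directly \eqref{E: gradient formula}), one has $\langle N(\check{x}),\check{x} \rangle^{-1} = \|D\psi(\check{x})\| = \|D\psi(x)\|$ since $D\psi$ is positively zero-homogeneous. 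Thus $\langle N(\check{x}),\check{x}\rangle^{-1} = \|D\psi(x)\|$ and $\langle N(\check{x}),\check{x}\rangle^{-2} = \|D\psi(x)\|^{2}$. Substituting these, applying the triangle inequality to the three terms of $A(x)$, using $\|N(\check{x})\| = 1$ and $\|v\otimes w\| = \|v\|\|w\|$ throughout, and then bounding crudely (e.g., $a \leq 1+a$, $ab \leq (1+a)(1+b)$, and $\|D\psi(x)\| + \|D\psi(x)\|^2 \leq 2(1 + \|D\psi(x)\|^2)$, together with $\|DN(\check x)\| \leq 1 + \|DN(\check x)\|$ and $\|\check x\| \leq 1 + \|\check x\|$) collapses the sum into the factor $2(1 + \|D\psi(x)\|^{2})(1 + \|\check{x}\|)(1 + \|DN(\check{x})\|)$ claimed for $\|A(x)\|$.

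I do not expect any genuine obstacle here: the proof is a routine chain of operator-norm inequalities from \eqref{E: chain rule guy}, and the only point requiring care is the bookkeeping needed to absorb the scalar factors $\|D\psi(x)\|$ and $\|D\psi(x)\|^{2}$ arising from $\langle N(\check{x}),\check{x}\rangle^{-1}$ and $\langle N(\check{x}),\check{x}\rangle^{-2}$ into the stated product, which is deliberately written with generous slack precisely so that such crude bounds suffice. Accordingly I would present only the two inequalities $\|A(x)\| \leq 2(1+\|D\psi(x)\|^2)(1+\|\check x\|)(1+\|DN(\check x)\|)$ and $\|B(x)\| \leq 1 + \|D\psi(x)\|\|\check x\|$, note that they follow by the triangle inequality and submultiplicativity of the operator norm applied to the explicit formulas for $A$ and $B$, and conclude by multiplying through by $\psi(x)^{-1}$.
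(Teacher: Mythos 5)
Your proposal is correct and matches the paper's own proof: both read the bound off the factorization $D^{2}\psi(x) = \psi(x)^{-1} A(x) B(x)$ from Proposition~\ref{P: check proposition}, use \eqref{E: gradient formula} to identify $\langle N(\check{x}),\check{x}\rangle^{-1}$ with $\|D\psi(x)\|$, and then estimate $\|A(x)\|$ and $\|B(x)\|$ by the triangle inequality and $\|v\otimes w\| = \|v\|\|w\|$, absorbing the terms into the stated product by crude bounds such as $t \leq 1+t^{2}$. No gaps; this is essentially the paper's argument.
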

	
	\begin{proof} In view of \eqref{E: gradient formula} and \eqref{E: chain rule guy}, it suffices to observe that the matrix functions $A$ and $B$ of the previous proposition are controlled by the trivial bounds
		\begin{align*}
			\|A(x)\| &\leq \|D\psi(x)\| \|DN(x)\| + \|D\psi(x)\|^{2} \|\check{x}\| \|DN(x)\| + \|D\psi(x)\|^{2}, \\
				&\leq 2(1 + \|D\psi(x)\|^{2}) (1 + \|\check{x}\|) (1 + \|DN(x)\|), \\
			\|B(x)\| &\leq 1 + \|D\psi(x)\| \|\check{x}\|.
		\end{align*}
	Thus, since $\|D^{2}\psi(x)\| \leq \psi(x)^{-1} \|A(x)\| \|B(x)\|$, the desired inequality follows. \end{proof}
	
The following trivial consequence of Proposition \ref{P: curvature bound for pos hom} will be used repeatedly in the arguments that follow:
	
	\begin{prop} \label{P: trivial hessian estimate} There is a continuous function $\Gamma : (0,1) \to (0,1)$ such that, for any $\delta \in (0,1)$, the constant $\Gamma(\delta) > 0$ has the following property: suppose that $\psi : \mathbb{R}^{d} \to [0,+\infty]$ is a positively homogeneous convex function and $\psi$ is locally $C^{1,1}$ in the interior of $\{\psi < +\infty\}$, fix $\zeta \in (0,1)$, and let $A \subseteq \{\psi = 1\} \cap \overline{B}_{\delta^{-1}}(0)$.  If for any $q \in A$, there holds
		\begin{equation*}
			\|DN(q)\| \leq \zeta^{-1} \quad \text{and} \quad \|D\psi(q)\| \leq \delta^{-1},
		\end{equation*}
	then $\psi$ satisfies the Hessian estimate
		\begin{equation*}
			\psi(x) D^{2}\psi(x) \leq \Gamma(\delta) \zeta^{-1} \text{Id} \quad \text{for almost every} \quad x \in \mathcal{C}(A).
		\end{equation*}
	\end{prop}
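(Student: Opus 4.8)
The plan is to read the conclusion off the pointwise Hessian estimate in Proposition \ref{P: curvature bound for pos hom}, using the positive one-homogeneity of $\psi$ to transport the hypotheses on $A \subseteq \{\psi = 1\}$ out to the whole cone $\mathcal{C}(A)$. First I would record the preliminary reductions. Since $\psi$ is locally $C^{1,1}$ in $\text{int}(\{\psi < +\infty\})$, the gradient $D\psi$ is locally Lipschitz there, so by Rademacher's theorem $D^{2}\psi(x)$ exists for Lebesgue-a.e.\ $x$; moreover the bounds $\|D\psi(q)\|, \|DN(q)\| < \infty$ assumed for $q \in A$ implicitly require $\psi$ to be differentiable at each $q \in A$, and a proper convex function is differentiable only at interior points of its domain, so $A \subseteq \text{int}(\{\psi < +\infty\})$; since this interior is a cone, $\mathcal{C}(A) \setminus \{0\} \subseteq \text{int}(\{\psi < +\infty\})$. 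Discarding the single (Lebesgue-negligible) point $0$, it therefore suffices to bound $\psi(x)D^{2}\psi(x)$ at an arbitrary $x \in \mathcal{C}(A) \setminus \{0\}$ at which $D^{2}\psi(x)$ exists.

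Fix such an $x$ and set $\check{x} = x/\psi(x) \in \{\psi = 1\}$. Writing $x = tq$ with $q \in A$ and $t > 0$, positive one-homogeneity gives $\psi(x) = t$ and $\check{x} = q \in A$, hence $\|\check{x}\| \le \delta^{-1}$ because $A \subseteq \overline{B}_{\delta^{-1}}(0)$. Positive zero-homogeneity of the gradient yields $D\psi(x) = D\psi(\check{x}) = D\psi(q)$, so $\|D\psi(x)\| \le \delta^{-1}$, and likewise $\|DN(\check{x})\| = \|DN(q)\| \le \zeta^{-1}$. Substituting these three bounds into Proposition \ref{P: curvature bound for pos hom} and using $\zeta < 1$ to replace $1 + \zeta^{-1}$ by $2\zeta^{-1}$ gives
\[
	\psi(x)\,\|D^{2}\psi(x)\| \;\le\; 4\,(1 + \delta^{-2})^{2}\,(1 + \delta^{-1})\,\zeta^{-1} \;=:\; \Gamma(\delta)\,\zeta^{-1},
\]
with $\Gamma(\delta) := 4(1+\delta^{-2})^{2}(1+\delta^{-1})$ continuous on $(0,1)$ and independent of $\psi$, $A$, $\zeta$. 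Finally, since $\psi$ is convex, $D^{2}\psi(x) \ge 0$ wherever it exists; as a positive semidefinite symmetric matrix $M$ satisfies $M \le \|M\|\,\text{Id}$, the scalar estimate upgrades to the matrix inequality $\psi(x)D^{2}\psi(x) \le \Gamma(\delta)\zeta^{-1}\,\text{Id}$, which is the assertion.

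I do not expect a genuine obstacle here: the proposition is essentially a corollary of Proposition \ref{P: curvature bound for pos hom}, and the only care required is bookkeeping — matching the ``almost every'' in the conclusion to the full-measure set on which $D^{2}\psi$ exists, applying the homogeneity identities $\check{x} \in A$ and $D\psi(x) = D\psi(\check{x})$ at the correct points, and invoking convexity to pass from the operator-norm bound to the matrix inequality. (If instead one wants the conclusion literally at every $x \in \mathcal{C}(A)$ where $D^{2}\psi(x)$ exists, the same argument delivers it verbatim.)
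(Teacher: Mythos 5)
Your proof is correct and follows essentially the same route as the paper: plug the homogeneity identities ($\check{x}=q\in A$, $D\psi(x)=D\psi(q)$) and the assumed bounds into Proposition \ref{P: curvature bound for pos hom}, absorb $1+\zeta^{-1}\le 2\zeta^{-1}$, and read off a constant $\Gamma(\delta)$ depending only on $\delta$ (your $4(1+\delta^{-2})^{2}(1+\delta^{-1})$ versus the paper's cruder $4(1+\delta^{-2})^{3}$). Your extra bookkeeping — locating $\mathcal{C}(A)\setminus\{0\}$ inside $\operatorname{int}(\{\psi<+\infty\})$, invoking Rademacher, and using $0\le D^{2}\psi(x)\le\|D^{2}\psi(x)\|\,\mathrm{Id}$ to pass from the norm bound to the matrix inequality — only makes explicit what the paper leaves implicit.
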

	
		\begin{proof} Suppose that $x \in \mathcal{C}(A) \setminus \{0\}$, hence $x \in \mathbb{R}^{d} \setminus \{0\}$ and $\psi(x)^{-1}x \in \overline{B}_{\delta^{-1}}(0)$.  If $D^{2}\psi(x)$ exists, then Proposition \ref{P: curvature bound for pos hom} applies to give, by hypothesis,
			\begin{equation*}
				\psi(x) D^{2}\psi(x) \leq 2 ( 1 + \delta^{-1}) (1 + \delta^{-1}) (1 + \zeta^{-1}) (1 + \delta^{-2}).
			\end{equation*}
		Since $\max\{\zeta,\delta\} < 1$, this yields the desired estimate with $\Gamma(\delta) = 4 (1 + \delta^{-2})^{3}$. \end{proof}
	
\subsection{Gradient Bounds} \label{S: gradient bounds} This section concludes with an easy gradient bound that will be particularly convenient when used in conjunction with the curvature bounds of the previous subsection.
 
 	\begin{prop} \label{P: very easy gradient bound} Let $\varphi$ be a Finsler norm in $\mathbb{R}^{d}$.  If there is a $\delta > 0$ such that 
		\begin{equation*}
			\overline{B}_{\delta}(0) \subseteq \{\varphi \leq 1\},
		\end{equation*}
	then $\{\varphi^{*} \leq 1\} \subseteq \overline{B}_{\delta^{-1}}(0)$.  
	
	In particular, if $\varphi$ is $C^{1}$ away from the origin and $\overline{B}_{\delta}(0) \subseteq \{\varphi \leq 1\}$, then $\|D\varphi(x)\| \leq \delta^{-1}$ for each $x \in \mathbb{R}^{d} \setminus \{0\}$. \end{prop}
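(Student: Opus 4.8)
The plan is to argue purely by duality, using nothing more than the definition of the dual norm and the elementary subdifferential identities recalled in Section \ref{S: preliminaries}. First I would restate the hypothesis in homogeneous form: by positive one-homogeneity, $\overline{B}_{\delta}(0) \subseteq \{\varphi \leq 1\}$ is equivalent to the pointwise bound $\varphi(q) \leq \delta^{-1}\|q\|$ for every $q \in \mathbb{R}^{d}$. Geometrically, the assertion to be proved is just the familiar fact that the polar of a convex body containing a Euclidean ball of radius $\delta$ is contained in the Euclidean ball of radius $\delta^{-1}$.

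To prove the first claim, I would fix $p$ with $\varphi^{*}(p) \leq 1$; the case $p = 0$ being trivial, assume $p \neq 0$. Recall that
\begin{equation*}
\varphi^{*}(p) = \max \left\{ \langle p, q \rangle \, \mid \, q \in \{\varphi \leq 1\} \right\},
\end{equation*}
and apply this with the specific test vector $q = \delta \|p\|^{-1} p$, which lies in $\overline{B}_{\delta}(0) \subseteq \{\varphi \leq 1\}$. This gives $\delta \|p\| = \langle p, q \rangle \leq \varphi^{*}(p) \leq 1$, so $\|p\| \leq \delta^{-1}$. Since $p$ was an arbitrary element of $\{\varphi^{*} \leq 1\}$, this proves $\{\varphi^{*} \leq 1\} \subseteq \overline{B}_{\delta^{-1}}(0)$.

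For the second claim, suppose in addition that $\varphi$ is $C^{1}$ away from the origin, and fix $x \in \mathbb{R}^{d} \setminus \{0\}$. Then $D\varphi(x)$ is the unique element of $\partial \varphi(x)$, so the duality identity \eqref{E: subdifferential basic identity} gives $D\varphi(x) \in \{\varphi^{*} = 1\} \subseteq \{\varphi^{*} \leq 1\}$. Combining this with the first part yields $\|D\varphi(x)\| \leq \delta^{-1}$, as desired.

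There is no genuine obstacle here: the proof is a one-line application of the definition of $\varphi^{*}$ together with the identity $\varphi^{*}(D\varphi(x)) = 1$ for $x \neq 0$. The only points requiring a (trivial) word of care are the reduction of the hypothesis to its homogeneous form and the exclusion of the degenerate case $p = 0$ in the duality argument.
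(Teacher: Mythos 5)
Your proof is correct and follows essentially the same route as the paper: both arguments test the duality pairing against the vector $\delta\|p\|^{-1}p \in \overline{B}_{\delta}(0) \subseteq \{\varphi \leq 1\}$ to get $\|p\| \leq \delta^{-1}$, and both then use the identity \eqref{E: subdifferential basic identity}, i.e.\ $\partial\varphi(x) \subseteq \{\varphi^{*} = 1\}$ for $x \neq 0$, to deduce the gradient bound. The only cosmetic difference is that the paper argues on $\{\varphi^{*} = 1\}$ and scales, whereas you handle $\{\varphi^{*} \leq 1\}$ directly.
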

	
		\begin{proof} Assume that $\overline{B}_{\delta}(0) \subseteq \{\varphi \leq 1\}$, and fix $p \in \{\varphi^{*} = 1\}$.  If $q \in \partial B_{\delta}(0)$, then $q \in \{\varphi \leq 1\}$ and, thus, by duality,
			\begin{equation*}
				\langle q, p \rangle \leq \varphi(q) \varphi^{*}(p) \leq 1.
			\end{equation*}
		It follows that $\|p\| \leq \delta^{-1}$.  This proves $\{\varphi^{*} = 1\} \subseteq \overline{B}_{\delta^{-1}}(0)$, hence also $\{\varphi^{*} \leq 1\} \subseteq \overline{B}_{\delta^{-1}}(0)$ by scaling.
	
		Finally, if $\varphi$ is $C^{1}$ away from the origin and $\overline{B}_{\delta}(0) \subseteq \{\varphi \leq 1\}$, then, for any $x \in \mathbb{R}^{d} \setminus \{0\}$, the variational formula \eqref{E: subdifferential basic identity} implies that
			\begin{equation*}
				\{D\varphi(x)\} = \partial \varphi(x) \subseteq \{\varphi^{*} = 1\}.
			\end{equation*}
		Thus, $\|D\varphi(x)\| \leq \delta^{-1}$ for every $x \in \mathbb{R}^{d} \setminus \{0\}$. \end{proof}
		
\subsection{Bounding the Hessians of the Approximations} \label{S: hessian bound approximation} To motivate the results above, here is a simple application, namely, the $O(\zeta^{-1})$ Hessian bound stated earlier.  The statement is recalled below for the convenience of the reader:  

\basichessianbound*

\begin{proof}The point is to combine combine conclusion (i) from the approximation theorem, Theorem \ref{T: regularized_norm}, with the key Hessian bound, Proposition \ref{P: trivial hessian estimate}.
		
		In particular, recall from Proposition \ref{P: proof of approx theorem 1} that $\|D\varphi_{\zeta}\|$ is uniformly bounded independently of $\zeta \in (0,1)$.  Further, as is well known, condition (i) implies that the derivative of the normal vector to $\{\varphi_{\zeta} \leq 1\}$ is bounded above by $\zeta^{-1} \text{Id}$; see \cite{lewicka_peres}.  As a result of these two bounds, Proposition \ref{P: trivial hessian estimate} implies the desired estimate. \end{proof}

\section{$C^{1,1}$ Cone Comparison Principles via Perturbation} \label{S: c11 setting}

In this section, cone comparison principles are proved in the special case when the unit ball $\{\varphi \leq 1\}$ is $C^{1,1}$, or, equivalently, $\varphi \in C^{1,1}_{\text{loc}}(\mathbb{R}^{d} \setminus \{0\})$.  This will be achieved by utilizing the conical test functions of Section \ref{S: conical test}.

As pointed out already above, difficulties arise due to the fact that the conical test functions are not globally smooth.  Thus, new ideas are necessary to show that they still yield useful information at contact points.  This is the main obstacle that will be overcome in this section.  

In order to fully justify the computations in Section \ref{S: c11 reduction}, it will be necessary to quantify the proofs.  Accordingly, the following running assumption will be used throughout this section:

\begin{assumption} \label{A: c11 assumption} $\varphi$ is a Finsler norm in $\mathbb{R}^{d}$ such that, for some $\zeta, \delta > 0$,
	\begin{gather}
		\{\varphi \leq 1\} \quad \text{satisfies an interior ball condition of radius} \, \, \zeta, \label{E: zeta interior ball} \\
		\overline{B}_{\delta}(0) \subseteq \{\varphi \leq 1\} \subseteq \overline{B}_{\delta^{-1}}(0). \label{E: nondegeneracy}
	\end{gather}
\end{assumption}


%
%
	
\subsection{Abstract Cone Comparison Principles} The generalized cone comparison principle that appeared in Section \ref{S: c11 reduction} follows from a more general result, stated next.  Toward that end, given $\alpha \in \mathbb{R}$, recall the shifted infinity Laplacian $\mathcal{G}^{*,\alpha}_{\varphi}$ defined by
	\begin{align*}
		\mathcal{G}^{*,\alpha}_{\varphi}(p,X) &= \max \left\{ Q_{X}(q - \alpha \|p\|^{-1} p) \, \mid \, q \in \partial \varphi^{*}(p) \right\} \quad \text{if} \quad (p,X) \in (\mathbb{R}^{d} \setminus \{0\}) \times \mathcal{S}_{d}, \\
		\mathcal{G}^{*,\alpha}_{\varphi}(0,X) &= \limsup_{0 \neq p \to 0} \mathcal{G}^{\alpha}_{\varphi}(p,X).
	\end{align*}
The arguments of Section \ref{S: c11 reduction} above motivated the interest in these operators.

The main result of this section concerns differential inequalities of the form
	\begin{equation} \label{E: general equation model}
		-\mathcal{H}(x,u,Du,D^{2}u,\mathcal{G}^{*,\alpha}_{\varphi}(Du,D^{2}u)) \leq 0,
	\end{equation}
where $\mathcal{H} : \mathbb{R}^{d} \times \mathbb{R} \times \mathbb{R}^{d} \times \mathcal{S}_{d} \times \mathbb{R} \to \mathbb{R}$ is a given function and $\alpha \in \mathbb{R}$ is fixed.  

The assumptions on $\mathcal{H}$ will be as follows:
	\begin{gather}
		\mathcal{H} \, \, \text{is upper semicontinuous in} \, \, \mathbb{R}^{d} \times \mathbb{R} \times \mathbb{R}^{d} \times \mathcal{S}_{d} \times \mathbb{R}, \label{Ass: continuity assumption} \\
		\mathcal{H}(x,r,p,X,a) \leq \mathcal{H}(x,s,p,Y,b) \quad \text{if} \quad r \geq s, \, \, X \leq Y, \, \, \text{and} \, \, a \leq b. \label{Ass: elliptic assumption}
	\end{gather}


	\begin{theorem} \label{T: technical core} Assume that $\varphi$ satisfies Assumption \ref{A: c11 assumption}.  Let $V \subseteq \mathbb{R}^{d} \setminus \{0\}$ be a bounded open set, fix $\alpha \in \mathbb{R}$, and let $\mathcal{H}$ be a function satisfying \eqref{Ass: continuity assumption} and \eqref{Ass: elliptic assumption}. 
	
	If $u \in USC(\overline{V})$ satisfies
		\begin{equation*}
			-\mathcal{H}(x,Du,D^{2}u,\mathcal{G}_{\varphi}^{*,\alpha}(Du,D^{2}u)) \leq 0 \quad \text{in} \, \, V,
		\end{equation*}
	and
		\begin{equation*}
			\max \{u(x) - \varphi(x) \, \mid \, x \in \overline{V} \} > \max \{u(x) - \varphi(x) \, \mid \, x \in \partial V \},
		\end{equation*}
	then there is a triple $(x_{*},X_{*},q_{**}) \in V \times \mathcal{S}_{d} \times \{\varphi = 1\}$ such that
		\begin{gather*}
			u(x_{*}) - \varphi(x_{*}) = \max \{u(x) - \varphi(x) \, \mid \, x \in \overline{V} \}, \\
			- \mathcal{H}(x_{*},u(x_{*}),D\varphi(x_{*}),X_{*}, Q_{X_{*}}(q_{**} - \alpha \|D\varphi(x_{*})\|^{-1} D\varphi(x_{*}))) \leq 0, \\
			X_{*} q_{**} = 0, \quad \text{and} \quad 0 \leq \varphi(x_{*}) X_{*} \leq \Gamma(\delta) \zeta^{-1} \text{Id}.
		\end{gather*}
	Above $\Gamma(\delta)$ is the constant from Proposition \ref{P: trivial hessian estimate}.
	\end{theorem}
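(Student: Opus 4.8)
The plan is to fix a contact point $x_{*} \in V$ where $u - \varphi$ attains its interior maximum, pass to the level-set point $q_{*} = \varphi(x_{*})^{-1} x_{*} \in \{\varphi = 1\}$, and use the face structure of $\{\varphi \leq 1\}$ at $q_{*}$ to produce a conical test function that touches $\varphi$ from above at $x_{*}$. Concretely, by \eqref{E: face partition} there is a unique face $F$ of $\{\varphi \leq 1\}$ with $q_{*} \in \text{rint}(F)$; set $e = \|D\varphi(x_{*})\|^{-1} D\varphi(x_{*})$ (which by \eqref{E: subdifferential basic identity} is the outward normal to $\{\varphi \leq 1\}$ at $q_{*}$) and consider $\psi = \psi_{e,F,\zeta}$ as in \eqref{E: test function definition}. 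The crucial geometric input is that $\{\varphi \leq 1\}$ satisfies the interior ball condition of radius $\zeta$ from Assumption \ref{A: c11 assumption}; since the normal along $F$ is constantly $e$, this gives $F_{(e,\zeta)} \subseteq \{\varphi \leq 1\}$, hence $\varphi \leq \psi$ pointwise with $\varphi = \psi$ on $\mathcal{C}(F) \ni x_{*}$, exactly as in the proof of Proposition \ref{P: touching above test function part}. Therefore $u - \psi$ also has a (local) interior maximum at $x_{*}$, and $\psi$ inherits the Hessian bound $\psi(x) D^{2}\psi(x) \leq \Gamma(\delta)\zeta^{-1}\text{Id}$ from Proposition \ref{P: trivial hessian estimate} (using the gradient bound $\|D\varphi\| \leq \delta^{-1}$ from Proposition \ref{P: very easy gradient bound} and the curvature bound on $\{\varphi\le 1\}$ coming from the interior ball condition via \cite{lewicka_peres}) wherever $D^{2}\psi$ exists.

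The difficulty — and the reason this is not immediate from Proposition \ref{P: conical test functions not smooth} — is that if $F$ has nonempty relative boundary then $x_{*}$ may lie in $\text{bdry}(\mathcal{C}(F))$, where $\psi$ is only $C^{1,1}$, not $C^{2}$, so $\psi$ cannot be used directly as a viscosity test function. The plan is to carry out a \emph{perturbed test function} argument à la Evans: replace $\psi$ by a family of smooth functions $\psi_{\tau}$ obtained by mollifying (in the graphical representation of Section \ref{S: graphical representation}) the distance function $\text{dist}(\cdot, G)$ that defines the level set of $\psi$, where $G$ is the orthogonal projection of $F$. Along the mollification the Hessian bound $0 \leq \psi_{\tau} D^{2}\psi_{\tau} \leq \Gamma(\delta)\zeta^{-1}\text{Id}$ is preserved (the interior ball radius only improves under this operation). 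For each $\tau$, $u - \psi_{\tau}$ still has a maximum near $x_{*}$ (up to a small perturbation of the maximizer, by upper semicontinuity of $u$ and uniform convergence $\psi_{\tau} \to \psi$), at a point $x_{\tau}$; the subsolution inequality \eqref{E: general equation model} gives
	\begin{equation*}
		-\mathcal{H}\bigl(x_{\tau}, u(x_{\tau}), D\psi_{\tau}(x_{\tau}), D^{2}\psi_{\tau}(x_{\tau}), \mathcal{G}_{\varphi}^{*,\alpha}(D\psi_{\tau}(x_{\tau}), D^{2}\psi_{\tau}(x_{\tau}))\bigr) \leq 0.
	\end{equation*}
The main obstacle is now the passage $\tau \to 0^{+}$: because $\mathcal{G}_{\varphi}^{*,\alpha}$ is only upper semicontinuous in $p$, one cannot naively pass to the limit inside it; worse, $D\psi_{\tau}(x_{\tau})$ need not converge to $D\varphi(x_{*})$ and the face $\partial\varphi^{*}(D\psi_{\tau}(x_{\tau}))$ may be larger than $F$. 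The key claim — which I expect to be the technically hard part, and which the excerpt flags as "a key claim of Section \ref{S: c11 setting}" to be proved in Section \ref{S: key mountain} — is that, after extracting subsequences, $x_{\tau} \to x_{*}$, $D\psi_{\tau}(x_{\tau}) \to D\varphi(x_{*})$, the Hessians $D^{2}\psi_{\tau}(x_{\tau})$ converge (by the uniform bound) to some $X_{*}$ with $0 \leq \varphi(x_{*}) X_{*} \leq \Gamma(\delta)\zeta^{-1}\text{Id}$, and there is a vector $q_{**}$ in the \emph{closure} of the relevant exposed faces with $q_{**} \in \{\varphi = 1\}$, $X_{*} q_{**} = 0$, and $\mathcal{G}_{\varphi}^{*,\alpha}(D\psi_{\tau}(x_{\tau}), D^{2}\psi_{\tau}(x_{\tau})) \to Q_{X_{*}}(q_{**} - \alpha\|D\varphi(x_{*})\|^{-1}D\varphi(x_{*}))$ — or at least that the liminf of the left-hand side is $\geq$ this quantity, which combined with the monotonicity \eqref{Ass: elliptic assumption} and upper semicontinuity \eqref{Ass: continuity assumption} of $\mathcal{H}$ suffices.

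Granting that convergence claim, the theorem follows by letting $\tau \to 0^{+}$ in the displayed inequality: upper semicontinuity of $\mathcal{H}$ and the convergences $x_{\tau} \to x_{*}$, $u(x_{\tau}) \to u(x_{*})$ (which may be upgraded using that $x_{*}$ is a strict-enough maximizer, or handled by the standard trick of perturbing $\varphi$ by a small multiple of $\varphi$ itself as in Lemma \ref{L: strict subharmonic}), $D\psi_{\tau}(x_{\tau}) \to D\varphi(x_{*})$, $D^{2}\psi_{\tau}(x_{\tau}) \to X_{*}$, together with $\mathcal{G}_{\varphi}^{*,\alpha}(\cdots) \to Q_{X_{*}}(q_{**} - \alpha\|D\varphi(x_{*})\|^{-1}D\varphi(x_{*}))$, yield
	\begin{equation*}
		-\mathcal{H}\bigl(x_{*}, u(x_{*}), D\varphi(x_{*}), X_{*}, Q_{X_{*}}(q_{**} - \alpha\|D\varphi(x_{*})\|^{-1}D\varphi(x_{*}))\bigr) \leq 0,
	\end{equation*}
while $X_{*} q_{**} = 0$ and the Hessian bound on $\varphi(x_{*}) X_{*}$ come for free from the limit. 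The identity $\varphi(x_{*}) = \psi(x_{*})$ ensures the normalization in the Hessian bound is the right one, and $q_{**} \in \{\varphi = 1\}$ because it arises as a limit of points in exposed faces $\partial\varphi^{*}(p_{\tau}) \subseteq \{\varphi = 1\}$, which is closed. Finally, Proposition \ref{P: intermediate cone comparison} is then just the special case $\mathcal{H}(x,r,p,X,a) = a - \tfrac{1}{2}\epsilon\varphi^{*}(p)^{2}$ with $\varphi$ replaced by $\varphi_{\zeta}$, $\alpha$ by $\zeta$, and $\delta$ by $\delta(\varphi)$, as noted in the excerpt.
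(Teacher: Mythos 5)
Your overall strategy (touch $u$ from above with a conical test function at the maximizer, mollify its level set in the graphical representation, test at the perturbed contact points, and pass to the limit) is indeed the skeleton of the paper's proof, but the proposal has a genuine gap: the "key claim" you defer \emph{is} the theorem. The existence of a pair $(X_{*},q_{**})$ with $X_{*}q_{**}=0$, where $q_{**}$ arises as a limit of maximizers $\tilde q_{\tau}\in\partial\varphi^{*}(D\psi_{\tau}(x_{\tau}))$, is not a routine compactness/semicontinuity statement about $\mathcal{G}^{*,\alpha}_{\varphi}$; it is exactly what Propositions \ref{P: key mountain} and \ref{P: key mountain part 1} establish, via the normal-cone monotonicity formula, the direction-of-approach Lemma \ref{L: direction of approach}, the blow-up Lemma \ref{L: intermediate key lemma}, and the cone-integral Lemma \ref{L: cone integral}, which together identify a face $F_{*}\subseteq\partial\varphi^{*}(D\varphi(x_{*}))$ containing \emph{both} $\varphi(x_{*})^{-1}x_{*}$ and $q_{**}$ and on which the limiting Hessian vanishes. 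Asserting that the perturbed gradients, Hessians, and near-maximizers all converge compatibly, without this face-identification mechanism, leaves the heart of the argument unproved.

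Two further points in your setup would derail the argument even as a plan. First, your choice of face is off: if $F$ is the face of $\{\varphi\le1\}$ with $q_{*}\in\mathrm{rint}(F)$, then by Proposition \ref{P: smooth case test function} the cone $\psi_{e,F,\zeta}$ is already smooth near $x_{*}$ (so your stated difficulty that $x_{*}\in\mathrm{bdry}(\mathcal{C}(F))$ cannot occur), and yet the direct test still fails, because $\mathcal{G}^{*,\alpha}_{\varphi}(D\varphi(x_{*}),\cdot)$ maximizes over the full exposed face $\partial\varphi^{*}(D\varphi(x_{*}))$, which is generally strictly larger than $F$ and is \emph{not} annihilated by $D^{2}\psi_{e,F,\zeta}(x_{*})$; nor can one re-center on $F$, since $F$ is typically not exposed (Remark \ref{R: exposed face conundrum}). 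The paper therefore builds the cone on the whole exposed face $\partial\varphi^{*}(p)$, accepts the non-smoothness at relative-boundary points of $\mathcal{C}(\partial\varphi^{*}(p))$, and lets the perturbation analysis produce the sub-face $F_{*}$. Second, you mollify at the same radius $\zeta$; the paper perturbs the cone of a strictly smaller radius $\nu<\zeta$ while touching with $\psi_{\zeta}$, because the quadratic gap between $g_{\nu}$ and $g_{\zeta}$ off $G_{p}$ is what yields the smallness condition $\mathrm{dist}(q_{\epsilon}',G_{p})=O(\epsilon)$ (Proposition \ref{P: smallness condition king}), an essential hypothesis of the blow-up argument; mere uniform convergence $\psi_{\tau}\to\psi$ gives only qualitative convergence of the contact points and does not control the rescaled distances, so the limits \eqref{E: first derivative thingy}--\eqref{E: second derivative thingy} need not exist. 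The $\zeta^{-1}$ constant is then recovered by letting $\nu\to\zeta$ at the end.
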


\subsection{The Touching Argument} \label{S: touching discussion} Here is the outline of the proof of Theorem \ref{T: technical core}.  As in the statement of the theorem, suppose that $u \in USC(\overline{V})$ and $x_{0} \in V$ are such that 
	\begin{gather*}
		-\mathcal{H}(x,u,Du,D^{2}u,\mathcal{G}_{\varphi}^{*,\alpha}(Du,D^{2}u)) \leq 0 \quad \text{in} \, \, V, \\
		u(x_{0}) - \varphi(x_{0}) = \max \left\{ u(x) - \varphi(x) \, \mid \, x \in \overline{V} \right\} > \max \left\{ u(y) - \varphi(y) \, \mid \, y \in \partial V \right\}.
	\end{gather*}
Define $M = \max_{\overline{V}} (u - \varphi) = u(x_{0}) - \varphi(x_{0})$.  Since $\varphi \in C^{1,1}_{\text{loc}}(\mathbb{R}^{d} \setminus \{0\})$ and $x_{0} \in V \subseteq \mathbb{R}^{d} \setminus \{0\}$, one can define vectors $p \in \mathbb{R}^{d} \setminus \{0\}$ and $e \in S^{d-1}$ by setting
	\begin{equation*}
		p = D\varphi(x_{0}), \quad e = \|p\|^{-1} p.
	\end{equation*} 

Since the $\zeta$-interior ball condition \eqref{E: zeta interior ball} is satisfied, the family of conical test functions $\{\psi_{e,\partial \varphi^{*}(p),\nu} \, \mid \, 0 < \nu \leq \zeta\}$ all touch $u$ above at $x_{0}$.  More precisely, for any $\nu \in (0,\zeta]$, one readily verifies using Proposition \ref{P: touching above test function part} that
	\begin{gather}
		u(x_{0}) = M + \psi_{e,\partial \varphi^{*}(p),\nu}(x_{0}), \quad u \leq M + \psi_{e,\partial \varphi^{*}(p),\nu} \quad \text{in} \, \, V, \label{E: worth noting later} \\
		\{x \in \overline{V} \, \mid \, u(x) = M + \psi_{e,\partial \varphi^{*}(p),\nu}(x)\} \subseteq V. \label{E: touch strictly inside}
	\end{gather}
Since it will be used later, it is worth noting at this stage that \eqref{E: worth noting later} and \eqref{E: touch strictly inside} hold for any $\nu \in (0,\zeta]$ if and only if they hold for $\nu = \zeta$.  This is a consequence of the ordering $\psi_{\zeta} \leq \psi_{\nu}$ and the inclusion $x_{0} \in \mathcal{C}(\partial \varphi^{*}(p)) = \{\psi_{\zeta} = \psi_{\nu} < + \infty\}$.  

For convenience, since $p$ and $e$ will be fixed henceforth, the following shorthand notation will be used throughout this section:
	\begin{equation*}
		\psi_{\nu} := \psi_{e,\partial \varphi^{*}(p),\nu}.
	\end{equation*}
	
As already mentioned in Section \ref{S: conical test}, while $\psi_{\nu}$ touches $u$ from above at $x_{0}$, unfortunately $\psi_{\nu}$ is not a smooth test function in general.  Nonetheless, it still has the ``right shape," which suggests a perturbation argument.  Toward that end, in this section, a family of smooth perturbations $(\psi_{\nu}^{\epsilon})_{\epsilon \in (0,\nu)}$ will be introduced, which converge to $\psi_{\nu}$ as $\epsilon \to 0^{+}$ while retaining the main geometric properties of $\psi_{\nu}$.  

In particular, assume that there is an open set $\mathcal{U}$ such that $\mathcal{C}(\partial \varphi^{*}(p)) \subseteq \mathcal{U}$, a parameter $\epsilon_{*} \in (0,\nu)$, and nonnegative functions $(\psi_{\nu}^{\epsilon})_{\epsilon \in (0,\nu)}$ such that 
	\begin{equation*}
		\psi_{\nu}^{\epsilon} \to \psi_{\nu} \quad \text{locally uniformly in} \, \, \mathcal{U} \quad \text{as} \quad \epsilon \to 0^{+}
	\end{equation*}
and, for any $\epsilon < \epsilon_{*}$,
	\begin{equation*}
		\psi_{\nu}^{\epsilon} \quad \text{is smooth in} \, \, \mathcal{U} \quad \text{and} \quad \{\psi_{\nu}^{\epsilon} < +\infty\} = \mathcal{U}.
	\end{equation*}

Given any $\epsilon \in (0,\nu)$, fix a point $x_{\epsilon} \in \overline{V}$ such that 
	\begin{equation*}
		u(x_{\epsilon}) - \psi_{\nu}^{\epsilon}(x_{\epsilon}) = \max \left\{ u(x) - \psi_{\nu}^{\epsilon}(x) \, \mid \, x \in \overline{V} \right\}.
	\end{equation*}
Due to \eqref{E: touch strictly inside} and the compactness of $\overline{V}$, $x_{\epsilon} \in V \cap \mathcal{U}$ provided $\epsilon$ is small enough.
Thus, since $\psi_{\nu}^{\epsilon}$ is smooth in $\mathcal{U}$, the subsolution property of $u$ implies that
	\begin{equation*}
		-\mathcal{H}(x_{\epsilon},D\psi_{\nu}^{\epsilon}(x_{\epsilon}), D^{2}\psi_{\nu}^{\epsilon}(x_{\epsilon}), \mathcal{G}^{*,\alpha}_{\varphi}(D\psi_{\nu}^{\epsilon}(x_{0}), D^{2}\psi_{\nu}^{\epsilon}(x_{0}))) \leq 0.
	\end{equation*}	
By definition of $\mathcal{G}^{*,\alpha}_{\varphi}$, there is a point $q_{\epsilon} \in \partial \varphi^{*}(D\psi_{\nu}^{\epsilon}(x_{\epsilon}))$ such that
	\begin{equation} \label{E: I reference this later}
		\mathcal{G}^{*,\alpha}_{\varphi}(D\psi_{\nu}^{\epsilon}(x_{\epsilon}),D^{2}\psi_{\nu}^{\epsilon}(x_{\epsilon})) = Q_{D^{2}\psi_{\nu}^{\epsilon}(x_{\epsilon})}(q_{\epsilon} - \alpha \|D\psi_{\nu}^{\epsilon}(x_{\epsilon})\|^{-1} D\psi_{\nu}^{\epsilon}(x_{\epsilon})).
	\end{equation}
The key now is to show that the perturbation $(\psi_{\nu}^{\epsilon})_{0 < \epsilon < \nu}$ can be constructed in such a way that, along subsequences,
	\begin{equation*}
		(x_{\epsilon},q_{\epsilon},D^{2}\psi_{\nu}^{\epsilon}(x_{\epsilon})) \to (x_{*},q_{**},X_{*}),
	\end{equation*}
where the triple $(x_{*},q_{**},X_{*})$ has the property
	\begin{equation*}
		X_{*} q_{**} = 0.
	\end{equation*}
Below the perturbation $(\psi_{\nu}^{\epsilon})_{\epsilon \in (0,\nu)}$ is constructed precisely so that this is the case.

In order to avoid repetition later and to prepare for future results, it is worth observing that the previous discussion amounts to a proof of the following proposition.

	\begin{prop} \label{P: basic touching thing} Assume that $\varphi$ satisfies Assumption \eqref{A: c11 assumption}.  Let $V \subseteq \mathbb{R}^{d} \setminus \{0\}$ be a bounded open set, fix an $\alpha \in \mathbb{R}$, and let $\mathcal{H}$ be a function satisfying \eqref{Ass: continuity assumption} and \eqref{Ass: elliptic assumption}.
	
	Fix a $\nu \in (0,\zeta]$ and a $p \in \mathbb{R}^{d} \setminus \{0\}$.  Let $e = \|p\|^{-1} p$ and write $\psi_{\mu} = \psi_{e,\partial \varphi^{*}(p),\mu}$ for $\mu \in \{\nu,\zeta\}$.  Assume that there is an open set $\mathcal{U} \subseteq \mathbb{R}^{d}$, a constant $\epsilon_{*} > 0$, and a family of nonnegative functions $(\psi^{\epsilon}_{\nu})_{\epsilon \in (0,\nu)}$ such that $\mathcal{C}(\partial \varphi^{*}(p)) \subseteq \mathcal{U}$ and
		\begin{gather*}
			\psi_{\nu}^{\epsilon} \to \psi_{\nu} \quad \text{locally uniformly in} \, \, \mathcal{U} \quad \text{as} \quad \epsilon \to 0^{+}, \\
			\psi_{\nu}^{\epsilon} \quad \text{is smooth in} \, \, \mathcal{U} \quad \text{and} \quad \{\psi_{\nu}^{\epsilon} < + \infty\} = \mathcal{U} \quad \text{for each} \quad \epsilon \in (0,\epsilon_{*}).
		\end{gather*}
	
	If $u \in USC(\overline{V})$, $M \in \mathbb{R}$, and $x_{0} \in V \cap \mathcal{C}(\partial \varphi^{*}(p))$ are such that
		\begin{gather*}
			-\mathcal{H}(x,u,Du,D^{2}u,\mathcal{G}_{\varphi}^{*,\alpha}(Du,D^{2}u)) \leq 0 \quad \text{in} \, \, V, \\
			u \leq M + \psi_{\zeta} \quad \text{in} \, \, \overline{V}, \quad u(x_{0}) = M + \psi_{\zeta}(x_{0}), \\
			\{x \in \overline{V} \, \mid \, u(x) = M + \psi_{\zeta}(x)\} \subseteq V,
		\end{gather*}
	then there are sequences $(\epsilon_{j})_{j \in \mathbb{N}} \subseteq (0,+\infty)$ and $(x_{j})_{j \in \mathbb{N}} \subseteq V \cap \mathcal{U}_{\nu}$ such that $\epsilon_{j} \to 0$ as $j \to +\infty$ and, for each $j \in \mathbb{N}$,
		\begin{gather}
			u(x_{j}) - \psi_{\nu}^{\epsilon_{j}}(x_{j}) = \max \left\{ u(x) - \psi_{\nu}^{\epsilon_{j}}(x) \, \mid \, x \in \overline{V} \right\}, \label{E: touching above key mountain} \\
			-\mathcal{H}(x_{j},u(x_{j}),D\psi_{\nu}^{\epsilon_{j}}(x_{j}),D^{2}\psi_{\nu}^{\epsilon_{j}}(x_{j}), \mathcal{G}_{\varphi}^{*,\alpha}(D\psi_{\nu}^{\epsilon_{j}}(x_{j}),D^{2}\psi_{\nu}^{\epsilon_{j}}(x_{j}))) \leq 0. \label{E: subsolution part key mountain}
		\end{gather}
	\end{prop}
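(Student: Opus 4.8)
\emph{Proof sketch.} The plan is to turn the informal touching argument of Section~\ref{S: touching discussion} into a rigorous statement, the substance being a localized version of Evans' perturbed test function method.

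First I would unpack the hypotheses on $u$. By Proposition~\ref{P: touching above test function part} one has $\psi_\zeta \le \psi_\nu$ on $\mathbb{R}^{d}$ and $x_{0}\in\mathcal{C}(\partial\varphi^{*}(p))=\{\psi_\zeta=\psi_\nu<+\infty\}$; combined with $u\le M+\psi_\zeta$ and $u(x_{0})=M+\psi_\zeta(x_{0})$, this gives $u\le M+\psi_\nu$ on $\overline{V}$ with equality at $x_{0}$, so $M=\max_{\overline{V}}(u-\psi_\nu)$. Moreover, if $x\in\overline{V}$ satisfies $u(x)=M+\psi_\nu(x)$, then $\psi_\zeta(x)=\psi_\nu(x)$ forces $x\in\mathcal{C}(\partial\varphi^{*}(p))$ by \eqref{E: contact set cone part}, and $u(x)=M+\psi_\zeta(x)$ puts $x$ in the $\psi_\zeta$-contact set, which by assumption lies in $V$. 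Hence $\Sigma:=\{x\in\overline{V} \,:\, u(x)=M+\psi_\nu(x)\}$ is a compact subset of $V\cap\mathcal{C}(\partial\varphi^{*}(p))\subseteq V\cap\mathcal{U}$.

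Next I would localize and perturb. Fix an open set $W$ with $\Sigma\subseteq W\subseteq\overline{W}\subseteq V\cap\mathcal{U}$ and $\overline{W}$ compact; since $\overline{W}$ is a compact subset of $\mathcal{U}$, the convergence $\psi_\nu^{\epsilon}\to\psi_\nu$ is uniform on $\overline{W}$. As $\Sigma$ is disjoint from the compact set $\partial W$, there is $\eta>0$ with $\sup_{\partial W}(u-\psi_\nu)\le M-2\eta$, while $u(x_{0})-\psi_\nu^{\epsilon}(x_{0})\to M$. Therefore, for all $\epsilon$ below some $\epsilon_{1}\in(0,\min\{\nu,\epsilon_{*}\})$, a maximizer $x_{\epsilon}$ of $u-\psi_\nu^{\epsilon}$ over $\overline{W}$ exists and satisfies $u(x_{\epsilon})-\psi_\nu^{\epsilon}(x_{\epsilon})\ge u(x_{0})-\psi_\nu^{\epsilon}(x_{0})>\sup_{\partial W}(u-\psi_\nu^{\epsilon})$, so $x_{\epsilon}\in W\subseteq V\cap\mathcal{U}$. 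Since $\psi_\nu^{\epsilon}$ is smooth near $x_{\epsilon}$ and $u-\psi_\nu^{\epsilon}$ has a local maximum at the interior point $x_{\epsilon}$, the definition of viscosity subsolution applied to $-\mathcal{H}(\cdot,u,Du,D^{2}u,\mathcal{G}^{*,\alpha}_{\varphi}(Du,D^{2}u))\le 0$ in $V$ yields \eqref{E: subsolution part key mountain}. Choosing any sequence $\epsilon_{j}\downarrow 0$ with $\epsilon_{j}<\epsilon_{1}$ and setting $x_{j}=x_{\epsilon_{j}}$ gives the conclusion, except that the maximum in \eqref{E: touching above key mountain} is \emph{a priori} only over $\overline{W}$.

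To upgrade it to a maximum over $\overline{V}$ one uses the $\epsilon$-independent bound $u\le M+\psi_\zeta$ on $\overline{V}$ together with $\psi_\nu^{\epsilon}\to\psi_\nu$ and $\psi_\nu^{\epsilon}\equiv+\infty$ off $\mathcal{U}$: this forces $\sup_{\overline{V}}(u-\psi_\nu^{\epsilon})=\sup_{\overline{W}}(u-\psi_\nu^{\epsilon})$ for $\epsilon$ small, and the quantitative estimate behind this (in effect a lower bound for $\psi_\nu^{\epsilon}$ off $W$ in terms of $\psi_\zeta$) is supplied by the explicit construction of the perturbations in Appendix~\ref{A: perturbed test functions}; in any case only the local-maximum property of $x_{\epsilon}$ at an interior point of $V$ is used in the applications. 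I expect the genuinely delicate point to be exactly this control of the location of the maximizer: the strict inequality on $\partial V$ in the hypothesis prevents escape through $\partial V$, but one must also rule out the maximizer drifting out of the thin cone-neighborhood $\mathcal{U}$ on which the smooth perturbations live — and it is the identity $\{u=M+\psi_\nu\}\subseteq\mathcal{C}(\partial\varphi^{*}(p))$ from \eqref{E: contact set cone part}, rather than any property of $\mathcal{U}$ itself, that keeps $\Sigma$ (hence $x_{\epsilon}$ for small $\epsilon$) inside $\mathcal{U}$. Everything else is routine bookkeeping with the definition of viscosity subsolution.
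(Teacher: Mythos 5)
Your overall strategy — perturb $\psi_\nu$ by the smooth family $\psi_\nu^{\epsilon}$ and invoke the viscosity subsolution property at interior maximizers — is the paper's strategy. But there is a genuine gap: conclusion \eqref{E: touching above key mountain} asserts that $x_j$ maximizes $u-\psi_\nu^{\epsilon_j}$ over all of $\overline{V}$, and your argument only produces maximizers over $\overline{W}$ for a neighborhood $W$ of the contact set. Your proposed upgrade ("a lower bound for $\psi_\nu^{\epsilon}$ off $W$ in terms of $\psi_\zeta$ forces $\sup_{\overline V}=\sup_{\overline W}$, supplied by Appendix~\ref{A: perturbed test functions}") is not justified and does not work as a pointwise-gap argument: at points of $\mathcal{C}(\partial\varphi^{*}(p))\cap\overline{V}$ far from $W$ one only has $u-\psi_\nu^{\epsilon}\le M+\psi_\zeta-\psi_\nu^{\epsilon}\le M$ with no quantitative margin, while $\sup_{\overline W}(u-\psi_\nu^{\epsilon})$ is itself only $\approx M$ (indeed $\le M$), so no strict separation between the two suprema follows from any estimate on $\psi_\nu^{\epsilon}$ off $W$. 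The global maximum is not cosmetic: \eqref{E: touching above key mountain} is quoted verbatim as a hypothesis in Propositions~\ref{P: key mountain} and \ref{P: smallness condition king} (e.g.\ the step deriving \eqref{E: touching bit} compares against the maximum of $u-\psi_\nu$ over $\overline{V}$), so proving only the $\overline W$-version changes the architecture downstream.

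The paper's proof runs the localization in the opposite direction: take $x_\epsilon$ to be a genuine maximizer of $u-\psi_\nu^{\epsilon}$ over $\overline{V}$ (it exists since $u-\psi_\nu^{\epsilon}$ is upper semicontinuous and equals $-\infty$ off the domain of $\psi_\nu^{\epsilon}$, while it is finite at $x_0$), and then show by compactness and semicontinuity that $x_\epsilon\in V\cap\mathcal{U}$ for small $\epsilon$: any limit point $\bar x$ of the maximizers satisfies $u(\bar x)\ge M+\psi_\zeta(\bar x)$ and $\psi_\nu(\bar x)=\psi_\zeta(\bar x)$ (using $u\le M+\psi_\zeta\le M+\psi_\nu\le M+\psi_\nu^{\epsilon}$, the comparison at $x_0$, and upper semicontinuity of $u-\psi_\zeta$), hence $\bar x$ lies in the $\psi_\zeta$-contact set intersected with $\mathcal{C}(\partial\varphi^{*}(p))$, which by \eqref{E: key localization}-type hypothesis and \eqref{E: contact set cone part} sits inside $V\cap\mathcal{U}$; then smoothness of $\psi_\nu^{\epsilon}$ there gives \eqref{E: subsolution part key mountain}, and \eqref{E: touching above key mountain} holds by construction. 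Incidentally, this localization of the limit points — and your own step "$\psi_\zeta(x)=\psi_\nu(x)$ forces $x\in\mathcal{C}(\partial\varphi^{*}(p))$" via \eqref{E: contact set cone part} — uses $\nu<\zeta$, whereas the statement allows $\nu\in(0,\zeta]$; your write-up should at least flag this restriction (it is the regime in which the proposition is actually applied). With the argument reorganized around global maximizers, your boundary-gap estimate on $\partial W$ becomes unnecessary and the statement is recovered exactly as claimed.
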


	\subsection{Smoothing the Test Function}  \label{S: smoothing the graph} Assume, as in the previous discussion, that $x_{0} \in \mathbb{R}^{d} \setminus \{0\}$, and let $p = D\varphi(x_{0})$ and $e = \|D\varphi(x_{0})\|^{-1} D\varphi(x_{0})$.

The question now --- and here is the main technical hurdle of this section --- is: how to perturb $\psi_{e,\partial \varphi^{*}(p),\nu}$ at the contact point so as to obtain ``better" contact points, where the viscosity subsolution property of $u$ can be invoked?  This will be analogous to the classical approach in the proof of Jensen's Lemma,\footnote{See \cite[Lemma A.3]{user} or \cite[Lemma 11.2]{primer}.} except that the Alexandrov-Bakelman-Pucci argument will be replaced by completely different techniques involving smoothing the test function.

As suggested in the previous subsection, it will be convenient to consider the entire family $\{\psi_{e,\partial \varphi^{*}(p),\nu} \, \mid \, 0 < \nu \leq \zeta\}$.  Again, the following shorthand will be used:
	\begin{equation*}
		\psi_{\nu} := \psi_{e,\partial \varphi^{*}(p),\nu}	
	\end{equation*}

To overcome the fact that $\psi_{\nu}$ may not be smooth at the contact point $x_{0}$, a family of functions $(\psi_{\nu}^{\epsilon})_{\epsilon \in (0,\nu)}$ will be constructed in such a way as to exploit the graphical representation of the surface $\{\psi_{\nu} = 1\}$ introduced in Section \ref{S: graphical representation}.  

First, following the discussion in that subsection, define the convex set $G_{p} \subseteq \{e\}^{\perp}$; the distance function $\text{dist}(\cdot,G_{p}) : \mathbb{R}^{d-1} \to [0,+\infty)$; and the function $g_{\nu}$ according to the following formulas:
	\begin{align} 
		G_{p} &= \{x' \in \{e\}^{\perp} \, \mid \, x' + \varphi^{*}(e) e \in \partial \varphi^{*}(p) \} = \partial \varphi^{*}(p) - \varphi^{*}(e)e, \nonumber \\
		\text{dist}(x',G_{p}) &= \inf \left\{ \|x' - y'\| \, \mid \, y' \in G_{p} \right\}, \nonumber \\
		g_{\nu}(x') &= \varphi^{*}(e) - \nu + \sqrt{\nu^{2} - \text{dist}(x',G_{p})^{2}}, \label{E: graphical special case}
	\end{align}
where, to reiterate, here $\mathbb{R}^{d-1}$ is identified with $\{e\}^{\perp}$.  Note that $g_{\nu}$ is concave since $\text{dist}(\cdot,G_{p})$ is convex. 

Recall from Section \ref{S: graphical representation} that if $F$ denotes the face $F = \partial \varphi^{*}(p) = \partial \varphi^{*}(e)$ and $F_{(e,\nu)}$ is defined as in \eqref{E: fancy convolution}, then
	\begin{align}
		F_{(e,\nu)} \cap \left\{x \in \mathbb{R}^{d} \, \mid \, \langle x, e \rangle \geq \varphi^{*}(e) - \nu \right\} &= \{x' + se \, \mid \, \text{dist}(x',G_{p}) \leq \nu, \label{E: bulk part needed} \\
		&\qquad \quad \varphi^{*}(e) - \nu \leq s \leq g_{\nu}(x') \, \, \}, \nonumber \\
		\partial F_{(e,\nu)} \cap \left\{ x \in \ \, \mid \, \langle x,e \rangle \geq \varphi^{*}(e) - \nu \right\} &= \{x' + g_{\nu}(x')e \, \mid \, \text{dist}(x',G_{p}) \leq \nu\}, \label{E: graph part needed} \\
		F &= \{x' + g_{\nu}(x') e \, \mid \, x' \in G_{p}\}. \nonumber
	\end{align}
Recalling that $\psi_{\nu}$ is nothing but the Minkowski gauge \eqref{E: test function definition} of $F_{(e,\nu)}$, this shows that, at least close to $\partial \varphi^{*}(p)$, the hypersurface $\{\psi_{\nu} = 1\}$ equals the graph of $g_{\nu}$.

More precisely, and in preparation for what is to come, the connection between $\psi_{\nu}$ and $g_{\nu}$ can be described in the following way: for any small enough $r > 0$, there is an open set $\mathcal{U}(r)$ such that $\mathcal{C}(\partial \varphi^{*}(p)) \subseteq \mathcal{U}(r) \subseteq \{\psi_{\nu} < + \infty\}$ and, for any $x \in \mathcal{U}(r)$,
	\begin{align} \label{E: graph correspondence}
		\psi_{\nu}(x) = 1 \quad \text{if and only if} \quad \text{dist}\left(x',G_{p}\right) < r \quad \text{and} \quad \langle x, e \rangle = g_{\nu} \left( x' \right).
	\end{align}
Note that the ``only if" follows more-or-less directly from the definition of $\psi_{\nu}$; the interest lies in the ``if" direction, which, indeed, requires $r$ to be small.  Since this statement follows from Theorem \ref{T: existence admissible perturbation} stated below, justification is deferred until later.

The connection between $\psi_{\nu}$ and $g_{\nu}$ encapsulated by \eqref{E: graph correspondence} will be exploited in the construction of the perturbations $(\psi_{\nu}^{\epsilon})_{\epsilon \in (0,\nu)}$.  In particular, $\psi_{\nu}^{\epsilon}$ will be a convex positively one-homogeneous function constructed specifically so that a version of \eqref{E: graph correspondence} holds, except with $g_{\nu}$ replaced by a smoothed function $g_{\nu}^{\epsilon}$.   

Toward that end, let $\rho : \mathbb{R}^{d - 1} \to [0,+\infty)$ be a smooth function such that 
	\begin{gather}
		\int_{\mathbb{R}^{d-1}} \rho(x') \, dx' = 1, \quad \rho(x') = 0 \quad \text{if and only if} \quad \|x'\| \geq 1, \label{E: mollifier positive integral etc} \\
		\rho(x') = \rho(y') \quad \text{if} \quad \|x'\| = \|y'\|. \label{E: symmetry mollifier}
	\end{gather}
Given $\epsilon \in (0,\nu)$, let $g^{\epsilon}_{\nu}$ be function obtained by mollifying $g_{\nu}$ against $\rho$ at scale $\epsilon$:
	\begin{equation} \label{E: mollification}
		g^{\epsilon}_{\nu}(x') = \int_{B_{1}} g_{\nu}(x' + \epsilon y') \rho(y') \, dy'.
	\end{equation}
Since $g_{\nu}$ is concave, Jensen's inequality implies that $g^{\epsilon}_{\nu}$ is concave and it lies below $g_{\nu}$.  These facts are recorded below for later use:
	\begin{equation} \label{E: useful concavity stuff}
		g_{\nu}^{\epsilon} \, \, \text{is concave} \quad \text{and} \quad g_{\nu}^{\epsilon} \leq g_{\nu} \quad \text{in} \, \, \{\text{dist}(\cdot,G_{p}) \leq \nu - \epsilon\}.
	\end{equation}  

Now that $g_{\nu}$ has been regularized, the approximation $\psi_{\nu}^{\epsilon} : \mathbb{R}^{d} \to [0,+\infty]$ of $\psi_{\nu}$ will be constructed so that $\{\psi_{\nu}^{\epsilon} = 1\}$ equals the graph of $g_{\nu}^{\epsilon}$ close to $\partial \varphi^{*}(p)$.  To make this precise, and to divorce the details of the construction from the properties needed in this section, here is a definition of the type of approximation that will be needed:

	\begin{definition} \label{D: admissible} Given a conical test function $\psi_{\nu}$ as above, a family of convex, positively one-homogeneous functions $(\psi_{\nu}^{\epsilon})_{\epsilon \in (0,\nu)}$ is said to be an \emph{admissible perturbation of $\psi_{\nu}$} if there is an open set $\mathcal{U}_{\nu} \subseteq \mathbb{R}^{d}$, a parameter $\epsilon_{*} > 0$, and a mollifier $\rho$ satisfying \eqref{E: mollifier positive integral etc} and \eqref{E: symmetry mollifier} such that (i)
		\begin{align*}
			(\psi_{\nu}^{\epsilon}, D\psi_{\nu}^{\epsilon}) \to (\psi_{\nu},D \psi_{\nu}) \quad \text{locally uniformly in} \, \, \mathcal{U}_{\nu} \quad \text{as} \quad \epsilon \to 0^{+};
		\end{align*}
	(ii) for any $\epsilon \in (0,\epsilon_{*})$,
		\begin{gather*}
			\psi_{\nu}^{\epsilon} \quad \text{is smooth in} \, \, \mathcal{U}_{\nu}, \quad \{\psi_{\nu}^{\epsilon} < +\infty\} = \overline{\mathcal{U}_{\nu}}, \\
			\text{and} \quad \psi_{\nu} \leq \psi_{\nu}^{\epsilon} \quad \text{pointwise in} \quad \mathcal{U}_{\nu};		
		\end{gather*}
	and (iii) there is an $r > 0$ such that, for any $x \in \mathcal{U}_{\nu}$ and any $\epsilon \in (0,\epsilon_{*})$, 
		\begin{align} \label{E : graph property ack ack ack}
			\psi_{\nu}^{\epsilon}(x) = 1 \quad \text{if and only if} \quad \text{dist}(x', G_{p}) < r \quad \text{and} \quad \langle x, e \rangle = g_{\nu}^{\epsilon}(x').
		\end{align}
	\end{definition} 

Notice that \eqref{E : graph property ack ack ack} mirrors the claimed property \eqref{E: graph correspondence} of $\psi_{\nu}$.  
	
	The above definition spells out the properties of the perturbation that will be used in the arguments that follow.  That still leaves the question of existence, but this is somewhat intuitive.  Indeed, a natural place to start is by considering the Minkowski gauge of the hypograph of $g_{\nu}^{\epsilon}$, which is certainly a convex set since $g_{\nu}^{\epsilon}$ is concave.  The main issue --- and for this reason the existence problem warrants a careful argument --- is the existence of an open set $\mathcal{U}_{\nu}$ for which the level surface $\{\psi_{\nu}^{\epsilon} = 1\}$ can be characterized as in \eqref{E : graph property ack ack ack}.
	
	However, since the proof of the existence of a perturbation satisfying \eqref{E : graph property ack ack ack} is not immediately relevant here, the proof is deferred to Appendix \ref{A: perturbed test functions}.  For now, the next theorem asserts that existence does hold and the graph characterization \eqref{E : graph property ack ack ack} directly extends to $\psi_{\nu}$:
	
		\begin{theorem} \label{T: existence admissible perturbation} Assume that $\varphi$ satisfies Assumption \ref{A: c11 assumption}.  Let $\psi_{\nu} = \psi_{e,\partial \varphi^{*}(p),\nu}$ be the conical test function associated to the parameters $p \in \mathbb{R}^{d} \setminus \{0\}$, $e = \|p\|^{-1} p$, and $\nu \in (0,\zeta]$.  Then, for any mollifier $\rho$ satisfying \eqref{E: mollifier positive integral etc} and \eqref{E: symmetry mollifier}, there is an admissible perturbation $(\psi_{\nu}^{\epsilon})_{\epsilon \in (0,\nu)}$ of $\psi_{\nu}$.
		
		Furthermore, if $\mathcal{U}_{\nu}$ and $r$ are as in the definition, then, for any $x \in \mathcal{U}_{\nu}$,
			\begin{equation*}
				\psi_{\nu}(x) = 1 \quad \text{if and only if} \quad \text{dist}(x',G_{p}) < r \quad \text{and} \quad \langle x', e \rangle= g_{\nu}(x').
			\end{equation*} \end{theorem}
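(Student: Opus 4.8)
The idea is to produce $(\psi_\nu^\epsilon)_{\epsilon \in (0,\nu)}$ by smoothing the \emph{graph} that cuts out $\{\psi_\nu = 1\}$ near the cone $\mathcal{C}(\partial\varphi^*(p))$, so it is natural to first pin down the graph description of $\psi_\nu$ itself, which is also the ``furthermore'' assertion. Write $F = \partial\varphi^*(p) = \partial\varphi^*(e)$ and identify $\mathbb{R}^d$ with $\mathbb{R}^{d-1}\times\mathbb{R}e$ as in Section~\ref{S: graphical representation}. Since $\{\psi_\nu \le 1\} = F_{(e,\nu)}$, the identities \eqref{E: bulk part needed}--\eqref{E: graph part needed} say that the portion of $\partial F_{(e,\nu)}$ lying in $\{\langle\cdot,e\rangle \ge \varphi^*(e)-\nu\}$ is exactly the graph $\{x' + g_\nu(x')e : \text{dist}(x',G_p) \le \nu\}$, which is relatively open in $\partial F_{(e,\nu)}$ over the open set $\{\text{dist}(\cdot,G_p) < \nu\}$ and contains $F$ in its relative interior. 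First I would fix a small $r \in (0,\nu)$ and let $\mathcal{U}_\nu$ be the open cone generated by a small neighborhood of $F$ inside $\{x' + g_\nu(x')e : \text{dist}(x',G_p) < r\}$; positive one-homogeneity of $\psi_\nu$ then gives, for $x \in \mathcal{U}_\nu$, that $\psi_\nu(x) = 1$ if and only if $\text{dist}(x',G_p) < r$ and $\langle x,e\rangle = g_\nu(x')$.

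Next is the construction. Given the prescribed mollifier $\rho$ satisfying \eqref{E: mollifier positive integral etc}--\eqref{E: symmetry mollifier}, set $g_\nu^\epsilon = g_\nu * \rho_\epsilon$ as in \eqref{E: mollification}. Because $g_\nu$ is concave and, on each $\{\text{dist}(\cdot,G_p) \le \nu - \delta\}$, of class $C^{1,1}$, the functions $g_\nu^\epsilon$ are smooth, concave, lie below $g_\nu$ by \eqref{E: useful concavity stuff}, and converge to $g_\nu$ in $C^1$ with locally bounded second derivatives on $\{\text{dist}(\cdot,G_p) < r\}$ (for $\epsilon < \nu - r$). I would then take $\psi_\nu^\epsilon$ to be the Minkowski gauge of a bounded convex body $K^\epsilon$ whose boundary over the window $\{\text{dist}(\cdot,G_p) < r\}$ is the graph of $g_\nu^\epsilon$ and which is otherwise a fixed smooth convex ``cap'' lying strictly inside $F_{(e,\nu)}$ --- for instance the closed convex hull of the restricted graph $\{x' + g_\nu^\epsilon(x')e : \text{dist}(x',G_p) \le \nu - \epsilon\}$ together with an auxiliary smooth convex body $L \subset F_{(e,\nu)}$, with $L$ chosen (this is the content relegated to Appendix~\ref{A: perturbed test functions}) so that the cone $\mathcal{C}(K^\epsilon)$ generated by $K^\epsilon$ equals the $\epsilon$-independent cone $\overline{\mathcal{U}_\nu}$. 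Since the hypograph of the concave function $g_\nu^\epsilon$ is convex, $K^\epsilon$ is convex, and since $g_\nu^\epsilon \to g_\nu$ uniformly while $F$ sits in the relative interior of the graph piece of $\partial F_{(e,\nu)}$, for $r$ small and $\epsilon$ small the part of $\partial K^\epsilon$ in $\mathcal{U}_\nu$ is precisely the graph of $g_\nu^\epsilon$ over $\{\text{dist}(\cdot,G_p) < r\}$.

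Granting this, checking (i)--(iii) of Definition~\ref{D: admissible} is routine and uses Section~\ref{S: pos hom}. Convexity and positive one-homogeneity of $\psi_\nu^\epsilon$ are automatic for gauges; $\{\psi_\nu^\epsilon < +\infty\} = \mathcal{C}(K^\epsilon) = \overline{\mathcal{U}_\nu}$, and $K^\epsilon \subseteq F_{(e,\nu)}$ (using $g_\nu^\epsilon \le g_\nu$ and $L \subseteq F_{(e,\nu)}$) gives $\psi_\nu \le \psi_\nu^\epsilon$ on $\mathcal{U}_\nu$; smoothness of $\psi_\nu^\epsilon$ on $\mathcal{U}_\nu$ follows from Propositions~\ref{P: differentiability in terms of level sets} and~\ref{P: check proposition}, since there $\{\psi_\nu^\epsilon = 1\}$ is the graph of the $C^\infty$ function $g_\nu^\epsilon$ and hence a smooth hypersurface; and \eqref{E : graph property ack ack ack} holds by construction. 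Finally, $g_\nu^\epsilon \to g_\nu$ uniformly near the window forces $K^\epsilon \to F_{(e,\nu)}$ in Hausdorff distance there, whence $\psi_\nu^\epsilon \to \psi_\nu$ locally uniformly on $\mathcal{U}_\nu$; and $D\psi_\nu^\epsilon \to D\psi_\nu$ locally uniformly (at points where $D\psi_\nu$ exists) follows from the convexity of the functions, the identity \eqref{E: gradient formula} expressing $D\psi$ through the unit normal of the level set, and the $C^1$-convergence of the graphs.

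The hard part --- and the reason the complete construction is deferred to Appendix~\ref{A: perturbed test functions} --- is producing $K^\epsilon$ that is, uniformly in small $\epsilon$, simultaneously: (a) globally convex; (b) such that its boundary, over one and the same window $\{\text{dist}(\cdot,G_p) < r\}$ and in one and the same neighborhood $\mathcal{U}_\nu$, is the graph of $g_\nu^\epsilon$; (c) smooth on $\mathcal{U}_\nu$; and (d) contained in $F_{(e,\nu)}$ with $\mathcal{C}(K^\epsilon)$ a prescribed cone. The delicate issues are that forming the convex hull could a priori create flat facets over the window --- which would destroy both \eqref{E : graph property ack ack ack} and the smoothness --- and that the transition between the concave cap (the graph of $g_\nu^\epsilon$) and the auxiliary body $L$ must be arranged so that it lies outside $\mathcal{U}_\nu$; keeping all of this under control uniformly in $\epsilon$ is exactly what the appendix does.
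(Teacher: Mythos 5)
There is a genuine gap. Your outline stops exactly where the theorem's content begins: you acknowledge that the hard part is producing, uniformly in $\epsilon$, a convex body whose boundary over a \emph{fixed} window is the graph of $g_{\nu}^{\epsilon}$ with the transition region outside a \emph{fixed} cone $\mathcal{U}_{\nu}$, and you then defer this ("this is exactly what the appendix does"). But the appendix \emph{is} the proof of Theorem \ref{T: existence admissible perturbation}; deferring that step means the proposal does not actually establish condition (iii) of Definition \ref{D: admissible}, i.e.\ \eqref{E : graph property ack ack ack} on an $\epsilon$-independent open set, which is the whole point. Your specific route also creates the difficulty it then cannot dispose of: taking the closed convex hull of the graph piece together with an auxiliary body $L$ invites flat facets and forces you to engineer $\mathcal{C}(K^{\epsilon})$ to equal a prescribed cone, and you give no mechanism for ruling out facets over the window or for showing the prescribed cone can be realized and is convex, uniformly as $\epsilon \to 0^{+}$.

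For comparison, the paper avoids the hull entirely: it takes the Minkowski gauge \eqref{E: gauge hypograph} of the hypograph \eqref{E: hypograph} of $g_{\nu}^{\epsilon}$ (convex because $g_{\nu}^{\epsilon}$ is concave and nonnegative) and then declares the gauge $+\infty$ outside the cone $\overline{\mathcal{U}_{p}(r_{*})}$; convexity of the restricted function requires proving that $\overline{\mathcal{U}_{p}(r_{*})}$ is a convex cone (Proposition \ref{P: dreaded convexity}), and the graph characterization of the unit level set inside $\mathcal{U}_{p}(r_{*})$, uniformly for $\epsilon < \epsilon_{*}$, is obtained not from hull geometry but from a quantitative argument along rays: the map $\alpha \mapsto \alpha^{-1}\langle q,e\rangle - g_{\nu}^{\epsilon}(\alpha^{-1}q')$ is strictly decreasing on a fixed window and has a unique zero there (Proposition \ref{P: crazy monotonicity argument}, using $Dg_{\nu} = 0$ on $G_{p}$ from Proposition \ref{P: gradient vanishing ack ack} and the transversality bound \eqref{E: key positivity observation}), combined with injectivity of the graph parametrization (Proposition \ref{P: homeo part}) and Lemma \ref{L: uniqueness time thing}. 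These are the quantitative ingredients your proposal would need to supply (either for the paper's hypograph construction or to control your hull-plus-$L$ body); without them, conditions (ii)--(iii) of Definition \ref{D: admissible} — and hence the existence claim and the "furthermore" statement, which the paper deduces from \eqref{E : graph property ack ack ack} by letting $\epsilon \to 0^{+}$ — remain unproven.
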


The next result shows that touching $u$ above with $\psi^{\epsilon}_{\nu}$ in place of $\psi_{\nu}$ leads to a useful outcome in the limit $\epsilon \to 0^{+}$.  

	\begin{prop}  \label{P: key mountain} Assume that $\varphi$ satisfies Assumption \eqref{A: c11 assumption}.  Let $V \subseteq \mathbb{R}^{d} \setminus \{0\}$ be a bounded open set.
	
Fix a $p \in \mathbb{R}^{d} \setminus \{0\}$ and let $e = \|p\|^{-1}p$.  For any $\nu \in (0,\zeta]$, let $\psi_{\nu} := \psi_{e,\partial \varphi^{*}(p),\nu}$ be the associated conical test function.
	
	Suppose there is a $u \in USC(\overline{V})$, an $M \in \mathbb{R}$, and an $x_{0} \in V \cap \mathcal{C}(\partial \varphi^{*}(p))$ such that
		\begin{gather} 
			u \leq M + \psi_{\zeta} \quad \text{in} \, \, \overline{V}, \quad u(x_{0}) = M + \psi_{\zeta}(x_{0}), \label{E: basic touching ack} \\
			 \{x \in \overline{V} \, \mid \, u(x) = M + \psi_{\zeta}(x)\} \subseteq V. \label{E: key localization}
		\end{gather}
	Fix a $\nu < \zeta$; let $(\epsilon_{j})_{j \in \mathbb{N}} \subseteq (0,+\infty)$ and $(x_{j})_{j \in \mathbb{N}} \subseteq V \times \mathcal{S}_{d} \times \{\varphi = 1\}$ be sequences such that \eqref{E: touching above key mountain} holds for each $j$; and let $(\psi_{\nu}^{\epsilon})_{\epsilon \in (0,\nu)}$ be an admissible perturbation of $\psi_{\nu}$.
	
	If $((X_{j},\tilde{q}_{j}))_{j \in \mathbb{N}} \subseteq \mathcal{S}_{d} \times \{\varphi = 1\}$ is a sequence such that, for each $j \in \mathbb{N}$, 
		\begin{equation}
			X_{j} = D^{2}\psi_{\nu}^{\epsilon_{j}}(x_{j}), \quad \tilde{q}_{j} \in \partial \varphi^{*}(D\psi_{\nu}^{\epsilon_{j}}(x_{j})), \label{E: nice things key mountain}
		\end{equation}
	then the following statements hold:
		\begin{itemize}
			\item[(i)] $((x_{j},X_{j},\tilde{q}_{j}))_{j \in \mathbb{N}}$ is precompact in $V \times \mathcal{S}_{d} \times \{\varphi = 1\}$.  In particular,
				\begin{equation}
					\limsup_{j \to \infty} \varphi(x_{j}) \|X_{j}\| \leq \Gamma(\delta) \nu^{-1}. \label{E: upper bound key mountain}
				\end{equation}
			\item[(ii)] If $(x_{*},X_{*},q_{**})$ is any accumulation point of $((x_{j},X_{j},\tilde{q}_{j}))_{j \in \mathbb{N}}$, then there is a face $F_{*} \subseteq \partial \varphi^{*}(D\varphi(x_{*}))$ such that 
				\begin{equation} \label{E: key mountain conclusion}
					(a) \quad \left\{ \varphi(x_{*})^{-1}x_{*}, q_{**} \right\} \subseteq F_{*} \quad \text{and} \quad (b) \quad X_{*} q = 0 \quad \text{for each} \quad q \in F_{*}. 
			\end{equation}
		\end{itemize}
	\end{prop}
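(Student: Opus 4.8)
The plan is to argue along convergent subsequences, splitting the work into a localization step (pinning down where the contact points accumulate), a curvature/compactness step (yielding (i)), and a convex-analytic step (yielding (ii)), with essentially all of the difficulty concentrated in the last. For the localization step I would record the ordering $\psi_\zeta \le \psi_\nu \le \psi_\nu^{\epsilon_j}$ on $\mathcal U_\nu$ --- the first inequality from $F_{(e,\nu)} \subseteq F_{(e,\zeta)}$ via Proposition \ref{P: touching above test function part}, the second from Definition \ref{D: admissible}(ii) --- together with $\psi_\nu^{\epsilon_j}(x_0) \to \psi_\nu(x_0)$ (Definition \ref{D: admissible}(i), as $x_0 \in \mathcal C(\partial\varphi^*(p)) \subseteq \mathcal U_\nu$). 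Using \eqref{E: touching above key mountain} and \eqref{E: basic touching ack} these give $u(x_j) - \psi_\nu(x_j) \ge u(x_j) - \psi_\nu^{\epsilon_j}(x_j) \ge u(x_0) - \psi_\nu^{\epsilon_j}(x_0) \to M$, while $u - \psi_\nu \le u - \psi_\zeta \le M$ on $\overline V$; since $u - \psi_\nu$ is upper semicontinuous, every accumulation point $x_*$ of $(x_j)_j$ satisfies $u(x_*) - \psi_\nu(x_*) = M$. By \eqref{E: key localization} and the identity $\{\psi_\zeta = \psi_\nu < +\infty\} = \mathcal C(\partial\varphi^*(p))$ (Proposition \ref{P: touching above test function part}, eq.\ \eqref{E: contact set cone part}), the set $\{u = M + \psi_\nu\}$ is a compact subset of $V \cap \mathcal C(\partial\varphi^*(p)) \subseteq V \cap \mathcal U_\nu$; hence $x_j \in V \cap \mathcal U_\nu$ for $j$ large, $(x_j)$ is precompact in $V$, and I would pass to a subsequence along which $(x_j, X_j, \tilde q_j) \to (x_*, X_*, q_{**})$ with $x_* \in V \cap \mathcal C(\partial\varphi^*(p))$ and $\check x_* := \varphi(x_*)^{-1} x_* \in \partial\varphi^*(p)$.

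\emph{Proof of (i).} Convexity gives $X_j = D^2\psi_\nu^{\epsilon_j}(x_j) \ge 0$. Near $\partial\varphi^*(p)$ the surface $\{\psi_\nu^{\epsilon_j} = 1\}$ is the graph of the concave mollification $g_\nu^{\epsilon_j}$ of the function $g_\nu$ of \eqref{E: graphical special case} (Theorem \ref{T: existence admissible perturbation}), whose hypograph coincides locally with $F_{(e,\nu)}$; since $F_{(e,\nu)}$ obeys a uniform interior ball condition of radius $\nu$, one has $g_\nu D^2 g_\nu \le C(\varphi)\nu^{-1}\,\text{Id}$ distributionally (Lemma \ref{L: c11 set thing} together with the standard equivalence with one-sided Hessian bounds), a bound preserved by mollification, so the second fundamental form of $\{\psi_\nu^{\epsilon_j} = 1\}$ near the relevant points is $\lesssim \nu^{-1}$, while the gradients $D\psi_\nu^{\epsilon_j}(\check x_j)$ stay bounded (by \eqref{E: gradient formula} and $\langle N(\check x_j),\check x_j\rangle \to \varphi^*(e) \ge \delta$). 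Proposition \ref{P: trivial hessian estimate}, applied with the parameter $\nu$ in place of $\zeta$, then yields $\psi_\nu^{\epsilon_j}(x_j)\,X_j \le \Gamma(\delta)\nu^{-1}\,\text{Id}$ for $j$ large; since $\psi_\nu^{\epsilon_j}(x_j) \to \psi_\nu(x_*) = \varphi(x_*) > 0$ this gives precompactness of $(X_j)$ together with \eqref{E: upper bound key mountain}, and precompactness of $(\tilde q_j)$ is immediate since $\{\varphi = 1\}$ is compact.

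\emph{Proof of (ii), the crux.} Euler's identity $D^2\psi_\nu^{\epsilon_j}(x_j)\,x_j = 0$ (Proposition \ref{P: subdifferential} and one-homogeneity) passes to the limit: $X_* \check x_* = 0$. Next $D\psi_\nu^{\epsilon_j}(x_j) = D\psi_\nu^{\epsilon_j}(\check x_j) \to D\psi_\nu(\check x_*) = D\varphi(\check x_*) = D\varphi(x_*)$, using Definition \ref{D: admissible}(i), $\psi_\nu = \varphi$ on $\mathcal C(\partial\varphi^*(p))$, and zero-homogeneity of $D\varphi$; since the graph of $\partial\varphi^*$ is closed, $q_{**} \in \partial\varphi^*(D\varphi(x_*)) = \partial\varphi^*(p)$. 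I then take $F_*$ to be the smallest face of $\partial\varphi^*(D\varphi(x_*))$ containing $\{\check x_*, q_{**}\}$, so (ii)(a) holds by construction, and (ii)(b) reduces, via $X_*\check x_* = 0$, to showing that $X_*$ annihilates the linear span $V_*$ of $F_* - \check x_*$. For this I would introduce, for each $j$, the nearest-point projection $\bar x_j'$ onto $G_p$ of $x_j' := \Pi_{\{e\}^\perp}\check x_j$ and the face $\mathcal F_j$ of $G_p$ with $\bar x_j' \in \text{rint}(\mathcal F_j)$. Because $\text{dist}(\cdot, G_p)$ depends, near $\text{rint}(\mathcal F_j)$, only on the component transverse to the span of $\mathcal F_j$, the mollification $g_\nu^{\epsilon_j}$ is affine in the $\mathcal F_j$-directions on a neighborhood of $x_j'$ --- provided $\text{dist}(\bar x_j', \text{bdry}(\mathcal F_j))$ is large relative to $\epsilon_j$ --- so $\{\psi_\nu^{\epsilon_j} \le 1\}$ has a flat face through $\check x_j$ whose span contains that of $\mathcal F_j$; hence $X_j$, and so $X_*$ in the limit, annihilates $\text{span}(\mathcal F_j)$. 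On the other hand $D\psi_\nu^{\epsilon_j}(x_j)$ is a normalization of $e$ minus a nonpositive multiple of the mollified outward-normal field of $G_p$ near $\bar x_j'$, hence lies in $\text{span}(e) + N_{G_p}(\mathcal F_j)$, so $\partial\varphi^*(D\psi_\nu^{\epsilon_j}(x_j))$ is contained in a face of $\partial\varphi^*(p)$ lying over $\mathcal F_j$. Letting $j \to \infty$, using that $\bar x_j' \to x_*' := \Pi_{\{e\}^\perp}\check x_*$ (so $\mathcal F_j$ is eventually a face dominating the face of $G_p$ containing $x_*'$), one concludes that both $q_{**}$ and $\check x_*$ lie in the limit of $\mathcal F_j + \varphi^*(e)e$, whence $V_* \subseteq \lim\text{span}(\mathcal F_j)$ and therefore $X_* V_* = 0$.

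The step I expect to be the main obstacle is precisely the matching in that last sentence: one must reconcile the subspace $X_*$ visibly annihilates (the limit of $\text{span}(\mathcal F_j)$) with the subspace $V_*$ that the location of $q_{**}$ forces. The difficulty is that the face of $\partial\varphi^*(p)$ exposed by $D\psi_\nu^{\epsilon_j}(x_j)$ can be strictly larger than $\mathcal F_j + \varphi^*(e)e$ when the mollified normal field fails to land in the relative interior of $N_{G_p}(\mathcal F_j)$, and --- since the faces in play need not be exposed (cf.\ Remark \ref{R: exposed face conundrum}) --- one cannot simply pass to a larger or smaller face at will. Resolving this is where convex analysis enters: one needs estimates quantifying the rate at which $\bar x_j'$ approaches $\text{bdry}(\mathcal F_j)$ against the mollification scale $\epsilon_j$, together with a careful comparison of the exposed faces involved, which is exactly the content carried out in the next section.
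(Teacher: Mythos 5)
Your localization step and part (i) are sound and consistent with the paper's route (the paper gets the Hessian bound from the explicit formulas for $D^{2}g_{\nu}^{\epsilon}$ via Lemma \ref{L: key hessian control}, but your mollification-preserves-one-sided-Hessian-bounds argument would also work). The genuine gap is in part (ii), and it is exactly the step you flag and then defer: your mechanism identifies the directions annihilated by $X_{*}$ with $\mathrm{span}(\mathcal F_{j})$, where $\mathcal F_{j}$ is the face of $G_{p}$ whose relative interior contains the nearest-point projection $\bar x_{j}'$, and identifies the face forced by $q_{**}$ with ``a face lying over $\mathcal F_{j}$.'' Neither intermediate claim holds at finite $j$. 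The mollified gradient $Dg_{\nu}^{\epsilon_{j}}(x_{j}')$ is an average of $D\,\mathrm{dist}(\cdot,G_{p})$ over an $\epsilon_{j}$-ball, and the normals encountered there belong to normal cones at nearby projection points; by the mere upper semicontinuity of the normal cone (Lemma \ref{L: upper semicontinuity}) these need \emph{not} be contained in $\mathcal N^{d-1}_{G_{p}}(\bar x_{j}')$, so $D\psi_{\nu}^{\epsilon_{j}}(x_{j})$ need not lie in $\mathrm{span}(e)+\mathcal C(\mathcal N^{d-1}_{G_{p}}(\bar x_{j}'))$ and the exposed face of $\partial\varphi^{*}(p)$ at that gradient can be unrelated to $\mathcal F_{j}$. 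Likewise, the local affineness of $g_{\nu}^{\epsilon_{j}}$ in the $\mathcal F_{j}$-directions requires $\mathrm{dist}(\bar x_{j}',\mathrm{bdry}(\mathcal F_{j}))\gg\epsilon_{j}$, which cannot be arranged: the contact points can approach the relative boundaries of faces at exactly the mollification scale, and $G_{p}$ can have arbitrarily wild (and non-exposed, cf.\ Remark \ref{R: exposed face conundrum}) face structure, so $\mathcal F_{j}$ may even degenerate to a point and carry no information. Deferring the reconciliation of these two subspaces ``to the next section'' is deferring the entire content of the proposition; as written, (ii) is not proved.

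Two further ingredients are missing and are precisely what the paper supplies. First, a quantitative smallness estimate: to do any scale-$\epsilon_{j}$ analysis at the contact points you need $\mathrm{dist}(q_{j}',G_{p})=O(\epsilon_{j})$, which is Proposition \ref{P: smallness condition king} and is where $\nu<\zeta$ enters quantitatively (comparing $g_{\nu}$ against $g_{\zeta}$ produces a quadratic penalty that forces the perturbed contact points to stay within $O(\epsilon_{j})$ of $G_{p}$); your proposal uses $\nu<\zeta$ only qualitatively, through \eqref{E: contact set cone part}. Second, the paper sidesteps your face-matching problem altogether: it blows up the rescaled distance functions (Lemma \ref{L: intermediate key lemma}) to obtain a limit $d_{*}$ with $Dd_{*}\in\mathcal N^{d-1}_{G_{p}}(q_{*}')$, defines $F_{*}$ not through faces of $G_{p}$ containing projections but as the exposed face of $\partial\varphi^{*}(p)$ determined by the direction of approach $w_{*}'=-\lim\epsilon_{j}^{-1}Dg_{\nu}^{\epsilon_{j}}(q_{j}')$ — with both $q_{*}$ and $q_{**}$ placed in $F_{*}$ by Lemma \ref{L: direction of approach} and the monotonicity formula, applied to $\{\varphi\le 1\}$ at $\tilde q_{j}$ and to $\{\psi_{\nu}^{\epsilon_{j}}\le 1\}$ at $q_{j}$ — and then extracts the kernel constraint on the limiting Hessian from the integral representation of $D^{2}g_{\nu}^{\epsilon_{j}}$ together with Lemma \ref{L: cone integral} on cone-supported measures. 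If you wish to salvage your route, those are the tools you would need to import.
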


Since it is somewhat technical, the proof of Proposition \ref{P: key mountain} is deferred to the next section.  Instead, for the moment, here is how the proposition leads to a proof of Theorem \ref{T: technical core}.  

	\begin{proof}[Proof of Theorem \ref{T: technical core}] To begin the proof, fix an $x_{0} \in V$ such that 
		\begin{equation*}
			u(x_{0}) - \varphi_{\zeta}(x_{0}) = \max \left\{ u(x) - \varphi(x) \, \mid \, x \in \overline{V} \right\} > \max \left\{ u(x) - \varphi(x) \, \mid \, x \in \partial V \right\}.
		\end{equation*}
	In view of the discussion of Section \ref{S: touching discussion}, there is a $p \in \mathbb{R}^{d} \setminus \{0\}$ such that if $e = \|p\|^{-1} p$ and $\psi_{\zeta} = \psi_{e,\partial \varphi_{\zeta}^{*}(p),\zeta}$, then \eqref{E: basic touching ack} and \eqref{E: key localization} hold.  Given any $\nu \in (0,\zeta)$, Proposition \ref{P: basic touching thing} implies there are sequences $(\epsilon_{j})_{j \in \mathbb{N}} \subseteq (0,+\infty)$ and $(x_{j})_{j \in \mathbb{N}} \subseteq V$ such that \eqref{E: touching above key mountain} and \eqref{E: subsolution part key mountain} hold for each $j \in \mathbb{N}$.
	
	Define $(X_{j})_{j \in \mathbb{N}} \subseteq \mathcal{S}_{d}$ by setting $X_{j} = D^{2}\psi_{\nu}^{\epsilon_{j}}(x_{j})$.  By \eqref{E: subsolution part key mountain}, for any $j \in \mathbb{N}$, there is a $\tilde{q}_{j} \in \{\varphi = 1\}$ such that
		\begin{gather}
			- \mathcal{H}(x_{j},u(x_{j}),D\psi_{\nu}^{\epsilon_{j}}(x_{j}), X_{j}, Q_{X_{j}}(\tilde{q}_{j} - \alpha \|D\psi_{\nu}^{\epsilon_{j}}(x_{j})\|^{-1} D\psi_{\nu}^{\epsilon_{j}}(x_{j}))) \leq 0, \label{E: need another equation here ack} \\ 
			\text{and} \quad \tilde{q}_{j} \in \partial \varphi^{*}(D\psi_{\nu}^{\epsilon_{j}}(x_{j})). \nonumber
		\end{gather}
	By Proposition \ref{P: key mountain}, there is no loss of generality assuming there is a limit $(x_{*},X_{*},q_{**}) = \lim_{j \to \infty} (x_{j},X_{j},\tilde{q}_{j})$ and a face $F_{*}$ of $\partial \varphi^{*}(D\varphi(x_{*}))$ such that \eqref{E: key mountain conclusion} holds.
	
In particular, after defining $q_{*} = \varphi(x_{*})^{-1} x_{*}$, \eqref{E: key mountain conclusion} says that 
	\begin{equation} \label{E: vanishing on face part of proof use}
		\{q_{*},q_{**}\} \subseteq F_{*} \quad \text{and} \quad X_{*} q = 0 \quad \text{for each} \quad q \in F_{*}.
	\end{equation}

Note that $u(x_{*}) = \lim_{j \to \infty} u(x_{j})$ due to \eqref{E: touching above key mountain} and the local uniform convergence of $\psi^{\epsilon_{j}}_{\nu} \to \psi_{\nu}$.  Thus, since $\mathcal{H}$ is upper semicontinuous and $\psi_{\nu}$ is $C^{1}$, sending $j \to +\infty$ in \eqref{E: need another equation here ack} yields\footnote{Here is where one needs that $D\psi_{\nu}^{\epsilon} \to D\psi_{\nu}$ locally uniformly, as prescribed by Definition \ref{D: admissible}.}
	\begin{equation*}
		- \mathcal{H}(x_{*},u(x_{*}),D\psi_{\nu}(x_{*}),X_{*}, Q_{X_{*}}(q_{**} - \alpha \|D\psi_{\nu}(x_{*})\|^{-1} D\psi_{\nu}(x_{*}))) \leq 0.
	\end{equation*}
Notice that since $x_{*} \in \mathcal{C}(\partial \varphi^{*}(p))$, Proposition \ref{P: touching above test function part} implies that
	\begin{equation*}
		D\psi_{\nu}(x_{*}) = D\varphi(x_{*}).
	\end{equation*}
Thus, the previous inequality can be rewritten in the form
	\begin{equation} \label{E: need to reference in proof 1}
		-\mathcal{H}(x_{*},u(x_{*}),D\varphi(x_{*}),X_{*}, Q_{X_{*}}(q_{**} - \alpha \|D\varphi(x_{*})\|^{-1} D\varphi(x_{*}) ) ) \leq 0.
	\end{equation}

Since $x_{*} \in V \subseteq \mathbb{R}^{d} \setminus \{0\}$, the positivity of $\varphi$ away from zero implies $\varphi(x_{*}) > 0$, and, thus, by \eqref{E: upper bound key mountain},
	\begin{equation} \label{E: hessian_bound_last_minute}
		0 \leq X_{*} \leq \varphi(x_{*})^{-1} \Gamma(\delta) \nu^{-1} .
	\end{equation}
	
The combination of \eqref{E: vanishing on face part of proof use}, \eqref{E: need to reference in proof 1}, and \eqref{E: hessian_bound_last_minute} yields the desired conclusion in the limit $\nu \to \zeta^{+}$ (i.e., after possibly extracting a limit point of the triples $(x_{*},q_{**},X_{*})$). \end{proof}

\section{The Perturbation Argument} \label{S: key mountain}

This section presents the proof of Proposition \ref{P: key mountain}, the main technical ingredient in the perturbation argument presented above.  The bulk of the work consists in showing that the derivatives of the perturbed test functions really are well-behaved in the limit $\epsilon \to 0^{+}$.  This will be a consequence of the main technical step in this section, namely, Proposition \ref{P: key mountain part 1}, stated next.  

While the proof of Proposition \ref{P: key mountain part 1} is geometrically intuitive (in low dimensions), making it rigorous is greatly facilitated through the use of the so-called normal cone of convex analysis.  This will become apparent multiple times in the proof.  Toward that end, Section \ref{S: normal cones} reviews the definition and relevant properties.  The proof of Proposition \ref{P: key mountain part 1} appears in Section \ref{S: most technical mountain proof}, while the proof of Proposition \ref{P: key mountain} comes earlier in Section \ref{S: mountain proof}.

\subsection{Taming the Perturbation}  The next result distills the main intermediate step involved in the proof of Proposition \ref{P: key mountain}.  In particular, the time has come to establish that the derivatives of the perturbed test functions $\psi^{\epsilon}_{\nu}$ defined in Section \ref{S: smoothing the graph} are well-behaved in the limit $\epsilon \to 0^{+}$.

More precisely, recall what remains to be proved.  As in Section \ref{S: c11 setting}, for the remainder of the section, assume that $\varphi$ satisfies Assumption \ref{A: c11 assumption}.  Fix a $\nu \in (0,\zeta]$ and a $p \in \mathbb{R}^{d} \setminus \{0\}$, and let $e = \|p\|^{-1} p$.  The objective is to show that at a contact point $x_{0} \in \mathcal{C}(p)$ where $\psi_{\nu}$ touches the subsolution $u$ from above, if $x_{\epsilon}$ is a contact point obtained after replacing $\psi_{\nu}$ by the perturbation $\psi_{\nu}^{\epsilon}$, and if $q_{\epsilon} \in \partial \varphi^{*}(D\psi_{\nu}^{\epsilon}(x_{\epsilon}))$, then, up to extraction,
	\begin{align*}
		(x_{\epsilon},q_{\epsilon},D^{2}\psi_{\nu}^{\epsilon}(x_{\epsilon})) \to (x_{*},q_{**},X_{*}) \quad \text{as} \quad \epsilon \to 0^{+},
	\end{align*}
where
	\begin{gather*}
		X_{*} q_{**} = 0, \quad 0 \leq \varphi(x_{*}) X_{*} \leq \tilde{\Gamma}(\varphi) \nu^{-1}.
	\end{gather*}
The next result will be used to prove this.

In the statement, as usual, the representation $x = x' + \langle x,e \rangle e$ is used so that $x' \in \{e\}^{\perp}$ denotes the orthogonal projection onto $\{e\}^{\perp}$ for an arbitrary $x \in \mathbb{R}^{d}$.

	\begin{prop} \label{P: key mountain part 1} As in the discussion above, let $(\psi_{\nu},g_{\nu})$ be the conical test function together with the function parametrizing its level sets, and let $(\psi_{\nu}^{\epsilon},g_{\nu}^{\epsilon})_{0 < \epsilon < \nu}$ be the perturbed versions.
	
	(a) Fix an arbitrary sequence $(\epsilon_{j})_{j \in \mathbb{N}} \subseteq (0,+\infty)$ such that $\epsilon_{j} \to 0$ as $j \to \infty$.  If there is a sequence $(q_{j},\tilde{q}_{j})_{j \in \mathbb{N}} \subseteq \mathbb{R}^{d} \times \{\varphi \leq 1\}$ and a pair $(q_{*},q_{**}) \in \partial \varphi^{*}(p) \times \partial \varphi^{*}(p)$ such that 
		\begin{itemize}
			\item[(i)] $\lim_{j \to \infty} (q_{j},\tilde{q}_{j}) = (q_{*},q_{**})$;
			\item[(ii)] for any $j \in \mathbb{N}$, 
				\begin{gather}
					q_{j} \in \{\psi_{\nu}^{\epsilon_{j}} = 1\} \label{E: orthogonal projection part} \quad 
			\text{and} \quad \tilde{q}_{j} \in \partial \varphi^{*}(D\psi_{\nu}^{\epsilon_{j}}(q_{j})); \quad \text{and} \nonumber
				\end{gather}
			\item[(iii)] the following smallness condition is satisfied:
				\begin{align}
					\limsup_{j \to \infty} \epsilon_{j}^{-1} \text{dist} &(q_{j}',G_{p}) < +\infty, \label{E: distance nuisance}
				\end{align}
		\end{itemize}
	then there is a subsequence $(j_{k})_{k \in \mathbb{N}} \subseteq \mathbb{N}$, a vector $w'_{*} \in \{e\}^{\perp}$, and a matrix $A'_{*} \in \mathcal{S}_{d}$ such that
		\begin{align}
			-w_{*}' &= \lim_{k \to \infty} \epsilon_{j_{k}}^{-1} Dg_{\nu}^{\epsilon_{j_{k}}}(q_{j_{k}}'), \label{E: first derivative thingy} \\
			A_{*}' &= \lim_{k \to \infty} D^{2}g_{\nu}^{\epsilon_{j_{k}}}(q_{j_{k}}'). \label{E: second derivative thingy}
		\end{align}
	Furthermore,
		\begin{equation} \label{E: optimality condition directional}
			\langle q_{*}, w_{*}' \rangle = \langle q_{**}, w_{*}' \rangle = \max \left\{ \langle q, w_{*}' \rangle \, \mid \, q \in \partial \varphi^{*}(p) \right\}.
		\end{equation}
	In particular, if $F_{*}$ is the face of $\partial \varphi^{*}(p)$ given by 
		\begin{equation} \label{E: F* definition}
			F_{*} = \{q \in \partial \varphi^{*}(p) \, \mid \, \langle q, w_{*}' \rangle = \langle q_{*}, w_{*}' \rangle\},
		\end{equation}
	then $\{q_{*},q_{**}\} \subseteq F_{*}$.
	
	(b) If, in addition, $(Y_{j})_{j \in \mathbb{N}} \subseteq \mathcal{S}_{d}$ is defined by 
		\begin{equation*}
			Y_{j} = D^{2}\psi_{\nu}^{\epsilon_{j}}(q_{j}),
		\end{equation*}
	then
		\begin{equation} \label{E: hessian bounds ack ack}
			\limsup_{j \to \infty} \|Y_{j}\| \leq \Gamma(\delta)  \nu^{-1}.
		\end{equation}
	Furthermore, if $(j_{k})_{k \in \mathbb{N}}$ is a subsequence as in part (a) with corresponding vector $w_{*}'$ and face $F_{*}$, and if there is a $Y_{*} \in \mathcal{S}_{d}$ such that $Y_{j_{k}} \to Y_{*}$ as $k \to +\infty$, then
		\begin{equation} \label{E: hessian constraint}
			Y_{*}q = 0 \quad \text{for each} \quad q \in F_{*}.
		\end{equation}\end{prop}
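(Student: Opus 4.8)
The plan is to reduce everything to the graphical description of Section~\ref{S: graphical representation}. Near $\partial\varphi^{*}(p)$ the level sets $\{\psi_{\nu}=1\}$ and $\{\psi_{\nu}^{\epsilon}=1\}$ are the graphs, over $\{e\}^{\perp}$ in the direction $e$, of the functions $g_{\nu}$ of \eqref{E: graphical special case} and of its mollification $g_{\nu}^{\epsilon}$ of \eqref{E: mollification}; in particular $q_{j}=q_{j}'+g_{\nu}^{\epsilon_{j}}(q_{j}')e$. Write $d(\cdot)=\text{dist}(\cdot,G_{p})$ and let $\pi$ be the metric projection onto the convex set $G_{p}$. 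Then $Dg_{\nu}(z')=-(z'-\pi(z'))/\sqrt{\nu^{2}-d(z')^{2}}$, and since $d^{2}$ is convex and $C^{1,1}$ with $D(d^{2})=2(\cdot-\pi)$ one has, a.e.\ near $G_{p}$,
\[
	D^{2}g_{\nu}=\frac{-D^{2}(d^{2})}{2\sqrt{\nu^{2}-d^{2}}}-\frac{D(d^{2})\otimes D(d^{2})}{4(\nu^{2}-d^{2})^{3/2}},
\]
so (using $0\le D^{2}(d^{2})\le2\,\text{Id}$ and $\|D(d^{2})\|^{2}=4d^{2}$) $\|D^{2}g_{\nu}\|\le C\nu^{-1}$ near $G_{p}$, and the same bound passes to $D^{2}g_{\nu}^{\epsilon}$ by Jensen. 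Finally, by the homogeneity formulas \eqref{E: gradient formula} and \eqref{E: chain rule guy}, $D\psi_{\nu}^{\epsilon}(q_{j})$ and $D^{2}\psi_{\nu}^{\epsilon}(q_{j})$ are expressible through $Dg_{\nu}^{\epsilon}(q_{j}')$, $D^{2}g_{\nu}^{\epsilon}(q_{j}')$, the outward normal to the level set, and $q_{j}$ alone; notably $D\psi_{\nu}^{\epsilon}(q_{j})$ is a positive multiple of $e-Dg_{\nu}^{\epsilon}(q_{j}')$.

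For part (a), differentiating the mollification gives
\[
	\epsilon_{j}^{-1}Dg_{\nu}^{\epsilon_{j}}(q_{j}')=-\int_{B_{1}}\frac{(q_{j}'+\epsilon_{j}y')-\pi(q_{j}'+\epsilon_{j}y')}{\epsilon_{j}\sqrt{\nu^{2}-d(q_{j}'+\epsilon_{j}y')^{2}}}\,\rho(y')\,dy'.
\]
By the smallness hypothesis \eqref{E: distance nuisance}, $d(q_{j}'+\epsilon_{j}y')\le C\epsilon_{j}$ for $j$ large and $y'\in B_{1}$, so the integrand is bounded and $\sqrt{\nu^{2}-d^{2}}\to\nu$ uniformly; a single subsequence then produces the limits $w_{*}'$ and $A_{*}'$ of \eqref{E: first derivative thingy} and \eqref{E: second derivative thingy}, the latter using the Hessian bound above. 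Each residual vector $\epsilon_{j}^{-1}((q_{j}'+\epsilon_{j}y')-\pi(q_{j}'+\epsilon_{j}y'))$ lies in the normal cone of $G_{p}$ at $\pi(q_{j}'+\epsilon_{j}y')$, and these projection points converge to $q_{*}-\varphi^{*}(e)e\in G_{p}$; by outer semicontinuity of the normal-cone map, any limit of convex averages of such vectors lies in the closed convex cone $N_{G_{p}}(q_{*}-\varphi^{*}(e)e)$, so $w_{*}'\in N_{G_{p}}(q_{*}-\varphi^{*}(e)e)$. Unwinding $G_{p}=\partial\varphi^{*}(p)-\varphi^{*}(e)e$ with $w_{*}'\perp e$ gives $\langle q_{*},w_{*}'\rangle=\max\{\langle q,w_{*}'\rangle:q\in\partial\varphi^{*}(p)\}$. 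For $q_{**}$: along the subsequence $D\psi_{\nu}^{\epsilon_{j_{k}}}(q_{j_{k}})$ is a positive multiple of $e+\epsilon_{j_{k}}(w_{*}'+o(1))$, so by the positive zero-homogeneity of $\partial\varphi^{*}$ one has $\tilde{q}_{j_{k}}\in\partial\varphi^{*}(e+\epsilon_{j_{k}}(w_{*}'+o(1)))$; a routine perturbation-of-linear-optimization estimate shows any accumulation point of $\partial\varphi^{*}(e+\epsilon w)$ as $\epsilon\to0^{+}$ lies in $\arg\max_{\partial\varphi^{*}(p)}\langle w,\cdot\rangle$, whence $q_{**}\in\arg\max_{\partial\varphi^{*}(p)}\langle w_{*}',\cdot\rangle$. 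This establishes \eqref{E: optimality condition directional}, and then $\{q_{*},q_{**}\}\subseteq F_{*}$ is immediate from \eqref{E: F* definition}.

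For part (b), the bound \eqref{E: hessian bounds ack ack} follows from Proposition~\ref{P: trivial hessian estimate} applied to $\psi_{\nu}^{\epsilon_{j}}$ at $q_{j}$: near $\partial\varphi^{*}(p)$ the level set $\{\psi_{\nu}^{\epsilon_{j}}=1\}$ inherits from the graph of $g_{\nu}$ (hence from $F_{(e,\nu)}$, a $\nu$-neighbourhood of a convex set) a uniform interior ball condition of radius $\nu$, which survives mollification, while $\|D\psi_{\nu}^{\epsilon_{j}}(q_{j})\|\to\|D\varphi(q_{*})\|\le\delta^{-1}$ by Proposition~\ref{P: very easy gradient bound}; taking $\limsup$ gives $\Gamma(\delta)\nu^{-1}$. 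For \eqref{E: hessian constraint}: since $\psi_{\nu}^{\epsilon_{j}}$ is positively one-homogeneous, $Y_{j_{k}}q_{j_{k}}=0$, so $Y_{*}q_{*}=0$; and inserting the limits of the first-order data into \eqref{E: chain rule guy}, a direct computation shows that, for $q\in\partial\varphi^{*}(p)$, $Y_{*}q=0$ if and only if $A_{*}'(q-q_{*})=0$, so it suffices to prove $A_{*}'w=0$ for every $w$ in the tangent space $V_{F_{*}}$ of $F_{*}$. As $-A_{*}'=\lim\int_{B_{1}}(-D^{2}g_{\nu})(q_{j_{k}}'+\epsilon_{j_{k}}y')\rho(y')\,dy'\ge0$, this amounts to $\lim\int_{B_{1}}\langle D^{2}g_{\nu}(q_{j_{k}}'+\epsilon_{j_{k}}y')w,w\rangle\rho(y')\,dy'=0$. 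Writing $G_{*}'=F_{*}-\varphi^{*}(e)e$ for the face of $G_{p}$ exposed by $w_{*}'$ (so $V_{G_{*}'}=V_{F_{*}}$), the limit average $w_{*}'$ lies in the relative interior of $N_{G_{p}}(G_{*}')$, which is a face of $N_{G_{p}}(q_{*}-\varphi^{*}(e)e)$; a convexity argument forces all but a vanishing-measure set of the residual vectors into $N_{G_{p}}(G_{*}')$. For those $y'$ one reads off from the explicit form of $D^{2}g_{\nu}$ that $\langle D^{2}g_{\nu}(q_{j_{k}}'+\epsilon_{j_{k}}y')w,w\rangle=0$: the residual is orthogonal to $V_{G_{*}'}$, and (being, for a.e.\ such $y'$, in the relative interior of the normal cone at its projection point) that projection point sits in the relative interior of a face of $G_{p}$ containing $G_{*}'$, so $D^{2}\text{dist}(\cdot,G_{p})$ annihilates $V_{G_{*}'}$ there. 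On the vanishing-measure remainder the integrand is controlled by $\|D^{2}g_{\nu}\|\le C\nu^{-1}$, and passing to the limit gives $A_{*}'w=0$, hence \eqref{E: hessian constraint}.

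The main obstacle is this last step — showing $A_{*}'$ annihilates $V_{F_{*}}$ — and the difficulty is precisely the non-exposed-face phenomenon of Remark~\ref{R: exposed face conundrum}: the face of $G_{p}$ whose relative interior contains $q_{*}-\varphi^{*}(e)e$ may be strictly smaller than $G_{*}'$, so one cannot simply pass the kernel of $D^{2}\text{dist}(\cdot,G_{p})$ at $q_{*}-\varphi^{*}(e)e$ to the limit. One is forced to use the normal-fan duality between faces of $G_{p}$ through $q_{*}-\varphi^{*}(e)e$ and faces of the cone $N_{G_{p}}(q_{*}-\varphi^{*}(e)e)$, together with the concentration of the residual vectors onto $N_{G_{p}}(G_{*}')$ and a careful estimate of the measure of the exceptional set where that concentration has not yet occurred.
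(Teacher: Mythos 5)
Your part (a) is essentially sound: extracting $w_{*}'$ and $A_{*}'$ from the boundedness furnished by \eqref{E: distance nuisance}, getting $\langle q_{*},w_{*}'\rangle=\max_{\partial\varphi^{*}(p)}\langle \cdot,w_{*}'\rangle$ directly from $w_{*}'\in\mathcal{N}^{d-1}_{G_{p}}(q_{*}')$ (outer semicontinuity of the normal cone applied to the residual vectors) is a legitimate shortcut compared with the paper, which instead runs the monotonicity formula over the varying sets $\{\psi_{\nu}^{\epsilon_{j}}\leq 1\}$ together with the flatness statement of Proposition \ref{P: flatness prop}; and your treatment of $q_{**}$ is the same perturbed-linear-optimization argument as Lemma \ref{L: direction of approach}. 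The $\limsup$ Hessian bound \eqref{E: hessian bounds ack ack} is also fine in outline (the paper's Lemma \ref{L: key hessian control} supplies the careful version of "the interior ball survives mollification").

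The genuine gap is in the proof of \eqref{E: hessian constraint}, which is the heart of the proposition. First, your key structural claim --- that $w_{*}'$ lies in the \emph{relative interior} of $N_{G_{p}}(G_{*}')$, the normal cone of the face exposed by $w_{*}'$ --- is unjustified, and for non-polyhedral convex sets it can fail: a vector exposing a face need not lie in the relative interior of that face's normal cone (this is exactly the touching-cone versus normal-cone distinction, i.e.\ the non-exposed-face phenomenon you yourself flag). Second, even granting some cone membership, your argument runs at finite $j$: you assert that "a convexity argument forces all but a vanishing-measure set of the residual vectors into $N_{G_{p}}(G_{*}')$" and that for those $y'$ the projection point sits in the relative interior of a face containing $G_{*}'$ so that $D^{2}\mathrm{dist}(\cdot,G_{p})$ annihilates $V_{F_{*}}$ there. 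Neither claim is proved; a mean that is only \emph{approximately} in a face does not force approximate concentration of the measure on that face without a quantitative estimate, and the passage from "residual in $N_{G_{p}}(G_{*}')$" to "projection point in the relative interior of a face containing $G_{*}'$" is a separate unestablished step. Your closing paragraph concedes this is the obstacle but the proposed fix ("normal-fan duality" plus an exceptional-set estimate) is not carried out. The paper resolves the difficulty by passing to the blow-up limit first (Lemma \ref{L: intermediate key lemma}), where the statements are exact: $Dd_{*}$ takes values in $\mathcal{N}^{d-1}_{G_{p}}(q_{*}')$, the mean $\nu w_{*}'$ of the measure induced by $d_{*}Dd_{*}$ is computed exactly, Lemma \ref{L: cone integral} is applied with the face $\tilde{\mathcal{N}}(w_{*}')$ of $\mathcal{N}^{d-1}_{G_{p}}(q_{*}')$ that contains $w_{*}'$ in its relative interior (such a face always exists by \eqref{E: face partition}, so no relative-interior claim about the exposed face's normal cone is needed), and the required orthogonality $F_{*}-q_{*}\subseteq \tilde{\mathcal{N}}(w_{*}')^{\perp}$ is proved purely convex-geometrically from the definition of a face, with the degenerate case $w_{*}'=0$ handled separately via the lineality space. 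A repaired version of your idea would have to be recast at this limiting level (for instance using that, since $q_{*}\in F_{*}$ by part (a), $N_{G_{p}}(G_{*}')$ is an \emph{exposed} face of $\mathcal{N}^{d-1}_{G_{p}}(q_{*}')$, so the cone-integral lemma applies to it); as written, the finite-$j$ concentration argument does not go through.
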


The proof of this proposition, which is deferred until Section \ref{S: most technical mountain proof}, will begin by establishing part (a), which will then be leveraged to prove part (b).  The proof of (a), which identifies the face $F_{*}$,  amounts to an analysis of the curvature of the level set of $\psi_{\nu}$ near the face $\partial \varphi^{*}(p)$.  Here, due to the lack of smoothness, curvature is understood in terms of directional derivatives of the normal vector, evaluated along subsequences as in Lemma \ref{L: direction of approach} below.  Once the conceptual issues are clear, the explicit definition \eqref{E: graphical special case} of $g_{\nu}$ reduces the task to checking certain conditions via calculus.  

The proof of (b), which determines the limiting Hessian $Y_{*}$, is more involved: it becomes necessary to understand the curvature of the level set $\{\psi_{\nu} = 1\}$ interpreted in terms of the structure of the Hessian $D^{2}\psi_{\nu}$, hence Lemma \ref{L: direction of approach} no longer suffices.  In order to make rigorous the intuitive sense that the Hessians of the perturbed test functions should curve ``in the right directions," it turns out that arguing in terms of faces and normal cones clarifies the picture considerably.  Once these notions are brought in, the work once again reduces to straightforward calculations.

\subsection{Application to Cone Comparison} \label{S: mountain proof} The stage is almost set for the proof of Proposition \ref{P: key mountain}, which is an application of Proposition \ref{P: key mountain part 1}.  Still, one key condition needs to be checked, namely, the smallness condition imposed as a hypothesis in Proposition \ref{P: key mountain part 1}.  

The next result verifies this condition.  This is where the strict inequality $\nu < \zeta$ is important.

		\begin{prop} \label{P: smallness condition king} Let $\varphi$ satisfy Assumption \ref{A: c11 assumption} and $V \subseteq \mathbb{R}^{d} \setminus \{0\}$ be open and bounded.
		
		Fix a $p \in \mathbb{R}^{d} \setminus \{0\}$, set $e = \|p\|^{-1} p$, and let $(\psi_{\nu})_{0 < \nu \leq \zeta} = \{\psi_{e,\partial \varphi^{*}(p),\zeta} \, \mid \, 0 < \nu \leq \zeta\}$ be the conical test functions of Section \ref{S: conical test}. 
	
	Fix a $\nu \in (0,\zeta)$ and let $(\psi_{\nu}^{\epsilon})_{0 < \epsilon < \nu}$ be an admissible perturbation of $\psi_{\nu}$.
	
	Suppose that there is a $u \in USC(\overline{V})$, an $M \in \mathbb{R}$, and an $x_{0} \in V \cap \mathcal{C}(\partial \varphi^{*}(p))$ such that 
		\begin{gather}
			u \leq M + \psi_{\zeta} \quad \text{in} \, \, \overline{V}, \quad u(x_{0}) = M + \psi_{\zeta}(x_{0}), \label{E: hopefully the last time touching above} \\
			\{x \in \overline{V} \, \mid \, u(x) = M + \psi_{\zeta}(x) \} \subseteq V. \label{E: stay in the cone appendix}
		\end{gather}
	If $(x_{\epsilon})_{0 < \epsilon < \nu} \subseteq \overline{V}$ and $(q_{\epsilon})_{0 < \epsilon < \nu}$ satisfy, for each $\epsilon \in (0,\nu)$,
		\begin{equation*}
			u(x_{\epsilon}) - \psi_{\nu}^{\epsilon}(x_{\epsilon}) = \max \left\{ u(y) - \psi^{\epsilon}_{\nu}(y) \, \mid \, y \in \overline{V} \right\}, \quad q_{\epsilon} = \psi_{\nu}^{\epsilon}(x_{\epsilon})^{-1} x_{\epsilon},
		\end{equation*}
then
		\begin{equation} \label{E: smallness condition finally}
			\limsup_{\epsilon \to 0^{+}} \epsilon^{-1} \text{dist}(q_{\epsilon}',G_{p}) < +\infty.
		\end{equation} 
	(Here, as throughout this section, $q \mapsto q'$ denotes the orthogonal projection onto $\{e\}^{\perp}$.)
	\end{prop}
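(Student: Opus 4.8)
\textbf{Proof proposal for Proposition \ref{P: smallness condition king}.}

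The plan is to argue by contradiction. Suppose \eqref{E: smallness condition finally} fails, so there is a sequence $\epsilon_j \to 0^+$ with $\epsilon_j^{-1}\,\mathrm{dist}(q_{\epsilon_j}',G_p) \to +\infty$. By the admissibility of the perturbation (Definition \ref{D: admissible}), together with \eqref{E: hopefully the last time touching above}, \eqref{E: stay in the cone appendix}, and the compactness of $\overline{V}$, the maximizers $x_{\epsilon_j}$ eventually lie inside $V \cap \mathcal{U}_\nu$, and $u(x_{\epsilon_j}) - \psi_\nu^{\epsilon_j}(x_{\epsilon_j}) \to M$ with $x_{\epsilon_j} \to x_0 \in \mathcal{C}(\partial\varphi^*(p))$ (up to extraction). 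Hence $q_{\epsilon_j} = \psi_\nu^{\epsilon_j}(x_{\epsilon_j})^{-1} x_{\epsilon_j} \to \varphi(x_0)^{-1} x_0 =: q_0 \in \partial\varphi^*(p)$, and in particular $q_{\epsilon_j} \in \{\psi_\nu^{\epsilon_j} = 1\}$, so by \eqref{E : graph property ack ack ack} one has $\mathrm{dist}(q_{\epsilon_j}',G_p) < r$ and $\langle q_{\epsilon_j}, e\rangle = g_\nu^{\epsilon_j}(q_{\epsilon_j}')$.

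The heart of the matter is a geometric comparison. Because $\nu < \zeta$, the larger conical test function $\psi_\zeta$ is \emph{strictly} above $\psi_\nu$ off the ray $\mathcal{C}(\partial\varphi^*(p))$: by Proposition \ref{P: touching above test function part}, $\{\psi_\nu = \psi_\zeta < +\infty\} = \mathcal{C}(\partial\varphi^*(p))$, which at the graphical level means that on $\{\mathrm{dist}(\cdot,G_p) \le \nu\}$ the concave function $g_\nu$ lies strictly below the corresponding graph function $g_\zeta$ of $\psi_\zeta$ except on $G_p$ itself; quantitatively, since $g_\mu(x') = \varphi^*(e) - \mu + \sqrt{\mu^2 - \mathrm{dist}(x',G_p)^2}$ by \eqref{E: graphical special case}, an elementary computation gives $g_\zeta(x') - g_\nu(x') \ge c_0\, \mathrm{dist}(x',G_p)^2$ for some $c_0 = c_0(\nu,\zeta,\varphi) > 0$ on a neighborhood of $G_p$. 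Since $g_\nu^{\epsilon_j} \le g_\nu$ by \eqref{E: useful concavity stuff}, the same lower bound holds with $g_\nu^{\epsilon_j}$ in place of $g_\nu$, so translating back to the radial picture,
\begin{equation} \label{E: strict separation radial}
	\psi_\nu^{\epsilon_j}(x) - \psi_\zeta(x) \ge c_1\, \mathrm{dist}(x',G_p)^2 - C_1 \epsilon_j
\end{equation}
holds for $x$ in $\mathcal{U}_\nu$ near $\mathcal{C}(\partial\varphi^*(p))$, the $C_1\epsilon_j$ accounting for $\|g_\nu - g_\nu^{\epsilon_j}\|_\infty = O(\epsilon_j^2)$ — in fact, since $g_\nu$ is concave and Lipschitz, one gets the sharper $\|g_\nu - g_\nu^{\epsilon_j}\|_{L^\infty} = O(\epsilon_j^2)$ on compact subsets of the interior of $\{\mathrm{dist}(\cdot,G_p) < \nu\}$, but $\|g_\nu - g_\nu^{\epsilon_j}\| = O(\epsilon_j)$ suffices if care is needed near the boundary. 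Combining \eqref{E: strict separation radial} with the touching hypothesis $u \le M + \psi_\zeta \le M + \psi_\nu^{\epsilon_j} - c_1\,\mathrm{dist}(x',G_p)^2 + C_1\epsilon_j$ and the maximality of $x_{\epsilon_j}$ for $u - \psi_\nu^{\epsilon_j}$,
\begin{equation*}
	M = \lim_{j} \big(u(x_{\epsilon_j}) - \psi_\nu^{\epsilon_j}(x_{\epsilon_j})\big) \ge \limsup_j \big(u(x_0) - \psi_\nu^{\epsilon_j}(x_0)\big) = u(x_0) - \psi_\nu(x_0) = M,
\end{equation*}
so $u(x_{\epsilon_j}) - \psi_\nu^{\epsilon_j}(x_{\epsilon_j}) \ge u(x_0) - \psi_\nu^{\epsilon_j}(x_0)$ for each $j$; feeding in the pointwise bound on $u$ at $x_{\epsilon_j}$ yields
\begin{equation*}
	c_1\, \mathrm{dist}(q_{\epsilon_j}',G_p)^2 \le C_1 \epsilon_j + o(\epsilon_j)
\end{equation*}
after rescaling by $\psi_\nu^{\epsilon_j}(x_{\epsilon_j}) \to \varphi(x_0) \in (0,+\infty)$ and using positive one-homogeneity to pass between $x_{\epsilon_j}$ and $q_{\epsilon_j}$. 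This forces $\mathrm{dist}(q_{\epsilon_j}',G_p)^2 = O(\epsilon_j)$, i.e. $\mathrm{dist}(q_{\epsilon_j}',G_p) = O(\epsilon_j^{1/2})$, whence $\epsilon_j^{-1}\,\mathrm{dist}(q_{\epsilon_j}',G_p) = O(\epsilon_j^{-1/2})$ — which is certainly \emph{not} bounded, so this route must be sharpened.

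The fix, and the step I expect to be the main obstacle, is to exploit the \emph{first-order} optimality at $x_{\epsilon_j}$ rather than only the zeroth-order one: since $x_{\epsilon_j}$ is an interior maximum of $u - \psi_\nu^{\epsilon_j}$ and $u \le M + \psi_\zeta$ with equality only on the ray, the function $\psi_\zeta - \psi_\nu^{\epsilon_j}$ has a constrained extremal structure at $x_{\epsilon_j}$ forcing its gradient there to be small — precisely, one should show $\|D\psi_\nu^{\epsilon_j}(x_{\epsilon_j}) - D\psi_\zeta(x_{\epsilon_j})\| = O(\epsilon_j)$ by combining $D(\psi_\zeta - \psi_\nu^{\epsilon_j})(x_{\epsilon_j})$ being (nearly) a nonnegative combination of the active-constraint gradients with the strict convexity gap \eqref{E: strict separation radial}. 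Translating to the graph picture, $D g_\nu^{\epsilon_j}(q_{\epsilon_j}') - D g_\zeta(q_{\epsilon_j}')$ is $O(\epsilon_j)$, and since $D g_\zeta(x') = -\mathrm{dist}(x',G_p)\, D\,\mathrm{dist}(x',G_p)/\sqrt{\zeta^2 - \mathrm{dist}(x',G_p)^2}$ has norm comparable to $\mathrm{dist}(x',G_p)$ near $G_p$, while $D g_\nu^{\epsilon_j}(x')$ is obtained by mollifying the analogous $D g_\nu$ at scale $\epsilon_j$ — and $D g_\nu$ similarly has norm $\gtrsim \mathrm{dist}(x',G_p)$ outside an $\epsilon_j$-tube of $G_p$ — one concludes that either $q_{\epsilon_j}'$ lies within $O(\epsilon_j)$ of $G_p$ (done), or the two gradients differ by an amount bounded below by $c\,\mathrm{dist}(q_{\epsilon_j}',G_p)$, which against the $O(\epsilon_j)$ bound gives $\mathrm{dist}(q_{\epsilon_j}',G_p) = O(\epsilon_j)$, contradicting $\epsilon_j^{-1}\,\mathrm{dist}(q_{\epsilon_j}',G_p) \to \infty$. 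Making the active-constraint / Lagrange-multiplier bookkeeping rigorous at a point where $\psi_\zeta$ may itself be nonsmooth is the delicate part; the normal cone formalism of Section \ref{S: normal cones} (invoked in the companion Proposition \ref{P: key mountain part 1}) is the natural tool, since it lets one phrase ``$D(\psi_\zeta - \psi_\nu^{\epsilon_j})(x_{\epsilon_j})$ points into the cone of ascent directions of the contact set'' precisely, and the explicit form \eqref{E: graphical special case} then reduces everything to one-dimensional estimates on $s \mapsto \sqrt{\mu^2 - s^2}$.
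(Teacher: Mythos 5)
Your overall strategy---compare the value of $u - \psi_\nu^{\epsilon}$ at the new maximizer $x_{\epsilon}$ with its value at the original contact point $x_{0}$, and play a quadratic separation between the $\nu$- and $\zeta$-cones against the size of the perturbation---is the right one, and it is the paper's. But as executed there is a genuine gap, which you half-acknowledge. First, the $-C_{1}\epsilon_{j}$ penalty you insert into your radial separation inequality is spurious: by Definition \ref{D: admissible} one has $\psi_\nu \le \psi_\nu^{\epsilon}$ pointwise in $\mathcal{U}_\nu$ (equivalently $g_\nu^{\epsilon} \le g_\nu$), and $g_\nu \le g_\zeta - \tfrac14(\nu^{-1}-\zeta^{-1})\,\mathrm{dist}(\cdot,G_p)^2$ near $G_p$, so the separation $\psi_\nu^{\epsilon}(x) \ge \psi_\zeta(x)\bigl[1 + A(\nu,\zeta)\,\mathrm{dist}(q',G_p)^2\bigr]$ holds with \emph{no} $\epsilon$-error at all (this is Proposition \ref{P: scarier bound}). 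The only place an $\epsilon$-dependent error genuinely enters the zeroth-order bookkeeping is the term $\psi_\nu^{\epsilon}(x_0) - \psi_\nu(x_0)$ produced by maximality, and the whole point is that this term is $O(\epsilon^2)$, not $O(\epsilon)$: the projection $q_0'$ of the contact point lies \emph{in} $G_p$, so $\mathrm{dist}(q_0' + \epsilon y',G_p) \le \epsilon\|y'\|$ and the square-root profile \eqref{E: graphical special case} gives $g_\nu^{\epsilon}(q_0') \ge \varphi^{*}(e) - \nu^{-1}\epsilon^{2}$; converting this graph-level estimate into $\psi_\nu^{\epsilon}(x_0) \le \psi_\nu(x_0)(1 + c\,\epsilon^2)$ via the transversal-crossing monotonicity of Proposition \ref{P: crazy monotonicity argument} is exactly Lemma \ref{L: basic bound}. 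With these two ingredients the zeroth-order argument already closes: $A\,\psi_\zeta(x_\epsilon)\,\mathrm{dist}(q_\epsilon',G_p)^2 \le \psi_\nu^{\epsilon}(x_0) - \psi_\nu(x_0) \le c\,\epsilon^2\,\psi_\nu(x_0)$, hence $\mathrm{dist}(q_\epsilon',G_p) = O(\epsilon)$. By downgrading the available error to $O(\epsilon_j)$ you only obtained $\mathrm{dist} = O(\sqrt{\epsilon_j})$, which, as you note, is useless.

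Second, the proposed fix via first-order optimality cannot be made to work in the form stated and should be dropped. You want $\|D(\psi_\zeta - \psi_\nu^{\epsilon_j})(x_{\epsilon_j})\| = O(\epsilon_j)$ from a Lagrange-multiplier/active-constraint argument, but no first-order information is available at $x_{\epsilon_j}$: $u$ is merely upper semicontinuous, the constraint $u \le M + \psi_\zeta$ is saturated only on the contact set, which by \eqref{E: stay in the cone appendix} and Proposition \ref{P: touching above test function part} lies in $\mathcal{C}(\partial\varphi^{*}(p))$, and nothing forces $x_{\epsilon_j}$ to lie on that set---maximality of $u - \psi_\nu^{\epsilon_j}$ at $x_{\epsilon_j}$ imposes no critical-point structure on $\psi_\zeta - \psi_\nu^{\epsilon_j}$ there. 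The normal-cone formalism of Section \ref{S: normal cones} is used in Proposition \ref{P: key mountain part 1} for a different purpose (identifying the limiting face and Hessian after the smallness condition is already known), not to extract gradient smallness at $x_{\epsilon_j}$. The repair you need is not more optimality conditions at $x_{\epsilon_j}$ but the sharp $O(\epsilon^2)$ estimate at $x_0$ described above.
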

	
The somewhat technical proof of Proposition \ref{P: smallness condition king} is presented in Appendix \ref{A: perturbed test functions} alongside the construction of admissible perturbations.  Still, at this stage, it may be worthwhile to consider why the proposition is true.  Here is an analogous, but much simpler situation: suppose that $u : [-1,1] \to \mathbb{R}$ is upper semicontinuous and 
	\begin{equation*}
		u(x) \leq \frac{x^{2}}{2 \zeta} \quad \text{for each} \quad x \in [-1,1], \quad u(0) = 0,
	\end{equation*}
i.e., the quadratic $x \mapsto \frac{x^{2}}{2\zeta}$ touches $u$ from above at $0$. 
Fix a $\nu < \zeta$ and, for any $\epsilon > 0$, let $v_{\nu}^{\epsilon}$ be the function obtained by mollifying the quadratic $x \mapsto \frac{x^{2}}{2 \nu}$ at scale $\epsilon$, i.e., $v_{\nu}^{\epsilon}(x) = \frac{1}{2 \nu} \int_{-1}^{1} y^{2} \rho(x + \epsilon y) \, dy$ for some mollifying kernel $\rho$.  Fix points $(x_{\epsilon})_{\epsilon > 0}$ with
	\begin{equation*}
		u(x_{\epsilon}) - v_{\nu}^{\epsilon}(x_{\epsilon}) = \max \left\{ u(y) - v_{\nu}^{\epsilon}(y) \, \mid \, y \in [-1,1] \right\} \quad \text{for each} \quad \epsilon > 0.
	\end{equation*}
In view of the fact that $\frac{x^{2}}{2 \nu} - \frac{x^{2}}{2 \zeta} = C(\nu,\zeta) x^{2}$ for some $C(\nu,\zeta) > 0$, it is not hard to show that
	\begin{equation*}
		|x_{\epsilon}| \leq C \epsilon,
	\end{equation*}
where the constant depends only on $\nu$, $\zeta$, and the mollifier $\rho$.  That is, after the mollification, the new contact points cannot stray too far from the original one.

The above analogy is slightly too simple, yet Proposition \ref{P: smallness condition king} is essentially the same situation, except with the quadratic $\frac{x^{2}}{2 \nu}$ replaced by $\psi_{\nu}$ and $|\cdot|$ (the distance to the origin) replaced by $\text{dist}(\cdot,G_{p})$ (the distance to the set $G_{p} = \{g_{\zeta} = g_{\nu}\}$). 

Taking Propositions \ref{P: key mountain part 1} and \ref{P: smallness condition king} for granted for now, here is the proof of Proposition \ref{P: key mountain}, which boils down to concatenating the previous string of propositions.

\begin{proof}[Proof of Proposition \ref{P: key mountain}]  Begin with the proof of (i), that is, the claim that the sequence $((x_{j},X_{j},\tilde{q}_{j}))_{j \in \mathbb{N}}$ is precompact in $V \times \mathcal{S}_{d} \times \{\varphi = 1\}$ and \eqref{E: upper bound key mountain} holds.  Toward that end, since $\overline{V}$ and $\{\varphi = 1\}$ is compact, it suffices to show that (i) if $x_{*}$ is any limit point of $(x_{j})_{j \in \mathbb{N}}$, then $x_{*} \in V$; and (ii) the matrices $(X_{j})_{j \in \mathbb{N}}$ are bounded.  

Let $x_{*}$ be an accumulation point of $(x_{j})_{j \in \mathbb{N}}$.  Without loss of generality (passing to a subsequence if necessary), assume that $x_{*} = \lim_{j \to \infty} x_{j}$.  Observe that \eqref{E: touching above key mountain} implies
	\begin{equation*}
		\lim_{\epsilon \to 0^{+}} [u(x_{\epsilon}) - \psi_{\nu}^{\epsilon}(x_{\epsilon})] \geq \max \left\{ u(y) - \psi_{\nu}(y) \, \mid \, y \in \overline{V} \right\}.
	\end{equation*}
Thus, by upper semicontinuity of $u$ and the local uniform convergence $\psi^{\epsilon}_{\nu} \to \psi_{\nu}$,
	\begin{equation} \label{E: touching bit}
		u(x_{*}) - \psi_{\nu}(x_{*}) = \max \left\{ u(y) - \psi_{\nu}(y) \, \mid \, y \in \overline{V} \right\},
	\end{equation}
and then \eqref{E: key localization} implies that $x_{*} \in V$.

In fact, by Proposition \ref{P: touching above test function part} and the strict inequality $\nu < \zeta$, \eqref{E: touching bit} and \eqref{E: key localization} together imply that $x_{*} \in V \cap \mathcal{C}(\partial \varphi^{*}(p))$, and then $x_{*} \neq 0$ since $V \subseteq \mathbb{R}^{d} \setminus \{0\}$.  It follows that one can define $(q_{j})_{j \in \mathbb{N}} \subseteq \mathbb{R}^{d}$ by setting $q_{j} = \psi_{\nu}^{\epsilon_{j}}(x_{j})^{-1} x_{j}$ for all $j$ sufficiently large.  Defining $q_{*} = \psi_{\nu}(x_{*})^{-1} x_{*}$, the local uniform convergence $\psi_{\nu}^{\epsilon_{j}} \to \psi_{\nu}$ implies $q_{*} = \lim_{j \to \infty} q_{j}$.  Further, by Proposition \ref{P: smallness condition king}, if $q_{j}'$ denotes the component of $q_{j}$ orthogonal to $e$, then
	\begin{equation*}
		\limsup_{j \to \infty} \epsilon_{j}^{-1} \text{dist}(q_{j}',G_{p}) < +\infty.
	\end{equation*} 
Thus, the sequence $((q_{j},\tilde{q}_{j}))_{j \in \mathbb{N}}$ satisfies the assumptions (i)-(iii) of Proposition \ref{P: key mountain part 1}.  Invoking that proposition, one deduces that if $(Y_{j})_{j \in \mathbb{N}}$ is given by 
	\begin{equation*}
		Y_{j} = D^{2}\psi_{\nu}^{\epsilon_{j}}(q_{j}),
	\end{equation*}
then $(Y_{j})_{j \in \mathbb{N}}$ satisfies \eqref{E: hessian bounds ack ack}.  At the same time, observe that, by the definition \eqref{E: nice things key mountain} of $X_{j}$ and homogeneity, one can write
	\begin{equation*}
		X_{j} = \psi_{\nu}^{\epsilon_{j}}(x_{j})^{-1} Y_{j} \quad \text{for each} \quad j \in \mathbb{N}.
	\end{equation*}
Accordingly, since $\psi_{\nu}^{\epsilon_{j}}(x_{j}) \to \psi_{\nu}(x_{*})$ as $j \to \infty$, it follows that $(X_{j})_{j \in \mathbb{N}}$ is bounded, and the bound \eqref{E: hessian bounds ack ack} can be converted directly into the bound \eqref{E: upper bound key mountain}.  This completes the proof of (i).

It remains to prove (ii).  Toward that end, suppose that $(x_{*},X_{*},q_{**})$ is any accumulation point of the sequence $((x_{j},X_{j},\tilde{q}_{j}))_{j \in \mathbb{N}}$.  Up to passing to a subsequence, there is no loss of generality assuming that $(x_{*},X_{*},q_{**}) = \lim_{j \to \infty} (x_{j},X_{j},\tilde{q}_{j})$. 

As in the previous step of the proof, it is expedient to define $(q_{j})_{j \in \mathbb{N}}$ and $(Y_{j})_{j \in \mathbb{N}}$ such that, for all $j$ sufficiently large,  
	\begin{equation*}
		q_{j} = \psi_{\nu}^{\epsilon_{j}}(x_{j})^{-1} x_{j}, \quad Y_{j} = D^{2}\psi_{\nu}(q_{j}).
	\end{equation*} 
Let $q_{*} = \psi_{\nu}(x_{*})^{-1}x_{*}$ so that $q_{*} = \lim_{j \to \infty} q_{j}$.  Let $Y_{*} = \psi_{\nu}(x_{*}) X_{*}$.  Since $X_{j} \to X_{*}$, it follows that $Y_{j} \to Y_{*}$ as $j \to +\infty$.  

As before, one can invoke Proposition \ref{P: key mountain part 1}: this leads to the conclusion that there is a face $F_{*} \subseteq \partial \varphi^{*}(p)$ such that 
	\begin{equation*}
		\{q_{*},q_{**}\} \subseteq F_{*} \quad \text{and} \quad Y_{*}q = 0 \quad \text{for each} \quad q \in F_{*}.
	\end{equation*}	
Since $X_{*} = \psi_{\nu}(x_{*})^{-1} Y_{*}$, it follows that $X_{*} q = 0$ for each $q \in F_{*}$. \end{proof}

\subsection{Normal Cones}  \label{S: normal cones} With the exception of Proposition \ref{P: smallness condition king}, the proof of which is presented in Appendix \ref{A: perturbed test functions}, the only loose end remaining in this section is the proof of Proposition \ref{P: key mountain part 1}.  The notion of normal cones will be used repeatedly in the proof.  Therefore, before diving into the analysis, it will be necessary to review the definition and some relevant properties, which will significantly clarify the geometric arguments to come.  Much of this information can be found in \cite{schneider} or \cite{rockafellar_wets}.

First, let $C \subseteq \mathbb{R}^{m}$ be a convex set in some arbitrary dimension $m$ and suppose $x_{*} \in C$.  Recall that a vector $v \in \mathbb{R}^{m} \setminus \{0\}$ determines a supporting hyperplane to $C$ at a point $x_{*}$ if 
	\begin{equation*}
		C \subseteq \left\{ x \in \mathbb{R}^{m} \, \mid \, \langle x - x_{*}, v \rangle \leq 0 \right\},
	\end{equation*}
or, equivalently, if 
	\begin{equation*}
		\langle v, x_{*} \rangle = \max \left\{\langle v, x \rangle \, \mid \, x \in C \right\}.
	\end{equation*}
Thus, up to rescaling, the supporting hyperplanes to $C$ at $x_{*}$ are in one-to-one correspondence with such vectors $v$.  The normal cone is nothing but the set of all such supporting hyperplanes with this extra multiplicity.

	\begin{definition} Given a convex set $C \subseteq \mathbb{R}^{m}$ in dimension $m$, the normal cone $\mathcal{N}^{m}_{C}(x)$ to $C$ at a point $x \in C$ is the convex cone defined by 
		\begin{equation*}
			\mathcal{N}^{m}_{C}(x) = \bigcap_{y \in C} \left\{ v \in \mathbb{R}^{m} \, \mid \, \langle v, x \rangle \geq \langle v, y \rangle \right\}.
		\end{equation*}
	\end{definition}
	
The normal cone plays the same role for a convex set that the subdifferential does for a convex function.\footnote{Indeed, the normal cone equals the subdifferential of the indicator function $I_{C}^{\infty}$ defined by $I^{\infty}_{C}(x) = 0$ if $x \in C$ and $I^{\infty}_{C}(x) = +\infty$, otherwise.}  In particular, $\mathcal{N}^{m}_{C}(x)$ is nonempty at each boundary point $x \in \partial C$.
	
\begin{remark} The ambient dimension $m$ is included in the notation $\mathcal{N}^{m}_{C}$ to avoid the slight ambiguity that will arise later, when certain convex sets will be treated both as subsets of $\mathbb{R}^{d-1}$ and $\mathbb{R}^{d}$.  Note, for instance, that the normal cone to the square $[0,1] \times [0,1] \times \{0\}$ changes according to whether one restricts the normal cone to vectors $v \in \mathbb{R}^{2} \times \{0\}$, i.e., identifying $\mathbb{R}^{2} \times \{0\}$ with $\mathbb{R}^{2}$, or considers $\mathcal{N}^{3}_{[0,1] \times [0,1] \times \{0\}}$ as literally defined above. \end{remark}

The most basic example of the normal cone occurs when the set $C$ is an open set in $\mathbb{R}^{m}$ of class $C^{1}$.  In that case, if $N_{\partial C}$ is the outward normal to $C$, then, for any $x \in \partial C$, 
	\begin{equation*}
		\mathcal{N}^{m}_{C}(x) = \{\lambda N_{\partial C}(x) \, \mid \, \lambda \geq 0\}.
	\end{equation*}
	
A more interesting example, which will be needed shortly, occurs when $C = \{\varphi \leq 1\}$ for some Finsler norm $\varphi$.  The next lemma shows that the normal cone is nothing but the cone generated by the subdifferential in this case.

	\begin{lemma} \label{L: explicit normal cone} Let $\varphi$ be a Finsler norm in $\mathbb{R}^{d}$.  If  $q \in \{\varphi = 1\}$, then
		\begin{equation*}
			\mathcal{N}^{d}_{\{\varphi \leq 1\}}(q) = \mathcal{C}(\partial \varphi(q)).
		\end{equation*}\end{lemma}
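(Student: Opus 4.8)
\textbf{Proof plan for Lemma \ref{L: explicit normal cone}.}

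The plan is to prove the two inclusions $\mathcal{C}(\partial \varphi(q)) \subseteq \mathcal{N}^{d}_{\{\varphi \leq 1\}}(q)$ and $\mathcal{N}^{d}_{\{\varphi \leq 1\}}(q) \subseteq \mathcal{C}(\partial \varphi(q))$ separately, using the duality identities \eqref{E: subdifferential basic identity} and \eqref{E: euler} recalled in Section \ref{S: preliminaries}. For the first inclusion, I would take $p \in \partial \varphi(q)$. By \eqref{E: subdifferential basic identity}, $\varphi^{*}(p) = 1$ and $\langle p, q \rangle = \varphi(q) = 1$. Then for any $\bar{q} \in \{\varphi \leq 1\}$, the basic duality inequality \eqref{E: euler} gives $\langle p, \bar{q} \rangle \leq \varphi(\bar{q}) \varphi^{*}(p) \leq 1 = \langle p, q \rangle$. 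Hence $p$ satisfies the defining inequality of the normal cone, so $p \in \mathcal{N}^{d}_{\{\varphi \leq 1\}}(q)$; since the normal cone is a cone, $\lambda p \in \mathcal{N}^{d}_{\{\varphi \leq 1\}}(q)$ for all $\lambda \geq 0$, which gives $\mathcal{C}(\partial \varphi(q)) \subseteq \mathcal{N}^{d}_{\{\varphi \leq 1\}}(q)$.

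For the reverse inclusion, I would take $v \in \mathcal{N}^{d}_{\{\varphi \leq 1\}}(q)$, so $\langle v, \bar{q} \rangle \leq \langle v, q \rangle$ for all $\bar{q} \in \{\varphi \leq 1\}$. The case $v = 0$ is trivial (it lies in $\mathcal{C}(\partial \varphi(q))$ by taking $\lambda = 0$), so assume $v \neq 0$. By the definition of the dual norm, $\varphi^{*}(v) = \max\{\langle v, \bar{q} \rangle \mid \bar{q} \in \{\varphi \leq 1\}\} = \langle v, q \rangle$, and in particular $\varphi^{*}(v) > 0$. Set $p = \varphi^{*}(v)^{-1} v$; then $\varphi^{*}(p) = 1$ and $\langle p, q \rangle = \varphi^{*}(v)^{-1} \langle v, q \rangle = 1 = \varphi(q)$. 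By \eqref{E: subdifferential basic identity}, this means exactly that $p \in \partial \varphi(q)$. Therefore $v = \varphi^{*}(v) p \in \mathcal{C}(\partial \varphi(q))$, completing the proof.

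I do not anticipate a serious obstacle here: the statement is essentially a reformulation of \eqref{E: subdifferential basic identity} once one unwinds the definition of the normal cone and the dual norm, and the one mild point to be careful about is handling $v = 0$ separately (or noting $q \in \{\varphi = 1\}$ already guarantees $q \neq 0$, so that the supporting hyperplane at $q$ is nontrivial). The only place that uses $q \in \{\varphi = 1\}$ rather than merely $q \in \{\varphi \leq 1\}$ is to ensure $\langle p, q \rangle = \varphi(q) = 1$ can be saturated, i.e., that $q$ is a boundary point where $\partial \varphi(q)$ is described by \eqref{E: subdifferential basic identity}; this is exactly the hypothesis, so no issue arises.
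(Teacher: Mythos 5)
Your proof is correct and is exactly the argument the paper has in mind: the paper dispatches this lemma in one line as following "more-or-less immediately" from \eqref{E: subdifferential basic identity} and the definition of the normal cone, and your two inclusions (using \eqref{E: euler} for one direction and normalizing $v$ by $\varphi^{*}(v)$ for the other) are the standard way to fill in that remark. No gaps; the separate treatment of $v=0$ is the right minor care point.
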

		
			\begin{proof} This follows more-or-less immediately from \eqref{E: subdifferential basic identity} and the definition of the normal cone. \end{proof}

There is another analogy with the subdifferential of a convex function.  Even if the convex set in question fails to be $C^{1}$, the normal cone is always upper semicontinuous in a sense that can be made precise.  This will be useful later so it is recorded now.

	\begin{lemma} \label{L: upper semicontinuity} Let $C \subseteq \mathbb{R}^{m}$ be a convex set in dimension $m$.  The normal cone $\mathcal{N}^{m}_{C}$ is upper semicontinuous in the following sense: for any $x \in C$, if the limit superior of $\mathcal{N}^{m}_{C}$ at $x$ is defined by 
		\begin{equation*}
			\limsup_{y \to x} \mathcal{N}^{m}_{C}(y) = \overline{\bigcap_{\epsilon > 0} \bigcup_{y \in B_{\epsilon}(x)} \mathcal{N}^{m}_{C}(y)},
		\end{equation*}
	then 
		\begin{equation*}
			\limsup_{y \to x} \mathcal{N}^{m}_{C}(y) \subseteq \mathcal{N}^{m}_{C}(x) \quad \text{for each} \quad x \in C.
		\end{equation*}
	\end{lemma}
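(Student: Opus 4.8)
\textbf{Proof plan for Lemma \ref{L: upper semicontinuity} (upper semicontinuity of the normal cone).}

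The plan is to unwind the definition of $\limsup_{y \to x} \mathcal{N}^{m}_{C}(y)$ and argue that any element produced in this way must lie in $\mathcal{N}^{m}_{C}(x)$. Because the right-hand side of the claimed inclusion is itself a closed set (the normal cone is the intersection of closed halfspaces, hence closed), and the left-hand side is by construction the closure of $\bigcap_{\epsilon>0}\bigcup_{y\in B_\epsilon(x)}\mathcal{N}^m_C(y)$, it suffices to show that every vector $v\in\bigcap_{\epsilon>0}\bigcup_{y\in B_\epsilon(x)}\mathcal{N}^m_C(y)$ belongs to $\mathcal{N}^m_C(x)$; the closure is then automatic. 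So fix such a $v$. By definition, for every $\epsilon>0$ there is some $y_\epsilon\in B_\epsilon(x)\cap C$ with $v\in\mathcal{N}^m_C(y_\epsilon)$; taking $\epsilon=1/n$ yields a sequence $y_n\to x$ with $y_n\in C$ and $v\in\mathcal{N}^m_C(y_n)$ for each $n$.

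First I would record what $v\in\mathcal{N}^m_C(y_n)$ means concretely: by the definition of the normal cone, $\langle v, y_n\rangle\geq\langle v, z\rangle$ for every $z\in C$. Now pass to the limit $n\to\infty$. For each fixed $z\in C$, the left-hand side $\langle v, y_n\rangle$ converges to $\langle v, x\rangle$ by continuity of the inner product (and $x\in C$ since $C$ is convex, hence closed if we assume it closed, or at worst $x\in\overline C$, which is all that is needed for $\mathcal{N}^m_C(x)$ to make sense — indeed one typically takes $C$ closed here, consistent with its use as $\{\varphi\le 1\}$ or $F_{(e,\nu)}$). Therefore $\langle v, x\rangle\geq\langle v, z\rangle$ for every $z\in C$, which is exactly the statement $v\in\mathcal{N}^m_C(x)$. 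This establishes $\bigcap_{\epsilon>0}\bigcup_{y\in B_\epsilon(x)}\mathcal{N}^m_C(y)\subseteq\mathcal{N}^m_C(x)$, and taking closures of both sides (using that $\mathcal{N}^m_C(x)$ is closed) gives the desired inclusion $\limsup_{y\to x}\mathcal{N}^m_C(y)\subseteq\mathcal{N}^m_C(x)$.

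The argument is essentially routine, so there is no real obstacle; the only point to be slightly careful about is the interplay between the closure operation in the definition of $\limsup$ and the closedness of $\mathcal{N}^m_C(x)$ — one should note explicitly that $\mathcal{N}^m_C(x)=\bigcap_{z\in C}\{v:\langle v,x\rangle\geq\langle v,z\rangle\}$ is an intersection of closed halfspaces through the origin, hence a closed convex cone, so that proving the inclusion for the un-closed set $\bigcap_{\epsilon>0}\bigcup_{y\in B_\epsilon(x)}\mathcal{N}^m_C(y)$ suffices. A cleaner variant, if one prefers to avoid even mentioning the inner closure, is to take an arbitrary $v$ in the closure directly: approximate it by vectors $v_k\to v$ with each $v_k\in\bigcup_{y\in B_{1/k}(x)}\mathcal{N}^m_C(y)$, pick $y_k\in B_{1/k}(x)$ with $v_k\in\mathcal{N}^m_C(y_k)$, and pass to the limit in $\langle v_k, y_k\rangle\geq\langle v_k, z\rangle$ for fixed $z\in C$, again using joint continuity of the inner product. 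Either way the proof is a one-paragraph limiting argument, and I would write it in the second, slightly more self-contained form to match the statement exactly as phrased.
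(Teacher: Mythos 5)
Your proposal is correct and, in the "cleaner variant" you say you would actually write, it is essentially identical to the paper's proof: extract a sequence $(y_k,v_k)\to(x,v)$ with $v_k\in\mathcal{N}^m_C(y_k)$ and pass to the limit in $\langle v_k,y_k\rangle\geq\langle v_k,z\rangle$ for each fixed $z\in C$. The first version (fixed $v$ plus closedness of $\mathcal{N}^m_C(x)$) is a harmless minor variant, and the aside about whether $x\in C$ is unnecessary since that is a hypothesis of the lemma.
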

	
		\begin{proof} If $v \in \limsup_{y \to x} \mathcal{N}^{m}_{C}(y)$, then there is a sequence $((x_{n},v_{n}))_{n \in \mathbb{N}} \subseteq C \times \mathbb{R}^{d}$ such that $(x_{n},v_{n}) \to (x,v)$ as $n \to +\infty$ and $v_{N} \in \mathcal{N}^{m}_{C}(x_{N})$ for all $N$.  If $y \in C$, then the definition of the normal cone implies that
			\begin{equation*}
				\langle v_{n}, x_{n} \rangle \geq \langle v_{n}, y \rangle \quad \text{for each} \quad n \in \mathbb{N},
			\end{equation*}
		which implies that $\langle v, x \rangle \geq \langle v, y \rangle$.  Since this is true for any $y \in C$, this proves $v \in \mathcal{N}^{m}_{C}(x)$ by definition.  \end{proof}
		
For the purposes of this paper, the main utility of the normal cone lies in the next lemma.

	\begin{lemma} \label{L: direction of approach}  Fix $m \in \mathbb{N}$ and let $C \subseteq \mathbb{R}^{m}$ be a convex set. Suppose that $x \in C$ and $p \in \mathcal{N}^{m}_{C}(x)$.  If there is a sequence $((x_{n},p_{n}))_{n \in \mathbb{N}} \subseteq C \times (\mathbb{R}^{d} \setminus \{p\})$ and a point $w' \in \mathbb{R}^{d}$ such that (i)  $(x_{n},p_{n}) \to (x,p)$ as $n \to +\infty$; (ii) $p_{N} \in \mathcal{N}^{m}_{C}(x_{N})$ for each $N \in \mathbb{N}$; and
		\begin{equation*}
			\text{(iii)} \quad w' = \lim_{n \to \infty} \frac{p_{n} - p}{\|p_{n} - p\|},
		\end{equation*}
	then
		\begin{equation*}
			\langle w', x \rangle = \max \left\{ \langle w', y \rangle \, \mid \, y \in C \, \, \text{such that} \, \, p \in \mathcal{N}^{m}_{C}(y) \right\}.
		\end{equation*}
	\end{lemma}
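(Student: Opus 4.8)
\textbf{Plan for the proof of Lemma \ref{L: direction of approach}.}

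The plan is to fix an arbitrary point $y \in C$ with $p \in \mathcal{N}^{m}_{C}(y)$ and show $\langle w', x \rangle \geq \langle w', y \rangle$; the reverse inequality will then be automatic by taking $y = x$ (which of course satisfies $p \in \mathcal{N}^{m}_{C}(x)$ by hypothesis), so that the maximum is attained at $x$. The key observation is the following ``second-order'' consequence of $p_n \in \mathcal{N}^m_C(x_n)$. By definition of the normal cone, $\langle p_n, x_n \rangle \geq \langle p_n, y \rangle$ for every $n$, i.e.\ $\langle p_n, x_n - y \rangle \geq 0$. On the other hand, since $p \in \mathcal{N}^m_C(y)$ and $x_n \in C$, we have $\langle p, y \rangle \geq \langle p, x_n \rangle$, i.e.\ $\langle p, x_n - y \rangle \leq 0$. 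Subtracting, $\langle p_n - p, x_n - y \rangle \geq -\langle p, x_n - y\rangle \geq 0$, and in particular $\langle p_n - p, x_n - y \rangle \geq 0$. Dividing by $\|p_n - p\| > 0$ (which is positive since $p_n \neq p$) gives
\begin{equation*}
	\left\langle \frac{p_n - p}{\|p_n - p\|}, x_n - y \right\rangle \geq 0.
\end{equation*}

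Next I would pass to the limit $n \to \infty$. By hypothesis (iii) the first factor converges to $w'$, and by hypothesis (i) we have $x_n \to x$, hence $x_n - y \to x - y$. Therefore the left-hand side converges to $\langle w', x - y \rangle$, which yields $\langle w', x - y \rangle \geq 0$, i.e.\ $\langle w', x \rangle \geq \langle w', y \rangle$. Since $y$ was an arbitrary element of $\{y \in C \mid p \in \mathcal{N}^m_C(y)\}$, this proves
\begin{equation*}
	\langle w', x \rangle \geq \max \left\{ \langle w', y \rangle \, \mid \, y \in C \, \, \text{such that} \, \, p \in \mathcal{N}^{m}_{C}(y) \right\}.
\end{equation*}
Combined with the fact that $x$ itself belongs to the set over which the maximum is taken (so the reverse inequality holds trivially), this gives the claimed equality.

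This argument is short and essentially just unwinds the definitions; I do not anticipate any serious obstacle. The only point requiring the slightest care is making sure all the limits exist and are taken in the right order — but hypothesis (iii) is precisely the assumption that the normalized difference quotient converges, so no subsequence extraction is needed here (that will be done upstream, e.g.\ in the proof of Proposition \ref{P: key mountain part 1}, to guarantee a $w'$ exists in the first place). It is also worth noting for the record that the inequality $\langle p_n - p, x_n - y\rangle \geq 0$ can be seen even more directly: $\langle p_n, x_n\rangle \geq \langle p_n, y\rangle$ and $\langle p, x_n\rangle \leq \langle p, y\rangle$ together give $\langle p_n - p, x_n\rangle \geq \langle p_n - p, y\rangle$, which is the same thing. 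Either bookkeeping works.
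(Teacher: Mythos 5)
Your argument is correct and is essentially identical to the paper's proof: both apply the monotonicity inequality $\langle p_n - p, x_n - y\rangle \geq 0$ (which the paper isolates as its "monotonicity formula" for normal cones and you derive directly from the definitions), divide by $\|p_n - p\|$, pass to the limit to get $\langle w', x - y\rangle \geq 0$, and conclude by noting $x$ itself lies in the admissible set. No gaps.
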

	
See Figure \ref{F: tangent thing} for a pictorial proof.

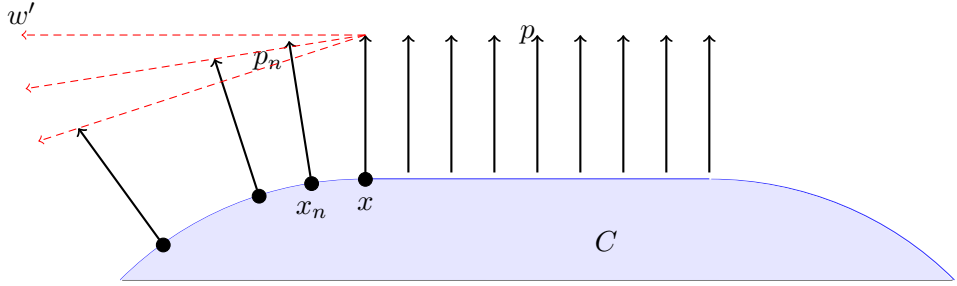
\begin{figure}
	\begin{tikzpicture}[scale=3.6]
	\draw[color=blue!80, thick] (-0.25in,0.5in) -- (0.25in,0.5in);
	\draw (-0.6036in, 0.3536in) -- (0.6036in,0.3536in);
	\draw[color = blue!80] (-0.60636in,0.3536in) arc (135:90:0.5in);
	\draw[color = blue!80] (0.60636in,0.3536in) arc (45:90:0.5in);
	\fill[fill=blue!10] (-0.60636in,0.3536in) arc (135:90:0.5in) -- (0.25in,0.5in) arc (90:45:0.5in) -- (-0.75in, 0.3536in);


	\draw[thick, ->] (0, 0.51in) -- (0,0.71in);
	\draw[anchor = east] (0.03,0.71in) node {\small $p$}; 
	\draw[thick,->] (0.0625in,0.51in) -- (0.0625in,0.71in);
	\draw[thick,->] (0.1250in,0.51in) -- (0.1250in,0.71in);
	\draw[thick,->] (0.1875in,0.51in) -- (0.1875in,0.71in);
	\draw[thick,->] (0.250in,0.51in) -- (0.250in,0.71in);
	
	\draw[thick,->] (-0.0625in,0.51in) -- (-0.0625in,0.71in);
	\draw[thick,->] (-0.1250in,0.51in) -- (-0.1250in,0.71in);
	\draw[thick,->] (-0.1875in,0.51in) -- (-0.1875in,0.71in);
	\draw[thick,->] (-0.250in,0.51in) -- (-0.250in,0.71in);
	
	\filldraw[fill=black] (-0.4045in,0.4755in) circle (0.010in);
	\draw[thick,->] (-0.4076in,0.4850in) -- (-0.4694in,0.6753in);
	
	\filldraw[fill=black] (-0.5439in,0.4045in) circle (0.010in);
	\draw[thick,->] (-0.5498in,0.4126in) -- (-0.6673in,0.5744in);
	
	\filldraw[fill=black] (-0.3282in,0.4938in) circle (0.010in);
	\draw[anchor = north] (-0.3282in,0.4838in) node {\small $x_{n}$}; 
	\draw[thick,->] (-0.3298in,0.5037in) -- (-0.3611in,0.7013in);
	\draw[anchor = north] (-0.3911in,0.7013in) node {\small $p_{n}$}; 
	
	\filldraw[fill=black] (-0.25in,0.5in) circle (0.010in);
	\draw[anchor = north] (-0.25in,0.49in) node {\small $x$}; 
	
	
	\draw[red,thin,densely dashed,->] (-0.25in,0.71in) -- (-0.75in,0.71in);
	\draw[anchor = south] (-0.75in,0.71in) node {\small $w'$}; 
	\draw[red,thin,densely dashed,->] (-0.25in,0.71in) -- (-0.7439in,0.6319in);
	\draw[red,thin,densely dashed,->] (-0.25in,0.71in) -- (-0.7255in,0.5555in);
	
	\draw[anchor = north] (0.1in, 0.44in) node {\small $C$}; 
	
\end{tikzpicture}
	\caption{The vector $w'$ describes the direction from which the sequence of support vectors $(p_{n})_{n \in \mathbb{N}}$ approach the limit $p$.  Notice that the point $x$ has the largest $w'$ component among all points on the corresponding face of $C$.}
	\label{F: tangent thing}
\end{figure}
	
	The proof leans on a monotonicity formula, which is worth highlighting in its own right: if $C$ is any convex set, then 
		\begin{equation} \label{E: monotonicity formula}
			\langle p_{2} - p_{1}, x_{2} - x_{1} \rangle \geq 0 \quad \text{for each} \quad x_{1},x_{2} \in C, \, \, (p_{1},p_{2}) \in \mathcal{N}^{m}_{C}(x_{1}) \times \mathcal{N}^{m}_{C}(x_{2}).
		\end{equation}
	This monotonicity formula captures the intuition that a convex set curves only ``inwards," and is a weak expression of the fact that, in the smooth setting, the principal curvatures are all nonnegative.\footnote{As is well known, the subdifferential of a convex function has a similar monotonicity property.} \footnote{To see that the monotonicity formula holds, observe that if $x_{1},x_{2} \in C$ and $(p_{1},p_{2}) \in \mathcal{N}^{m}_{C}(x_{1}) \times \mathcal{N}^{m}_{C}(x_{2})$, then the definition of the normal cone directly implies that
		\begin{equation*}
			\langle p_{2} - p_{1}, x_{2} - x_{1} \rangle = \langle p_{2},x_{2} - x_{1} \rangle + \langle p_{1}, x_{1} - x_{2} \rangle \geq 0.
		\end{equation*}}
	
		\begin{proof}[Proof of Lemma \ref{L: direction of approach}] Suppose that $y \in C$ and $p \in \mathcal{N}^{m}_{C}(y)$.  By the monotonicity formula \eqref{E: monotonicity formula},
			\begin{equation*}
				\langle p_{n} - p, x_{n} - y \rangle \geq 0.
			\end{equation*}
		Dividing by $\|p_{n} - p\|$ is harmless here, hence, after doing so and sending $n \to +\infty$, one finds
			\begin{equation*}
				\langle w', x - y\rangle \geq 0.
			\end{equation*}
		Since $y$ was arbitrary and $p \in \mathcal{N}^{m}_{C}(x)$, the desired conclusion is now immediate.   \end{proof}

\subsection{Proof of Proposition \ref{P: key mountain part 1}} \label{S: most technical mountain proof} Fnally, the section concludes with the proof that the derivatives of the perturbed test functions are well-behaved in the limit $\epsilon \to 0^{+}$.  The proof of Proposition \ref{P: key mountain part 1} will be presented in two parts, corresponding to parts (a) and (b) in the statement.

In the proof of part (a), the optimality condition \eqref{E: optimality condition directional} in the definition of the face $F_{*}$ will arise from an application of Lemma \ref{L: direction of approach}.  In particular, upon setting $\hat{p}_{j} = \|D\psi^{\epsilon_{j}}_{\nu}(q_{j})\|^{-1} D\psi^{\epsilon_{j}}_{\nu}(q_{j})$, the proof of \eqref{E: optimality condition directional} proceeds by establishing that there is a $w_{*}' \in \{e\}^{\perp} \setminus \{0\}$ such that \eqref{E: first derivative thingy} holds and\footnote{Actually, it will also be necessary to consider the possibility that $w_{*}' = 0$, but this presents no additional difficulties.}
	\begin{equation} \label{E: directional limit key mountain}
		\frac{w_{*}'}{\|w_{*}'\|} = \lim_{j \to \infty} \frac{\hat{p}_{j} - e}{\|\hat{p}_{j} - e\|}.
	\end{equation}
Once $w_{*}'$ is identified, the relation \eqref{E: optimality condition directional} will follow after an application of Lemma \ref{L: direction of approach}.

To prove that it is possible to find $w_{*}'$ satisfying both \eqref{E: first derivative thingy} and \eqref{E: directional limit key mountain}, it is useful to write the derivatives of $\psi_{\nu}^{\epsilon_{j}}$ somewhat explicitly, or, more precisely, to write those of $g_{\nu}^{\epsilon_{j}}$ explicitly.  Recall that $g_{\nu}^{\epsilon}$ is given by \eqref{E: mollification} and $g_{\nu}$, by \eqref{E: graphical special case}.  Thus, the first two derivatives of $g_{\nu}^{\epsilon}$ are explicitly determined by
	\begin{align}
		Dg_{\nu}^{\epsilon}(x') &= -\int_{B_{1}} \frac{\text{dist}(x' + \epsilon y',G_{p})}{\sqrt{\nu^{2} - \text{dist}(x' + \epsilon y',G_{p})^{2}}} D\text{dist}(x' + \epsilon y',G_{p}) \, \rho(y) \, dy, \label{E: first derivative key mountain} \\
		D^{2}g_{\nu}^{\epsilon}(x') &= - \int_{B_{1}} \frac{\text{dist}(x' + \epsilon y',G_{p})}{\sqrt{\nu^{2} - \text{dist}(x' + \epsilon y',G_{p})^{2}}} D^{2} \text{dist}(x' + \epsilon y',G_{p}) \, \rho(y) dy \label{E: second derivative key mountain} \\
		&\qquad - \int_{B_{1}} \frac{\nu^{2}}{(\nu^{2} - \text{dist}(x' + \epsilon y',G_{p})^{2})^{3/2}} D\text{dist}(x' + \epsilon y',G_{p})^{\otimes 2} \, \rho(y) dy. \nonumber
	\end{align}
Since $G_{p}$ is convex, the distance function $\text{dist}(\cdot,G_{p})$ is itself convex; hence there are enough (weak) derivatives available to justify these formulas.

To appreciate the role played by the derivatives of $g_{\nu}^{\epsilon}$ here, recall that since the level set $\{\psi_{\nu}^{\epsilon} = 1\}$ is contained in the graph of $g_{\nu}^{\epsilon}$ by \eqref{E : graph property ack ack ack}, it follows that the outward normal vector to this surface is given, for any $q \in \{\psi_{\nu}^{\epsilon} = 1\}$, by the formula
	\begin{align} \label{E: classical normal vector formula}
		\frac{D \psi^{\epsilon}_{\nu}(q)}{\|D\psi^{\epsilon}_{\nu}(q)\|} = - \frac{Dg_{\nu}^{\epsilon}(q')}{\sqrt{1 + \|Dg_{\nu}^{\epsilon}(q')\|^{2}}} + \frac{e}{\sqrt{1 + \|Dg_{\nu}^{\epsilon}(q')\|^{2}}}
	\end{align}
and the second fundamental form (matrix of principal curvatures) has a related expression
	\begin{align} \label{E: graph curvature}
		&\frac{(\text{Id} - \|D\psi_{\nu}^{\epsilon}(q)\|^{-2} D\psi_{\nu}^{\epsilon}(q)^{\otimes 2}) D^{2}\psi_{\nu}^{\epsilon}(q) (\text{Id} - \|D\psi_{\nu}^{\epsilon}(q)\|^{-2} D\psi_{\nu}^{\epsilon}(q)^{\otimes 2})}{\|D\psi_{\nu}^{\epsilon}(q)\|} =\\
			 &\qquad \left( \text{Id} - \frac{Dg_{\nu}^{\epsilon}(q')^{\otimes 2}}{1 + \|Dg_{\nu}^{\epsilon}(q')\|^{2}} \right) D^{2}g_{\nu}^{\epsilon}(q') \left(\text{Id} - \frac{Dg_{\nu}^{\epsilon}(q')^{\otimes 2}}{1 + \|Dg_{\nu}^{\epsilon}(q')\|^{2}} \right). \nonumber
	\end{align}
As before, here $q'$ denotes the orthogonal projection of $q$ onto $\{e\}^{\perp}$. 

From the representations above, a straightforward computation shows that \eqref{E: first derivative thingy} implies \eqref{E: directional limit key mountain} provided $w_{*}' \neq 0$. 

As will be shown shortly, the preceding discussion more-or-less amounts to a proof of part (a) of the proposition.  Before proceeding to the proof of part (b), though, it will be necessary to extract some information concerning the ``shapes" of possible accumulation points of the family $(g_{\nu}^{\epsilon})_{0 < \epsilon < \nu}$.  That is the purpose of the next lemma.

	\begin{lemma} \label{L: intermediate key lemma} Let $G \subseteq \mathbb{R}^{d - 1}$ be a convex set, fix a $\nu > 0$, and define $g : \{\text{dist}(\cdot,G) \leq \nu\} \to [0,+\infty)$ and $g^{\epsilon} : \{\text{dist}(\cdot,G) \leq \nu - \epsilon\} \to [0,+\infty)$ by 
		\begin{equation*}
			g(x) = \sqrt{\nu^{2} - \text{dist}(x',G)^{2}}, \quad g^{\epsilon}(x') = \int_{\mathbb{R}^{d-1}} g(x' + \epsilon y') \rho(y') \, dy'.
		\end{equation*} 
	If $(\epsilon_{j})_{j \in \mathbb{N}} \subseteq (0,+\infty)$ is a sequence converging to zero and $(q_{j}')_{j \in \mathbb{N}} \subseteq \mathbb{R}^{d-1}$ satisfies
		\begin{equation} \label{E: nice convergence}
			\limsup_{j \to \infty} \epsilon_{j}^{-1} \text{dist}(q_{j}',G) < +\infty,
		\end{equation}
	then the blow-up sequence $(d_{j})_{j \in \mathbb{N}}$ defined by 
		\begin{equation*}
			d_{j}(y') = \epsilon_{j}^{-1} \text{dist}(q'_{j} + \epsilon_{j} y',G) = \text{dist}(y', \epsilon_{j}^{-1} (G - q'_{j}))
		\end{equation*}
	has a subsequence $(d_{j_{k}})_{k \in \mathbb{N}}$ that converges locally uniformly to a convex function $d_{*} : \mathbb{R}^{d-1} \to [0,+\infty)$, which is $C^{1}$ in the set $\{d_{*} > 0\}$.  Furthermore, the following formulas hold:
		\begin{align}
			\lim_{k \to \infty} \epsilon_{j_{k}}^{-1} Dg^{\epsilon_{j_{k}}}(q_{j_{k}}') &= - \nu^{-1} \int_{B_{1}} d_{*}(y') Dd_{*}(y') \, \rho(y') dy', \label{E: direction of approach} \\
			\lim_{k \to \infty} D^{2}g^{\epsilon_{j_{k}}}(q_{j_{k}}') &= - \nu^{-1} \int_{B_{1}} \left( d_{*}(y') D^{2}d_{*}(y') + Dd_{*}(y') \otimes Dd_{*}(y') \right) \rho(y') \, dy'. \label{E: curvature}
		\end{align}
	In addition, if the subsequence $(q'_{j_{k}})_{k \in \mathbb{N}}$ converges to some point $q'_{*} \in G$, then 
		\begin{equation} \label{E: key normal cone part}
			Dd_{*}(y') \in \mathcal{N}^{d-1}_{G}(q'_{*}) \quad \text{for each} \quad y' \in \{d_{*} > 0\}.
		\end{equation}  \end{lemma}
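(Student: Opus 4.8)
\textbf{Proof plan for Lemma \ref{L: intermediate key lemma}.}

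The plan is to exploit the scaling structure of the distance function together with standard compactness results for convex functions. First I would observe that each blow-up $d_j(y') = \text{dist}(y', \epsilon_j^{-1}(G - q_j'))$ is nonnegative, convex, and $1$-Lipschitz (being a distance function to a convex set), hence the sequence $(d_j)_{j \in \mathbb{N}}$ is locally equi-Lipschitz. By \eqref{E: nice convergence}, the values $d_j(0) = \epsilon_j^{-1}\text{dist}(q_j',G)$ stay bounded, so $(d_j)_{j\in\mathbb{N}}$ is also locally uniformly bounded. The Arzel\`a--Ascoli theorem then yields a subsequence $(d_{j_k})_{k\in\mathbb{N}}$ converging locally uniformly to a limit $d_*$, which is automatically convex, nonnegative, and $1$-Lipschitz. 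Since $d_{j_k}$ is the distance function to the convex set $C_k := \epsilon_{j_k}^{-1}(G - q_j')$, the limit $d_*$ is again the distance function to a convex set $C_* := \lim_k C_k$ in the Kuratowski sense (or, if the sets escape to infinity, $d_* \equiv$ an affine-in-modulus function; in either case $d_*$ is a distance function to a closed convex set, possibly empty, in which case $d_* \equiv +\infty$, ruled out by the value bound). The key regularity point is that the distance function to a closed convex set is $C^1$ wherever it is positive --- this is classical (the nearest-point projection onto a convex set is single-valued, and its gradient equals $(\cdot - \Pi_{C_*}(\cdot))/\text{dist}(\cdot,C_*)$). This handles the first two assertions.

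Next, for the gradient and Hessian formulas \eqref{E: direction of approach} and \eqref{E: curvature}, I would start from the explicit expressions \eqref{E: first derivative key mountain} and \eqref{E: second derivative key mountain} for $Dg_\nu^{\epsilon}$ and $D^2 g_\nu^{\epsilon}$, specialized to the function $g(x') = \sqrt{\nu^2 - \text{dist}(x',G)^2}$ of the lemma (note this $g$ differs from $g_\nu$ in the main text only by the additive constant $\varphi^*(e) - \nu$, which drops out of derivatives, and the formulas transcribe verbatim). Changing variables $x' = q_{j_k}' + \epsilon_{j_k} y'$ inside the mollification integral and using $\text{dist}(q_{j_k}' + \epsilon_{j_k} y', G) = \epsilon_{j_k} d_{j_k}(y')$, one sees $\epsilon_{j_k}^{-1} Dg^{\epsilon_{j_k}}(q_{j_k}')$ and $D^2 g^{\epsilon_{j_k}}(q_{j_k}')$ become integrals over $B_1$ of expressions in $d_{j_k}(y')$, $Dd_{j_k}(y')$, $D^2 d_{j_k}(y')$ against $\rho$. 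As $\epsilon_{j_k} \to 0$, the prefactors like $\text{dist}/\sqrt{\nu^2 - \text{dist}^2}$ and $\nu^2/(\nu^2 - \text{dist}^2)^{3/2}$ tend to $\epsilon_{j_k} d_{j_k}/\nu \to 0$-order corrections times $\nu^{-1}$ and $\nu^{-3}$, but after the $\epsilon^{-1}$ rescaling the surviving terms are precisely $\nu^{-1} d_*(y') Dd_*(y')$ and $\nu^{-1}(d_*(y') D^2 d_*(y') + Dd_*(y') \otimes Dd_*(y'))$; the limit passage in the gradient uses local uniform convergence $Dd_{j_k} \to Dd_*$ (valid on $\{d_* > 0\}$ by convexity, and the integrand is supported away from $\{d_* = 0\}$ up to a $\rho$-null perturbation, or one dominates near $\{d_*=0\}$ using that $d_*Dd_*$ extends continuously by zero). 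For the Hessian formula, since $D^2 d_{j_k}$ is only a measure in general, one passes to the limit in the distributional/weak-$*$ sense: $d_{j_k} D^2 d_{j_k} \rightharpoonup d_* D^2 d_*$ and $Dd_{j_k}^{\otimes 2} \to Dd_*^{\otimes 2}$ locally uniformly away from the zero set; integrating against the smooth compactly supported $\rho$ legitimizes the convergence of the full integrals.

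For the final claim \eqref{E: key normal cone part}, suppose $q'_{j_k} \to q'_* \in G$. Fix $y' \in \{d_* > 0\}$; then $d_{j_k}(y') > 0$ for large $k$, so $Dd_{j_k}(y')$ is well-defined and equals the unit vector pointing from the projection $\Pi_{C_k}(y')$ toward $y'$, in particular $Dd_{j_k}(y') \in \mathcal{N}^{d-1}_{C_k}(\Pi_{C_k}(y'))$, and by translation-rescaling this is $\mathcal{N}^{d-1}_{G}(q_{j_k}' + \epsilon_{j_k}\Pi_{C_k}(y'))$. Since $\epsilon_{j_k}\Pi_{C_k}(y') = \Pi_G(q_{j_k}' + \epsilon_{j_k}y') \to q_*'$, the upper semicontinuity of the normal cone (Lemma \ref{L: upper semicontinuity}) gives $Dd_*(y') = \lim_k Dd_{j_k}(y') \in \mathcal{N}^{d-1}_G(q_*')$. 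The main obstacle I anticipate is the technical bookkeeping in the Hessian limit \eqref{E: curvature}: ensuring the weak convergence of the matrix-valued measures $d_{j_k} D^2 d_{j_k}$ is compatible with testing against $\rho$ and that the prefactor expansions do not hide any $O(1)$ contributions from the region where $d_{j_k}$ is close to zero; this is where one must be careful, whereas the gradient formula and the $C^1$-regularity of $d_*$ on $\{d_* > 0\}$ are routine.
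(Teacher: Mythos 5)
Your overall structure matches the paper's proof: Arzel\`a--Ascoli from the $1$-Lipschitz bound plus the value bound $d_j(0)=\epsilon_j^{-1}\mathrm{dist}(q_j',G)$, the change of variables turning $\epsilon_{j}^{-1}Dg^{\epsilon_j}(q_j')$ and $D^2g^{\epsilon_j}(q_j')$ into integrals of $d_j, Dd_j, d_jD^2d_j$ against $\rho$, and the normal-cone claim via the projection formula $D\mathrm{dist}(\cdot,G)=\|\cdot-[\cdot]\|^{-1}(\cdot-[\cdot])$, convergence of the projections, and upper semicontinuity of $\mathcal{N}^{d-1}_G$ (Lemma \ref{L: upper semicontinuity}); those parts are fine. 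Your route to the $C^1$ regularity of $d_*$ via a Kuratowski limit of the rescaled convex sets is a legitimate alternative to the paper's argument (which gets $Dd_{j_k}\to Dd_*$ locally uniformly on $\{d_*>0\}$ from convexity), and it buys you a clean structural description of $d_*$ as a distance function.

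The genuine gap is exactly the step you flag and then leave open: the Hessian limit \eqref{E: curvature}. You assert $d_{j_k}D^2d_{j_k}\rightharpoonup d_*D^2d_*$ ``in the distributional/weak-$*$ sense'' without supplying the mechanism, and without it two things are unjustified: (i) why the $j$-dependent weights in \eqref{E: second derivative key mountain} and the possibly singular measures $D^2d_{j_k}$ produce no $O(1)$ contribution near $\{d_{j_k}\approx 0\}$, and (ii) why the limit object $d_*D^2d_*$ is a bounded \emph{function}, which is needed for \eqref{E: curvature} to make sense as written and for the later pointwise-a.e.\ arguments that use it. The paper's key device, which you are missing, is the observation that $\tfrac12 d_j^2$ is an inf-convolution of $\tfrac12\|\cdot\|^2$, hence $0\le D^2\{\tfrac12 d_j^2\}\le \mathrm{Id}$ distributionally; combined with convexity this gives $\tfrac12 d_j^2\in W^{2,\infty}_{\mathrm{loc}}$ uniformly, the a.e.\ identity $D^2\{\tfrac12 d_j^2\}=d_jD^2d_j+Dd_j^{\otimes 2}$, and the uniform bound $0\le d_jD^2d_j\le \mathrm{Id}$ a.e. This is what turns $d_jD^2d_j$ into a sequence bounded in $L^\infty$, yields weak-$*$ compactness with an $L^\infty$ limit identified as $d_*D^2d_*$ (via locally uniform convergence of $\tfrac12 d_{j_k}^2$ and a.e.\ convergence of $Dd_{j_k}^{\otimes 2}$), and makes the passage to the limit in the weighted integrals immediate. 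To complete your proposal you should prove (or cite) this semiconcavity bound; with it, the ``careful bookkeeping'' you worry about disappears.
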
 
		
\begin{remark} Actually, if $q_{j_{k}}' \to q_{*}'$ as in the final statement of the lemma, then one can further say that there is a shift $v' \in \mathbb{R}^{d-1}$ such that $d_{*}$ is given by 
	\begin{equation*}
		d_{*}(y') = \text{dist}(y', v' + \mathcal{T}_{G}(q_{*}')) = \text{dist}(y' - v', \mathcal{T}_{G}(q_{*}'),
	\end{equation*}
where $\mathcal{T}_{G}(q_{*}')$ is the tangent cone to $G$ at $q_{*}'$.\footnote{See \cite{rockafellar_wets} for the definition of tangent cones.}  Though it helps in understanding the geometric meaning of the lemma, this observation will not be needed in what follows.  \end{remark}
		
			\begin{proof} Begin with a priori estimates: it is well-known that distance functions are uniformly Lipschitz with constant one, that is,
				\begin{equation*}
					\{d_{j}\}_{j \in \mathbb{N}} \subseteq W^{1,\infty}_{\text{loc}}(\mathbb{R}^{d-1}), \quad \|Dd_{j}\|_{L^{\infty}(\mathbb{R}^{d-1})} = 1 \quad \text{for each} \quad j \in \mathbb{N}.
				\end{equation*}
			At the same time, by hypothesis,
				\begin{equation*}
					\limsup_{j \to \infty} d_{j}(0) = \limsup_{j \to \infty} \epsilon_{j}^{-1} \text{dist}(q_{j}',G) < +\infty.
				\end{equation*}
			This establishes that $\{d_{j}\}_{j \in \mathbb{N}}$ is locally bounded in $\mathbb{R}^{d-1}$.
			
			It is convenient to obtain slightly more information about $\{d_{j}\}_{j \in \mathbb{N}}$.  First, observe that it is possible to write
				\begin{equation*}
					\frac{1}{2} d_{j}(y')^{2} = \min \left\{ \frac{1}{2} \|y' - \tilde{y}'\|^{2} \, \mid \, \tilde{y}' \in \epsilon_{j}^{-1}(G - x_{j}') \right\}.
				\end{equation*}
			Thus, since this last expression has the form of an inf-convolution,\footnote{See \cite[Section 11]{primer} or \cite[Appendix]{user}.} 
				\begin{equation*}
					-D^{2} \left\{ 2^{-1} d_{j}^{2}\right\} \geq -\text{Id} \quad \text{in the viscosity sense in} \, \, \mathbb{R}^{d-1}.
				\end{equation*}
			As already mentioned in the proof of Proposition \ref{P: approximate supersolution} above, such a viscosity inequality is equivalent to its distributional version, hence $D^{2}\{2^{-1}d_{j}^{2}\} \leq \text{Id}$ holds distributionally.  At the same time, since $G$ is convex, $d_{j}$ is convex for any $j$.  In particular, $2^{-1} d_{j}^{2}$ is also convex for all $j$.  It follows that
				\begin{equation*}
					0 \leq D^{2} \left\{ 2^{-1} d_{j}^{2} \right\} \leq \text{Id} \quad \text{in the sense of distributions in} \, \, \mathbb{R}^{d-1}.
				\end{equation*}
			Thus, $2^{-1} d_{j}^{2} \in W^{2,\infty}_{\text{loc}}(\mathbb{R}^{d-1})$ for any $j$ and the second derivative, which is \emph{a priori} just a Radon measure, is represented by a function in $L^{\infty}(\mathbb{R}^{d-1})$.  In particular, there is no abuse of notation writing it as
				\begin{equation*}
					D^{2}\{2^{-1} d_{j}^{2}\} = d_{j} D^{2} d_{j} + Dd_{j} \otimes Dd_{j}
				\end{equation*}
			since this formula holds almost everywhere; and the positivity of $Dd_{j} \otimes Dd_{j}$ implies
				\begin{equation*}
					0 \leq d_{j} D^{2} d_{j} \leq \text{Id} \quad \text{almost everywhere in} \, \, \mathbb{R}^{d-1}.
				\end{equation*}			
			
			At last, the previous arguments show that there is a subsequence $(j_{k})_{k \in \mathbb{N}} \subseteq \mathbb{N}$ and a nonnegative convex function $d_{*}$ such that
				\begin{gather*}
					d_{j_{k}} \to d_{*} \quad \text{locally uniformly in} \, \, \mathbb{R}^{d-1}, \\
					Dd_{j_{k}} \to Dd_{*} \quad \text{locally uniformly in} \, \, \{d_{*} > 0\},  \\
					d_{j_{k}} D^{2} d_{j_{k}} \overset{*}{\rightharpoonup} d_{*} D^{2}d_{*} \quad \text{weakly in} \, \, L^{\infty}(\mathbb{R}^{d - 1}).
				\end{gather*}
			In view of \eqref{E: first derivative key mountain} and \eqref{E: second derivative key mountain}, these convergences imply \eqref{E: direction of approach} and \eqref{E: curvature}. 
			
			It remains to prove that if $q_{j_{k}}' \to q_{*}'$ as $k \to +\infty$, then $Dd_{*}$ takes values in $\mathcal{N}^{d-1}_{G}(q'_{*})$.  Recall that if $q' \in \mathbb{R}^{d-1} \setminus \overline{G}$, then $D\text{dist}(q', G) = \|q' - [q']\|^{-1}(q' - [q'])$, where $[q']$ is the (unique) closest point in $G$ to $x'$.\footnote{See \cite[Section 1.2]{schneider} and \cite[Example 2.25]{rockafellar_wets}.}  In particular, $[q'] \in \partial G$ and $\|q' - [q']\|^{-1}(q' - [q'])$ is the normal vector to a supporting hyperplane to $G$ at $[q']$.  Thus,
				\begin{equation*}
					D\text{dist}(q',G) = \|q' - [q']\|^{-1}(q' - [q']) \in \mathcal{N}^{d-1}_{G}([q']).
				\end{equation*}
			After rescaling, this becomes
				\begin{equation*}
					Dd_{j}(y') \in \mathcal{N}^{d-1}_{G}([q_{j}' + \epsilon_{j} y']).
				\end{equation*}
			Clearly, $[q_{j}' + \epsilon y'] \to [q_{*}'] = q_{*}'$ as $\epsilon \to 0$.  Therefore, by the upper semicontinuity of $\mathcal{N}_{G}^{d-1}$ (i.e., Lemma \ref{L: upper semicontinuity}), for any $y' \in \{d_{*} > 0\}$,
				\begin{equation*}
					Dd_{*}(y') = \lim_{j \to \infty} Dd_{j_{k}}(y') \in \limsup_{q' \to q'_{*}} \mathcal{N}_{G}^{d-1}(q') \subseteq \mathcal{N}^{d-1}_{G}(q_{*}').
				\end{equation*}
			This proves that $Dd_{*}$ maps $\{d_{*} > 0\}$ into $\mathcal{N}^{d-1}_{G}(q'_{*})$. \end{proof}

At this point, the stage is set for the proof of the first part of Proposition \ref{P: key mountain part 1}.  

	\begin{proof}[Proof of Proposition \ref{P: key mountain part 1}, (a)]  Apply Lemma \ref{L: intermediate key lemma} with $G$ equal to $G_{p} = \partial \varphi^{*}(p) - \varphi^{*}(e) e$ (using the identification of $\{e\}^{\perp}$ with $\mathbb{R}^{d-1}$ to regard this as a convex set in $\mathbb{R}^{d-1}$) and the sequence $(q_{j}')_{j \in \mathbb{N}}$ determined as usual by projecting $(q_{j})_{j \in \mathbb{N}}$ to $\{e\}^{\perp}$, see \eqref{E: orthogonal projection part}.  The lemma asserts that there is a convex function $d_{*} : \mathbb{R}^{d-1} \to [0,+\infty)$ such that 
		\begin{equation} \label{E: gradient constraint}
			Dd_{*}(y') \in \mathcal{N}^{d-1}_{G_{p}}(q_{*}') \quad \text{for each} \quad y' \in \{d_{*} > 0\}.
		\end{equation}
	Further, note that the function $g$ in the statement of the lemma coincides with $g_{\nu}$ up to a constant, hence the conclusions concerning its derivatives imply the desired conclusions \eqref{E: first derivative thingy} and \eqref{E: second derivative thingy}.  In particular, there is a subsequence $(j_{k})_{k \in \mathbb{N}} \subseteq \mathbb{N}$ such that
		\begin{equation} \label{E: need this gradient business later}
			- \lim_{k \to \infty} \epsilon_{j_{k}}^{-1} Dg_{\nu}^{\epsilon_{j_{k}}}(q_{j_{k}}') = \nu^{-1} \int_{B_{1}} d_{*}(y') Dd_{*}(y') \, \rho(y') dy'
		\end{equation}
	As in the discussion preceding this proof, see \eqref{E: first derivative thingy}, denote this vector by $w_{*}'$.
	
	It remains to show that
		\begin{equation*}
			\langle w_{*}', q_{*} \rangle = \langle w_{*}', q_{**} \rangle = \max \left\{ \langle w_{*}', q \rangle \, \mid \, q \in \partial \varphi^{*}(p) \right\}.
		\end{equation*}
	This is trivial if $w_{*}' = 0$; hence, in the rest of the proof, assume that $w_{*}' \neq 0$.  
	
	Start with $q_{**}$, the easier of the two: this part is a straightforward application of Lemma \ref{L: direction of approach}.  Consider the convex set $\{\varphi \leq 1\} \subseteq \mathbb{R}^{d}$.  For any $j \in \mathbb{N}$, one knows that $\tilde{q}_{j} \in \partial \varphi^{*}(D\psi_{\nu}^{\epsilon_{j}}(q_{j}))$ by hypothesis and, thus, by inversion \eqref{E: inversion},
		\begin{equation*}
			\varphi^{*}(D\psi_{\nu}^{\epsilon_{j}}(q_{j}))^{-1} D\psi_{\nu}^{\epsilon_{j}}(q_{j}) \in \partial \varphi(\tilde{q}_{j}).
		\end{equation*}
	From this and Lemma \ref{L: explicit normal cone}, it follows that if one defines the sequence $(\hat{p}_{j})_{j \in \mathbb{N}}$ by letting $\hat{p}_{j} = \|D\psi_{\nu}^{\epsilon_{j}}(q_{j})\|^{-1} D\psi_{\nu}^{\epsilon_{j}}(q_{j})$, then
		\begin{equation*}
			\hat{p}_{j} \in \mathcal{N}^{d}_{\{\varphi \leq 1\}}(\tilde{q}_{j}) \quad \text{for each} \quad j \in \mathbb{N}.
		\end{equation*}
	Furthermore, since $q_{j} \to q_{*}$ as $j \to +\infty$ and $q_{*} \in \partial \varphi^{*}(p)$, Proposition \ref{P: touching above test function part} says that $\varphi$ touches $\psi_{\nu}$ from below at $x_{*}$.  Thus,
		\begin{equation*}
			\lim_{j \to \infty} D\psi_{\nu}^{\epsilon_{j}}(q_{j}) = D\psi_{\nu}(q_{*}) = D\varphi(q_{*}) \overset{\eqref{E: subdifferential basic identity}}{=} \varphi^{*}(p)^{-1} p,
		\end{equation*}
	It follows that $\hat{p}_{j} \to e$ as $j \to +\infty$.  At the same time, by the definition \eqref{E: first derivative thingy} of $w_{*}'$ and the representation formula \eqref{E: classical normal vector formula}, 
		\begin{equation*}
			\lim_{k \to \infty} \frac{\hat{p}_{j_{k}} - e}{\|\hat{p}_{j_{k}} - e\|} = \frac{w_{*}'}{\|w_{*}'\|}.
		\end{equation*}
	At last, by Lemma \ref{L: direction of approach} applied to $(\tilde{q}_{j_{k}},\hat{p}_{j_{k}})_{k \in \mathbb{N}}$, plus another application of Lemma \ref{L: explicit normal cone},
		\begin{align*}
			\langle w_{*}', q_{**} \rangle &= \max \left\{ \langle w_{*}', q \rangle \, \mid \, q \in \{\varphi \leq 1\} \, \, \text{such that} \, \, e \in \mathcal{N}^{d}_{\{\varphi \leq 1\}}(q) \right\} \\
				&= \max \left\{ \langle w_{*}', q \rangle \, \mid \, q \in \partial \varphi^{*}(e) \right\} \overset{\eqref{E: zero hom}}{=} \max \left\{ \langle w_{*}', q \rangle \, \mid \, q \in \partial \varphi^{*}(p) \right\}.
		\end{align*}
		
	It remains to consider $q_{*}$.  This boils down to yet another application of Lemma \ref{L: direction of approach}, except for the (nontrivial) complication that the convex set in question varies with $j$.  
	
	For any $j$, consider the convex set $C_{j} = \{\psi_{\nu}^{\epsilon_{j}} \leq 1\} \subseteq \mathbb{R}^{d}$.  By assumption, $q_{j} \in \partial C_{j}$ for any $j$ and $q_{j} \to q_{*}$ as $j \to +\infty$.  By the discussion preceding Lemma \ref{L: explicit normal cone},		
	\begin{equation*}
			\hat{p}_{j} = \|D\psi_{\nu}^{\epsilon_{j}}(q_{j})\|^{-1} D\psi_{\nu}^{\epsilon_{j}}(q_{j}) \in \mathcal{N}^{d}_{C_{j}}(q_{j}) \quad \text{for each} \quad j \in \mathbb{N}.
		\end{equation*}
	At the same time, if the set $[\partial \varphi^{*}(p)]^{(\mu)}$ is defined for $\mu > 0$ by
		\begin{equation*}
			[\partial \varphi^{*}(p)]^{(\mu)} = \left\{q \in \partial \varphi^{*}(p) \, \mid \, \text{dist}(q, \text{bdry}(\partial \varphi^{*}(p))) \geq \mu \right\}, 
		\end{equation*}
	then it is not difficult to show that, for any $\epsilon < \mu$,
		\begin{equation*}
			\|D\psi_{\nu}^{\epsilon}(q)\|^{-1} D\psi_{\nu}^{\epsilon}(q) = e \quad \text{for each} \quad q \in [\partial \varphi^{*}(p)]^{(\mu)};
		\end{equation*}
	see Proposition \ref{P: flatness prop} in Appendix \ref{App: flatness} for the proof.  In particular, for any $\mu > 0$, there is a $J(\mu) \in \mathbb{N}$ such that if $j \geq J(\mu)$, then
		\begin{equation*}
			e \in \mathcal{N}^{d}_{C_{j}}(q) \quad \text{for each} \quad q \in [\partial \varphi^{*}(p)]^{(\mu)}.
		\end{equation*}
	Thus, by the monotonicity formula \eqref{E: monotonicity formula}, for any $j \geq J(\mu)$,
		\begin{equation*}
			\langle \hat{p}_{j} - e, q_{j} - q \rangle \geq 0 \quad \text{for each} \quad q \in [\partial \varphi^{*}(p)]^{(\mu)}.
		\end{equation*}
	Dividing by $\|\hat{p}_{j} - e\|$ and sending $j \to +\infty$, this becomes
		\begin{equation*}
			\langle w_{*}', q_{*} - q \rangle \geq 0 \quad \text{for each} \quad q \in [\partial \varphi^{*}(p)]^{(\mu)}.
		\end{equation*}
	In the limit $\mu \to 0^{+}$, this yields the remaining half of \eqref{E: optimality condition directional}.      \end{proof}
	
To complete the proof of Proposition \ref{P: key mountain part 1}, it only remains to prove part (b) concerning the asymptotic behavior of the Hessians.  Here is where the explicit representations \eqref{E: first derivative thingy} and \eqref{E: second derivative thingy} and the gradient constraint \eqref{E: key normal cone part} are used.

To appreciate the significance of the matrix $A_{*}'$ obtained in the limit \eqref{E: second derivative thingy}, first, notice that it can be expressed in the form
	\begin{align*}
		A_{*}' &= \lim_{j \to \infty} D^{2}g_{\nu}^{\epsilon_{j}}(q_{j}') \\
			&= - \nu^{-1} \int_{B_{1}} \left( d_{*}(y') D^{2}d_{*}(y') + Dd_{*}(y') \otimes Dd_{*}(y') \right) \rho(y') \, dy'.
	\end{align*} 
Further, the formula \eqref{E: graph curvature} implies that
			\begin{align*}
		&(\text{Id} - e \otimes e) Y_{*} (\text{Id} - e \otimes e) = \lim_{j \to \infty} (\text{Id} - \hat{p}_{j}^{\otimes 2}) Y_{j} (\text{Id} - \hat{p}_{j}^{\otimes 2}) \\
			&\quad = \lim_{j \to \infty} \|D\psi_{\nu}^{\epsilon_{j}}(q_{j})\| \left(\text{Id} - \frac{Dg_{\nu}^{\epsilon_{j}}(q_{j}')^{\otimes 2}}{1 + \|Dg_{\nu}^{\epsilon_{j}}(q_{j}')\|^{2}} \right) D^{2}g_{\nu}^{\epsilon_{j}}(q_{j}') \left( \text{Id} - \frac{Dg_{\nu}^{\epsilon_{j}}(q_{j}')^{\otimes 2}}{1 + \|Dg_{\nu}^{\epsilon_{j}}(q_{j}')\|^{2}} \right) \\
			&\quad = \|D\psi_{\nu}(q_{*})\| A_{*}'
	\end{align*}
since $Dg_{\nu}^{\epsilon_{j}}(q_{j}') \to Dg_{\nu}(q_{*}') = 0$ as $j \to +\infty$.  Thus, the constraints \eqref{E: hessian constraint} on $Y_{*}$ will follow from an analysis of the integral formula for $A_{*}$, which amounts to studying the ``shape" of the limiting distance function $d_{*}$.

Finally, before entering into the proof of part (b), there is still one more convex analytic consideration that will facilitate the proof, basically an elementary lemma.
	
\begin{lemma} \label{L: cone integral} Given any dimension $m \in \mathbb{N}$ and any closed convex cone $\mathcal{N} \subseteq \mathbb{R}^{m}$, let $\mu$ be a finite positive Borel measure on $\mathcal{N}$ and let $\mathcal{N}'$ be a face of $\mathcal{N}$.  If $\int_{\mathcal{N}} \|v\| \, \mu(dv) < +\infty$ and the mean of $\mu$ is in $\mathcal{N}'$, that is,
		\begin{equation*}
			\int_{\mathcal{N}} v \, \mu(dv) \in \mathcal{N}',
		\end{equation*}
then $\mu$ is supported on $\mathcal{N}'$ (i.e., $\mu(\mathcal{N} \setminus \mathcal{N}') = 0$). \end{lemma}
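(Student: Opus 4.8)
The plan is to prove the lemma by a separating-hyperplane argument, reducing everything to the defining property of a face of a convex cone. Recall that a face $\mathcal{N}'$ of a convex cone $\mathcal{N}$ has the absorption property: if $v_1 + v_2 \in \mathcal{N}'$ with $v_1, v_2 \in \mathcal{N}$, then $v_1, v_2 \in \mathcal{N}'$. The key geometric fact I would use is that any face $\mathcal{N}'$ of a closed convex cone $\mathcal{N}$ is \emph{exposed relative to $\mathcal{N}$} in the following weak sense that suffices here: there exists a linear functional $\ell$ on $\mathbb{R}^m$ with $\ell \le 0$ on $\mathcal{N}$ and $\mathcal{N}' \subseteq \{v \in \mathcal{N} \mid \ell(v) = 0\}$. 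Actually, to be safe, I would instead avoid invoking exposedness and argue directly from the face property via a dimension/support argument, since non-exposed faces are explicitly flagged as troublesome elsewhere in the paper.

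Here is the direct route. Let $b = \int_{\mathcal{N}} v\, \mu(dv) \in \mathcal{N}'$, and suppose toward a contradiction that $\mu(\mathcal{N} \setminus \mathcal{N}') > 0$. First I would dispose of the trivial normalization: if $\mu$ is the zero measure there is nothing to prove, so assume $\mu(\mathcal{N}) > 0$. Let $W = \mathrm{span}(\mathcal{N}')$ be the linear span of the face and let $\pi$ denote the orthogonal projection onto $W^\perp$. Since $b \in \mathcal{N}' \subseteq W$, we have $\pi(b) = 0$, i.e.
\begin{equation*}
	\int_{\mathcal{N}} \pi(v)\, \mu(dv) = 0.
\end{equation*}
Now the crucial claim is that $\pi(v)$ lies in a fixed closed convex cone $K \subseteq W^\perp$ with $K \cap (-K) = \{0\}$ (a \emph{pointed} cone) for $\mu$-almost every $v$, and that $\pi(v) = 0$ forces $v \in \mathcal{N}'$. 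Granting this claim: an integral of elements of a pointed closed convex cone that vanishes must be supported on the apex; more precisely, by Hahn--Banach there is a linear functional $\psi$ on $W^\perp$ with $\psi > 0$ on $K \setminus \{0\}$, so $\int \psi(\pi(v))\,\mu(dv) = 0$ with $\psi(\pi(v)) \ge 0$ forces $\psi(\pi(v)) = 0$ hence $\pi(v) = 0$ for $\mu$-a.e.\ $v$, hence $v \in \mathcal{N}'$ for $\mu$-a.e.\ $v$, contradicting $\mu(\mathcal{N} \setminus \mathcal{N}') > 0$.

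So the real content is the claim that $\pi(\mathcal{N})$ is a pointed cone in $W^\perp$ and that $\pi(v) = 0$, $v \in \mathcal{N}$ implies $v \in \mathcal{N}'$. For the second part: if $v \in \mathcal{N}$ and $\pi(v) = 0$ then $v \in W = \mathrm{span}(\mathcal{N}')$; writing $v = v_+ - v_-$ in terms of a Carathéodory-type decomposition inside $\mathcal{N}'$ (possible because $\mathcal{N}'$ spans $W$ and is a cone with nonempty relative interior), one gets $v + v_- \in \mathcal{N}'$ with $v, v_- \in \mathcal{N}$, so by the face absorption property $v \in \mathcal{N}'$. For pointedness of $\pi(\mathcal{N})$: if $\pi(v_1) = -\pi(v_2) \ne 0$ with $v_1, v_2 \in \mathcal{N}$, then $\pi(v_1 + v_2) = 0$, so by the previous step $v_1 + v_2 \in \mathcal{N}'$, whence $v_1, v_2 \in \mathcal{N}'$ by absorption, whence $\pi(v_1) = \pi(v_2) = 0$, a contradiction. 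The main obstacle I anticipate is making the decomposition $v = v_+ - v_-$ rigorous without circularity — the cleanest fix is to note that $W = \mathcal{N}' - \mathcal{N}'$ (since $\mathcal{N}'$ is a convex cone spanning $W$), which is immediate, so every $v \in W$ is literally a difference of two elements of $\mathcal{N}'$; no Carathéodory machinery is needed. With that observation the argument is short. I would also double-check the measurability points (the map $v \mapsto \psi(\pi(v))$ is continuous, so no issue) and the integrability hypothesis $\int \|v\|\,\mu(dv) < \infty$ is exactly what is needed to make $b$ and $\int \psi(\pi(v))\,\mu(dv)$ well-defined.
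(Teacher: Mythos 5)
Your reductions are fine up to a point: the identity $\int_{\mathcal{N}} \pi(v)\,\mu(dv)=0$, the fact that $\mathrm{span}(\mathcal{N}')=\mathcal{N}'-\mathcal{N}'$, the consequent identity $\mathcal{N}\cap \mathrm{span}(\mathcal{N}')=\mathcal{N}'$ via the absorption property, and the pointedness of the convex cone $\pi(\mathcal{N})$ are all correct. The gap is the Hahn--Banach step. You need a linear functional $\psi$ on $W^{\perp}$ with $\psi\circ\pi\geq 0$ on $\mathcal{N}$ and $\psi(\pi(v))=0$ only when $\pi(v)=0$; pulling back, this is exactly a linear functional $\ell$ vanishing on $\mathrm{span}(\mathcal{N}')$, nonnegative on $\mathcal{N}$, whose zero set in $\mathcal{N}$ is precisely $\mathcal{N}'$ --- in other words, your claim is equivalent to $\mathcal{N}'$ being an \emph{exposed} face of $\mathcal{N}$, the very hypothesis you said you wanted to avoid. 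Pointedness of $\pi(\mathcal{N})$ does not rescue this: $\pi(\mathcal{N})$ is in general not closed, its closure need not be pointed (project the second-order cone $\{z\geq\sqrt{x^{2}+y^{2}}\}$ along one of its extreme rays: the closure of the image is a half-plane), and a pointed but non-closed convex cone need not admit any strictly positive functional. A concrete counterexample to your claim: let $B\subseteq\mathbb{R}^{2}$ be the square-plus-half-disk set from the paper's remark on non-exposed faces, let $\mathcal{N}=\{t(x,y,1)\mid (x,y)\in B,\ t\geq 0\}$, and let $\mathcal{N}'$ be the ray over the corner $(1,1)$. This is a face of the closed convex cone $\mathcal{N}$, but every linear functional nonnegative on $\mathcal{N}$ and vanishing on $\mathcal{N}'$ also vanishes on the two-dimensional face over the top edge $[0,1]\times\{1\}$; so your $\psi$ does not exist, and your argument only shows that $\mu$ is supported on that larger exposed face, not on $\mathcal{N}'$.

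The lemma itself is true without exposedness, and the paper proves it by a different mechanism: instead of separating with a functional adapted to $\mathcal{N}'$, it forms the closed convex cone $\overline{\mathcal{N}_{0}}$ generated by $\mathrm{sppt}(\mu)$, shows the mean lies in the \emph{relative interior} of $\overline{\mathcal{N}_{0}}$ (by contradiction, choosing a supporting vector $w$ of $\overline{\mathcal{N}_{0}}$ at the mean and using that some point of the support must satisfy $\langle w,\cdot\rangle<0$), and then applies the definition of face: a convex subset of $\mathcal{N}$ whose relative interior meets $\mathcal{N}'$ is contained in $\mathcal{N}'$. The separating vector there depends on $\mu$, not on $\mathcal{N}'$, which is what sidesteps the exposedness issue. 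Your approach could be repaired, e.g., by iterating your argument through the chain of exposed faces (every face of a closed convex set is an exposed face of an exposed face of $\dots$), but as written the key functional is asserted, not available.
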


For the reader's convenience, a proof of the lemma can be found in Appendix \ref{A: technical results}.

\begin{proof}[Proof of Proposition \ref{P: key mountain part 1}, (b)]  The Hessian bounds \eqref{E: hessian bounds ack ack} follow from the estimates in Section \ref{S: pos hom}.  Indeed, at the limit point $q_{*}$, one knows that $D\psi_{\nu}(q_{*}) = D\varphi(q_{*})$, and control over $D\varphi$ is furnished by the $\delta$-nondegeneracy condition \eqref{E: nondegeneracy}.  Similarly, control over the curvature of the level sets of the sequence $\{\psi^{\epsilon_{j}}_{\nu}\}_{j \in \mathbb{N}}$ can be gleaned from the explicit formula for the second derivative of $g_{\nu}^{\epsilon_{j}}$.  For the full details, the reader is referred to Lemma \ref{L: key hessian control} in Appendix \ref{A: technical results}. 

That only leaves the more interesting statement, namely, the Hessian constraint \eqref{E: hessian constraint}.  Toward that end, it suffices to establish that 
	\begin{equation} \label{E: A constant part}
		A_{*}'( q - q_{*}) = 0 \quad \text{for each} \quad q \in F_{*}.
	\end{equation}
To see why, assume for the moment that \eqref{E: A constant part} holds.  Note that $Y_{*}q_{*} = 0$ since, by positive homogeneity,
	\begin{align*}
		Y_{*}q_{*} = \lim_{k \to \infty} D^{2}\psi_{\nu}^{\epsilon_{j_{k}}}(q_{j_{k}}) q_{j_{k}} = \lim_{k \to \infty} (0) = 0.
	\end{align*} Therefore, for any $q \in F_{*}$,
	\begin{align*}
		\langle Y_{*} q, q \rangle &= \langle Y_{*}(q - q_{*}), q - q_{*} \rangle.
	\end{align*}
At the same time, since $F_{*} \subseteq \partial \varphi^{*}(p) = \partial \varphi^{*}(e)$, the representation \eqref{E: subdifferential basic identity dual} implies that $q - q_{*} = (\text{Id} - e \otimes e) (q - q_{*})$ for any $q \in F_{*}$.  Thus, by the discussion preceding Lemma \ref{L: cone integral},
	\begin{align*}
		\langle Y_{*}q,q \rangle &= \langle (\text{Id} - e \otimes e) Y_{*} (\text{Id} - e \otimes e) (q - q_{*}), q - q_{*} \rangle \\
			&= \|D\psi_{\nu}(q_{*})\| \langle A_{*}'(q - q_{*}), q - q_{*} \rangle = 0.
	\end{align*}
Since $Y_{*} \geq 0$ by convexity, the identity $\langle Y_{*}q,q \rangle = 0$ actually implies $Y_{*}q = 0$.  Therefore, \eqref{E: A constant part} implies \eqref{E: hessian constraint} as claimed.

It only remains to prove \eqref{E: A constant part}.  For the rest of the proof, $G_{p}$ is treated as a subset of $\mathbb{R}^{d-1}$ so that $\mathcal{N}_{G_{p}}^{d-1}$ makes sense.  In what follows, given a vector $w' \in \mathcal{N}^{d-1}_{G_{p}}(q_{*}')$, let $\tilde{\mathcal{N}}(w')$ denote the face of $\mathcal{N}^{d-1}_{G_{p}}(q_{*}')$ such that 
	\begin{equation*}
		w' \in \text{rint}(\tilde{\mathcal{N}}(w')).
	\end{equation*}
By \eqref{E: face partition}, there is a unique such face, hence the map $w' \mapsto \tilde{\mathcal{N}}(w')$ is well-defined.

To ease the exposition, the proof will be presented arguing by cases.

\textit{Case: $w_{*}' = 0$ and $\tilde{\mathcal{N}}(0) = \{0\}$.} 

This case is particularly simple.  By \eqref{E: direction of approach},
	\begin{equation*}
		\int_{B_{1} \cap \{d_{*} > 0\}} d_{*}(y') Dd(y') \rho(y') \, dy = \int_{B_{1}} d_{*}(y') Dd(y') \rho(y') \, dy = w_{*}' = 0.
	\end{equation*}
At the same time, since $Dd_{*} \in \mathcal{N}^{d-1}_{G_{p}}(q_{*}')$ pointwise by \eqref{E: gradient constraint}, this last integral has the form described in Lemma \ref{L: cone integral}.  Applying that lemma with $\mathcal{N} = \mathcal{N}^{d-1}_{G_{p}}(q_{*}')$, $\mathcal{N}' = \{0\}$, and $\mu$ the measure on $\mathcal{N}^{d-1}_{G_{p}}(q_{*}')$ given by 
	\begin{equation*}
		\int_{\mathcal{N}^{d-1}_{G_{p}}(q_{*}')} f(v) \, \mu(dv) = \int_{B_{1} \cap \{d_{*} > 0\}} f(d_{*}(y') Dd_{*}(y')) \rho(y') \, dy',
	\end{equation*} 
one finds
	\begin{equation}
		Dd_{*}(y') = 0 \quad \text{for each} \quad y' \in \{d_{*} > 0\} \cap B_{1}.
	\end{equation}
It follows that $D^{2}d_{*}(y') = 0$ for a.e.\ $y' \in B_{1}$, and, thus, $A_{*}' = 0$ by \eqref{E: curvature}.  Hence \eqref{E: A constant part} holds trivially in this case.

\textit{Case: $w_{*}' = 0$ and $\tilde{\mathcal{N}}(0) \neq \{0\}$.}  

First, it is useful to note that $\tilde{\mathcal{N}}(0)$ has a special form: specifically, $\tilde{\mathcal{N}}(0)$ is what is referred to in \cite{rockafellar} as the \emph{lineality} of $\mathcal{N}^{d-1}_{G_{p}}(q_{*}')$, that is, the maximal linear subspace of $\mathbb{R}^{d-1}$ contained in $\mathcal{N}^{d-1}_{G_{p}}(q_{*}')$.  At the same time, by the definition of normal cone, this is equivalent to saying that $\tilde{\mathcal{N}}(0)$ is the maximal subspace of $\mathbb{R}^{d-1}$ such that 
	\begin{equation} \label{E: vanishing thing}
		G_{p} - q_{*}' \subseteq \tilde{\mathcal{N}}(0)^{\perp}.
	\end{equation}

Next, applying Lemma \ref{L: cone integral} as in the last case, observe that the inclusion $w_{*}' = 0 \in \tilde{\mathcal{N}}(0)$ implies that
	\begin{equation*}
		Dd_{*}(y') \in \tilde{\mathcal{N}}(0) \quad \text{for each} \quad y' \in B_{1} \cap \{d_{*} > 0\}.
	\end{equation*}
Since $\tilde{\mathcal{N}}(0)$ is a linear space, the last inclusion can be differentiated to find
	\begin{equation*}
		D^{2}d_{*}(y')(\mathbb{R}^{d-1}) \subseteq \tilde{\mathcal{N}}(0) \quad \text{for a.e.} \quad y' \in B_{1} \cap \{d_{*} > 0\}.
	\end{equation*}
Integrating over $B_{1} \cap \{d_{*} > 0\}$, one finds from \eqref{E: curvature} that $A_{*}'$ has the same property, that is, the range of $A_{*}'$ is contained in $\tilde{\mathcal{N}}(0)$.  Hence, since $A_{*}'$ is a symmetric matrix,
	\begin{equation} \label{E: first kernel condition}
		\tilde{\mathcal{N}}(0)^{\perp} \subseteq \text{Ker}(A_{*}'),
	\end{equation}
where $\text{Ker}(A_{*}')$ denotes the kernel of $A_{*}'$.

Combining \eqref{E: vanishing thing} and \eqref{E: first kernel condition}, one deduces that, for any $q' \in G_{p}$,
	\begin{equation*}
		A_{*}'(q' - q_{*}') = 0.
	\end{equation*}
Since $\partial \varphi^{*}(p) = G_{p} + \varphi^{*}(e) e$, this implies that
	\begin{equation*}
		A_{*}'(q - q_{*}) = 0 \quad \text{for each} \quad q \in \partial \varphi^{*}(p),
	\end{equation*}
which is nothing but \eqref{E: A constant part}.

\textit{Case: $w_{*}' \neq 0$.}  

This case is very similar to the previous one in so far as the use of the map $w' \mapsto \tilde{\mathcal{N}}(w')$ is concerned.  By definition of $F_{*}$ (i.e., \eqref{E: F* definition}), one knows that
	\begin{equation} \label{E: linear relation}
		\langle w_{*}', q \rangle = \langle w_{*}', q_{*} \rangle \quad \text{for each} \quad q \in F_{*}. 
	\end{equation}
In particular,
	\begin{equation*}
		F_{*} - q_{*} \subseteq \{ w_{*}' \}^{\perp}.
	\end{equation*}
In fact, this can be upgraded to
	\begin{equation} \label{E: perpendicular part}
		F_{*} - q_{*} \subseteq \tilde{\mathcal{N}}(w_{*}')^{\perp}.
	\end{equation}

Indeed, to see that \eqref{E: perpendicular part} holds, suppose that $q \in F_{*}$ and let $q'$ denote its orthogonal projection to $\{e\}^{\perp}$ as usual.  By \eqref{E: linear relation}, 
	\begin{equation*}
		w_{*}' \in \{w' \in \mathcal{N}^{d-1}_{G_{p}}(q_{*}') \, \mid \, \langle w', q \rangle = \langle w', q_{*} \rangle \} =: \mathcal{N}''(q).
	\end{equation*}
At the same time, $\mathcal{N}''(q)$ is an exposed face of $\mathcal{N}^{d-1}_{G_{p}}(q_{*}')$ since, by the definition of normal cone and the fact that $\mathbb{R}^{d-1}$ is being identified with $\{e\}^{\perp}$,
	\begin{equation*}
		\langle w', q \rangle = \langle w', q' \rangle \leq \langle w', q'_{*} \rangle = \langle w', q_{*} \rangle \quad \text{for each} \quad w' \in \mathcal{N}^{d-1}_{G_{p}}(q_{*}').
	\end{equation*}
Therefore, since $w_{*}' \in \text{rint}(\tilde{\mathcal{N}}(w_{*}')) \cap \mathcal{N}''(q)$, the defining property of faces (Definition \ref{D: face}) implies
	\begin{equation*}
		\tilde{\mathcal{N}}(w_{*}') \subseteq \mathcal{N}''(q).
	\end{equation*}
Since $q$ was arbitrary, this leads to the inclusion $\tilde{\mathcal{N}}(w_{*}') \subseteq \bigcap_{q \in F_{*}} \mathcal{N}''(q)$, which is equivalent to \eqref{E: perpendicular part}.

Finally, once again, Lemma \ref{L: cone integral} implies that
	\begin{equation*}
		Dd_{*}(y') \in \tilde{\mathcal{N}}(w_{*}') \quad \text{for each} \quad y' \in B_{1} \cap \{d_{*} > 0\},
	\end{equation*}
and hence
	\begin{equation*}
		D^{2}d_{*}(y')(\mathbb{R}^{d-1}) \subseteq \text{span}(\tilde{\mathcal{N}}(w_{*}')) \quad \text{for a.e.} \quad y' \in B_{1} \cap \{d_{*} > 0\},
	\end{equation*}
where $\text{span}(\tilde{\mathcal{N}}(w_{*}'))$ denotes the linear span of $\tilde{\mathcal{N}}(w_{*}')$.
In particular, combining this last observation with \eqref{E: perpendicular part}, one concludes as in the last step that
	\begin{equation*}
		F_{*} - q_{*} \subseteq \tilde{\mathcal{N}}(w_{*}')^{\perp} \subseteq \text{Ker}(A_{*}'),
	\end{equation*}
which is precisely \eqref{E: A constant part}.  The proof of \eqref{E: hessian constraint} is complete. \end{proof}

\part{The General Case and Applications} \label{Part: applications}

In this final part of the paper, the cone-type comparison results proved above are used to establish general comparison principles.  This is achieved by doubling variables.  The applications mentioned already in the introduction are treated in the final section, Section \ref{S: applications}.  
		
\section{General Cone Comparison Principles} \label{S: single variable}

%
%

Before delving into comparison proofs involving variable-doubling arguments, the next proposition recapitulates the $C^{1,1}$ results proved above in the context of a general Finsler norm $\varphi$.  In effect, it shows that $(\varphi_{\zeta})_{\zeta > 0}$ provides a good class of test functions for elliptic PDE involving the operators $G_{\varphi}^{*}$ and $G_{*}^{\varphi}$.  

\begin{prop} \label{P: technical core eaten} Let $\varphi$ be a Finsler norm in $\mathbb{R}^{d}$ and let $(\varphi_{\zeta})_{\zeta > 0}$ be the regularized norms of Theorem \ref{T: regularized_norm}.  Let $V \subseteq \mathbb{R}^{d} \setminus \{0\}$ be a bounded open set, and let $\mathcal{H}$ be a function satisfying \eqref{Ass: continuity assumption} and \eqref{Ass: elliptic assumption}.
	
	If $u \in USC(\overline{V})$ satisfies
		\begin{equation*}
			-\mathcal{H}(x,Du,D^{2}u,G_{\varphi}^{*}(Du,D^{2}u)) \leq 0 \quad \text{in} \, \, V,
		\end{equation*}
	and if there is a $\zeta \in (0,1)$ such that 
		\begin{equation*}
			\max \{u(x) - \varphi_{\zeta}(x) \, \mid \, x \in \overline{V} \} > \max \{u(x) - \varphi_{\zeta}(x) \, \mid \, x \in \partial V \},
		\end{equation*}
	then there is a triple $(x_{*},X_{*},q_{**}) \in V \times \mathcal{S}_{d} \times \{\varphi_{\zeta} = 1\}$ such that
		\begin{gather*}
			u(x_{*}) - \varphi_{\zeta}(x_{*}) = \max \{u(x) - \varphi_{\zeta}(x) \, \mid \, x \in \overline{V} \}, \\
			- \mathcal{H}(x_{*},D\varphi_{\zeta}(x_{*}),X_{*}, Q_{X_{*}}(q_{**} - \zeta \|D\varphi_{\zeta}(x_{*})\|^{-1} D\varphi_{\zeta}(x_{*}))) \leq 0, \\
			X_{*} q_{**} = 0, \quad \text{and} \quad 0 \leq \varphi_{\zeta}(x_{*}) X_{*} \leq \tilde{\Gamma}(\varphi) \zeta^{-1} \text{Id},
		\end{gather*}
	where $\tilde{\Gamma}(\varphi) > 0$ depends only on $\varphi$.
	\end{prop}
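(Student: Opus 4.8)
The plan is to deduce Proposition \ref{P: technical core eaten} from Theorem \ref{T: technical core} by exploiting the key transformation \eqref{E: key transformation}, which expresses $G_{\varphi}^{*}$ as the shifted operator $\mathcal{G}_{\varphi_{\zeta}}^{*,\zeta}$ associated with the regularized norm $\varphi_{\zeta}$. First I would observe that, by Theorem \ref{T: regularized_norm}, the norm $\varphi_{\zeta}$ satisfies Assumption \ref{A: c11 assumption} with interior-ball radius $\zeta$ (item (i)) and with nondegeneracy constant $\delta = \delta(\varphi)$ (item (v)). Hence the quantitative hypotheses of Theorem \ref{T: technical core} are met, with $\varphi$ there replaced by $\varphi_{\zeta}$ and with $\Gamma(\delta(\varphi))$ playing the role of $\Gamma(\delta)$; setting $\tilde{\Gamma}(\varphi) = \Gamma(\delta(\varphi))$ then gives a constant depending only on $\varphi$, exactly as in the statement.

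The second step is to put the differential inequality for $u$ into the form required by Theorem \ref{T: technical core}. By hypothesis $u$ satisfies $-\mathcal{H}(x,Du,D^{2}u,G_{\varphi}^{*}(Du,D^{2}u)) \leq 0$ in $V$; invoking \eqref{E: key transformation}, $G_{\varphi}^{*} \equiv \mathcal{G}_{\varphi_{\zeta}}^{*,\zeta}$, so this is precisely $-\mathcal{H}(x,Du,D^{2}u,\mathcal{G}_{\varphi_{\zeta}}^{*,\zeta}(Du,D^{2}u)) \leq 0$ in $V$. Thus Theorem \ref{T: technical core} applies with $\alpha = \zeta$ and with the Finsler norm taken to be $\varphi_{\zeta}$: since the maximum of $u - \varphi_{\zeta}$ over $\overline{V}$ strictly exceeds its maximum over $\partial V$, the theorem produces a triple $(x_{*},X_{*},q_{**}) \in V \times \mathcal{S}_{d} \times \{\varphi_{\zeta} = 1\}$ with $u(x_{*}) - \varphi_{\zeta}(x_{*}) = \max_{\overline{V}}(u - \varphi_{\zeta})$, with $X_{*} q_{**} = 0$, with $0 \leq \varphi_{\zeta}(x_{*}) X_{*} \leq \Gamma(\delta(\varphi)) \zeta^{-1}\text{Id} = \tilde{\Gamma}(\varphi)\zeta^{-1}\text{Id}$, and with
\begin{equation*}
	-\mathcal{H}\bigl(x_{*},D\varphi_{\zeta}(x_{*}),X_{*},Q_{X_{*}}(q_{**} - \zeta\|D\varphi_{\zeta}(x_{*})\|^{-1}D\varphi_{\zeta}(x_{*}))\bigr) \leq 0.
\end{equation*}
These are exactly the three conclusions claimed.

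There is essentially no hard step here: the proposition is a bookkeeping corollary of Theorem \ref{T: technical core} once one recognizes that the shift parameter $\alpha$ in $\mathcal{G}^{*,\alpha}_{\varphi_{\zeta}}$ should be chosen equal to $\zeta$ so that $\mathcal{G}^{*,\zeta}_{\varphi_{\zeta}}$ coincides identically with the original operator $G^{*}_{\varphi}$. The only points that warrant a line of care are (a) checking that $\varphi_{\zeta}$ genuinely satisfies Assumption \ref{A: c11 assumption} with the stated constants — which is immediate from items (i) and (v) of Theorem \ref{T: regularized_norm} — and (b) confirming that the constant $\tilde{\Gamma}(\varphi) = \Gamma(\delta(\varphi))$ is independent of $\zeta$, which holds because $\delta(\varphi)$ is $\zeta$-independent and $\Gamma$ is the fixed function from Proposition \ref{P: trivial hessian estimate}. (This is precisely the remark already made in the proof of Proposition \ref{P: intermediate cone comparison}.) One should also note that $\mathcal{H}$ satisfies \eqref{Ass: continuity assumption} and \eqref{Ass: elliptic assumption} by assumption, so no further hypotheses need to be verified before applying Theorem \ref{T: technical core}.
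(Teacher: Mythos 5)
Your proposal is correct and follows essentially the same route as the paper: apply the identity $G_{\varphi}^{*} \equiv \mathcal{G}_{\varphi_{\zeta}}^{*,\zeta}$ from \eqref{E: key transformation}, note that $\varphi_{\zeta}$ satisfies \eqref{E: zeta interior ball} with radius $\zeta$ and \eqref{E: nondegeneracy} with $\delta = \delta(\varphi)$ by Theorem \ref{T: regularized_norm}, and invoke Theorem \ref{T: technical core} with $\alpha = \zeta$, setting $\tilde{\Gamma}(\varphi) = \Gamma(\delta(\varphi))$. Nothing is missing.
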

	
\begin{proof} This is an application of Theorem \ref{T: technical core}.  Indeed, reasoning as in Section \ref{S: c11 reduction}, one observes that
	\begin{equation*}
		-\mathcal{H}(x,Du,D^{2}u,\mathcal{G}_{\varphi_{\zeta}}^{\zeta}(Du,D^{2}u)) \leq 0 \quad \text{in} \, \, V.
	\end{equation*}
Furthermore, $\varphi_{\zeta}$ satisfies \eqref{E: zeta interior ball} with radius $\zeta$ and \eqref{E: nondegeneracy} with constant $\delta = \delta(\varphi)$ by Theorem \ref{T: regularized_norm}.  Taken together with the other hypotheses, these observations establish that Theorem \ref{T: technical core} applies, and the theorem directly yields the desired conclusion with $\tilde{\Gamma}(\varphi) := \Gamma(\delta(\varphi))$.
\end{proof}

\subsection{Curved Cones} The cone comparison principle proved so far only treats $\varphi$ itself as a test function.  In various contexts, it useful to know that these results extend to ``curved" cones of the form $h(\varphi)$ for some increasing function $h$.

\begin{prop} \label{P: technical core eaten curved} Let $\varphi$ be a Finsler norm in $\mathbb{R}^{d}$ and let $(\varphi_{\zeta})_{\zeta > 0}$ be the regularized norms of Theorem \ref{T: regularized_norm}.  Let $V \subseteq \mathbb{R}^{d} \setminus \{0\}$ be a bounded open set, and let $\mathcal{H}$ be a function satisfying \eqref{Ass: continuity assumption} and \eqref{Ass: elliptic assumption}.  Suppose that $h : \mathbb{R} \to \mathbb{R}$ is smooth and increasing.
	
	If $u$ is an upper semicontinuous function in $\overline{V}$ satisfying
		\begin{equation*}
			-\mathcal{H}(x,Du,D^{2}u,G_{\varphi}^{*}(Du,D^{2}u)) \leq 0 \quad \text{in} \, \, V,
		\end{equation*}
	and if there is a point $x_{0} \in V$ and a $\zeta \in (0,1)$ such that 
		\begin{equation*}
			u(x_{0}) - h(\varphi_{\zeta}(x_{0})) = \max \{u(x) - h(\varphi_{\zeta}(x)) \, \mid \, x \in \overline{V} \} > \max \{u(x) - h(\varphi_{\zeta}(x)) \, \mid \, x \in \partial V \},
		\end{equation*}
	then there is a triple $(x_{*},X_{*},q_{**}) \in V \times \mathcal{S}_{d} \times \{\varphi_{\zeta} = 1\}$ such that
		\begin{gather}
			u(x_{*}) - h(\varphi_{\zeta}(x_{*})) = \max \{u(x) - h(\varphi_{\zeta}(x)) \, \mid \, x \in \overline{V} \}, \label{E: equality thing we need ack} \\
			- \mathcal{H}(x_{*},h'(u(x_{*}))D\varphi_{\zeta}(x_{*}),X_{*}^{h}, Q_{X^{h}_{*}}(q_{**} - \zeta \|D\varphi_{\zeta}(x_{*})\|^{-1} D\varphi_{\zeta}(x_{*}))) \leq 0, \nonumber \\
			X_{*} q_{**} = 0, \quad \text{and} \quad 0 \leq \varphi_{\zeta}(x_{*}) X_{*} \leq c(\varphi) \zeta^{-1} \text{Id}, \nonumber
		\end{gather}
		where $X^{h}_{*}$ is derived from $h$, $X_{*}$, and $x_{*}$ via the rule
		\begin{equation*}
			X^{h}_{*} = h'(u(x_{*})) X_{*} + h''(u(x_{*})) D\varphi_{\zeta}(x_{*}) \otimes D\varphi_{\zeta}(x_{*}).
		\end{equation*}
	\end{prop}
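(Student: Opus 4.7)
The plan is to reduce Proposition \ref{P: technical core eaten curved} to the flat-cone result, Proposition \ref{P: technical core eaten}, by straightening the curved cone via the change of variables
\[
\tilde u(x) := h^{-1}\bigl(u(x) - M\bigr), \qquad M := u(x_0) - h(\varphi_\zeta(x_0)).
\]
Since $h$ is smooth and strictly increasing on the range of $u - M$, the function $\tilde u$ is well-defined and upper semicontinuous on $\overline V$, with $\tilde u(x_0) = \varphi_\zeta(x_0)$. Applying $h^{-1}$ to the inequality $u - M \leq h(\varphi_\zeta)$ (valid on $\overline V$ by the maximum hypothesis) yields $\tilde u \leq \varphi_\zeta$ on $\overline V$, with equality at $x_0$ and strict inequality on $\partial V$. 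Hence $\tilde u - \varphi_\zeta$ attains its maximum, of value $0$, at the interior point $x_0$.

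A routine chain-rule argument for viscosity subsolutions then shows that $\tilde u$ is a viscosity subsolution in $V$ of the transformed equation
\[
-\mathcal{H}_h\bigl(x, \tilde u, D\tilde u, D^2 \tilde u, G_\varphi^{*}(D\tilde u, D^2\tilde u)\bigr) \leq 0,
\]
where
\[
\mathcal{H}_h(x, r, p, X, a) := \mathcal{H}\bigl(x, h'(r) p,\; h'(r) X + h''(r)\, p \otimes p,\; h'(r) a + h''(r) \varphi^{*}(p)^2\bigr).
\]
Indeed, a smooth test function $\phi$ touching $\tilde u$ from above at a point $y_0$ lifts to the smooth test function $\tilde\phi(x) := M + h(\tilde u(y_0) + \phi(x) - \phi(y_0))$, which touches $u$ from above at $y_0$ and whose first two derivatives at $y_0$ equal $h'(\tilde u(y_0)) D\phi(y_0)$ and $h'(\tilde u(y_0)) D^2\phi(y_0) + h''(\tilde u(y_0)) D\phi(y_0)^{\otimes 2}$. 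The reduction of the $G_\varphi^{*}$ argument uses the elementary identity
\[
G_\varphi^{*}\bigl(\lambda p, \lambda X + \mu\, p^{\otimes 2}\bigr) = \lambda G_\varphi^{*}(p, X) + \mu \varphi^{*}(p)^2, \qquad \lambda > 0,
\]
which follows from the zero-homogeneity of $\partial \varphi^{*}$ and the identity $\langle p, q \rangle = \varphi^{*}(p)$ for $q \in \partial \varphi^{*}(p)$. The operator $\mathcal{H}_h$ inherits upper semicontinuity from $\mathcal{H}$ and ellipticity in $X$ and $a$ from positivity of $h'$; its dependence on $r$ is harmless, because Theorem \ref{T: technical core} admits $r$-dependent operators.

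Invoking the key identity $G_\varphi^{*} \equiv \mathcal{G}_{\varphi_\zeta}^{*,\zeta}$ from Theorem \ref{T: regularized_norm}, the transformed PDE for $\tilde u$ fits the template of Theorem \ref{T: technical core} with $\varphi$ replaced by $\varphi_\zeta$, $\alpha = \zeta$, and $\mathcal{H}$ replaced by $\mathcal{H}_h$ (the constants in Assumption \ref{A: c11 assumption} being furnished by Theorem \ref{T: regularized_norm}). The theorem then produces a triple $(x_*, X_*, q_{**}) \in V \times \mathcal{S}_d \times \{\varphi_\zeta = 1\}$ at which $\tilde u - \varphi_\zeta$ attains its maximum---so $\tilde u(x_*) = \varphi_\zeta(x_*)$---together with the Hessian bound $0 \leq \varphi_\zeta(x_*) X_* \leq \tilde\Gamma(\varphi) \zeta^{-1} \text{Id}$, the kernel condition $X_* q_{**} = 0$, and the viscosity inequality for $\mathcal{H}_h$ at $x_*$. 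Unpacking the definition of $\mathcal{H}_h$ at $r = \tilde u(x_*) = \varphi_\zeta(x_*)$ recovers the asserted form of $X_*^h$ and the claimed inequality for $\mathcal{H}$; that $u - h(\varphi_\zeta)$ also attains its maximum at $x_*$ follows from $u = h(\tilde u) + M$ and the monotonicity of $h$.

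The chief obstacle will be the careful chain-rule derivation of the transformed equation, particularly the bookkeeping on where $h'$ and $h''$ are evaluated (namely at the contact value of $\tilde u$, which equals $\varphi_\zeta(x_*)$ at the maximum) and the verification that the reduction of the $G_\varphi^{*}$ argument via the scaling identity produces exactly the fourth argument required by $\mathcal{H}_h$. The remaining steps are either standard or have been dispatched in earlier parts of the paper.
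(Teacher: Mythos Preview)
Your approach is essentially the same as the paper's: both reduce to Proposition \ref{P: technical core eaten} (equivalently, Theorem \ref{T: technical core}) by composing $u$ with $h^{-1}$ after a normalizing shift, then reading off the conclusion via the chain rule.

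Two minor points deserve attention. First, you assume $h^{-1}$ is well-defined on the range of $u - M$, but the hypothesis only says $h$ is smooth and increasing, not strictly increasing or surjective. The paper handles this by first perturbing to $h_\epsilon(s) = h(s) + \epsilon s$, which is strictly increasing with $h_\epsilon(\mathbb{R}) = \mathbb{R}$, and then recovering the general case in the limit $\epsilon \to 0^+$. Your write-up should include this reduction.

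Second, your chain rule correctly places $h'$ and $h''$ at $\tilde u(x_*) = \varphi_\zeta(x_*)$, whereas the stated conclusion has them evaluated at $u(x_*)$; these disagree in general since $u(x_*) = M + h(\varphi_\zeta(x_*))$. The paper's own proof writes $h'(u(x_*))$ as well, so this appears to be a typo in the statement rather than an error in your derivation --- but your claim that unpacking $\mathcal{H}_h$ ``recovers the asserted form of $X_*^h$'' glosses over this discrepancy and should be flagged.
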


The proof of the proposition amounts to writing down the differential inequality governing the function $v = h^{-1}(u)$, as explained next.

%
	
		\begin{proof} By hypothesis, there is an $r > 0$ such that if $\overline{B}_{r}(\partial V) = \bigcup_{y \in \partial V} \overline{B}_{r}(y)$, then
			\begin{equation*}
				\max \{u(x) - h(\varphi_{\zeta}(x)) \, \mid \, x \in \overline{V} \} > \max \{u(x) - h(\varphi_{\zeta}(x)) \, \mid \, x \in \overline{B}_{r}(\partial V) \}.
			\end{equation*}
		To begin with, there is no loss of generality assuming that $h'$ is positive and $h(\mathbb{R}) = \mathbb{R}$; otherwise, one can perturb $h$ by setting $h_{\epsilon}(u) = h(u) + \epsilon u$, and then recover the desired result in the limit $\epsilon \to 0^{+}$.
		
		Up to subtracting a constant from $u$, which does not change the equation, one can assume that
			\begin{equation*}
				\max\left\{ u(x) - h(\varphi_{\zeta}(x)) \, \mid \, x \in \overline{V} \right\} = 0,
			\end{equation*}
		hence $u \leq h(\varphi_{\zeta})$ holds in $\overline{V}$ with equality at some point in $V$.
		
		In particular, if $v = h^{-1}(u)$, then
			\begin{equation*}
				\max \left\{ v(x) - \varphi_{\zeta}(x) \, \mid \, x \in \overline{V} \right\} = 0 > \max \left\{v(x) - \varphi_{\zeta}(x) \, \mid \, x \in \overline{B}_{r}(\partial V) \right\}.
			\end{equation*}
		
		A straightforward computation shows that $v$ satisfies
			\begin{equation*}
				-\mathcal{H}(x,h'(u(x)) Dv, \mathcal{X}(v,Dv,D^{2}v), G_{\varphi}^{*}(h'(u(x))Dv, \mathcal{X}(u(x),Dv,D^{2}v))) \leq 0 \quad \text{in} \, \, V,
			\end{equation*}
		where $\mathcal{X} : \mathbb{R} \times \mathbb{R}^{d} \times \mathcal{S}_{d} \to \mathcal{S}_{d}$ is the map
			\begin{equation*}
				\mathcal{X}(u,p,X) = h'(u) X + h''(u) p \otimes p.
			\end{equation*}
		This differential inequality fits into the framework of Proposition \ref{P: technical core eaten}.  Therefore, by that proposition, there is a triple $(x_{*},X_{*},q_{**}) \in V \times \mathcal{S}_{d} \times \{\varphi = 1\}$ such that 
			\begin{gather}
				v(x_{*}) - \varphi_{\zeta}(x_{*}) = 0 = \max \left\{ v(x) - \varphi(x) \, \mid x \in \overline{V} \right\}, \label{E: monotonicity use here ack} \\
				- \mathcal{H}(x_{*}, h'(u(x_{*})) D\varphi_{\zeta}(x_{*}), X^{h}_{*}, Q_{X^{h}_{*}}(q_{**} - \zeta \|D\varphi_{\zeta}(x_{*})\|^{-1} D\varphi_{\zeta}(x_{*}))) \leq 0, \label{E: equation for the umpteenth time} \\
				X_{*} q^{\epsilon}_{**} = 0, \quad \text{and} \quad 0 \leq \varphi_{\zeta}(x_{*}) X_{*} \leq c(\varphi) \zeta^{-1} \text{Id}. \label{E: another bound}
			\end{gather}
		where $X^{h}_{*} = \mathcal{X}(u(x_{*}), D\varphi_{\zeta}(x_{*}),X_{*})$. 
	Note that \eqref{E: monotonicity use here ack} implies \eqref{E: equality thing we need ack} since $h$ is monotone.  Therefore, the proof is complete. \end{proof}

\section{Comparison Results in Bounded Domains} \label{S: elliptic bounded domains}

In this section, general comparison principles are proved by combining the cone comparison results of the previous sections with the standard uniqueness machinery from the theory of viscosity solutions.

\subsection{The Dirichlet Problem}  Consider the following generalization of \eqref{E: naive}, namely, 
	\begin{align} \label{E: dirichlet with fancy h}
		\left\{ \begin{array}{r l}
			- \mathcal{H}(x,u,Du,D^{2}u,\mathcal{G}_{\varphi}^{*}(Du,D^{2}u)) \leq 0 & \text{in} \, \, U, \\
			- \mathcal{H}(x,u,Du,D^{2}u, \mathcal{G}_{*}^{\varphi}(Du,D^{2}u)) \geq 0 & \text{in} \, \, U, \\
			u = g & \text{on} \, \, \partial U,
		\end{array} \right.
	\end{align}
where the function $\mathcal{H} : \mathbb{R}^{d} \times \mathbb{R} \times \mathbb{R}^{d} \times \mathcal{S}_{d} \times \mathbb{R} \to \mathbb{R}$ is a continuous function satisfying the following two assumptions.  First, assume there is a constant $\lambda = \lambda(\mathcal{H}) > 0$ such that, for any $(x,p) \in \mathbb{R}^{d} \times \mathbb{R}^{d}$, $s,r, a,b \in \mathbb{R}$, and $X,Y \in \mathcal{S}_{d}$,
	\begin{align} 
		\lambda |s - r| \leq \mathcal{H}(x,s,p,Y,b) - \mathcal{H}(x,r,p,X,a) \quad \text{if} \, \, s \leq r, \, \, Y \geq X, \, \, b \geq a; \label{E: strict monotonicity operator}
	\end{align}
Next, assume there is a $\zeta = \zeta(\mathcal{H}) \in (0,1)$ such that if $X, Y \in \mathcal{S}_{d}$ satisfy, for some constants $C, T > 0$,
	\begin{align*}
		-C T \left( \begin{array}{r l}
					\text{Id} & 0 \\
					0 & \text{Id}
				\end{array} \right) \leq \left( \begin{array}{r l}
										X & 0 \\
										0 & -Y
									\end{array} \right) \leq CT \left( \begin{array}{r l}
																\text{Id} & - \text{Id} \\
																- \text{Id} & \text{Id}
															\end{array} \right),
	\end{align*}
then there is a continuous function $\omega_{C} : [0,+\infty) \to [0,+\infty)$ depending only on $C$ such that $\omega_{C}(0) = 0$ and, for any $x,y \in \mathbb{R}^{d}$, $r \in \mathbb{R}$, and $q \in B_{C}(0)$, \footnote{Here $D(\varphi_{\zeta}^{2})$ denotes the derivative of the function $x \mapsto \varphi_{\zeta}(x)^{2}$.  In particular, if $\varphi_{\zeta} = \|\cdot\|$, then $D(\varphi_{\zeta}^{2})(x - y) = 2(x - y)$.}
	\begin{align}
		&\mathcal{H}(x,r, 2^{-1} T D(\varphi_{\zeta}^{2})(x - y), X, Q_{X}(q)) - \mathcal{H}(y,r, 2^{-1} T D(\varphi_{\zeta}^{2})(x - y), X, Q_{X}(q)) \label{E: regularity operator} \\
		&\qquad \qquad \qquad \leq \omega_{C}(\|x - y\| + T\|x - y\|^{2}). \nonumber
	\end{align}
For examples of functions $\mathcal{H}$ for which this last condition holds, see Remarks \ref{R: regularity assumption} and \ref{R: diffusion assumption} below.

The comparison principle for \eqref{E: dirichlet with fancy h} reads as follows:

	\begin{theorem} \label{T: penalized problem comparison} Let $\varphi$ be a Finsler norm in $\mathbb{R}^{d}$, $U \subseteq \mathbb{R}^{d}$ be a bounded open set, and $\mathcal{H}$ be a continuous function such that \eqref{E: strict monotonicity operator} and \eqref{E: regularity operator} both hold for some fixed parameters $\lambda(\mathcal{H}) > 0$ and $\zeta(\mathcal{H}) \in (0,1)$.  If $u \in USC(\overline{U})$ and $v \in LSC(\overline{U})$ are such that 
		\begin{gather*}
			-\mathcal{H}(x,u,Du,D^{2}u,\mathcal{G}_{\varphi}^{*}(Du,D^{2}u)) \leq 0 \quad \text{in} \, \, U, \\
			- \mathcal{H}(x,v,Dv,D^{2}v,\mathcal{G}_{*}^{\varphi}(Dv,D^{2}v)) \geq 0 \quad \text{in} \, \, U, \\
			\text{and} \quad \max \left\{ u(x) - v(x) \, \mid \, x \in \partial U \right\} \leq 0,
		\end{gather*}
	then 
		\begin{equation*}
			\max \left\{ u(x) - v(x) \, \mid \, x \in \overline{U} \right\} \leq 0.
		\end{equation*}\end{theorem}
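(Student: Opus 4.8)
The plan is to prove Theorem \ref{T: penalized problem comparison} by the classical doubling-of-variables argument, using the regularized norm $\varphi_{\zeta}$ (with $\zeta = \zeta(\mathcal{H})$) in place of the Euclidean distance, and feeding the output of the variable-doubling step into the single-variable curved-cone comparison machinery of Section \ref{S: single variable}. First I would argue by contradiction: suppose $M := \max_{\overline U}(u - v) > 0$. Standard reductions allow us to assume the maximum is attained at an interior point and, after a small perturbation using the strict monotonicity \eqref{E: strict monotonicity operator}, to assume $u$ is a \emph{strict} subsolution in the sense of a small additive $\lambda$-gap; one also localizes near the contact set so that the doubled functional's maximum is attained strictly inside. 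For $T>0$ large, consider
	\begin{equation*}
		\Phi_{T}(x,y) = u(x) - v(y) - \frac{T}{2} \varphi_{\zeta}(x - y)^{2},
	\end{equation*}
and let $(x_{T}, y_{T})$ be a maximizer over $\overline{U} \times \overline{U}$. The usual estimates give $T \varphi_{\zeta}(x_{T}-y_{T})^{2} \to 0$, $\|x_{T} - y_{T}\| \to 0$, and $u(x_{T}) - v(y_{T}) \to M$ as $T \to \infty$, and for $T$ large the points $x_{T}, y_{T}$ lie in $U$ and stay away from the boundary and from each other (since $M>0$), so in particular $x_{T} - y_{T} \in \mathbb{R}^{d}\setminus\{0\}$.

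The next step is to apply the maximum principle for semicontinuous functions (the Jensen--Ishii lemma, cf.\ \cite{ishii}; this is exactly where, as announced in the introduction, this tool is used for the first time) to the function $\varphi_{\zeta}^{2}$, which is $C^{1,1}$ away from the diagonal by Theorem \ref{T: regularized_norm}(i) and the results of Section \ref{S: pos hom}. This produces matrices $X, Y \in \mathcal{S}_{d}$ with
	\begin{equation*}
		-C T \begin{pmatrix} \mathrm{Id} & 0 \\ 0 & \mathrm{Id} \end{pmatrix} \leq \begin{pmatrix} X & 0 \\ 0 & -Y \end{pmatrix} \leq CT \begin{pmatrix} \mathrm{Id} & -\mathrm{Id} \\ -\mathrm{Id} & \mathrm{Id} \end{pmatrix}
	\end{equation*}
for a constant $C$ controlled by $\|D^2(\varphi_\zeta^2)\|$ on an annulus, together with
	\begin{gather*}
		-\mathcal{H}\bigl(x_{T}, u(x_{T}), p_{T}, X, \mathcal{G}_{\varphi}^{*}(p_{T}, X)\bigr) \leq 0, \\
		-\mathcal{H}\bigl(y_{T}, v(y_{T}), p_{T}, Y, \mathcal{G}_{*}^{\varphi}(p_{T}, Y)\bigr) \geq 0,
	\end{gather*}
where $p_{T} = \tfrac{T}{2} D(\varphi_{\zeta}^{2})(x_{T} - y_{T}) \neq 0$. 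Using \eqref{E: key transformation} one rewrites $\mathcal{G}_{\varphi}^{*}(p_T,X) = \mathcal{G}_{\varphi_\zeta}^{*,\zeta}(p_T,X)$, so there is $q_{T} \in \partial\varphi_{\zeta}^{*}(p_{T})$ realizing the max, i.e.\ $\mathcal{G}_{\varphi}^{*}(p_T,X) = Q_{X}(q_T - \zeta\|p_T\|^{-1}p_T)$, and similarly on the supersolution side with some $q_T' \in \partial\varphi_\zeta^*(p_T)$ realizing the min. Then the matrix inequality gives $Q_X(q) - Q_Y(q) \le 0$ for any $q$, in particular $\mathcal{G}_{\varphi}^{*}(p_T, X) \le \mathcal{G}_{*}^{\varphi}(p_T, Y) + o(1)$ after accounting for the $\zeta$-shift (the shift vectors agree since $q_T, q_T'$ are shifted by the same $\zeta\|p_T\|^{-1}p_T$ and the relevant directions lie in the common kernel structure). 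Subtracting the two viscosity inequalities and invoking the regularity assumption \eqref{E: regularity operator} to absorb the $x_T \neq y_T$ discrepancy in $\mathcal{H}$, together with ellipticity \eqref{Ass: elliptic assumption} in the last slot, yields
	\begin{equation*}
		\lambda (u(x_{T}) - v(y_{T})) \leq \omega_{C}\bigl(\|x_{T} - y_{T}\| + T\|x_{T} - y_{T}\|^{2}\bigr) + o(1),
	\end{equation*}
and sending $T \to \infty$ gives $\lambda M \leq 0$, contradicting $M > 0$.

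The main obstacle is the interface between the discontinuous operators $\mathcal{G}_{\varphi}^{*}$, $\mathcal{G}_{*}^{\varphi}$ and the Jensen--Ishii lemma: the envelopes are only semicontinuous and the matrices $X, Y$ produced by the lemma carry no information pinning down which element of the multivalued subdifferential $\partial\varphi_\zeta^*(p_T)$ is relevant. The key point that makes the argument go through is precisely the identity \eqref{E: subdifferential version of infinity harmonic} / \eqref{E: key transformation}: because $\partial\varphi_\zeta^*(p_T) = \partial\varphi^*(p_T) + \zeta p_T/\|p_T\|$, the \emph{same} shift appears on both the sub- and supersolution sides, so when one compares $\mathcal{G}_{\varphi}^{*}(p_T,X) = \max_q Q_X(q - \zeta e_T)$ against $\mathcal{G}_{*}^{\varphi}(p_T,Y) = \min_q Q_Y(q - \zeta e_T)$ (with $e_T = p_T/\|p_T\|$ and $q$ ranging over the common set $\partial\varphi_\zeta^*(p_T)$), the maximizing $q$ on the left can simply be used as a competitor on the right, and the matrix inequality $Q_X \le Q_Y$ on the relevant difference does the rest. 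A secondary technical point is verifying that the constant $C$ in the matrix bound — which comes from the Hessian of $\varphi_\zeta^2$ on an annulus around the diagonal, controlled via Lemma \ref{L: basic hessian bound} and Proposition \ref{P: trivial hessian estimate} — depends only on $\varphi$ and $\zeta$ and not on $T$, so that $\omega_C$ in \eqref{E: regularity operator} is a fixed modulus; this is where the quantitative form of Assumption \ref{A: c11 assumption} and the nondegeneracy \eqref{E: nondegeneracy condition zeta} enter. I would also note in passing, as flagged in Section \ref{S: c11 reduction}, that this gives the alternative proof of Theorem \ref{T: cone comparison} using a single $\zeta$, by taking $\mathcal{H}(x,r,p,X,a) = \tfrac12\epsilon\varphi^*(p)^2 - a$ and combining with Lemma \ref{L: strict subharmonic}.
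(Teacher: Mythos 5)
Your overall architecture (contradiction, doubling variables with a $\varphi_{\zeta}$-penalization, Jensen--Ishii, then \eqref{E: strict monotonicity operator} and \eqref{E: regularity operator} to close) matches the paper's, but there is a genuine gap at exactly the step the whole paper is built to handle. From the matrix inequality you correctly get $X \leq Y$, hence $Q_{X}(\xi) \leq Q_{Y}(\xi)$ for every fixed $\xi$. But the subsolution inequality involves $\mathcal{G}_{\varphi}^{*}(p_{T},X) = \max_{q \in \partial\varphi_{\zeta}^{*}(p_{T})} Q_{X}(q - \zeta e_{T})$ and the supersolution inequality involves $\mathcal{G}_{*}^{\varphi}(p_{T},Y) = \min_{q \in \partial\varphi_{\zeta}^{*}(p_{T})} Q_{Y}(q - \zeta e_{T})$, and these are evaluated at \emph{different} elements $q_{T} \neq q_{T}'$ of the (multivalued) set $\partial\varphi_{\zeta}^{*}(p_{T})$. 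Using the maximizer $q_{T}$ as a competitor in the min only gives $\mathcal{G}_{*}^{\varphi}(p_{T},Y) \leq Q_{Y}(q_{T} - \zeta e_{T})$, which points the wrong way; $X \leq Y$ alone never yields $\max_{q} Q_{X}(q-\zeta e_{T}) \leq \min_{q} Q_{Y}(q - \zeta e_{T})$ (take $X = Y$ with $Q_{X}$ non-constant on the shifted face for a counterexample). Your parenthetical that ``the relevant directions lie in the common kernel structure'' is precisely the missing ingredient, and Jensen--Ishii applied to $\varphi_{\zeta}^{2}$ supplies no such kernel information. In the paper this is the content of \eqref{E: big killing} in Proposition \ref{P: doubling variables}: one must first replace $\varphi_{\zeta}$ by a conical test function $\psi_{\nu}$ touching at the off-diagonal maximum, pass to the smooth admissible perturbations $\psi_{\nu}^{\epsilon}$, apply Ishii's lemma to those (Proposition \ref{P: generate contact points double variable}), and then run the limit analysis of Proposition \ref{P: key mountain} (faces, normal cones) to produce $q_{**}^{u}, q_{**}^{v} \in \partial\varphi_{\zeta}^{*}(p)$ with $X_{*}(q_{**}^{u}-q_{**}^{v}) = Y_{*}(q_{**}^{u}-q_{**}^{v}) = 0$; only then can the supersolution inequality be rewritten with the subsolution's vector $q_{**}^{u}$ (Remark \ref{R: big killing crazy}) and the standard machinery take over.

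A secondary problem: $\varphi_{\zeta}$ is only $C^{1,1}$ away from the origin, so $\varphi_{\zeta}^{2}$ is not an admissible test function for the maximum principle for semicontinuous functions as stated in Proposition \ref{P: jensen ishii lemma}, which requires the convex function to be smooth near $\xi_{0} = x_{0} - y_{0}$; moreover a naive $C^{1,1}$ version would leave the matrix bound in terms of a Hessian that need not exist at the relevant point. The paper avoids this by applying the lemma only to the smooth perturbed cones $\psi_{\nu}^{\epsilon}$ (raised to the fourth power), with the Hessian control coming from Proposition \ref{P: key mountain} rather than from pointwise bounds on $D^{2}(\varphi_{\zeta}^{2})$. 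So to repair your argument you would essentially have to reintroduce Proposition \ref{P: doubling variables}, i.e.\ the doubled-variable version of the conical-test-function perturbation scheme, rather than appeal to Jensen--Ishii on $\varphi_{\zeta}^{2}$ directly.
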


The proof proceeds using the variable-doubling technique.  In particular, given $\beta > 0$, consider the function
	\begin{equation*}
		\Phi(x,y) = u(x) - v(y) - \frac{\varphi_{\zeta}(x - y)^{4}}{4 \beta},
	\end{equation*}
where $\varphi_{\zeta}$ is the regularized version of $\varphi$ from Theorem \ref{T: regularized_norm} and $\zeta = \zeta(\mathcal{H})$ is the parameter from \eqref{E: regularity operator}.  The next result asserts that, compared to the cone comparison results proved earlier (i.e., Propositions \ref{P: technical core eaten} and \ref{P: technical core eaten curved}), the addition of a second variable causes no additional difficulties.  

	\begin{prop} \label{P: doubling variables} Let $\varphi$ be a Finsler norm and $U \subseteq \mathbb{R}^{d}$ be a bounded open set.  Given a parameter $\zeta \in (0,1)$, let $\varphi_{\zeta}$ be the regularized version of $\varphi$ from Theorem \ref{T: regularized_norm}.  Let $\mathcal{H} : \mathbb{R}^{d} \times \mathbb{R} \times \mathbb{R}^{d} \times \mathcal{S}_{d} \times \mathbb{R} \to \mathbb{R}$ be a function satisfying assumptions \eqref{Ass: continuity assumption} and \eqref{Ass: elliptic assumption}.  
	
	Suppose that $u \in USC(\overline{U})$ and $v \in LSC(\overline{U})$ are such that
		\begin{align}
			-\mathcal{H}(x,u,Du,D^{2}u,G_{\varphi}^{*}(Du,D^{2}u)) &\leq 0 \quad \text{in} \, \, U, \label{E: we are doubling variables 1}\\
			-\mathcal{H}(x,u,Dv,D^{2}v,G_{*}^{\varphi}(Dv,D^{2}v)) &\geq 0 \quad \text{in} \, \, U. \label{E: we are doubling variables 2}
		\end{align}
	
	If for some $\beta > 0$ one has that
		\begin{align}
			\sup &\left\{ u(x) - v(y) - \frac{\varphi_{\zeta}(x-y)^{4}}{4 \beta} \, \mid \, (x,y) \in U \times U \right\} \label{E: nondegenerate boundary assumption} \\
				&\qquad > \max \left\{ u(x) - v(y) - \frac{\varphi_{\zeta}(x -y)^{4}}{4 \beta} \, \mid \, (x,y) \in \partial(U \times U) \right\}, \nonumber
		\end{align}
	then there are triples $(x_{*},X_{*},q^{u}_{**}), (y_{*},Y_{*},q^{v}_{**}) \in U \times \mathcal{S}_{d} \times \{\varphi_{\zeta} = 1\}$ and a constant $C(\varphi) > 0$ depending only on $\varphi$ such that, after defining $T_{*} \geq 0$ and $\mu_{*} \in \mathbb{R}^{d}$ by
		\begin{equation*}
			T_{*} = \beta^{-1} \varphi_{\zeta}(x_{*} - y_{*})^{2}, \quad \mu_{*} = \left\{ \begin{array}{r l}
			 			\beta^{-1} \varphi_{\zeta}(x_{*} - y_{*})^{3} D\varphi_{\zeta}(x_{*} - y_{*}), & \text{if} \, \, x_{*} \neq y_{*}, \\
						0, & \text{otherwise,}
					\end{array} \right.
		\end{equation*} 
	one has
		\begin{gather}
			u(x_{*}) - v(y_{*}) - \frac{\varphi_{\zeta}(x_{*} - y_{*})^{4}}{4 \beta} \qquad \qquad \qquad \qquad \qquad\qquad \label{E: big maximization} \\
			\quad \qquad \qquad \qquad = \max\left\{ u(x) - v(y) - \frac{\varphi_{\zeta}(x-y)^{4}}{4 \beta} \, \mid \, (x,y) \in \overline{U} \times \overline{U} \right\}, \nonumber \\
			-\mathcal{H}(x_{*},u(x_{*}),\mu_{*}, X_{*}, Q_{X_{*}}(q_{**}^{u} - \zeta \|\mu_{*}\|^{-1} \mu_{*})) \leq 0 \quad \text{if} \, \, x_{*} \neq y_{*}, \label{E: big equation 1} \\
			-\mathcal{H}(x_{*},u(x_{*}),0,0,0) \leq 0 \quad \text{if} \, \, x_{*} = y_{*}, \label{E: big equation 1 star}, \\
			-\mathcal{H}(y_{*},u(y_{*}),\mu_{*},Y_{*}, Q_{Y_{*}}(q^{v}_{**} - \zeta \|\mu_{*}\|^{-1} \mu_{*})) \geq 0 \quad \text{if} \, \, x_{*} \neq y_{*}, \label{E: big equation 2} \\
			-\mathcal{H}(y_{*},u(y_{*}),0,0,0) \geq 0 \quad \text{if} \, \, x_{*} = y_{*}, \label{E: big equation 2 star} \\
			X_{*} (q^{u}_{**} - q^{v}_{**}) = Y_{*}(q^{u}_{**} - q^{v}_{**}) = 0, \label{E: big killing}, \\
			- C(\varphi) T_{*} \zeta^{-1} \left( \begin{array}{c c}
															\text{Id} & 0 \\
															0 & \text{Id}
														\end{array} \right) \leq \left( \begin{array}{c c}
		X_{*} & 0 \\
		0 & -Y_{*}
	\end{array} \right) \leq C(\varphi) T_{*} \zeta^{-1} \left( \begin{array}{c c}
																	\text{Id} & - \text{Id} \\
																	-\text{Id} & \text{Id}
																\end{array} \right), \label{E: crazy matrix business}
		\end{gather}
	\end{prop}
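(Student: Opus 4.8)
\emph{The plan} is to carry out, in the doubled setting, the perturbed conical test function argument used to prove Theorem \ref{T: technical core}, the one genuinely new ingredient being the maximum principle for semicontinuous functions of \cite{ishii}, which is forced on us here because \emph{both} $u$ and $v$ are merely semicontinuous (in the single-variable situation of Theorem \ref{T: technical core} there was no $v$, hence no need for it). Write $h(t) = t^{4}/(4\beta)$, extended to a smooth strictly increasing function on $\mathbb{R}$, so the penalization is $\phi(x,y) = h(\varphi_{\zeta}(x-y))$; since $\varphi_{\zeta}^{4} \in C^{1,1}_{\text{loc}}(\mathbb{R}^{d})$ (its gradient is $O(\|z\|^{3})$ near $0$ and its distributional Hessian is bounded, using $\varphi_{\zeta} D^{2}\varphi_{\zeta} \leq c(\varphi)\zeta^{-1}\text{Id}$ from Lemma \ref{L: basic hessian bound} and $\|D\varphi_{\zeta}\| \leq C(\varphi)$ from Proposition \ref{P: proof of approx theorem 1}), the function $\phi$ is $C^{1,1}_{\text{loc}}(\mathbb{R}^{2d})$ with block Hessian $\begin{pmatrix} Z & -Z \\ -Z & Z\end{pmatrix}$, $Z = D^{2}(h\circ\varphi_{\zeta})(x-y) = h'D^{2}\varphi_{\zeta} + h''D\varphi_{\zeta}\otimes D\varphi_{\zeta}$, satisfying $0 \leq Z \leq C(\varphi)T\zeta^{-1}\text{Id}$ on $\{x\neq y\}$ with $T = \beta^{-1}\varphi_{\zeta}(x-y)^{2}$. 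Fix a maximizer $(x_{*},y_{*})$ of $\Phi$ over $\overline{U}\times\overline{U}$; assumption \eqref{E: nondegenerate boundary assumption} puts $(x_{*},y_{*})$ in $U\times U$, and $D_{x}\phi(x_{*},y_{*}) = -D_{y}\phi(x_{*},y_{*}) = \mu_{*}$, $T_{*} = \beta^{-1}\varphi_{\zeta}(x_{*}-y_{*})^{2}$.

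The degenerate case $x_{*} = y_{*}$ is disposed of immediately: then $\mu_{*} = 0$, and $\phi(\cdot,y_{*})$ (resp.\ $-\phi(x_{*},\cdot)$) admits the second-order Taylor expansion $o(\|x-x_{*}\|^{2})$ at $x_{*}$ (resp.\ $o(\|y-y_{*}\|^{2})$ at $y_{*}$), so $(0,0)$ lies in the superjet of $u$ at $x_{*}$ and in the subjet of $v$ at $y_{*}$; the sub/supersolution properties, together with $\mathcal{G}_{\varphi}^{*,0}(0,0) = \mathcal{G}_{*,0}^{\varphi}(0,0) = 0$, give \eqref{E: big equation 1 star} and \eqref{E: big equation 2 star}, and one takes $X_{*} = Y_{*} = 0$ and any $q^{u}_{**} = q^{v}_{**} \in \{\varphi_{\zeta} = 1\}$, so that \eqref{E: big killing} and \eqref{E: crazy matrix business} hold trivially. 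Assume henceforth $x_{*} \neq y_{*}$, hence $\varphi_{\zeta}(x_{*}-y_{*}) > 0$ and $\mu_{*} \neq 0$. Put $p = D\varphi_{\zeta}(x_{*}-y_{*})$, $e = \|p\|^{-1}p$; since $p \in \partial\varphi_{\zeta}(x_{*}-y_{*})$, inversion \eqref{E: inversion} gives $x_{*}-y_{*} \in \mathcal{C}(\partial\varphi_{\zeta}^{*}(p))$. Fix $\nu \in (0,\zeta)$ and let $\chi = \psi_{e,\partial\varphi_{\zeta}^{*}(p),\nu}$ be the conical test function for $\varphi_{\zeta}$; by Proposition \ref{P: touching above test function part}, $\varphi_{\zeta} \leq \chi$ with equality on $\mathcal{C}(\partial\varphi_{\zeta}^{*}(p))$, so $h(\chi(x-y)) \geq h(\varphi_{\zeta}(x-y))$ with equality at $(x_{*},y_{*})$, and therefore $(x_{*},y_{*})$ also maximizes $(x,y)\mapsto u(x) - v(y) - h(\chi(x-y))$, with the contact set localized exactly as in Section \ref{S: touching discussion}.

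Next, introduce an admissible perturbation $(\chi^{\epsilon})_{0<\epsilon<\nu}$ of $\chi$ (Theorem \ref{T: existence admissible perturbation}) and, for each small $\epsilon$, a maximizer $(x_{\epsilon},y_{\epsilon})$ of $(x,y)\mapsto u(x) - v(y) - h(\chi^{\epsilon}(x-y))$ over $\overline{U}\times\overline{U}$. The localization estimates of Section \ref{S: key mountain} --- in particular the analogue of Proposition \ref{P: smallness condition king}, available because $\nu < \zeta$ --- give, along a subsequence, $(x_{\epsilon},y_{\epsilon}) \to (x_{*},y_{*})$ with $x_{\epsilon}-y_{\epsilon}$ in a neighborhood of $\mathcal{C}(\partial\varphi_{\zeta}^{*}(p))$ bounded away from $0$, and $\epsilon^{-1}\text{dist}((x_{\epsilon}-y_{\epsilon})',G_{p})$ bounded. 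Since $x_{\epsilon}\neq y_{\epsilon}$, the function $\phi^{\epsilon}(x,y) := h(\chi^{\epsilon}(x-y))$ is smooth near $(x_{\epsilon},y_{\epsilon})$, so the maximum principle for semicontinuous functions of \cite{ishii} applies cleanly: it produces $X_{\epsilon},Y_{\epsilon} \in \mathcal{S}_{d}$ with $(\mu_{\epsilon},X_{\epsilon})$ in the closure of the superjet of $u$ at $x_{\epsilon}$ and $(\mu_{\epsilon},Y_{\epsilon})$ in the closure of the subjet of $v$ at $y_{\epsilon}$, where $\mu_{\epsilon} = h'(\chi^{\epsilon}(x_{\epsilon}-y_{\epsilon}))D\chi^{\epsilon}(x_{\epsilon}-y_{\epsilon})$, together with the matrix inequality built from $D^{2}\phi^{\epsilon}(x_{\epsilon},y_{\epsilon}) = \begin{pmatrix} Z^{h}_{\epsilon} & -Z^{h}_{\epsilon} \\ -Z^{h}_{\epsilon} & Z^{h}_{\epsilon}\end{pmatrix}$, $Z^{h}_{\epsilon} = h'D^{2}\chi^{\epsilon} + h''D\chi^{\epsilon}\otimes D\chi^{\epsilon}$. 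Here one uses the uniform curvature bound $\chi^{\epsilon}D^{2}\chi^{\epsilon} \leq \Gamma(\delta(\varphi))\nu^{-1}\text{Id}$ from Section \ref{S: pos hom} (valid along admissible perturbations, cf.\ Proposition \ref{P: key mountain part 1}(b)) to get $\|Z^{h}_{\epsilon}\| \leq C(\varphi)\beta^{-1}\chi^{\epsilon}(x_{\epsilon}-y_{\epsilon})^{2}\zeta^{-1}$, and, after a standard choice of the Jensen--Ishii parameter, the two-sided bound of \eqref{E: crazy matrix business} with $T_{\epsilon} = \beta^{-1}\chi^{\epsilon}(x_{\epsilon}-y_{\epsilon})^{2}$. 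Invoking \eqref{E: we are doubling variables 1}, \eqref{E: we are doubling variables 2} and the identities $G_{\varphi}^{*} \equiv \mathcal{G}_{\varphi_{\zeta}}^{*,\zeta}$, $G_{*}^{\varphi} \equiv \mathcal{G}_{*,\zeta}^{\varphi_{\zeta}}$, choose $q^{u}_{\epsilon} \in \partial\varphi_{\zeta}^{*}(\mu_{\epsilon})$ realizing the max in $\mathcal{G}_{\varphi_{\zeta}}^{*,\zeta}(\mu_{\epsilon},X_{\epsilon})$ and $q^{v}_{\epsilon} \in \partial\varphi_{\zeta}^{*}(\mu_{\epsilon})$ realizing the min in $\mathcal{G}_{*,\zeta}^{\varphi_{\zeta}}(\mu_{\epsilon},Y_{\epsilon})$; ellipticity \eqref{Ass: elliptic assumption} then yields the $\epsilon$-versions of \eqref{E: big equation 1}, \eqref{E: big equation 2}.

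Finally, pass to the limit $\epsilon \to 0^{+}$, and then $\nu \to \zeta^{-}$. Along a subsequence $(x_{\epsilon},y_{\epsilon},X_{\epsilon},Y_{\epsilon},\mu_{\epsilon},q^{u}_{\epsilon},q^{v}_{\epsilon}) \to (x_{*},y_{*},X_{*},Y_{*},\mu_{*},q^{u}_{**},q^{v}_{**})$; upper semicontinuity of $\mathcal{H}$, the convergences $u(x_{\epsilon})\to u(x_{*})$, $v(y_{\epsilon})\to v(y_{*})$ (from the \eqref{E: touching above key mountain}-type equalities and the local uniform convergence $\chi^{\epsilon}\to\chi$, $D\chi^{\epsilon}\to D\chi$), and $D\chi(x_{*}-y_{*}) = D\varphi_{\zeta}(x_{*}-y_{*})$ on the contact cone give \eqref{E: big maximization}, \eqref{E: big equation 1}, \eqref{E: big equation 2}, while \eqref{E: crazy matrix business} is stable under limits. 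The crux is the killing property \eqref{E: big killing}, which is extracted exactly as in Proposition \ref{P: key mountain part 1}: the normalized gradients $\|\mu_{\epsilon}\|^{-1}\mu_{\epsilon}$ approach $e$ from a direction $w_{*}'\in\{e\}^{\perp}$, which via Lemma \ref{L: direction of approach} (applied both in $\{\varphi_{\zeta}\leq 1\}$ for $q^{u}_{\epsilon},q^{v}_{\epsilon}$ and along the varying sets $\{\chi^{\epsilon}\leq 1\}$ for $x_{\epsilon}-y_{\epsilon}$) determines a single exposed face $F_{*}\subseteq\partial\varphi_{\zeta}^{*}(p)$ with $\{\varphi_{\zeta}(x_{*}-y_{*})^{-1}(x_{*}-y_{*}),q^{u}_{**},q^{v}_{**}\}\subseteq F_{*}$; and the same analysis (Proposition \ref{P: key mountain part 1}(b)) shows the limiting test-function curvature annihilates $F_{*}$, so the limiting Hessian block $Z^{h}_{*} = \lim Z^{h}_{\epsilon}$ kills $\text{span}(F_{*}-q_{*})$. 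Feeding this degeneracy of the right-hand side back into the Jensen--Ishii matrix inequality, and combining with the extremality of $q^{u}_{**}$ and $q^{v}_{**}$ as maximizer/minimizer of the respective quadratic forms over $F_{*}$, forces $X_{*}(q^{u}_{**}-q^{v}_{**}) = Y_{*}(q^{u}_{**}-q^{v}_{**}) = 0$. \emph{The main obstacle} is precisely this last step: in contrast to Theorem \ref{T: technical core}, where the extracted matrix is literally a limit of smooth test-function Hessians so that Proposition \ref{P: key mountain} applies verbatim, here $X_{*}$ and $Y_{*}$ are only sandwiched by the Jensen--Ishii inequality, so one must argue that once its right-hand side degenerates along $F_{*}$, the degeneracy propagates to both $X_{*}$ and $Y_{*}$ in the relevant directions; the fact that the faces produced on the $u$-side and on the $v$-side coincide --- because the approach direction $w_{*}'$ is a property of the common test function $\chi^{\epsilon}$ --- is what ultimately makes this possible.
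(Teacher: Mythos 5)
Your proposal is correct and follows essentially the same route as the paper's proof: split into the diagonal and off-diagonal cases, replace $\varphi_{\zeta}$ by the conical test function $\psi_{e,\partial\varphi_{\zeta}^{*}(p),\nu}$ with $\nu<\zeta$ at the contact direction, apply the Jensen--Ishii lemma of \cite{ishii} to the smooth admissible perturbations, and extract \eqref{E: big killing} by feeding the face/curvature degeneracy from Proposition \ref{P: key mountain part 1} into the Jensen--Ishii matrix sandwich. The only cosmetic deviations are that the paper attaches $q^{u}_{j},q^{v}_{j}$ to the test-function Hessians by freezing one variable at a time (rather than to the Jensen--Ishii matrices $X_{\epsilon},Y_{\epsilon}$), and it does not need the $u$-side and $v$-side faces to coincide, since the limiting curvature annihilates each limit point separately while $p\otimes p$ annihilates their difference.
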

	
\begin{remark} \label{R: big killing crazy} Notice that if, as in the previous proposition, there is a matrix $Y_{*} \in \mathcal{S}_{d}$ and vectors $q_{**}^{u}, q_{**}^{v} \in \mathbb{R}^{d}$ such that $Y_{*}q_{**}^{u} = Y_{*}q_{**}^{v}$, then, for any $\xi \in \mathbb{R}^{d}$,
	\begin{equation*}
		Q_{Y_{*}}(q^{v}_{**} + \xi) = Q_{Y_{*}}(q_{**}^{u} + \xi).
	\end{equation*}
In particular, in this scenario, \eqref{E: big equation 2} is equivalent to
	\begin{equation*}
		-\mathcal{H}(y_{*},u(y_{*}),\mu_{*},Y_{*},Q_{Y_{*}}(q^{u}_{**} - \zeta \|\mu_{*}\|^{-1} \mu_{*}) \leq 0.
	\end{equation*}
As will become clear in the proof of Theorem \ref{T: penalized problem comparison} below, this means the discontinuity of $G_{\varphi}^{*}$ and $G_{*}^{\varphi}$ has effectively been circumvented, and the rest of the variable-doubling argument can be conducted as in the case of equations with continuous coefficients.
\end{remark}

Before going into the proof of Proposition \ref{P: doubling variables}, here are some remarks on the regularity condition \eqref{E: regularity operator}:

	\begin{remark} \label{R: regularity assumption} Note that \eqref{E: regularity operator} is a generalization of the standard assumption guaranteeing comparison in the viscosity theory for second-order elliptic operators, see, for instance, \cite[Section 3]{user}.  Indeed, \eqref{E: regularity operator} becomes the assumption in that reference if $\varphi_{\zeta} = \alpha^{-1} \|\cdot\|$ for some $\alpha > 0$ (cf. Remark \ref{R: ball regularization}).

	In particular, as usual, if $\mathcal{H}$ has the following ``separable" form
		\begin{equation*}
			\mathcal{H}(x,s,p,X,a) = \mathcal{H}_{0}(s,p,X,a) - f(x),
		\end{equation*}
	where $\mathcal{H}_{0}$ is monotone increasing in the third and fourth arguments and $f$ is uniformly continuous, then \eqref{E: regularity operator} holds.   \end{remark}
	
	\begin{remark} \label{R: diffusion assumption} The computation of \cite[Example 3.6]{user} applies just as well in this setting.  Thus, for instance, if $\sigma : \mathbb{R}^{d} \to [0,+\infty)$ is uniformly Lipschitz, that is, if there is a $K > 0$ such that, for any $x,y \in \mathbb{R}^{d}$,
		\begin{equation*}
			|\sigma(x) - \sigma(y)| \leq K \|x - y\|,
		\end{equation*}
	then the operator $\mathcal{H}_{\sigma}$ given by 
		\begin{equation*}
			\mathcal{H}_{\sigma}(x,r,p,X,a) = -u + \sigma(x)^{2} a
		\end{equation*}
	satisfies \eqref{E: regularity operator}.  Hence, setting $c(x) = \sigma(x)^{2}$, Theorem \ref{T: penalized problem comparison} applies to the problem \eqref{E: finsler dirichlet with zeroth order} presented in the introduction. \end{remark}

Taking Proposition \ref{P: doubling variables} for granted for the moment, here is the proof of Theorem \ref{T: penalized problem comparison}. 

	\begin{proof}[Proof of Theorem \ref{T: penalized problem comparison}]  Argue by contradiction, that is, assume that
		\begin{equation} \label{E: running assumption ack}
			\max\left\{ u(x) - v(x) \, \mid \, x \in \overline{U} \right\} > 0 \geq \max \left\{ u(x) - v(x) \, \mid \, x \in \partial U \right\}.
		\end{equation}
	
	Let $\zeta = \zeta(\mathcal{H}) \in (0,1)$ be as in the hypotheses and let $\varphi_{\zeta}$ be the regularized norm of Theorem \ref{T: regularized_norm}.
	
	Let $\beta > 0$.  Assume that there is a $\beta_{*} \in (0,1)$ such that, for any $\beta \in (0,\beta_{*})$,		\begin{align}
			\sup &\left\{ u(x) - v(y) - \frac{\varphi_{\zeta}(x - y)^{4}}{4 \beta} \, \mid \, (x,y) \in U \times U \right \} \label{E: ack ack ack need to prove ack ack ack} \\
			&\qquad > \max \left\{ u(x) - v(y) - \frac{\varphi_{\zeta}(x - y)^{4}}{4 \beta} \, \mid \, (x,y) \in \partial (U \times U) \right\}. \nonumber
		\end{align}
	The existence of $\beta_{*}$ follows from \eqref{E: running assumption ack}, as will be verified at the end of this proof.

	Taking $\beta < \beta_{*}$ and applying Proposition \ref{P: doubling variables}, one obtains two triples 
		\begin{equation*}
			(x_{*},X_{*},q^{u}_{**}), (y_{*},Y_{*},q^{v}_{**}) \in U \times \mathcal{S}_{d} \times \{\varphi_{\zeta} = 1\}
		\end{equation*} 
	such that the conditions \eqref{E: big maximization}-\eqref{E: crazy matrix business} all hold.  As in the statement of the proposition, it is convenient to let $T_{*} = \beta^{-1} \varphi_{\zeta}(x_{*} - y_{*})^{2}$.  The crux of the proof lies in establishing the following inequality		
		\begin{equation} \label{E: what we are proving now finally at last}
			\lambda (u(x_{*}) - v(y_{*})) \leq \omega_{C}(\|x_{*} - y_{*}\| + T_{*} \|x_{*} -y_{*}\|^{2}),
		\end{equation}
	where $\lambda = \lambda(\mathcal{H})$ is as in the hypotheses and $\omega_{C}$ is as in \eqref{E: regularity operator} for some well-chosen $C$.
	
	If $x_{*} = y_{*}$, then this follows immediately from \eqref{E: big equation 1 star}, \eqref{E: big equation 2 star}, and \eqref{E: strict monotonicity operator}.  Indeed, here in the application of \eqref{E: strict monotonicity operator}, one needs to observe that, by \eqref{E: running assumption ack} and the choice of the point $(x_{*},y_{*})$,
		\begin{align}
			0 &< \sup \left\{ u(x) - v(x) \, \mid \, x \in \overline{U} \right\} \label{E: useful string of inequalities ack} \\
			&\leq \sup \left\{ u(x) - v(y) - \frac{\varphi_{\zeta}(x - y)^{4}}{4 \beta} \, \mid \, x,y \in \overline{U} \times \overline{U} \right\} \leq u(x_{*}) - v(y_{*}). \nonumber
		\end{align}
	Thus, if $x_{*} = y_{*}$, one invokes \eqref{E: strict monotonicity operator} and \eqref{E: regularity operator} (with an arbitrary $C$) to find
		\begin{align*}
			\lambda (u(x_{*}) - v(y_{*})) &\leq \mathcal{H}(x_{*},v(y_{*}),0,0,0) - \mathcal{H}(x_{*},u(x_{*}),0,0,0) \\
			&\leq \mathcal{H}(x_{*}, v(y_{*}),0,0,0) - \mathcal{H}(y_{*},v(y_{*}),0,0,0) \\
			&\leq \omega_{C}(\|x_{*} - y_{*}\| + T_{*} \|x_{*} - y_{*}\|^{2}).
		\end{align*}
	Hence, in what follows, assume $x_{*} \neq y_{*}$.
	
	As explained in Remark \ref{R: big killing crazy}, when $x_{*} \neq y_{*}$, \eqref{E: big equation 2} combines with \eqref{E: big killing} to yield the following inequality
		\begin{equation} \label{E: key thing we need ack}
			-\mathcal{H}(y_{*},v(y_{*}),\mu_{*},Y_{*},Q_{Y_{*}}(q^{u}_{**} - \zeta \|\mu_{*}\|^{-1} \mu_{*}) \geq 0,
		\end{equation}
	where $\mu_{*} = \beta^{-1} \varphi_{\zeta}(x_{*} - y_{*})^{3} D\varphi_{\zeta}(x_{*} - y_{*})$.  At this stage, the goal is to apply \eqref{E: strict monotonicity operator} and \eqref{E: regularity operator} to obtain \eqref{E: what we are proving now finally at last}.
	
Toward that end, note that, by \eqref{E: crazy matrix business}, $X_{*}$ and $Y_{*}$ satisfy the following inequality
	\begin{equation*}
		- C(\varphi) T_{*} \left( \begin{array}{c c}
							\text{Id} & 0\\
							0 & \text{Id}
						\end{array} \right) \leq \left( \begin{array}{c c}
													X_{*} & 0 \\
													0 & - Y_{*}
											\end{array} \right) \leq C(\varphi) T_{*} \left( \begin{array}{c c}
																	\text{Id} & - \text{Id} \\
																	- \text{Id} & \text{Id}
															\end{array} \right).
	\end{equation*}
Furthermore, observe that since $\{q_{**}^{u},q_{**}^{v}\} \subseteq \{\varphi_{\zeta} = 1\}$, there holds
	\begin{equation*}
		\left\| q_{**}^{u} - \zeta \|\mu_{*}\|^{-1} \mu_{*} \right\|, \left\| q_{**}^{v} - \zeta \|\mu_{*}\|^{-1} \mu_{*} \right\| \leq \max \left\{ \|q\| \, \mid \, \varphi_{\zeta}(q) = 1 \right\} + 1.
	\end{equation*}
Thus, if $C = C(\varphi) +  \max \{ \|q\| \, \mid \, \varphi_{\zeta}(q) = 1\} + 1$, then one can invoke \eqref{E: regularity operator} to find
	\begin{align*}
		&\mathcal{H}(x_{*},v(y_{*}), \mu_{*},X_{*},Q_{X_{*}}(q_{u}^{**} - \zeta \|\mu_{*}\|^{-1} \mu)) -\mathcal{H}(y_{*},v(y_{*}),\mu_{*},Y_{*},Q_{Y_{*}}(q^{u}_{**} - \zeta \|\mu_{*}\|^{-1} \mu_{*}) \\
		&\qquad \qquad \qquad \leq \omega_{C}(\|x_{*} - y_{*}\| + T_{*} \|x_{*} -y_{*}\|^{2}).
	\end{align*}

Adding this last expression to \eqref{E: big equation 1} and invoking \eqref{E: key thing we need ack} and \eqref{E: strict monotonicity operator}, one obtains
	\begin{align*}
		\lambda (u(x_{*}) - v(y_{*})) &\leq \mathcal{H}(x_{*}, v(y_{*}), \mu_{*}, X_{*}, Q_{X_{*}}(q_{**}^{u} - \zeta \|\mu_{*}\|^{-1} \mu_{*})) \\
			&\quad - \mathcal{H}(x_{*},u(x_{*}), \mu_{*},X_{*}, Q_{X_{*}}(q_{**}^{u} - \zeta \|\mu_{*}\|^{-1} \mu_{*})) \\
			&\leq \omega_{C}(\|x_{*} - y_{*}\| + T_{*} \|x_{*} - y_{*}\|^{2}) \\
			&\quad + \mathcal{H}(y_{*},v(y_{*}),\mu_{*},Y_{*},Q_{Y_{*}}(q_{**}^{u} - \zeta \|\mu_{*}\|^{-1} \mu_{*})) \\
			&\leq \omega_{C}(\|x_{*} - y_{*}\| + T_{*} \|x_{*} - y_{*}\|^{2}).
	\end{align*} 
Thus, \eqref{E: what we are proving now finally at last} holds, as claimed.  

Here is the conclusion of the proof: by \eqref{E: what we are proving now finally at last} and \eqref{E: useful string of inequalities ack}, one deduces that
	\begin{align*}
		\sup \left\{ u(x) - v(x) \, \mid \, x \in \overline{U} \right\} &\leq u(x_{*}) - v(y_{*}) \leq \frac{1}{\lambda} \omega_{C}(\|x_{*} - y_{*}\| + T_{*} \|x_{*} - y_{*}\|^{2}).
	\end{align*}
In view of the running assumption \eqref{E: running assumption ack}, this leads to a contradiction provided 
		\begin{align*}
			\lim_{\beta \to 0^{+}} \left( \|x_{*} - y_{*}\| + T_{*} \|x_{*} - y_{*}\|^{2} \right) = 0.
		\end{align*}
	Toward that end, by definition of $T_{*}$, it suffices to establish that
		\begin{align} \label{E: vanishing part of proof comparison}
			\lim_{\beta \to 0^{+}} \frac{\varphi_{\zeta}(x_{*} - y_{*})^{4}}{4 \beta} = 0.
		\end{align}
	Indeed, notice that, by definition of $T_{*}$, 
		\begin{equation*}
			T_{*} \|x_{*} - y_{*}\|^{2} \leq \max \left\{ \|q\|^{2} \, \mid \, \varphi_{\zeta}(q) = 1 \right\} \cdot \frac{\varphi_{\zeta}(x_{*} - y_{*})^{4}}{\beta}.
		\end{equation*}
	Thus, the proof is complete as soon as \eqref{E: ack ack ack need to prove ack ack ack} and \eqref{E: vanishing part of proof comparison} are established.  Both are fairly standard claims; nonetheless, the details are provided next for completeness.
	
	In order to avoid confusion in what follows, make the dependence on $\beta$ explicit by setting $(x_{*}^{\beta},y_{*}^{\beta}) = (x_{*},y_{*})$.  Since $\overline{U}$ is compact, there is no great loss of generality in assuming that the limits $x_{\infty} = \lim_{\beta \to 0^{+}} x_{*}^{\beta}$ and $y_{\infty} = \lim_{\beta \to 0^{+}} y_{*}^{\beta}$ exist; otherwise, pass to a subsequence.  In view of the computation \eqref{E: useful string of inequalities ack}, 
		\begin{align*}
			\frac{\varphi_{\zeta}(x_{*}^{\beta} - y_{*}^{\beta})^{4}}{4 \beta} &\leq u(x_{*}^{\beta}) - v(y_{*}^{\beta}) \\
			&\leq \max \left\{ u(x) \, \mid \, x \in \overline{U} \right\} - \inf \left\{ v(y) \, \mid \, y \in \overline{U} \right\} < +\infty.
		\end{align*}
	It follows that $x_{\infty} = y_{\infty}$. 
	
	Since $u$ is upper semicontinuous and $v$, lower semicontinuous, one finds
		\begin{align*}
			u(x_{\infty}) - v(x_{\infty}) &\geq \limsup_{\beta \to 0^{+}} u(x_{*}^{\beta}) - \liminf_{\beta \to 0^{+}} v(y_{*}^{\beta}) \\
				&\geq \limsup_{\beta \to 0^{+}} \left[ u(x_{*}^{\beta}) - v(y_{*}^{\beta}) - \frac{\varphi_{\zeta}(x_{*}^{\beta} - y_{*}^{\beta})^{4}}{4 \beta} \right].
		\end{align*}
	At the same time, 		
		\begin{equation*}
			u(x_{\infty}) - v(x_{\infty}) \leq \sup\{u(x) - v(x) \, \mid \, x \in U \} \leq \liminf_{\beta \to 0^{+}} \left[ u(x_{*}^{\beta}) - v(y_{*}^{\beta}) - \frac{\varphi_{\zeta}(x_{*}^{\beta} - y_{*}^{\beta})^{4}}{4 \beta} \right].
		\end{equation*}
	This proves $u(x_{\infty}) = \limsup_{\beta \to 0^{+}} u(x_{*}^{\beta})$ and $v(x_{\infty}) = \liminf_{\beta \to 0^{+}} v(y_{*}^{\beta})$.  Therefore, after revisiting the last inequality, one finds
		\begin{align*}
			u(x_{\infty}) - v(x_{\infty}) &\leq \limsup_{\beta \to 0^{+}} u(x_{*}^{\beta}) - \liminf_{\beta \to 0^{+}} v(x_{*}^{\beta}) - \limsup_{\beta \to 0^{+}} \frac{\varphi_{\zeta}(x_{*}^{\beta} - y_{*}^{\beta})^{4}}{4 \beta} \\
				&= u(x_{\infty}) - v(x_{\infty}) - \limsup_{\beta \to 0^{+}} \frac{\varphi_{\zeta}(x_{*}^{\beta}) - y_{*}^{\beta})^{4}}{4 \beta}.
		\end{align*} 
	This proves $(4 \beta)^{-1} \varphi_{\zeta}(x_{*}^{\beta} - y_{*}^{\beta})^{4} \to 0$ as $\epsilon \to 0$, as claimed.  Furthermore, the argument showed that 
		\begin{equation*}
			\lim_{\beta \to 0^{+}} \sup \left\{ u(x) - v(y) - \frac{\varphi_{\zeta}(x - y)^{4}}{4 \beta} \, \mid \, x,y \in \overline{U} \times \overline{U} \right\} = \sup \{u(x) - v(x) \, \mid \, x \in \overline{U}\}.
		\end{equation*}
	
	Finally, to establish \eqref{E: ack ack ack need to prove ack ack ack}, one repeats the previous arguments to show that
		\begin{align*}
			&\lim_{\beta \to 0^{+}} \sup \left\{ u(x) - v(y) - \frac{\varphi_{\zeta}(x - y)^{4}}{4 \beta} \, \mid \, x,y \in \partial (U \times U) \right\} \\
			&\qquad \qquad \qquad = \sup \left\{ u(x) - v(x) \, \mid \, x \in \partial U \right\}.
		\end{align*}
	Combining this with the previous limit, one invokes \eqref{E: running assumption ack} to deduce that \eqref{E: ack ack ack need to prove ack ack ack} holds. \end{proof}
	
Before entering into the proof of Proposition \ref{P: doubling variables}, here is a final remark concerning one of its first implications.
	
\begin{remark} \label{R: barles busca} As is readily verified, Proposition \ref{P: doubling variables} can be combined with the classical result of \cite{barles_busca} to give an alternative proof of the comparison principle for $\varphi$-infinity harmonic functions, or \eqref{E: comparison of AMLE}. \end{remark}

\subsection{Proof of Proposition \ref{P: doubling variables}} The proof of Proposition \ref{P: doubling variables} proceeds by combining the ingredients developed in the previous sections with the uniqueness machinery from the theory of viscosity solutions.  In particular, a very useful lemma from \cite{ishii} will be used to control the Hessian.

Specifically, the following version of the maximum principle for semicontinuous functions (or the Jensen-Ishii Lemma)\footnote{See \cite[Theorem 3.2]{user}, \cite[Theorem on Sums]{primer}, or \cite[Lemmas 3.23 and 3.30]{silvestre_imbert}.} will be used:

\begin{prop}[Lemma 1 in \cite{ishii}] \label{P: jensen ishii lemma} Fix $\alpha \in \mathbb{R}$, let $U \subseteq \mathbb{R}^{d}$ be a bounded open set, and suppose that $u \in USC(\overline{U})$ and $v \in LSC(\overline{U})$ satisfy
	\begin{align*}
		-\mathcal{H}(x,u,Du,D^{2}u,\mathcal{G}_{\varphi}^{*,\alpha}(Du,D^{2}u)) \leq 0 \quad \text{in} \, \, U, \\
		-\mathcal{H}(x,u,Du,D^{2}u,\mathcal{G}_{*,\alpha}^{\varphi}(Du,D^{2}u)) \geq 0 \quad \text{in} \, \, U.
	\end{align*}  
Let $\xi_{0} \in \mathbb{R}^{d}$ and suppose that $\psi : \mathbb{R}^{d} \to \mathbb{R} \cup \{+\infty\}$ is a convex function which is finite and smooth in a neighborhood of $\xi_{0}$.  If there are points $(x_{0},y_{0}) \in U$ such that $\xi_{0} = x_{0} - y_{0}$ and 
	\begin{equation*}
		\sup \left\{ u(x) - v(y) - \psi(x - y) \, \mid \, (x,y) \in U \times U\right\} = u(x_{0}) - v(y_{0}) - \psi(x_{0} - y_{0}),
	\end{equation*}
then there are matrices $X, Y \in \mathcal{S}_{d}$ such that 
	\begin{align*}
		- \mathcal{H}(x_{0},u(x_{0}),D\psi(\xi_{0}),X,\mathcal{G}_{\varphi}^{*,\alpha}(D\psi(\xi_{0}),X)) &\leq 0, \\
		-\mathcal{H}(y_{0},v(y_{0}),D\psi(\xi_{0}),Y,\mathcal{G}_{*,\alpha}^{\varphi}(D\psi(\xi_{0}),Y)) &\geq 0, 
	\end{align*}
and
	\begin{equation*}
		-3 \left( \begin{array}{c c}
			D^{2}\psi(\xi_{0}) & 0 \\
			0 & D^{2}\psi(\xi_{0}) 
		\end{array} \right) \leq \left( \begin{array}{c c}
								X & 0 \\
								0 & - Y
							\end{array} \right) \leq 3 \left( \begin{array}{c c}
													D^{2}\psi(\xi_{0}) & -D^{2}\psi(\xi_{0}) \\
													-D^{2}\psi(\xi_{0}) & D^{2}\psi(\xi_{0}) 
												\end{array} \right).
	\end{equation*}
\end{prop}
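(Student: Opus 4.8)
The plan is to deduce the statement from the classical maximum principle for semicontinuous functions (the ``Theorem on Sums'' of \cite{primer}, Theorem 3.2 in \cite{user}, or Lemmas 3.23 and 3.30 in \cite{silvestre_imbert}), applied with the test function $(x,y)\mapsto\psi(x-y)$, and then to upgrade the resulting second-order jets into genuine sub- and supersolution inequalities for the discontinuous operators $\mathcal{G}_{\varphi}^{*,\alpha}$ and $\mathcal{G}_{*,\alpha}^{\varphi}$. First I would record the elementary reductions. Since $\psi$ is convex, finite, and smooth on some ball $B_{\rho}(\xi_{0})$ with $\xi_{0}=x_{0}-y_{0}$, the function $\phi(x,y):=\psi(x-y)$ is $C^{2}$ on the open set $\{(x,y)\in U\times U\,:\,x-y\in B_{\rho}(\xi_{0})\}$, which contains $(x_{0},y_{0})$; and by hypothesis $(x_{0},y_{0})$ is an interior local maximum of $(x,y)\mapsto u(x)-v(y)-\phi(x,y)$. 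Writing $B:=D^{2}\psi(\xi_{0})$, one has $D_{x}\phi(x_{0},y_{0})=D\psi(\xi_{0})$, $D_{y}\phi(x_{0},y_{0})=-D\psi(\xi_{0})$, and $D^{2}_{(x,y)}\phi(x_{0},y_{0})=\bigl(\begin{smallmatrix}B&-B\\-B&B\end{smallmatrix}\bigr)$, which is positive semidefinite because $\psi$ is convex. The classical lemma then produces $X,Y\in\mathcal{S}_{d}$ with $(D\psi(\xi_{0}),X)\in\overline{J}^{2,+}_{U}u(x_{0})$ and $(D\psi(\xi_{0}),Y)\in\overline{J}^{2,-}_{U}v(y_{0})$ (the gradient entering the subjet of $v$ being $-D_{y}\phi(x_{0},y_{0})=D\psi(\xi_{0})$), together with the matrix sandwich displayed in the statement; the numerical constant $3$ is the standard normalization arising from the block form of $D^{2}\phi(x_{0},y_{0})$ and the semidefiniteness of $B$, exactly as in the cited references.

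It remains to convert the jet memberships into the claimed differential inequalities, and this is the step where the discontinuity of the operators actually enters. By definition of $\overline{J}^{2,+}_{U}u(x_{0})$ there are sequences $x_{k}\to x_{0}$, $u(x_{k})\to u(x_{0})$, and $(p_{k},X_{k})\to(D\psi(\xi_{0}),X)$ with $(p_{k},X_{k})\in J^{2,+}_{U}u(x_{k})$; by the standard equivalence of jets with smooth test functions, the subsolution property of $u$ yields $-\mathcal{H}(x_{k},u(x_{k}),p_{k},X_{k},\mathcal{G}_{\varphi}^{*,\alpha}(p_{k},X_{k}))\le 0$ for every $k$. I would then observe that the composite map $(x,r,p,X)\mapsto\mathcal{H}(x,r,p,X,\mathcal{G}_{\varphi}^{*,\alpha}(p,X))$ is upper semicontinuous: the set-valued map $p\mapsto\partial\varphi^{*}(p)$ has closed graph and is locally bounded, so by Berge's theorem $(p,X)\mapsto\max\{Q_{X}(q-\alpha\|p\|^{-1}p)\,:\,q\in\partial\varphi^{*}(p)\}$ is upper semicontinuous on $(\mathbb{R}^{d}\setminus\{0\})\times\mathcal{S}_{d}$, and the $\limsup$-extension at $p=0$ renders $\mathcal{G}_{\varphi}^{*,\alpha}$ upper semicontinuous on all of $\mathbb{R}^{d}\times\mathcal{S}_{d}$; composing with $\mathcal{H}$, which is continuous and nondecreasing in its last argument by \eqref{Ass: continuity assumption} and \eqref{Ass: elliptic assumption}, preserves upper semicontinuity. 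Taking $\limsup_{k\to\infty}$ therefore gives $-\mathcal{H}(x_{0},u(x_{0}),D\psi(\xi_{0}),X,\mathcal{G}_{\varphi}^{*,\alpha}(D\psi(\xi_{0}),X))\le 0$. The supersolution inequality at $(y_{0},Y)$ follows by the symmetric argument: $\mathcal{G}_{*,\alpha}^{\varphi}$ is lower semicontinuous (again by Berge together with the $\liminf$-extension at $p=0$), hence $(x,r,p,X)\mapsto\mathcal{H}(x,r,p,X,\mathcal{G}_{*,\alpha}^{\varphi}(p,X))$ is lower semicontinuous, and one passes to the $\liminf$ along the sequence defining $\overline{J}^{2,-}_{U}v(y_{0})$. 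Since both inequalities hold for the very pair $(X,Y)$ produced by the classical lemma, which already satisfies the matrix sandwich, the proof is complete.

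The routine part is Step 1: it is a verbatim application of a standard theorem to a smooth test function, and I expect no difficulty there beyond bookkeeping the gradients and the block Hessian. The substance — and the only place the discontinuity of the Finsler infinity Laplacian is felt — is Step 2, namely verifying that the one-sided semicontinuity of the enveloped operators $\mathcal{G}_{\varphi}^{*,\alpha}$ and $\mathcal{G}_{*,\alpha}^{\varphi}$ is precisely of the type needed to push the viscosity inequalities through the limits defining $\overline{J}^{2,\pm}$. I would also flag one minor point to settle in the write-up: the supersolution side of the argument uses both monotonicity and lower semicontinuity of $\mathcal{H}$ in its last slot, so either $\mathcal{H}$ should be taken continuous (which is the case for every application of this lemma in the paper) or one replaces $\mathcal{H}$ by its lower semicontinuous envelope in the definition of supersolution; in either case the content of the argument is unchanged.
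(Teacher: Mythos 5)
First, note that the paper does not actually prove this proposition: it is imported verbatim as Lemma 1 of \cite{ishii}, so what you are really attempting is a self-contained proof of the cited result. Your Step 2 — upgrading membership in $\overline{J}^{2,+}u(x_{0})$ and $\overline{J}^{2,-}v(y_{0})$ to the displayed differential inequalities by observing that $(p,X)\mapsto\mathcal{G}_{\varphi}^{*,\alpha}(p,X)$ is upper semicontinuous (compactness of $\partial\varphi^{*}(p)\subseteq\{\varphi=1\}$, closed graph, uniform Lipschitz dependence on $X$, and the $\limsup$-extension at $p=0$), $\mathcal{G}_{*,\alpha}^{\varphi}$ lower semicontinuous, and composing with $\mathcal{H}$ via \eqref{Ass: continuity assumption}--\eqref{Ass: elliptic assumption} — is sound, and your caveat about lower semicontinuity of $\mathcal{H}$ on the supersolution side is the right one to flag (harmless here since $\mathcal{H}$ is continuous in every application).

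The genuine gap is in Step 1, in the matrix sandwich. Writing $Z=D^{2}\psi(\xi_{0})\geq 0$ and $A=\left(\begin{array}{cc} Z & -Z\\ -Z & Z\end{array}\right)$, the classical Crandall--Ishii lemma gives, for each $\epsilon>0$, matrices with
\begin{equation*}
-\Bigl(\tfrac{1}{\epsilon}+\|A\|\Bigr)\,\mathrm{Id} \;\leq\; \left(\begin{array}{cc} X & 0\\ 0 & -Y\end{array}\right) \;\leq\; A+\epsilon A^{2},
\end{equation*}
and the choice $\epsilon=\|Z\|^{-1}$ together with $Z\geq0$ (so $Z^{2}\leq\|Z\|Z$) does yield the upper bound with constant $3$. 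But the resulting lower bound is $-3\|Z\|\,\mathrm{Id}$, which is strictly weaker than the stated bound $-3\left(\begin{array}{cc} Z & 0\\ 0 & Z\end{array}\right)\leq\left(\begin{array}{cc} X & 0\\ 0 & -Y\end{array}\right)$ whenever $Z$ is degenerate — and degeneracy is the generic situation in this paper, since $\psi$ is always built from a (power of a) Finsler norm, whose Hessian annihilates the radial direction. The stated lower bound is a genuine rigidity statement: on $\ker Z$ it forces $\langle Xq,q\rangle\geq0$ and, combined with the upper bound, $\langle Xq,q\rangle=0$, information the classical lemma simply does not provide for any choice of $\epsilon$, and which cannot be recovered afterwards by enlarging $X$ or shrinking $Y$ inside the closed jets without risking the coupled upper bound. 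This refined two-sided estimate, with both bounds expressed through $D^{2}\psi(\xi_{0})$ and exploiting the special structure $\phi(x,y)=\psi(x-y)$ with $\psi$ convex, is precisely the content of Ishii's Lemma 1 and requires its own argument (carried out in \cite{ishii} through the sup-convolution/Jensen machinery); your phrase ``the numerical constant $3$ is the standard normalization \ldots exactly as in the cited references'' passes over exactly this nontrivial part, since the references \cite{user,primer,silvestre_imbert} state only the $-\bigl(\frac{1}{\epsilon}+\|A\|\bigr)\mathrm{Id}$ lower bound. Either reproduce Ishii's proof of the refined bound, or cite the lemma as the paper does.
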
 

The idea of the proof of Proposition \ref{P: doubling variables} is now more-or-less straightforward.  It starts by replacing $\varphi_{\zeta}$ by $\psi_{\zeta}$, an appropriately chosen conical test function.  Next, the perturbation argument is used, replacing $\psi_{\zeta}$ by $\psi_{\zeta}^{\epsilon}$.   Applying the maximum principle for semicontinuous functions to $\psi_{\zeta}^{\epsilon}$ yields matrices $(X_{\epsilon},Y_{\epsilon})$ with good properties, which, in particular, lead to the key constraint \eqref{E: big killing} in the limit $\epsilon \to 0^{+}$.

As in Section \ref{S: c11 setting}, it is convenient to first consider the case when the unit ball $\{\varphi \leq 1\}$ is $C^{1,1}$.  First, here is a variable-doubled analogue of Proposition \ref{P: basic touching thing}:

	\begin{prop} \label{P: generate contact points double variable} Let $\varphi$ be a Finsler norm in $\mathbb{R}^{d}$ satisfying \eqref{E: zeta interior ball} and \eqref{E: nondegeneracy} for some constants $\zeta, \delta > 0$; let $U \subseteq \mathbb{R}^{d}$ be a bounded open set; and fix a $p \in \mathbb{R}^{d} \setminus \{0\}$.  Define $e = \|p\|^{-1} p$.  
	
	Fix a $\beta > 0$ and a $\nu \in (0,\zeta)$.  For $\mu \in \{\nu,\zeta\}$, let $\psi_{\mu}$ be the conical test function $\psi_{\mu} := \psi_{e,\partial \varphi^{*}(p),\mu}$ of Section \ref{S: conical test}, and let $(\psi_{\nu}^{\epsilon})_{0 < \epsilon < \nu}$ be an admissible perturbation of $\psi_{\nu}$.
	
	If $u \in \text{USC}(\overline{U})$ and $v \in \text{LSC}(\overline{U})$ are such that 
		\begin{align}
			-\mathcal{H}(x,u,Du,D^{2}u,\mathcal{G}_{\varphi}^{*,\alpha}(Du,D^{2}u)) &\leq 0 \quad \text{in} \, \, U, \label{E: doubled equation part one} \\
			-\mathcal{H}(x,u,Du,D^{2}u,\mathcal{G}_{*,\alpha}^{\varphi}(Du,D^{2}u)) &\geq 0 \quad \text{in} \, \, U, \label{E: doubled equation part two}
		\end{align}
	and
		\begin{align}
			M &:= \max \left\{ u(x) - v(y) - \frac{\psi_{\zeta}(x - y)^{4}}{4 \beta} \, \mid \, (x,y) \in \overline{U} \times \overline{U} \right\} \label{E: maximum point doubled equation} \\
			&\qquad > \max \left\{ u(x) - v(y) - \frac{\psi_{\zeta}(x-y)^{4}}{4 \beta} \, \mid \, (x,y) \in \partial (U \times U) \right\}, \nonumber \\
			\{(x,y) &\, \mid \, u(x) - v(y) = M + \frac{\psi_{\zeta}(x - y)^{4}}{4 \beta} \} \label{E: nondegeneracy assumption for doubled cone} \\
			&\qquad \subseteq \{(x,y) \in U \times U \, \, \mid \, x - y \in \mathcal{C}(\partial \varphi^{*}(p)), \, \, x \neq y\}, \nonumber
		\end{align}
	then, for any $\nu \in (0,\zeta)$, there are sequences $(\epsilon_{j})_{j \in \mathbb{N}} \subseteq (0,+\infty)$, $((x_{j},y_{j}))_{j \in \mathbb{N}} \subseteq U \times U$, and $((p_{j},X_{j},Y_{j}))_{j \in \mathbb{N}} \subseteq \mathbb{R}^{d} \times \mathcal{S}_{d} \times \mathcal{S}_{d}$ such that
		\begin{gather}
			u(x_{j}) - v(y_{j}) - \frac{\psi_{\nu}^{\epsilon_{j}}(x_{j} - y_{j})^{4}}{4 \beta} = \max \left\{ u(x) - v(y) - \frac{\psi_{\nu}^{\epsilon_{j}}(x - y)^{4}}{4 \beta} \, \mid \, (x,y) \in \overline{U} \times \overline{U} \right\}, \label{E: the double variable maximum thing} \\
			- \mathcal{H}(x_{j},u(x_{j}),\mu_{j}, X_{j}, \mathcal{G}_{\varphi}^{*,\alpha}(\mu_{j},X_{j})) \leq 0, \quad - \mathcal{H}(y_{j},v(y_{j}),\mu_{j},Y_{j}, \mathcal{G}_{*,\alpha}^{\varphi}(\mu_{j},Y_{j})) \geq 0, \label{E: doubled equation} \\
			\mu_{j} = \beta^{-1} \psi_{\nu}^{\epsilon_{j}}(x_{j} - y_{j})^{3} D\psi_{\nu}^{\epsilon_{j}}(x_{j} - y_{j}), \label{E: maximum first derivative thing double variable} \\
			- 3 \left( \begin{array}{r l}
					\bar{A}_{j}(x_{j} - y_{j}) & 0 \\
					0 & \bar{A}_{j}(x_{j} - y_{j})
				\end{array} \right) \leq \left( \begin{array}{r l}
										X_{j} & 0 \\
										0 & -Y_{j} 
									\end{array} \right) \leq 3 \left( \begin{array}{r l}
															\bar{A}_{j}(x_{j} - y_{j}) & -\bar{A}_{j}(x_{j} - y_{j}) \\
															-\bar{A}_{j}(x_{j} - y_{j}) & \bar{A}_{j}(x_{j} - y_{j})
														\end{array} \right), \label{E: key doubled matrix inequality for later}
		\end{gather}
	where $\bar{A}$ is the matrix function
		\begin{align}
			\bar{A}_{j}(\xi) = \left\{ \begin{array}{r l}
							\beta^{-1} \psi_{\nu}^{\epsilon_{j}}(\xi)^{3} D^{2}\psi_{\nu}^{\epsilon_{j}}(\xi) + 3 \beta^{-1} \psi_{\nu}^{\epsilon_{j}}(\xi)^{2} D\psi_{\nu}^{\epsilon_{j}}(\xi)^{\otimes 2}, & \text{if} \, \, \xi \neq 0, \\
							0, & \text{otherwise.}
						\end{array} \right. \label{E: matrix function stuff}
		\end{align}
	\end{prop}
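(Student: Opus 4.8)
The proof of Proposition \ref{P: generate contact points double variable} follows the same template as the single-variable case, namely Proposition \ref{P: basic touching thing}, except that the conical test function $\psi_\nu$ (and its perturbation $\psi_\nu^\epsilon$) is now applied to the difference variable $\xi = x - y$, and the maximum principle for semicontinuous functions (Proposition \ref{P: jensen ishii lemma}) is invoked in place of the direct subsolution test.  First I would record the touching properties of $\psi_\zeta$: by Proposition \ref{P: touching above test function part}, since $\{\varphi \le 1\}$ satisfies the interior ball condition of radius $\zeta$, the function $\xi \mapsto \psi_\zeta(\xi)^4/(4\beta)$ agrees with $\xi \mapsto \varphi(\xi)^4/(4\beta)$ (appropriately interpreted) on the cone $\mathcal{C}(\partial \varphi^*(p))$ and lies above it elsewhere, and the composition with $t \mapsto t^4/(4\beta)$ preserves convexity and positive homogeneity (of degree four).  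Then \eqref{E: maximum point doubled equation} and the localization \eqref{E: nondegeneracy assumption for doubled cone} show that the max of $u(x) - v(y) - \psi_\zeta(x-y)^4/(4\beta)$ over $\overline{U}\times\overline{U}$ is attained only at interior points $(x_0,y_0)$ with $\xi_0 := x_0 - y_0 \in \mathcal{C}(\partial\varphi^*(p))$ and $x_0 \ne y_0$; in particular, since $\psi_\zeta^4 = \psi_\nu^4$ on this cone, the same statement holds with $\psi_\zeta$ replaced by $\psi_\nu$, and the contact set stays strictly inside.

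Next I would carry out the perturbation.  Fix the admissible perturbation $(\psi_\nu^\epsilon)_{0<\epsilon<\nu}$ with open set $\mathcal{U}_\nu \supseteq \mathcal{C}(\partial\varphi^*(p))$.  Since $\psi_\nu^\epsilon \to \psi_\nu$ locally uniformly and $\psi_\nu^\epsilon \ge \psi_\nu$, the function $\Phi_j(x,y) = u(x) - v(y) - \psi_\nu^{\epsilon_j}(x-y)^4/(4\beta)$ attains its max over $\overline{U}\times\overline{U}$ at some $(x_j,y_j)$, and by the localization established above, for a sequence $\epsilon_j \to 0$ one has $(x_j,y_j) \in U \times U$ with $x_j - y_j \in \mathcal{U}_\nu$ and (eventually) $x_j \ne y_j$ — this is exactly \eqref{E: the double variable maximum thing}.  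At this stage the composite test function $\psi_j(\xi) := \psi_\nu^{\epsilon_j}(\xi)^4/(4\beta)$ is convex, finite, and smooth near $\xi_j := x_j - y_j \ne 0$ (smoothness of $\psi_\nu^{\epsilon_j}$ on $\mathcal{U}_\nu$ plus the chain rule), so Proposition \ref{P: jensen ishii lemma} applies with $\psi = \psi_j$.  A direct computation of $D\psi_j(\xi_j)$ and $D^2\psi_j(\xi_j)$ via the chain rule gives $D\psi_j(\xi_j) = \mu_j$ as in \eqref{E: maximum first derivative thing double variable} and $D^2\psi_j(\xi_j) = \bar{A}_j(\xi_j)$ as in \eqref{E: matrix function stuff}; feeding these into the conclusion of Proposition \ref{P: jensen ishii lemma} yields matrices $X_j, Y_j$ satisfying the two differential inequalities in \eqref{E: doubled equation} and the matrix sandwich \eqref{E: key doubled matrix inequality for later}.  (One should note that $D\psi_\nu^{\epsilon_j}(\xi_j) = D\psi_\nu^{\epsilon_j}(\check\xi_j)$ by positive one-homogeneity, where $\check\xi_j = \xi_j/\psi_\nu^{\epsilon_j}(\xi_j)$, so the gradient lies in $\{\varphi^* \le C(\varphi)\}$ uniformly by the gradient bounds of Section \ref{S: pos hom}, which keeps $\mu_j$ in a compact set once $\beta$ is fixed; this will matter when the proposition is chained together later.)

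The work left is purely bookkeeping: confirming that the composition $t \mapsto t^4/(4\beta)$ interacts correctly with the hypotheses \eqref{Ass: continuity assumption}--\eqref{Ass: elliptic assumption} on $\mathcal{H}$ so that Proposition \ref{P: jensen ishii lemma} is legitimately applicable (ellipticity is preserved since $t^4/(4\beta)$ is increasing and convex on $[0,\infty)$), and checking the chain-rule identities $D\psi_j = \mu_j$, $D^2\psi_j = \bar A_j$ which are elementary.  I do not expect a serious obstacle here; the genuinely hard analytic content — namely that the perturbed test functions behave well in the limit $\epsilon \to 0^+$, giving the kernel constraints and the Hessian bound — is deferred to the subsequent analysis (the analogue of Proposition \ref{P: key mountain}) rather than being part of this proposition's statement.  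The one point requiring a little care is the verification that \eqref{E: nondegeneracy assumption for doubled cone} genuinely forces $x_j - y_j \in \mathcal{C}(\partial\varphi^*(p))$ in the limit and keeps $x_j \ne y_j$: this uses that, on the relevant cone, $\psi_\zeta$ and $\psi_\nu$ (hence their fourth powers) coincide and that the contact set for $\psi_\zeta$ is \emph{interior} by \eqref{E: maximum point doubled equation}, so a standard upper-semicontinuity-plus-compactness argument — exactly as in the proof of Proposition \ref{P: key mountain}(i) — transfers the constraint to the perturbed maximizers for $\epsilon_j$ small.
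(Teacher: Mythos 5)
Your proposal is correct and follows essentially the same route as the paper: the paper's proof simply argues as in Proposition \ref{P: basic touching thing} (applied to the difference variable) to produce the perturbed maximizers satisfying \eqref{E: the double variable maximum thing}, and then invokes Proposition \ref{P: jensen ishii lemma} with the convex, locally smooth test function $\xi \mapsto \psi_{\nu}^{\epsilon_{j}}(\xi)^{4}/(4\beta)$, whose first and second derivatives at $x_{j}-y_{j}$ are exactly $\mu_{j}$ and $\bar{A}_{j}(x_{j}-y_{j})$, yielding \eqref{E: doubled equation} and \eqref{E: key doubled matrix inequality for later}. Your additional bookkeeping (convexity of the fourth power, localization of the contact points off the diagonal and inside $\mathcal{C}(\partial\varphi^{*}(p))$, uniform gradient bounds) is consistent with what the paper leaves implicit.
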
 
	
		\begin{proof} Argue as in Proposition \ref{P: basic touching thing} to obtain sequences $(\epsilon_{j})_{j \in \mathbb{N}}$ and $((x_{j},y_{j}))_{j \in \mathbb{N}}$ such that \eqref{E: the double variable maximum thing} holds.  Then apply Proposition \ref{P: jensen ishii lemma} to obtain a sequence $((X_{j},Y_{j}))_{j \in \mathbb{N}}$ such that \eqref{E: doubled equation} and \eqref{E: key doubled matrix inequality for later} hold. \end{proof}
		
Finally, here is the proof of Proposition \ref{P: doubling variables}.

\begin{proof}[Proof of Proposition \ref{P: doubling variables}]  The proof boils down to an application of Propositions \ref{P: key mountain} and \ref{P: generate contact points double variable}.  In order to apply Proposition \ref{P: generate contact points double variable} directly, though, it would be necessary to know that the maximum of $u(x) - v(y) - (4 \beta)^{-1} \varphi_{\zeta}(x - y)^{4}$ occurs away from the diagonal.  Of course, this may not be true in general.  Thus, the proof begins with the (ultimately trivial) case when the maximum is on the diagonal before proceeding to the more interesting situation.

\textit{Case: maximum on the diagonal.} 

Suppose that there is a point $x_{0} \in U$ such that
	\begin{equation*}
		u(x_{0}) - v(x_{0}) = \sup \left\{ u(x) - v(y) - \frac{\varphi_{\zeta}(x-y)^{4}}{4 \beta} \, \mid \, (x,y) \in U \times U \right\}.
	\end{equation*}
This implies that 
	\begin{align*}
		u(x_{0}) - v(x_{0}) &= \max \left\{ u(x) - v(x_{0}) - \frac{\varphi_{\zeta}(x - x_{0})^{4}}{4 \beta} \, \mid \, x \in U \right\}, \\
		u(x_{0}) - v(y) &= \max \left\{ u(x_{0}) - v(y) - \frac{\varphi_{\zeta}(x_{0} - y)^{4}}{4 \beta} \, \mid \, y \in U \right\}.
	\end{align*}
At the same time, since $\varphi_{\zeta}$ sastisifes \eqref{E: nondegeneracy condition zeta},
	\begin{align*}
		\frac{\varphi_{\zeta}(\xi)^{4}}{4 \beta} &\leq \frac{\|\xi\|^{4}}{4 \beta \delta} \quad \text{with equality if} \quad \xi = 0,
	\end{align*}
Therefore,
	\begin{align*}
		u(x_{0}) - v(x_{0}) &= \sup \left\{ u(x) - v(x_{0}) - \frac{\|x - x_{0}\|^{4}}{4 \beta \delta} \, \mid \, x \in U \right\}, \\
		u(x_{0}) - v(x_{0}) &= \sup \left\{ u(x_{0}) - v(y) - \frac{\|x_{0} - y\|^{4}}{4 \beta \delta} \, \mid \, y \in U \right\},
	\end{align*}
and, thus, since $\|\cdot\|^{4}$ is smooth and vanishing to second order at zero,
	\begin{align*}
		- \mathcal{H}(x_{0},u(x_{0}),0, 0, 0) \leq 0, \quad -\mathcal{H}(y_{0},u(y_{0}),0,0,0) \geq 0.
	\end{align*}
This completes the proof with $(x_{*},y_{*}) = (x_{0},y_{0})$, $(X_{*},Y_{*}) = (0,0)$, and $q^{u}_{**}, q^{v}_{**}$ any arbitrary points in $\{\varphi = 1\}$, their values being completely irrelevant in this case.
	
\textit{Case: all maxima off the diagonal.} 

To complete the proof, it suffices to proceed under the assumption that there is an $r > 0$ such that
	\begin{equation} \label{E: trivial assumption almost done}
		\left\{(x,y) \, \mid \, u(x) - v(y) = M + \frac{\varphi_{\zeta}(x - y)^{4}}{4 \beta} \right\} \subseteq \{(x,y) \in U \times U \, \mid \, \|x - y\| > r\},
	\end{equation}
where $M$ is the maximum
	\begin{equation*}
		M = \max\left\{ u(x) - v(y) - \frac{\varphi_{\zeta}(x - y)^{4}}{4 \beta} \, \mid \, (x,y) \in \overline{U} \times \overline{U} \right\}.
	\end{equation*}

Due to \eqref{E: nondegenerate boundary assumption} and \eqref{E: trivial assumption almost done}, there are points $(x_{0},y_{0}) \in U \times U$ such that 
	\begin{equation*}
		u(x_{0}) - v(y_{0}) = M + \frac{\varphi_{\zeta}(x_{0} - y_{0})^{4}}{4 \beta}, \quad \|x_{0} - y_{0}\| > r.
	\end{equation*}
Let $\xi_{0} = x_{0} - y_{0}$.  Since $\|\xi_{0}\| > r$, the vector $p = D\varphi_{\zeta}(\xi_{0})$ is well-defined, as is the direction $e = \|p\|^{-1} p$.  

For $\nu \in (0,\zeta]$, let $\psi_{\nu} = \psi_{e,\partial \varphi_{\zeta}^{*}(p),\nu}$ be the conical test function associated with the triple $(e,\partial \varphi_{\zeta}^{*}(p),\nu)$.  By Proposition \ref{P: touching above test function part}, the following statements all hold:
	\begin{gather}
		u(x_{0}) - v(y_{0}) = M + \frac{\psi_{\nu}(x_{0} - y_{0})^{4}}{4 \beta}, \nonumber \\
		u(x) - v(y) \leq M + \frac{\psi_{\nu}(x - y)^{4}}{4 \beta} \quad \text{for each} \quad (x,y) \in \overline{U} \times \overline{U}, \nonumber \\
		\left\{(x,y) \, \mid \, u(x) - v(y) = M + \frac{\psi_{\nu}(x - y)^{4}}{4 \beta} \right\} \subseteq \{(x,y) \in U \times U \, \mid \, x - y \in \mathcal{C}(\partial \varphi^{*}_{\nu}(p)), \, \, x \neq y \}. \label{E: contact condition ack ack ack}
	\end{gather}
In particular, the hypotheses of Proposition \ref{P: generate contact points double variable} are satisfied.

Fix a $\nu \in (0,\zeta)$; for instance, $\nu = 2^{-1} \zeta$, although the exact value is immaterial to the proof.  By Proposition \ref{P: generate contact points double variable}, there are sequences 
	\begin{equation*}
		(\epsilon_{j})_{j \in \mathbb{N}} \subseteq (0,+\infty), \quad ((x_{j},y_{j}))_{j \in \mathbb{N}} \subseteq U \times U, \quad ((\mu_{j},X_{j},Y_{j}))_{j \in \mathbb{N}} \subseteq \mathbb{R}^{d} \times \mathcal{S}_{d} \times \mathcal{S}_{d} 
	\end{equation*}
such that \eqref{E: the double variable maximum thing}-\eqref{E: key doubled matrix inequality for later} all hold with this value of $\nu$.  

In what follows, it will be useful to define $p_{j} = D\psi_{\nu}^{\epsilon_{j}}(x_{j} - y_{j})$ and $A_{j} = \bar{A}_{j}(x_{j} - y_{j})$, where $\bar{A}_{j}$ is the matrix function defined in \eqref{E: matrix function stuff}.  Notice that if $(H_{j})_{j \in \mathbb{N}}$ is the sequence $H_{j} = D^{2}\psi_{\nu}^{\epsilon_{j}}(x_{j} - y_{j})$, then
	\begin{equation*}
		A_{j} = \beta^{-1} \psi_{\nu}^{\epsilon_{j}}(x_{j} - y_{j})^{3} H_{j} + 3 \beta^{-1} \psi_{\nu}^{\epsilon_{j}}(x_{j} - y_{j})^{2} p_{j} \otimes p_{j}.	
	\end{equation*}

By compactness (up to passing to a subsequence), there are points $(x_{*},y_{*}) \in \overline{U} \times \overline{U}$ such that $(x_{j},y_{j}) \to (x_{*},y_{*})$.  Note that, by upper semicontinuity and Proposition \ref{P: touching above test function part},
	\begin{equation} \label{E: touching ack ack ack ack}
		u(x_{*}) - v(y_{*}) = M + \frac{\psi_{\nu}(x_{*} - y_{*})^{4}}{4 \beta} = M + \frac{\varphi_{\zeta}(x_{*} - y_{*})^{4}}{4 \beta}
	\end{equation}
and, thus, $\|x_{*} - y_{*}\| > r$ by assumption \eqref{E: trivial assumption almost done}.  Since $x_{*} \neq y_{*}$, \eqref{E: touching ack ack ack ack} implies $\psi_{\nu}(x_{*} - y_{*}) = \varphi_{\zeta}(x_{*} - y_{*}) < +\infty$ and, thus, $p_{j} \to D\psi_{\nu}(x_{*} - y_{*}) = D\varphi_{\zeta}(x_{*} - y_{*}) = p$ as $j \to +\infty$ by \eqref{E: contact condition ack ack ack} and Proposition \ref{P: touching above test function part}. 
Additionally, by Proposition \ref{P: key mountain}, 
	\begin{align} \label{E: useful bound again and again}
		\varphi_{\zeta}(x_{*} - y_{*}) \limsup_{j \to \infty} \|H_{j}\| = \limsup_{j \to \infty} \psi_{\nu}(x_{j} - y_{j}) \|H_{j}\| \leq  \tilde{\Gamma}(\varphi) \zeta^{-1}.
	\end{align}
In particular, up to passing to yet another subsequence, there is an $H_{*} \in \mathcal{S}_{d}$ such that $H_{j} \to H_{*}$ as $j \to +\infty$.  Note that this implies $A_{j} \to A_{*}$, where 
	\begin{equation} \label{E: kind of annoying need it later}
		A_{*} = \beta^{-1} \varphi_{\zeta}(x_{*} - y_{*})^{3} H_{*} + 3 \beta^{-1} \varphi_{\zeta}(x_{*} - y_{*})^{2} p \otimes p.
	\end{equation}
Thus, by \eqref{E: useful bound again and again}, Proposition \ref{P: very easy gradient bound}, and the assumption $\zeta < 1$, upon writing $T_{*} = \beta^{-1} \varphi_{\zeta}(x_{*} - y_{*})^{2}$, one finds
	\begin{align} \label{E: i need this one too}
		0 \leq A_{*} \leq \left( \tilde{\Gamma}(\varphi) \zeta^{-1} T_{*} + 3 T_{*} \delta(\varphi)^{-2} \right) \text{Id} \leq \left( \tilde{\Gamma}(\varphi) + 3 \delta(\varphi)^{-2} \right) \zeta^{-1} T_{*} \text{Id}.
	\end{align}

Next, fix the $y$ variable: observe that, for any $j \in \mathbb{N}$,
	\begin{align*}
		u(x_{j}) - v(y_{j}) - \frac{\psi_{\nu}^{\epsilon_{j}}(x_{j} - y_{j})^{4}}{4 \beta} = \sup \left\{ u(x) - v(y_{j}) - \frac{\psi_{\nu}^{\epsilon_{j}}(x - y_{j})^{4}}{4 \beta} \, \mid \, x \in \overline{U} \right\},
	\end{align*}
hence there is a sequence $(q^{u}_{j})_{j \in \mathbb{N}} \subseteq \{\varphi_{\zeta} = 1\}$ such that
	\begin{align*}
		- \mathcal{H}(x_{j},u(x_{j}),\mu_{j},A_{j},Q_{A_{j}}(q^{u}_{j} - \zeta \|\mu_{j}\|^{-1} \mu_{j})) \leq 0, \quad q_{j}^{u} \in \partial \varphi^{*}_{\zeta}(\mu_{j}) = \partial \varphi^{*}_{\zeta}(p_{j}).
	\end{align*}
Since $\{\varphi_{\zeta} = 1\}$ is compact, there is no loss of generality assuming there is a $q^{u}_{**} \in \{\varphi_{\zeta} = 1\}$ such that $q^{u}_{j} \to q^{u}_{**}$ as $j \to +\infty$.  By Proposition \ref{P: key mountain}, it follows that there is a face $F_{*}^{u} \subseteq \partial \varphi_{\zeta}^{*}(p)$ such that
	\begin{equation} \label{E: vanishing face part a}
		q_{**}^{u} \in F_{*}^{u} \quad \text{and} \quad H_{*} q = 0 \quad \text{for each} \quad q \in F_{*}^{u}.
	\end{equation}
	
Similarly, fix the $x$ variable: observe that, for any $j \in \mathbb{N}$,
	\begin{align*}
		u(x_{j}) - v(y_{j}) - \frac{\psi_{\nu}^{\epsilon_{j}}(x_{j} - y_{j})^{4}}{4 \beta} = \sup \left\{ u(x_{j}) - v(y) - \frac{\psi_{\nu}^{\epsilon_{j}}(x_{j} - y)^{4}}{4 \beta} \, \mid \, y \in \overline{U} \right\},
	\end{align*}
hence there is a sequence $(q^{v}_{j})_{j \in \mathbb{N}} \subseteq \{\varphi_{\zeta} = 1\}$ such that
	\begin{align*}
		- \mathcal{H}(y_{j},v(y_{j}),p_{j},-A_{j},Q_{-A_{j}}(q_{j}^{v} - \zeta \|\mu_{j}\|^{-1} \mu_{j})) \geq 0, \quad q_{j}^{v} \in \partial \varphi^{*}_{\zeta}(p_{j}).
	\end{align*}
Once again, after passing to a subsequence, there is a limit $q_{**}^{v} = \lim_{j \to \infty} q_{j}^{v}$.  Further, by Proposition \ref{P: key mountain} (applied to supersolutions; cf.\ Lemma \ref{L: reduction to subsolutions}), there is a face $F_{*}^{v} \subseteq \partial \varphi^{*}_{\zeta}(p)$ such that
	\begin{equation} \label{E: vanishing face part b}
		q_{**}^{v} \in F_{*}^{v} \quad \text{and} \quad H_{*} q = 0 \quad \text{for each} \quad q \in F_{*}^{v}.
	\end{equation}

Finally, notice that \eqref{E: key doubled matrix inequality for later} implies that $- 3 A_{j} \leq X_{j} \leq Y_{j} \leq 3 A_{j}$.  Thus, the sequence $((X_{j},Y_{j}))_{j \in \mathbb{N}}$ is pre-compact, and there is no loss of generality in assuming that there are matrices $X_{*}$ and $Y_{*}$ such that $(X_{j},Y_{j}) \to (X_{*}, Y_{*})$ as $j \to +\infty$.  By \eqref{E: key doubled matrix inequality for later} and \eqref{E: i need this one too}, the inequality \eqref{E: crazy matrix business} holds with $C(\varphi) = 3(\tilde{\Gamma}(\varphi) + 3 \delta(\varphi)^{-2})$.  To prove \eqref{E: big killing}, first, observe that, on the one hand, since $q_{**}^{u}, q_{**}^{v} \in \partial \varphi_{\zeta}^{*}(p)$, the variational formula \eqref{E: subdifferential basic identity dual} implies that
	\begin{equation*}
		(p \otimes p)(q_{**}^{u} - q_{**}^{v}) = \left[ \langle p, q_{**}^{u} \rangle - \langle p, q_{**}^{v} \rangle \right] p = [\varphi_{\zeta}^{*}(p) - \varphi_{\zeta}^{*}(p)] p = 0.
	\end{equation*} 
In view of \eqref{E: kind of annoying need it later}, \eqref{E: vanishing face part a}, and \eqref{E: vanishing face part b}, this proves $A_{*}(q_{**}^{u} - q_{**}^{v}) = 0$.  On the other hand, in the limit $j \to +\infty$, \eqref{E: key doubled matrix inequality for later} implies $-3A_{*} \leq X_{*} \leq Y_{*} \leq 3A_{*}$.  Accordingly, it is an exercise in linear algebra to convince oneself that also\footnote{Cf.\ the proof of \cite[Lemma 1]{m_souganidis}.}
	\begin{equation*}
		X_{*}(q_{**}^{u} - q_{**}^{v}) = Y_{*}(q_{**}^{u} - q_{**}^{v}) = 0.
	\end{equation*}
\end{proof}

\section{Applications} \label{S: applications}

This section treats the applications described in the introduction, namely, Theorem \ref{T: comparison dirichlet} on the Dirichlet problem \eqref{E: finsler dirichlet with zeroth order}, Theorem \ref{T: uniqueness infinity eigenvalue} on the infinity eigenvalue problem \eqref{E: finsler infinity eigenvalue}, and Theorem \ref{T: tug of war convergence} on tug-of-war games.

\subsection{Existence via Perron's Method} \label{S: dirichlet existence} This section establishes an existence result for the Dirichlet problem 
	\begin{align} \label{E: Dirichlet problem redux ax}
		\left\{ \begin{array}{r l} 
			\lambda u - c(x) G_{\varphi}^{*}(Du, D^{2}u) \leq f(x) & \text{in} \, \, U, \\
			\lambda u - c(x) G_{*}^{\varphi}(Du,D^{2}u) \geq f(x) & \text{in} \, \, U, \\
			u = g & \text{on} \, \, \partial U.
		\end{array} \right.
	\end{align}
The argument exploits the fact that $\varphi$ is convex and satisfies $\varphi^{*}(D\varphi) \geq 1$ in $\mathbb{R}^{d} \setminus \{0\}$, hence it can be used to build explicit barriers when $g$ is regular enough.

	\begin{prop} \label{P: perron} For any Finsler norm $\varphi$ in $\mathbb{R}^{d}$ and any bounded open set $U \subseteq \mathbb{R}^{d}$, if $\sqrt{c}: \mathbb{R}^{d} \to [0,+\infty)$ is uniformly Lipschitz continuous, then there is a viscosity solution $u \in C(\overline{U})$ of \eqref{E: Dirichlet problem redux ax} for any $g \in C(\partial U)$. \end{prop}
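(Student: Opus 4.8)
\textbf{Proof plan for Proposition \ref{P: perron}.}

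The plan is to apply Perron's method in the form standard for viscosity solutions (see \cite[Section 4]{user}), using the comparison principle from Theorem \ref{T: penalized problem comparison} and explicit barriers built out of $\varphi$. First I would observe that the operator $\mathcal{H}(x,r,p,X,a) = \lambda r - c(x) a - f(x)$ satisfies the structural hypotheses \eqref{E: strict monotonicity operator} and \eqref{E: regularity operator}: strict monotonicity holds with the constant $\lambda > 0$ since $c \geq 0$, and the regularity assumption \eqref{E: regularity operator} holds because $\sqrt{c}$ is uniformly Lipschitz, by the computation recalled in Remark \ref{R: diffusion assumption} (after rescaling $c = \sigma^2$, the term $c(x) Q_X(q) - c(y) Q_X(q)$ is controlled exactly as in \cite[Example 3.6]{user}, using the matrix inequality to bound $Q_X(q)$ by $C T \|x-y\|^2$ plus lower-order terms, all multiplied by $|\sqrt{c(x)} - \sqrt{c(y)}| \leq K\|x-y\|$). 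This makes Theorem \ref{T: penalized problem comparison} available, which supplies both uniqueness of the solution and, crucially for Perron's method, the mechanism to upgrade the Perron envelope to a continuous solution attaining the boundary data.

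The second step is the construction of barriers. Fix $x_0 \in \partial U$. Since $g \in C(\partial U)$, for any $\varepsilon > 0$ there is $\delta > 0$ and a constant $L = L(\varepsilon) > 0$ such that, with $w_{x_0}^{\pm}(x) = g(x_0) \pm \varepsilon \pm L\,\varphi(\pm(x - x_0))$, one has $w_{x_0}^{+} \geq g$ and $w_{x_0}^{-} \leq g$ on $\partial U$ once $L$ is chosen large (using that $\varphi$ is positive away from the origin, so $\varphi(\pm(x-x_0))$ is bounded below by a positive multiple of $\|x - x_0\|$ on $\partial U \setminus B_\delta(x_0)$, while continuity of $g$ handles the part near $x_0$). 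I claim $w_{x_0}^{+}$ is a supersolution of the differential inequality in \eqref{E: Dirichlet problem redux ax}, possibly after adding a large constant to absorb $\|f\|_\infty$: indeed $x \mapsto \varphi(x - x_0)$ is convex with $\varphi^*(D\varphi(\cdot - x_0)) \equiv 1$ away from $x_0$, so by Proposition \ref{P: strongly superharmonic} one has $-G_\varphi^*(Dw_{x_0}^+, D^2 w_{x_0}^+) \geq 0 \geq -G_*^\varphi(Dw_{x_0}^+, D^2 w_{x_0}^+)$ in the viscosity sense away from $x_0$, whence $\lambda w_{x_0}^+ - c(x) G_*^\varphi(Dw_{x_0}^+, D^2 w_{x_0}^+) \geq \lambda(g(x_0) - \|g\|_\infty)$, which is $\geq f(x)$ in $U$ after replacing $\varepsilon$ by $\varepsilon + \lambda^{-1}\|f\|_{L^\infty(U)}$ (or, more cleanly, by first adding a suitable constant $C_0$ to $w_{x_0}^+$). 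Symmetrically $w_{x_0}^-$ is a subsolution. Taking the infimum over $x_0$ of the $w_{x_0}^+$ (resp.\ supremum of the $w_{x_0}^-$) produces a global supersolution $\overline{w}$ and subsolution $\underline{w}$ with $\underline{w} \leq \overline{w}$, both continuous up to $\overline{U}$ and equal to $g$ on $\partial U$; here one uses that infima of supersolutions are supersolutions and that $\overline{w}, \underline{w}$ are also globally bounded.

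The third step is the Perron construction itself: set
\[
	u(x) = \sup\{ z(x) \,\mid\, z \in USC(\overline{U}),\ \underline{w} \leq z \leq \overline{w},\ z \text{ is a subsolution of the first inequality in \eqref{E: Dirichlet problem redux ax}} \}.
\]
By the standard bump-function argument (Perron's method), $u^*$ is a subsolution and $u_*$ is a supersolution of the respective inequalities. The comparison principle of Theorem \ref{T: penalized problem comparison} applied to $u^*$ and $u_*$ (noting $u^* \leq \overline{w}$ and $u_* \geq \underline{w}$ so $u^* \leq u_*$ on $\partial U$) gives $u^* \leq u_*$ in $\overline{U}$; since trivially $u_* \leq u \leq u^*$, all three coincide, so $u \in C(\overline{U})$ is a viscosity solution of the two differential inequalities, and the sandwich $\underline{w} \leq u \leq \overline{w}$ forces $u = g$ on $\partial U$. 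The main obstacle I anticipate is verifying cleanly that the barriers $w_{x_0}^{\pm}$ are genuine viscosity sub/supersolutions of the \emph{discontinuous} pair \eqref{E: finsler dirichlet with zeroth order}: one must be careful that $\varphi$ enters as a supersolution via $G_\varphi^*$ (the stronger envelope) exactly as in Proposition \ref{P: strongly superharmonic}, and that the zeroth-order and source terms are handled by the additive constant, which is why choosing $L$ and the constant in the right order matters. Everything else is routine once Theorem \ref{T: penalized problem comparison} and Proposition \ref{P: strongly superharmonic} are in hand.
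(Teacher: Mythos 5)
Your overall architecture---checking that $\mathcal{H}(x,r,p,X,a)=\lambda r - c(x)a - f(x)$ satisfies \eqref{E: strict monotonicity operator} and \eqref{E: regularity operator} (via Remark \ref{R: diffusion assumption}) so that Theorem \ref{T: penalized problem comparison} applies, and then running Perron's method with the comparison principle---matches the paper. The gap is in the barrier step. Your upper barrier $w^{+}_{x_0}=g(x_0)+\varepsilon+L\,\varphi(x-x_0)$ uses the cone itself, which carries \emph{no} strict supersolution excess: $\varphi$ is exactly $\varphi$-infinity harmonic, so the best Proposition \ref{P: strongly superharmonic} gives is $-c(x)G_*^{\varphi}(Dw^{+}_{x_0},D^2w^{+}_{x_0})\geq 0$, and the supersolution inequality reduces, at points of $U$ close to $x_0$, to $\lambda\bigl(g(x_0)+\varepsilon\bigr)\geq f(x)$. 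This is false whenever $f(x_0)>\lambda(g(x_0)+\varepsilon)$, no matter how large $L$ is: near the vertex the cone contributes nothing through the second-order term (its excess is identically zero, and is in any case multiplied by $c$, which may vanish), so the zeroth-order term must absorb $f$ on its own and cannot. Your proposed repair---adding a constant of order $\lambda^{-1}\|f\|_{\infty}+\|g\|_{\infty}$---restores the supersolution property but destroys the boundary pinning: then $w^{+}_{x_0}(x_0)$ exceeds $g(x_0)$ by a fixed amount, so the Perron envelope is no longer forced to attain the datum $g$ at $x_0$, and the conclusion $u=g$ on $\partial U$ is lost.

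The paper avoids exactly this by taking barriers of the form $g(x_0)\pm C K_{\pm}\,\varphi(\pm(x-x_0))^{\alpha}$ with $\alpha\in(0,1)$: Lemma \ref{L: barriers dirichlet} shows that the concavity of $s\mapsto s^{\alpha}$ produces a strictly positive excess $-G_*^{\varphi}(Du_{+,\alpha},D^2u_{+,\alpha})\geq C_{+,\alpha}(R)>0$ (in fact blowing up like $\varphi^{\alpha-2}$ near the vertex), so multiplying by a large constant $C$ absorbs $f$ and the zeroth-order term while the barrier still equals $g(x_0)$ at $x_0$. Since such barriers require H\"older control of $g$, the paper argues in two steps: Perron's method for uniformly H\"older boundary data, then general $g\in C(\partial U)$ by monotone approximation by H\"older functions, comparison, Dini's theorem, and stability. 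Your single-step ``$\varepsilon$ plus modulus of continuity'' scheme could be salvaged, but only after replacing the power-one cones by strict barriers of this $\varphi^{\alpha}$ type; as written, the barrier verification fails, and this is the essential point of the proof rather than a routine detail.
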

		
	The proof proceeds first by constructing solutions when the boundary datum $g$ is uniformly H\"{o}lder continuous on $\partial U$, then establishes the general case by approximation.  Toward that end, it will be useful to know that there are explicit conical barriers.
	
	\begin{lemma} \label{L: barriers dirichlet} For any Finsler norm $\varphi$, any $\alpha \in (0,1)$, and any $R > 0$, there are constants $C_{+,\alpha}(R), C_{-,\alpha}(R) > 0$ such that the functions $u_{+,\alpha}, u_{-,\alpha} : \mathbb{R}^{d} \to [0, +\infty)$ given by 
		\begin{equation*}
			u_{+,\alpha}(x) = \varphi(x)^{\alpha}, \quad u_{-,\alpha}(x) = -\varphi(-x)^{\alpha},
		\end{equation*}
	satisfy the differential inequalities
		\begin{align*}
			- G_{*}^{\varphi}(Du_{+,\alpha},D^{2}u_{+,\alpha}) &\geq C_{+,\alpha}(R) \quad \text{in} \, \, B_{R}(0) \setminus \{0\}, \\
			- G^{*}_{\varphi}(Du_{-,\alpha},D^{2}u_{-,\alpha}) &\leq -C_{-,\alpha}(R) \quad \text{in} \, \, B_{R}(0) \setminus \{0\}.
		\end{align*}\end{lemma}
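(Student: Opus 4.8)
The plan is to compute $G_\varphi^*$ and $G_*^\varphi$ applied to $u_{+,\alpha}$ and $u_{-,\alpha}$ directly, exploiting the fact that these functions are smooth away from the origin (since they are compositions of the smooth function $s \mapsto s^\alpha$, $s>0$, with the Finsler norm, which however is only Lipschitz). Wait — $\varphi$ need not be smooth, so $u_{+,\alpha}$ is generally not $C^2$. Thus the inequalities must be interpreted in the viscosity sense, and the natural approach is to test $u_{+,\alpha}$ against a smooth supersolution test function from below and analyze the resulting algebra. The cleanest route, though, is to observe that the relevant inequalities are \emph{stable} and reduce to a pointwise computation on the smooth part together with the structural input of Proposition \ref{P: strongly superharmonic}: namely, $\varphi$ is strongly $\varphi$-infinity superharmonic.

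First I would treat $u_{+,\alpha}$. Fix $x_0 \in B_R(0)\setminus\{0\}$ and a smooth $\psi$ with $u_{+,\alpha} - \psi$ having a local minimum at $x_0$; equivalently $\varphi^\alpha - \psi$ has a local minimum there. Writing $h(s) = s^\alpha$ and $v = \varphi$, one has $\psi$ touching $h(v)$ from below, so $h^{-1}(\psi) = \psi^{1/\alpha}$ touches $v = \varphi$ from below at $x_0$ (using that $h^{-1}$ is increasing and smooth where $\psi > 0$, which holds near $x_0$ since $x_0 \neq 0$). By Proposition \ref{P: strongly superharmonic} and its proof, with $\tilde\psi := \psi^{1/\alpha}$ one gets $D\tilde\psi(x_0) \in \partial\varphi(x_0)$, $\varphi^*(D\tilde\psi(x_0)) = 1$, $\langle D\tilde\psi(x_0), q\rangle = 1$ for every $q \in \partial\varphi^*(D\tilde\psi(x_0))$, and $D^2\tilde\psi(x_0) q = 0$ for such $q$ — more precisely $\langle D^2\tilde\psi(x_0)q,q\rangle \le 0$. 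Now relate the derivatives of $\psi$ and $\tilde\psi = \psi^{1/\alpha}$ by the chain rule: $D\psi = \alpha \psi^{1-1/\alpha}\, D\tilde\psi$ — careful, rather $\psi = \tilde\psi^\alpha$, so $D\psi = \alpha \tilde\psi^{\alpha-1} D\tilde\psi$ and $D^2\psi = \alpha\tilde\psi^{\alpha-1} D^2\tilde\psi + \alpha(\alpha-1)\tilde\psi^{\alpha-2} D\tilde\psi \otimes D\tilde\psi$. Evaluating the quadratic form $Q_{D^2\psi(x_0)}$ on any $q \in \partial\varphi^*(D\psi(x_0)) = \partial\varphi^*(D\tilde\psi(x_0))$ (the subdifferentials agree since $D\psi(x_0)$ and $D\tilde\psi(x_0)$ are positive multiples of each other and $\partial\varphi^*$ is zero-homogeneous by \eqref{E: zero hom}), the first term is $\le 0$ and the second term equals $\alpha(\alpha-1)\tilde\psi^{\alpha-2}\langle D\tilde\psi(x_0),q\rangle^2 = \alpha(\alpha-1)\varphi(x_0)^{\alpha-2}$, which is \emph{strictly negative} because $\alpha \in (0,1)$. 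Taking the minimum over $q$ gives $-G_*^\varphi(D\psi(x_0),D^2\psi(x_0)) \ge -\alpha(\alpha-1)\varphi(x_0)^{\alpha-2} = \alpha(1-\alpha)\varphi(x_0)^{\alpha-2}$. Since $\varphi(x_0) \le C_\varphi R$ for $x_0 \in B_R(0)$ with $C_\varphi = \max_{\|y\|=1}\varphi(y)$ and $\alpha - 2 < 0$, this is bounded below by $C_{+,\alpha}(R) := \alpha(1-\alpha)(C_\varphi R)^{\alpha-2} > 0$. That gives the first inequality.

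For $u_{-,\alpha}(x) = -\varphi(-x)^\alpha = -\underline\varphi(x)^\alpha$ with $\underline\varphi$ the inverted norm (Lemma \ref{L: reduction to subsolutions}), I would apply $\mathcal N$: $\mathcal N u_{-,\alpha} = \underline\varphi^\alpha$, which by the first part is a $\underline\varphi$-infinity supersolution in the strong sense, i.e.\ $-G_{\underline\varphi}^*(D(\underline\varphi^\alpha),D^2(\underline\varphi^\alpha)) \ge C_{-,\alpha}(R)$ with $C_{-,\alpha}(R) = \alpha(1-\alpha)(C_{\underline\varphi} R)^{\alpha-2}$ and $C_{\underline\varphi} = \max_{\|y\|=1}\varphi(-y)$. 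Translating back through $\mathcal N$ as in Lemma \ref{L: reduction to subsolutions} (which exchanges $G_{\underline\varphi}^*$-super with $G_\varphi^*$-sub, noting that $G_{\underline{\varphi}}^*$ corresponds to $G_\varphi^*$ under the reflection), one obtains $-G_\varphi^*(Du_{-,\alpha},D^2 u_{-,\alpha}) \le -C_{-,\alpha}(R)$ in $B_R(0)\setminus\{0\}$, as desired.

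\textbf{Main obstacle.} The one delicate point is the viscosity-theoretic bookkeeping in the first step: since $\varphi$ and hence $u_{+,\alpha}$ are not smooth, one cannot differentiate naively, and the argument must pass the derivatives onto the test function $\psi$, then transfer to the auxiliary function $\tilde\psi = \psi^{1/\alpha}$, invoking the composition-of-test-functions trick (valid because $x_0 \neq 0$ so $\psi(x_0) > 0$ and $s \mapsto s^{1/\alpha}$ is a local diffeomorphism near $\psi(x_0)$). Getting the signs and the chain-rule identities exactly right — in particular verifying $\langle D\tilde\psi(x_0),q\rangle = 1$, that $\partial\varphi^*(D\psi(x_0)) = \partial\varphi^*(D\tilde\psi(x_0))$ by zero-homogeneity, and that the cross term in $D^2\psi$ contributes the strictly negative quantity $\alpha(\alpha-1)\varphi(x_0)^{\alpha-2}$ when evaluated on $q$ — is the crux, though none of it is hard once Proposition \ref{P: strongly superharmonic} is in hand. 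The uniformity of the constant over $B_R(0)$ is then immediate from the monotonicity of $s \mapsto s^{\alpha-2}$ and the two-sided bounds relating $\varphi$, $\underline\varphi$, and the Euclidean norm.
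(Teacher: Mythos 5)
Your proposal is correct and follows essentially the same route as the paper: the paper's proof likewise combines the strong superharmonicity of $\varphi$ (Proposition \ref{P: strongly superharmonic}) with the viscosity inequality $\varphi^{*}(D\varphi) \geq 1$ and the chain rule for $s \mapsto s^{\alpha}$, which produces exactly the cross term $\alpha(\alpha-1)\varphi^{\alpha-2}\varphi^{*}(D\varphi)^{2}$ you isolate, and then handles $u_{-,\alpha}$ by the symmetry of Lemma \ref{L: reduction to subsolutions}. Your test-function composition with $\tilde\psi=\psi^{1/\alpha}$ is just the explicit viscosity-sense bookkeeping that the paper leaves implicit, and your constant matches the paper's $C_{+,\alpha}(R)=\min\{\alpha(1-\alpha)\varphi(x)^{\alpha-2} \mid x\in B_{R}(0)\setminus\{0\}\}$.
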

		
	\begin{proof} In a manner similar to Lemma \ref{L: reduction to subsolutions} above, the arguments for $u_{+,\alpha}$ and $u_{-,\alpha}$ are symmetrical.  Therefore, only those for $u_{+,\alpha}$ will be presented here.
	
	In view of the fact that $-G_{*}^{\varphi}(D\varphi,D^{2}\varphi) \geq 0$ holds by Proposition \ref{P: strongly superharmonic}, the following inequalities also hold
		\begin{align*}
			- G_{*}^{\varphi}(Du_{+,\alpha},D^{2}u_{+,\alpha}) &\geq  - \alpha (\alpha - 1) \varphi(x)^{\alpha - 2} \varphi^{*}(D\varphi)^{2}.
		\end{align*}
	At the same time, since $\varphi^{*}(D\varphi) \geq 1$ in the viscosity sense, this implies
		\begin{align*}
			- G_{*}^{\varphi}(Du_{+,\alpha},D^{2}u_{+,\alpha}) &\geq \alpha (1 - \alpha) \varphi(x)^{\alpha - 2}.
		\end{align*}
	Finally, since $\alpha < 2$, 
		\begin{align*}
			C_{+,\alpha}(R) := \min \left\{ \alpha (1 - \alpha) \varphi(x)^{\alpha - 2} \, \mid \, x \in B_{R}(0) \setminus \{0\} \right\} > 0.
		\end{align*}
	\end{proof}
	
With the barriers constructed in the previous lemma, the proof of existence for \eqref{E: Dirichlet problem redux ax} is now routine.  Here is the proof:
	
	\begin{proof}[Proof of Proposition \ref{P: perron}] As advertised, the proof begins with the case when $g$ is more reuglar, then extends  to the general case by approximation and monotonicity.
	
	\textit{Step one: uniformly H\"{o}lder continuous boundary data.} 
	
	First, assume that there is an $\alpha \in (0,1)$ such that $g$ is uniformly H\"{o}lder continuous on $\partial U$ that is, there is a $K > 0$ such that 
		\begin{equation*}
			|g(x) - g(y)| \leq K \|x - y\|^{\alpha} \quad \text{for each} \quad x, y \in \partial U.
		\end{equation*}
	Notice that, by equivalence of norns, this implies there are constants $K_{+}$ and $K_{-}$ such that, for any $x,y \in \partial U$,
		\begin{equation*}
			|g(x) - g(y)| \leq \max\{K_{+} \varphi(x - y)^{\alpha}, K_{-} \varphi(y - x)^{\alpha}\}.
		\end{equation*}
	In particular, if $x_{0} \in \partial U$ is fixed, then, for any $x \in \partial U$, 
		\begin{equation*}
			g(x_{0}) - K_{-} \varphi(x_{0} - x)^{\alpha} \leq g(x) \leq g(x_{0}) + K_{+} \varphi(x - x_{0})^{\alpha} =: u^{-}_{x_{0}}(x).
		\end{equation*}

	Let $C > 0$ be a large constant to be determined.  Define functions $u^{+}_{x_{0}}$ and $u^{-}_{x_{0}}$ by
		\begin{align*}
			u^{\pm}_{x_{0}}(y) = g(x_{0}) \pm C K_{\pm} \varphi( \pm [x - x_{0}] )^{\alpha}.
		\end{align*}
	Due to Lemma \ref{L: barriers dirichlet}, a routine argument shows that, at least if $C > 1$ is large enough, 
		\begin{align*}
			\lambda u^{+}_{x_{0}} - G_{*}^{\varphi}(Du^{+}_{x_{0}},D^{2}u^{+}_{x_{0}}) &\geq f(x) \quad \text{in} \, \, U, \\
			\lambda u^{-}_{x_{0}} - G_{\varphi}^{*}(Du^{-}_{x_{0}},D^{2}u^{-}_{x_{0}}) &\leq f(x) \quad \text{in} \, \, U.
		\end{align*}		
	Note that $u^{+}_{x_{0}} \geq g \geq u^{-}_{x_{0}}$ also holds.  Fix such a $C$ henceforth.
	
	It only remains to apply Perron's Method.  Let $\mathcal{S}$ be the set of all subsolutions, that is,
		\begin{align*}
			\mathcal{S} = \left\{v \in USC(\overline{U}) \, \mid \, \lambda v - G_{\varphi}^{*}(Dv,D^{2}v) \leq f(x) \, \, \text{in} \, \, U, \, \, v \leq g \, \, \text{on} \, \, \partial U \right\}.
		\end{align*}
	We showed that $\mathcal{S}$ is nonempty.  Further, by the comparison principle (Theorem \ref{T: penalized problem comparison}), for any $v \in \mathcal{S}$ and any $x_{0} \in \partial U$,
		\begin{align*}
			v \leq u_{x_{0}}^{+} \quad \text{pointwise in} \, \, \overline{U}.
		\end{align*}
	This shows $\mathcal{S}$ is pointwise bounded.  In particular, let $u : \overline{U} \to \mathbb{R}$ be the function
		\begin{equation*}
			u(x) = \max \left\{ v(x) \, \mid \, v \in \mathcal{S} \right\}.
		\end{equation*}
	
A standard computation shows that the upper semicontinuous envelope of $u$ belongs to $\mathcal{S}$, from which it follows that $u \in USC(\overline{U})$ and $u \in \mathcal{S}$.  Similarly, a standard construction shows that $u$ is a supersolution in $U$, that is,
		\begin{equation*}
			\lambda u - c(x) G_{*}^{\varphi}(Du,D^{2}u) \geq f(x) \quad \text{in} \, \, U.
		\end{equation*}
	See \cite[Section 4]{user} for detailed proofs of both of these claims.
	
At the same time, for any $x_{0} \in \partial U$, $u_{x_{0}}^{-} \leq u \leq u_{x_{0}}^{+}$ holds pointwise in $\overline{U}$, and, therefore, $u = g$ on $\partial U$. 
	
	\textit{Step two: the general case.} 
	
	It remains to prove existence even when $g$ need not be uniformly H\"{o}lder continuous.  To that end, observe that there is a sequence of functions $(g_{n})_{n \in \mathbb{N}}$ with constants $(K_{n})_{n \in \mathbb{N}} \subseteq (0,+\infty)$ such that, for any $n$,
		\begin{equation*}
			g \leq g_{n + 1} \leq g_{n} \quad \text{pointwise in} \, \, \partial U,
		\end{equation*}
	and
		\begin{gather*}
			\lim_{n \to \infty} \|g_{n} - g\|_{C(\partial U)} = 0, \\
			|g_{n}(x) - g_{n}(y)| \leq K_{n} \|x - y\|^{\frac{1}{2}} \quad \text{for each} \quad x,y \in \partial U.
		\end{gather*}
	Further, it is possible to fix a $h \in C(\partial U)$ such that $h \leq g$ pointwise on $\partial U$ and $|h(x) - h(y)| \leq K_{h} \|x - y\|^{\frac{1}{2}}$ for any $x,y \in \partial U$.  
	
	For any $n$, let $u_{n}$ denote the solution of \eqref{E: Dirichlet problem redux ax} with $u_{n} = g_{n}$ on $\partial U$.  Similarly, let $u_{h}$ be the solution with $u_{h} = h$ on $\partial U$.  By the comparison principle (Theorem \ref{T: penalized problem comparison}), $u_{h} \leq u_{n + 1} \leq u_{n}$ for each $n \in \mathbb{N}$.  Thus, if $u : \overline{U} \to \mathbb{R}$ is the function given by $u(x) = \min\{u_{n}(x) \, \mid \, n \in \mathbb{N}\}$, then Dini's Theorem implies that $u \in C(\overline{U})$, and the choice of $(u_{n})_{n \in \mathbb{N}}$ implies that $u = g$ on $\partial U$.  Finally, by stability of viscosity solutions, $u$ is the viscosity solution of \eqref{E: Dirichlet problem redux ax}.  \end{proof}

Finally, the barriers of Lemma \ref{L: barriers dirichlet} also imply that the solution $u$ of \eqref{E: Dirichlet problem redux ax} is locally H\"{o}lder continuous.

	\begin{prop} Let $\varphi$, $U$, and $c$ be as in Proposition \ref{P: perron}.  Given any $\alpha \in (0,1)$ and any $V \subset \subset U$, there is a $C(V,\alpha) > 0$ such that if $u$ is the solution of \eqref{E: Dirichlet problem redux ax}, then
		\begin{equation*}
			[u]_{\alpha,V}  \leq C(V,\alpha) \max \left\{ |g(x)| \, \mid \, x \in \partial U \right\},
		\end{equation*}
	where $[u]_{\alpha,V}$ is the H\"{o}lder seminorm
		\begin{equation*}
			[u]_{\alpha,V} = \inf \left\{ K > 0 \, \mid \, |u(x) - u(y)| \leq K \|x - y\|^{\alpha} \, \, \text{for each} \, \, x, y \in V \right\}.
		\end{equation*}
	\end{prop}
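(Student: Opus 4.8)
The plan is to prove interior H\"{o}lder continuity by a barrier argument, exploiting the fact, already used in the existence proof, that the conical functions $x \mapsto \varphi(x - x_{0})^{\alpha}$ and $x \mapsto -\varphi(x_{0} - x)^{\alpha}$ are respectively super- and subsolutions away from their apex (Lemma \ref{L: barriers dirichlet}), together with the comparison principle (Theorem \ref{T: penalized problem comparison} / Theorem \ref{T: comparison dirichlet}). The first step is to reduce to a bound near a fixed interior point: given $V \subset \subset U$, set $d = \operatorname{dist}(V,\partial U) > 0$ and $R = \operatorname{diam}(U)$, and fix $x_{0} \in V$. The solution $u$ is bounded on $\overline{U}$ by the comparison principle applied with the constant sub/supersolutions $\pm\lambda^{-1}(\|f\|_{\infty} + \lambda\max_{\partial U}|g|)$ --- actually by the cruder bound $\|u\|_{L^{\infty}(\overline{U})} \leq C_{0}$ with $C_{0}$ depending only on $\|f\|_{\infty}$, $\lambda$, and $\max_{\partial U}|g|$; here I note that the statement as written only records the dependence on $\max_{\partial U}|g|$, but $f$ is fixed once and for all in the hypotheses of Theorem \ref{T: comparison dirichlet}, so this is consistent once one tracks the $f$-dependent part into $C(V,\alpha)$.

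\textbf{The barrier construction.} Next I would build, for each $x_{0} \in V$ and each scale, a pair of barriers pinned at $x_{0}$. Consider the ball $B_{d/2}(x_{0}) \subseteq U$. On $\partial B_{d/2}(x_{0})$ one has $|u(x) - u(x_{0})| \leq 2C_{0}$. The idea is to choose constants $A_{+}, A_{-} > 0$ (depending only on $\varphi$, $\alpha$, $d$, $C_{0}$, $\lambda$, and $\|f\|_{\infty}$) so that
\begin{align*}
	w^{+}(x) &= u(x_{0}) + A_{+}\varphi(x - x_{0})^{\alpha}, \\
	w^{-}(x) &= u(x_{0}) - A_{-}\varphi(x_{0} - x)^{\alpha}
\end{align*}
satisfy: (i) $w^{+} \geq u \geq w^{-}$ on $\partial B_{d/2}(x_{0})$, which is arranged by taking $A_{\pm}$ large enough that $A_{\pm}\min\{\varphi(\pm z)^{\alpha} : |z| = d/2\} \geq 2C_{0}$, using positivity of $\varphi$ away from zero; and (ii) $w^{+}$ is a supersolution and $w^{-}$ a subsolution of the PDE in $B_{d/2}(x_{0}) \setminus \{x_{0}\}$, which follows from Lemma \ref{L: barriers dirichlet} once $A_{\pm}$ is also large enough to dominate $\lambda|u(x_{0})| + \|f\|_{\infty}$ against the strictly positive lower bound $c(x) C_{+,\alpha}(R)$ --- here I must also handle the coefficient $c$, which is only nonnegative, so I would instead use that $\lambda w^{+} - c(x)G_{*}^{\varphi}(Dw^{+}, D^{2}w^{+}) \geq \lambda u(x_{0}) + \lambda A_{+}\varphi(x-x_{0})^{\alpha} + c(x) A_{+} C_{+,\alpha}(R) \geq f(x)$, where the $\lambda$-term alone (since $\lambda > 0$ in Theorem \ref{T: penalized problem comparison}; in the setting of Theorem \ref{T: comparison dirichlet} one has $\lambda = 1$) suffices to beat $\|f\|_{\infty}$ for $A_{+}$ large, with the $c$-term merely helping. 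Since the apex $x_{0}$ is excluded and $w^{\pm}$ are continuous with $w^{\pm}(x_{0}) = u(x_{0})$, the comparison principle applied on $B_{d/2}(x_{0})\setminus\{x_{0}\}$ (whose boundary is $\partial B_{d/2}(x_{0}) \cup \{x_{0}\}$, where the inequalities $w^{-} \leq u \leq w^{+}$ hold) yields
\begin{equation*}
	u(x_{0}) - A_{-}\varphi(x_{0} - x)^{\alpha} \leq u(x) \leq u(x_{0}) + A_{+}\varphi(x - x_{0})^{\alpha} \quad \text{for } x \in B_{d/2}(x_{0}).
\end{equation*}

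\textbf{Conclusion and the main obstacle.} From the two-sided bound and equivalence of $\varphi$ with the Euclidean norm (there are constants $c_{1}(\varphi), c_{2}(\varphi)$ with $c_{1}\|z\| \leq \varphi(z), \varphi(-z) \leq c_{2}\|z\|$), one gets $|u(x) - u(x_{0})| \leq \max\{A_{+},A_{-}\} c_{2}^{\alpha}\|x - x_{0}\|^{\alpha}$ for $x \in B_{d/2}(x_{0})$. To upgrade this pointwise-at-$x_{0}$ estimate to a genuine H\"older seminorm bound on all of $V$: for $x, y \in V$ with $\|x - y\| < d/2$, apply the estimate with $x_{0} = x$ to control $|u(x) - u(y)|$; for $x, y \in V$ with $\|x - y\| \geq d/2$, use $|u(x) - u(y)| \leq 2C_{0} \leq 2C_{0}(2/d)^{\alpha}\|x-y\|^{\alpha}$. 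Combining, $[u]_{\alpha,V} \leq C(V,\alpha)C_{0}$, and since $C_{0}$ is controlled by $\max_{\partial U}|g|$ up to the fixed data $f, \lambda$, this is the claimed bound with the $f$-dependence absorbed into $C(V,\alpha)$. The main obstacle I anticipate is purely bookkeeping: verifying cleanly that $w^{+}$ and $w^{-}$ are viscosity super/subsolutions on the punctured ball, since Lemma \ref{L: barriers dirichlet} gives the differential inequality for $\varphi(\cdot)^{\alpha}$ in the viscosity sense and one must check the chain/scaling of constants and the sign of the $c(x)$-term go the right way --- this requires the positivity $\lambda > 0$ and the strict positivity of $C_{\pm,\alpha}(R)$, but no new ideas beyond those already deployed in Proposition \ref{P: perron}. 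A minor point to be careful about is whether to use $B_{d/2}(x_{0})$ or $B_{d}(x_{0})$; taking radius $d$ would still keep the ball inside $U$ only if $x_{0}$ is well inside, so $d/2$ (or any fixed fraction of $\operatorname{dist}(V, \partial U)$) is the safe choice, and the constant $C(V,\alpha)$ is allowed to depend on $V$ precisely through this distance.
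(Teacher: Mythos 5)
Your overall route is the one the paper intends: its proof of this proposition is a one-line citation of the ``well-known argument'' of Jensen, which is precisely the scheme you describe — pin the cone barriers of Lemma \ref{L: barriers dirichlet} at an interior point, compare on a punctured ball using Theorem \ref{T: penalized problem comparison}, then patch the pointwise oscillation bound into a seminorm bound on $V$. The reduction steps, the $L^{\infty}$ bound, and the final two-case patching are fine. The genuine gap is in the step you label ``purely bookkeeping'': the claim that $w^{+}(x)=u(x_{0})+A_{+}\varphi(x-x_{0})^{\alpha}$ is a supersolution of \eqref{E: Dirichlet problem redux ax} on $B_{d/2}(x_{0})\setminus\{x_{0}\}$ because ``the $\lambda$-term alone suffices to beat $\|f\|_{\infty}$ for $A_{+}$ large.'' What you actually need is
\begin{equation*}
\lambda u(x_{0})+\lambda A_{+}\varphi(x-x_{0})^{\alpha}+c(x)A_{+}C_{+,\alpha}(R)\;\geq\; f(x)\qquad\text{for all }x\neq x_{0}\text{ in the ball,}
\end{equation*}
and as $x\to x_{0}$ the middle term vanishes while the last term gives nothing, since only $c\geq 0$ with $\sqrt{c}$ Lipschitz is assumed and $c$ may vanish near $x_{0}$. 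You are then left needing $\lambda u(x_{0})\geq f(x_{0})$, which is false in general (e.g.\ $u(x_{0})<0<f(x_{0})$); enlarging $A_{+}$ or shrinking the ball does not help, and the same problem occurs for $w^{-}$. So the barriers, as pinned, are simply not super-/subsolutions near the apex, and the comparison step collapses exactly where the estimate is supposed to bite.

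This is not repairable by constant-chasing: if $c$ vanishes identically on a ball inside $V$ (allowed by the hypotheses), the equation forces $u=f/\lambda$ there, which is not H\"{o}lder for general continuous $f$; so the inhomogeneity cannot be absorbed the way you propose, and the ``well-known argument'' really pertains to the case where the apex term has a sign. Two honest ways to close the gap within your framework: (i) for $f\equiv 0$, pin the upper barrier at $\max\{u(x_{0}),0\}$ and the lower one at $-\max\{-u(x_{0}),0\}$, so that $\lambda w^{+}\geq 0\geq -c(x)\,\cdot\,(\text{nonnegative})$ makes the supersolution property immediate without any condition on $c$; the full seminorm bound then follows from a short case analysis on the signs of $u$ at the two points; or (ii) assume $c\geq c_{0}>0$ near $V$, in which case $c_{0}A_{\pm}C_{\pm,\alpha}$ does absorb $\lambda\|u\|_{\infty}+\|f\|_{\infty}$ for $A_{\pm}$ large and your computation goes through (with the constant now depending on $c_{0}$ and $f$). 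Your observation about the missing $f$-dependence in the stated constant is a fair reading of the statement, but the apex verification is the substantive issue, not a formality.
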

	
		\begin{proof} This follows from Lemma \ref{L: barriers dirichlet} by a well-known argument (cf.\ \cite[Corollary 3.8]{jensen}). \end{proof}
		
The above propositions combine with Theorem \ref{T: penalized problem comparison} to prove Theorem \ref{T: comparison dirichlet}.

	

\subsection{Applications to Tug-of-War Games} \label{S: tug of war} This section describes applications of the comparison results above to the asymptotics of tug-of-war games involving Finsler norms.    In this setting, the Finsler infinity Laplacian is replaced by the finite-difference operator $\mathcal{M}^{\varphi}_{\epsilon}$ defined by\footnote{Note that the inversion in the minimum is not a typo.  As in the definition of $CCA_{\varphi}$ and $CCB_{\varphi}$, $\varphi$ is replaced by the ``inverted" norm $x \mapsto \varphi(-x)$ when passing from the maximum to the minimum.} 
	\begin{equation} \label{E: finite difference}
		\mathcal{M}_{\epsilon}^{\varphi}u(x) = \frac{1}{2} \sup \left\{ u(y) \, \mid \, \varphi(y - x) < \epsilon \right\} + \frac{1}{2} \inf \left\{ u(y) \, \mid \, \varphi(x - y) < \epsilon \right\}.
	\end{equation}
In order to prove results that were previously out of reach, instead of the standard tug-of-war game, which can be treated entirely using the cone comparison results from \cite{armstrong_smart_easy_proof} and \cite{armstrong_crandall_julin_smart}, this section treats a version of the game with discount rate and running cost, the treatment of which requires Theorem \ref{T: penalized problem comparison}; see Remark \ref{R: no simple tug of war} for further explanation on this point.

Analytically, this section considers the $\epsilon$-scale finite-difference version of the problem considered in the previous subsection:
	\begin{align} \label{E: tug of war}
		u^{\epsilon} - \epsilon^{-2} c(x) (u^{\epsilon} - \mathcal{M}_{\epsilon}^{\varphi}u^{\epsilon}) = f(x) \quad \text{in} \, \, U_{\epsilon}, \quad u^{\epsilon} = G \quad \text{in} \, \, \overline{U} \setminus U_{\epsilon},
	\end{align}
where $G \in C(\overline{U})$ and $U_{\epsilon}$ is the subdomain
	\begin{equation*}
		U_{\epsilon} = \{x \in U \, \mid \, \{y \in \mathbb{R}^{d} \, \mid \, \varphi(y - x) < \epsilon \, \, \text{or} \, \, \varphi(x - y) < \epsilon\} \subseteq U\}.
	\end{equation*}  
The aim is to prove Theorem \ref{T: tug of war convergence}, which asserts that this problem has solutions for $\epsilon > 0$ that converge to the solution of the corresponding PDE in the limit $\epsilon \to 0^{+}$. 

Probabilistically, the above equation characterizes the value function of a two-player stochastic game.  Specifically, fix a discount rate $\mu > 0$, a running cost $f \in C(\overline{U})$, and a terminal payout $G \in C(\overline{U})$.  The game consists of two players, Player I and Player II.  Their counter begins at some initial point $x_{0} \in U_{1}$.  If at time $n$ the current position of the counter is $x_{n} \in U_{1}$, a random (unbiased) coin is flipped.  If the outcome is heads, Player I chooses a new position $x_{n + 1}$ such that $\varphi(x_{n + 1} - x_{n}) < 1$; if it is tails, Player II chooses a position $x_{n + 1}$ such that $\varphi(x_{n} - x_{n + 1}) < 1$.\footnote{Hence Player I moves the counter ``forward," while Player II moves it ``backwards."}  If $x_{n + 1} \notin U_{1}$, the game ends and the payout is
	\begin{equation*}
		(1 + \mu) \sum_{m = 0}^{n} (1 + \mu)^{-m} f(x_{m}) + G(x_{n + 1}).
	\end{equation*}
Otherwise, if $x_{n + 1} \in U_{1}$, the game continues as before.  Player I seeks to maximize the expected payout, while Player II seeks to minimize it.

It turns out that the value functions of the two players, that is, the expected payout suitably optimized over the choice of strategies, coincide.  This function $u$ is characterized as the solution of the difference equation
	\begin{equation} \label{E: tug of war basic}
		\mu u + u - \mathcal{M}^{\varphi}_{\epsilon} u = f(x) \quad \text{in} \, \, U_{\epsilon}, \quad u = G \quad \text{in} \, \, \overline{U} \setminus U_{\epsilon}.
	\end{equation}
See the original paper \cite{peres_schramm_sheffield_wilson} or either of the introductory texts \cite{lewicka, parviainen} for derivations to this effect.  Under the diffusive rescaling $u^{\epsilon}(x) = \epsilon^{2} u(\epsilon^{-1} x)$, this equation becomes \eqref{E: tug of war}, hence the $\epsilon$-scale equation captures the behavior of the game in a large spatial domain and over long time scales.\footnote{The diffusivity $c$ appears in a continuous-time version of the game, where jumps occur at random Poisson times occurring with variable rate equal to $c(x_{t})$.}

While the probabilistic sketch above can be made rigorous to establish the existence of solutions of \eqref{E: tug of war basic}, analytical arguments very similar to those presented for the continuum limit \eqref{E: Dirichlet problem redux ax} also apply.  The next two lemmas are the main ingredients necessary to see this:

	\begin{lemma} \label{L: comparison discrete} Let $\varphi$ be a Finsler norm in $\mathbb{R}^{d}$ and fix a bounded open set $U \subseteq \mathbb{R}^{d}$.  Given any $\epsilon, \mu > 0$, if $u, v : \overline{U} \to \mathbb{R}$ are bounded functions such that 
		\begin{equation*}
			\mu u + c(x) (u - \mathcal{M}^{\varphi}_{\epsilon}u) \leq \mu v + c(x) (v -  \mathcal{M}_{\epsilon}^{\varphi} v) \quad \text{in} \, \, U_{\epsilon},
		\end{equation*}
	then 
		\begin{equation*}
			\sup \left\{ u(x) - v(x) \, \mid \, x \in \overline{U} \right\} = \sup \left\{ u(x) - v(x) \, \mid \, x \in \overline{U} \setminus U_{\epsilon} \right\}.
		\end{equation*}\end{lemma}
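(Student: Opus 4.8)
The plan is to prove this discrete comparison principle by a standard maximum-principle argument for the finite-difference operator, exploiting the fact that $\mathcal{M}_\epsilon^\varphi$ is an ``averaging'' operator: it is monotone and, crucially, it returns values that are at most the supremum of the test function over a small neighborhood, hence it cannot create a strict interior maximum of $u-v$ unless that maximum is already attained on a suitable enlargement of the boundary set $\overline U\setminus U_\epsilon$.

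First I would set $w=u-v$ and $M=\sup_{\overline U}w$. Arguing by contradiction, suppose $M>\sup_{\overline U\setminus U_\epsilon}w$. Since $u,v$ are bounded, $M$ is finite; pick a point $x_*\in U_\epsilon$ (or a near-maximizing sequence, if the sup is not attained) with $w(x_*)$ close to $M$. The key computation is to bound $\mathcal{M}^\varphi_\epsilon u(x_*)-\mathcal{M}^\varphi_\epsilon v(x_*)$ from above by (roughly) $M$, using that for each choice of $y$ with $\varphi(y-x_*)<\epsilon$ one has $u(y)=w(y)+v(y)\le M+v(y)$, and similarly for the infimum term with the inverted constraint $\varphi(x_*-y)<\epsilon$; combining the two halves of the definition \eqref{E: finite difference} gives $\mathcal{M}^\varphi_\epsilon u(x_*)-\mathcal{M}^\varphi_\epsilon v(x_*)\le M$ up to an error controlled by how far $w$ is from $M$ near $x_*$. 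Plugging this into the differential inequality at $x_*$, namely
	\begin{equation*}
		\mu u(x_*)+c(x_*)\bigl(u(x_*)-\mathcal{M}^\varphi_\epsilon u(x_*)\bigr)\le \mu v(x_*)+c(x_*)\bigl(v(x_*)-\mathcal{M}^\varphi_\epsilon v(x_*)\bigr),
	\end{equation*}
and rearranging, I would obtain $\mu\, w(x_*)+c(x_*)\bigl(w(x_*)-[\mathcal{M}^\varphi_\epsilon u(x_*)-\mathcal{M}^\varphi_\epsilon v(x_*)]\bigr)\le 0$, which forces $\mu\, w(x_*)\le c(x_*)\,(\text{error})$; as the error tends to $0$ along the near-maximizing sequence and $\mu>0$, this yields $M\le 0$ or more precisely a contradiction with $M>\sup_{\overline U\setminus U_\epsilon}w$ once one also handles the case $M\le 0$ separately (if $M\le 0$ the conclusion is trivial since then $w\le 0\le \sup_{\overline U\setminus U_\epsilon}w$ is not what we want, so one instead notes the sup over $\overline U\setminus U_\epsilon$ must equal $M$).

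The main obstacle is that the supremum defining $M$ need not be attained in $U_\epsilon$, and even if it is, the ``error'' term measuring the gap between $w$ near $x_*$ and $M$ must be shown to vanish in the right way — this is where one must be slightly careful. The cleanest remedy is to replace $w$ by $w_\eta(x)=w(x)-\eta\,d(x)$ for a small $\eta>0$ and a suitable auxiliary function (or simply to argue with a maximizing sequence and pass to a subsequential limit, using boundedness), so that the maximum is effectively attained and $\mathcal{M}^\varphi_\epsilon$ at the maximizer is genuinely $\le M$; alternatively, since $\mu>0$ one can absorb a genuinely strict deficit. I would also record the trivial but necessary observations that $\mathcal{M}_\epsilon^\varphi$ is well-defined on bounded functions (the sup and inf are finite) and monotone, and that $c\ge 0$ so the sign in the rearrangement works out. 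With these in place the contradiction closes, giving $\sup_{\overline U}w=\sup_{\overline U\setminus U_\epsilon}w$, which is the claim.
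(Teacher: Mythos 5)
Your route is genuinely different from the paper's. The paper divides the hypothesis by $\mu + c(x) > 0$ and rewrites it as $u - Tu \leq v - Tv$ on $U_{\epsilon}$, where $Tu = \frac{c}{\mu + c}\,\mathcal{M}^{\varphi}_{\epsilon}u$ on $U_{\epsilon}$ and $Tu = 0$ on $\overline{U}\setminus U_{\epsilon}$; since $T$ is monotone and, because $\mu > 0$ and $c$ is bounded, a strict contraction in the sup norm, the comparison follows in one stroke. Your direct maximum-principle computation is sound at its core and can be made cleaner than you suggest: with $M = \sup_{\overline{U}}(u-v)$ one has the exact bound $\mathcal{M}^{\varphi}_{\epsilon}u(x) - \mathcal{M}^{\varphi}_{\epsilon}v(x) \leq M$ for every $x \in U_{\epsilon}$ (no error term, hence no need for the $-\eta\, d(x)$ perturbation or for the supremum to be attained), so along any sequence $x_{n}$ with $(u-v)(x_{n}) \to M$, which lies in $U_{\epsilon}$ under your contradiction hypothesis $M > \sup_{\overline{U}\setminus U_{\epsilon}}(u-v)$, the hypothesis rearranges to $\mu\,(u-v)(x_{n}) \leq c(x_{n})\bigl(M - (u-v)(x_{n})\bigr)$; boundedness of $c$ --- which you use tacitly and should state, exactly as the paper needs it for the strict contraction --- then gives $\mu M \leq 0$.

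The genuine gap is the endgame. This contradiction only closes when $M > 0$; for $M \leq 0$ your parenthetical fix (``one instead notes the sup over $\overline{U}\setminus U_{\epsilon}$ must equal $M$'') is unsupported, and indeed the literal equality of suprema can fail in that regime: take $v \equiv 0$, $\mu = 1$, $c \equiv 1$, and $u = -\tfrac{1}{2}$ on $U_{\epsilon}$, $u = -1$ on $\overline{U}\setminus U_{\epsilon}$; since $u \geq -1$ forces $\mathcal{M}^{\varphi}_{\epsilon}u \geq -1$, one has $2u - \mathcal{M}^{\varphi}_{\epsilon}u \leq 0$ in $U_{\epsilon}$, so the hypothesis holds, yet the two suprema are $-\tfrac{1}{2}$ and $-1$. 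What your argument actually proves is $\sup_{\overline{U}}(u-v) \leq \max\bigl\{0,\ \sup_{\overline{U}\setminus U_{\epsilon}}(u-v)\bigr\}$, equivalently the comparison statement that $u \leq v$ on $\overline{U}\setminus U_{\epsilon}$ forces $u \leq v$ on $\overline{U}$. That positive-part version is also all the paper's contraction argument establishes and all that is used downstream (the Perron construction in the discrete existence lemma and the barrier/half-relaxed-limit arguments in the convergence proof), so the correct repair is to state and prove that version rather than to try to rescue the literal equality, which is not available when both suprema are negative.
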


		\begin{proof} Let $B(\overline{U})$ denote the space of all bounded functions in $\overline{U}$ and let $T : B(\overline{U}) \to B(\overline{U})$ denote the map
			\begin{equation*}
				Tu(x) = \left\{ \begin{array}{r l}
					 \frac{c(x)}{\mu + c(x)} \mathcal{M}_{\epsilon}^{\varphi}u(x), & \text{if} \, \, x \in U_{\epsilon}, \\
					 0, & \text{otherwise.}
					\end{array} \right.
			\end{equation*}
		Notice that $T$ is monotone nondecreasing in the sense that if $u_{1} \leq u_{2}$ pointwise in $\overline{U}$, then $Tu_{1} \leq Tu_{2}$ pointwise in $\overline{U}$.  Furthermore, since $\mu > 0$ and $c$ is bounded above in $\overline{U}$, $T$ is a strict contraction with respect to the supremum norm.  It follows that $u - Tu \leq v - Tv$ holds pointwise in $\overline{U}$ if and only if $u \leq v$ pointwise in $\overline{U}$. \end{proof}
		
	\begin{lemma} \label{L: cone comparison discrete} Let $\varphi$ be a Finsler norm in $\mathbb{R}^{d}$ and fix a bounded open set $U \subseteq \mathbb{R}^{d}$.  If $v \notin U$, then, for any $\epsilon > 0$, the function $u(x) = \varphi(x - v)$ satisfies
		\begin{equation*}
			u - \mathcal{M}_{\epsilon}^{\varphi} u = 0 \quad \text{in} \, \, U_{\epsilon}.
		\end{equation*}
	\end{lemma}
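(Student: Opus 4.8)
The claim is that for $v \notin U$, the function $u(x) = \varphi(x - v)$ is ``infinity harmonic'' for the finite-difference operator, i.e. $u - \mathcal{M}^{\varphi}_{\epsilon}u = 0$ in $U_{\epsilon}$. The key idea is that $\varphi$ is positively one-homogeneous and convex, so along the ray from $v$ through any point $x$, it behaves like a genuine (nonsymmetric) distance. Fix $x \in U_{\epsilon}$ and let $\xi = x - v \neq 0$ (since $v \notin U$). I want to compute both
\[
	\sup \left\{ \varphi(y - v) \, \mid \, \varphi(y - x) < \epsilon \right\} \quad \text{and} \quad \inf \left\{ \varphi(y - v) \, \mid \, \varphi(x - y) < \epsilon \right\}
\]
and show their average equals $\varphi(\xi)$.

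\textbf{The supremum.} First I would show $\sup \left\{ \varphi(y - v) \, \mid \, \varphi(y - x) < \epsilon \right\} = \varphi(\xi) + \epsilon$. For the upper bound, write $y - v = (y - x) + \xi$ and apply the triangle inequality: $\varphi(y - v) \leq \varphi(y - x) + \varphi(\xi) < \epsilon + \varphi(\xi)$. For the lower bound (showing the supremum is attained in the limit), I would take $y = x + t q$ where $q \in \{\varphi = 1\}$ with $\langle D\varphi(x - v), q \rangle$ maximal — more precisely, invoking \eqref{E: subdifferential basic identity dual}, take $q \in \partial \varphi^{*}(p)$ where $p \in \partial \varphi(\xi)$, so that $\varphi(q) = 1$ and, by the homogeneity/linearity identity \eqref{E: linear thing} applied to the cone $\mathcal{C}(\partial \varphi^{*}(p))$, one has $\varphi(\xi + tq) = \varphi(\xi) + t$ for $t \geq 0$ since both $\xi \in \mathcal{C}(\partial\varphi^{*}(p))$ (as $p \in \partial\varphi(\xi)$ means $\xi/\varphi(\xi) \in \partial\varphi^{*}(p)$ by \eqref{E: inversion}) and $q$ lie in that convex cone. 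Taking $t \uparrow \epsilon$ gives $\varphi(y-v) \to \varphi(\xi) + \epsilon$ while $\varphi(y - x) = \varphi(tq) = t < \epsilon$, so the supremum equals $\varphi(\xi) + \epsilon$.

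\textbf{The infimum.} The symmetric computation gives $\inf \left\{ \varphi(y - v) \, \mid \, \varphi(x - y) < \epsilon \right\} = \varphi(\xi) - \epsilon$, with one extra subtlety. The upper bound on the infimum again comes from moving along the same cone in the opposite direction: with $q$ as above, set $y = x - tq$; then $\varphi(x - y) = \varphi(tq) = t < \epsilon$, and $\varphi(y - v) = \varphi(\xi - tq)$. Here I need $\varphi(\xi - tq) = \varphi(\xi) - t$ for $0 \leq t \leq \varphi(\xi)$; this follows because $\xi - tq = (1 - t/\varphi(\xi))\xi + (t/\varphi(\xi))(\xi - \varphi(\xi) q)$ — no wait, the cleaner route is: $\langle p, \xi - tq\rangle = \varphi(\xi)\varphi^*(p) - t\langle p, q\rangle = \varphi(\xi) - t$ using $\varphi^*(p) = 1$ and $\langle p, q\rangle = 1$; and since $p \in \partial\varphi(\xi)$ determines a supporting hyperplane, $\varphi(\xi - tq) \geq \langle p, \xi - tq\rangle = \varphi(\xi) - t$, while the reverse inequality $\varphi(\xi - tq) \leq \varphi(\xi) + \varphi(-tq)$ is the wrong direction, so instead I argue $\varphi(\xi) = \varphi((\xi - tq) + tq) \leq \varphi(\xi - tq) + t$, giving $\varphi(\xi - tq) \geq \varphi(\xi) - t$ again; combined with $\varphi(\xi - tq) \leq \varphi(\xi) - t$ from the linearity on the cone (valid as long as $\xi - tq$ stays in $\mathcal{C}(\partial\varphi^*(p))$, which holds for $t < \varphi(\xi)$), we get equality. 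The lower bound on the infimum: for any $y$ with $\varphi(x-y) < \epsilon$, $\varphi(\xi) = \varphi((\xi - (x-y)) + (x - y)) \leq \varphi(y - v) + \varphi(x - y) < \varphi(y-v) + \epsilon$, so $\varphi(y-v) > \varphi(\xi) - \epsilon$. Hence the infimum is $\varphi(\xi) - \epsilon$ (the value being approached but, consistent with the strict inequality, not attained — which is fine).

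\textbf{Conclusion and the main obstacle.} Averaging, $\mathcal{M}^{\varphi}_{\epsilon}u(x) = \tfrac{1}{2}(\varphi(\xi)+\epsilon) + \tfrac{1}{2}(\varphi(\xi)-\epsilon) = \varphi(\xi) = u(x)$, so $u - \mathcal{M}^{\varphi}_\epsilon u = 0$ in $U_\epsilon$ as claimed. The main obstacle is bookkeeping around the fact that the sup and inf in \eqref{E: finite difference} are over \emph{open} balls, so the extremal value is a limit rather than a maximum; I need to be careful that the sequence $y = x \pm tq$ with $t \uparrow \epsilon$ stays admissible and that $\varphi(\xi) > 0$ (guaranteed since $\xi \neq 0$) so that the infimum computation makes sense even when $\epsilon$ is not small — though for $x \in U_\epsilon$ one typically has $\varphi(\xi)$ comfortably positive. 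The identity \eqref{E: linear thing} (linearity of $\varphi$ on the cone $\mathcal{C}(\partial\varphi^*(p))$) together with \eqref{E: inversion} and \eqref{E: subdifferential basic identity dual} is the real engine here, and everything else is elementary manipulation of the triangle inequality.
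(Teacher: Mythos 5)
Your overall strategy (compute the sup to be $\varphi(\xi)+\epsilon$, the inf to be $\varphi(\xi)-\epsilon$, and average) is the same as the paper's, and your supremum computation is fine. But the justification of the infimum step contains a genuine error: you claim $\varphi(\xi - tq) \leq \varphi(\xi) - t$ ``from the linearity on the cone, valid as long as $\xi - tq$ stays in $\mathcal{C}(\partial \varphi^{*}(p))$, which holds for $t < \varphi(\xi)$.'' That last assertion is false for a general $q \in \partial \varphi^{*}(p)$: the cone is only closed under nonnegative combinations, and subtracting $tq$ can leave it long before $t = \varphi(\xi)$. Concretely, take $\varphi = \ell^{\infty}$ in $\mathbb{R}^{2}$, $\xi = (1,\tfrac12)$, so $p = (1,0)$ and $\partial \varphi^{*}(p) = \{1\} \times [-1,1]$, and choose $q = (1,-1)$. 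Then $\xi - tq = (1-t, \tfrac12 + t)$ exits the cone $\{(s,u) : |u| \leq s\}$ at $t = \tfrac14$, and for $t \in (\tfrac14, \tfrac12)$ one has $\varphi(\xi - tq) = \tfrac12 + t > \varphi(\xi) - t$. Since $\epsilon$ is only constrained by $\epsilon \leq \varphi(x-v)$ (not by being small), this configuration is realizable with $x \in U_{\epsilon}$ and $v \notin U$, so your family $y = x - tq$, $t \uparrow \epsilon$, need not approach the value $\varphi(\xi) - \epsilon$, and the upper bound on the infimum is not established as written.

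The conclusion of that step is still correct and the repair is one line: for the infimum (and equally well for the supremum) take $q = \varphi(\xi)^{-1}\xi$, which lies in $\partial \varphi^{*}(p)$ by \eqref{E: inversion}; then $y = x - tq = v + (1 - t/\varphi(\xi))\xi$, and positive one-homogeneity alone gives $\varphi(y - v) = \varphi(\xi) - t$ and $\varphi(x - y) = t$ for all $0 \leq t \leq \varphi(\xi)$. This is exactly the paper's (very short) proof: translate $v$ to the origin and slide along the ray through $x$; no face of $\{\varphi \leq 1\}$ other than the one containing $\xi/\varphi(\xi)$ is needed. While fixing this, also make explicit that $\epsilon \leq \varphi(x - v)$, which follows from $v \notin U$ and $x \in U_{\epsilon}$ (otherwise $v$ itself would lie in the backward ball around $x$, forcing $v \in U$); without this the infimum would be $0$ rather than $\varphi(\xi) - \epsilon$ and the identity would fail, so your remark that $\varphi(\xi) > 0$ ``since $\xi \neq 0$'' is not sufficient at that point.
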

	
		\begin{proof} This is one place where the need for asymmetry in the definitinon of $\mathcal{M}^{\varphi}_{\epsilon}$ becomes apparent.  Up to translation, one can assume $v = 0$.  Observe then that, for any $x \in U_{\epsilon}$,
			\begin{align*}
				\max \left\{ \varphi(y) \, \mid \, \varphi(y - x) \leq \epsilon \right\} &= \varphi(x) + \epsilon, \\
				\min \left\{ \varphi(y) \, \mid \, \varphi(x - y) \leq \epsilon \right\} &= \varphi(x) - \epsilon.
			\end{align*}
		This proves $\mathcal{M}_{\epsilon}^{\varphi} u(x) = u(x)$. \end{proof}
		
The previous two lemmas imply existence and uniqueness of the solution of \eqref{E: tug of war} by analogy with what was done in the previous subsection.  Where existence is concerned, one can once again argue by Perron's Method, as in the next lemma.

	\begin{lemma} \label{L: discrete existence} For any bounded open set $U \subseteq \mathbb{R}^{d}$, any bounded functions $c : U \to [0, +\infty)$, $f : U \to \mathbb{R}$, and $G : \overline{U} \to \mathbb{R}$, and any $\epsilon > 0$, there is a bounded function $u : \overline{U} \to \mathbb{R}$ such that \eqref{E: tug of war basic} holds.  \end{lemma}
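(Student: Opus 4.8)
The plan is to recast the equation as a fixed-point problem for a strict contraction on the Banach space $B(\overline{U})$ of bounded functions on $\overline{U}$ equipped with the supremum norm, exploiting the contraction operator already introduced in the proof of Lemma \ref{L: comparison discrete}. Concretely, a bounded function $u : \overline{U} \to \mathbb{R}$ solves \eqref{E: tug of war basic} if and only if it is a fixed point of the map $S : B(\overline{U}) \to B(\overline{U})$ given by
\[
	Su(x) = \left\{ \begin{array}{r l}
		\dfrac{c(x)}{\mu + c(x)} \, \mathcal{M}^{\varphi}_{\epsilon} u(x) + \dfrac{f(x)}{\mu + c(x)}, & \text{if} \, \, x \in U_{\epsilon}, \\[2ex]
		G(x), & \text{otherwise.}
	\end{array} \right.
\]
This is precisely the operator $T$ of Lemma \ref{L: comparison discrete} shifted by a fixed bounded function, and the heart of the matter is that $S$ is a strict contraction.

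First I would check that $S$ is well-defined. By the definition of $U_{\epsilon}$, for $x \in U_{\epsilon}$ both sets $\{y \, \mid \, \varphi(y - x) < \epsilon\}$ and $\{y \, \mid \, \varphi(x - y) < \epsilon\}$ are contained in $U$, so $\mathcal{M}^{\varphi}_{\epsilon} u(x)$ depends only on the (bounded) restriction of $u$ to $U$ and is finite; since $f$, $G$, and $c$ are bounded and $\mu > 0$, it follows that $Su \in B(\overline{U})$. Next I would establish the two properties behind the contraction estimate. On the one hand, $\mathcal{M}^{\varphi}_{\epsilon}$ is nonexpansive: for each $x \in U_{\epsilon}$ the maps $u \mapsto \sup\{u(y) \, \mid \, \varphi(y - x) < \epsilon\}$ and $u \mapsto \inf\{u(y) \, \mid \, \varphi(x - y) < \epsilon\}$ are nonexpansive for the supremum norm, and averaging gives $|\mathcal{M}^{\varphi}_{\epsilon} u(x) - \mathcal{M}^{\varphi}_{\epsilon} v(x)| \leq \|u - v\|_{L^{\infty}(U)}$. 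On the other hand, since $c$ is bounded above and $t \mapsto t/(\mu + t)$ is increasing on $[0,+\infty)$ with supremum strictly below $1$ on $[0, \|c\|_{L^{\infty}(U)}]$, the number $\theta := \|c\|_{L^{\infty}(U)}/(\mu + \|c\|_{L^{\infty}(U)})$ satisfies $\theta < 1$ and $c(x)/(\mu + c(x)) \leq \theta$ for all $x \in U_{\epsilon}$. Combining the two yields
\[
	\|Su - Sv\|_{L^{\infty}(\overline{U})} \leq \theta \, \|u - v\|_{L^{\infty}(\overline{U})} \quad \text{for all} \quad u,v \in B(\overline{U}).
\]

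Finally I would invoke the Banach fixed-point theorem: $S$ is a strict contraction on the complete metric space $(B(\overline{U}), \|\cdot\|_{L^{\infty}})$, hence it has a unique fixed point $u \in B(\overline{U})$, and by construction this $u$ satisfies \eqref{E: tug of war basic}. There is no real obstacle here; the only points requiring any attention are the nonexpansiveness of $\mathcal{M}^{\varphi}_{\epsilon}$ and the strict bound $\theta < 1$, both elementary consequences of the boundedness of $c$ and the positivity of $\mu$. (Alternatively, existence could be obtained by Perron's method using Lemma \ref{L: comparison discrete} together with the explicit conical functions of Lemma \ref{L: cone comparison discrete} as barriers, exactly as in the proof of Proposition \ref{P: perron}, but the contraction argument is shorter and yields uniqueness for free.)
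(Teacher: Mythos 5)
Your argument is correct, but it takes a different route from the paper. You solve the problem by the Banach fixed-point theorem: you observe that a solution of \eqref{E: tug of war basic} is exactly a fixed point of the affine map $S$, that $\mathcal{M}^{\varphi}_{\epsilon}$ is nonexpansive in the sup norm, and that the factor $c(x)/(\mu + c(x))$ is bounded by $\theta = \|c\|_{L^{\infty}}/(\mu + \|c\|_{L^{\infty}}) < 1$ because $\mu > 0$ and $c$ is bounded, so $S$ is a strict contraction on the complete space $B(\overline{U})$; this is airtight and also delivers uniqueness at no extra cost. The paper instead runs Perron's method: it takes the pointwise supremum of the family $\mathcal{S}$ of bounded subsolutions, shows $\mathcal{S}$ is nonempty (a suitable constant) and bounded above via Lemma \ref{L: comparison discrete}, and then upgrades the supremum to a supersolution by a bump argument (if supersolvability failed at some point $x$, adding $\zeta 1_{\{x\}}$ would produce a larger subsolution, contradicting maximality). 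The two proofs lean on the same underlying structure — the paper's comparison lemma is itself proved by exhibiting the very contraction you use — so your approach is arguably the shorter and more economical one; what the Perron formulation buys is stylistic and structural parallelism with the continuum existence proof (Proposition \ref{P: perron}) and a template that only needs monotonicity plus comparison rather than an explicit contraction constant. Your closing parenthetical correctly identifies this alternative, so the only substantive difference is which of the two routes is taken as primary.
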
  
	
	\begin{proof} As in the proof of Lemma \ref{L: comparison discrete}, let $B(\overline{U})$ denote the space of all bounded functions in $\overline{U}$.  Let $\mathcal{S}$ denote the space of all subsolutions, meaning
	\begin{equation*}
		\mathcal{S} = \left\{u \in B(\overline{U}) \, \mid \, \mu u + \epsilon^{-2} c(x) (u - \mathcal{M}_{\epsilon}^{\varphi} u) \leq F(x) \, \, \text{in} \, \, U_{\epsilon}, \, \, u \leq G(x) \, \, \text{in} \, \, \overline{U} \setminus U_{\epsilon} \right\}.
	\end{equation*}
Note that the constant function $u(x) = \min\{\mu^{-1} \inf F, \inf G\}$ is in $\mathcal{S}$, hence $\mathcal{S}$ is nonempty.  Further, the function $v(x) = \max\{\mu^{-1} \sup F, \sup G\}$ is a supersolution, which, in view of Lemma \ref{L: comparison discrete}, implies that any element of $\mathcal{S}$ lies below it.

Defining $u^{\epsilon}(x) = \sup_{u \in \mathcal{S}} u(x)$, one readily deduces that $u^{\epsilon} \in \mathcal{S}_{\epsilon}$.  Furthermore, by maximality, $u^{\epsilon}$ is also a supersolution.  (If $u^{\epsilon}$ fails to be a supersolution at some point $x \in \overline{U}$, one directly shows that the function $u^{\epsilon} + \zeta 1_{\{x\}}$ is a subsolution provided $\zeta > 0$ is small enough, which would contradict the maximality of $u^{\epsilon}$.) \end{proof}

Finally, it remains to show that the solution $u^{\epsilon}$ converges to the solution of the PDE \eqref{E: Dirichlet problem redux ax} in the limit $\epsilon \to 0^{+}$.  Here the classical technique of Barles and Souganidis \cite{barles_souganidis} is applicable.  Toward that end, the following lemma is needed, which shows, in the terminology used in that reference, that the approximation \eqref{E: finite difference} is a consistent scheme for the PDE.  This computation is by now quite well-known.\footnote{Cf.\ \cite[Lemma 5.10]{chatterjee_souganidis}.}

	\begin{lemma} \label{L: consistency} Let $\varphi$ be a Finsler norm in $\mathbb{R}^{d}$.  Given any $C^{2}$ function $f$ defined in some open set $V \subseteq \mathbb{R}^{d}$, any $x_{0} \in V$, and any family $(x_{\epsilon})_{\epsilon > 0} \subseteq V$ such that $x_{\epsilon} \to x_{0}$ as $\epsilon \to 0^{+}$, 
		\begin{gather*}
			- G_{\varphi}^{*}(Df(x_{0}), D^{2} f(x_{0})) \leq \liminf_{\epsilon \to 0^{+}} \epsilon^{-2} \left[ f(x_{\epsilon}) - \mathcal{M}_{\epsilon}^{\varphi} f(x_{\epsilon}) \right], \\
			\limsup_{\epsilon \to 0^{+}} \epsilon^{-2} \left[ f(x_{\epsilon}) - \mathcal{M}_{\epsilon}^{\varphi} f(x_{\epsilon}) \right] \leq - G_{*}^{\varphi}(Df(x_{0}),D^{2}f(x_{0})).
		\end{gather*}
	\end{lemma}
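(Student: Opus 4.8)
The plan is to expand $f$ by Taylor's theorem around $x_\epsilon$ and analyze the increments $\sup\{f(y) : \varphi(y-x_\epsilon)<\epsilon\}$ and $\inf\{f(y) : \varphi(x_\epsilon-y)<\epsilon\}$ separately. Write $x = x_\epsilon + \epsilon z$ with $z$ ranging over $\{\varphi < 1\}$ (respectively $\{\underline{\varphi} < 1\}$, i.e.\ $\varphi(-z)<1$), so that
\begin{align*}
	f(x_\epsilon + \epsilon z) = f(x_\epsilon) + \epsilon \langle Df(x_\epsilon), z \rangle + \tfrac{1}{2} \epsilon^2 \langle D^2 f(x_\epsilon) z, z \rangle + o(\epsilon^2),
\end{align*}
uniformly in $z$ over the (bounded) unit ball. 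Taking the supremum of the first-order term over $\{\varphi \le 1\}$ gives exactly $\varphi^*(Df(x_\epsilon))$; the key point is that the supremum is attained precisely on the face $\partial\varphi^*(Df(x_\epsilon))$ by \eqref{E: subdifferential basic identity dual}, so for small $\epsilon$ the maximizers $z$ of the full expression must concentrate near that face. Hence
\begin{align*}
	\sup_{\varphi(z)<1} f(x_\epsilon+\epsilon z) = f(x_\epsilon) + \epsilon\varphi^*(Df(x_\epsilon)) + \tfrac12\epsilon^2 \sup\{\langle D^2 f(x_\epsilon) q, q\rangle : q \in \partial\varphi^*(Df(x_\epsilon))\} + o(\epsilon^2),
\end{align*}
and symmetrically for the infimum one gets $f(x_\epsilon) - \epsilon\varphi^*(Df(x_\epsilon)) + \tfrac12\epsilon^2 \inf\{\langle D^2 f(x_\epsilon)q,q\rangle : q \in \partial\varphi^*(Df(x_\epsilon))\} + o(\epsilon^2)$ — here one must be slightly careful, since $\inf\{f(y) : \varphi(x_\epsilon-y)<\epsilon\}$ corresponds to $y = x_\epsilon - \epsilon w$ with $\varphi(w)<1$, and the linear term is $-\epsilon\varphi^*(Df(x_\epsilon))$ attained on the same face $\partial\varphi^*(Df(x_\epsilon))$ again, using $\langle -Df(x_\epsilon), w\rangle$ maximized over $\{\varphi\le 1\}$ equals $\varphi^*(-Df)$... no: the relevant identity is that $-w$ ranges over $\{\varphi\le 1\}$ reflected, and one checks the minimizing $w$ satisfies $-w \in \partial\varphi^*(Df(x_\epsilon))$, so the quadratic term is again an extremum over the face $\partial\varphi^*(Df(x_\epsilon))$. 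Averaging the two expansions with weights $\tfrac12$ and dividing by $\epsilon^2$, the linear terms cancel, and
\begin{align*}
	\epsilon^{-2}[f(x_\epsilon) - \mathcal{M}_\epsilon^\varphi f(x_\epsilon)] = -\tfrac14\Big(\sup_{q\in\partial\varphi^*(Df(x_\epsilon))} Q_{D^2f(x_\epsilon)}(q) + \inf_{q\in\partial\varphi^*(Df(x_\epsilon))} Q_{D^2f(x_\epsilon)}(q)\Big) + o(1).
\end{align*}

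The remaining work is to pass to the limit $\epsilon \to 0^+$ using $x_\epsilon \to x_0$ and the semicontinuity of the envelopes. For the liminf inequality, bound $\epsilon^{-2}[f(x_\epsilon) - \mathcal{M}_\epsilon^\varphi f(x_\epsilon)]$ from below by $-\tfrac12\sup\{Q_{D^2 f(x_\epsilon)}(q) : q \in \partial\varphi^*(Df(x_\epsilon))\} + o(1)$ (discarding the $\inf$ half and using the $\sup$ half bounds the average below by the $\sup$, since $\inf \le \sup$; actually the average of $\sup$ and $\inf$ over the face lies between them, so it is bounded below by $\inf$ and above by $\sup$) — more precisely the average is $\ge$ the minimum, so $\epsilon^{-2}[f-\mathcal M_\epsilon f] \ge -\tfrac12\sup_{q} Q_{D^2 f(x_\epsilon)}(q) + o(1) = -\tfrac12 G_\varphi^*(Df(x_\epsilon), D^2 f(x_\epsilon)) + o(1)$; wait, the factor should come out to exactly match $G_\varphi^*$, so the constant bookkeeping must be redone carefully — the cleanest route is: the average of $\sup$ and $\inf$ is $\ge \inf = G_*^\varphi$ and $\le \sup = G_\varphi^*$ of $Q_{D^2f(x_\epsilon)}$ over $\partial\varphi^*(Df(x_\epsilon))$, times the correct normalization. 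Then one invokes the upper semicontinuity of $G_\varphi^*$ and lower semicontinuity of $G_*^\varphi$ as functions on $(\mathbb R^d\setminus\{0\})\times\mathcal S_d$ — specifically $\limsup_{x\to x_0} G_\varphi^*(Df(x),D^2f(x)) \le G_\varphi^*(Df(x_0),D^2f(x_0))$ since $f$ is $C^2$ and the composition with the USC envelope is USC — to conclude
\begin{align*}
	\liminf_{\epsilon\to0^+}\epsilon^{-2}[f(x_\epsilon)-\mathcal M_\epsilon^\varphi f(x_\epsilon)] \ge -\tfrac12\limsup_{\epsilon\to0^+} G_\varphi^*(Df(x_\epsilon),D^2f(x_\epsilon)) \ge -\tfrac12 G_\varphi^*(Df(x_0),D^2f(x_0)),
\end{align*}
with the analogous argument giving the $\limsup$ bound with $G_*^\varphi$. (If the statement really has no factor of $\tfrac12$, then the scaling in the problem must be such that the $\tfrac12$ from the coin flip and the $\tfrac12$ from the Taylor term combine differently than I wrote; I would simply track the constants until they match the asserted inequalities, or absorb them — the structure of the argument is unaffected.)

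\textbf{Main obstacle.} The delicate step is justifying that, in the $\epsilon\to0^+$ limit, the quadratic terms in the sup/inf expansions are genuinely controlled by $Q_{D^2f(x_\epsilon)}$ restricted to the \emph{exact} face $\partial\varphi^*(Df(x_\epsilon))$ rather than some $\epsilon$-fattening of it. This requires a concentration argument: if $z_\epsilon$ is a near-maximizer of $z\mapsto \langle Df(x_\epsilon),z\rangle + \tfrac{\epsilon}{2}Q_{D^2f(x_\epsilon)}(z)$ over $\{\varphi\le 1\}$, then since $Df(x_\epsilon)\to Df(x_0)\neq 0$ (we are at an interior point and may assume $Df(x_0)\neq 0$; if $Df(x_0)=0$ the inequalities are handled by the $p=0$ conventions for the envelopes) and $\langle Df(x_\epsilon),\cdot\rangle$ has a unique maximizing face with a quantitative gap, $\mathrm{dist}(z_\epsilon, \partial\varphi^*(Df(x_\epsilon))) \to 0$; combined with the boundedness of $D^2 f$ this forces the quadratic contribution to converge to an extremum over the face. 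This is exactly the type of face-concentration phenomenon exploited throughout the paper (cf.\ the proof of Proposition \ref{P: strongly superharmonic} and Lemma \ref{L: direction of approach}), so the needed estimates are standard convex-analysis facts; the care lies in making the error terms in the Taylor expansion uniform over the unit ball and in handling the case $Df(x_0)=0$ via the $\limsup/\liminf$ definitions of $G_\varphi^*(0,X)$ and $G_*^\varphi(0,X)$. Since $f$ is $C^2$, all error terms are genuinely $o(\epsilon^2)$ uniformly, and this case is routine — the reference to \cite{chatterjee_souganidis} indicates the computation is by now standard.
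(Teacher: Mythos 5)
Your route is the same as the paper's: Taylor-expand $f$ at $x_\epsilon$, show the optimizers defining $\mathcal{M}^\varphi_\epsilon f(x_\epsilon)$ concentrate on a face of $\{\varphi\le 1\}$, average so the first-order terms cancel, and pass to the limit. But the step you single out as the main obstacle is resolved incorrectly as written. You claim that near-maximizers $z_\epsilon$ of $z\mapsto\langle Df(x_\epsilon),z\rangle+\tfrac{\epsilon}{2}Q_{D^2f(x_\epsilon)}(z)$ over $\{\varphi\le1\}$ satisfy $\mathrm{dist}(z_\epsilon,\partial\varphi^*(Df(x_\epsilon)))\to0$ because the linear functional at $Df(x_\epsilon)$ ``has a unique maximizing face with a quantitative gap.'' When $\partial\varphi^*$ is multivalued near $Df(x_0)$ --- precisely the regime this paper treats --- that gap is not uniform in $\epsilon$ and the claim is false: take $\varphi$ the $\ell^\infty$ norm in the plane, $Df(x_0)=(1,0)$, $Df(x_\epsilon)=(1,t_\epsilon)$ with $0<t_\epsilon\ll\epsilon$; then $\partial\varphi^*(Df(x_\epsilon))$ is the single corner $(1,1)$, yet the quadratic term can move the maximizer a fixed distance along the right edge of the square. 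Consequently your displayed expansion of the sup/inf with $o(\epsilon^2)$ error over the $\epsilon$-dependent face, and the intermediate bound through $G^*_\varphi(Df(x_\epsilon),D^2f(x_\epsilon))$ followed by semicontinuity of the envelopes, do not hold. What is true, and is all you need, is concentration on the face of the \emph{limit} gradient: the rescaled functions $F_\epsilon(\xi)=\epsilon^{-1}(f(x_\epsilon+\epsilon\xi)-f(x_\epsilon))$ converge uniformly on $\{\varphi\le1\}$ to $\langle Df(x_0),\cdot\rangle$, so every limit point of the optimizers lies in $\partial\varphi^*(Df(x_0))$ by \eqref{E: subdifferential basic identity dual} (in the example the whole right edge lies in $\partial\varphi^*((1,0))$, so there is no contradiction). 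This is exactly how the paper argues; with that correction the quantities $G^*_\varphi(Df(x_0),D^2f(x_0))$ and $G_*^\varphi(Df(x_0),D^2f(x_0))$ appear directly, you never pass through the $\epsilon$-face, and the case $Df(x_0)=0$ is indeed harmless since $\partial\varphi^*(0)=\{\varphi\le1\}$.

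On the constants: do not plan to ``absorb'' them. Your bookkeeping, which traps $\epsilon^{-2}[f(x_\epsilon)-\mathcal{M}^\varphi_\epsilon f(x_\epsilon)]$ asymptotically between $-\tfrac12\sup$ and $-\tfrac12\inf$ of $Q_{D^2f(x_0)}$ over the limit face, is the correct one: the two halves of $\mathcal{M}^\varphi_\epsilon$ carry weights $\tfrac12$ and each second-order Taylor term carries another $\tfrac12$ (test $f(y)=\|y\|^2/2$ with the Euclidean norm at $x_0\ne0$: the ratio is exactly $-\tfrac12$, whereas $-G_*^\varphi=-1$). The paper's own proof reaches $-G^*_\varphi$ only by inserting a factor $2$ in the final display, so the mismatch you noticed is a genuine normalization issue in the statement rather than something your argument should be bent to reproduce; state and prove the version with $\tfrac12 G^*_\varphi$ and $\tfrac12 G_*^\varphi$ (or rescale the scheme) and flag the discrepancy.
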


		For the sake of completeness, and to provide further evidence that $\mathcal{M}^{\varphi}_{\epsilon}$ is defined correctly in \eqref{E: finite difference}, a proof of this fundamental lemma is provided in Appendix \ref{A: technical results}.
		
With the above lemmas in hand, the proof of convergence of $u^{\epsilon}$ to the solution of \eqref{E: Dirichlet problem redux ax} follows by the Barles-Souganidis approach, as sketched in the next proof:

	\begin{proof}[Proof of Theorem \ref{T: tug of war convergence}] Let $u_{*}$ and $u^{*}$ be the upper and lower half-relaxed limits of $(u^{\epsilon})_{\epsilon > 0}$ as defined in \cite[Section 6]{barles} or \cite[Proof of Theorem 3.1]{chatterjee_souganidis}.  Since the barrier arguments above are $\epsilon$-independent and $G$ is an extension of $g$, it follows that $u_{*} = u^{*} = G = g$ on $\partial U$.  Further, using Lemma \ref{L: consistency}, one readily deduces that 
		\begin{equation*}
			\lambda u^{*} - G_{\varphi}^{*}(Du,D^{2}u) \leq f(x) \quad \text{in} \, \, U, \quad \lambda u_{*} - G_{*}^{\varphi}(Du,D^{2}u) \geq f(x) \quad \text{in} \, \, U.
		\end{equation*}
	Therefore, by Theorem \ref{T: penalized problem comparison}, $u^{*} \leq u_{*}$ in $\overline{U}$.  Yet the definitions imply $u^{*} \geq u_{*}$.  Thus, equality holds $u^{*} \equiv u_{*}$, and $u^{\epsilon} \to u$ uniformly in $\overline{U}$.\end{proof}
		
\begin{remark} \label{R: no simple tug of war} It is also true that the solution $u^{\epsilon}$ of the simpler problem
	\begin{equation*}
		u^{\epsilon} - \mathcal{M}^{\varphi}_{\epsilon} u^{\epsilon} = 0 \quad \text{in} \, \, U_{\epsilon}, \quad u^{\epsilon} = G \quad \text{in} \, \, \overline{U} \setminus U_{\epsilon},
	\end{equation*}
converges to the $\varphi$-infinity harmonic function in $U$ with boundary value $G$.  However, this follows already from the cone comparison results of \cite{armstrong_smart_easy_proof} and \cite{armstrong_crandall_julin_smart}, the reason being that one can use Lemmas \ref{L: comparison discrete} and \ref{L: cone comparison discrete} to prove that the upper (resp.\ lower) half-relaxed limit of $(u^{\epsilon})_{\epsilon > 0}$ as $\epsilon \to 0^{+}$ is in $CCA_{\varphi}(U)$ (resp.\ $CCB_{\varphi}(U)$) and then invoke the comparison theorems in those papers to conclude that the upper and lower half-relaxed limits coincide.\footnote{The limiting behavior is less straightforward when there is a right-hand side, see \cite{armstrong_smart_transactions}.}

The focus here is on \eqref{E: tug of war} since it seems to be out of reach of previous work.  Specifically, unlike the Finsler infinity Laplace equation, the solutions of which are characterized by comparison with cones, the PDE \eqref{E: Dirichlet problem redux ax} considered here does not seem to possess a comparison-with-cones-type characterization. \end{remark}

\begin{remark}  The problem \eqref{E: finite difference} is just one of many tug-of-war games whose value functions can be shown to converge using the comparison results of this paper.  For another simple example, consider tug-of-war with noise,\footnote{The game is the same, except for a third ``drunk" player who chooses the next position $x_{n + 1}$ uniformly at random from the ball $B_{1}(x_{n})$; see \cite{lewicka,parviainen}.}  in which the value function is the solution of the following equation
	\begin{equation*}
		u^{\epsilon} + \epsilon^{-2} (u^{\epsilon} - p \mathcal{M}_{\epsilon}^{\varphi}u^{\epsilon} - (1 - p) \mathcal{S}_{\epsilon} u^{\epsilon}) = f(x).
	\end{equation*}
Above $p \in [0,1]$ and $\mathcal{S}_{\epsilon}$ is the following averaging operator:
	\begin{equation*}
		(\mathcal{S}_{\epsilon} u)(x) = \fint_{B_{\epsilon}(x)} u(y) \, dy.
	\end{equation*}
Since $\text{Id} - \mathcal{S}_{\epsilon}$ approximates $-\Delta$ as $\epsilon \to 0^{+}$, limits of the above equation are determined by the following differential inequalities:
	\begin{equation*}
		\lambda u - p \Delta u - (1 - p) G_{\varphi}^{*}(Du,D^{2}u) \leq f(x) \quad \text{and} \quad \lambda u - p \Delta u - (1 - p) G_{*}^{\varphi}(Du,D^{2}u) \geq f(x).  
	\end{equation*}\end{remark}

\appendix

\part{Appendices} 

\section{Technical Lemmata} \label{A: technical results}

\subsection{Integrals over Cones}  Recall that the perturbative argument in the proof of the cone comparison principle involved a property of integration over cones.  The lemma in question is proved here.  

The reader may wish to compare this result to its better-known analogue: if $\mu$ is a probability measure on a compact convex set $C$ in $\mathbb{R}^{m}$ and $\int_{C} x \, \mu(dx)$ equals an extreme point of $C$, then $\mu$ is a Dirac mass.  The lemma that follows more-or-less shows (albeit in the setting of cones) that this observation can be generalized to the case where ``extreme point" is replaced by ``face."

\begin{proof}[Proof of Lemma \ref{L: cone integral}]  Let $\text{sppt}(\mu)$ denote the support of $\mu$, that is, $v \in \text{sppt}(\mu)$ if and only if
		\begin{equation*}
			\mu(B_{\epsilon}(v)) > 0 \quad \text{for each} \quad \epsilon > 0.
		\end{equation*}
	Recall that $\mu(\mathcal{N} \setminus \text{sppt}(\mu)) = 0$.  Let $\mathcal{N}_{0}$ denote the convex cone generated by $\text{sppt}(\mu)$, that is,
		\begin{equation*}
			\mathcal{N}_{0} = \left\{ \sum_{i = 1}^{N} \alpha_{i} v_{i} \, \mid \, N \in \mathbb{N}, \, \,  v_{1},\dots,v_{N} \in \text{sppt}(\mu), \, \, \alpha_{1},\dots,\alpha_{N} \geq 0\right\}.
		\end{equation*}
	The closure $\overline{\mathcal{N}_{0}}$ is also a convex cone, and $\overline{\mathcal{N}_{0}} \subseteq \mathcal{N}$ since $\mu$ is a measure on $\mathcal{N}$.  To prove that $\mu(\mathcal{N} \setminus \mathcal{N}') = 0$, it suffices to show that $\overline{\mathcal{N}_{0}} \subseteq \mathcal{N}'$.  Toward that end, since $\mathcal{N}'$ is a face of $\mathcal{N}$, it enough to show that the mean $m_{\mu} := \int_{\mathcal{N}} v \, \mu(dv)$ is an element of the relative interior of $\overline{\mathcal{N}_{0}}$.
	
	 It is clear that $m_{\mu} \in \overline{\mathcal{N}_{0}}$ since $\mu$ is a positive measure (e.g., by simple approximation).  Let us argue by contradiction: suppose that $m_{\mu} \in \text{bdry}(\overline{\mathcal{N}_{0}})$.  It follows that there is a vector $w \in \mathbb{R}^{m}$ such that 
	 	\begin{align*}
			\langle w, m_{\mu} \rangle &= \max \left\{ \langle w, v \rangle \, \mid \, v \in \overline{\mathcal{N}_{0}} \right\}.
		\end{align*}
Note that if $v \in \text{rint}(\overline{\mathcal{N}_{0}})$, then $\langle w, v \rangle < \langle w, m_{\mu} \rangle$ necessarily holds.  At the same time, since $\overline{\mathcal{N}_{0}}$ is a cone, the maximum can only be zero: $\langle w, m_{\mu} \rangle = 0$.
	
	Fix a $v_{0} \in \text{rint}(\overline{\mathcal{N}_{0}})$.  Since $\langle w, v_{0} \rangle < 0$, the definition of $\mathcal{N}_{0}$ implies there is a $\tilde{v} \in \text{sppt}(\mu)$ such that $\langle w, \tilde{v} \rangle < 0$.  In particular, there is an $\epsilon > 0$ such that
		\begin{equation*}
			\langle w, v \rangle < 0 \quad \text{for each} \quad v \in B_{\epsilon}(\tilde{v}).
		\end{equation*}
	Thus, $\mu(\{\langle w, \cdot \rangle < 0\}) \geq \mu(B_{\epsilon}(\tilde{v})) > 0$.
	On the other hand, since $\langle w, v \rangle \leq 0$ for all $v \in \overline{\mathcal{N}_{0}}$, this leads to the observation that
		\begin{align*}
			0 = \langle w, m_{\mu} \rangle = \int_{\text{sppt}(\mu)} \langle w, v \rangle \, \mu(dv) = \int_{\{\langle w, v \rangle < 0\}} \langle w, v \rangle \, \mu(dv) < 0,
		\end{align*} 
	a patent contradiction. \end{proof}
	
\subsection{Hessian Bounds for Conical Test Functions} This appendix proves the Hessian bound that was used in the analysis of conical test functions in Section \ref{S: c11 setting}.

\begin{lemma} \label{L: key hessian control} Let $\varphi$ satisfying Assumption \ref{A: c11 assumption}.  Fix a $\nu \in (0,\zeta]$, $p \in \mathbb{R}^{d} \setminus \{0\}$, and let $e = \|p\|^{-1} p$.  Let $\psi_{\nu} = \psi_{e,\partial \varphi^{*}(p),\nu}$ be the associated conical test function, and let $(\psi_{\nu}^{\epsilon})_{0 < \nu < \epsilon}$ be an admissible perturbation.  

If there are sequences $(\epsilon_{j})_{j \in \mathbb{N}} \subseteq (0,\nu)$ and $(q_{j})_{j \in \mathbb{N}} \subseteq \mathbb{R}^{d}$ and a point $q_{*} \in \partial \varphi^{*}(p)$ such that $\psi_{\nu}^{\epsilon_{j}}(q_{j}) = 1$ for each $j \in \mathbb{N}$ and 
	\begin{equation*}
		\lim_{j \to \infty} \epsilon_{j} = 0 \quad \text{and} \quad \lim_{j \to \infty} q_{j} = q_{*},
	\end{equation*}
then
	\begin{equation*}
		\limsup_{j \to \infty} \|D^{2}\psi_{\nu}^{\epsilon_{j}}(q_{*})\| \leq \Gamma(\delta) \nu^{-1},
	\end{equation*}
where $\Gamma(\delta)$ is the constant from Proposition \ref{P: trivial hessian estimate}. \end{lemma}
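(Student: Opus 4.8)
The statement is that the Hessians $D^2\psi_\nu^{\epsilon_j}$ evaluated at points $q_j$ on the level set $\{\psi_\nu^{\epsilon_j}=1\}$ which converge to $q_*\in\partial\varphi^*(p)$ remain bounded by $\Gamma(\delta)\nu^{-1}$ in the limit. This is a pure application of the curvature machinery of Section~\ref{S: pos hom}, specifically Proposition~\ref{P: trivial hessian estimate}, so the plan is to verify the two hypotheses of that proposition --- a gradient bound and a second-fundamental-form bound on the relevant portion of the level set $\{\psi_\nu^{\epsilon_j}=1\}$ --- with constants uniform in $j$.

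First I would establish the gradient bound. Since $\psi_\nu^{\epsilon_j}$ is convex, positively one-homogeneous, and (for $j$ large, by Definition~\ref{D: admissible}) smooth near $q_j$ with $\psi_\nu\le\psi_\nu^{\epsilon_j}$ and $\psi_\nu(q_*)=\psi_\nu^{\epsilon_j}(q_*)=1$ by Proposition~\ref{P: touching above test function part}, the admissibility of the perturbation gives $D\psi_\nu^{\epsilon_j}(q_j)\to D\psi_\nu(q_*)=D\varphi(q_*)$. By the nondegeneracy condition \eqref{E: nondegeneracy} in Assumption~\ref{A: c11 assumption}, namely $\overline B_\delta(0)\subseteq\{\varphi\le1\}$, Proposition~\ref{P: very easy gradient bound} yields $\|D\varphi(q_*)\|\le\delta^{-1}$, hence $\|D\psi_\nu^{\epsilon_j}(q_j)\|\le\delta^{-1}+o(1)$. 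Likewise $q_j\to q_*\in\{\varphi=1\}\subseteq\overline B_{\delta^{-1}}(0)$, so $q_j\in\overline B_{\delta^{-1}}(0)$ for $j$ large; thus the portion of the level set we care about lies in $\{\psi_\nu^{\epsilon_j}=1\}\cap\overline B_{2\delta^{-1}}(0)$, which after a harmless adjustment of $\delta$ fits the hypothesis $A\subseteq\{\psi=1\}\cap\overline B_{\delta^{-1}}(0)$ of Proposition~\ref{P: trivial hessian estimate}.

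Second I would establish the curvature (normal-vector derivative) bound $\|DN(q_j)\|\le\nu^{-1}$ for the level set $\{\psi_\nu^{\epsilon_j}=1\}$ near $q_j$. This is where the graphical representation from Section~\ref{S: graphical representation} does the work: by \eqref{E : graph property ack ack ack}, near $\partial\varphi^*(p)$ the surface $\{\psi_\nu^{\epsilon_j}=1\}$ is the graph of the mollified function $g_\nu^{\epsilon_j}$, whose second fundamental form is given by \eqref{E: graph curvature} in terms of $D^2 g_\nu^{\epsilon_j}$. From the explicit formula \eqref{E: second derivative key mountain}, $D^2 g_\nu^{\epsilon_j}$ is a convex combination (against $\rho$) of terms $-\tfrac{\mathrm{dist}}{\sqrt{\nu^2-\mathrm{dist}^2}}D^2\mathrm{dist}$ and $-\tfrac{\nu^2}{(\nu^2-\mathrm{dist}^2)^{3/2}}D\mathrm{dist}^{\otimes2}$, each of which has norm $\le\nu^{-1}$ once $\mathrm{dist}(\cdot,G_p)$ stays bounded away from $\nu$: indeed $0\le D^2 \mathrm{dist}\cdot\mathrm{dist}\le \mathrm{Id}$ by convexity of $\mathrm{dist}^2/2$ and the viscosity/distributional bound already exploited in the proof of Lemma~\ref{L: intermediate key lemma}, and $\|D\mathrm{dist}\|\le1$. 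Since $q_j\to q_*\in G_p+\varphi^*(e)e$ and $\nu<\zeta$ (or $\nu=\zeta$, both fine here), one has $\mathrm{dist}(q_j',G_p)\to0$, so these denominators are bounded below by a fixed positive constant for $j$ large, giving $\|D^2 g_\nu^{\epsilon_j}(q_j')\|\le C\nu^{-1}$ with $C$ absolute; the passage \eqref{E: graph curvature} through the projection factors $(\mathrm{Id}-\ldots)$ only improves this. Plugging into Proposition~\ref{P: curvature bound for pos hom} via Proposition~\ref{P: trivial hessian estimate} then gives $\psi_\nu^{\epsilon_j}(q_j)D^2\psi_\nu^{\epsilon_j}(q_j)\le\Gamma(\delta)\nu^{-1}\mathrm{Id}$, and since $\psi_\nu^{\epsilon_j}(q_j)=1$ and $D^2\psi_\nu^{\epsilon_j}$ is homogeneous of degree $-1$, evaluating at $q_*$ versus $q_j$ changes nothing in the limit. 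Taking $\limsup_{j\to\infty}$ yields the claim.

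The only genuinely delicate point --- hence the expected main obstacle --- is handling the ``boundary'' part of the level set: the graphical representation \eqref{E : graph property ack ack ack} is only valid for $\mathrm{dist}(x',G_p)<r$, i.e.\ near the face, whereas in principle $q_j$ could approach $q_*$ while the relevant normal-vector derivatives blow up if $q_j'$ drifts toward the edge where $\mathrm{dist}(\cdot,G_p)=\nu-\epsilon_j$ (the ``rim'' of the rounded tube). However, since $q_*\in\partial\varphi^*(p)$ lies on the face itself, $\mathrm{dist}(q_*',G_p)=0$, and so for $j$ large $q_j$ lies well inside the region where the graph description holds and the denominators $\nu^2-\mathrm{dist}^2$ are bounded below; I would make this quantitative by choosing $J$ so that $\mathrm{dist}(q_j',G_p)<\nu/2$ for $j\ge J$, after which all estimates are uniform. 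Everything else reduces to bookkeeping with the already-proven Propositions~\ref{P: very easy gradient bound}, \ref{P: curvature bound for pos hom}, \ref{P: trivial hessian estimate}, and the formulas of Section~\ref{S: most technical mountain proof}.
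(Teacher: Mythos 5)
Your proposal follows essentially the same route as the paper's proof: verify the gradient hypothesis of Proposition \ref{P: trivial hessian estimate} using Proposition \ref{P: touching above test function part}, Proposition \ref{P: very easy gradient bound} and the local $C^{1}$ convergence of the admissible perturbation, verify the curvature hypothesis from the explicit formula \eqref{E: second derivative key mountain} together with the bound $0 \le \mathrm{dist}\, D^{2}\mathrm{dist} + D\mathrm{dist}\otimes D\mathrm{dist} \le \mathrm{Id}$ established in the proof of Lemma \ref{L: intermediate key lemma}, and then conclude via Proposition \ref{P: trivial hessian estimate}. The structure, the ingredients, and the way you localize near the face (so that the graph description applies and the denominators $\nu^{2}-\mathrm{dist}^{2}$ stay away from zero) all match the paper.

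The one genuine shortfall is the constant. With your fixed cutoff $\mathrm{dist}(q_j',G_p)<\nu/2$, the two terms of \eqref{E: second derivative key mountain} are bounded only by $\tfrac{2}{\sqrt{3}}\nu^{-1}$ and $\tfrac{8}{3\sqrt{3}}\nu^{-1}$, so the curvature bound is $C\nu^{-1}$ with an absolute $C>1$; likewise your ``harmless adjustment of $\delta$'' (working in $\overline{B}_{2\delta^{-1}}(0)$) replaces $\Gamma(\delta)$ by $\Gamma$ evaluated at a smaller parameter. Feeding these into Proposition \ref{P: trivial hessian estimate} yields $C\,\Gamma(\delta')\,\nu^{-1}$, so the step ``plugging in \dots gives $\Gamma(\delta)\nu^{-1}$'' is a jump: the stated sharp constant does not follow from the inputs as written. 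The repair is available inside your own setup and is exactly how the paper closes: since $\mathrm{dist}(q_j',G_p)\to 0$ and $\epsilon_j\to 0$, for every fixed $\rho\in(0,1)$ and $\delta_{*}\in(0,\delta)$ the relevant portion of the level set eventually satisfies the curvature bound $(1-\rho)^{-2}\nu^{-1}$ and the gradient/ball hypotheses with parameter $\delta_{*}$, so $\limsup_{j}\|D^{2}\psi_{\nu}^{\epsilon_j}(q_j)\|\le \Gamma(\delta_{*})(1-\rho)^{-2}\nu^{-1}$; then send $\rho\to 0$ and $\delta_{*}\to\delta$, using the continuity of $\Gamma$. A small additional remark: the appeal to $(-1)$-homogeneity to pass between $q_j$ and $q_{*}$ does not literally apply, since the two points need not lie on the same ray; it is cleaner to note that the bound from Proposition \ref{P: trivial hessian estimate} holds on the cone over a whole level-set neighborhood of $q_{*}$, which covers both points.
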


\begin{proof} This is a relatively routine application of the results from Section \ref{S: pos hom}, most notably Proposition \ref{P: trivial hessian estimate}.  To begin, fix an arbitrary $\delta_{*} \in (0,\delta)$.   

First, note that since $q_{*} \in \partial \varphi^{*}(p) \subseteq \{\varphi \leq 1\}$, the nondegeneracy condition \eqref{E: nondegeneracy} implies that $\|q_{*}\| \leq \delta^{-1}$.  Since $\delta_{*} < \delta$, one can fix an $R > 0$ such that
	\begin{equation*}
		\overline{B}_{R}(q_{*}) \subseteq \overline{B}_{\delta_{*}^{-1}}(0).
	\end{equation*}

Next, note that, at the limit point $q_{*}$, one has that $D\psi_{\nu}(q_{*}) = D\varphi(q_{*})$ by Proposition \ref{P: touching above test function part} and, thus, by the nondegeneracy assumption \eqref{E: nondegeneracy} and Proposition \ref{P: very easy gradient bound}, 
	\begin{equation*}
		\|D\psi_{\nu}(q_{*})\| = \|D\varphi(q_{*})\| \leq \delta^{-1}.
	\end{equation*}
Since $D\psi_{\nu}^{\epsilon_{j}} \to D\psi_{\nu}$ locally uniformly as $j \to +\infty$, up to making $R$ smaller if necessary, one can fix a $J \in \mathbb{N}$ such that, for any $j \geq J$, 
	\begin{equation*}
		\max \left\{ \|D\psi_{\nu}^{\epsilon_{j}}(q)\| \, \mid \, q \in \overline{B}_{R}(q_{*}) \right\} \leq \delta_{*}^{-1}.
	\end{equation*}
Thus, the functions $(\psi_{\nu}^{\epsilon_{j}})_{j \in \mathbb{N}}$ satisfy a gradient bound, as required in Proposition \ref{P: trivial hessian estimate}. 

It remains to check that the level set $\{\psi_{\nu}^{\epsilon_{j}} = 1\}$ satisfies a curvature bound near $q_{*}$.  Toward that end, it will be convenient to fix a $\rho \in (0,1)$.  First, recall that the proof of Lemma \ref{L: intermediate key lemma} establishes that
	\begin{equation*}
		0 \leq \text{dist}(x',G_{p}) D^{2} \text{dist}(x',G_{p}) + D\text{dist}(x',G_{p}) \otimes D\text{dist}(x',G_{p}) \leq \text{Id}.
	\end{equation*}
Combining this with the explicit formula \eqref{E: second derivative key mountain}, one deduces that, for every $q \in \mathbb{R}^{d-1}$ such that $\text{dist}(q',G_{p}) \leq \rho \nu - \epsilon_{j}$, 
	\begin{equation*}
		\|D^{2}g_{\nu}^{\epsilon_{j}}(q')\| \leq (1 - \rho)^{-1} \nu^{-1} \max \left\{ 1, \frac{1}{1 - \rho} \right\} = (1 - \rho)^{-2} \nu^{-1}.
	\end{equation*}
At the same time, recall that the derivative of the outward normal vector $N_{j}$ to the graph of $g_{\nu}^{\epsilon_{j}}$ is given, at points $q = q' + g_{\nu}^{\epsilon_{j}}(q') e$, by the formula
	\begin{equation*}
		DN_{j}(q) = \left(\text{Id} - \frac{Dg_{\nu}^{\epsilon_{j}}(q')^{\otimes 2}}{1 + \|Dg_{\nu}^{\epsilon_{j}}(q')\|^{2}} \right) D^{2} g_{\nu}^{\epsilon_{j}}(q') \left( \text{Id} - \frac{Dg_{\nu}^{\epsilon_{j}}(q')^{\otimes 2}}{1 + \|Dg_{\nu}^{\epsilon_{j}}(q')\|^{2}} \right).
	\end{equation*}
Thus, up to making $R$ smaller and $J$ larger above, there is no loss of generality assuming that
	\begin{equation*}
		\max \left\{ \|N_{j}(q)\| \, \mid \, q \in \{\psi_{\nu}^{\epsilon_{j}} = 1\} \cap \overline{B}_{R}(q_{*}) \right\} \leq (1 - \rho)^{-2} \nu^{-1}.
	\end{equation*}
This proves that $\{\psi_{\nu}^{\epsilon_{j}}\}_{j \in \mathbb{N}}$ satisfy a uniform curvature estimate close to $q$, verifying the remaining hypothesis of Proposition \ref{P: trivial hessian estimate}.

At last, apply Proposition \ref{P: trivial hessian estimate} to conclude that, for any $j \geq J$,
	\begin{equation*}
		\max \left\{ \psi_{\nu}^{\epsilon_{j}}(q) \|D^{2}\psi_{\nu}^{\epsilon_{j}}(q)\| \, \mid \, q \in \{\psi_{\nu}^{\epsilon_{j}} = 1\} \cap \overline{B}_{R}(q_{*}) \right\} \leq \Gamma(\delta_{*}) (1 - \rho)^{-2} \nu^{-1}.
	\end{equation*}
In particular, since $q_{j} \to q_{*}$ as $j \to +\infty$ and $\psi_{\nu}^{\epsilon_{j}}(q_{j}) = 1$ for all $j$, this implies
	\begin{equation*}
		\limsup_{j \to \infty} \|Y_{j}\| = \limsup_{j \to \infty} \|D^{2}\psi_{\nu}^{\epsilon_{j}}(q_{j})\| \leq \Gamma(\delta_{*}) (1 - \rho)^{-2} \nu^{-1}.
	\end{equation*}
Finally, since $\Gamma$ is a continuous function, the desired estimate \eqref{E: hessian bounds ack ack} follows upon sending $\rho \to 0$ and $\delta_{*} \to \delta$. \end{proof}

\subsection{Finite-Difference Approximation} 

\begin{proof}[Proof of Lemma \ref{L: consistency}] Only the details for the bound involving $G_{\varphi}^{*}$ will be presented since the bound for $G_{*}^{\varphi}$ follows via a transformation as in Lemma \ref{L: reduction to subsolutions}.  

Recall that $f$ is a smooth function defined in some neighborhood of a point $x_{0} \in \mathbb{R}^{d}$ and $(x_{\epsilon})_{\epsilon > 0}$ are points such that $x_{\epsilon} \to x_{0}$ as $\epsilon \to 0^{+}$, the goal being to establish that 
	\begin{equation*}
		-G_{\varphi}^{*}(Df(x_{0}),D^{2}f(x_{0})) \leq \liminf_{\epsilon \to 0^{+}} \epsilon^{-2} \left[ f(x_{\epsilon}) - (\mathcal{M}^{\varphi}_{\epsilon}f)(x_{\epsilon}) \right].
	\end{equation*}

Given $\epsilon > 0$, define $y^{+}_{\epsilon}$ and $y_{\epsilon}^{-}$ such that $\varphi(y^{+}_{\epsilon} - x) \leq \epsilon$, $\varphi(x_{\epsilon} - y^{-}_{\epsilon}) \leq \epsilon$, and
	\begin{align*}
		f(y^{+}_{\epsilon}) = \max \left\{ f(y) \, \mid \, \varphi(y - x_{\epsilon}) \leq \epsilon \right\}, \quad f(y^{-}_{\epsilon}) = \min \left\{ f(y) \, \mid \, \varphi(x_{\epsilon} - y) \leq \epsilon \right\}.
	\end{align*}
By definition of $\mathcal{M}^{\varphi}_{\epsilon}$,
	\begin{equation} \label{E: annoying asymptotic formula}
		f(x_{\epsilon}) - (\mathcal{M}^{\varphi}_{\epsilon} f)(x_{\epsilon}) = \frac{1}{2} (f(x_{\epsilon}) - f(y^{+}_{\epsilon})) + \frac{1}{2} (f(x_{\epsilon}) - f(y^{-}_{\epsilon})).
	\end{equation}

At this stage, it is convenient to rescale: consider the function $F_{\epsilon}$ given by 
	\begin{equation*}
		F_{\epsilon}(\xi) = \epsilon^{-1} (f(x_{\epsilon} + \epsilon \xi) - f(x_{\epsilon})).
	\end{equation*}
Notice that $F_{\epsilon}$ converges locally uniformly to the linear function $\xi \mapsto \langle Df(x_{0}), \xi \rangle$ as $\epsilon \to 0^{+}$.  Thus,
	\begin{align*}
		\lim_{\epsilon \to 0^{+}} \max \left\{ F_{\epsilon}(\xi) \, \mid \, \varphi(\xi) \leq 1 \right\} &= \max \left\{ \langle Df(x_{0}), \xi \rangle \, \mid \, \varphi(\xi) \leq 1 \right\} = \varphi^{*}(Df(x_{0})) \\
		\lim_{\epsilon \to 0^{+}} \min \left\{ F_{\epsilon}(\xi) \, \mid \, \varphi(-\xi) \leq 1 \right\} &= \min \left\{ \langle Df(x_{0}), \xi \rangle \, \mid \, \varphi(-\xi) \leq 1 \right\} \\
			&= -\max \left\{ \langle Df(x_{0}), \xi \rangle \, \mid \, \varphi(\xi) \leq 1 \right\} = - \varphi^{*}(Df(x_{0})).
	\end{align*}
Note that optimizers of $F_{\epsilon}$ converge to optimizers of the limit, hence, by \eqref{E: subdifferential basic identity dual},
	\begin{equation*}
		\lim_{\epsilon \to 0^{+}} \min \left\{ \| \pm \epsilon^{-1}(y^{\pm}_{\epsilon} - x_{\epsilon}) - q \| \, \mid \, q \in \partial \varphi^{*}(Df(x_{0})) \right\} = 0. 
	\end{equation*}

After Taylor expanding the two terms in the right-hand side of \eqref{E: annoying asymptotic formula}, one concludes
	\begin{align*}
		\liminf_{\epsilon \to 0^{+}} \epsilon^{-2} \left[ f(x_{\epsilon}) - (\mathcal{M}^{\varphi}_{\epsilon} f)(x_{\epsilon}) \right]  &\geq \varphi^{*}(Df(x_{0})) - \varphi^{*}(Df(x_{0})) \\
			&\quad + 2 \min \left\{ -\frac{1}{2} \langle D^{2}f(x_{0}) q, q \rangle \, \mid \, q \in \partial \varphi^{*}(Df(x_{0})) \right\} \\
			&= - G_{\varphi}^{*}(Df(x_{0}),D^{2}f(x_{0})).
	\end{align*}	
\end{proof}

\section{Construction of the Perturbed Test Functions} \label{A: perturbed test functions}

Throughout this appendix, let $\varphi$ be a Finsler norm in $\mathbb{R}^{d}$ that satisfies the $\zeta$-interior ball condition \eqref{E: zeta interior ball} for some $\zeta > 0$.  Fix a $p \in \mathbb{R}^{d} \setminus \{0\}$ and let $e = \|p\|^{-1} p$.  As in Section \ref{S: c11 setting}, let $\psi_{\nu} = \psi_{e,\partial \varphi^{*}(p),\nu}$ be the conical test function associated with $\partial \varphi^{*}(p)$.

Towards the construction of the perturbed test functions $(\psi_{\nu}^{\epsilon})_{\epsilon \in (0,\nu)}$, let $g^{\epsilon}_{\nu}$ be the mollified function defined as in Section \ref{S: smoothing the graph}.  Define the hypograph $\mathcal{H}^{\epsilon}_{\nu}$ by
	\begin{equation} \label{E: hypograph}
		\mathcal{H}^{\epsilon}_{\nu} = \{x' + s e \, \mid \, \text{dist}(x',G_{p}) \leq \nu - \epsilon, \, \, 0 \leq s \leq g_{\nu}^{\epsilon}(x') \}.
	\end{equation}
$\mathcal{H}^{\epsilon}_{\nu}$ is convex since $g_{\nu}^{\epsilon}$ is concave and nonnegative.  Accordingly, if $\underline{\psi}^{\epsilon}_{\nu}$ is the Minkowski gauge of this set
	\begin{equation} \label{E: gauge hypograph}
		\underline{\psi}^{\epsilon}_{\nu}(x) = \inf \left\{ \alpha > 0 \, \mid \, \alpha^{-1} x \in \mathcal{H}^{\epsilon}_{\nu} \right\},
	\end{equation}
then $\underline{\psi}^{\epsilon}_{\nu}$ is a convex function and it is not hard to imagine that $\{\underline{\psi}^{\epsilon}_{\nu} = 1\}$ coincides with the graph of $g_{\nu}^{\epsilon}$ close to the flat part $\partial \varphi^{*}(p)$. 

To construct an admissible perturbation as in Definition \ref{D: admissible}, the trick is to identify a nice open cone $\mathcal{U}_{\nu}$ containing $\partial \varphi^{*}(p)$ such that the functions $(\psi_{\nu}^{\epsilon})_{\epsilon \in (0,\nu)}$ defined by 
	\begin{align} \label{E: admissible perturbation}
		\psi^{\epsilon}_{\nu}(q) = \left\{ \begin{array}{r l}
									\underline{\psi}^{\epsilon}_{\nu}(q), & \text{if} \, \, q \in \overline{\mathcal{U}_{\nu}}, \\
									+ \infty, & \text{otherwise,}
								\end{array} \right.
	\end{align}
have the desired properties.  That is the approach taken below. 

\subsection{Construction of an Admissible Perturbation} For any $r \in (0,\nu]$, define open sets $U_{p}(r) \subseteq \mathbb{R}^{d-1}$ and $\mathcal{U}_{p}(r) \subseteq \mathbb{R}^{d}$ and the hypersurface $\mathcal{S}_{p}(r) \subseteq \mathbb{R}^{d}$ by 
	\begin{gather*}
		U_{p}(r) = \{q' \in \mathbb{R}^{d-1} \, \mid \, \text{dist}(q', G_{p}) < r\}, \quad \mathcal{S}_{p}(r) = \{q' + g_{\nu}(q') e \, \mid \, q' \in U_{p}(r) \}, \\
		\mathcal{U}_{p}(r) = \{t q \, \mid \, t > 0, \, \, q \in \mathcal{S}_{p}(r) \}.
	\end{gather*}
Note that these sets decrease in $r$ with respect to set inclusion.
	
	\begin{prop} \label{P: a cone construction} There are parameters $r_{*} \in (0,2^{-1} \nu)$ and $\epsilon_{*} \in (0,\nu)$ such that, for any $\epsilon \in (0,\epsilon_{*})$,
		\begin{itemize}
			\item[(i)] $\overline{\mathcal{U}_{\nu}} \subseteq \{\underline{\psi}_{\nu}^{\epsilon} < +\infty\}$.
			\item[(ii)] If $x \in \mathcal{U}_{p}(r_{*})$, then
				\begin{equation*}
					\underline{\psi}_{\nu}^{\epsilon}(x) = 1 \quad \text{if and only if} \quad \text{dist} \left( x', G_{p} \right) < 2 r_{*} \quad \text{and} \quad \langle x, e \rangle = g^{\epsilon}_{\nu} \left(x'\right).
				\end{equation*}
		\end{itemize}
	Furthermore, $\overline{\mathcal{U}_{p}(r_{*})}$ is convex.
	\end{prop}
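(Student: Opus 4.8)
The plan is to unpack the geometry of the conical test function $\psi_\nu$ near its flat face $\partial\varphi^*(p)$, using the graphical representation from Section~\ref{S: graphical representation}, and then to transfer this picture to the mollified function $g_\nu^\epsilon$ with parameters chosen uniformly small.

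First I would recall that, by Proposition~\ref{P: touching above test function part} and the graphical description \eqref{E: bulk part needed}--\eqref{E: graph part needed}, the surface $\{\psi_\nu=1\}$ coincides, in the region $\{\langle x,e\rangle \ge \varphi^*(e)-\nu\}$, with the graph of $g_\nu$ over $\{\mathrm{dist}(\cdot,G_p)\le\nu\}$. Since $\nu<\zeta$, \eqref{E: contact set cone part} gives that the contact set $\{\varphi=\psi_\nu\}$ is exactly $\mathcal{C}(\partial\varphi^*(p))$, so near the face the level set curves strictly away; this strict separation is what will give room to perturb. I would then fix a small $r_*\in(0,2^{-1}\nu)$ so that $\overline{\mathcal{U}_p(r_*)}$ is contained in the open region where the graph description is valid and where $g_\nu$ is finite with $\mathrm{dist}(\cdot,G_p)<\nu$ bounded away from $\nu$ (say $\le (1-c)\nu$ on $U_p(2r_*)$); the convexity of $\overline{\mathcal{U}_p(r_*)}$ will follow because it is the cone over a piece of the boundary of the convex set $F_{(e,\nu)}$ cut by a supporting-type slab, and one checks directly that intersecting a convex cone with the closed half-space $\{\langle x,e\rangle \ge (\varphi^*(e)-\nu)t\}$-type constraints defining $\mathcal{S}_p(r_*)$ preserves convexity. (This is the only part where I would actually grind a short computation: verifying that the set $\{tq : t\ge 0,\ q\in\overline{\mathcal{S}_p(r_*)}\}$ is convex, which reduces to the fact that $\overline{\mathcal{S}_p(r_*)}$ lies on $\partial F_{(e,\nu)}$ and $F_{(e,\nu)}$ is convex with $0$ in its interior by Assumption~\ref{A: c11 assumption}, so the radial projection onto the boundary behaves well.)

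Next I would choose $\epsilon_*$. Because $g_\nu^\epsilon \to g_\nu$ in $C^1_{\mathrm{loc}}$ as $\epsilon\to0^+$ (Jensen plus uniform Lipschitz bounds on $g_\nu$ away from $\mathrm{dist}=\nu$), and because $g_\nu^\epsilon \le g_\nu$ on $\{\mathrm{dist}(\cdot,G_p)\le\nu-\epsilon\}$ by \eqref{E: useful concavity stuff}, the hypograph $\mathcal{H}^\epsilon_\nu$ in \eqref{E: hypograph} is convex, nonempty, and converges to the corresponding piece of the convex region bounded by $g_\nu$. For (i): since $\mathrm{dist}(\cdot,G_p)\le(1-c)\nu$ on $U_p(2r_*)$, for all $\epsilon<\epsilon_*:=c\nu/2$ the slab $\{\mathrm{dist}(\cdot,G_p)\le\nu-\epsilon\}$ still contains $U_p(2r_*)$, and the cone $\mathcal{U}_p(r_*)$ over $\mathcal{S}_p(r_*)$ lies in the cone generated by the graph of $g_\nu^\epsilon$, hence $\overline{\mathcal{U}_p(r_*)}\subseteq\{\underline\psi^\epsilon_\nu<+\infty\}$. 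For (ii): the ``only if'' is immediate from the definition \eqref{E: gauge hypograph} together with the fact that $\mathcal{U}_p(r_*)$ only meets $\partial\mathcal{H}^\epsilon_\nu$ along the graph portion (the side $\{\mathrm{dist}=\nu-\epsilon\}$ and the base $\{s=0\}$ are, by the choice of $r_*$, strictly separated from $\mathcal{U}_p(r_*)$ when $\epsilon<\epsilon_*$). The ``if'' direction is the substantive one: given $x\in\mathcal{U}_p(r_*)$ with $\mathrm{dist}(x',G_p)<2r_*$ and $\langle x,e\rangle=g_\nu^\epsilon(x')$, I need $x\in\partial\mathcal{H}^\epsilon_\nu$ and that no smaller dilate of $x$ already lies in $\mathcal{H}^\epsilon_\nu$; this follows because $\mathcal{H}^\epsilon_\nu$ is a hypograph over a ball in $\{e\}^\perp$, so radial rescaling decreases the $e$-component linearly while $g_\nu^\epsilon$ is bounded below by a positive constant near $G_p$ (again from the uniform bounds and $\epsilon<\epsilon_*$), forcing $x$ itself to be the boundary point.

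The main obstacle I anticipate is the bookkeeping needed to make the ``if'' direction of (ii) genuinely uniform in $\epsilon$: one must ensure that shrinking $\epsilon$ does not move the graph $\{s=g_\nu^\epsilon(x')\}$ enough to let the relevant part of $\partial\mathcal{H}^\epsilon_\nu$ escape the cone $\mathcal{U}_p(r_*)$, and simultaneously that the lateral face $\{\mathrm{dist}(x',G_p)=\nu-\epsilon\}$ of $\mathcal{H}^\epsilon_\nu$ stays outside $\overline{\mathcal{U}_p(r_*)}$. Both are handled by first fixing $r_*$ small relative to $\nu$ (using $\nu<\zeta$ and the $C^1$ closeness), and then fixing $\epsilon_*$ small relative to $r_*$; the order of quantifiers — $r_*$ chosen before $\epsilon_*$ — is what makes everything work, and the final convexity claim for $\overline{\mathcal{U}_p(r_*)}$ is a clean consequence of the convexity of $F_{(e,\nu)}$ established in Proposition~\ref{P: structure theorem} together with Assumption~\ref{A: c11 assumption}.
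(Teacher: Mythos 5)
Your overall plan (use the graphical representation, pick $r_{*}$ first and then $\epsilon_{*}$, identify the level set $\{\underline{\psi}_{\nu}^{\epsilon}=1\}$ with the graph of $g_{\nu}^{\epsilon}$, and prove convexity of the cone) is the same route the paper takes, but two of your key justifications do not hold up. First, the convexity of $\overline{\mathcal{U}_{p}(r_{*})}$ does not ``reduce to'' the fact that $\mathcal{S}_{p}(r_{*})$ lies on $\partial F_{(e,\nu)}$ with $F_{(e,\nu)}$ convex and $0$ in its interior: the origin is in general not even an element of $F_{(e,\nu)}$ (take $\varphi=\|\cdot\|$ and $\nu$ small, so that $F_{(e,\nu)}=\overline{B}_{\nu}(e-\nu e)$ is a small ball near the unit sphere), and, more importantly, the cone generated by a piece of the boundary of a convex body need not be convex at all (the cone over a circular arc subtending more than $\pi$ is a nonconvex sector). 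Convexity here is a quantitative fact: it holds only because, for $r_{*}$ small, $Dg_{\nu}$ is small on $U_{p}(r_{*})$ (Proposition \ref{P: gradient vanishing ack ack}) while $g_{\nu}$ stays close to $\varphi^{*}(e)>0$, so that $\langle Dg_{\nu}(q_{*}'),q_{*}'\rangle<g_{\nu}(q_{*}')$ as in \eqref{E: key positivity observation}; this is exactly what allows the construction of a supporting hyperplane at each boundary ray, which is the content of Proposition \ref{P: dreaded convexity} and is not a short computation that follows from abstract convexity of $F_{(e,\nu)}$.

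Second, your argument for part (ii) has a direction error and omits the mechanism that makes the equivalence work. Since $\underline{\psi}_{\nu}^{\epsilon}(x)=\inf\{\alpha>0 \mid \alpha^{-1}x\in\mathcal{H}_{\nu}^{\epsilon}\}$, having $\underline{\psi}_{\nu}^{\epsilon}(x)=1$ means $x\in\mathcal{H}_{\nu}^{\epsilon}$ and $tx\notin\mathcal{H}_{\nu}^{\epsilon}$ for every $t>1$; excluding \emph{smaller} dilates, which is what your ``$e$-component decreases under rescaling'' remark addresses, says nothing about the gauge. What is really needed, for both implications, is that the ray through $x$ meets the graph of $g_{\nu}^{\epsilon}$ at a unique parameter lying close to $1$ (this is also what yields $\mathrm{dist}(x',G_{p})<2r_{*}$ in the ``only if'' direction, since $x'=tq'$ with $q'\in U_{p}(r_{*})$ but $t$ a priori unknown), and that no larger dilate re-enters the hypograph. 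The paper obtains this from the strict monotonicity of $\alpha\mapsto\langle q,e\rangle/\alpha-g_{\nu}^{\epsilon}(q'/\alpha)$ on $[1-a_{*},1+a_{*}]$ (Proposition \ref{P: crazy monotonicity argument}), the injectivity of $(t,x')\mapsto t[x'+g_{\nu}^{\epsilon}(x')e]$ (Proposition \ref{P: homeo part}), and a final supporting-hyperplane step in which $\langle\beta^{-1}x,n\rangle=\max\{\langle q,n\rangle \mid q\in\mathcal{H}_{\nu}^{\epsilon}\}>0$, via \eqref{E: key positivity observation}, forces $\beta=\underline{\psi}_{\nu}^{\epsilon}(x)$. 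None of these ingredients appear in your sketch, so the equivalence in (ii), uniformly in $\epsilon<\epsilon_{*}$, is not actually established; this is a genuine gap rather than mere bookkeeping.
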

	
Taking the proposition for granted for now, here is the proof of Theorem \ref{T: existence admissible perturbation} on the existence of admissible perturbations:

	\begin{proof}[Proof of Theorem \ref{T: existence admissible perturbation}]  Let $\epsilon_{*}$ and $r_{*}$ be the parameters of Proposition \ref{P: a cone construction} above, and let $\mathcal{U}_{\nu} = \mathcal{U}_{p}(r_{*})$ and $r = 2r_{*}$.
	
	For $\epsilon \in (0,\epsilon_{*})$, define $\psi_{\nu}^{\epsilon}$ as in \eqref{E: admissible perturbation}.  Otherwise, if $\epsilon \geq \epsilon_{*}$, simply set $\psi_{\nu}^{\epsilon}(x) = \|x\|$ for all $x \in \mathbb{R}^{d}$.  Since $\overline{\mathcal{U}_{\nu}}$ is convex and $(\underline{\psi}^{\epsilon}_{\nu})_{\epsilon \in (0,\nu)}$ are convex functions, the functions so defined are convex as well.  Similarly, they are positively one-homogeneous.
	
	Proposition \ref{P: a cone construction} shows that condition (iii) in the definition of admissible perturbation (Definition \ref{D: admissible}) is satisfied with the parameters $\epsilon_{*}$ and $r$.
	
	Trivially, $\{\psi_{\nu}^{\epsilon} < + \infty\} = \overline{\mathcal{U}_{\nu}}$ for any $\epsilon \in (0,\epsilon_{*})$ and these functions are smooth in $\mathcal{U}_{\nu}^{\epsilon}$ since $\{\psi_{\nu}^{\epsilon} = 1\}$ is a smooth graph for any $\epsilon < \epsilon_{*}$.  
	
	To conclude the proof of the existence of an admissible perturbation, it remains to show that $\psi_{\nu} \leq \psi_{\nu}^{\epsilon}$ for any $\epsilon \in (0,\epsilon_{*})$ and that $\psi_{\nu}^{\epsilon} \to \psi_{\nu}$ in $C^{1}_{\text{loc}}(\mathcal{U}_{\nu})$ as $\epsilon \to 0^{+}$.  The first follows from the fact that $g_{\nu}^{\epsilon} \leq g_{\nu}$ pointwise for any $\epsilon$, while the second follows readily from the fact that $(g_{\nu}^{\epsilon},Dg_{\nu}^{\epsilon}) \to (g_{\nu},Dg_{\nu})$ as $\epsilon \to 0^{+}$.
	
	Finally, since $\mathcal{U}_{\nu}$ is open and $\psi_{\nu}^{\epsilon} \to \psi_{\nu}$ locally uniformly as $\epsilon \to 0^{+}$, the characterization of $\{\psi_{\nu} = 1\} \cap \mathcal{U}_{\nu}$ in the final statement of the theorem follows directly from that for $(\psi_{\nu}^{\epsilon})_{\epsilon \in (0,\epsilon_{*})}$. \end{proof}

\subsection{Proof of Proposition \ref{P: a cone construction}} To begin with, it will be useful to recall that $g_{\nu}$ and $g^{\epsilon}_{\nu}$ are defined by 
	\begin{equation} \label{E: recall g annoying definition}
		g_{\nu}(x') = \varphi^{*}(e) - \nu + \sqrt{\nu^{2} - \text{dist}(y',G_{p})^{2}}, \quad g_{\nu}^{\epsilon}(x') = \int_{B_{1}} g_{\nu}(x' + \epsilon y') \rho(y') \, dy',
	\end{equation}
where above $\rho$ is a smooth nonnegative function supported in $B_{1}$ with integral equal to one.  Inspection of the proof of Lemma \ref{L: intermediate key lemma} shows that the following result holds:

	\begin{prop} \label{P: gradient vanishing ack ack} $g_{\nu}$ is $C^{1}$ in the open set $\{x' \in \mathbb{R}^{d-1} \, \mid \, \text{dist}(x',G_{p}) < \nu\}$.  In particular, $Dg_{\nu}(x') = 0$ if $x' \in G_{p}$.  \end{prop}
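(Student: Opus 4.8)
The statement to prove is that $g_{\nu}$, defined by \eqref{E: recall g annoying definition}, is $C^{1}$ on the open set $\{x' \in \mathbb{R}^{d-1} \mid \text{dist}(x',G_{p}) < \nu\}$, and that $Dg_{\nu}(x') = 0$ whenever $x' \in G_{p}$. The plan is to reduce everything to known regularity properties of the distance function to a convex set, which were already established in the proof of Lemma \ref{L: intermediate key lemma} (and which ultimately rest on \cite[Section 1.2]{schneider} and \cite[Example 2.25]{rockafellar_wets}).

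First I would recall that $\text{dist}(\cdot,G_{p})$ is convex (since $G_{p}$ is convex), globally $1$-Lipschitz, and, crucially, that the proof of Lemma \ref{L: intermediate key lemma} showed the function $x' \mapsto \tfrac12 \text{dist}(x',G_{p})^{2}$ satisfies $0 \leq D^{2}\{2^{-1}\text{dist}(\cdot,G_{p})^{2}\} \leq \text{Id}$ in the distributional sense, hence lies in $W^{2,\infty}_{\text{loc}}(\mathbb{R}^{d-1})$. In particular $x' \mapsto \text{dist}(x',G_{p})^{2}$ is $C^{1,1}_{\text{loc}}$, and its gradient is $2\,\text{dist}(x',G_{p})\,D\text{dist}(x',G_{p})$ wherever the latter makes sense, with the convention that this product is $0$ on $G_{p}$ (which is consistent since $\text{dist}(\cdot,G_{p})$ vanishes there and $D\text{dist}$ is bounded). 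Now $g_{\nu}(x') = \varphi^{*}(e) - \nu + \sqrt{\nu^{2} - \text{dist}(x',G_{p})^{2}}$, so on the open set $\{\text{dist}(\cdot,G_{p}) < \nu\}$ the argument $\nu^{2} - \text{dist}(x',G_{p})^{2}$ is strictly positive and $C^{1,1}_{\text{loc}}$, and $t \mapsto \sqrt{t}$ is smooth on $(0,\infty)$; composition gives that $g_{\nu}$ is $C^{1}$ (indeed $C^{1,1}_{\text{loc}}$) there, with
\begin{equation*}
	Dg_{\nu}(x') = -\frac{\text{dist}(x',G_{p})\,D\text{dist}(x',G_{p})}{\sqrt{\nu^{2} - \text{dist}(x',G_{p})^{2}}}.
\end{equation*}

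For the second claim, if $x' \in G_{p}$ then $\text{dist}(x',G_{p}) = 0$, so the numerator of the displayed formula vanishes while the denominator equals $\nu > 0$; hence $Dg_{\nu}(x') = 0$. Alternatively, and perhaps more cleanly, one can argue directly: $g_{\nu}$ attains its maximum value $\varphi^{*}(e)$ precisely on the (relatively open, in the relevant sense) set $G_{p}$, and since $g_{\nu}$ is differentiable at each interior-type point of $\{\text{dist}(\cdot,G_{p}) < \nu\}$, Fermat's rule forces the gradient to vanish at any point of $G_{p}$ that is a local maximum, which every point of $G_{p}$ is. I do not expect any genuine obstacle here: the only mildly delicate point is making sure the chain rule is legitimate despite $D\text{dist}(\cdot,G_{p})$ being merely bounded (not continuous) across $G_{p}$ — this is handled by working instead with the $C^{1,1}$ function $\text{dist}(\cdot,G_{p})^{2}$, whose gradient is genuinely continuous and vanishes on $G_{p}$, so that the composition with the smooth square root is unambiguously $C^{1}$.
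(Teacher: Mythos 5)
Your proposal is correct and follows essentially the same route as the paper, which disposes of this proposition by ``inspection of the proof of Lemma \ref{L: intermediate key lemma}'' --- i.e., exactly the fact you use, that $\tfrac12\,\text{dist}(\cdot,G_{p})^{2}$ has distributional Hessian between $0$ and $\text{Id}$, hence is $C^{1,1}$ with gradient vanishing on $G_{p}$, so that composing with the smooth square root on $\{\text{dist}(\cdot,G_{p})<\nu\}$ yields the $C^{1}$ regularity and $Dg_{\nu}=0$ on $G_{p}$. Your filled-in details (the explicit gradient formula and the Fermat-rule alternative) are sound and simply make explicit what the paper leaves to the reader.
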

	
Recall that if $x' \in G_{p}$, then $x' + g_{\nu}(x')e \in \partial \varphi^{*}(p)$ and 
	\begin{equation*}
		\left \langle x' + g_{\nu}(x')e, e \right \rangle = \varphi^{*}(e).
	\end{equation*}
Thus, by continuity and Proposition \ref{P: gradient vanishing ack ack}, there are constants $r_{*}^{(1)}, \epsilon_{*}^{(1)} > 0$ such that, for each $\epsilon \in (0,\epsilon_{*}^{(1)})$ and each $x' \in U_{p}(r_{*}^{(1)})$,
	\begin{equation} \label{E: key positivity observation}
		\min \left\{ g_{\nu}^{\epsilon}(x'), \left \langle x' + g^{\epsilon}_{\nu}(x')e, \frac{e - Dg_{\nu}^{\epsilon}(x')}{\sqrt{1 + \|Dg_{\nu}^{\epsilon}(x')\|^{2}}} \right \rangle \right\} \geq \frac{1}{2} \varphi^{*}(e) > 0.
	\end{equation}
	
In the proof of the proposition, it will be useful to study the properties of the maps $\Phi$ and $(\Phi^{\epsilon})_{\epsilon \in (0,\nu)}$ defined as follows:
	\begin{gather*}
		\Phi : (0,+\infty) \times U_{p}(\nu) \to \mathbb{R}^{d}, \quad \Phi(t,x') = t[x' + g_{\nu}(x') e], \\
		\Phi^{\epsilon} : (0,+\infty) \times U_{p}(\nu - \epsilon) \to \mathbb{R}^{d}, \quad \Phi^{\epsilon}(t,x') = t[x' + g_{\nu}^{\epsilon}(x')e].
	\end{gather*}
The next result shows that these maps are homeomorphisms onto their respective ranges.

	\begin{prop} \label{P: homeo part} If $r \in (0,r_{*}^{(1)})$ and $\epsilon \in (0,\epsilon_{*}^{(1)})$, then the maps $\Phi$ and $\Phi^{\epsilon}$ restrict to homeomorphisms of $(0,+\infty) \times U_{p}(r)$. \end{prop}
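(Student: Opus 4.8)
The plan is to show that each of $\Phi$ and $\Phi^{\epsilon}$ is a continuous bijection from the product cone $(0,+\infty) \times U_p(r)$ onto an open subset of $\mathbb{R}^d$, with continuous inverse. Since the arguments for $\Phi$ and $\Phi^{\epsilon}$ are identical — one simply replaces $g_\nu$ by $g_\nu^{\epsilon}$ and uses \eqref{E: key positivity observation} in place of the analogous positivity statement for $g_\nu$ — I would present the details for, say, $\Phi^{\epsilon}$ and remark that the case of $\Phi$ is verbatim the same (indeed $\Phi$ can be regarded as the ``$\epsilon = 0$'' case). Continuity is immediate from the smoothness of $g_\nu^{\epsilon}$ established in Section~\ref{S: smoothing the graph}.

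The core is injectivity. Suppose $\Phi^{\epsilon}(t_1, x_1') = \Phi^{\epsilon}(t_2, x_2')$ with $t_1, t_2 > 0$ and $x_1', x_2' \in U_p(r)$. Writing the equality componentwise with respect to the orthogonal decomposition $\mathbb{R}^d = \{e\}^{\perp} \oplus \mathbb{R} e$, the $\{e\}^{\perp}$-component gives $t_1 x_1' = t_2 x_2'$ and the $\mathbb{R} e$-component gives $t_1 g_\nu^{\epsilon}(x_1') = t_2 g_\nu^{\epsilon}(x_2')$. By \eqref{E: key positivity observation}, both $g_\nu^{\epsilon}(x_i')$ are bounded below by $\tfrac12 \varphi^*(e) > 0$, so the second equation forces $t_1$ and $t_2$ to have the same sign (they are positive) and, combined with the first, shows $x_1'$ and $x_2'$ are positive scalar multiples of one another: $x_2' = (t_1/t_2) x_1'$. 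If $x_1' = 0$ then $x_2' = 0$ and the scalar equations immediately give $t_1 = t_2$. Otherwise, set $\lambda = t_1/t_2 > 0$, so $x_2' = \lambda x_1'$; substituting into $t_1 g_\nu^{\epsilon}(x_1') = t_2 g_\nu^{\epsilon}(x_2')$ yields $\lambda g_\nu^{\epsilon}(x_1') = g_\nu^{\epsilon}(\lambda x_1')$. The hard part — and the step I expect to be the main obstacle — is to rule out $\lambda \neq 1$ here: one must argue that along the ray $\{s x_1' : s > 0\}$ (restricted to where it meets $U_p(\nu - \epsilon)$), the function $s \mapsto g_\nu^{\epsilon}(s x_1')/s$ is strictly monotone, hence the relation $\lambda g_\nu^{\epsilon}(x_1') = g_\nu^{\epsilon}(\lambda x_1')$ can only hold for $\lambda = 1$. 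This strict monotonicity should follow from the strict positivity and concavity of $g_\nu^{\epsilon}$: indeed for a nonnegative concave function $g$ on a convex domain with $g > 0$ at the relevant points, $g(sx)/s$ is nonincreasing in $s$, and it is strictly decreasing unless $g$ is linear through the origin along that ray — which is incompatible with $g_\nu^{\epsilon} \geq \tfrac12\varphi^*(e) > 0$ and the fact that the domain does not contain the origin. I would make this precise by differentiating $s \mapsto g_\nu^{\epsilon}(sx_1') - s g_\nu^{\epsilon}(x_1')$ or by a direct chord argument, taking care that all points stay in $U_p(\nu - \epsilon)$ where $g_\nu^{\epsilon}$ is defined and smooth; the constraint $r < r_*^{(1)}$ is what guarantees this. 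This gives $\lambda = 1$, hence $t_1 = t_2$ and $x_1' = x_2'$, so $\Phi^{\epsilon}$ is injective.

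For openness of the range and continuity of the inverse, I would invoke invariance of domain: $(0,+\infty) \times U_p(r)$ is an open subset of $\mathbb{R}^d$ (identifying $\{e\}^{\perp} \cong \mathbb{R}^{d-1}$), $\Phi^{\epsilon}$ is continuous and injective, so by Brouwer's invariance of domain it is a homeomorphism onto its open image. Alternatively, and perhaps cleaner, one can compute the Jacobian of $\Phi^{\epsilon}$ directly: in the basis adapted to $\{e\}^{\perp} \oplus \mathbb{R} e$ one finds
\begin{equation*}
	D\Phi^{\epsilon}(t,x') = \begin{pmatrix} x' + g_\nu^{\epsilon}(x') e & t(\mathrm{Id} + e \otimes Dg_\nu^{\epsilon}(x')) \end{pmatrix},
\end{equation*}
and a short computation shows $\det D\Phi^{\epsilon}(t,x') = t^{d-1}\big\langle x' + g_\nu^{\epsilon}(x')e, \, e - Dg_\nu^{\epsilon}(x')\big\rangle / \text{(normalization)}$, which is strictly positive by the second half of \eqref{E: key positivity observation}. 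Thus $\Phi^{\epsilon}$ is a local diffeomorphism everywhere, and a local diffeomorphism that is injective is a diffeomorphism onto an open set; in particular it is a homeomorphism. Either route completes the proof, and I would likely use the Jacobian computation since it reuses \eqref{E: key positivity observation} and is self-contained. Finally, I would note explicitly that the identical argument with $g_\nu$ in place of $g_\nu^{\epsilon}$ and Proposition~\ref{P: gradient vanishing ack ack} together with the continuity estimate preceding \eqref{E: key positivity observation} establishes the claim for $\Phi$, completing the proof of Proposition~\ref{P: homeo part}.
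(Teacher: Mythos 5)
Your overall architecture --- continuity, injectivity, then either invariance of domain or a full-rank Jacobian plus the inverse function theorem --- is sound, and your Jacobian computation is essentially the paper's own openness argument: up to sign and a factor of $t^{d-1}$, the determinant is $\langle x' + g_\nu^\epsilon(x')e,\, e - Dg_\nu^\epsilon(x')\rangle$, which is nonzero by the second member of \eqref{E: key positivity observation}. The gap is in the injectivity step, exactly where you flagged it. After reducing to $x_2' = \lambda x_1'$ and $\lambda g_\nu^\epsilon(x_1') = g_\nu^\epsilon(\lambda x_1')$, you assert that $s \mapsto g_\nu^\epsilon(sx_1')/s$ is strictly decreasing because $g_\nu^\epsilon$ is concave, bounded below by $\tfrac12\varphi^*(e)$, and defined on a domain avoiding the origin. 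That justification does not hold up: a function that is linear through the origin along the ray, say $g(y') = \langle a, y'\rangle$ with $\langle a, x_1'\rangle > 0$, is concave, strictly positive, and bounded below by $\tfrac12\varphi^*(e)$ on a ray segment far from the origin, yet $g(sx_1')/s$ is constant there and the associated map $(t,x') \mapsto t[x' + g(x')e]$ is genuinely non-injective (it sends $(2,x_1')$ and $(1,2x_1')$ to the same point). So positivity of $g_\nu^\epsilon$ together with the domain missing the origin cannot, by themselves, rule out $\lambda \neq 1$; note also that the standard fact ``$g(sx)/s$ is nonincreasing for concave $g$'' requires $0$ in the domain with $g(0)\geq 0$, which you do not have here.

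The missing ingredient is the transversality half of \eqref{E: key positivity observation}: since $x'$ and $Dg_\nu^\epsilon(x')$ lie in $\{e\}^\perp$, that inequality says precisely that $g_\nu^\epsilon(y') - \langle Dg_\nu^\epsilon(y'), y'\rangle \geq \tfrac12\varphi^*(e)\sqrt{1+\|Dg_\nu^\epsilon(y')\|^{2}} > 0$ for $y' \in U_p(r_*^{(1)})$ and $\epsilon < \epsilon_*^{(1)}$. Hence $\frac{d}{ds}\left[s^{-1}g_\nu^\epsilon(sx_1')\right] = s^{-2}\left(\langle Dg_\nu^\epsilon(sx_1'), sx_1'\rangle - g_\nu^\epsilon(sx_1')\right) < 0$ along the segment joining $x_1'$ to $\lambda x_1'$, which stays inside $U_p(r) \subseteq U_p(r_*^{(1)})$ by convexity of $U_p(r)$ (this, not mere membership in $U_p(\nu - \epsilon)$, is the constraint you actually need). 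With that correction your ray-scaling argument closes, and it is a genuinely different injectivity proof from the paper's: the paper never reduces to a one-dimensional scaling relation, but instead pairs the identity $t_1[x_1' + g_\nu^\epsilon(x_1')e] = t_2[x_2' + g_\nu^\epsilon(x_2')e]$ against the graph normals at the two points and uses convexity of the hypograph (the supporting-hyperplane inequality) together with \eqref{E: key positivity observation} to conclude $t_1 = t_2$ directly, after which projecting to $\{e\}^\perp$ gives $x_1' = x_2'$. Both routes rest on the same quantity $\langle x' + g_\nu^\epsilon(x')e,\, e - Dg_\nu^\epsilon(x')\rangle > 0$; the paper's version avoids any one-dimensional monotonicity lemma, while yours, once \eqref{E: key positivity observation} is invoked in full rather than only its first member, makes the mechanism along rays more explicit.
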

	
		\begin{proof} For convenience, set $\Phi^{0} = \Phi$ and $g_{\nu}^{0} = g_{\nu}$ so there is no need to treat $\Phi$ and $\Phi^{\epsilon}$ separately.  Suppose that $\epsilon \in [0,\epsilon_{*}^{(1)})$ and $r \in (0,r_{*}^{(1)})$.
		
		It is clear that $\Phi^{\epsilon}$ is continuous.  Further, the assumption \eqref{E: key positivity observation} implies that, for each $(t,x') \in U_{p}(r)$, the vector $x' + g_{\nu}^{\epsilon}(x') e$ crosses the surface $\mathcal{S}_{p}(r)$ transversely.  It follows that $\partial_{t} \Phi^{\epsilon}(t,x')$ is not in the span of $\{\partial_{x_{1}'}\Phi^{\epsilon}(t,x'),\dots,\partial_{x_{d-1}'} \Phi^{\epsilon}(t,x')\}$.  Since these last vectors are readily shown to be linearly independent, this proves $D\Phi^{\epsilon}_{\nu}(t,x')$ has full rank, hence $\Phi^{\epsilon}$ maps open sets to open sets by the inverse function theorem.  To prove that it is a homeomorphism, it thus only remains to show that $\Phi^{\epsilon}$ restricts to an injective function in $(0,+\infty) \times U_{p}(r)$. 
		
		To see this, suppose that $\Phi(t_{1},x_{1}') = \Phi(t_{2},x_{2}')$ for some pair of points $(t_{1},x_{1}), (t_{2},x_{2}) \in (0,+\infty) \times U_{p}(r)$.  By definition, this implies
			\begin{equation} \label{E: injectivity argument}
				t_{1} [x'_{1} + g^{\epsilon}_{\nu}(x_{1}')e] = t_{2} [x_{2}' + g^{\epsilon}_{\nu}(x_{2}')e].
			\end{equation}
		For convenience, denote the outward normal vectors to the graph of $g^{\epsilon}_{\nu}$ at $x_{1}'$ and $x_{2}'$ respectively by $n_{1}$ and $n_{2}$, i.e.,
			\begin{equation*}
				n_{i} = \frac{e - Dg_{\nu}^{\epsilon}(x_{i}')}{\sqrt{1 + \|Dg_{\nu}^{\epsilon}(x_{i}')\|^{2}}} \quad \text{for} \quad i \in \{1,2\}.
			\end{equation*}
		At the level of the normal vectors, \eqref{E: injectivity argument} leads to	
			\begin{equation} \label{E: convexity guy}
				t_{1} \langle x_{1}' + g^{\epsilon}_{\nu}(x_{1}')e, n_{i} \rangle = t_{2} \langle x_{2}' + g^{\epsilon}_{\nu}(x_{2}')e, n_{i} \rangle \quad \text{for each} \quad i \in \{1,2\}.
			\end{equation}
		At the same time, since $g_{\nu}^{\epsilon}$ is concave, its hypograph is convex.  Thus, by convexity, for each $i, j \in \{1,2\}$, 
			\begin{equation*}
				\langle x_{i}' + g^{\epsilon}_{\nu}(x_{i}')e, n_{i} \rangle \geq \langle x_{j}' + g^{\epsilon}_{\nu}(x_{j}')e, n_{i} \rangle.
			\end{equation*}
		Combining this with \eqref{E: key positivity observation} and \eqref{E: convexity guy}, one deduces that $t_{j} - t_{i}\geq 0$ for any $i, j$, or, in other words, $t_{1} = t_{2}$.  
		
		Since $t_{1} = t_{2}$, the identity \eqref{E: injectivity argument} becomes 
			\begin{equation*}
				x_{1}' + g^{\epsilon}_{\nu}(x_{1}')e = x_{2}' + g^{\epsilon}_{\nu}(x_{2}') e.
			\end{equation*}
		Projecting to $\{e\}^{\perp}$, this yields $x_{1}' = x_{2}'$.  Therefore, $(t_{1},x_{1}') = (t_{2},x_{2}')$, proving $\Phi^{\epsilon}$ restricts to an injective map on $(0,+\infty) \times U_{p}(r)$. \end{proof}
		
The fact that $\mathcal{U}_{p}(r)$ is an open set follows immediately from the previous result.

	\begin{prop} If $r \in (0,r_{*}^{(1)})$, then $\mathcal{U}_{p}(r)$ is an open set. \end{prop}
	
		\begin{proof} By the previous result, the map $\Phi$ maps $(0,+\infty) \times U_{p}(r)$ homeomorphically onto $\Phi((0,+\infty) \times U_{p}(r))$.  At the same time, by definition, $\mathcal{U}_{p}(r) = \Phi((0,+\infty) \times U_{p}(r))$.  Therefore, $\mathcal{U}_{p}(r)$ is open. \end{proof}
		
The next result is the key ingredient in the proof of Proposition \ref{P: a cone construction}.

	\begin{prop}  \label{P: crazy monotonicity argument} There are parameters $r_{*}^{(2)} \in (0,2^{-1} r_{*}^{(1)})$, $\epsilon_{*}^{(2)} \in (0,\epsilon_{*}^{(1)})$, and $a_{*} \in (0,1)$ such that, given any $q \in \mathcal{S}_{p}(r_{*}^{(2)})$ and any $\epsilon \in (0,\epsilon_{*}^{(2)})$, if $f^{\epsilon}_{q} : [1 - a_{*}, 1 + a_{*}] \to \mathbb{R}$ is the function defined by
		\begin{equation*}
			f^{\epsilon}_{q}(\alpha) = \frac{\langle q, e \rangle}{\alpha} - g^{\epsilon}_{\nu}\left( \frac{q'}{\alpha} \right),
		\end{equation*}
	then $f^{\epsilon}_{q}$ is strictly decreasing in $[1 - a_{*}, 1 + a_{*}]$ and there is a unique $\alpha_{*} \in [1 - a_{*}, 1 + a_{*}]$ such that $f_{q}^{\epsilon}(\alpha_{*}) = 0$.  \end{prop}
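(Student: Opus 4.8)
\textbf{Proof strategy for Proposition \ref{P: crazy monotonicity argument}.} The plan is to analyze the function $f^{\epsilon}_{q}(\alpha) = \alpha^{-1}\langle q,e\rangle - g^{\epsilon}_{\nu}(\alpha^{-1} q')$ and show that, for $q$ ranging over a sufficiently thin neighborhood $\mathcal{S}_{p}(r^{(2)}_{*})$ of $\partial\varphi^{*}(p)$ and $\epsilon$ sufficiently small, the derivative $\frac{d}{d\alpha} f^{\epsilon}_{q}(\alpha)$ is bounded above by a strictly negative constant uniformly in $\alpha$ near $1$. Once that uniform negativity is in hand, strict monotonicity is immediate, and the existence and uniqueness of a zero $\alpha_{*}$ in $[1-a_{*},1+a_{*}]$ follows by the intermediate value theorem, provided one also checks that $f^{\epsilon}_{q}(1-a_{*}) > 0 > f^{\epsilon}_{q}(1+a_{*})$; this last sign condition is arranged by taking $r^{(2)}_{*}$ small so that $q$ is close to the graph of $g_{\nu}$ (where $f$ would vanish at $\alpha = 1$) and then shrinking $a_{*}$ against the (uniform) size of the derivative.

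\textbf{Key steps in order.} First I would compute the derivative: writing $q = q' + \langle q,e\rangle e$, the chain rule gives
	\begin{equation*}
		\frac{d}{d\alpha} f^{\epsilon}_{q}(\alpha) = -\frac{\langle q,e\rangle}{\alpha^{2}} + \frac{1}{\alpha^{2}}\big\langle Dg^{\epsilon}_{\nu}\big(\alpha^{-1}q'\big), q'\big\rangle = -\frac{1}{\alpha^{2}}\Big\langle q' + \langle q,e\rangle e,\; e - Dg^{\epsilon}_{\nu}\big(\alpha^{-1}q'\big)\Big\rangle.
	\end{equation*}
Up to the positive prefactor and normalization, the bracket is exactly the inner product of the point $q' + \langle q,e\rangle e$ with the (unnormalized) outward normal $e - Dg^{\epsilon}_{\nu}(\alpha^{-1}q')$ to the graph of $g^{\epsilon}_{\nu}$. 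Second, I would invoke the positivity estimate \eqref{E: key positivity observation}: for $x' \in U_{p}(r^{(1)}_{*})$ and $\epsilon < \epsilon^{(1)}_{*}$ one has $\langle x' + g^{\epsilon}_{\nu}(x')e,\; (e - Dg^{\epsilon}_{\nu}(x'))/\sqrt{1+\|Dg^{\epsilon}_{\nu}(x')\|^{2}}\rangle \geq \tfrac{1}{2}\varphi^{*}(e)$. Since $\varphi^{*}(e) > 0$ (indeed $\partial\varphi^{*}(p) \subseteq \{\varphi = 1\}$ lies in the hyperplane $\langle \cdot,e\rangle = \varphi^{*}(e)$, so $\varphi^{*}(e) > 0$), this gives a strictly positive lower bound for the bracket evaluated at the ``correct'' base point $x' + g^{\epsilon}_{\nu}(x')e$. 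Third, I would choose $r^{(2)}_{*}$, $\epsilon^{(2)}_{*}$, $a_{*}$ so small that, whenever $q \in \mathcal{S}_{p}(r^{(2)}_{*})$, $\epsilon < \epsilon^{(2)}_{*}$, and $\alpha \in [1-a_{*},1+a_{*}]$, the point $\alpha^{-1}q'$ still lies in $U_{p}(r^{(1)}_{*})$ and the pair $(q' + \langle q,e\rangle e,\; Dg^{\epsilon}_{\nu}(\alpha^{-1}q'))$ is close in norm to a pair of the form $(x' + g^{\epsilon}_{\nu}(x')e,\; Dg^{\epsilon}_{\nu}(x'))$ with $x' \in U_{p}(r^{(1)}_{*})$ — using the local uniform $C^{1}$ convergence $g^{\epsilon}_{\nu} \to g_{\nu}$ together with continuity of $Dg_{\nu}$ on $U_{p}(\nu)$ (Proposition \ref{P: gradient vanishing ack ack}) and the fact that $q$ lies within $r^{(2)}_{*}$ of the graph of $g_{\nu}$. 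By continuity of the bilinear pairing, the bracket then remains $\geq \tfrac{1}{4}\varphi^{*}(e) > 0$, so $\frac{d}{d\alpha} f^{\epsilon}_{q}(\alpha) \leq -c$ for a fixed $c > 0$. Fourth, for the zero: at $\alpha = 1$ one has $f^{\epsilon}_{q}(1) = \langle q,e\rangle - g^{\epsilon}_{\nu}(q')$, and since $q \in \mathcal{S}_{p}(r^{(2)}_{*})$ means $\langle q,e\rangle = g_{\nu}(q')$ while $0 \leq g_{\nu}(q') - g^{\epsilon}_{\nu}(q') = O(\epsilon)$ (from $g^{\epsilon}_{\nu} \leq g_{\nu}$ and uniform convergence), $|f^{\epsilon}_{q}(1)| = O(\epsilon)$; strict decrease with slope $\leq -c$ then forces $f^{\epsilon}_{q}(1-a_{*}) \geq c a_{*} - O(\epsilon) > 0$ and $f^{\epsilon}_{q}(1+a_{*}) \leq -c a_{*} + O(\epsilon) < 0$ once $\epsilon^{(2)}_{*}$ is small relative to $c a_{*}$, and uniqueness follows from strict monotonicity.

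\textbf{Main obstacle.} The delicate point is the uniformity of the lower bound on the bracket as $q$ and $\alpha$ vary: the estimate \eqref{E: key positivity observation} is stated at the base point $x' + g^{\epsilon}_{\nu}(x')e$ sitting \emph{on} the graph, whereas here the point $q' + \langle q,e\rangle e$ is only near the graph (distance $O(r^{(2)}_{*}) + O(\epsilon)$), and the normal direction is evaluated at the rescaled base point $\alpha^{-1}q'$ rather than at $q'$. Making the perturbation argument quantitative — i.e. verifying that the combined errors from dilating by $\alpha \in [1-a_{*},1+a_{*}]$, mollifying at scale $\epsilon$, and displacing $q$ within $\mathcal{S}_{p}(r^{(2)}_{*})$ all stay below, say, $\tfrac{1}{4}\varphi^{*}(e)$ — is the technical heart of the argument, though it is ultimately just a continuity/compactness bookkeeping exercise on the explicit formula \eqref{E: first derivative key mountain} for $Dg^{\epsilon}_{\nu}$ together with the boundedness of $\langle q,e\rangle$ on $\mathcal{S}_{p}(r^{(1)}_{*})$. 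I would order the quantifier choices carefully: fix $c = \tfrac{1}{4}\varphi^{*}(e)$, then choose $r^{(2)}_{*}$ and $\epsilon^{(2)}_{*}$ to secure the derivative bound on $[1-a_{*}^{0},1+a_{*}^{0}]$ for some provisional $a_{*}^{0}$, then possibly shrink $a_{*}$ and $\epsilon^{(2)}_{*}$ further to get the boundary sign conditions.
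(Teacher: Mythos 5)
Your proposal is correct and follows essentially the same route as the paper: differentiate $f^{\epsilon}_{q}$ in $\alpha$, derive a uniform strictly negative upper bound on $\tfrac{d}{d\alpha}f^{\epsilon}_{q}$ from the smallness of $Dg^{\epsilon}_{\nu}$ near $G_{p}$ (the paper gets this directly by bounding $\langle q,e\rangle \geq \tfrac{1}{2}\varphi^{*}(e)$ and $|\langle Dg^{\epsilon}_{\nu}(\alpha^{-1}q'),q'\rangle| \leq \tfrac{1}{4}\varphi^{*}(e)$ via Proposition \ref{P: gradient vanishing ack ack}, rather than perturbing \eqref{E: key positivity observation}, but the underlying estimate is the same), and then combine $0 \leq f^{\epsilon}_{q}(1) \leq C\epsilon$ with the intermediate value theorem and strict monotonicity to locate the unique zero. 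The paper's explicit choice $a_{*} = r_{*}^{(2)}/(D + 2r_{*}^{(2)})$ plays exactly the role you assign to shrinking $a_{*}$ so that $\alpha^{-1}q'$ remains in the admissible region.
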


		\begin{proof} Define the constant $D \geq 0$ by 
			\begin{equation*}
				D = \sup \left\{ \|q'\| \, \mid \, q' \in U_{p}(\nu) \right\}.
			\end{equation*}
		By Proposition \ref{P: gradient vanishing ack ack}, there is an $\epsilon_{A} \in (0,\epsilon_{*}^{(1)})$ and an $r_{*}^{(2)} \in (0,2^{-1} r_{*}^{(1)})$ such that
			\begin{equation} \label{E: gradient monotonicity annoying part}
				\|Dg_{\nu}^{\epsilon}(x')\| \leq \frac{\varphi^{*}(e)}{4 (D + 1)} \quad \text{if} \quad x' \in U_{p}(2r_{*}^{(2)}) \, \, \text{and} \, \, \epsilon \in (0,\epsilon_{A}).
			\end{equation}
		Further, since $\langle q, e \rangle = \varphi^{*}(e)$ for each $q \in \mathcal{S}_{p}(0) = \partial \varphi^{*}(p)$, up to shrinking $r_{*}^{(2)}$ further, there is no loss of generality in assuming that
			\begin{equation} \label{E: level monotonicity annoying part}
				\langle q, e \rangle \geq \frac{1}{2} \varphi^{*}(e) \quad \text{if} \quad q \in \mathcal{S}_{p}(r_{*}^{(2)}).
			\end{equation}
			
		Next, let $a_{*} = (D + 2 r_{*}^{(2)})^{-1} r_{*}^{(2)}$.  Observe that if $x' \in U_{p}(r_{*}^{(2)})$ and $\alpha \in [1 - a_{*}, 1 + a_{*}]$, then 
			\begin{equation*}
				\text{dist}(\alpha^{-1} x', G_{p}) \leq \text{dist}(x',G_{p}) + |1 - \alpha^{-1}| D \leq 2 r_{*}^{(2)} < r_{*}^{(1)}.
			\end{equation*}
		
		Finally, fix $q \in \mathcal{S}_{p}(r_{*}^{(2)})$.  By definition of $\mathcal{S}_{p}(r_{*}^{(2)})$, it is possible to write $q = q' + g_{\nu}(q') e$ for some $q' \in U_{p}(r_{*}^{(2)})$.  Thus, given any $\alpha \in [1 - a_{*}, 1 + a_{*}]$, the previous paragraph shows that the point $\alpha^{-1} q'$ is in $U_{p}(2r_{*}^{(2)}) \subseteq U_{p}(\nu)$.  In particular, the number $g_{\nu}^{\epsilon}(\alpha^{-1}q')$ is well-defined for any such $\alpha$.   Differentiating with respect to $\alpha$ and invoking \eqref{E: gradient monotonicity annoying part} and \eqref{E: level monotonicity annoying part}, one finds
			\begin{align*}
				\frac{d}{d\alpha} \left\{ f_{q}^{\epsilon}(\alpha)\right\} &= \left( - \frac{1}{\alpha^{2}} \right) \cdot \left[ \langle q, e \rangle - \left \langle Dg_{\nu}^{\epsilon} \left( \frac{q'}{\alpha} \right), q' \right \rangle \right] \\
					&\leq \left(-\frac{1}{\alpha^{2}}\right) \cdot \left[ \frac{1}{2} \varphi^{*}(e) - \left \|Dg_{\nu}^{\epsilon}\left(\frac{q'}{\alpha}\right) \right\| \cdot D \right] \\
					&\leq \left( -\frac{1}{\alpha^{2}} \right) \cdot \left( \frac{1}{4} \varphi^{*}(e) \right) < 0.
			\end{align*}
		This proves $f_{q}$ is strictly decreasing in the interval $[1 - a_{*},1 + a_{*}]$.
		
		Finally, observe that the $C^{1}$ property of $g_{\nu}$ implies there is a constant $C > 0$ depending on $r_{*}^{(2)}$ and $\nu$ such that if $q \in \mathcal{S}_{p}(r_{*}^{(2)})$ and $\epsilon \in (0,\epsilon_{*}^{(2)})$, then 
			\begin{equation*}
				f_{q}^{\epsilon}(1) = \langle q, e \rangle - g_{\nu}(q') + [g_{\nu}(q') - g_{\nu}^{\epsilon}(q')] = g_{\nu}(q') - g_{\nu}^{\epsilon}(q') \leq C \epsilon.
			\end{equation*}
		Thus, if $\epsilon_{B}, \epsilon_{*}^{(2)} \in (0,\nu)$ are given by
			\begin{equation*}
				\epsilon_{B} = \frac{\varphi^{*}(e) a_{*}}{4 (1 + a_{*})^{2} C}, \quad \epsilon_{*}^{(2)} = \min\{\epsilon_{A}, \epsilon_{B}\},
			\end{equation*}
		then one finds, for each $q \in \mathcal{S}_{p}(r_{*}^{(2)})$ and each $\epsilon \in (0,\epsilon_{*}^{(2)})$,
			\begin{equation*}
				f_{q}^{\epsilon}(1 + a_{*}) = f_{q}^{\epsilon}(1) + \int_{1}^{1 + a_{*}} \frac{d}{d\alpha}\{f_{q}^{\epsilon}\}(\alpha) \, d\alpha \leq C\epsilon - \frac{\varphi^{*}(e) a_{*}}{4(1 + a_{*})^{2}} < 0.
			\end{equation*}
		At the same time, by \eqref{E: useful concavity stuff},
			\begin{equation*}
				f_{q}^{\epsilon}(1) = g_{\nu}(q') - g_{\nu}^{\epsilon}(q') \geq 0.
			\end{equation*}
		Therefore, the intermediate value theorem implies there is an $\alpha_{*} \in [1,1 + a_{*}]$ such that $f_{q}^{\epsilon}(\alpha_{*}) = 0$.  Since $f_{q}^{\epsilon}$ is strictly decreasing, $\alpha_{*}$ is unique. \end{proof}
		
	The stage is set for the proof of Proposition \ref{P: a cone construction}.  The next lemma, which will be used in the proof, follows readily from what has been proved so far.
		
		\begin{lemma} \label{L: uniqueness time thing} Let $r_{*}^{(2)}$ and $\epsilon_{*}^{(2)}$ be the constants from Proposition \ref{P: crazy monotonicity argument}.  If $x \in \mathcal{U}_{p}(r_{*}^{(2)})$ and $\epsilon \in (0,\epsilon_{*}^{(2)})$, then there is a unique $t > 0$ such that 
			\begin{equation*}
				\text{dist}(t^{-1} x, G_{p}) < 2 r_{*}^{(2)} \quad \text{and} \quad \frac{\langle x,e \rangle}{t} = g_{\nu}^{\epsilon} \left( \frac{x'}{t} \right).
			\end{equation*}\end{lemma}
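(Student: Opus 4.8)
The plan is to translate the $d$-dimensional statement into the one-variable equation $f^{\epsilon}_{q}(\alpha)=0$ already analyzed in Proposition~\ref{P: crazy monotonicity argument}, by exploiting the conical structure of $\mathcal{U}_{p}(r_{*}^{(2)})$, and then to read off uniqueness from the injectivity of $\Phi^{\epsilon}$ furnished by Proposition~\ref{P: homeo part}.

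First I would rewrite the point $x$. Since $x\in\mathcal{U}_{p}(r_{*}^{(2)})=\Phi\big((0,+\infty)\times U_{p}(r_{*}^{(2)})\big)$ by the very definition of $\mathcal{U}_{p}(r_{*}^{(2)})$, there are $s>0$ and $q'\in U_{p}(r_{*}^{(2)})$ such that, putting $q=q'+g_{\nu}(q')e\in\mathcal{S}_{p}(r_{*}^{(2)})$, one has $x=sq$; in particular the orthogonal projection of $x$ onto $\{e\}^{\perp}$ is $sq'$ and $\langle x,e\rangle=s\langle q,e\rangle$. For a candidate $t>0$, write $z=t^{-1}x$, so that $z'=(s/t)q'$ and $\langle z,e\rangle=(s/t)\langle q,e\rangle$. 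Hence the equation $\langle z,e\rangle=g^{\epsilon}_{\nu}(z')$ is exactly $f^{\epsilon}_{q}(t/s)=0$, and the constraint $\text{dist}(z',G_{p})<2r_{*}^{(2)}$ becomes $\text{dist}\big((t/s)^{-1}q',G_{p}\big)<2r_{*}^{(2)}$. Thus the lemma reduces to the claim that there is a unique $\alpha>0$ with $f^{\epsilon}_{q}(\alpha)=0$ and $\text{dist}(\alpha^{-1}q',G_{p})<2r_{*}^{(2)}$, the sought $t$ then being $t=s\alpha$.

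For existence, I would apply Proposition~\ref{P: crazy monotonicity argument} to $q\in\mathcal{S}_{p}(r_{*}^{(2)})$ and $\epsilon\in(0,\epsilon_{*}^{(2)})$: it produces $\alpha_{*}\in[1-a_{*},1+a_{*}]$ with $f^{\epsilon}_{q}(\alpha_{*})=0$. The elementary distance estimate recorded in the proof of that proposition --- namely $\text{dist}(\alpha^{-1}q',G_{p})\le\text{dist}(q',G_{p})+|\alpha^{-1}-1|\,D$ combined with $\text{dist}(q',G_{p})<r_{*}^{(2)}$ and the choice $a_{*}=r_{*}^{(2)}/(D+2r_{*}^{(2)})$ --- yields $\text{dist}(\alpha_{*}^{-1}q',G_{p})<2r_{*}^{(2)}$, so $\alpha_{*}^{-1}q'\in U_{p}(2r_{*}^{(2)})$ and $g^{\epsilon}_{\nu}(\alpha_{*}^{-1}q')$ is well defined. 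Setting $t_{*}=s\alpha_{*}$, I would check $x=t_{*}\big(\alpha_{*}^{-1}q'+g^{\epsilon}_{\nu}(\alpha_{*}^{-1}q')e\big)$ component by component: the $\{e\}^{\perp}$-component is $t_{*}\alpha_{*}^{-1}q'=sq'=x'$, and the $e$-component is $t_{*}g^{\epsilon}_{\nu}(\alpha_{*}^{-1}q')=s\alpha_{*}g^{\epsilon}_{\nu}(\alpha_{*}^{-1}q')=s\langle q,e\rangle=\langle x,e\rangle$, the middle identity coming from $f^{\epsilon}_{q}(\alpha_{*})=0$. Hence $t=t_{*}$ satisfies both conditions of the lemma. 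For uniqueness, suppose $t_{1},t_{2}>0$ both satisfy the conclusion, and set $z_{i}'=t_{i}^{-1}x'\in U_{p}(2r_{*}^{(2)})$. The two conditions say precisely that $t_{i}^{-1}x=z_{i}'+g^{\epsilon}_{\nu}(z_{i}')e=\Phi^{\epsilon}(1,z_{i}')$, whence $x=\Phi^{\epsilon}(t_{i},z_{i}')$ with $(t_{i},z_{i}')\in(0,+\infty)\times U_{p}(2r_{*}^{(2)})$. Since $r_{*}^{(2)}<2^{-1}r_{*}^{(1)}$ forces $2r_{*}^{(2)}<r_{*}^{(1)}$, and $\epsilon<\epsilon_{*}^{(2)}<\epsilon_{*}^{(1)}$, Proposition~\ref{P: homeo part} applies with $r=2r_{*}^{(2)}$ and shows $\Phi^{\epsilon}$ is injective on $(0,+\infty)\times U_{p}(2r_{*}^{(2)})$. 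Therefore $(t_{1},z_{1}')=(t_{2},z_{2}')$, and in particular $t_{1}=t_{2}$.

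The one genuinely delicate point is the parameter bookkeeping, which must be stated carefully so that no hidden restriction on $\epsilon$ creeps in: one has to verify that for every $\epsilon\in(0,\epsilon_{*}^{(2)})$ the set $U_{p}(2r_{*}^{(2)})$ lies inside the domain $\{\text{dist}(\cdot,G_{p})\le\nu-\epsilon\}$ of $g^{\epsilon}_{\nu}$ --- equivalently, inside the second-factor domain of $\Phi^{\epsilon}$ --- so that the explicit identities and the invocation of Proposition~\ref{P: homeo part} above are legitimate. This follows from the chain $2r_{*}^{(2)}<r_{*}^{(1)}\le\nu-\epsilon_{*}^{(1)}<\nu-\epsilon$, valid whenever $\epsilon<\epsilon_{*}^{(2)}<\epsilon_{*}^{(1)}$, the middle inequality being implicit in the mere meaningfulness of~\eqref{E: key positivity observation} (which refers to $g^{\epsilon}_{\nu}$ on $U_{p}(r_{*}^{(1)})$ for all $\epsilon<\epsilon_{*}^{(1)}$). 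Beyond this, the proof is a direct transcription of Propositions~\ref{P: crazy monotonicity argument} and~\ref{P: homeo part}, and I expect no further obstacle.
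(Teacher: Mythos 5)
Your proof is correct and follows essentially the same route as the paper: reduce by the conical structure to a point of $\mathcal{S}_{p}(r_{*}^{(2)})$, obtain existence from Proposition \ref{P: crazy monotonicity argument}, and deduce uniqueness from the injectivity of $\Phi^{\epsilon}$ on $(0,+\infty)\times U_{p}(2r_{*}^{(2)})$ furnished by Proposition \ref{P: homeo part}. Your extra bookkeeping remarks (and your reading of the distance condition as applying to the projection $t^{-1}x'$, which is how the lemma is used in the proof of Proposition \ref{P: a cone construction}) are consistent with the paper's intent.
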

			
				\begin{proof} If $x \in \mathcal{U}_{p}(r_{*}^{(2)})$, then there is a $q \in \mathcal{S}_{p}(r_{*}^{(2)})$ and an $a > 0$ such that $x = a q$.  Therefore, it suffices to prove the result when $q \in \mathcal{S}_{p}(r_{*}^{(2)})$. 
				
				Suppose that $q \in \mathcal{S}_{p}(r_{*}^{(2)})$ and $\epsilon \in (0,\epsilon_{*}^{(2)})$.  By Proposition \ref{P: crazy monotonicity argument}, there is an $\alpha > 0$ such that 
					\begin{equation*}
						\text{dist}(\alpha^{-1} q', G_{p}) < 2r_{*}^{(2)} \quad \text{and} \quad \frac{\langle q, e \rangle}{\alpha} = g_{\nu}^{\epsilon} \left( \frac{q'}{\alpha} \right).
					\end{equation*}
				It remains to show that $\alpha$ is unique. 
				
			To establish uniqueness, suppose that $\beta > 0$ is another candidate:
				\begin{equation*}
					\text{dist}(\beta^{-1} q', G_{p}) < 2r_{*}^{(2)} \quad \text{and} \quad \frac{\langle q, e \rangle}{\beta} = g_{\nu}^{\epsilon} \left( \frac{q'}{\beta} \right).
				\end{equation*}
			Define $(t_{1},x_{1}'), (t_{2},x_{2}') \in (0,+\infty) \times U_{p}(2r_{*}^{(2)})$ by
				\begin{equation*}
					x_{1}' = \alpha^{-1} q', \quad x_{2}' = \beta^{-1} q', \quad t_{1} = \alpha, \quad t_{2} = \beta.
				\end{equation*}
			By construction, 
				\begin{equation*}
					q = \Phi^{\epsilon}(t_{1},x_{1}') = \Phi^{\epsilon}(t_{2},x_{2}').
				\end{equation*}
			Therefore, by Proposition \ref{P: homeo part}, $t_{1} = t_{2}$ and $x_{1}' = x_{2}'$.  In particular, $\alpha = \beta$. \end{proof}
		
It only remains to prove Proposition \ref{P: a cone construction}.  Toward that end, recall that, for any $\epsilon \in (0,\nu)$, $\underline{\psi}_{\nu}^{\epsilon}$ is the Minkowski gauge of the hypograph $\mathcal{H}^{\epsilon}_{\nu}$; see \eqref{E: hypograph} and \eqref{E: gauge hypograph}.  In the proof, it remains to show that $\mathcal{U}_{p}(r)$ is convex for suitable choices of $r$, as asserted in the next proposition.

	\begin{prop} \label{P: dreaded convexity} If $r \in (0,r_{*}^{(1)})$, then $\overline{\mathcal{U}_{p}(r)}$ is a closed convex cone. \end{prop}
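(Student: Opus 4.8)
The plan is to prove that $\overline{\mathcal{U}_p(r)}$ is a convex cone by showing it coincides with the cone generated by the closed hypograph of $g_\nu$ restricted to $\overline{U_p(r)}$, together with a compatibility check at the ``rim'' of that hypograph. Recall $\mathcal{U}_p(r) = \Phi((0,+\infty) \times U_p(r))$ where $\Phi(t,x') = t[x' + g_\nu(x')e]$, so $\mathcal{U}_p(r)$ is exactly the set of positive multiples of points on the graph $\mathcal{S}_p(r) = \{x' + g_\nu(x')e \mid x' \in U_p(r)\}$. Since $g_\nu$ is concave and nonnegative on $U_p(\nu)$ (by \eqref{E: recall g annoying definition} and the concavity of $\text{dist}(\cdot,G_p)$), its hypograph $\{x' + se \mid x' \in \overline{U_p(r)},\ 0 \le s \le g_\nu(x')\}$ is a closed convex set when $r$ is small enough that $g_\nu$ is finite and $C^1$ there (which is guaranteed for $r < r_*^{(1)} < \nu$ by Proposition \ref{P: gradient vanishing ack ack} and \eqref{E: key positivity observation}). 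The cone generated by a closed convex set $K$ with $0 \notin K$ is convex provided $K$ lies in a halfspace $\{\langle \cdot, e\rangle \ge c\}$ with $c > 0$ and $K$ is bounded in the directions transverse to $e$, which holds here because $\langle x' + g_\nu(x')e, e\rangle = g_\nu(x') \ge \tfrac12\varphi^*(e) > 0$ on $\overline{U_p(r)}$ by \eqref{E: key positivity observation} and $\overline{U_p(r)}$ is compact.

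The first step is to fix the hypograph notation: set $K_r = \{x' + s e \mid x' \in \overline{U_p(r)},\ 0 \le s \le g_\nu(x')\}$ and verify $K_r$ is closed and convex. Closedness follows from continuity of $g_\nu$ on $\overline{U_p(r)} \subseteq U_p(\nu)$ and compactness of $\overline{U_p(r)}$; convexity follows because the hypograph of a concave function over a convex set is convex, and $U_p(r) = \{q' \mid \text{dist}(q',G_p) < r\}$ is convex (a sublevel set of the convex function $\text{dist}(\cdot,G_p)$). The second step is to show $\overline{\mathcal{U}_p(r)} = \mathcal{C}(\partial K_r^{\mathrm{top}})$ where $\partial K_r^{\mathrm{top}} = \{x' + g_\nu(x')e \mid x' \in \overline{U_p(r)}\}$ is the graph, i.e.\ $\overline{\mathcal{U}_p(r)}$ is the cone on the ``top face.'' Indeed $\mathcal{U}_p(r)$ is the cone on $\mathcal{S}_p(r)$ (the graph over the open set), and taking closures, using that $g_\nu$ extends continuously to $\overline{U_p(r)}$ and that the cone operation commutes with closure for a bounded generating set bounded away from $0$, gives $\overline{\mathcal{U}_p(r)} = \mathcal{C}(\overline{\mathcal{S}_p(r)}) = \mathcal{C}(\partial K_r^{\mathrm{top}})$. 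The third step is the convexity itself: given $y_1, y_2 \in \overline{\mathcal{U}_p(r)}$, write $y_i = t_i z_i$ with $z_i \in \partial K_r^{\mathrm{top}}$, $t_i \ge 0$; then for $\lambda \in [0,1]$ the point $\lambda y_1 + (1-\lambda) y_2$ lies in $\mathcal{C}(K_r)$ by convexity of $K_r$, and one checks that $\mathcal{C}(K_r)$, in turn, equals $\mathcal{C}(\partial K_r^{\mathrm{top}})$ because every point of $K_r$ of the form $x' + se$ with $s \le g_\nu(x')$ is a positive multiple of a point on the graph — this uses precisely the transversality/positivity \eqref{E: key positivity observation}, which says the ray through such a point hits the graph, together with Lemma \ref{L: uniqueness time thing} (applied with $\epsilon$ replaced by the limiting value, or directly with $g_\nu$) to locate that intersection.

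I would organize the argument so that the only genuinely new content is the identity $\mathcal{C}(K_r) = \mathcal{C}(\partial K_r^{\mathrm{top}})$, which is where the geometry of $g_\nu$ enters. The cleanest route is: for $x' + se \in K_r$ with $0 < s \le g_\nu(x')$ and $x' \ne 0$, the function $\alpha \mapsto \tfrac{s}{\alpha} - g_\nu(\tfrac{x'}{\alpha})$ on a suitable interval around $1$ is continuous, is positive at large $\alpha$ where $g_\nu$ vanishes near $G_p$ and negative (or zero) at $\alpha = 1$ by hypothesis, and strictly monotone by the computation in the proof of Proposition \ref{P: crazy monotonicity argument} (which bounds $\|Dg_\nu\|$ and $\langle q,e\rangle$ from below); hence there is a unique $\alpha \ge 1$ with $\tfrac{s}{\alpha} = g_\nu(\tfrac{x'}{\alpha})$, meaning $\tfrac{1}{\alpha}(x' + se)$ is on the graph, so $x' + se \in \mathcal{C}(\partial K_r^{\mathrm{top}})$. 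The reverse inclusion $\mathcal{C}(\partial K_r^{\mathrm{top}}) \subseteq \mathcal{C}(K_r)$ is trivial. Combining this with Step 1 (convexity of $K_r$) and Step 2 gives that $\overline{\mathcal{U}_p(r)} = \mathcal{C}(K_r)$ is a convex cone, and it is closed because $K_r$ is compact, convex, and bounded away from the origin.

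The main obstacle is making rigorous the claim that the cone on the full hypograph equals the cone on just the graph — i.e.\ that lower points of $K_r$ are radially ``shadowed'' by graph points — since this can fail if $g_\nu$ is not bounded below on $\overline{U_p(r)}$ or if the graph curls back toward the origin. This is exactly controlled by \eqref{E: key positivity observation}, which forces $g_\nu \ge \tfrac12\varphi^*(e)$ and forces the outward graph normal to have a positive $e$-component; I expect to spend most of the care there, borrowing the transversality and monotonicity estimates already established in the proofs of Propositions \ref{P: homeo part} and \ref{P: crazy monotonicity argument} rather than re-deriving them. A secondary technical point is the interchange of closure and the cone operation, which is routine given compactness of $\overline{U_p(r)}$ and $g_\nu$ continuous on it, but should be stated explicitly so the passage from the open generating set $\mathcal{S}_p(r)$ to its closure is justified.
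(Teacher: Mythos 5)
There is a genuine gap: your Step 3 identity $\mathcal{C}(K_r)=\mathcal{C}(\partial K_r^{\mathrm{top}})$ (equivalently $\overline{\mathcal{U}_{p}(r)}=\mathcal{C}(K_r)$) is false in general, and the argument you give for it misreads $g_{\nu}$. By \eqref{E: recall g annoying definition} and \eqref{E: key positivity observation}, $g_{\nu}$ does \emph{not} vanish near $G_{p}$; it equals $\varphi^{*}(e)$ on $G_{p}$ and is bounded below by $\tfrac12\varphi^{*}(e)$ on $U_{p}(r_{*}^{(1)})$ (you are conflating $g_{\nu}$ with $\mathrm{dist}(\cdot,G_{p})$). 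Consequently, for a hypograph point $x'+se$ with $0<s<g_{\nu}(x')$ the function $\alpha\mapsto \tfrac{s}{\alpha}-g_{\nu}(\tfrac{x'}{\alpha})$ is negative at $\alpha=1$ and, by the very monotonicity you cite from Proposition \ref{P: crazy monotonicity argument}, it is decreasing, so it never crosses zero for $\alpha\ge 1$; moreover $x'/\alpha$ can leave $\overline{U_{p}(r)}$ altogether, since $\overline{U_{p}(r)}=\{\mathrm{dist}(\cdot,G_{p})\le r\}$ need not contain the segment from its points to the origin of $\{e\}^{\perp}$ (the origin of $\{e\}^{\perp}$ need not lie in $G_{p}$). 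A concrete counterexample to the identity: take $d=2$, $\varphi^{*}(e)=1$, $G_{p}=\{a\}$ with $a\gg r$. Then $\overline{\mathcal{U}_{p}(r)}$ is a thin wedge of rays through the short arc $\{x'+g_{\nu}(x')e\mid |x'-a|\le r\}$, all of whose points satisfy $\langle \cdot,e\rangle \ge c\|\cdot\|$ for some $c>0$ determined by $a$, $r$, $\varphi^{*}(e)$; but $K_{r}$ contains points $(a-r)+se$ with $s>0$ arbitrarily small, whose rays are nearly orthogonal to $e$ and exit the wedge without ever meeting the graph. These points lie in $\mathcal{C}(K_{r})$ but not in $\overline{\mathcal{U}_{p}(r)}$, so convexity of $K_{r}$ does not transfer to $\overline{\mathcal{U}_{p}(r)}$ by your route. (Your Steps 1 and 2 — convexity of the hypograph and $\overline{\mathcal{U}_{p}(r)}=\mathcal{C}(\overline{\mathcal{S}_{p}(r)})$ — are fine; it is only the passage from the graph to the full hypograph that breaks.)

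The difficulty is real: the lower boundary of the wedge $\overline{\mathcal{U}_{p}(r)}$ is not $\{\langle\cdot,e\rangle=0\}$ but another conical surface, and any convex generating set with the right cone is essentially the cone's cross-section, whose convexity is what you are trying to prove. The paper avoids this by a dual argument: for each boundary ray, i.e.\ each $q_{*}'\in\partial U_{p}(r)$, it produces a supporting hyperplane to $\overline{\mathcal{U}_{p}(r)}$ through the origin with normal $v=n+ue$, where $n=-Dg_{\nu}(q_{*}')$ is the outward normal of the convex set $U_{p}(r)$ at $q_{*}'$ and $u=-\langle n,q_{*}'\rangle/g_{\nu}(q_{*}')$, and then verifies $\ell_{v}(q')=\langle n,q'\rangle+u\,g_{\nu}(q')\le \ell_{v}(q_{*}')$ on $\overline{U_{p}(r)}$ by splitting into the cases $u\le 0$ ($\ell_{v}$ convex, so maximized on $\partial U_{p}(r)$, where $g_{\nu}$ is constant) and $u>0$ ($\ell_{v}$ concave, handled by showing its directional derivative at $q_{*}'$ toward any interior point is negative, using $u<1$ from \eqref{E: key positivity observation}). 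If you want to keep a primal argument, you would need to verify directly that for any two graph points the chord is radially shadowed by the graph, which again comes down to the transversality and sign information in \eqref{E: key positivity observation}; the supporting-hyperplane route is the cleaner way to package exactly that input.
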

	
The proof of convexity is tedious, hence it is deferred until after the proof of Proposition \ref{P: a cone construction}.

\begin{proof}[Proof of Proposition \ref{P: a cone construction}]  Let $r_{*} = r_{*}^{(2)}$ and $\epsilon_{*} = \epsilon_{*}^{(2)}$, where $r_{*}^{(2)}$ and $\epsilon_{*}^{(2)}$ are the parameters identified in Proposition \ref{P: crazy monotonicity argument}.

	Fix $\epsilon \in (0,\epsilon_{*})$.  Suppose that $x \in \mathcal{U}_{p}(r_{*})$ and $\alpha > 0$.
	
	Since $x \in \mathcal{U}_{p}(r_{*})$, Lemma \ref{L: uniqueness time thing} implies there is a unique $\beta > 0$ such that 
		\begin{equation*}
			\text{dist}(\beta^{-1} x', G_{p}) < 2 r_{*} \quad \text{and} \quad \frac{\langle x,e \rangle}{\beta} = g_{\nu}^{\epsilon} \left( \frac{x'}{\beta} \right).
		\end{equation*}
	On the one hand, this says that $\beta^{-1}x \in \partial \mathcal{H}_{\nu}^{\epsilon}$, hence, as a consequence of the definition \eqref{E: gauge hypograph} of $\underline{\psi}_{\nu}^{\epsilon}$, one deduces that $\underline{\psi}_{\nu}^{\epsilon}(x) \leq \beta$.  On the other hand, the definition \eqref{E: gauge hypograph} implies that $\underline{\psi}_{\nu}^{\epsilon}(x)^{-1} x \in \mathcal{H}^{\epsilon}_{\nu}$.  Consider the normal vector $n$ to $\mathcal{H}_{\nu}^{\epsilon}$ at $\beta^{-1} x$.  By the convexity of $\mathcal{H}^{\epsilon}_{\nu}$,
		\begin{equation*}
			\langle \beta^{-1} x, n \rangle = \max \left\{ \langle q, n \rangle \, \mid \, q \in \mathcal{H}_{\nu}^{\epsilon} \right\} \geq \langle \underline{\psi}_{\nu}^{\epsilon}(x)^{-1} x, n \rangle.
		\end{equation*}
	At the same time, by \eqref{E: key positivity observation}, $\langle \beta^{-1}x, n \rangle > 0$ holds.  Hence the previous inequality directly implies that $\beta \leq \underline{\psi}_{\nu}^{\epsilon}(x)$.  This proves $\beta = \underline{\psi}_{\nu}^{\epsilon}(x)$.  In particular, $\underline{\psi}_{\nu}(x) = 1$ if and only if $\beta = 1$, completing the proof. \end{proof}

Finally, here is the proof that $\mathcal{U}_{p}(r)$ is convex.  While convexity (for small $r$) is very intuitive, the proof is admittedly somewhat cumbersome.
	
		\begin{proof}[Proof of Proposition \ref{P: dreaded convexity}] The definition immediately implies that $\overline{\mathcal{U}_{p}(r)}$ is a cone, hence it only remains to show it is convex.  Toward that end, since $\mathcal{U}_{p}(r)$ is open, it suffices to prove that, for any $x_{*} \in \partial \mathcal{U}_{p}(r)$, there is a $v \in \mathbb{R}^{d} \setminus \{0\}$ such that 
			\begin{align} \label{E: maximum thing ack}
				\langle v, x_{*} \rangle = \max \left\{ \langle v, x \rangle \, \mid \, x \in \overline{\mathcal{U}_{p}(r)} \right\}.
			\end{align}
		Replacing $x_{*}$ above by $\frac{1}{\alpha} x_{*}$ for some $\alpha > 0$ does not change \eqref{E: maximum thing ack} since $\overline{\mathcal{U}_{p}(r)}$ is a cone.\footnote{Indeed, the maximum is zero if it is finite.}  Thus, up to replacing $x_{*}$ by $\frac{1}{\alpha} x_{*}$ for some suitable $\alpha > 0$, it is enough to show that if $q_{*}' \in \partial U_{p}(r)$, then there is a $v \in \mathbb{R}^{d} \setminus \{0\}$ such that 
			\begin{align} \label{E: maximum principle type convex analysis argument}
				\langle v, q_{*}' + g_{\nu}(q_{*}') e \rangle = \max \left\{ \langle v, q' + g_{\nu}(q')e \rangle \, \mid \, q' \in \overline{U_{p}(r)} \right\}.
			\end{align}
			
		Toward that end, fix $q_{*}' \in \partial U_{p}(r)$.  Recall that since $G_{p}$ is convex, the set $U_{p}(r)$ is convex.   Therefore, there is an $n \in \{e\}^{\perp} \setminus \{0\}$ such that 
			\begin{align} \label{E: another optimization ack}
				\langle n, q_{*}' \rangle = \max \left\{ \langle n, q' \rangle \, \mid \,q' \in \partial U_{p}(r) \right\}.
			\end{align}
		In fact, in view of the formula \eqref{E: recall g annoying definition} for $g_{\nu}$ in terms of $\text{dist}(\cdot,G_{p})$, \eqref{E: another optimization ack} holds if and only if $n = - \alpha Dg_{\nu}(q_{*}')$ for some $\alpha > 0$.  Hence, for definiteness, let $\alpha = 1$ and $n = -Dg_{\nu}(q_{*}')$ from now on.
		
		Let $v = n + u e$, where $u$ is chosen such that 
			\begin{equation*}
				0 = \langle v, q_{*}' + g_{\nu}(q_{*}')e \rangle = \langle n, q_{*}' \rangle + u g_{\nu}(q_{*}'), \quad \text{hence} \quad u = - \frac{\langle n, q_{*}'\rangle}{g_{\nu}(q_{*}')}.
			\end{equation*}
		(In view of \eqref{E: key positivity observation}, there is no division by zero in the definition of $u$.)  As a first step towards \eqref{E: maximum principle type convex analysis argument}, define $\ell_{v} : \overline{U_{p}(r)} \to \mathbb{R}$ by $\ell_{v}(q') = \langle v, q' + g_{\nu}(q') e \rangle$ so that $\ell_{v}$ is precisely the function being maximized in \eqref{E: maximum principle type convex analysis argument}.  Observe that, by the definition of $v$, 
			\begin{equation} \label{E: crazy annoying concavity argument ack}
				\ell_{v}(q') = \langle n, q' \rangle + g_{\nu}(q') \langle v, e \rangle = \langle n, q' \rangle + u g_{\nu}(q').
			\end{equation} 
Further, if $q' \in \partial U_{p}(r)$, then the definition of $g_{\nu}$ implies $g_{\nu}(q') = g_{\nu}(q_{*}')$, hence
			\begin{align*}
				\ell_{v}(q') = \langle n, q' \rangle + u g_{\nu}(q') \leq \langle n, q_{*}' \rangle + u g_{\nu}(q_{*}') = \ell_{v}(q_{*}').
			\end{align*}
		This proves 
			\begin{equation*}
				\ell_{v}(q_{*}') = \max \left\{ \ell_{v}(q') \, \mid \, q' \in \partial U_{p}(r) \right\}.
			\end{equation*}
			
		Finally, it remains to show that $\ell_{v}$ achieves its maximum in $\overline{U_{p}(r)}$ on the boundary.  To see this, first note that if $u \leq 0$, then \eqref{E: crazy annoying concavity argument ack} implies that $\ell_{v}$ is convex, and then, by convexity, $\ell_{v}$ necessarily attains its maximum on the boundary.    
		
		On the other hand, if $u > 0$, then $\ell_{v}$ is concave.  To see that $\ell_{v}$ is nonetheless still maximized at $q_{*}'$, fix $q' \in U_{p}(r)$ and consider the function $H_{q'} : [0,1] \to \mathbb{R}$ given by 
	\begin{equation*}
		H_{q'}(t) = \ell_{v}((1 - t) q_{*}' + t q').
	\end{equation*}
The goal now is to show that $H_{q'}$ is maximized at $t = 0$.  Toward that end, observe that since $\ell_{v}$ is concave in $U_{p}(r)$, the function $H_{q'}$ is concave in $[0,1]$.  Thus, since the derivative of a concave function is decreasing, it suffices to show that the derivative of $H_{q'}$ is negative at zero.

Indeed, differentiating $H_{q'}$ at zero leads to
	\begin{align*}
		\frac{d}{dt} \left\{ H_{q'}(t) \right\} \restriction_{t = 0} &= \langle n, q' - q_{*}' \rangle + u \langle Dg_{\nu}(q_{*}'), q' - q_{*}' \rangle = \langle n, q' - q_{*}' \rangle \left(1 - u \right).
	\end{align*}
Observe that, by \eqref{E: key positivity observation}, 
	\begin{align*}
		u = - \frac{\langle n, q_{*}' \rangle}{g_{\nu}(q_{*}')} = \frac{\langle Dg_{\nu}(q_{*}'), q_{*}' \rangle}{g_{\nu}(q_{*}')} < 1
	\end{align*}
so $1 - u > 0$. 
At the same time, by the choice of $n$,
	\begin{align*}
		U_{p}(r) \subseteq \{x' \in \mathbb{R}^{d-1} \, \mid \, \langle n, x' \rangle < \langle n, q_{*}' \rangle \}.
	\end{align*}
In particular, $\langle n, q' - q_{*}' \rangle < 0$ and then the previous computations imply that one has $\frac{d}{dt}\{H_{q'}(t)\} \restriction_{t = 0} < 0$, as claimed. \end{proof}

\subsection{Flatness in the Perturbation Argument} \label{App: flatness} This appendix establishes that, in passing from $\psi_{\nu}$ to the perturbation $\psi_{\nu}^{\epsilon}$, some flatness of the face $\partial \varphi^{*}(p)$ is preserved.  The precise result is stated next.

	\begin{prop} \label{P: flatness prop} Let $\varphi : \mathbb{R}^{d} \to [0,+\infty)$ be a Finsler norm satisfying the $\zeta$-interior ball condition \eqref{E: zeta interior ball} for some $\zeta > 0$.  Given a $p \in \mathbb{R}^{d} \setminus \{0\}$ and a $\nu \in (0,\zeta)$, let $e = \|p\|^{-1} p$; $\psi_{\nu} = \psi_{e,\partial \varphi^{*}(p),\nu}$ be the associated conical test function; and let $(\psi_{\nu}^{\epsilon})_{\epsilon \in (0,\nu)}$ be an admissible perturbation. 
	
	  Given any $\mu > 0$, if $[\partial \varphi^{*}(p)]^{(\mu)}$ is the set of points in $\partial \varphi^{*}(p)$ of distance at least $\mu$ from the (relative) boundary, that is,
	  	\begin{equation*}
			[\partial \varphi^{*}(p)]^{(\mu)} = \{q \in \partial \varphi^{*}(p) \, \mid \, \text{dist}(q,\text{bdry}(\partial \varphi^{*}(p))) \geq \mu\},
		\end{equation*}
	then, for any $\epsilon \in (0,\mu)$, 
		\begin{equation*}
			\|D\psi_{\nu}^{\epsilon}(q)\|^{-1} D\psi_{\nu}^{\epsilon}(q) = e \quad \text{for each} \quad q \in [\partial \varphi^{*}(p)]^{(\mu)}.
		\end{equation*}
	\end{prop}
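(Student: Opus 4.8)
The plan is to exploit the graphical representation of the level set $\{\psi_\nu^\epsilon = 1\}$ as the graph of $g_\nu^\epsilon$ over a neighbourhood of $G_p$, together with the fact that $g_\nu^\epsilon$ is \emph{constant} on the set $[G_p]^{(\mu)} = \{x' \in G_p : \operatorname{dist}(x',\operatorname{bdry}(G_p)) \geq \mu\}$ when $\epsilon < \mu$. The key observation is that the distance function $\operatorname{dist}(\cdot,G_p)$ vanishes on $G_p$ and, more to the point, vanishes on an $\epsilon$-neighbourhood of any point of $[G_p]^{(\mu)}$ whenever $\epsilon < \mu$. Concretely, if $q' \in [G_p]^{(\mu)}$ and $\epsilon < \mu$, then for every $y' \in B_1(0)$ the point $q' + \epsilon y'$ still lies in $G_p$ (since it is within distance $\epsilon < \mu$ of $q'$ and $q'$ is at distance at least $\mu$ from $\operatorname{bdry}(G_p)$), hence $\operatorname{dist}(q' + \epsilon y', G_p) = 0$. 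Plugging this into the explicit formula \eqref{E: first derivative key mountain} for $Dg_\nu^\epsilon$ gives $Dg_\nu^\epsilon(q') = 0$: indeed the integrand carries the factor $\operatorname{dist}(q' + \epsilon y', G_p) = 0$.

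First I would fix $\mu > 0$, $\epsilon \in (0,\mu)$, and a point $q \in [\partial\varphi^*(p)]^{(\mu)}$. Writing $q = q' + \varphi^*(e)e$ with $q' \in G_p$ (recall $G_p = \partial\varphi^*(p) - \varphi^*(e)e$), the condition $q \in [\partial\varphi^*(p)]^{(\mu)}$ translates to $q' \in [G_p]^{(\mu)}$. Since the relative boundary of $\partial\varphi^*(p)$ and that of $G_p$ correspond under this translation, this is a clean identification. Next I would verify that $q \in \mathcal{C}(\partial\varphi^*(p))$ lies in the open cone $\mathcal{U}_\nu$ on which $\psi_\nu^\epsilon$ is given by the graph of $g_\nu^\epsilon$ (this is part of the definition of an admissible perturbation, Definition \ref{D: admissible}, together with Theorem \ref{T: existence admissible perturbation}); here one uses that $\mathcal{C}(\partial\varphi^*(p)) \subseteq \mathcal{U}_\nu$. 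Then by the graph characterization \eqref{E : graph property ack ack ack}, $q \in \{\psi_\nu^\epsilon = 1\}$ precisely when $\operatorname{dist}(q', G_p) < r$ and $\langle q, e\rangle = g_\nu^\epsilon(q')$; one checks the first holds trivially ($q' \in G_p$ so the distance is $0$) and one needs $\langle q, e\rangle = \varphi^*(e) = g_\nu^\epsilon(q')$, i.e.\ that $g_\nu^\epsilon$ equals $\varphi^*(e)$ on $[G_p]^{(\mu)}$. This last follows because $g_\nu(x') = \varphi^*(e) - \nu + \sqrt{\nu^2 - \operatorname{dist}(x',G_p)^2} = \varphi^*(e)$ on $G_p$ (where $\operatorname{dist}(\cdot,G_p) = 0$), and the mollification over $B_1$ at scale $\epsilon < \mu$ only samples points still in $G_p$, so $g_\nu^\epsilon(q') = \int_{B_1} g_\nu(q' + \epsilon y')\rho(y')\,dy' = \varphi^*(e)$. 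Thus $q$ indeed lies on the level set.

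With $q \in \{\psi_\nu^\epsilon = 1\} \cap \mathcal{U}_\nu$ established, I would then invoke the classical normal-vector formula \eqref{E: classical normal vector formula}:
\begin{equation*}
	\frac{D\psi_\nu^\epsilon(q)}{\|D\psi_\nu^\epsilon(q)\|} = -\frac{Dg_\nu^\epsilon(q')}{\sqrt{1 + \|Dg_\nu^\epsilon(q')\|^2}} + \frac{e}{\sqrt{1 + \|Dg_\nu^\epsilon(q')\|^2}}.
\end{equation*}
Since $Dg_\nu^\epsilon(q') = 0$ by the vanishing-of-the-integrand argument above, the right-hand side collapses to $e$, which is exactly the assertion $\|D\psi_\nu^\epsilon(q)\|^{-1} D\psi_\nu^\epsilon(q) = e$.

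The main obstacle, such as it is, is bookkeeping rather than conceptual: one must be careful that the graph characterization \eqref{E : graph property ack ack ack} is stated for $x \in \mathcal{U}_\nu$ with a threshold radius $r$, and confirm that $[\partial\varphi^*(p)]^{(\mu)}$ (after translation, $[G_p]^{(\mu)}$) really sits inside the region where that characterization applies — but since $[G_p]^{(\mu)} \subseteq G_p$ and $G_p \subseteq \mathcal{C}(\partial\varphi^*(p)) \cap \{e\}^\perp\text{-slice} \subseteq \mathcal{U}_\nu$ with $\operatorname{dist}(\cdot,G_p) = 0 < r$, this is immediate. A second minor point is making rigorous the claim that $q' + \epsilon y' \in G_p$ for all $y' \in B_1(0)$ when $\operatorname{dist}(q',\operatorname{bdry}(G_p)) \geq \mu > \epsilon$; this is just the triangle inequality for the distance function combined with the fact that $G_p$ is closed and convex, so that the ball $\overline{B}_\mu(q')$ (relative to the affine hull of $G_p$, if $G_p$ is lower-dimensional) is contained in $G_p$. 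No part of this requires the heavy machinery of Section \ref{S: key mountain}; it is a direct consequence of the explicit formulas and the construction of admissible perturbations.
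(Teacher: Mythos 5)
There is a genuine gap, and it is exactly at the step you call ``the vanishing-of-the-integrand argument.'' Your claim that, for $q' \in [G_p]^{(\mu)}$ and $\epsilon < \mu$, every point $q' + \epsilon y'$ with $y' \in B_1(0) \subseteq \mathbb{R}^{d-1}$ still lies in $G_p$ is false unless $G_p$ is full-dimensional in $\{e\}^{\perp}$. The set $[\partial\varphi^{*}(p)]^{(\mu)}$ is defined via the distance to the \emph{relative} boundary, so $q' \in [G_p]^{(\mu)}$ only guarantees that the $k$-dimensional ball $\overline{B}_{\mu}(q') \cap (q' + V_{G_p})$ lies in $G_p$, where $k = \dim(G_p)$ may be anything from $0$ to $d-1$. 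The mollification \eqref{E: mollification} integrates over the full $(d-1)$-dimensional ball, so when $k < d-1$ (e.g.\ the face is a single point or an edge in $d \geq 3$, the typical situation) one has $\operatorname{dist}(q' + \epsilon y', G_p) = \|\underline{y}'\|\epsilon > 0$ for the components $\underline{y}'$ of $y'$ orthogonal to the affine hull, and the integrand in \eqref{E: first derivative key mountain} does not vanish. Your parenthetical remark about working ``relative to the affine hull'' acknowledges the issue but does not repair it: the integral samples the full-dimensional neighbourhood. The same problem invalidates your intermediate claims that $g_{\nu}^{\epsilon}(q') = \varphi^{*}(e)$ and that $q \in \{\psi_{\nu}^{\epsilon} = 1\}$; in the lower-dimensional case $g_{\nu}^{\epsilon}(q') < \varphi^{*}(e)$ strictly, so $\psi_{\nu}^{\epsilon}(q) > 1$ and $q$ is not on the unit level set.

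The paper's proof of Lemma \ref{L: flatness lemma} handles the general case differently: after an affine change of coordinates putting $q' = 0$ and $G_p \subseteq \mathbb{R}^{k} \times \{0\}$, one observes that $\operatorname{dist}(x',G_p) = \|\underline{x}'\|$ for $x' \in B_{\mu}(0)$, so that $D\operatorname{dist}(\cdot,G_p) = \underline{x}'/\|\underline{x}'\|$ is \emph{odd} under the reflection $(\check{x}',\underline{x}') \mapsto (\check{x}',-\underline{x}')$ while the scalar factor is even; combined with the radial symmetry \eqref{E: symmetry mollifier} of the mollifier $\rho$, the integral defining $Dg_{\nu}^{\epsilon}(q')$ cancels, giving $Dg_{\nu}^{\epsilon}(q') = -Dg_{\nu}^{\epsilon}(q') = 0$. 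Note that your proposal never invokes \eqref{E: symmetry mollifier}, which is a telltale sign: that symmetry hypothesis on $\rho$ is imposed precisely so this cancellation works. Your argument is correct only in the special case $k = d-1$, where it coincides with the trivial branch of the paper's computation; to cover the general case you must replace the ``integrand vanishes'' step with the symmetry cancellation (or an equivalent argument) at points where the distance factor is genuinely positive.
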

	
In the proof of Proposition \ref{P: flatness prop}, it will be useful to note that if one defines the set $[G_{p}]^{(\mu)}$ via $[G_{p}]^{(\mu)} = \{q' \in G_{p} \, \mid \, \text{dist}(q',\text{bdry}(G_{p}))\geq\mu\}$, then 
	\begin{equation*}
		[\partial \varphi^{*}(p)]^{(\mu)} = [G_{p}]^{(\mu)} + \varphi^{*}(e) e.
	\end{equation*}
Thus, in view of the identity \eqref{E: classical normal vector formula} relating the normal vector $\|D\psi_{\nu}^{\epsilon}\|^{-1} D\psi_{\nu}^{\epsilon}$ to the gradient $Dg_{\nu}^{\epsilon}$, Proposition \ref{P: flatness prop} follows immediately from the next lemma.

	\begin{lemma} \label{L: flatness lemma} Under the assumptions of Proposition \ref{P: flatness prop}, if $g_{\nu}$ is defined by \eqref{E: graphical special case} and $(g^{\epsilon}_{\nu})_{\epsilon \in (0,\nu)}$ are the mollified functions defined by \eqref{E: mollification}, then 
		\begin{equation*}
			Dg_{\nu}^{\epsilon}(q') = 0 \quad \text{for each} \quad q' \in [G_{p}]^{(\mu)} \quad \text{and} \quad \epsilon < \mu.
		\end{equation*}
	\end{lemma}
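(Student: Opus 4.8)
\textbf{Proof proposal for Lemma \ref{L: flatness lemma}.}

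The plan is to reduce the statement to an explicit pointwise computation of the mollification integral. The key geometric fact is that $g_\nu$, as given by \eqref{E: graphical special case}, depends on $x'$ only through $\mathrm{dist}(x',G_p)$, and in particular $g_\nu$ is constant and equal to its maximal value $\varphi^*(e)$ on all of $G_p$, with $Dg_\nu \equiv 0$ on $G_p$ by Proposition \ref{P: gradient vanishing ack ack} (equivalently, Lemma \ref{L: intermediate key lemma}, whose proof establishes the $C^1$ regularity of $g_\nu$ across $G_p$). So the whole matter comes down to showing that if $q'$ lies far enough inside $G_p$, then the $\epsilon$-ball around $q'$ over which we mollify still sees only points where $g_\nu$ is flat, so the gradient of the mollified function vanishes there as well.

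First I would fix $q' \in [G_p]^{(\mu)}$ and $\epsilon < \mu$, and recall from \eqref{E: mollification} that
\begin{equation*}
    g_\nu^\epsilon(q') = \int_{B_1} g_\nu(q' + \epsilon y') \rho(y')\, dy',
\end{equation*}
so that, differentiating under the integral sign (legitimate since $g_\nu$ is Lipschitz, indeed $C^1$ near $G_p$, and $\rho$ is smooth and compactly supported),
\begin{equation*}
    Dg_\nu^\epsilon(q') = \int_{B_1} Dg_\nu(q' + \epsilon y')\, \rho(y')\, dy'.
\end{equation*}
The crucial observation is that for every $y' \in B_1$, the point $q' + \epsilon y'$ still lies in $G_p$: indeed, since $\mathrm{dist}(q',\mathrm{bdry}(G_p)) \ge \mu > \epsilon \ge \epsilon\|y'\|$, the point $q' + \epsilon y'$ remains in $G_p$ (this is just the elementary fact that the $\epsilon$-neighborhood of $[G_p]^{(\mu)}$ is contained in $G_p$ when $\epsilon < \mu$; here I use convexity of $G_p$ only insofar as $\mathrm{dist}(\cdot,\mathrm{bdry}(G_p))$ measures how far one can move and stay inside). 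Consequently $\mathrm{dist}(q' + \epsilon y', G_p) = 0$, and by Proposition \ref{P: gradient vanishing ack ack} we have $Dg_\nu(q' + \epsilon y') = 0$ for each such $y'$. Plugging this into the integral formula for $Dg_\nu^\epsilon(q')$ gives $Dg_\nu^\epsilon(q') = 0$, which is the claim.

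I do not anticipate a genuine obstacle here; the only points requiring a little care are (i) justifying differentiation under the integral, which follows from $g_\nu \in C^1$ in a neighborhood of $G_p$ (from Lemma \ref{L: intermediate key lemma}) together with local boundedness of $Dg_\nu$ and smoothness of $\rho$, and (ii) the inclusion $q' + \epsilon y' \in G_p$, which is immediate from the definition of $[G_p]^{(\mu)}$ and $\epsilon < \mu$. Finally, Proposition \ref{P: flatness prop} follows by combining Lemma \ref{L: flatness lemma} with the identity $[\partial \varphi^*(p)]^{(\mu)} = [G_p]^{(\mu)} + \varphi^*(e)e$ and the normal-vector formula \eqref{E: classical normal vector formula}: when $Dg_\nu^\epsilon(q') = 0$, that formula gives $\|D\psi_\nu^\epsilon(q)\|^{-1} D\psi_\nu^\epsilon(q) = e$.
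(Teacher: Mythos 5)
There is a genuine gap: your key step, ``for every $y' \in B_1$ the point $q' + \epsilon y'$ still lies in $G_p$'', is false in general. The set $[G_p]^{(\mu)}$ is defined via the \emph{relative} boundary $\mathrm{bdry}(G_p)$, and $G_p = \partial\varphi^*(p) - \varphi^*(e)e$ is typically a lower-dimensional convex subset of $\{e\}^\perp \simeq \mathbb{R}^{d-1}$ (think of $\partial\varphi^*(p)$ being an edge of a polytope in $\mathbb{R}^3$, so that $G_p$ is a segment in $\mathbb{R}^2$). Being at distance $\ge \mu$ from $\mathrm{bdry}(G_p)$ only guarantees that the $\mu$-ball \emph{within the affine hull of $G_p$} stays in $G_p$; the full $(d-1)$-dimensional ball $B_\epsilon(q')$ leaves $G_p$ immediately in the directions transverse to that hull. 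On those points $\mathrm{dist}(\cdot,G_p) > 0$ and $Dg_\nu \neq 0$, so your integrand does not vanish and the conclusion $Dg_\nu^\epsilon(q') = \int Dg_\nu(q'+\epsilon y')\rho(y')\,dy' = 0$ does not follow from pointwise vanishing. Your argument is correct only in the special case $\dim \partial\varphi^*(p) = d-1$, which is precisely the heuristic case the paper discusses before its proof.

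The paper's proof handles the general case $k = \dim G_p < d-1$ as follows: after an affine change of coordinates with $q' = 0$ and $G_p \subseteq \mathbb{R}^k \times \{0\}$, the hypothesis $\mathrm{dist}(q',\mathrm{bdry}(G_p)) \ge \mu$ gives $\mathrm{dist}(x',G_p) = \mathrm{dist}(x',\mathbb{R}^k\times\{0\}) = \|\underline{x}'\|$ for all $x' \in B_\mu(0)$, where $\underline{x}'$ is the component transverse to the hull. Hence $D\mathrm{dist}(\cdot,G_p) = \underline{x}'/\|\underline{x}'\|$ there, which is \emph{odd} under the reflection $T(\check{x}',\underline{x}') = (\check{x}',-\underline{x}')$, while $\mathrm{dist}(\cdot,G_p)$ itself is even. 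Since the mollifier $\rho$ is radially symmetric by \eqref{E: symmetry mollifier} and $\epsilon < \mu$ keeps the integration region inside $B_\mu(0)$, the change of variables $x' \mapsto T(x')$ in the formula \eqref{E: first derivative key mountain} for $Dg_\nu^\epsilon(0)$ shows $Dg_\nu^\epsilon(0) = -Dg_\nu^\epsilon(0)$, i.e.\ the gradient vanishes by cancellation, not by pointwise vanishing of the integrand. To repair your proof you would need exactly this symmetry argument (and note that it is the only place where the radial symmetry assumption on $\rho$ is used); the reduction of Proposition \ref{P: flatness prop} to the lemma via \eqref{E: classical normal vector formula} in your last sentence is fine.
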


To get a sense of why the lemma is true, consider the case when $\partial \varphi^{*}(p)$ is $(d - 1)$-dimensional, hence $G_{p}$ can be regarded as a convex subset of $\mathbb{R}^{d-1}$ with nonempty interior.  In this case, $q' \in [G_{p}]^{(\mu)}$ if and only if $\overline{B}_{\mu}(q') \cap \mathbb{R}^{d-1} \subseteq G_{p}$.  Thus,
	\begin{equation*}
		\text{dist}(\cdot,G_{p}) \equiv 0 \quad \text{in} \, \, \overline{B}_{\mu}(q') \cap \mathbb{R}^{d-1},
	\end{equation*}
which implies that
	\begin{equation*}
		Dg_{\nu}^{\epsilon}(q') = \int_{B_{1}} Dg_{\nu}(q' + \epsilon y') \rho(y') \, dy = 0 \quad \text{if} \, \, \epsilon < \mu.
	\end{equation*}

	\begin{proof}[Proof of Lemma \ref{L: flatness lemma}] Fix $q' \in [G_{p}]^{(\mu)}$ and let $k = \text{dim}(G_{p})$.  After an affine change of coordinates, there is no loss of generality assuming that $q' = 0$ and $G_{p} \subseteq \mathbb{R}^{k} \times \{0\} \subseteq \mathbb{R}^{d-1}$. 

In what follows, for a point $x' \in \mathbb{R}^{d-1}$, write $x' = (\check{x}', \underline{x}')$ with $\check{x}' \in \mathbb{R}^{k} \times \{0\}$ and $\underline{x}' \in \{0\} \times \mathbb{R}^{d-1 - k}$.  

Since $G_{p}$ has dimension $k$, the inequality $\text{dist}(0, \text{bdry}(G)) \geq \mu$ is equivalent to the requirement that the closed $k$-dimensional ball $\overline{B}_{\mu}(0) \cap (\mathbb{R}^{k} \times \{0\})$ is contained in $G_{p}$. Thus, by comparing to $\text{dist}(\cdot,\mathbb{R}^{k} \times \{0\})$, one deduces that
	\begin{equation*}
		\text{dist}(x', G_{p}) = \text{dist}(x',\mathbb{R}^{k} \times \{0\}) = \|\underline{x}'\| \quad \text{for each} \quad x' \in B_{\mu}(0).
	\end{equation*}
After differentiating, this becomes, for $k < d - 1$,
	\begin{equation*}
		D\text{dist}(x',G_{p}) = D\text{dist}(x',\mathbb{R}^{k} \times \{0\}) = \frac{\underline{x}'}{\|\underline{x}'\|}\quad \text{for each} \quad x' \in B_{\mu}(0) \setminus (\mathbb{R}^{k} \times \{0\})
	\end{equation*}
and, for $k = d - 1$,
	\begin{equation*}
		D\text{dist}(x',G_{p}) = 0 \quad \text{for each} \quad x' \in B_{\mu}(0).
	\end{equation*}
Applying these identities in the formulas \eqref{E: graphical special case} and \eqref{E: mollification}, changing variables via the map $T(\check{x}',\underline{x}') = (\check{x}',-\underline{x}')$, and recalling that $\epsilon < \mu$ and \eqref{E: symmetry mollifier} holds, one finds
	\begin{align*}
		Dg_{\nu}^{\epsilon}(0) &= - \epsilon^{-(d-1)} \int_{B_{\epsilon}(0)} \frac{\text{dist}(x',G)}{\sqrt{\nu^{2} - \text{dist}(x',G)^{2}}} D\text{dist}(x',G) \rho(\epsilon^{-1}x') \, dx' \\
					&= - \epsilon^{-(d-1)} \int_{B_{\epsilon}(0)} \frac{\text{dist}(T(x'),\mathbb{R}^{k} \times \{0\})}{\sqrt{\nu^{2} - \text{dist}(T(x'),\mathbb{R}^{k} \times \{0\})^{2}}} D \text{dist}(T(x'),\mathbb{R}^{k} \times \{0\}) \rho(\epsilon^{-1}x') \, dx' \\
					&= \epsilon^{-(d-1)} \int_{B_{\epsilon}(0)} \frac{\text{dist}(x',\mathbb{R}^{k} \times \{0\})}{\sqrt{\nu^{2} - \text{dist}(x',\mathbb{R}^{k} \times \{0\})^{2}}} Dd(x', \mathbb{R}^{k} \times \{0\}) \rho(\epsilon^{-1}x') \, dx' \\
					&= - Dg_{\nu}^{\epsilon}(0).
	\end{align*}
This proves $Dg_{\nu}^{\epsilon}(0) = 0$.  \end{proof}
	
\subsection{Verifying the Smallness Condition}  This subsection proves Proposition \ref{P: smallness condition king}.  The proof proceeds as follows.  First, recall that $M + \psi_{\nu}^{\epsilon} \geq M + \psi_{\nu} \geq u$ in $V$.  Hence one proceeds to lower $\psi_{\nu}^{\epsilon}$, subtracting a constant until it first touches $u$ at some point $x_{\epsilon}$.  That is, $x_{\epsilon}$ is nothing more than a point where $M + \psi_{\nu}^{\epsilon} - C_{\epsilon}$ touches $u$ from above, where $C_{\epsilon}$ is defined by 
	\begin{equation*}
		C_{\epsilon} = \inf \left\{ C > 0 \, \mid \, M + \psi^{\epsilon}_{\nu}(x) - C < u(x) \, \, \text{for some} \, \, x \in \overline{B}_{R}(x_{0}) \right\}.
	\end{equation*}
Let $q_{\epsilon} = \psi_{\nu}^{\epsilon}(x_{\epsilon})^{-1} x_{\epsilon}$ be the associated point on the surface $\{\psi_{\nu}^{\epsilon} = 1\}$.  

To establish that the projection $q_{\epsilon}'$ is, in fact, a distance $O(\epsilon)$ away from $G_{p}$, the strategy is to show that $C_{\epsilon} = O(\epsilon^{2})$, while such a bound would fail if $q_{\epsilon}'$ were too far away.  More precisely, the proof involves the following two lemmas:

	\begin{lemma} \label{L: basic bound} If $x_{*} \in \mathcal{C}(\partial \varphi^{*}(p)) \setminus \{0\}$ and $\nu \in (0,\zeta]$, then there is a constant $c(\nu,x_{*}) > 0$ such that, for all small enough $\epsilon > 0$,
		\begin{equation} \label{E: parabola estimate thing}
			\psi_{\nu}^{\epsilon}(x_{*}) \leq \psi_{\nu}(x_{*})[ 1 + c(\nu,x_{*}) \epsilon^{2}].
		\end{equation}
	\end{lemma}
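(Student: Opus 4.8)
The plan is to reduce the estimate to a second-order Taylor bound for the mollification $g_{\nu}^{\epsilon}$ of $g_{\nu}$ near the flat face $G_{p}$, and then to convert that bound into one for $\psi_{\nu}^{\epsilon}$ using the monotonicity established in Proposition \ref{P: crazy monotonicity argument}. First I would normalize the geometry: since $x_{*} \in \mathcal{C}(\partial \varphi^{*}(p)) \setminus \{0\}$, there are a unique $q_{*} \in \partial \varphi^{*}(p)$ and a unique $t_{*} > 0$ with $x_{*} = t_{*} q_{*}$, and $\psi_{\nu}(q_{*}) = \varphi(q_{*}) = 1$ by Proposition \ref{P: touching above test function part}, so $t_{*} = \psi_{\nu}(x_{*})$. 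Writing $q_{*}'$ for the orthogonal projection of $q_{*}$ onto $\{e\}^{\perp}$, one has $q_{*}' \in G_{p}$ and $\langle q_{*}, e \rangle = \varphi^{*}(e) = g_{\nu}(q_{*}')$, hence $q_{*} = q_{*}' + g_{\nu}(q_{*}')e \in \mathcal{S}_{p}(r)$ for every $r > 0$ and $x_{*} \in \mathcal{U}_{p}(r)$ for every $r \in (0,\nu)$. In particular the graph characterization of $\psi_{\nu}^{\epsilon}$ from Proposition \ref{P: a cone construction} (equivalently Definition \ref{D: admissible}(iii) together with Lemma \ref{L: uniqueness time thing}) applies at $x_{*}$.

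The key analytic step is the estimate $0 \leq g_{\nu}(q_{*}') - g_{\nu}^{\epsilon}(q_{*}') \leq C(\nu) \epsilon^{2}$. To obtain it, I would recall from the proof of Lemma \ref{L: intermediate key lemma} that $\tfrac{1}{2}\mathrm{dist}(\cdot,G_{p})^{2}$ is $C^{1,1}_{\mathrm{loc}}$ with $0 \leq D^{2} \leq \mathrm{Id}$; composing with the smooth function $t \mapsto \sqrt{\nu^{2} - t}$, which is $C^{\infty}$ on $[0,\nu^{2})$, shows that $g_{\nu} = \varphi^{*}(e) - \nu + \sqrt{\nu^{2} - \mathrm{dist}(\cdot,G_{p})^{2}}$ is $C^{1,1}$ on any ball $B_{r}(q_{*}')$ with $r < \nu$, with an a.e.\ Hessian bound $\|D^{2} g_{\nu}\|_{L^{\infty}(B_{r}(q_{*}'))} \leq K(\nu,r)$ depending only on $\nu$ and $r$; I would fix $r = \nu/2$. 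Since $q_{*}' \in G_{p}$, Proposition \ref{P: gradient vanishing ack ack} gives $Dg_{\nu}(q_{*}') = 0$, so for $y' \in B_{1}$ and $\epsilon < \nu/2$ the $C^{1,1}$ bound yields $g_{\nu}(q_{*}' + \epsilon y') \geq g_{\nu}(q_{*}') - \tfrac{1}{2}K(\nu,\nu/2)\epsilon^{2}\|y'\|^{2}$. Integrating against $\rho$ (using $\int \rho = 1$ and $\int \|y'\|^{2}\rho \leq 1$) and combining with $g_{\nu}^{\epsilon} \leq g_{\nu}$ from \eqref{E: useful concavity stuff} gives the claimed bound with $C(\nu) = \tfrac{1}{2}K(\nu,\nu/2)$.

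Finally, I would convert this into the bound on $\psi_{\nu}^{\epsilon}(x_{*})$. By the graph characterization and Lemma \ref{L: uniqueness time thing}, $\psi_{\nu}^{\epsilon}(x_{*}) = \alpha_{*} t_{*}$, where $\alpha_{*}$ is the unique root in $[1-a_{*},1+a_{*}]$ of the function $f_{q_{*}}^{\epsilon}(\alpha) = \alpha^{-1}\varphi^{*}(e) - g_{\nu}^{\epsilon}(\alpha^{-1}q_{*}')$ of Proposition \ref{P: crazy monotonicity argument} (applicable since $q_{*} \in \mathcal{S}_{p}(r_{*}^{(2)})$ as $\mathrm{dist}(q_{*}',G_{p}) = 0$, and since $\epsilon < \epsilon_{*}^{(2)}$). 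That proposition supplies a negative slope bound $(f_{q_{*}}^{\epsilon})'(\alpha) \leq -c_{0}(\nu) < 0$ on the interval, while $f_{q_{*}}^{\epsilon}(1) = g_{\nu}(q_{*}') - g_{\nu}^{\epsilon}(q_{*}') \in [0, C(\nu)\epsilon^{2}]$; integrating $(f_{q_{*}}^{\epsilon})'$ from $1$ to $\alpha_{*}$ gives $\alpha_{*} \geq 1$ and $\alpha_{*} - 1 \leq c_{0}(\nu)^{-1} f_{q_{*}}^{\epsilon}(1) \leq c_{0}(\nu)^{-1} C(\nu)\epsilon^{2}$. Hence $\psi_{\nu}^{\epsilon}(x_{*}) = \alpha_{*} t_{*} \leq \psi_{\nu}(x_{*})(1 + c(\nu,x_{*})\epsilon^{2})$ with $c(\nu,x_{*}) = c_{0}(\nu)^{-1} C(\nu)$, valid for all $\epsilon < \min\{\nu/2, \epsilon_{*}^{(2)}\}$.

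The main obstacle I anticipate is the bookkeeping around the graph characterization: one must verify cleanly that $\psi_{\nu}^{\epsilon}(x_{*})$ genuinely equals $\alpha_{*} t_{*}$ with $\alpha_{*}$ the root of $f_{q_{*}}^{\epsilon}$, which requires carefully threading together Proposition \ref{P: a cone construction}, Lemma \ref{L: uniqueness time thing}, and the definition of the Minkowski gauge, and checking that $x_{*}$ lies in the relevant cone $\mathcal{U}_{p}(r_{*})$. The analytic content --- the $C^{1,1}$ regularity of $g_{\nu}$ near $G_{p}$ and the $\epsilon^{2}$ Taylor estimate --- is routine once that identification is in place.
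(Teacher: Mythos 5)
Your proof is correct and follows essentially the same route as the paper's: both reduce the estimate to the $O(\epsilon^{2})$ bound $g_{\nu}(q_{*}') - g_{\nu}^{\epsilon}(q_{*}') \leq C\epsilon^{2}$ at the foot point $q_{*}' \in G_{p}$ and then convert it into \eqref{E: parabola estimate thing} through the graph characterization and the monotone function $f_{q_{*}}^{\epsilon}$ of Proposition \ref{P: crazy monotonicity argument}. The only differences are cosmetic: the paper splits into the cases $q_{*}' \in \mathrm{int}(G_{p})$ and $q_{*}' \in \mathrm{bdry}(G_{p})$ and Taylor-expands $\sqrt{\nu^{2}-r}$ directly (using $\mathrm{dist}(q_{*}'+\epsilon y',G_{p}) \leq \epsilon$), whereas you treat both cases at once via the $C^{1,1}$ bound on $g_{\nu}$ together with $Dg_{\nu}(q_{*}')=0$, and you quantify the root of $f_{q_{*}}^{\epsilon}$ using the slope bound from that proposition's proof instead of the paper's $(1+\eta\epsilon^{2})$-rescaling argument.
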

	
	\begin{prop} \label{P: scarier bound} Fix a $\nu \in (0,\zeta)$.  There are constants $A(\nu,\zeta) > 0$ and $\delta(\nu,\zeta) \in (0,\nu)$ such that, for any $\epsilon \in (0,\nu)$, if $x \in \{\psi^{\epsilon}_{\nu} < +\infty\} \cap \{\psi_{\zeta} < +\infty\}$ and the points $q_{\nu}$ and $q_{\zeta}$ defined by $q _{\nu}= \psi_{\nu}^{\epsilon}(x)^{-1}x$ and $q_{\zeta} = \psi_{\zeta}(x)^{-1} x$ satisfy
		\begin{equation} \label{E: smallness condition necessary assumption}
			\max\left\{ \text{dist}(q'_{\nu},G_{p}), \text{dist}(q'_{\zeta},G_{p}) \right\} \leq \delta(\nu,\zeta),
		\end{equation}
	then
		\begin{equation} \label{E: main thing appendix smallness}
			\psi^{\epsilon}_{\nu}(x) \geq \psi_{\zeta}(x)[1 + A(\nu,\zeta) \text{dist}(q'_{\nu},G_{p})^{2}].
		\end{equation}\end{prop}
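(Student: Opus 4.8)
The plan is to turn the one‑dimensional heuristic recorded just after Proposition~\ref{P: smallness condition king} into a genuine argument: near $G_{p}$ the graph of $\{\psi_{\zeta}=1\}$ lies above the graph of $\{\psi_{\nu}=1\}$, hence above the mollified graph of $\{\psi_{\nu}^{\epsilon}=1\}$, by an amount quadratic in $\text{dist}(\cdot,G_{p})$, and passing from these graph bounds to the Minkowski gauges $\psi_{\zeta},\psi_{\nu}^{\epsilon}$ yields \eqref{E: main thing appendix smallness}. Throughout one shrinks $\delta(\nu,\zeta)$ so that it is below the radius $r$ of Definition~\ref{D: admissible} (which already forces $\delta<\nu-\epsilon_{*}$, so that $g_{\nu}^{\epsilon}$ is defined near $q_{\nu}'$) and below $\nu/2$, and works with $\epsilon$ below the threshold $\epsilon_{*}$ of the admissible perturbation; this is harmless, since Proposition~\ref{P: smallness condition king} only needs $\epsilon\to0^{+}$, and for the remaining range $\epsilon\in[\epsilon_{*},\nu)$ the estimate reduces to the elementary comparison $\psi_{\nu}^{\epsilon}\geq\psi_{\zeta}$ on the relevant neighborhood.

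First I would record the quadratic gap. With $d=\text{dist}(y',G_{p})$, formula \eqref{E: graphical special case} gives, for $d<\nu$, $g_{\zeta}(y')-g_{\nu}(y')=d^{2}\bigl(\tfrac{1}{\nu+\sqrt{\nu^{2}-d^{2}}}-\tfrac{1}{\zeta+\sqrt{\zeta^{2}-d^{2}}}\bigr)$, and since $\nu<\zeta$ the corresponding single fraction has numerator $\geq\zeta-\nu$ and denominator $\leq4\zeta^{2}$; hence $g_{\nu}(y')\leq g_{\zeta}(y')-c_{0}d^{2}$ with $c_{0}=c_{0}(\nu,\zeta)=(\zeta-\nu)/(4\zeta^{2})$ whenever $d\leq\nu/2$. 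Combining with \eqref{E: useful concavity stuff} gives $g_{\nu}^{\epsilon}(y')\leq g_{\nu}(y')\leq g_{\zeta}(y')-c_{0}d^{2}$; in particular $g_{\nu}^{\epsilon}\leq g_{\zeta}$ on the neighborhood, so the conical unit balls are nested and $\psi_{\nu}^{\epsilon}\geq\psi_{\zeta}$ there, i.e.\ $t:=\psi_{\nu}^{\epsilon}(x)/\psi_{\zeta}(x)\geq1$.

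Next I would reduce to a scalar identity for $t$. By homogeneity we may normalize $\psi_{\zeta}(x)=1$, so $q_{\zeta}=x$, $q_{\nu}=x/t$ and $q_{\zeta}'=t\,q_{\nu}'$. Since $\text{dist}(q_{\nu}',G_{p})\leq\delta<r$ and $q_{\nu}\in\{\psi_{\nu}^{\epsilon}=1\}$, Theorem~\ref{T: existence admissible perturbation} puts $q_{\nu}$ on the graph part, so $\langle q_{\nu},e\rangle=g_{\nu}^{\epsilon}(q_{\nu}')$, and similarly $\langle q_{\zeta},e\rangle=g_{\zeta}(q_{\zeta}')$; as $\langle q_{\zeta},e\rangle=t\langle q_{\nu},e\rangle$ this is exactly $t\,g_{\nu}^{\epsilon}(q_{\nu}')=g_{\zeta}(t\,q_{\nu}')$. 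Now set $a=g_{\nu}^{\epsilon}(q_{\nu}')\leq\varphi^{*}(e)$ (using $g_{\nu}\leq\varphi^{*}(e)$) and $\gamma(s)=g_{\zeta}(s\,q_{\nu}')$, concave because $g_{\zeta}$ is concave. The identity gives $a(t-1)=g_{\zeta}(q_{\zeta}')-g_{\nu}^{\epsilon}(q_{\nu}')=[\gamma(t)-\gamma(1)]+[g_{\zeta}(q_{\nu}')-g_{\nu}^{\epsilon}(q_{\nu}')]\geq[\gamma(t)-\gamma(1)]+c_{0}d^{2}$ by the second step, while concavity gives $\gamma(t)-\gamma(1)\geq\gamma'(t)(t-1)=\langle Dg_{\zeta}(q_{\zeta}'),q_{\nu}'\rangle(t-1)$. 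Since $\text{dist}(q_{\zeta}',G_{p})\leq\delta<\zeta$, the explicit formula for $Dg_{\zeta}$ (together with $\|q_{\nu}'\|\leq R$ on the compact set $\{\text{dist}(\cdot,G_{p})\leq\zeta\}$) yields $|\langle Dg_{\zeta}(q_{\zeta}'),q_{\nu}'\rangle|\leq\omega(\delta):=\delta R/\sqrt{\zeta^{2}-\delta^{2}}$, hence $(a+\omega(\delta))(t-1)\geq c_{0}d^{2}$. Choosing $\delta(\nu,\zeta)$ so small that $\omega(\delta)\leq\varphi^{*}(e)$ gives $t-1\geq c_{0}d^{2}/(2\varphi^{*}(e))$, which is \eqref{E: main thing appendix smallness} with $A(\nu,\zeta)=c_{0}(\nu,\zeta)/(2\varphi^{*}(e))$.

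The main obstacle is not any single estimate but the simultaneous bookkeeping of constants: one must fix $\delta$, the ambient open cone about $\mathcal{C}(\partial\varphi^{*}(p))$, and the threshold $\epsilon_{*}$ so that the graph representation of $\{\psi_{\nu}^{\epsilon}=1\}$ (Theorem~\ref{T: existence admissible perturbation}), the quadratic gap between $g_{\nu}$ and $g_{\zeta}$, and the smallness of $Dg_{\zeta}$ all hold at once and uniformly in $\epsilon\in(0,\epsilon_{*})$, and so that $q_{\nu}$ really lands on the graph part of the level set; this is the same kind of care exercised in Proposition~\ref{P: crazy monotonicity argument}, and the one genuinely algebraic ingredient is the concavity step replacing $\gamma(t)-\gamma(1)$ by $\gamma'(t)(t-1)$.
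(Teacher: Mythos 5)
Your argument is correct and follows essentially the same route as the paper's own proof: pass to the graph functions via the level-set representation, use the (exact, rather than Taylor-expanded) quadratic gap $g_{\nu}^{\epsilon}\leq g_{\nu}\leq g_{\zeta}-c_{0}\,\mathrm{dist}(\cdot,G_{p})^{2}$, and absorb the radial variation of $g_{\zeta}$ between the two projection points --- where you invoke concavity plus the smallness of $Dg_{\zeta}$ near $G_{p}$, the paper uses the Lipschitz constant $M_{\zeta}(\delta)$ of \eqref{E: lipschitz constant}, arriving at the same conclusion with a comparable constant $A(\nu,\zeta)$. The one loose point is your parenthetical claim that the range $\epsilon\in[\epsilon_{*},\nu)$ ``reduces to $\psi_{\nu}^{\epsilon}\geq\psi_{\zeta}$'' (that alone does not yield the quadratic gain), but this is immaterial: the paper's proof likewise relies on the graph property \eqref{E : graph property ack ack ack}, which Definition \ref{D: admissible} only guarantees for $\epsilon<\epsilon_{*}$, and the sole application, Proposition \ref{P: smallness condition king}, concerns only $\epsilon\to 0^{+}$.
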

	
	Before going into the proofs of Lemma \ref{L: basic bound} and Proposition \ref{P: scarier bound}, here is how these two results combine to yield an estimate on $\text{dist}(q_{\epsilon}',G_{p})$.
	
	\begin{proof}[Proof of Proposition \ref{P: smallness condition king}]  To begin the proof, fix a sequence $(\epsilon_{j})_{j \in \mathbb{N}} \subseteq (0,+\infty)$ such that
		\begin{equation*}
			\lim_{j \to \infty} \epsilon_{j}^{-1} \text{dist}(q_{\epsilon_{j}}',G_{p}) = \limsup_{\epsilon \to 0^{+}} \epsilon^{-1} \text{dist}(q_{\epsilon}',G_{p}), \quad \lim_{j \to \infty} \epsilon_{j} = 0.
		\end{equation*}
		
	Since $\overline{V}$ is compact, there is no loss of generality in assuming that the limit $x_{*} = \lim_{j \to \infty} x_{\epsilon_{j}}$ exists.  In particular, since $u$ is upper semicontinuous, a classical argument shows that
		\begin{equation*}
			u(x_{*}) - \psi_{\nu}(x_{*}) = \max \left\{ u(y) - \psi_{\nu}(y) \, \mid \, y \in \overline{V} \right\}.
		\end{equation*}
	Since $\psi_{\nu} \geq \psi_{\zeta}$ and $\psi_{\nu}(x_{0}) = \psi_{\zeta}(x_{0})$ by Proposition \ref{P: touching above test function part}, 
		\begin{equation*}
			u(x_{*}) = M + \psi_{\zeta}(x_{*}) = M + \psi_{\nu}(x_{*}).
		\end{equation*}  
	Thus, by \eqref{E: stay in the cone appendix}, $x_{*} \in V \cap \mathcal{C}(\partial \varphi^{*}(p))$ and $0 < \psi_{\nu}(x_{*}) < +\infty$.  Further, since $\psi_{\nu}^{\epsilon} \to \psi_{\nu}$ locally uniformly in a neighborhood of $\mathcal{C}(\partial \varphi^{*}(p))$, the limit $q_{*} = \lim_{j \to \infty} q_{\epsilon_{j}}$ exists. Since $x_{*} \in \mathcal{C}(\partial \varphi^{*}(p))$, there holds $q_{*} \in \partial \varphi^{*}(p)$. 
	
	The considerations of the previous paragraph show that $q_{\epsilon_{j}}' \to q_{*}'$ as $j \to +\infty$ and $q'_{*} \in G_{p}$.  In particular, $\text{dist}(q_{\epsilon_{j}}',G_{p}) \to 0$ as $j \to +\infty$, and, up to passing to another subsequence, there is no loss of generality in assuming that
		\begin{equation*}
			\sup \left\{ \text{dist}(q_{\epsilon_{j}}',G_{p}) \, \mid \, j \in \mathbb{N} \right\} \leq \delta,
		\end{equation*}
	where $\delta = \delta(\nu,\zeta) > 0$ is the constant in Proposition \ref{P: scarier bound}.
	
Similarly, if $\tilde{q}_{\epsilon_{j}} = \psi_{\zeta}(x_{\epsilon_{j}})^{-1} x_{\epsilon_{j}}$, then Proposition \ref{P: touching above test function part} implies that
	\begin{equation*}
		\lim_{j \to \infty} \tilde{q}_{\epsilon_{j}} = \lim_{j \to \infty} \frac{x_{\epsilon_{j}}}{\psi_{\zeta}(x_{\epsilon_{j}})} = \frac{x_{*}}{\psi_{\zeta}(x_{*})} = \frac{x_{*}}{\psi_{\nu}(x_{*})} = q_{*} \in \partial \varphi^{*}(p),
	\end{equation*}
In particular, $\text{dist}(\tilde{q}_{\epsilon_{j}}',G_{p}) \to 0$ as $j \to +\infty$, and, thus, there is no loss of generality in assuming once more that
	\begin{equation*}
		\sup \left\{ \text{dist}(\tilde{q}_{\epsilon_{j}}',G_{p}) \, \mid \, j \in \mathbb{N} \right\} \leq \delta.
	\end{equation*}
In particular, $x_{\epsilon_{j}}$ satisfies the assumptions of Proposition \ref{P: scarier bound} for every $j \in \mathbb{N}$. 
	
	On the one hand, $x_{\epsilon_{j}}$ is a point where $M + \psi_{\nu}^{\epsilon_{j}} - u$ is minimized in $\overline{V}$, that is,
		\begin{equation*}
			M + \psi_{\nu}^{\epsilon_{j}}(x_{\epsilon_{j}}) - u(x_{\epsilon_{j}}) = \min \left\{ M + \psi_{\nu}^{\epsilon_{j}}(x) - u(x) \, \mid \, x \in \overline{V} \right\}.
		\end{equation*}
	Using Lemma \ref{L: basic bound} and \eqref{E: hopefully the last time touching above}, this implies
		\begin{equation*}
			M + \psi_{\nu}^{\epsilon_{j}}(x_{\epsilon_{j}}) - u(x_{\epsilon_{j}}) \leq M + \psi_{\nu}^{\epsilon_{j}}(x_{0}) - u(x_{0}) \leq c(\nu,x_{0}) \epsilon^{2}_{j} \psi_{\nu}(x_{0}).
		\end{equation*}
	On the other hand, by Proposition \ref{P: scarier bound} and \eqref{E: hopefully the last time touching above},
		\begin{align*}
			\psi_{\nu}^{\epsilon_{j}}(x_{\epsilon_{j}}) &\geq \psi_{\zeta}(x_{\epsilon_{j}}) \left[1 + A(\nu,\zeta) \text{dist}(q_{\epsilon_{j}}',G)^{2}\right] \\
				&\geq u(x_{\epsilon_{j}}) - M + A(\nu,\zeta) \psi_{\zeta}(x_{\epsilon_{j}}) \text{dist}(q_{\epsilon_{j}}',G)^{2}.
		\end{align*}
	In particular, combining the two estimates leads to the bound
		\begin{equation*}
			 A(\nu,\zeta) \psi_{\zeta}(x_{\epsilon_{j}}) \text{dist}(q_{\epsilon_{j}}',G)^{2} \leq c(\nu,x_{0}) \epsilon_{j}^{2} \psi_{\nu}(x_{0})
		\end{equation*}
	Since $x_{\epsilon_{j}} \to x_{*}$ and $x_{0},x_{*} \in \mathcal{C}(\partial \varphi^{*}(p)) \setminus \{0\}$, Proposition \ref{P: touching above test function part} implies $\psi_{\nu}(x_{0}) = \varphi(x_{0})$ and $\psi_{\zeta}(x_{*}) = \varphi(x_{*}) > 0$, hence this last inequality leads to
		\begin{equation*}
			\limsup_{\epsilon \to 0^{+}} \epsilon^{-1} \text{dist}(q'_{\epsilon},G_{p}) = \lim_{j \to \infty} \epsilon_{j}^{-1} \text{dist}(q_{\epsilon_{j}}',G_{p}) \leq  \sqrt{ \frac{c(\nu,x_{0}) \varphi(x_{0})}{A(\nu,\zeta) \varphi(x_{*})} } < +\infty.
		\end{equation*}
	\end{proof}
	
\begin{proof}[Proof of Lemma \ref{L: basic bound}]  Fix $x_{*} \in \mathcal{C}(\partial \varphi^{*}(p)) \setminus \{0\}$.  Define $q_{*} = \psi_{\nu}(x_{*})^{-1} x_{*}$.  Since $q_{*} \in \mathcal{C}(\partial \varphi^{*}(p)) \cap \{\psi_{\nu} = 1\}$, the inclusion $q_{*} \in \partial \varphi^{*}(p)$ holds and, thus,
	\begin{equation*}
		q_{*} = q_{*}' + g_{\nu}(q_{*}') e = q_{*}' + \varphi^{*}(e) e.
	\end{equation*}
In the proof, there are two cases to consider: (i) $q_{*}' \in \text{int}(G_{p})$ and (ii) $q_{*}' \in \partial G_{p}$.  (Here $\text{int}(G_{p})$ and $\partial G_{p}$ are the interior and boundary of $G_{p}$ relative to the topology of $\mathbb{R}^{d-1}$.)

In case (i), since $q_{*}' \in \text{int}(G_{p})$, there is a $\mu > 0$ such that $\overline{B}_{\mu}(q_{*}') \cap \mathbb{R}^{d-1} \subseteq G_{p}$.  Thus, if $\epsilon < \mu/2$, then $g^{\epsilon}_{\nu}(q') = \varphi^{*}(e)$ for each $q' \in \overline{B}_{\mu/2}(q_{*}') \cap \mathbb{R}^{d-1}$.  In particular, for any $\epsilon < \mu/2$,
	\begin{equation*}
		\psi_{\nu}^{\epsilon}(q_{*}) = 1 = \psi_{\nu}(q_{*}),
	\end{equation*}
which implies $\psi_{\nu}^{\epsilon}(x_{*}) = \psi_{\nu}(x_{*})$ for all $\epsilon \in (0,\mu/2)$, stronger than needed in \eqref{E: parabola estimate thing}.

To complete the proof, it remains to consider case (ii).  In this case, $q_{*}' \in \partial G_{p}$ so the convexity of $G_{p}$ implies the existence of a $p' \in \mathbb{R}^{d-1}$ such that 
	\begin{equation} \label{E: supporting hyperplane finally}
		\langle p', q' \rangle \leq \langle p', q'_{*} \rangle \quad \text{for each} \quad q' \in G_{p}.
	\end{equation}

Considering $g_{\nu}^{\epsilon}$ and Taylor expanding the function $r \mapsto \sqrt{\nu^{2} - r}$ near $r = 0$, one finds, for all sufficiently small $\epsilon$,
	\begin{align*}
		g_{\nu}^{\epsilon}(q_{*}') &= \varphi^{*}(e) - \nu + \int_{B_{1}} \sqrt{\nu^{2} - \text{dist}(q_{*}' + \epsilon y', G_{p})^{2}} \rho(y') \, dy' \\
			&\geq \varphi^{*}(e) - \frac{1}{4} \nu^{-1} \int_{B_{1}} \text{dist}(q_{*}' + \epsilon y',G_{p})^{2} \rho(y') \, dy' \\
			&= \varphi^{*}(e) - \frac{1}{4} \nu^{-1} \epsilon^{2} \int_{B_{1}} \left[ \epsilon^{-1} \text{dist}(q_{*}' + \epsilon y', G_{p}) \right]^{2} \rho(y') \, dy'.
	\end{align*}
At this stage, it is useful to note that 
	\begin{align} \label{E: another blowup argument}
		B(q_{*}') := \liminf_{\epsilon \to 0^{+}} \int_{B_{1}} \left[ \epsilon^{-1} \text{dist}(q_{*}' + \epsilon y', G_{p}) \right]^{2} \rho(y') \, dy' > 0.
	\end{align}
Assuming \eqref{E: another blowup argument} for the moment, one deduces that, for any $\epsilon > 0$ small enough,
	\begin{equation*}
		g_{\nu}^{\epsilon}(q_{*}') \geq \varphi^{*}(e) - \frac{1}{2} \nu^{-1} B(q_{*}') \epsilon^{2}.
	\end{equation*}
Since $q_{*} \in \partial \varphi^{*}(p)$,  the duality identity \eqref{E: subdifferential basic identity dual} implies $\langle q_{*}, e \rangle = \varphi^{*}(e)$, hence 
	\begin{equation*}
		g_{\nu}^{\epsilon}\left( q_{*}' \right) \geq \varphi^{*}(e) \left( 1 - \frac{B(q_{*}') \epsilon^{2}}{2 \nu \varphi^{*}(e)} \right) = \langle q_{*}, e \rangle \left(1 - \frac{B(q_{*}') \epsilon^{2}}{2 \nu \varphi^{*}(e)} \right)
	\end{equation*}
Since $\lim_{\epsilon \to 0^{+}} Dg_{\nu}^{\epsilon}(q_{*}') = Dg_{\nu}(q_{*}') = 0$ by Proposition \ref{P: gradient vanishing ack ack}, it follows that one can fix an $\eta > 0$ such that, for all $\epsilon > 0$ small enough,
	\begin{align*}
		g_{\nu}^{\epsilon} \left( \frac{q_{*}'}{1 + \eta \epsilon^{2}} \right) \geq \frac{\langle q_{*}, e \rangle}{1 + \eta \epsilon^{2}}.
	\end{align*}
In particular, as soon as $\epsilon$ is small enough, one can invoke Proposition \ref{P: crazy monotonicity argument} to deduce that $\psi_{\nu}^{\epsilon}(q_{*}) \leq 1 + \eta \epsilon^{2}$, yielding the desired estimate \eqref{E: parabola estimate thing} by homogeneity.

To check \eqref{E: another blowup argument}, let $H_{p'} = \{x' \in \mathbb{R}^{d-1} \, \mid \, \langle p', x' \rangle \geq 0\}$ and use \eqref{E: supporting hyperplane finally} to obtain
	\begin{equation*}
		\text{dist}(x',G_{p}) \geq \text{dist}(x',q_{*}' - H_{p'}).
	\end{equation*}
By homogeneity and \eqref{E: symmetry mollifier}, this leads to
	\begin{align*}
		B(q_{*}') \geq \lim_{\epsilon \to 0^{+}} \int_{B_{1}} \left[\epsilon^{-1} \text{dist}(\epsilon y', -H_{p'}) \right]^{2} \rho(y') \, dy' = \int_{B_{1}} \text{dist}(y',-H_{p'})^{2} \rho(y') \, dy' > 0.
	\end{align*}
\end{proof}

It only remains to prove Proposition \ref{P: scarier bound}.  Before delving into the proof, it will be convenient to introduce constants $M_{\zeta}(\delta)$ and $R(\delta)$ defined by 
		\begin{align} 
			M_{\zeta}(\delta) &= \sup \left\{ \frac{g_{\zeta}(x') - g_{\zeta}(y')}{\|x' - y'\|} \, \mid \, x', y' \in \{ \text{dist}(\cdot,G) \leq \delta\}, \, \, x' \neq y' \right\}, \label{E: lipschitz constant} \\
			R(\delta) &= \sup \left\{ \|q'\| \, \mid \, \text{dist}(q',G) \leq \delta \right\}. \label{E: radius}
		\end{align}
	
\begin{proof}[Proof of Proposition \ref{P: scarier bound}] Suppose that $x \in \{\psi^{\epsilon}_{\nu} < + \infty\} \cap \{\psi_{\zeta}< + \infty\}$ and assume that \eqref{E: smallness condition necessary assumption} holds for some $\delta > 0$ to be determined below.  As in Section \ref{S: c11 setting}, write $x = \langle x, e \rangle e + x'$, where $x' \in \{e\}^{\perp} \simeq \mathbb{R}^{d-1}$.  

To lighten the notation, define $C, \gamma, \ell > 0$ by
	\begin{equation*}
		C = \psi^{\epsilon}_{\nu}(x), \quad \gamma = \psi^{\epsilon}_{\nu}(x)^{-1} \psi_{\zeta}(x), \quad \ell = \text{dist}\left(\frac{x'}{\psi^{\epsilon}_{\nu}(x)},G_{p}\right).
	\end{equation*}
Note, in particular, that $\psi_{\zeta}(x) = \gamma C$. By Definition \ref{D: admissible}, $\psi^{\epsilon}_{\nu} \geq \psi_{\zeta}$ and, thus, $\gamma \leq 1$.

By Taylor expanding the function $r \mapsto \sqrt{\nu^{2} - r}$ at $r = 0$, we have, for $\delta > 0$ small enough,
	\begin{align} \label{E: definition of delta in the appendix}
		g_{\nu}^{\epsilon}(y') \leq g_{\nu}(y') \leq g_{\zeta}(y') -  \frac{1}{4} (\nu^{-1} - \zeta^{-1}) \text{dist}(y',G_{p})^{2} \quad \text{if} \quad \text{dist}(y',G_{p}) \leq \delta.
	\end{align}
This condition fixes $\delta$ from now on.
	
In view of \eqref{E: smallness condition necessary assumption}, the key property \eqref{E : graph property ack ack ack} of $\psi_{\nu}^{\epsilon}$, and the definition of $C$ and $\ell$, one can write
	\begin{align} \label{E: thankful}
		\frac{x_{d}}{C} = g_{\nu}^{\epsilon} \left( \frac{x'}{C} \right) \leq g_{\zeta} \left(\frac{x'}{C}\right) - \frac{1}{4} (\nu^{-1} - \zeta^{-1}) \ell^{2}.	\end{align}
At the same time, by definition of $\gamma$ and $M_{\zeta}(\delta)$,
	\begin{align*}
			\frac{x_{d}}{\gamma C} = g_{\zeta}\left(\frac{x'}{\gamma C}\right) \geq g_{\zeta} \left(\frac{x'}{C} \right) - M_{\zeta}(\delta) \left\|\frac{x'}{\gamma C} - \frac{x'}{C} \right\| = g_{\zeta}\left(\frac{x'}{C}\right) - M_{\zeta}(\delta) \left(\frac{1}{\gamma} - 1\right) \left\|\frac{x'}{C}\right\|.
	\end{align*}

Combining this with the bound \eqref{E: thankful}, one deduces that
	\begin{align*}
		\frac{x_{d}}{\gamma C} &\geq \frac{x_{d}}{C} + \frac{1}{4} (\nu^{-1} - \zeta^{-1}) \ell^{2} - M_{\zeta}(\delta) \left\| \frac{x'}{C} \right\| \left( \frac{1}{\gamma} - 1 \right).
	\end{align*}
After rearrangement, this becomes
	\begin{align*}
		\frac{1}{4} (\nu^{-1} - \zeta^{-1}) \ell^{2} &\leq \left(\frac{1}{\gamma} - 1 \right) \left(\frac{x_{d}}{C} + M_{\zeta}(\delta) \left\| \frac{x'}{C} \right\| \right) \\
			&= \left(\frac{1}{\gamma} - 1 \right) \left(g_{\nu}^{\epsilon} \left( \frac{x'}{C} \right) + M_{\zeta}(\delta) \left\| \frac{x'}{C} \right\| \right).
	\end{align*}
	
Reinterpreting this last inequality as a lower bound on $\gamma^{-1} - 1$, one arrives at the following lower bound on $\psi^{\epsilon}_{\nu}(x)$:
	\begin{align*}
		\psi^{\epsilon}_{\nu}(x) = C &= \psi_{\zeta}(x) + (1 - \gamma)C \\
			&\geq \psi_{\zeta}(x) + \frac{1}{4} (\nu^{-1} - \zeta^{-1}) \gamma \psi^{\epsilon}_{\nu}(x) \ell^{2} \left( g^{\epsilon}_{\nu}\left( \frac{x'}{C}\right) + M_{\zeta}(\delta) \left\| \frac{x'}{C}\right\|\right)^{-1} \\
			&= \psi_{\zeta}(x) \left[ 1 + \frac{1}{4} (\nu^{-1} - \zeta^{-1}) \ell^{2} \left( g^{\epsilon}_{\nu} \left( \frac{x'}{C} \right) + M_{\zeta}(\delta) \left\| \frac{x'}{C}\right\|\right)^{-1} \right]
	\end{align*}
Finally, observe that $C^{-1} \|x'\| \leq R(\delta)$ and 
	\begin{align*}
		\max \left\{ g_{\nu}^{\epsilon}(q') \, \mid \, \text{dist}(q',G) \leq \delta \right\} \leq \max \left\{ g_{\nu}(q') \, \mid \, \text{dist}(q',G) \leq \delta \right\} &= \varphi^{*}(e),
	\end{align*}
hence \eqref{E: main thing appendix smallness} holds with the constant $A$ given by 
	\begin{align*}
		A= \frac{1}{4} (\nu^{-1} - \zeta^{-1}) \left( \varphi^{*}(e) + M_{\zeta}(\delta) R(\delta) \right)^{-1}.
	\end{align*}
\end{proof}

  \bibliographystyle{plain}
\bibliography{bibliography}

\end{document}